\tikzstyle{startstop} = [rectangle, rounded corners, minimum width=2cm, minimum height=1cm,text centered, draw=black]
\tikzstyle{arrow} = [thick,->,>=stealth]
\newcommand{\ms}[1]{\mathscr{#1}}
\newcommand{\mc}[1]{\mathcal{#1}}
\newcommand{\ovl}[1]{\overline{#1}}
\newcommand{\varep}{ \varepsilon }
\newcommand{\sse} {\subseteq}
\newcommand{\Z}{\mathbb{Z}}
\newcommand{\R}{\mathbb{R}}
\newcommand{\Q}{\mathbb{Q}}
\newcommand{\C}{\mathbb{C}}
\newcommand{\E}{\mathbb{E}}
\newcommand{\icomplex}{\textup{i}}
\newcommand{\ra}{\rightarrow}
\newcommand{\toinf}{\ra \infty}
\newcommand{\beq}{\begin{equation}}
\newcommand{\eeq}{\end{equation}}
\newcommand{\mbf}[1]{\mathbf{#1}}
\newtheorem{theorem}{Theorem}
\newtheorem{prop}[theorem]{Proposition}
\newtheorem{lemma}[theorem]{Lemma}
\newtheorem{cor}[theorem]{Corollary}
\newtheorem{conjecture}[theorem]{Conjecture}
\theoremstyle{definition}
\newtheorem{definition}[theorem]{Definition}
\newtheorem{remark}[theorem]{Remark}
\newcommand{\p}{\mathbb{P}}
\numberwithin{equation}{section}
\numberwithin{theorem}{section}
\newcommand{\ind}{\mathbbm{1}}
\newcommand{\ptl}{\partial}
\DeclareMathOperator*{\esssup}{ess\,sup}
\newcommand{\torus}{\mathbb{T}}
\newcommand{\rade}{R\aa de}
\newcommand{\groupid}{\textup{id}}
\newcommand{\cutoff}{\Lambda}
\newcommand{\rconn}{\mathbf{A}}
\newcommand{\rgclass}{\mathbf{Q}}
\newcommand{\weakarrow}{\rightharpoonup}
\newcommand{\liegroup}{G}
\newcommand{\lalg}{\mathfrak{g}}
\newcommand{\elemlalg}{X}
\newcommand{\const}{C}
\newcommand{\timefn}{\tau}
\newcommand{\normbdfn}{\rho}
\newcommand{\connspace}{\mc{A}}
\newcommand{\gaugetransf}{\mc{G}}
\newcommand{\threetorus}{\torus^3}
\newcommand{\metricthreetorus}{d_{\threetorus}}
\newcommand{\gaction}[2]{#1^{#2}}
\newcommand{\sym}{S_{\textup{YM}}}
\newcommand{\orbitspace}{\connspace / \gaugetransf}
\newcommand{\projection}{\pi}
\newcommand{\cutoffspace}[1]{\Omega_{#1}}
\DeclareDocumentCommand{\toporbitspace}{o}{\ms{T}_{q\IfValueT{#1}{, #1}}}
\DeclareDocumentCommand{\ctoporbitspace}{o}{\ms{T}_{c\IfValueT{#1}{, #1}}}
\DeclareDocumentCommand{\borelorbitspace}{o}{\mc{B}_{q\IfValueT{#1}{, #1}}}
\DeclareDocumentCommand{\cborelorbitspace}{o}{\mc{B}_{c\IfValueT{#1}{, #1}}}
\newcommand{\wloop}{\ell}
\newcommand{\character}{\chi}
\newcommand{\loopset}{\mathbb{L}}
\newcommand{\lalgdim}{d_\lalg}
\newcommand{\fouriercutoff}{M}
\newcommand{\regwl}[2]{W_{#1, #2}}
\newcommand{\regwlorbit}[3]{W_{#1, #2, #3}}
\newcommand{\vectorspace}{V}
\newcommand{\oneform}{$1$-form}
\newcommand{\oneforms}{$1$-forms}
\newcommand{\twoform}{$2$-form}
\newcommand{\zeroform}{$0$-form}
\newcommand{\zeroforms}{$0$-forms}
\newcommand{\chset}{\mbf{Ch}}
\newcommand{\indexset}{\mbf{I}}
\newcommand{\ymsemigroup}{\Phi}
\newcommand{\nonlineardistspace}{\mc{X}}
\newcommand{\topnonlineardistspace}{\ms{T}_{\nonlineardistspace^{1,2}}}
\newcommand{\borelnonlineardistspace}{\mc{B}_{\nonlineardistspace^{1,2}}}
\newcommand{\evalmap}{\mathbf{e}}
\newcommand{\rvnldistspace}{\mbf{X}}
\newcommand{\inclnldistpace}{\iota_{\nonlineardistspace_0^{1,2}}}
\DeclareDocumentCommand{\nucutoff}{o o o}{\nu_{\IfValueT{#2}{#2} \IfValueT{#3}{, #3}; #1}}
\DeclareDocumentCommand{\mucutoff}{o o o}{\mu_{\IfValueT{#2}{#2} \IfValueT{#3}{, #3}; #1}}
\newcommand{\obsnldistspace}[3]{[W_{\nonlineardistspace^{1,2}}]_{#1, #3, #2}}
\newcommand{\unitary}{\textup{U}}
\newcommand{\specialunitary}{\textup{SU}}
\newcommand{\orthogonal}{\textup{O}}
\newcommand{\specialorthogonal}{\textup{SO}}
\newcommand{\gt}{\sigma}
\newcommand{\metricspace}{d_{\torus^3}}
\newcommand{\dloop}{d_{\loopset}}
\newcommand{\curv}[1]{F_{#1}}
\newcommand{\honeconnspace}{\connspace^{1,2}}
\newcommand{\htwogaugetransf}{\gaugetransf^{2,2}}
\newcommand{\honeorbitspace}{\honeconnspace / \htwogaugetransf}
\newcommand{\gorbithone}[1]{[#1]^{1,2}}
\newcommand{\newtop}{\ms{T}}
\newcommand{\newborel}{\mc{B}}
\newcommand{\consttb}{{\const_1}}
\newcommand{\exptb}{{\beta}}
\newcommand{\constfourp}{{\const_2}}
\newcommand{\constgf}{{\const_3}}
\renewcommand{\hat}{\widehat}
\renewcommand{\tilde}{\widetilde}
\begin{document}

\title[A state space for 3D Yang--Mills]{A state space for 3D Euclidean Yang--Mills theories}
\author{Sky Cao}
\address{Department of Mathematics, Massachusetts Institute of Technology, Building 2, Cambridge, MA 02138}
\email{skycao@mit.edu}

\author{Sourav Chatterjee}
\address{Department of Statistics, Stanford University, Sequoia Hall, 390 Jane Stanford Way, Stanford, CA 94305}
\email{souravc@stanford.edu}
\thanks{S.~Cao was partially supported by NSF grant DMS RTG 1501767}
\thanks{S.~Chatterjee was partially supported by NSF grants DMS 1855484 and DMS 2113242}
\keywords{Euclidean Yang--Mills theory, Yang--Mills heat equation, Wilson loop} 
\subjclass[2020]{81T13, 82B28, 60B05}

\begin{abstract}
It is believed that Euclidean Yang--Mills theories behave like the massless Gaussian free field (GFF) at short distances. This makes it impossible to define the main observables for these theories --- the Wilson loop observables --- in dimensions greater than two, because line integrals of the GFF do not exist in such dimensions. Taking forward a proposal of Charalambous and Gross, this article shows that it is possible to define Euclidean Yang--Mills theories on the 3D unit torus as `random distributional gauge orbits', provided that they indeed behave like the GFF in a certain sense. One of the main technical tools is the existence of the Yang--Mills heat flow on the 3D torus starting from GFF-like initial data, which is established in a companion paper. A key consequence of this construction is that under the GFF assumption, one can define a notion of `regularized Wilson loop observables' for Euclidean Yang--Mills theories on the 3D unit torus.
\end{abstract}

\maketitle

\tableofcontents

\section{Introduction}\label{section:introduction}
Construction of Euclidean Yang--Mills theories is an important step towards the construction of quantum Yang--Mills theories, which are the building blocks of the Standard Model of quantum mechanics~\cite{GJ1987, jaffewitten}. In this article, we only deal with Euclidean Yang--Mills theories on the 3D unit torus $\torus^3 = \R^3/\Z^3$, and as such, we start with some definitions required to introduce these theories. 

\begin{remark}
In this paper, we use the term ``state space" in the sense of probability theory, i.e. a space on which a probability measure is defined, or in which random variables take values. This is in contrast to the usual notion of a ``state space" in quantum mechanics. 
\end{remark}

\subsection{Euclidean Yang--Mills theories}\label{section:euclidean-ym-theories}
Let $\liegroup$ be a compact Lie group and let $\lalg$ denote the Lie algebra of $\liegroup$. For a technical reason (see Remark \ref{remark:G-restriction}), we assume that $\liegroup$ is a finite product of groups from the following list: $\{\torus^n\}_{n \geq 1}$, $\{\unitary(n)\}_{n \geq 1}$, $\{\specialunitary(n)\}_{n \geq 1}$, $\{\orthogonal(n)\}_{n \geq 1}$, $\{\specialorthogonal(2n+1)\}_{n \geq 1}$. 

A {\it connection} on the trivial principal $G$-bundle $\torus^3 \times G$ is a function $A: \torus^3 \ra \lalg^3$, that is, a $3$-tuple of functions $A = (A_1, A_2, A_3)$, with $A_i : \torus^3 \ra \lalg$ for $i=1,2,3$. Note that $A$ can also be viewed as a $\lalg$-valued $1$-form on $\torus^3$. Let $\connspace$ denote the set of smooth connections. 

Let $\gaugetransf$ be the set of smooth maps from $\torus^3$ into $G$, considering $G$ as a submanifold of Euclidean space. Note that $\gaugetransf$ is a group under pointwise multiplication. We will refer to the elements of $\gaugetransf$ as smooth {\it gauge transformations}. Given $A\in \connspace$ and $\gt \in \gaugetransf$, define $A^\gt = (A_1^\gt, A_2^\gt, A_3^\gt)$ as
\beq\label{eq:gaugetransform}
A_i^\gt := \gt^{-1}A_i \gt + \gt^{-1} \partial_i\gt, 
\eeq
where $\partial_i \gt$ is the derivative of $\gt$ in coordinate direction $i$, considering $\gt$ as a map from $\torus^3$ into an ambient Euclidean space containing $G$. It is not hard to verify that $A^\gt$ is a smooth connection, and that the map $A\mapsto A^\gt$ defines an action of the group $\gaugetransf$ on the set $\connspace$. 

The {\it curvature} $\curv{A} = ((\curv{A})_{ij}, 1 \leq i, j \leq 3)$ of a smooth connection $A$ is defined~as
\begin{align}\label{eq:curvdef}
(\curv{A})_{ij} := \ptl_i A_j - \ptl_j A_i +[A_i, A_j], 
\end{align}
where $[A_i, A_j] = A_iA_j - A_j A_i$.  The {\it Yang--Mills action} of a smooth connection $A$ is defined~as
\beq\label{eq:sym-def} \sym(A) := -\sum_{1 \leq i, j \leq 3} \int_{\torus^3} \Tr((\curv{A})_{ij}(x)^2) dx. \eeq
Pure Euclidean Yang--Mills theory on $\torus^3$ with gauge group $G$ and coupling strength $g>0$ is the hypothetical probability measure 
\begin{align}\label{eq:ym-measure} 
d\mu(A) = Z^{-1} \exp\biggl(-\frac{1}{g^2}\sym(A)\biggr)d\lambda( A),
\end{align}
where $Z$ is the normalizing constant and $\lambda$ is Lebesgue (that is, translation invariant) measure on $\connspace$. 

A function $f$ on $\connspace$ is called {\it gauge invariant} if $f(A)=f(A^\gt)$ for any $A\in \connspace$ and $\gt\in \gaugetransf$. For example, it is not difficult to check that the Yang--Mills action $\sym$ is  gauge invariant. Physicists tell us that any physical observable must be gauge invariant (see e.g. \cite[Section 2.1.2]{TongGaugeTheory}). This suggests that the Yang--Mills measure \eqref{eq:ym-measure} should be defined on the quotient space $\orbitspace$, rather than on $\connspace$. The most important gauge invariant observables are the {\it Wilson loop observables}, defined below.

\subsection{Wilson loop observables}\label{section:wilson-loop-observables}
We define a {\it path} in $\torus^3$ to be any continuous function $\wloop : [0, 1] \ra \torus^3$. If moreover $\wloop(0) = \wloop(1)$, then we say that $\wloop$ is a loop. We say that a path $\wloop$ is $C^1$ if $\wloop'$ exists on $(0, 1)$ and is uniformly continuous. 
We say that a path $\wloop : [0,1] \ra \torus^3$ is piecewise $C^1$ if there exist points $0 = a_0 < a_1 < \cdots < a_k = 1$ such that for each $1 \leq i \leq k$, the restriction of $\wloop$ to $[a_{i-1}, a_i]$ is $C^1$. Let $\loopset$ denote the set of all piecewise $C^1$ loops in $\torus^3$. Note that two different elements of $\loopset$ may be reparametrizations of the same loop, but we will treat them as different loops because it will reduce some unnecessary complications.

In this paper, whenever we say that $\character$ is a character of $\liegroup$, we mean that there is a finite-dimensional representation $\rho$ of $\liegroup$ such that $\character = \Tr \rho$. Since $\liegroup$ is a compact Lie group, we can always assume that $\rho$ is a unitary representation, that is, $\rho : \liegroup \ra \unitary(m)$, where $m = \character(\groupid)$ (see, e.g., \cite[Section 8.1]{Huang1999}). 

Let $A$ be a smooth connection. Given a vector $v \in \R^3$ and $x \in \torus^3$, we denote $A(x) \cdot v := \sum_{i=1}^3 A_i(x) v_i \in \lalg$. Given a piecewise $C^1$ loop $\wloop\in \loopset$, consider a solution $h : [0,1] \ra \liegroup$ of the ODE
\beq\label{eq:wilson-loop-ode} 
h'(t) = h(t) A(\wloop(t)) \cdot \wloop'(t), ~~ h(0) = \groupid,
\eeq
where $\groupid$ is the identity element of $G$. 
From standard ODE theory, it follows that $h$ exists and is unique~\cite[Theorem 3.9]{Tes2012}. Moreover, it is not hard to see that each $h(t)$ is an element of $G$. The element $h(1)$ is called the {\it holonomy} of $A$ around the loop $\wloop$ (see \cite[Definition 3.7]{CG2015} or \cite[Section 1]{Che2019}). 

When $\liegroup=\unitary(1)$, each $A_j$ is a function from $\mathbb{T}^3$ into the imaginary axis. Hence, the parallel transport is simply the exponential of the line integral $\int_\ell \sum A_j dx_j$. When $\liegroup$ is non-Abelian, the $A_j$'s are non-Abelian matrix-valued functions. In this case, $h(1)$ is the {\it path ordered} exponential of the line integral $\int_\ell \sum A_j dx_j$, defined as the solution of the above ODE.

Now, given a character $\character$ of $\liegroup$, we define the function $\regwl{\wloop}{\character}:\connspace \to \C$ as
\begin{align*}
\regwl{\wloop}{\character}(A) := \character(h(1)). 
\end{align*}
This function $\regwl{\wloop}{\character}$ is called a Wilson loop observable. Just like the Yang--Mills action, Wilson loop observables are also gauge invariant, meaning that $\regwl{\wloop}{\character}(A) = \regwl{\wloop}{\character}(A^\gt)$ for any $\wloop$, $\character$, $A$, and $\gt$. This can be seen by noting that the function $h_\gt : [a, b] \ra \liegroup$ defined by $h_\gt(t) := \gt(\wloop(a))^{-1} h(t) \gt(\wloop(t))$ satisfies the ODE \eqref{eq:wilson-loop-ode} with $A$ replaced by $\gaction{A}{\gt}$, and then using that $\wloop(1) = \wloop(0)$ since $\wloop$ is a loop, along with the fact that characters are conjugacy invariant.

Thus, each $\regwl{\wloop}{\character}$ can be treated as a map from $\orbitspace$ into $\C$, and we will treat them as such. It is a fact that $\regwl{\wloop}{\character}$ is invariant under reparametrizations of the loop $\wloop$, but this will not be important for us.

We will need the following result by Sengupta~\cite{S1994}, which says that the collection of all Wilson loop observables separates  gauge orbits. Recall that $\liegroup$ is assumed to be a finite product of groups from the following list: $\{\torus^n\}_{n \geq 1}$, $\{\unitary(n)\}_{n \geq 1}$, $\{\specialunitary(n)\}_{n \geq 1}$, $\{\orthogonal(n)\}_{n \geq 1}$, $\{\specialorthogonal(2n+1)\}_{n \geq 1}$. This assumption is needed for this result. 

\begin{lemma}[Corollary of Theorems 1 and 2 of Sengupta~\cite{S1994}]\label{lmm:wilson-loops-determine-gauge-eq-class}
Suppose that for some $A_1,A_2\in \connspace$, we have that $\regwl{\wloop}{\character}(A_1) = \regwl{\wloop}{\character}(A_2)$ for all piecewise $C^1$ loops $\wloop$ and all characters $\character$ of $G$. Then there is a smooth gauge transformation $\gt$ such that $A_2 = \gaction{A_1}{\gt}$.
\end{lemma}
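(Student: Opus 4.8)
The plan is to deduce this from Sengupta's reconstruction results for holonomy fields, whose hypotheses match the class of groups fixed in the paper. First I would recall the precise content of the cited Theorems 1 and 2 of \cite{S1994}: given the collection of traces of holonomies (in all finite-dimensional representations) of a connection around all loops based at a fixed point, one can reconstruct the full holonomy assignment $\wloop \mapsto h_{A}(\wloop) \in \liegroup$, well defined up to simultaneous conjugation of the holonomies at the base point, and up to the (representation-theoretic) ambiguity that characters see. The key point one must extract from those theorems is that for the admissible groups $\liegroup$, characters of all representations separate conjugacy classes \emph{and} a matching family of character values determines the holonomy data up to conjugacy; this is exactly where the restriction to the listed groups enters, since for a general compact Lie group two non-conjugate elements can have the same character in every representation only fails here because the list is chosen to exclude such pathologies.

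Second, I would turn the equality $\regwl{\wloop}{\character}(A_1)=\regwl{\wloop}{\character}(A_2)$ for all loops $\wloop$ and all characters $\character$ into a statement about holonomies. For a fixed base point $x_0\in\torus^3$, restrict attention to loops based at $x_0$; then $h_{A_1}(\wloop)$ and $h_{A_2}(\wloop)$ are elements of $\liegroup$ for each such loop, and the hypothesis says $\character(h_{A_1}(\wloop))=\character(h_{A_2}(\wloop))$ for every character $\character$. By Sengupta's result this forces the existence of a single group element $g_0\in\liegroup$ with $h_{A_2}(\wloop)=g_0^{-1}h_{A_1}(\wloop)g_0$ for all based loops $\wloop$. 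The content to be checked here is that the holonomies $h_A(\wloop)$ for a smooth connection $A$ are exactly the $\wloop \mapsto h(1)$ defined via the ODE \eqref{eq:wilson-loop-ode}, and that these satisfy the multiplicativity and smoothness properties (holonomy is smooth in the loop, multiplicative under concatenation, inverts under reversal) that make Sengupta's abstract holonomy-field framework applicable.

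Third, I would upgrade the pointwise conjugation of holonomies to a smooth gauge transformation $\gt$. The standard construction: fix the base point $x_0$ and set $\gt(x_0)$ so as to realize $g_0$; then for any other point $x$, choose a path $\wloop_x$ from $x_0$ to $x$ and define $\gt(x) := h_{A_1}(\wloop_x)^{-1}\,\gt(x_0)^{-1}\cdots$ — more precisely one defines $\gt(x)$ using parallel transport of $A_1$ and $A_2$ along $\wloop_x$ so that the two parallel transports are intertwined by $\gt$. Well-definedness (independence of the chosen path $\wloop_x$) is precisely guaranteed by the based-loop conjugation relation obtained in the previous step; smoothness of $\gt$ follows from smoothness of parallel transport in the endpoint for smooth connections, using locally short paths; and the resulting $\gt$ satisfies $A_2=\gaction{A_1}{\gt}$ by differentiating the intertwining relation, which reproduces \eqref{eq:gaugetransform}. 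This is essentially the classical fact that a connection is determined up to gauge by its holonomy, and it can be cited from \cite{S1994} or standard references.

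The main obstacle, and the reason the group hypothesis is imposed, is the passage from equality of \emph{characters} of holonomies to equality of holonomies \emph{up to conjugacy}: a priori, matching all character values only shows the two holonomy functions are "representation-theoretically indistinguishable," and one must invoke the specific structure of the groups in the list (for which the relevant separation and rigidity statements in Sengupta's Theorems 1 and 2 hold) to conclude genuine conjugacy. Everything else — the ODE-to-holonomy dictionary, multiplicativity, and the path-independent construction of $\gt$ — is routine once that reconstruction step is in hand. So the proof is short: cite Lemma's two ingredients from \cite{S1994}, restrict to based loops to get a single conjugating element, and run the standard parallel-transport construction of the gauge transformation.
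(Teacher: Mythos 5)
The paper offers no proof of this lemma; it is invoked as a direct citation to Sengupta~\cite{S1994}. Your proposal is therefore a reconstruction of what sits behind the citation, and its overall architecture is the right one: pass from character data to conjugacy data on based holonomies, extract a single conjugating element $g_0$, and then build a smooth gauge transformation via the standard path-independent parallel-transport formula $\gt(x) = h_{A_1}(\wloop_x)^{-1}\,g_0\, h_{A_2}(\wloop_x)$, with well-definedness coming from the based-loop conjugation identity and smoothness from smooth dependence of parallel transport on endpoints.

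The one place your account goes genuinely astray is the explanation of why the restriction on $\liegroup$ is needed. You suggest the issue is that ``two non-conjugate elements can have the same character in every representation'' for a general compact group — but this never happens: by Peter--Weyl, characters span a dense subspace of continuous class functions, so characters always separate conjugacy classes of \emph{single} elements in \emph{any} compact Lie group. The real subtlety, and the reason Sengupta must restrict to the listed groups, is at the level of \emph{tuples} of elements or, equivalently, homomorphisms from a free group into $\liegroup$: two tuples $(g_1,\dots,g_n)$ and $(g_1',\dots,g_n')$ can satisfy $\character(w(g)) = \character(w(g'))$ for every word $w$ and every character $\character$ without being \emph{simultaneously} conjugate. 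Concretely, $\specialorthogonal(2n)$ admits an outer automorphism (conjugation by an orientation-reversing element of $\orthogonal(2n)$) that fixes every character but moves simultaneous conjugacy classes of tuples, which is why $\specialorthogonal(2n)$ is excluded from the list while $\specialorthogonal(2n+1)$, $\orthogonal(n)$, $\unitary(n)$, $\specialunitary(n)$, and tori are permitted. This distinction — single-element versus tuple/homomorphism rigidity — is precisely the content that the restriction on $\liegroup$ is buying, so it is worth getting right if this argument is to be spelled out rather than cited.
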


\begin{remark}\label{remark:G-restriction}
Lemma \ref{lmm:wilson-loops-determine-gauge-eq-class} is the only reason for the assumption on $\liegroup$. By considering a more general class of observables (i.e. those satisfying the assumptions of \cite[Proposition 2.1.2(b)]{S1992}), it is possible to remove these assumptions. Ultimately, we choose not to do so, since Wilson loop observables are the most commonly used observables in Yang--Mills, and the restriction on $\liegroup$ is satisfied by practically any $\liegroup$ that is relevant to Yang--Mills (in particular, $\specialunitary(2), \specialunitary(3)$).
\end{remark}

\subsection{The difficulty of defining Wilson loop observables}
For $d\ge 2$, the {\it massless Gaussian free field} (GFF) $\phi$ on the unit torus $\torus^d$ is a random distribution with the property that for any smooth $f$ and $g$, $\phi(f)$ and $\phi(g)$ are jointly Gaussian random variables, with mean zero and covariance 
\[
\int f(x) G(x,y) g(y)dx dy,
\] 
where $G$ is the Green's function on $\torus^d$. Taking $f = \delta_x$ and $g =\delta_y$, we get the formal expression $\E(\phi(x)\phi(y)) = G(x,y)$, even though $\phi$ is not defined pointwise. Note that the covariance blows up to infinity as $y\to x$. This is consistent with the fact that $\phi$ is infinity at any given point. See Section \ref{section:gauge-invariant-free-field} for more on the GFF.

It is believed that the random connections drawn from a Yang--Mills measure are not really random functions, but rather, random distributions. Furthermore, it is believed that these random distributions have similar behavior as the GFF. This has been verified rigorously in 2D (see e.g. \cite{Che2019}), but not in higher dimensions. 

So, assuming that Euclidean Yang--Mills theories behave like the GFF, let us consider the question of computing Wilson loop observables. Recall that Wilson loop observables are defined using line integrals. Let $\ell$ be a loop and suppose that we want to integrate a $d$-dimensional massless free field $\phi$ along $\ell$. The Green's function $G(x,y)$ blows up like $\log(1/|x-y|)$ as $y\to x$ when $d=2$, and like $|x-y|^{2-d}$ when $d\ge 3$. From this, it is not hard to see that the line integral of $\phi$ along $\ell$ is well-defined in 2D, but blows up in higher dimensions. Thus, we do not expect to be able to define Wilson loop observables directly in $d\ge 3$, and some indirect approach has to be taken. See \cite[Section 3.1]{Che2019} for a similar discussion.

\subsection{Yang--Mills heat flow}\label{ymheatsec}
Since Wilson loop observables are unlikely to be definable for Euclidean Yang--Mills theories in $d\ge 3$, one idea is that we should first make the theories  smooth by some kind of regularization (also called {\it renormalization}). The problem is that we need the regularization to be gauge covariant, meaning that if $R(A)$ is the regularized version of a distributional connection $A$, then $R(A^\gt)$ must equal $R(A)^\gt$. We need this so that the regularization is in fact acting on the {\it physical} space $\orbitspace$, rather than the unphysical space $\connspace$. A simple regularization, such as taking convolution with a smooth kernel, is not going to work. 

The works on Yang--Mills in the 80s focused on lattice approximations of Yang--Mills theories and smoothing using a gauge-covariant procedure known as phase cell renormalization (see Section \ref{reviewsec} for references). This program was not completed, and so the Yang--Mills existence problem is still open (see e.g. \cite[Section 6.5]{jaffewitten}). An idea that has emerged in the last fifteen years (see e.g. \cite{Luscher2010a, Luscher2010b, LW2011, NaNeu2006}) is that Yang--Mills theories can be regularized by the {\it Yang--Mills heat flow}, which is the  gradient flow of the Yang--Mills action. The Yang--Mills heat flow has played a central role in various areas of mathematics, starting with the paper by Atiyah and Bott \cite{AB1983}.  See \cite[Section 1]{Feehan2016} for a historical overview of this equation and its many applications in mathematics and physics, as well as for an encyclopedic account of existing results. See also \cite{CG2013, CG2015, CG2017, G2016, G2017, OT2017, Waldron2019} for some newer results.
In our setting, the Yang--Mills heat flow is the following PDE on time-dependent smooth connections on $\torus^3$:
\begin{align}\label{eq:ymdef}
\ptl_t A_i(t) = - \sum_{j=1}^3 \ptl_j (\curv{A(t)})_{ij} -  \sum_{j=1}^3 [A_j(t), (\curv{A(t)})_{ij}], 
\end{align}
holding for $1 \leq i \leq 3$ and $t\ge 0$. By a result of \rade~\cite{R1992}, this equation has a unique smooth solution for all $t > 0$ if the initial data $A(0)$ is smooth. Using this uniqueness, it is not hard to prove that the Yang--Mills heat flow is gauge covariant, meaning that if $\{B(t)\}_{t\ge 0}$ and $\{A(t)\}_{t\ge 0}$ are Yang--Mills heat flows with $B(0)=A(0)^\gt$ for some $\gt\in \gaugetransf$, then $B(t) = A(t)^\gt$ for every $t$. Thus, the Yang--Mills heat flow lifts to a flow on the quotient space $\orbitspace$. Let us denote this flow by $\{\Phi_t\}_{t\ge 0}$. That is, if $A\in \connspace$ and $[A]$ is the image of $A$ in $\orbitspace$, then $\Phi_t([A]) = [A(t)]$, where $\{A(t)\}_{t\ge 0}$ is the Yang--Mills heat flow with $A(0)=A$. It is not hard to see that $\{\Phi_t\}_{t\ge 0}$ is a semigroup of operators, that is, $\Phi_t \Phi_s = \Phi_{t+s}$ for all $t,s\ge 0$. We will refer to it as the Yang--Mills heat semigroup.

We will see later that the Yang--Mills heat flow indeed regularizes random distributional connections with GFF-like properties, so that regularized Wilson loop observables can be defined. This is the one of the main results of this paper (Theorem~\ref{thm:free-field-behavior-implies-tightness}). But before that, we need to define the notion of a distributional gauge orbit --- that is, an $\orbitspace$-valued distribution on $\torus^3$ --- because this is the space on which the Yang--Mills measure \eqref{eq:ym-measure} will be defined, under the assumption of GFF-like behavior. This is done in the next subsection.

\subsection{Distributional gauge orbits}
Recall that a distribution is a linear functional acting on some space of test functions. The action of a distribution on a  test function yields a real or complex number, which is interpreted as the integral of the product of the distribution and the test function. Thus, although a distribution cannot be defined pointwise, its average over a region is a well-defined real or complex number. This familiar concept of a distribution is difficult to generalize to the space $\orbitspace$ of gauge orbits. The reason is that there is no natural interpretation of $\orbitspace$ as a space of linear functionals on any space of test functions, which can be abstracted to define the space of distributional gauge orbits as a larger class of linear functionals. Note that this problem does not arise if we work with $\connspace$ instead of $\orbitspace$, but we do not want to do that because $\orbitspace$ is the physical space while $\connspace$ is unphysical, as mentioned earlier.

Therefore, to define distributional gauge orbits, we need to look at distributions from a different point of view. For a large class of distributions on, say, $\R^d$, one can define the ordinary heat flow whose initial data is a distribution rather than a function. More specifically, we often have that the solution of the heat equation $\partial_t \phi = \Delta \phi$ exists and is a smooth function for all $t>0$, even though $\phi(0,\cdot)$ is a distribution. In this case, we may identify the distribution $\phi(0, \cdot)$ with the flow $\{\phi(t,\cdot)\}_{t>0}$ of smooth functions. For example, this procedure identifies the Dirac delta at the origin in $\R^d$ with the collection of smooth functions $\{\phi(t, \cdot)\}_{t>0}$ given by $\phi(t,x) = (4\pi t)^{-d/2} e^{-|x|^2/4t}$. 

Conversely, we may take any smooth $\phi$ that solves the heat equation in $(0,\infty)\times \R^d$, and declare that it represents a distribution on $\R^d$. Given a smooth test function $f$ with compact support, the integral $\int \phi(t,x)f(x)dx$ would represent the action of this distribution on the function $g(t,\cdot)$, where $g$ is the solution of the heat equation with initial data $f$. It is not hard to see that this prescription is consistent with the previous paragraph.

The notion of distributions as solutions of the heat equation in $(0,\infty)\times \R^d$ can be generalized to the space of gauge orbits, as follows.  Let $\nonlineardistspace$ be the space of all solutions of the Yang--Mills heat equation \eqref{eq:ymdef} in $(0,\infty)\times \torus^3$, modulo gauge transforms. That is, an element $X\in \nonlineardistspace$ is a collection $\{X(t)\}_{t>0}$ of elements of $\orbitspace$, such that for each $t>s>0$, $X(t) = \Phi_{t-s}(X(s))$, where $\{\Phi_t\}_{t\ge 0}$ is the Yang--Mills heat semigroup defined in Subsection \ref{ymheatsec}. Note that, as above, we are leaving out $t=0$ to account for the possibility of roughness tending to infinity as $t\downarrow 0$. The space $\nonlineardistspace$ is our space of distributional gauge orbits. Note that $\orbitspace$ is naturally embedded in $\nonlineardistspace$ by the map that sends each element of $\orbitspace$ to the Yang--Mills heat flow starting from the element.

Having defined $\nonlineardistspace$, we need to define a topology on this space. This is tricky, because $\nonlineardistspace$ has no natural embedding in a vector space or a familiar metric space. We will work with the following topology, which provides a set of continuous functions that should suffice for most purposes. For each $\wloop\in \loopset$, each character $\character$ of $G$, and each $t>0$, define the map $\regwlorbit{\wloop}{\character}{t}: \nonlineardistspace\to \C$ as
\[
\regwlorbit{\wloop}{\character}{t}(X) := \regwl{\wloop}{\character}(X(t)). 
\]
We will refer to $\regwlorbit{\wloop}{\character}{t}$ as a {\it regularized Wilson loop observable}. We endow $\nonlineardistspace$ with the topology generated by the functionals $\regwlorbit{\wloop}{\character}{t}$, which we denote by $\newtop$, and the Borel $\sigma$-algebra generated by this topology, which we denote by $\newborel(\nonlineardistspace)$. One convenient feature of working with this $\sigma$-algebra is that we have the following result, which allows us to think of the regularized Wilson loops as the finite dimensional distributions of a $(\nonlineardistspace, \newborel(\nonlineardistspace))$-valued random variable. The proof is in Section \ref{section:completing-the-proof}.

\begin{lemma}\label{fdlemma}
Let $\mu$ and $\nu$ be two probability measures on $(\nonlineardistspace, \newborel(\nonlineardistspace))$ such that for any finite collection $\wloop_i$, $\character_i$, and $t_i$, $1 \leq i \leq k$, the pushforwards of $\mu$ and $\nu$ to $\C^k$ by the map $X \mapsto (W_{\wloop_i, \character_i, t_i}(X), 1 \leq i \leq k)$ are equal. Then $\mu=\nu$. In the language of random variables, this means that if $\mbf{X}$ and $\mbf{Y}$ are two $(\nonlineardistspace, \newborel(\nonlineardistspace))$-valued random variables such that for any finite collection $\wloop_i$, $\character_i$, and $t_i$, $1 \leq i \leq k$, we have that $(\regwlorbit{\wloop_i}{\character_i}{t_i}(\mbf{X}), 1 \leq i \leq k)$ has the same law as $(\regwlorbit{\wloop_i}{\character_i}{t_i}(\mbf{Y}), 1 \leq i \leq k)$, then $\mbf{X}$ and $\mbf{Y}$ have the same law.
\end{lemma}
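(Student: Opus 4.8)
The plan is to derive $\mu=\nu$ from Dynkin's $\pi$-$\lambda$ theorem, the only genuinely nontrivial ingredient being a soft topological fact about $(\nonlineardistspace,\newtop)$. First I would set up the $\pi$-system
\[
\mathcal{P} := \Bigl\{\, \bigcap_{i=1}^{k}\ \regwlorbit{\wloop_i}{\character_i}{t_i}^{-1}(B_i)\ :\ k\ge 1,\ \wloop_i\in\loopset,\ \character_i\text{ a character of }\liegroup,\ t_i>0,\ B_i\in\mc{B}(\C)\,\Bigr\}.
\]
An intersection of two members of $\mathcal{P}$ is again of this form, so $\mathcal{P}$ is a $\pi$-system, and a generic member is the preimage of the Borel rectangle $B_1\times\cdots\times B_k$ under $X\mapsto(\regwlorbit{\wloop_i}{\character_i}{t_i}(X))_{i\le k}$; by hypothesis the pushforwards of $\mu$ and $\nu$ along this map agree, so $\mu$ and $\nu$ agree on $\mathcal{P}$. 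As $\mu,\nu$ are probability measures, Dynkin's theorem gives $\mu=\nu$ on $\sigma(\mathcal{P})$. Since the one-coordinate members of $\mathcal{P}$ generate $\mathcal{A}:=\sigma(\{\regwlorbit{\wloop}{\character}{t}:\wloop\in\loopset,\ \character,\ t>0\})$ while $\mathcal{P}\subseteq\mathcal{A}$, we have $\sigma(\mathcal{P})=\mathcal{A}$, and it remains to prove $\mathcal{A}=\newborel(\nonlineardistspace)$.

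The inclusion $\mathcal{A}\subseteq\newborel(\nonlineardistspace)$ is immediate since every $\regwlorbit{\wloop}{\character}{t}$ is $\newtop$-continuous. For the reverse inclusion it suffices to show $\newtop$ is \emph{second countable} --- indeed, that it is already generated by a \emph{countable} subfamily of the $\regwlorbit{\wloop}{\character}{t}$ --- because then every $\newtop$-open set is a countable union of finite intersections of subbasic sets $\regwlorbit{\wloop_n}{\character_n}{t_n}^{-1}(U)$ with $U$ from a countable base of $\C$, each of which lies in $\mathcal{A}$, whence $\newborel(\nonlineardistspace)=\sigma(\newtop)\subseteq\mathcal{A}$. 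To produce a countable generating subfamily I would compress the three indices in turn. For characters: $\liegroup$ has at most countably many inequivalent irreducible representations, and by complete reducibility every $\regwl{\wloop}{\character}$ is a finite sum (with multiplicities) of functionals attached to irreducible characters, so only countably many characters occur up to redundancy. For times: for any $t>0$ and rational $t'\in(0,t)$, the definition of $\nonlineardistspace$ gives
\[
\regwlorbit{\wloop}{\character}{t}\;=\;\regwl{\wloop}{\character}\circ\Phi_{t-t'}\circ\evalmap_{t'},
\]
where $\evalmap_{t'}\colon\nonlineardistspace\to\orbitspace$ is evaluation at time $t'$ and $\Phi_{t-t'}\colon\orbitspace\to\orbitspace$ is the (continuous) Yang--Mills heat semigroup map; hence the topology generated by the rational-time functionals already makes every $\regwlorbit{\wloop}{\character}{t}$ continuous, so it equals $\newtop$. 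For loops: the topology generated by $\{\regwlorbit{\wloop}{\character}{t'}:\wloop\in\loopset,\ \character,\ t'\in\Q_{>0}\}$ is the coarsest making each $\evalmap_{t'}$ continuous into $\orbitspace$ carrying the topology generated by the Wilson loop functionals $\regwl{\wloop}{\character}$; since $\orbitspace$ with that topology is (as established earlier in the paper) a separable metrizable space, its topology is already generated by a countable subfamily $\{\regwl{\wloop_n}{\character_n}\}_n$, and composing with the countably many maps $\evalmap_{t'}$, $t'\in\Q_{>0}$, yields the desired countable subfamily of the $\regwlorbit{\wloop}{\character}{t}$ generating $\newtop$.

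The main obstacle is exactly this second-countability reduction: a priori $\newtop$ is generated by an uncountable family of functionals, and for such topologies $\newborel(\nonlineardistspace)$ can be strictly larger than the $\sigma$-algebra the functionals generate, so something must be proved. The two structural facts that make it go through are (i) that evaluation at any positive time factors through evaluation at an earlier \emph{rational} time followed by the continuous heat semigroup --- the feature of the space of Yang--Mills heat flows that disposes of the uncountably many irrational times --- and (ii) the separability of the gauge-orbit space in the Wilson loop topology, imported from the earlier sections, which disposes of the uncountably many loops. Granting these, the $\pi$-$\lambda$ argument above gives $\mu=\nu$ on $\newborel(\nonlineardistspace)$, and the reformulation in terms of the random variables $\mbf{X},\mbf{Y}$ is a verbatim restatement.
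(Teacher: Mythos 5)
Your $\pi$-$\lambda$ framing is sound as far as it goes: the family $\mathcal{P}$ of finite Wilson-loop rectangles is indeed a $\pi$-system, the hypothesis gives agreement of $\mu$ and $\nu$ on $\mathcal{P}$, and Dynkin gives $\mu=\nu$ on $\sigma(\mathcal{P})=\mathcal{A}$. The entire weight of the argument therefore rests on the claim that $\mathcal{A}=\newborel(\nonlineardistspace)$, which you propose to settle by showing $\newtop$ is generated by a \emph{countable} subfamily of the functionals $\regwlorbit{\wloop}{\character}{t}$. It is here that two things not available in the paper are treated as if they were.

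First, the rational-time reduction requires $\Phi_{t-t'}\colon\orbitspace\to\orbitspace$ to be continuous when $\orbitspace$ carries the topology generated by the \emph{unregularized} Wilson loops $\regwl{\wloop}{\character}$. Your factorization $\regwlorbit{\wloop}{\character}{t}=\regwl{\wloop}{\character}\circ\Phi_{t-t'}\circ\evalmap_{t'}$ is correct, but the parenthetical ``(continuous)'' is doing all the work, and this continuity is established nowhere in the paper. The only continuity statement of that flavor that the paper proves is Lemma \ref{lemma:regularized-wilson-loop-weak-sequential-continuity}: if $A_{0,n}$ converges \emph{weakly in $H^1$}, then the regularized Wilson loops converge. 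That concerns the sequential weak $H^1$ topology on $\honeconnspace$, a much stronger topology. There is no reason to expect that ordinary Wilson-loop convergence of smooth gauge orbits alone should force convergence of the heat-flowed Wilson loops; the whole point of the paper's machinery is that one needs quantitative control supplied by Weak Uhlenbeck Compactness on the sub-level sets $\cutoffspace{\cutoff}$, and the topologies $\toporbitspace$ and $\toporbitspace^w$ are only shown to agree there (Corollary \ref{cor:regwl-honeorbitspace-topology}).

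Second, the claim that ``$\orbitspace$ with that topology is (as established earlier in the paper) a separable metrizable space'' misattributes what the paper proves. What is established (Corollary \ref{cor:orbitspace-lusin}) is that $(\honeorbitspace,\toporbitspace)$ is Lusin --- where $\toporbitspace$ is the sequential weak $H^1$ topology on the $H^1$ orbit space. Nothing in the paper asserts that $\orbitspace$ with the Wilson-loop topology is second countable, nor that $(\honeorbitspace,\toporbitspace^w)$ is; the compact metrizability results apply only to each $\cutoffspace{\cutoff}$. Even granting second countability, your Lindel\"of extraction of a countable subfamily is valid in principle, but the premise is not available.

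The paper's route avoids both gaps: it transfers to $\honeorbitspace$ via the maps $i^{1,2}$ and $j^{1,2}$, constructs the countable Wilson-loop map $\Psi_0\colon\honeorbitspace\to\C^{\indexset_0}$ (continuous and injective by Lemmas \ref{lemma:regwlorbit-continuous} and \ref{lemma:countable-index-set-suffices}), and then --- crucially --- uses Uhlenbeck compactness to show that $\Psi_0$ sends Borel sets to Borel sets (Theorem \ref{thm:countable-index-homeomorphism}, Lemma \ref{lemma:orbit-space-regwl-determine-law}). This Borel-isomorphism step is what replaces the second-countability input you would need; it is powered by compactness of the $\cutoffspace{\cutoff}$, which has no analogue on $(\orbitspace,\ \text{Wilson-loop topology})$. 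Your $\pi$-$\lambda$ formulation tidily repackages the final bookkeeping, but the substantive step --- reducing to a countable generating family of observables --- cannot be carried out along the lines you sketch without importing the compactness machinery from $\honeorbitspace$.
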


By definition, $\regwlorbit{\wloop}{\character}{t}$ is a measurable map from $\nonlineardistspace$ into $\C$. But these are not the only measurable maps. The next lemma gives another class of measurable maps that we need in our main result. The proof is in Section \ref{section:completing-the-proof}. 
\begin{lemma}\label{symlmm}
For each $t$, the map $X\mapsto \sym(X(t))$ is measurable from $\nonlineardistspace$ into $\R$.
\end{lemma}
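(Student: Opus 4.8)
The plan is to realize $X \mapsto \sym(X(t))$ as a pointwise limit of finite real-linear combinations of the functions $\Re\,\regwlorbit{\wloop}{\character}{t}$. Each $\regwlorbit{\wloop}{\character}{t}$ is continuous with respect to $\newtop$ by the very definition of $\newtop$, hence $\newborel(\nonlineardistspace)$-measurable, so its real part is as well, and a pointwise limit of measurable functions is measurable; thus such a representation proves the lemma. Since the Yang--Mills heat flow is smooth at all positive times, $X(t)$ is the gauge orbit of a smooth connection for every $X \in \nonlineardistspace$, and $\regwlorbit{\wloop}{\character}{t}(X) = \regwl{\wloop}{\character}(X(t))$ by definition; hence it is enough to approximate $\sym(A)$, for an arbitrary smooth connection $A$, by fixed (that is, $A$-independent) finite real-linear combinations of the numbers $\Re\,\regwl{\wloop}{\character}(A)$, and then precompose with $X \mapsto X(t)$.

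For this I would use the familiar recovery of the Yang--Mills action from small Wilson loops. Fix $n \in \N$, put $\varepsilon = 1/n$, and for indices $1 \le i, j \le 3$ with $i \ne j$ and $x$ in the lattice $L_n := (\varepsilon\Z)^3/\Z^3$, let $\wloop_{x,i,j,\varepsilon} \in \loopset$ be the boundary of the $\varepsilon$-square based at $x$ in the $(i,j)$-coordinate plane, traversed with positive orientation when $i<j$ and in reverse when $i>j$. For a smooth connection $A$, the holonomy of $A$ around $\wloop_{x,i,j,\varepsilon}$ equals $\exp\bigl(\varepsilon^2 (\curv{A})_{ij}(x) + O(\varepsilon^3)\bigr)$, with the remainder bounded uniformly in $x$ by a fixed polynomial in the $C^2$-norm of $A$ --- here compactness of $\torus^3$ enters. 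Applying a unitary representation $\rho$ of $\liegroup$ with character $\character$ to this element and taking the real part of the trace cancels the odd-order terms (the differential $d\rho$ sends $\lalg$ into skew-Hermitian matrices, whose odd powers have purely imaginary trace), leaving
\[
\Re\,\regwl{\wloop_{x,i,j,\varepsilon}}{\character}(A) = \character(\groupid) + \tfrac{\varepsilon^4}{2}\,\Tr\!\bigl(d\rho((\curv{A})_{ij}(x))^2\bigr) + O(\varepsilon^5),
\]
uniformly in $x$. Next, using the assumed product structure of $\liegroup$, I would choose finitely many characters $\character_1,\dots,\character_r$ of $\liegroup$ and reals $c_1,\dots,c_r$ with $\sum_{s} c_s\,\Tr(d\rho_s(X)^2) = \Tr(X^2)$ for all $X\in\lalg$, where the trace on the right is the invariant form appearing in \eqref{eq:sym-def}: on each non-abelian factor that form is already $\Tr(d\rho(\cdot)^2)$ for the defining representation, each abelian factor $\torus^m$ is handled by its $m$ standard characters, and representations trivial on all but one factor separate the factors. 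Writing $m_\ast := \sum_s c_s\,\character_s(\groupid)$, the display above gives, uniformly in $x\in L_n$,
\[
\Tr\!\bigl((\curv{A})_{ij}(x)^2\bigr) = \frac{2}{\varepsilon^4}\Bigl(\sum_{s=1}^r c_s\,\Re\,\regwl{\wloop_{x,i,j,\varepsilon}}{\character_s}(A) - m_\ast\Bigr) + O(\varepsilon).
\]

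Multiplying by $\varepsilon^3$ (the volume of a cell of $L_n$), summing over $x \in L_n$ and over the ordered pairs $i \ne j$ in $\{1,2,3\}$, and using that $|L_n| = \varepsilon^{-3}$ so that the accumulated remainder is $\varepsilon^3\cdot\varepsilon^{-3}\cdot O(\varepsilon) = O(\varepsilon)$, the left-hand side becomes a Riemann sum converging to $\sum_{i\ne j}\int_{\torus^3}\Tr((\curv{A})_{ij}(x)^2)\,dx = -\sym(A)$, so that
\[
\sym(A) = \lim_{n\to\infty}\ \Bigl(-\tfrac{2}{\varepsilon}\Bigr)\sum_{\substack{1\le i,j\le 3\\ i\ne j}}\ \sum_{x\in L_n}\Bigl(\sum_{s=1}^r c_s\,\Re\,\regwl{\wloop_{x,i,j,\varepsilon}}{\character_s}(A) - m_\ast\Bigr),\qquad \varepsilon = \tfrac1n,
\]
and for each $n$ the expression in parentheses under the limit is a fixed finite real-linear combination of real parts of Wilson loop observables. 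Precomposing with $X\mapsto X(t)$ turns this into $\sym(X(t)) = \lim_n g_n(X)$ with each $g_n$ a finite real-linear combination of functions $\Re\,\regwlorbit{\wloop}{\character}{t}$, hence $\newborel(\nonlineardistspace)$-measurable, and the lemma follows. I expect the one genuinely technical ingredient to be the uniform small-loop expansion of the holonomy --- a quantitative non-abelian Stokes estimate whose remainder is controlled uniformly in the basepoint by norms of $A$ --- while the representation-theoretic choice of $\character_s$ and $c_s$, and the Riemann-sum bookkeeping, are routine given the hypotheses on $\liegroup$.
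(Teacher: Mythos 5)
Your proposal is correct, but it takes a genuinely different route from the paper. The paper proves the lemma abstractly as a composition of measurable maps: it writes $X\mapsto \sym(X(t))$ as $\sym^{1,2}\circ\evalmap_t\circ i^{1,2}$, then invokes measurability of $i^{1,2}:\nonlineardistspace\to\nonlineardistspace^{1,2}$ (Lemma \ref{lemma:i12-measurable}, which hinges on the fact that regularized Wilson loops generate the Borel $\sigma$-algebra on $\honeorbitspace$), continuity of the evaluation map $\evalmap_t$, and measurability of $\sym^{1,2}$ on $\honeorbitspace$ (Remark \ref{remark:sym-measurable}, which uses that the sublevel sets $\cutoffspace{\cutoff}$ are closed, via Uhlenbeck compactness and weak lower semicontinuity of the $L^2$ norm). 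That proof is three lines once the $H^1$ orbit-space machinery from Sections \ref{section:3dalt-notation}--\ref{section:orbit-space-measure-theory} is in place, but it leans on essentially all of that machinery. Your argument is instead an explicit constructive approximation: using the classical small-loop holonomy expansion and a representation-theoretic decomposition of the invariant form, you exhibit $\sym(X(t))$ as a pointwise limit of finite real-linear combinations of the functionals $\Re\,\regwlorbit{\wloop}{\character}{t}$, which are continuous by the very definition of $\newtop$. This buys self-containedness --- it bypasses $\honeorbitspace$, Uhlenbeck compactness, and the topology comparisons entirely, and only needs that $X(t)$ is the orbit of a smooth connection (immediate from the definition of $\nonlineardistspace$), the gauge invariance of Wilson loops, and a quantitative non-abelian Stokes estimate --- and as a bonus it shows $X\mapsto\sym(X(t))$ is of Baire class 1, which is stronger than measurability. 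The price is the analytic work you correctly flag: establishing the uniform $O(\varepsilon^3)$ remainder in $h_\varepsilon(x)=\exp\bigl(\varepsilon^2(\curv{A})_{ij}(x)+R_\varepsilon(x)\bigr)$, controlled by $\|A\|_{C^2}$. The representation-theoretic step is sound for the groups allowed by the paper (for each simple factor the invariant form is proportional to $\Tr(d\rho(\cdot)^2)$ for any nontrivial $\rho$, abelian factors are handled by finitely many lattice characters, and factors are separated by pulling back representations along the coordinate projections), and the parity argument --- that $\Re\Tr\exp(Y)$ for skew-Hermitian $Y$ has only even-order terms --- is exactly what is needed to isolate the $\varepsilon^4$ coefficient.
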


We are now ready to state the first main result of this paper in the next subsection.

\subsection{The tightness result}
The following theorem gives a criterion for tightness of any given sequence of probability measures $\{\mu_n\}_{n\ge 1}$ on the space $\nonlineardistspace$. Once tightness is established, Lemma~\ref{fdlemma} reduces the question of weak convergence to showing that the pushforwards of $\mu_n$ by $\regwlorbit{\wloop}{\character}{t}$ converge weakly for each $\wloop$, $\character$, $t$.

\begin{theorem}\label{mainthm}
Let $\{\rvnldistspace_n\}_{n \geq 1}$ be a sequence of $\nonlineardistspace$-valued random variables. Suppose that for all $t > 0$, $\{\sym(\rvnldistspace_n(t))\}_{n \geq 1}$ is a tight sequence of real-valued random variables. Then there exists a subsequence $\{\rvnldistspace_{n_k}\}_{k \geq 1}$, and a probability measure $\mu$ on $(\nonlineardistspace, \newborel(\nonlineardistspace))$, such that the laws of $\rvnldistspace_{n_k}$ converge weakly to $\mu$, in the sense that for all bounded continuous functions $f : (\nonlineardistspace, \newtop) \ra \R$, we have that $\E f(\rvnldistspace_{n_k}) \ra  \int_{\nonlineardistspace} f d\mu$ as $k\to \infty$.
\end{theorem}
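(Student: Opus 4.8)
The plan is to realize $\nonlineardistspace$ concretely as a subspace of a product of closed disks in $\C$, to build --- using the hypothesis together with smoothing estimates for the Yang--Mills heat flow --- a family of compact sets capturing all but $\varepsilon$ of the mass of every $\rvnldistspace_n$, and then to invoke Prokhorov's theorem. For the embedding, let $J$ be the set of triples $(\wloop,\character,t)$ with $\wloop\in\loopset$, $\character$ a character of $\liegroup$, and $t>0$, and let $\iota:\nonlineardistspace\to\C^{J}$ be $\iota(X):=(\regwlorbit{\wloop}{\character}{t}(X))_{(\wloop,\character,t)\in J}$. A character of a unitary representation is bounded in modulus by its dimension, so each $\regwlorbit{\wloop}{\character}{t}$ takes values in $\{z\in\C:|z|\le\character(\groupid)\}$ and $\iota$ maps into a compact product of closed disks. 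An element of $\nonlineardistspace$ is determined by its values along any sequence $t\downarrow 0$ (since $X(t)=\ymsemigroup_{t-s}(X(s))$ for $s<t$), and Wilson loop observables separate gauge orbits by Lemma~\ref{lmm:wilson-loops-determine-gauge-eq-class}; hence $\iota$ is injective, and by the definition of $\newtop$ it is a homeomorphism onto its image with the subspace topology from the product. In particular $(\nonlineardistspace,\newtop)$ is Hausdorff. From now on I identify $\rvnldistspace_n$ with $\iota(\rvnldistspace_n)$ and write $\mu_n$ for its law; the goal is, for each $\varepsilon>0$, to produce a compact $\mc{K}_\varepsilon\in\newborel(\nonlineardistspace)$ with $\inf_n\mu_n(\mc{K}_\varepsilon)\ge 1-\varepsilon$.

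The construction of $\mc{K}_\varepsilon$ is the heart of the argument, and is where the analytic input of the companion paper enters. The key property I would use is the following smoothing/compactness statement for the Yang--Mills heat semigroup: for all $0<s<t$ and $C>0$, the image $\ymsemigroup_{t-s}(\{[A]\in\orbitspace:\sym(A)\le C\})$ is relatively compact in the orbit space, its closure is metrizable (say, for the orbit space metric $\orbitspacemetric$) and consists of smooth orbits, and $\ymsemigroup_{t-s}$ extends continuously to this closure. Granting this, fix $\varepsilon>0$ and a sequence $s_k\downarrow 0$. Since $\{\sym(\rvnldistspace_n(s_k))\}_n$ is tight and $X\mapsto\sym(X(s_k))$ is measurable by Lemma~\ref{symlmm}, choose $C_k$ with $\p(\sym(\rvnldistspace_n(s_k))>C_k)\le\varepsilon 2^{-k}$ for all $n$. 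Set $K_k:=\overline{\ymsemigroup_{s_k-s_{k+1}}(\{[A]:\sym(A)\le C_{k+1}\})}$, a compact metrizable subset of $\orbitspace$, and define
\[
\mc{K}_\varepsilon:=\{X\in\nonlineardistspace:X(s_k)\in K_k\ \text{for all}\ k\ge 1\}.
\]
On the event $\bigcap_k\{\sym(\rvnldistspace_n(s_k))\le C_k\}$, which has probability at least $1-\varepsilon$ uniformly in $n$, we have $\rvnldistspace_n(s_k)=\ymsemigroup_{s_k-s_{k+1}}(\rvnldistspace_n(s_{k+1}))\in K_k$ for every $k$, so $\mu_n(\mc{K}_\varepsilon)\ge 1-\varepsilon$ for all $n$. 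Restriction to the times $s_k$ identifies $\mc{K}_\varepsilon$ with the set of consistent sequences $\{(\xi_k)_k\in\prod_k K_k:\xi_i=\ymsemigroup_{s_i-s_j}(\xi_j)\ \text{whenever}\ i<j\}$, which is a closed subset (by continuity of the extended semigroup) of the compact metrizable space $\prod_k K_k$, hence compact metrizable; and this identification is a homeomorphism for the $\newtop$-subspace topology, since on $\mc{K}_\varepsilon$ both topologies are generated by the functionals $\regwlorbit{\wloop}{\character}{s_k}$: each $\regwlorbit{\wloop}{\character}{t}$ restricted to $\mc{K}_\varepsilon$ factors continuously through $(\xi_k)_k$, and on the compact set $K_k$ the Wilson loops separate points (Lemma~\ref{lmm:wilson-loops-determine-gauge-eq-class}) and so generate its topology. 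Therefore $\mc{K}_\varepsilon$ is compact; being compact in a Hausdorff space it is closed, hence lies in $\newborel(\nonlineardistspace)$.

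With uniform tightness in hand, I would finish by a standard (if slightly fiddly) application of Prokhorov's theorem. Taking $\varepsilon=1/m$ and enlarging $\mc{K}_{1/m}$ to $\bigcup_{j\le m}\mc{K}_{1/j}$ gives an increasing sequence of compact metrizable sets with $\inf_n\mu_n(\mc{K}_{1/m})\ge 1-1/m$. Since each $\mc{K}_{1/m}$ is second countable, its topology is generated by only countably many of the coordinates $\regwlorbit{\wloop}{\character}{t}$; collecting these over all $m$ identifies $\bigcup_m\mc{K}_{1/m}$ with a $\sigma$-compact subset of a compact metric space (a countable product of closed disks), via a map that is a homeomorphism on each $\mc{K}_{1/m}$. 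Pushing the $\mu_n$ forward there and applying Prokhorov's theorem yields a subsequence converging weakly; by the portmanteau lemma and uniform tightness the limit is concentrated on the image of $\bigcup_m\mc{K}_{1/m}$, and pulling back gives a Borel probability measure $\mu$ on $\nonlineardistspace$ (extended to $\newborel(\nonlineardistspace)$ using $\bigcup_m\mc{K}_{1/m}\in\newborel(\nonlineardistspace)$). Finally, a Tietze-extension argument --- approximate $f$ on each $\mc{K}_{1/m}$ by a function extending continuously to the ambient compact metric space, and use uniform tightness to bound the tails --- upgrades the convergence to $\E f(\rvnldistspace_{n_k})\to\int_{\nonlineardistspace}f\,d\mu$ for every bounded $\newtop$-continuous $f:\nonlineardistspace\to\R$, which is the assertion of the theorem.

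The one genuinely hard step is the smoothing/compactness input invoked in the second paragraph: that flowing a bounded-action family of gauge orbits for a positive time produces a set with compact, metrizable closure \emph{in the Wilson-loop topology of the orbit space}, on which the Yang--Mills heat semigroup acts continuously. This is precisely the kind of a priori regularity estimate for the 3D Yang--Mills heat flow from rough (merely finite-action, or GFF-like) initial data that the companion paper is built to provide, and the delicate point is to obtain the compactness in the topology compatible with the embedding $\iota$ and with $\ymsemigroup$, rather than in an auxiliary Sobolev topology. Everything else above is soft point-set topology and a standard application of Prokhorov's theorem.
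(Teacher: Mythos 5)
Your high-level architecture --- embed $\nonlineardistspace$ into $\C^J$ via the regularized Wilson loop observables, manufacture compact sets by exploiting the smoothing of the Yang--Mills heat semigroup on bounded-action sets, and apply Prokhorov --- is a reasonable reorganization of the paper's argument, and you have correctly isolated the heart of the matter. But there is a genuine gap in the central claim, and a misattribution of where it comes from.

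The misattribution first: you say the "smoothing/compactness statement" is "precisely the kind of a priori regularity estimate... that the companion paper is built to provide." It is not. Theorem~\ref{mainthm}'s proof does not use the companion paper at all; the companion paper's estimates enter only in Theorems~\ref{thm:free-field-behavior-implies-tightness} and~\ref{thm:gauge-invariant-free-field}. The compactness input for Theorem~\ref{mainthm} is Uhlenbeck's weak compactness theorem (stated as Theorem~\ref{thm:weak-uhlenbeck-compactness}, from Wehrheim's book), used together with the local well-posedness and smoothing of the ZDDS flow from Charalambous--Gross, and a weak-sequential-continuity statement for the regularized Wilson loop observables (Lemma~\ref{lemma:regularized-wilson-loop-weak-sequential-continuity}, which in turn rests on Lemma~\ref{lemma:zdds-solutions-weak-continuity}). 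All of this is developed within the present paper.

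The genuine gap is in the paragraph where you build $K_k$ and $\mc{K}_\varepsilon$. You invoke "the orbit space metric $\orbitspacemetric$" and assert the flowed bounded-action set is relatively compact, metrizable, with a continuous extension of $\ymsemigroup$; but no topology on $\orbitspace$ (or a suitable enlargement) has been specified for which any of this makes a priori sense, and there is no such metric in the paper. Constructing one --- in which bounded-action sets become compact, the regularized Wilson loop observables are continuous, and $\ymsemigroup_t$ is continuous, all simultaneously --- is exactly the content of Sections~\ref{section:3dalt-notation}--\ref{section:orbit-space-measure-theory}: one has to pass to the $H^1$ orbit space $\honeorbitspace$, define the sequential-weak quotient topology $\toporbitspace$ via $\weakarrow_q$ (to make Uhlenbeck compactness yield topological compactness of $\cutoffspace{\cutoff}$), prove weak sequential continuity of the regularized Wilson loops via the ZDDS flow, and then verify metrizability of $(\cutoffspace{\cutoff},\toporbitspace[\cutoff])$ by injecting it with a countable subfamily $\Psi_0$ (Theorem~\ref{thm:countable-index-homeomorphism}). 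Your sentence "on the compact set $K_k$ the Wilson loops separate points and so generate its topology" is circular absent all of that: to conclude a separating family generates the topology of a compact space, you must first know the family is continuous for the compact topology you have in mind, and that is precisely what needs establishing. Limits produced by Uhlenbeck's theorem are $H^1$ orbits, not smooth ones; showing the flowed closure really does consist of smooth orbits requires the ZDDS smoothing and gauge-covariance lemmas (Lemmas~\ref{lemma:ym-zdds-solution-gauge-transformation}, \ref{lemma:wilson-loop-gauge-invariant-not-smooth-gauge-transform}, \ref{lemma:orbitspace-gauge-orbit-smooth-choice}), and the whole argument is most naturally phrased in $\honeorbitspace$ --- which is exactly why the paper proves the theorem there first (Theorem~\ref{thm:nonlinear-dist-space-tightness}) and then transfers to $\nonlineardistspace$ via the measurable maps $i^{1,2}$ and $j^{1,2}$. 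Your proposed route, which proves tightness directly on $\nonlineardistspace$ and so dispenses with the Kolmogorov--Bochner extension, would be a pleasant simplification if the compactness statement were available off the shelf, but proving it requires essentially the same $H^1$ machinery, so it is not a true shortcut.
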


\begin{remark}
For some intuition as to why $\sym(\rvnldistspace_n(t))$ may be well-behaved, recall that the Yang--Mills heat flow is the gradient flow of the Yang--Mills action $\sym$, so that the Yang--Mills heat flow tries to decrease $\sym$ as fast as possible.
\end{remark}


Theorem \ref{mainthm} is proved in Section \ref{mainproof}.~The proof uses a combination of Uhlenbeck's compactness theorem, various properties of the Yang--Mills heat flow, and Sobolev embedding. We close off this subsection with the following conjecture.

\begin{conjecture}\label{conjecture:tightness}
Let $\liegroup$ be a Lie group which satisfies the condition given at the beginning of Section \ref{section:euclidean-ym-theories}. We conjecture that for some suitable sequence of $\nonlineardistspace$-valued random variables $\{\rvnldistspace_n\}_{n \geq 1}$, whose laws approximate the Yang--Mills measure, the tightness condition of Theorem \ref{mainthm} holds.
\end{conjecture}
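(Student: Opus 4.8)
The plan is to prove the theorem by establishing \emph{uniform tightness} of the laws $\{\mathrm{Law}(\rvnldistspace_n)\}_{n\ge 1}$ on $(\nonlineardistspace, \newtop)$: for every $\varepsilon > 0$ I would produce a set $\mc{K}_\varepsilon \subseteq \nonlineardistspace$ that is compact and metrizable in the topology $\newtop$ and satisfies $\inf_n \prob(\rvnldistspace_n \in \mc{K}_\varepsilon) \ge 1 - \varepsilon$, and then extract a weakly convergent subsequence by a Prokhorov-type argument carried out on the $\sigma$-compact metrizable subspace $\bigcup_m \mc{K}_{1/m}$, so that the (unclear) Polishness of $\nonlineardistspace$ itself is never needed. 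All of the real content lies in constructing the $\mc{K}_\varepsilon$, and this is where Uhlenbeck compactness, the regularity theory of the Yang--Mills heat flow, and Sobolev embedding enter.

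I would isolate two deterministic inputs. \emph{(i) Uhlenbeck compactness in three dimensions.} Since the Yang--Mills energy is subcritical in dimension three, a bound $\sym(A) \le M$ forces an $L^2$ --- hence $L^{3/2}$ --- bound on $\curv{A}$, so Uhlenbeck's gauge-fixing theorem applies comfortably and yields a metrizable topology $\tau_{\mathrm{low}}$ on $\orbitspace$ (coming from weak $W^{1,2}$ control of connections in a good gauge) in which $S_M := \{[A] \in \orbitspace : \sym(A) \le M\}$ is compact. \emph{(ii) Smoothing and continuity of the Yang--Mills heat semigroup from finite-energy data.} By the well-posedness and regularity results for \eqref{eq:ymdef} proved in the companion paper, together with parabolic smoothing estimates and the Sobolev embeddings $W^{k,2}(\torus^3) \hookrightarrow C^m$, for each $\tau > 0$ the map $\Phi_\tau$ extends to a map that is continuous from $(S_M, \tau_{\mathrm{low}})$ into $\orbitspace$ equipped with the $C^\infty$ quotient topology $\tau_{\mathrm{hi}}$, with $\Phi_\tau(S_M)$ relatively $\tau_{\mathrm{hi}}$-compact; moreover $t \mapsto \Phi_t([A])$ is continuous and $\sym$ is nonincreasing along the flow. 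Finally I would record that on any $\tau_{\mathrm{hi}}$-compact set of smooth orbits the Wilson loop functionals $\regwl{\wloop}{\character}$ are continuous and, by Sengupta's Lemma~\ref{lmm:wilson-loops-determine-gauge-eq-class}, separate points --- so on such a set the topology they generate coincides with $\tau_{\mathrm{hi}}$, a continuous bijection from a compact space onto a Hausdorff space being a homeomorphism.

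Now the construction. Fix $\varepsilon > 0$, set $s_j := 1/j$, and using the hypothesis pick $M_j < \infty$ with $\inf_n \prob(\sym(\rvnldistspace_n(s_j)) \le M_j) \ge 1 - \varepsilon 2^{-j}$; put
\[
\mc{K}_\varepsilon := \{X \in \nonlineardistspace : \sym(X(s_j)) \le M_j \text{ for all } j \ge 1\},
\]
which is Borel by Lemma~\ref{symlmm} and has $\inf_n \prob(\rvnldistspace_n \in \mc{K}_\varepsilon) \ge 1 - \varepsilon$. Since $X(t) = \Phi_{t-s_j}(X(s_j))$ whenever $s_j < t$, every $X \in \mc{K}_\varepsilon$ is determined by the sequence $(X(s_j))_j \in \prod_j S_{M_j}$, and I would let $\tilde\tau$ be the metrizable topology on $\mc{K}_\varepsilon$ pulled back from the compact metric space $\prod_j (S_{M_j}, \tau_{\mathrm{low}})$. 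One checks that the image of $\mc{K}_\varepsilon$ in this product --- cut out by the relations $y_j = \Phi_{s_j - s_k}(y_k)$ for $k < j$ --- is closed, each such relation being the preimage of a diagonal under a $\tau_{\mathrm{low}}$-continuous map (the flow being continuous and $\sym$-nonincreasing, so $\Phi_{s_j-s_k}(y_k)$ stays in a fixed compact Hausdorff set), so $\tilde\tau$ is compact; and that $\newtop|_{\mc{K}_\varepsilon} \subseteq \tilde\tau$, because by (ii) each $\regwlorbit{\wloop}{\character}{t}(X) = \regwl{\wloop}{\character}(\Phi_{t-s_j}(X(s_j)))$ is $\tilde\tau$-continuous. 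Since $\newtop$ is Hausdorff on $\nonlineardistspace$ (Wilson loops separate smooth orbits, by Lemma~\ref{lmm:wilson-loops-determine-gauge-eq-class}), the compact-to-Hausdorff trick gives $\newtop|_{\mc{K}_\varepsilon} = \tilde\tau$, so $\mc{K}_\varepsilon$ is compact and metrizable in $\newtop$. With these $\mc{K}_{1/m}$, $m \ge 1$ (taken increasing), weak compactness of probability measures on each compact metric space $\mc{K}_{1/m}$ together with a diagonal extraction over $m$ produces a subsequence $\{\rvnldistspace_{n_k}\}$ and a Borel probability measure $\mu$ on $(\nonlineardistspace, \newborel(\nonlineardistspace))$ with $\mu(\mc{K}_{1/m}) \ge 1 - 1/m$ and $\E f(\rvnldistspace_{n_k}) \to \int f \, d\mu$ for every bounded continuous $f \colon (\nonlineardistspace, \newtop) \to \R$ --- splitting $f$ over $\mc{K}_{1/m}$ and its complement, using ordinary weak convergence on the former and the bound $\|f\|_\infty/m$ on the latter, and letting $m \to \infty$.

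The step I expect to be the main obstacle is input (ii): establishing that a bound on the Yang--Mills energy at the small times $s_j$ forces $C^\infty$-compactness of the orbits $X(t)$ at all later times, and that $\Phi_\tau$ depends continuously on such rough (energy-space) initial data. These are genuinely analytic statements --- quantitative smoothing and continuous-dependence estimates for \eqref{eq:ymdef} on $\torus^3$ with merely $W^{1,2}$-type initial data --- and they are exactly the content imported from the companion paper; the remaining ingredients (Uhlenbeck compactness, the topological bookkeeping on $\nonlineardistspace$, and the Prokhorov step) are comparatively soft. A secondary technical point that needs care, but no new ideas, is to verify that all the orbit-space topologies appearing in the argument are Hausdorff and metrizable on the compact sets that arise, for which Sengupta's Lemma~\ref{lmm:wilson-loops-determine-gauge-eq-class} together with the compact-to-Hausdorff trick suffices.
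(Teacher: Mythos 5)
The statement you were asked to address is Conjecture~\ref{conjecture:tightness}, which is an \emph{open conjecture} in the paper, not a theorem with a proof. The conjecture asserts the \emph{existence} of a sequence $\{\rvnldistspace_n\}_{n \geq 1}$ of approximations to the Yang--Mills measure (e.g., Fourier cutoffs or lattice gauge theories) satisfying the tightness hypothesis of Theorem~\ref{mainthm}, i.e., such that $\{\sym(\rvnldistspace_n(t))\}_{n\geq 1}$ is tight for every $t>0$. The paper does not prove this and does not claim to; the closest it comes is Theorem~\ref{thm:free-field-behavior-implies-tightness}, which shows the tightness criterion \emph{would} follow if one could verify a list of quantitative GFF-like assumptions (translation invariance, $L^2$ regularity, sub-Gaussian tails, approximate Isserlis bounds, and a two-point function bound of the form $|\E[\rconn^{n,a_1}_{j_1}(x)\rconn^{n,a_2}_{j_2}(y)]|\le C\,\metricspace(x,y)^{-(3-\alpha)}$) for the approximating sequence---but those assumptions are themselves left conjectural for non-Abelian Yang--Mills.

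Your proposal proves a different statement. You explicitly take ``the hypothesis'' of tightness of $\{\sym(\rvnldistspace_n(t))\}_n$ as given, in order to choose the constants $M_j$ in the construction of $\mc{K}_\varepsilon$, and then extract a weakly convergent subsequence. That is precisely the content of Theorem~\ref{mainthm}, not of the conjecture, and indeed your sketch follows essentially the same route as the paper's proof of Theorem~\ref{mainthm}: weak Uhlenbeck compactness to get compactness of the energy sublevel sets $\cutoffspace{\cutoff}$; parabolic smoothing and continuous dependence of the Yang--Mills/ZDDS heat flows on $H^1$ initial data; Wilson loop observables as continuous, point-separating functionals via Sengupta's lemma (with the compact-to-Hausdorff trick to match topologies); and a Prokhorov/Kolmogorov extension argument on a Lusin subspace. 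What the conjecture actually asks---and what is genuinely open---is to supply the $M_j$'s unconditionally, i.e., to prove a priori uniform control of the Yang--Mills energy after an arbitrarily small amount of heat-flow regularization for a natural approximation scheme. Your argument never touches this; it starts exactly at the point where the conjecture ends.
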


\subsection{Tightness for fields with free field-like short distance behavior}\label{section:free-field-behavior-implies-tightness}

We now give some evidence for Conjecture \ref{conjecture:tightness}. The starting observation is that Yang--Mills theories should behave like the free field at short distances. For instance, the phenomenon of {\it asymptotic freedom} (see the original papers \cite{GW1973, Pol1973}, as well as the overview \cite{Bethke2007}) says that the particles described by these theories behave like free particles at very short distances. Also, it is mentioned in \cite[Section 3.1]{Che2019} that the 3D Yang--Mills theory is expected to have the same regularity as the 3D GFF. This leads to the second main result of this paper (see Theorem \ref{thm:free-field-behavior-implies-tightness} below), which says that if the aforementioned behavior holds in a certain sense, then the tightness condition of Theorem \ref{mainthm} is satisfied.

First, we set some notation. For integers $n \geq 1$, let $[n] := \{1, \ldots, n\}$. For vectors $v \in \R^n$ for some $n$, we write $|v|$ for the Euclidean norm of $v$, and we write $|v|_\infty := \max_{1 \leq i \leq n} |v_i|$ for the $\ell^\infty$ norm of~$v$. Let $\{e_n\}_{n \in \Z^3}$ be the Fourier basis on $\torus^3$. Explicitly, if we identify functions on $\threetorus$ with 1-periodic functions on $\R^3$, then $e_n(x) = e^{\icomplex 2\pi n \cdot x}$, where $\icomplex = \sqrt{-1}$. 

The metric on $\threetorus$ (that is, the metric induced by the standard Euclidean metric of $\R^3$) will be denoted by $\metricspace$. Explicitly, if $\Pi : \R^3 \ra \threetorus$ is the canonical projection map, then $\metricspace(x, y) := \inf \{|x_0 - y_0| : \Pi(x_0) = x, \Pi(y_0) = y\}$. Here $|x_0 - y_0|$ is the Euclidean distance between $x_0, y_0 \in \R^3$.

Next, since we assumed that $\liegroup \sse \unitary(N)$, the Lie algebra $\lalg$ is a real finite-dimensional Hilbert space, with inner product given by $\langle X_1, X_2 \rangle = \Tr(X_1^* X_2)$. Let $\lalgdim$ be the dimension of $\lalg$, and fix an orthonormal basis $(X^a, a \in [\lalgdim])$ of $\lalg$. With this notation, we may view a ($\lalg$-valued) {\oneform} $A : \threetorus \ra \lalg^3$ as a collection $(A^a_j, a \in [\lalgdim], j \in [3])$ of functions $A^a_j : \threetorus \ra \R$, satisfying the relation
\beq\label{eq:one-form-R-valued-functions-relation} A_j = \sum_{a \in [\lalgdim]} A^a_j X^a, ~~ j \in [3].\eeq
Now let $\rconn = (\rconn(x), x \in \threetorus)$ be a $\lalg^3$-valued stochastic process with smooth sample paths. By \eqref{eq:one-form-R-valued-functions-relation}, we may express
\[ \rconn_j(x) = \sum_{a \in [\lalgdim]} \rconn^{a}_j(x) X^a, j \in [3],\]
and thus we obtain the $\R$-valued stochastic process $(\rconn^{a}_j(x), a \in [\lalgdim], j \in [3], x \in \threetorus)$, which also has smooth sample paths. We will view this $\R$-valued process as equivalent to the original $\lalg^3$-valued process.  Given a test function $\phi \in C^\infty(\threetorus, \R)$, let
\[ (\rconn^{a}_j, \phi) := \int_{\threetorus} \rconn^{a}_j(x) \phi(x) dx, ~~ a \in [\lalgdim], j \in [3].\]
We also similarly define $(\rconn^{a}_j, \phi)$ for complex test functions $\phi \in C^\infty(\threetorus, \C)$. For $n \in \Z^3$, we define the Fourier coefficient $\hat{\rconn}^a_j(n) := (\rconn^a_j, e_{-n})$ (recall that $e_{-n}(x) = e^{-\icomplex 2\pi n \cdot x}$). For $N \geq 0$, we define the Fourier truncation 
\[
\rconn_N = (\rconn_{N}(x), x \in \threetorus) = (\rconn^a_{N, j}(x), a \in [\lalgdim], j \in [3], x \in \threetorus),
\]
given by
\beq\label{eq:fourier-truncation} \rconn^a_{N, j}(x) := \sum_{\substack{n \in \Z^3 \\ |n|_\infty \leq N}} \hat{\rconn}^a_j(n) e_n(x), ~~ a \in [\lalgdim], j \in [3], x \in \threetorus.\eeq
Note that $\rconn_N$ is also a $\lalg^3$-valued stochastic process with smooth sample paths. Let $K = (K^{a_1 a_2}_{j_1 j_2}, a_1, a_2 \in [\lalgdim], j_1, j_2 \in [3])$ be a collection of smooth functions $K^{a_1 a_2}_{j_1 j_2} \in C^\infty(\threetorus \times \threetorus, \R)$. We define the quadratic form
\[ (\rconn, K \rconn) := \sum_{\substack{a_1, a_2 \in [\lalgdim] \\ j_1, j_2 \in [3]}} \int_{\threetorus} \int_{\threetorus} \rconn^{a_1}_{j_1}(x) K^{a_1 a_2}_{j_1 j_2}(x, y) \rconn^{a_2}_{j_2}(y) dx dy. \]
Note that using \eqref{eq:fourier-truncation}, we have that for any $N \geq 0$,
\beq\label{eq:quadratic-form-fourier-truncation} (\rconn_N, K \rconn_N) = \sum_{\substack{a_1, a_2 \in [\lalgdim] \\ j_1, j_2 \in [3]}} \sum_{\substack{n^1, n^2 \in \Z^3 \\ |n^1|_\infty, |n^2|_\infty \leq N}} \hat{\rconn}^{a_1}_{j_1}(n^1) \hat{\rconn}^{a_2}_{j_2}(n^2) (e_{n^1}, K^{a_1 a_2}_{j_1 j_2} e_{n^2}), \eeq
where
\[ (e_{n^1}, K^{a_1 a_2}_{j_1 j_2} e_{n^2}) := \int_{\threetorus} \int_{\threetorus} e_{n^1}(x) K^{a_1 a_2}_{j_1 j_2}(x, y) e_{n^2}(y) dx dy.\]
We can now state the assumptions needed for Theorem \ref{thm:free-field-behavior-implies-tightness}. Let $\{\rconn^n\}_{n \geq 1}$ be a sequence of $\lalg^3$-valued stochastic processes $\rconn^n = (\rconn^n(x), x \in \threetorus)$ with smooth sample paths. Think of this as a sequence of random smooth {\oneforms} whose laws approximate the Yang--Mills measure \eqref{eq:ym-measure}. Suppose that the following assumptions are satisfied for all $n \geq 1$.
\begin{itemize}
     \item {\it Translation invariance.} For any $a \in [\lalgdim], j \in [3]$, $x \in \threetorus$, we have that $\E[(\rconn^{n, a}_j(x))^2] < \infty$. Moreover, for any $a_1, a_2 \in [\lalgdim]$, $j_1, j_2 \in [3]$, $x, y \in \threetorus$, we have that
    \[\begin{split}
    \E[\rconn^{n, a_1}_{j_1}(x) \rconn^{n, a_2}_{j_2}(y)] &= \E[\rconn^{n, a_1}_{j_1}(x-y) \rconn^{n, a_2}_{j_2}(0)] \\
    &= \E[\rconn^{n, a_1}_{j_1}(0) \rconn^{n, a_2}_{j_2}(y-x)]. 
    \end{split}\]
    \item {\it $L^2$ regularity.} For all $\phi \in C^\infty(\threetorus, \R)$, $a \in [\lalgdim]$, $j \in [3]$, we have that $\E[(\rconn^{n, a}_j, \phi)^2] < \infty$. Moreover, we have that
    \[ \lim_{N \toinf} \E[|(\rconn^{n, a}_{N, j}, \phi) - (\rconn^{n, a}_j, \phi)|^2] = 0. \]
    Finally, we also assume that for any $a_1, a_2 \in [\lalgdim]$, $j_1, j_2 \in [3]$, $\phi_1, \phi_2 \in C^\infty(\threetorus, \R)$, 
    \[\begin{split} 
    \E[ (\rconn^{n, a_1}_{j_1}, \phi_1)&(\rconn^{n, a_2}_{j_2}, \phi_2)] = \\
    &\int_{\threetorus} \int_{\threetorus} \E[\rconn^{n, a_1}_{j_1}(x) \rconn^{n, a_2}_{j_2}(y)] \phi_1(x) \phi_2(y) dx dy. 
    \end{split}\]
    \item {\it Tail bounds.} There exist constants $\consttb > 0, \exptb > 0$ (which are independent of $n$), such that the following hold. For any $\phi \in C^\infty(\threetorus, \R)$, $a \in [\lalgdim], j \in [3]$, we have that for all $u \geq 0$,
    \[ \p(|(\rconn^{n, a}_j, \phi)| > u) \leq \consttb \exp\bigg(- \frac{1}{\consttb} \bigg(\frac{u}{(\E[(\rconn^{n, a}_j, \phi)^2])^{1/2}}\bigg)^{\exptb}\bigg). \]
    Additionally, let $K = (K^{a_1 a_2}_{j_1 j_2}, a_1, a_2 \in [\lalgdim], j_1, j_2 \in [3])$ be a collection of smooth functions $K^{a_1 a_2}_{j_1 j_2} \in C^\infty(\threetorus, \R)$, such that for all $a \in [\lalgdim]$, $j_1, j_2 \in [3]$, $x, y \in \threetorus$, we have that $K^{aa}_{j_1 j_2}(x, y) = 0$. For $N \geq 0$, let $\sigma_{n, N, K}^2 := \E [(\rconn^n_N, K \rconn^n_N)^2]$ (this is finite by recalling \eqref{eq:quadratic-form-fourier-truncation}, and using the previous finite second moment and tail bound assumptions to obtain that the Fourier coefficients have finite fourth moments: $\E[|\hat{\rconn}^{n, a}_j(n)|^4] < \infty$ for all $n, a, j$). Then we have that for all $u \geq 0$,
    \[ \p\big(|(\rconn^n_N, K \rconn^n_N)| > u\big) \leq \consttb \exp\bigg(-\frac{1}{\consttb} \bigg(\frac{u}{ \sigma_{n, N, K}}\bigg)^{\exptb}\bigg). \]
    \item {\it Approximate Wick's theorem.} There is some constant $\constfourp \geq 0$ (which is independent of $n$) such that the following holds. Let $a_1, a_2 \in [\lalgdim]$, $j_1, j_2 \in [3]$, $\phi_1, \phi_2, \phi_3, \phi_4 \in C^\infty(\threetorus, \C)$. Assume that $a_1 \neq a_2$. Let $Z_1 = (\rconn^{n, a_1}_{j_1}, \phi_1)$, $Z_2 = (\rconn^{n, a_2}_{j_2}, \phi_2)$, $Z_3 = (\rconn^{n, a_1}_{j_1}, \phi_3)$, $Z_4 = (\rconn^{n, a_2}_{j_2}, \phi_4)$. Then
    \[ |\E[Z_1 Z_2 \ovl{Z_3 Z_4}]| \leq \constfourp \big( |\E[Z_1 \ovl{Z_3}] \E[Z_2 \ovl{Z_4}]| + |\E[Z_1 \ovl{Z_4}] \E [Z_2 \ovl{Z_3}]| \big) .\]
(This condition is inspired by the fact that the four-point functions of a Gaussian process may be expressed in terms of its two-point functions.)
    \item {\it Local behavior like the GFF.} There exist constants $\constgf \geq 0$ and $\alpha \in (5/3, 3)$ (which are independent of $n$) such that for all $a_1, a_2 \in [\lalgdim]$, $j_1, j_2 \in [3]$, $x, y \in \threetorus$, we have that
    \[ |\E[\rconn^{n, a_1}_{j_1}(x) \rconn^{n, a_2}_{j_2}(y)]| \leq \constgf \metricspace(x, y)^{-(3-\alpha)}.\]
\end{itemize}

\begin{remark}\label{remark:assumptions-free-field-implies-tightness}
The key conceptual assumption is the last one, which assumes that the sequence $\{\rconn^n\}_{n \geq 1}$ has local behavior like the GFF, uniformly in $n$. To see how this relates to the GFF, take $\alpha = 2$. Then the assumption says that the covariance functions of $\{\rconn^n\}_{n \geq 1}$ can only be as singular as $\metricspace(x, y)^{-1}$. Note that this is exactly the singularity for the Green's function in 3D, and the Green's function in 3D is covariance function of the 3D GFF (see, e.g., \cite[Section 3]{WP2020}). Therefore this assumption says that the covariance functions of $\{\rconn^n\}_{n \geq 1}$ can be as singular as the covariance function of the GFF. Because we are thinking of $\{\rconn^n\}_{n \geq 1}$ as a sequence of approximations to the Yang--Mills measure \eqref{eq:ym-measure}, this assumption is thus a mathematical formulation of the statement that Yang--Mills theories should have the same short distance behavior as the free field. Actually, since we allow $\alpha \in (5/3, 3)$ (and in particular, $\alpha \in (5/3, 2]$), we allow for short distance behavior which is even more singular than the free field, which may be easier to show.
\end{remark}

\begin{remark}
The first assumption (of translation invariance) can always be ensured, because we can replace $(\rconn^n(x), x \in \threetorus)$ with $(\rconn^n(U + x), x \in \threetorus)$, where $U$ is a uniform random element of $\threetorus$ which is independent of $\rconn^n$. Note that the Yang--Mills theory \eqref{eq:ym-measure} is formally translation invariant, since the Yang--Mills action $\sym$ is translation invariant. The second to fourth assumptions are technical assumptions which are needed in the proof of Theorem \ref{thm:free-field-behavior-implies-tightness}; they essentially say that on a qualitative level, $\rconn^n$ is like a Gaussian field, in the sense that we have tail bounds and an approximate Wick's theorem (where in contrast to the usual Wick's theorem, we only assume a bound on the four-point correlations in terms of the two-point correlations, as opposed to exact equality). 
\end{remark}

Before we state Theorem \ref{thm:free-field-behavior-implies-tightness}, we need the following lemma. The proof is in Appendix \ref{section:measurability}.

\begin{lemma}\label{lemma:measurability-from-stochastic-process}
Let $\rconn = (\rconn(x), x \in \threetorus)$ be a $\lalg^3$-valued stochastic process with smooth sample paths. Define $\rvnldistspace \in \nonlineardistspace$ by $\rvnldistspace(t) := \Phi_t([\rconn])$ for $t > 0$. Then $\rvnldistspace$ is an $\nonlineardistspace$-valued random variable (i.e., it is measurable).
\end{lemma}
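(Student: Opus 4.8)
The plan is to combine the definition of $\newborel(\nonlineardistspace)$ with continuous-dependence facts for the two flows that enter the construction (the Yang--Mills heat flow and the holonomy ODE). As is shown in Section~\ref{section:completing-the-proof} in the course of proving Lemma~\ref{fdlemma}, the $\sigma$-algebra $\newborel(\nonlineardistspace)$ is the smallest one making all of the maps $\regwlorbit{\wloop}{\character}{t} : \nonlineardistspace \to \C$ measurable. Hence it suffices to show that, for each fixed $\wloop \in \loopset$, each character $\character$ of $\liegroup$, and each $t > 0$, the map
\[
\omega \longmapsto \regwlorbit{\wloop}{\character}{t}(\rvnldistspace(\omega)) = \regwl{\wloop}{\character}(\Phi_t([\rconn(\omega)]))
\]
is measurable on the underlying probability space (note that $\{\Phi_t([\rconn(\omega)])\}_{t>0}$ genuinely lies in $\nonlineardistspace$ by the semigroup property, once one knows the flow exists for all positive times). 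Since Wilson loop observables are gauge invariant, the right-hand side equals $\regwl{\wloop}{\character}(A^\omega(t))$, where $\{A^\omega(s)\}_{s \geq 0}$ is the Yang--Mills heat flow \eqref{eq:ymdef} with smooth initial data $A^\omega(0) = \rconn(\omega)$ --- a flow which exists and is unique for all $s > 0$ by \rade~\cite{R1992}.

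It then remains to exhibit $\omega \mapsto \regwl{\wloop}{\character}(A^\omega(t))$ as a measurable map followed by continuous maps. First, $\omega \mapsto \rconn(\omega)$ is measurable from the probability space into $C^\infty(\threetorus, \lalg^3)$ with its usual Fréchet (Polish) topology: for each $x$ and each multi-index $\beta$, the map $\omega \mapsto \partial^\beta \rconn(\omega)(x)$ is a pointwise limit of difference quotients of the (measurable) evaluations of $\rconn(\omega)$, hence measurable, and the functionals $f \mapsto \partial^\beta f(x)$ generate the Borel $\sigma$-algebra of $C^\infty(\threetorus, \lalg^3)$. Second, for fixed $t > 0$ the time-$t$ solution map $S_t : C^\infty(\threetorus, \lalg^3) \to C^\infty(\threetorus, \lalg^3)$ of \eqref{eq:ymdef} is continuous; this is standard for parabolic equations of this type, and is also implicit in the a priori estimates underlying \rade's global existence theorem. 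Third, for fixed $\wloop$ and $\character$ the map $B \mapsto \regwl{\wloop}{\character}(B) = \character(h(1))$ is continuous from $C^\infty(\threetorus, \lalg^3)$ (indeed from $C^0$) to $\C$, by continuous dependence of the solution of the holonomy ODE \eqref{eq:wilson-loop-ode} on its coefficients~\cite[Theorem 3.9]{Tes2012} together with continuity of $\character$. Composing the three, $\omega \mapsto \regwl{\wloop}{\character}(S_t(\rconn(\omega)))$ is measurable, which is exactly what is needed.

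The main obstacle is the second ingredient above: one needs that the Yang--Mills heat flow out of smooth initial data depends continuously on that data, in a topology strong enough that the holonomy of the time-$t$ connection still makes sense and varies continuously with it. This is the only genuinely analytic input; everything else is soft measure theory together with standard continuous dependence for ODEs. A lesser technical point is that the reduction in the first paragraph relies on knowing that $\newborel(\nonlineardistspace)$ is generated by the maps $\regwlorbit{\wloop}{\character}{t}$ --- a fact that itself requires passing to a countable subfamily of loops, characters, and times, and which is established where Lemma~\ref{fdlemma} is proved.
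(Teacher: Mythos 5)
Your proposal takes a genuinely different route from the paper's proof, and it has two gaps that the paper's argument is specifically designed to avoid.

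\textbf{First gap: the reduction to $\sigma(\mathrm{RWLO})$.} You assert that $\newborel(\nonlineardistspace)$ coincides with the $\sigma$-algebra generated by the maps $\regwlorbit{\wloop}{\character}{t}$, and attribute this to the proof of Lemma~\ref{fdlemma}. That lemma only shows that two \emph{laws} on $(\nonlineardistspace,\newborel(\nonlineardistspace))$ with identical finite-dimensional RWLO distributions are equal; it does not show that the $\sigma$-algebras $\newborel(\nonlineardistspace)$ and $\sigma(\mathrm{RWLO})$ coincide. In general, the Borel $\sigma$-algebra of an initial topology generated by an uncountable family of maps strictly contains the $\sigma$-algebra generated by those maps, unless one knows the topology is second countable (or some surrogate). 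The paper establishes the analogous equality only on $\honeorbitspace$ (Lemma~\ref{lemma:orbit-space-borel-sigma-algebras-coincide}), using the compact metrizable exhaustion by the cutoff spaces $\cutoffspace{\cutoff}$ (Theorem~\ref{thm:countable-index-homeomorphism}) together with the Lusin property. No analogue for $\nonlineardistspace$ is proved, and the paper's argument for Lemma~\ref{lemma:measurability-from-stochastic-process} is structured precisely to avoid needing one: it pushes through $i^{1,2}$ into $\nonlineardistspace^{1,2}$ and further into $\honeorbitspace$, where the Lusin-space / countable-exhaustion machinery is available.

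\textbf{Second gap: continuity of the time-$t$ solution map.} You assert that $S_t : C^\infty \to C^\infty$ (or at least to $C^0$, which is all your step~3 requires) is continuous, and cite R\aa de's existence theorem. But R\aa de's theorem and the paper's Lemma~\ref{lemma:ym-continuity-in-initial-data-smooth-case} only give continuous dependence of $A_n(t)$ on $A_n(0)$ in the $H^1$ norm. In three dimensions $H^1$ does not embed in $L^\infty$, so $H^1$-continuity does not give the $C^0$-continuity you need for the holonomy map to vary continuously. In the paper this issue is handled elsewhere (in Lemma~\ref{lemma:regularized-wilson-loop-weak-sequential-continuity}) by routing through the ZDDS flow, whose Proposition~\ref{prop:ZDDS-local-existence} supplies $L^\infty$ estimates; but that is a separate argument, not something one can read off from R\aa de.

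\textbf{How the paper sidesteps both.} The paper defines $\tilde{\rvnldistspace} := i^{1,2}(\rvnldistspace)$, notes $\rvnldistspace = j^{1,2}(\tilde{\rvnldistspace})$ with $j^{1,2}$ continuous (Lemma~\ref{lemma:j12-cont}), and reduces to showing that $[\rconn]^{1,2}$ is an $\honeorbitspace$-valued random variable. For this it approximates $\rconn$ by Fourier truncations $\rconn_N$: each $\rconn_N$ is a \emph{continuous} function of finitely many Fourier coefficients (hence trivially measurable into $\honeconnspace$), and $\rconn_N \to \rconn$ in $H^1$, hence $[\rconn_N]^{1,2} \to [\rconn]^{1,2}$. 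Measurability of the limit then follows from the Lusin-space property of $\honeorbitspace$ (Corollary~\ref{cor:orbitspace-lusin}) and Lemma~\ref{lemma:limits-preserve-measurability}, which says that pointwise limits of measurable maps into a Lusin space are measurable. This avoids any need for continuity of the Yang--Mills solution map (the flow $\Phi_t$ is only used through the fixed embedding $\iota_{\ymsemigroup^{1,2}}$, not through a continuity estimate in a strong topology) and avoids the question of whether $\newborel(\nonlineardistspace)$ is generated by the RWLO.

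Your general strategy could likely be repaired, but only by importing the very machinery the paper developed: the $\sigma$-algebra reduction would go through the $\nonlineardistspace^{1,2}$ framework, and the needed continuity would come from the ZDDS smoothing estimates rather than from R\aa de directly. As written, both steps are asserted rather than proven.
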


We can now state Theorem \ref{thm:free-field-behavior-implies-tightness}. The proof is in Section \ref{section:proofs-second-third}. The main technical details in the proof are in the companion paper \cite{CaoCh2021}. 

\begin{theorem}\label{thm:free-field-behavior-implies-tightness}
Let $\{\rconn^n\}_{n \geq 1}$ be a sequence of $\lalg^3$-valued stochastic processes $\rconn^n = (\rconn^n(x), x \in \threetorus)$ with smooth sample paths. Suppose that the previously listed assumptions are satisfied. For $n \geq 1$, define an $\nonlineardistspace$-valued random variable $\rvnldistspace_n$ by $\rvnldistspace_n(t) := \Phi_t([\rconn^n])$ for $t > 0$ (recall Lemma \ref{lemma:measurability-from-stochastic-process}). Then for all $t > 0$,
\beq\label{eq:action-bounded-expectation} \sup_{n \geq 1} \E[\sym(\rvnldistspace_n(t))] < \infty. \eeq
Consequently, by Theorem \ref{mainthm}, there is a subsequence $\{\rvnldistspace_{n_k}\}_{k \geq 1}$ and a probability measure $\mu$ on $(\nonlineardistspace, \mc{B}(\nonlineardistspace))$ such that the laws of $\rvnldistspace_{n_k}$ converge weakly to $\mu$.
\end{theorem}


We close off this subsection  with the following conjecture.

\begin{conjecture}
Let $\liegroup$ be a Lie group which satisfies the condition given at the beginning of Section \ref{section:euclidean-ym-theories}. We conjecture that for some suitable sequence $\{\rconn^n\}_{n \geq 1}$ of $\lalg^3$-valued stochastic processes with smooth sample paths, whose laws approximate the Yang--Mills measure, the previously listed assumptions are satisfied.
\end{conjecture}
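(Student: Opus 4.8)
\emph{Proof proposal.} This is a conjecture, so I describe a strategy rather than a complete argument, and I indicate where it currently runs aground. The natural candidates for the approximating sequence $\{\rconn^n\}_{n \geq 1}$ are UV-regularized Yang--Mills measures. Concretely, fix a mollifier $\eta \in C^\infty(\threetorus, \R)$ with $\int_{\threetorus} \eta = 1$, set $\eta_n = n^3 \eta(n\, \cdot)$, take the lattice $\Lambda_n := (n^{-1}\Z)^3/\Z^3$ with the Wilson action for gauge group $\liegroup$, sample a configuration $U^n$, and form the random smooth $\lalg^3$-valued $1$-form $\rconn^n := \eta_n * B^n$, where $B^n_j(x) := n\,\log U^n_{x, x + n^{-1} e_j}$ with a fixed measurable choice of branch of $\log$ near the identity of $\liegroup$ (and $0$ elsewhere). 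To make $B^n$, hence $\rconn^n$, a genuine $\lalg^3$-valued process rather than a gauge orbit, one imposes a complete, translation-covariant gauge fixing on the lattice --- for instance a measurable section of $\connspace \to \orbitspace$ refining Landau gauge. An alternative candidate is the continuum Fourier cutoff $d\mu_n(A) \propto \exp\!\big(-\tfrac{1}{g^2}\sym(A)\big)\, \ind[A = P_n A]\, dA$ on the gauge slice $\{\sum_i \ptl_i A_i = 0\}$, with $P_n$ the projection onto Fourier modes $|k|_\infty \leq n$ and $\rconn^n$ a draw from $\mu_n$; a third is a fixed-time slice of stochastic quantization run with mollified noise. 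In each case these are exactly the standard regularizations believed to converge to \eqref{eq:ym-measure}.

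With such a construction in hand, the first four assumptions should be within reach. \emph{Translation invariance} is forced by the construction: average over a uniform continuum translation, as in the remark following Theorem~\ref{thm:free-field-behavior-implies-tightness}, using lattice translation covariance of the gauge fixing. \emph{$L^2$ regularity} holds because $\rconn^n$ is smooth with second moments bounded uniformly in $x$; on the lattice side, compactness of $\liegroup$ makes $B^n$ bounded, so $\rconn^n$ has moments of every order, and the Fourier-truncation convergence is then just smoothness. \emph{Tail bounds} for linear functionals come, on the lattice side, from compactness together with a chessboard/reflection-positivity estimate, or in a weak-coupling regime from a convergent cluster expansion plus hypercontractivity; the stretched-exponential tail for the quadratic forms $(\rconn^n_N, K\rconn^n_N)$ with $K^{aa}_{j_1 j_2} \equiv 0$ would then follow from a Hanson--Wright-type inequality once enough Gaussian-like structure (again a cluster expansion) is in place. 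The \emph{four product assumption} is an approximate Isserlis theorem: for distinct Lie-algebra directions $a_1 \neq a_2$ one wants the connected four-point function dominated by products of two-point functions, uniformly in $n$, which one would again extract from a polymer/cluster expansion (or correlation inequalities) in which the connected part is a factor smaller and the expansion converges uniformly in the cutoff.

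The essential difficulty --- and the reason this remains only a conjecture --- is the last assumption, \emph{local behavior like the GFF}: a bound $|\E[\rconn^{n, a_1}_{j_1}(x) \rconn^{n, a_2}_{j_2}(y)]| \leq \constgf\, \metricspace(x,y)^{-(3-\alpha)}$ for some $\alpha \in (5/3, 3)$, \emph{uniform in $n$}. This is tantamount to a uniform estimate on the two-point function of regularized $3$D Yang--Mills, which is precisely the kind of bound the (still open) construction of the theory must supply. I see two routes. The cleaner one uses reflection positivity and Gaussian domination: for the lattice model these should yield a momentum-space bound $\hat{G}^n(k) \lesssim |k|^{-2}$ uniformly, hence $\metricspace(x,y)^{-1}$ decay in position space, i.e.\ $\alpha = 2$; the obstruction is that the connection $1$-form is not gauge invariant, so one must either run the infrared bound in a gauge compatible with reflection positivity (temporal/axial-type gauges, which clash with translation invariance) or transport the bound from a gauge-invariant surrogate --- reconciling these is the real technical knot. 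The robust route is a full multiscale/renormalization-group analysis in the style of Ba{\l}aban for $3$D Yang--Mills, tracking the two-point function through the scales; this is heavy machinery but is what one expects to need. A mitigating observation is that $\alpha$ may be taken just above $5/3$, so the sharp GFF exponent is not required: any uniform bound $|\E[\rconn^{n, a_1}_{j_1}(x) \rconn^{n, a_2}_{j_2}(y)]| \leq \constgf\, \metricspace(x,y)^{-4/3 + \epsilon}$ with some $\epsilon > 0$ (that is, strictly less singular than $\metricspace(x,y)^{-4/3}$) already suffices, which might be reachable by softer means --- dimensional analysis, a bounded number of RG steps, and compactness-based a priori estimates --- even if the sharp construction stays out of reach. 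In every scenario, this last assumption, together with the uniformity in $n$ of the cluster expansion underlying the four-product and tail bounds, is where essentially all the work lies, and is the main obstacle.
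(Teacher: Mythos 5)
The statement you were asked to prove is stated in the paper as a \emph{conjecture}, and the paper supplies no proof of it; there is therefore nothing to compare your argument against line by line. Your submission correctly recognizes this and is honest about being a strategy sketch rather than a proof, so there is no false claim to flag. Your proposed candidates for $\{\rconn^n\}_{n\geq 1}$ (mollified/gauge-fixed lattice gauge theory, continuum Fourier cutoff on the Coulomb slice, stochastic quantization with mollified noise) match the paper's own suggestions in the remark following the conjecture, which mentions Fourier truncation and lattice gauge theories, with the \eqref{eq:ZDDS} flow available as a smoothing device when the lattice interpolation is not smooth (Remark~\ref{remark:smoothing-procedure}). Your diagnosis of where the difficulty lies is also the right one: the first four assumptions are ``soft'' Gaussian-like structure (translation invariance by randomizing the origin, moment and tail bounds from compactness of $\liegroup$ plus cluster-expansion/hypercontractivity input, an approximate Isserlis identity for the connected four-point function), while the genuinely hard assumption is the uniform-in-$n$ two-point bound $|\E[\rconn^{n,a_1}_{j_1}(x)\rconn^{n,a_2}_{j_2}(y)]| \leq \constgf\, \metricspace(x,y)^{-(3-\alpha)}$, which is essentially a uniform ultraviolet estimate on regularized 3D Yang--Mills and is of the same order of difficulty as the constructive problem itself. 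Your observation that $\alpha$ need only exceed $5/3$, so that a bound strictly less singular than $\metricspace(x,y)^{-4/3}$ suffices (rather than the sharp GFF exponent $\metricspace(x,y)^{-1}$), is correct and is precisely the latitude the authors build in (see Remark~\ref{remark:assumptions-free-field-implies-tightness}). Two caveats worth keeping in mind if you pursue this: (i) the assumptions are stated for the $\R$-valued components $\rconn^{n,a}_j$ relative to a fixed orthonormal basis of $\lalg$, so any gauge fixing you impose must produce an actual $\lalg^3$-valued process with translation-invariant two-point functions in those coordinates — the tension you note between reflection-positivity-compatible gauges and translation invariance is real and unresolved; and (ii) the tail bound is required for the quadratic forms $(\rconn^n_N, K\rconn^n_N)$ uniformly in the Fourier cutoff $N$ as well as in $n$, which is stronger than a Hanson--Wright bound at fixed $N$. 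As a statement of the state of the art and of the obstacles, your write-up is accurate; as a proof it is, as you say yourself, not one — which is consistent with the statement being an open conjecture in the paper.
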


\begin{remark}
As previously mentioned, the sequence $\{\rconn^n\}_{n \geq 1}$ may be obtained by, for example, Fourier truncation or lattice gauge theories. One small thing is that if we work with lattice gauge theories, then the obtained $\rconn^n$ may not necessarily be smooth. But we can always apply a smoothing procedure to obtain smoothness. See Remark \ref{remark:smoothing-procedure} for more.
\end{remark}

\begin{remark}[Comment on 4D]
One might naturally wonder whether we could also try to implement this approach in 4D. Unfortunately, there is a major barrier, which is that the basic ``probabilistic smoothing" phenomenon which is the key to the proof of Theorem \ref{thm:free-field-behavior-implies-tightness} (whose proof relies on \cite[Proposition 3.18]{CaoCh2021}) no longer holds. Stated in different terms, in 4D, the proof of local existence of the Yang--Mills heat flow with GFF initial data (proven in 3D in \cite{CaoCh2021, CCHS2022}) breaks down. This difficulty is ultimately due to the fact that 4D is critical for Yang-Mills. The SPDE approach of \cite{CCHS2020, CCHS2022} faces similar obstacles.
\end{remark}

\subsection{The gauge invariant free field}\label{section:gauge-invariant-free-field}

Since Theorem \ref{thm:free-field-behavior-implies-tightness} applies to fields which have GFF-like short distance behavior, we may as well apply it to the actual GFF. This gives a construction of an object which we call the ``gauge invariant free field", since it can be interpreted as an element $\rvnldistspace \in \nonlineardistspace$ with GFF initial data. This shows that $\nonlineardistspace$ is nontrivial, in that it indeed contains ``distributional" elements. We proceed to describe the construction of $\rvnldistspace$.

We first set notation for Fourier coefficients. Recall the Fourier basis $\{e_n\}_{n \in \Z^3}$. Given $f \in L^1(\threetorus, \R)$, define the Fourier coefficient
\[ \hat{f}(n) := \int_{\threetorus} f(x) \ovl{e_n(x)} dx \in \C, ~~ n \in \Z^3.  \]
Note (since $f$ is $\R$-valued) that $\hat{f}(-n) = \ovl{\hat{f}(n)}$ for all $n \in \Z^3$. Also, if $f \in C^\infty(\threetorus, \R)$, then for any $k \geq 1$,
\beq\label{eq:fourier-coefficients-rapid-decay-general-dim} \sup_{n \in \Z^3} (1 + |n|^2)^{k/2} |\hat{f}(n)| < \infty. \eeq
(See, e.g., \cite[Chapter 3, equation (1.7)]{T2011a}.)

Now, as in \cite[Section 1.1]{CaoCh2021}, we construct the 3D $\lalg^3$-valued GFF $\rconn^0$ as a $\lalg^3$-valued stochastic process $\rconn^0 = ((\rconn^0, \phi), \phi \in C^\infty(\threetorus, \R))$, as follows. Fix a subset $I_\infty \sse \Z^3$ such that $0 \notin I_\infty$, and such that for each $n \in \Z^3 \setminus \{0\}$, exactly one of $n, -n$ is in $I_\infty$. Let $(Z^a_j(n), a \in [\lalgdim], j \in [3], n \in I_\infty)$ be an i.i.d.~collection of standard complex Gaussian random variables. For $n \in \Z^3 \setminus \{0\}$, $n \notin I_\infty$, define $Z^a_j(n) := \ovl{Z^a_j(-n)}$. Then, for $\phi \in C^\infty(\threetorus, \R)$, define
\[\begin{split}
(\rconn^{0, a}_j, \phi) &:= \sum_{\substack{n \in \Z^3 \\ n \neq 0}} \frac{Z^a_j(n)}{|n|} \hat{\phi}(n) \in \R,  ~~ a \in [\lalgdim], j \in [3], \\
(\rconn^{0}_j, \phi) &:= \sum_{a \in [\lalgdim]} (\rconn^{0, a}_j, \phi) X^a \in \lalg, ~~ j \in [3], \\
(\rconn^0, \phi) &:= ((\rconn^0_j, \phi), j \in [3]) \in \lalg^3. 
\end{split}\]
(Using standard concentration arguments, we have $|Z^a_j(n)| = O(\sqrt{\log |n|})$ a.s. Combining this with \eqref{eq:fourier-coefficients-rapid-decay-general-dim}, we have that a.s., for all $\phi \in C^\infty(\threetorus, \R)$, the sum defining $(\rconn^{0, a}_j, \phi)$ is absolutely summable.)
For $N \geq 0$, we define the Fourier truncations $\rconn^0_N$ of $\rconn^0$ as follows. First, for $a \in [\lalgdim], j \in [3]$, define $\rconn^{0, a}_{N, j} = (\rconn^{0, a}_{N, j}(x), x \in \threetorus)$ by
\[ \rconn^{0, a}_{N, j}(x) := \sum_{\substack{n \in \Z^3 \\ |n|_\infty \leq N \\ n \neq 0}} \frac{Z^a_j(n)}{|n|} e_n(x), ~~ x \in \threetorus.\]
Then define $\rconn_N^0 = (\rconn_N^0(x), x \in \threetorus)$ by $\rconn_N^0(x) := (\rconn^0_{N, j}(x), j \in [3]) \in \lalg^3$, where $\rconn^0_{N, j}(x) := \sum_{a \in [\lalgdim]} \rconn^{0, a}_{N, j}(x) X^a$. Note that $\rconn_N^0$ is a $\lalg^3$-valued stochastic process with smooth sample paths. We have that $\rconn^0_N$ converges to $\rconn^0$ in various natural senses; for instance, a.s., for all $\phi \in C^\infty(\threetorus, \R)$, we have that $(\rconn^0_N, \phi) \ra (\rconn^0, \phi)$, where $(\rconn^0_N, \phi) := \int_{\threetorus} \rconn^0_N(x) \phi(x) dx$.

For $N \geq 0$, define $\rvnldistspace_N \in \nonlineardistspace$ by $\rvnldistspace_N(t) := \Phi_t([\rconn^0_N])$ for $t > 0$. By Lemma \ref{lemma:measurability-from-stochastic-process}, $\rvnldistspace_N$ is an $\nonlineardistspace$-valued random variable. The proof of the next theorem is in Section~\ref{section:proofs-second-third}. The main technical details in the proof are in the companion paper \cite{CaoCh2021}. 

\begin{theorem}\label{thm:gauge-invariant-free-field}
There exists an $\nonlineardistspace$-valued random variable $\rvnldistspace$ such that the laws of $\rvnldistspace_N$ converge weakly to the law of $\rvnldistspace$ as $N\to \infty$.
\end{theorem}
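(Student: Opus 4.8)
The plan is to realize the GFF $\rconn^0$ as a limit of its own Fourier truncations, apply Theorem~\ref{thm:free-field-behavior-implies-tightness} to extract a weak subsequential limit, and then upgrade subsequential convergence to full convergence by the separating-functionals argument built into the topology $\newtop$ together with Lemma~\ref{fdlemma}. Concretely, first I would verify that the sequence $\{\rconn^0_N\}_{N \geq 0}$ (or, more naturally, that $\rconn^0$ itself viewed as a single field, and the truncations as approximations to it) satisfies all the assumptions preceding Theorem~\ref{thm:free-field-behavior-implies-tightness}: translation invariance follows from the i.i.d.\ structure of the Gaussian coefficients $Z^a_j(n)$ and the explicit covariance $\E[\rconn^{0,a_1}_{j_1}(x)\rconn^{0,a_2}_{j_2}(y)] = \delta_{a_1 a_2}\delta_{j_1 j_2} G(x-y)$ where $G$ is (a multiple of) the 3D torus Green's function; the $L^2$ regularity assumption is the statement that $(\rconn^{0,a}_{N,j},\phi)\to(\rconn^{0,a}_j,\phi)$ in $L^2$, which follows from $\sum_{n\neq 0}|n|^{-2}|\hat\phi(n)|^2 < \infty$ and Parseval; the tail bounds hold because $(\rconn^{0,a}_j,\phi)$ is an honest Gaussian and $(\rconn^0_N, K\rconn^0_N)$ with vanishing diagonal is a Gaussian chaos of order $2$ with no diagonal term, both of which satisfy $\consttb\exp(-\tfrac{1}{\consttb}(\cdot)^{\exptb})$-type bounds (with $\exptb = 1$, say, after absorbing constants); the four-product assumption is \emph{exact} Isserlis with $\constfourp = 1$ since $\rconn^0$ is genuinely Gaussian and for $a_1\neq a_2$ the cross-terms $\E[Z_1\ovl{Z_2}]$ vanish, leaving precisely the two surviving pairings; and the local GFF behavior holds with $\constgf$ a constant and $\alpha = 2 \in (5/3,3)$, since $|G(x-y)| \lesssim \metricspace(x,y)^{-1} = \metricspace(x,y)^{-(3-2)}$.

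Having checked the hypotheses, Theorem~\ref{thm:free-field-behavior-implies-tightness} gives $\sup_N \E[\sym(\rvnldistspace_N(t))] < \infty$ for every $t>0$, hence tightness of $\{\sym(\rvnldistspace_N(t))\}_N$ for each $t$, and then Theorem~\ref{mainthm} produces a subsequence $\{\rvnldistspace_{N_k}\}$ and a probability measure $\mu$ on $(\nonlineardistspace,\newborel(\nonlineardistspace))$ with $\rvnldistspace_{N_k}\Rightarrow\mu$. Let $\rvnldistspace$ be an $\nonlineardistspace$-valued random variable with law $\mu$. To promote this to convergence of the \emph{full} sequence $\{\rvnldistspace_N\}$, I would argue that every subsequence has a further subsequence converging weakly to the \emph{same} limit: by the same tightness input, any subsequence of $\{\rvnldistspace_N\}$ has a weakly convergent sub-subsequence with some limit law $\mu'$; it then suffices to show $\mu' = \mu$. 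For this, by Lemma~\ref{fdlemma} it is enough to check that the finite-dimensional distributions $(\regwlorbit{\wloop_i}{\character_i}{t_i}(\rvnldistspace), 1\le i\le m)$ agree under $\mu$ and $\mu'$. Since each $\regwlorbit{\wloop}{\character}{t}$ is by construction continuous on $(\nonlineardistspace,\newtop)$ and bounded (characters of a compact group are bounded), weak convergence of $\rvnldistspace_{N_k}$ along either subsequence forces the joint law of $(\regwlorbit{\wloop_i}{\character_i}{t_i}(\rvnldistspace_{N_k}))_i$ to converge; hence it suffices to show these joint laws converge \emph{along the full sequence} $N\to\infty$ to a common limit. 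This in turn reduces, via the flow identity $\regwlorbit{\wloop}{\character}{t}(\rvnldistspace_N) = \regwl{\wloop}{\character}([\rconn^0_N(t)])$ where $\rconn^0_N(t)$ is the Yang--Mills heat flow at time $t$ started from $\rconn^0_N$, to showing that the Wilson loop functionals of the heat-flowed truncations converge in distribution as $N\to\infty$ — and this is exactly the content established in the companion paper \cite{CaoCh2021}, where the heat flow is shown to exist starting from the GFF $\rconn^0$ and to be the limit (in an appropriate sense, along the regularizations $\rconn^0_N$) of the flows started from $\rconn^0_N$, so that the associated holonomies converge.

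The main obstacle is precisely this last point: controlling the Yang--Mills heat flow started from the (distributional) GFF and showing that it is genuinely approximated by the flows started from the smooth truncations $\rconn^0_N$, uniformly enough that the nonlinear, non-local Wilson loop observables (solutions of the holonomy ODE~\eqref{eq:wilson-loop-ode}) converge. This is a genuinely hard PDE-probability estimate — the GFF is too rough for the flow to start classically, so one needs the careful short-time analysis and stochastic estimates of \cite{CaoCh2021} — but it is exactly what that companion paper provides, so in the present paper this step is invoked rather than proved. Everything else (checking the five bulleted assumptions for the exactly-Gaussian field, and the soft subsequence-of-every-subsequence upgrade using Lemma~\ref{fdlemma} and the continuity of the generating functionals) is routine. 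I would organize the write-up as: (i) a short verification of the assumptions, quoting exact Isserlis and standard Gaussian tail/chaos bounds; (ii) invocation of Theorems~\ref{thm:free-field-behavior-implies-tightness} and~\ref{mainthm} for subsequential tightness; (iii) the Lemma~\ref{fdlemma} argument reducing uniqueness of the limit to convergence of the regularized Wilson loop finite-dimensional distributions; (iv) a citation to \cite{CaoCh2021} for that convergence; and (v) the standard conclusion that a tight sequence all of whose limit points coincide converges.
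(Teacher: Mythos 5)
Your route is genuinely different from the paper's. You propose: (a) verify the five bulleted assumptions for the truncated fields $\rconn^0_N$, (b) apply Theorems~\ref{thm:free-field-behavior-implies-tightness} and~\ref{mainthm} to extract tight subsequential limits, (c) establish uniqueness of the subsequential limit via Lemma~\ref{fdlemma} by showing the regularized Wilson loop finite-dimensional distributions converge along the full sequence. The paper bypasses (a) and (b) entirely: it invokes the companion paper's Theorem~\ref{thm:ym-heat-flow-gff}, which constructs a solution $\rconn$ of \eqref{eq:ZDDS} with GFF initial data together with a random time $T$, and provides quantitative (weighted-$C^1$, in probability, uniform over $t\in(0,T/2)$) convergence $\rconn_N(t)\to\rconn(t)$. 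This lets the paper define $\rvnldistspace$ \emph{explicitly} by $\rvnldistspace(t):=[\rconn(t)]$ for $t<T$ (extended by the semigroup), and then prove weak convergence by a coupling argument: pass to a sub-subsequence on which the heat-flow convergence and $T_{N}\to T$ are almost sure, apply the weak sequential continuity of regularized Wilson loops (Lemma~\ref{lemma:regularized-wilson-loop-weak-sequential-continuity}) on that a.s.\ event to get $\rvnldistspace_{N_{k_j}}\to\rvnldistspace$ in $\newtop$, and conclude by bounded convergence. The explicit construction is what underwrites the name ``gauge invariant free field'' and is also used in Lemma~\ref{lemma:gauge-invariant-free-field-measurable}; your approach only produces an abstractly-existing limit law.

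Two steps you label as routine are not. First, the local-GFF assumption requires $|\E[\rconn^{0,a_1}_{N,j_1}(x)\rconn^{0,a_2}_{N,j_2}(y)]|\leq\constgf\,\metricspace(x,y)^{-1}$ \emph{uniformly in $N$}; this is a uniform bound on the Dirichlet-truncated Green's function $\sum_{0<|n|_\infty\leq N}e_n(x-y)/|n|^2$, which requires exploiting cancellation in the exponential sums (the truncation is not a positive kernel, so it cannot be compared to the full Green's function by positivity or monotonicity alone). Second, in your step (iv), the companion paper furnishes convergence of the heat-flowed \emph{connections} $\rconn_N(t)\to\rconn(t)$ only over a random time window and in probability; upgrading this to convergence of $\regwlorbit{\wloop}{\character}{t}(\rvnldistspace_N)$ at \emph{arbitrary} fixed $t>0$ still requires (i) Lemma~\ref{lemma:regularized-wilson-loop-weak-sequential-continuity}, whose proof is nontrivial and runs through the ZDDS estimates of Lemma~\ref{lemma:zdds-solutions-weak-continuity}, (ii) the semigroup property to escape the random window $(0,T/2)$, and (iii) a sub-subsequence argument to pass from convergence in probability to a.s.\ convergence. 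These are precisely the ingredients the paper's proof makes explicit; without them, ``so that the associated holonomies converge'' is an assertion, not an argument.
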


We call the random variable $\rvnldistspace$ the ``gauge invariant free field''. As previously noted, the initial data of $\rvnldistspace_N$ converges (in a certain sense) to the 3D $\lalg^3$-valued GFF $\rconn^0$, and thus we may think of $\rvnldistspace$ as a free field in the quotient space of distributional connections modulo gauge transforms; hence the name ``gauge invariant free field". 

\subsection{Related literature}\label{reviewsec}
Construction of Euclidean Yang--Mills theories in dimension two is now fairly well-understood, due to the contributions of many authors over the last forty years~~\cite{BFS1979, BFS1980, BFS1981, CCHS2020, Che2019, F1990, F1991, GKS1989, KK1987, L2003, L2010, S1992, S1993, S1997}. See \cite[Section 6]{Ch2018} for more references and discussion. 

Once we move to dimensions higher than two, there are fewer results. Gross~\cite{G1983} and Driver~\cite{Dri1987} proved the convergence of $\unitary(1)$ lattice gauge theory to the free electromagnetic field~\cite{G1975, Guerra} in dimensions three and four, respectively. Dimock \cite{Dimock2018, Dimock2020, Dimock2020b} proved ultraviolet stability for QED (which is $\unitary(1)$ theory with a fermion field) in 3D. It is mentioned in the recent paper of Chandra et al.~\cite{CCHS2020} on 2D YM that they have a forthcoming work which extends some of the results of their paper to 3D. Beyond this, the state of the art for three and four dimensions and a general (that is, non-Abelian) Lie group $\liegroup$ seems to be the phase cell renormalization results of Ba{\l}aban starting with \cite{Bal1983}, the phase cell renormalization results of Federbush starting with \cite{Fed1986}, and an alternative approach of Magnen, Rivasseau, and S\'{e}n\'{e}or \cite{MRS1993}. For more on phase cell renormalization as well as a more complete list of references, see \cite[Section 6]{Ch2018}. See also \cite[Chapter II.8]{Seiler1982} for an approach which views construction of Euclidean Yang--Mills theories in arbitrary dimensions as a problem of constructing the expectations of certain observables. 

However, as far as we can tell, none of the above papers actually construct non-Abelian Euclidean Yang--Mills theories in dimensions three and higher, in the sense that they do not construct probability measures on spaces of connections, or gauge orbits, or physically relevant observables, as was done by the various previously cited works in 2D. The absence of a suitable state space on which to define the Yang--Mills measure was the primary motivation for the works of Charalambous and Gross~\cite{CG2013, CG2015, CG2017} and Gross~\cite{G2016, G2017}, who proposed the construction of such state spaces in 3D using the Yang--Mills heat flow. This idea of using the Yang--Mills heat flow as a regularization procedure also appeared earlier in the physics literature, in work of Narayanan and Neuberger \cite{NaNeu2006} as well as work of L\"{u}scher \cite{Luscher2010a, Luscher2010b} and L\"{u}scher and Weisz \cite{LW2011}. A major obstruction in Charalambous and Gross's program, as it turned out, was that it was not known whether the Yang--Mills heat flow exists and is well-behaved if the initial data is a distributional connection. In this paper (combined with the companion paper \cite{CaoCh2021}), we show that this is indeed the case, if the initial distributional connection is the GFF, or ``GFF-like" (in terms of short-distance behavior).

Finally, we point out work of Chandra et al. \cite{CCHS2022}, which takes an SPDE approach towards construction of the 3D Yang--Mills measure. The state space that the authors construct in the paper is close related to our state space, in the sense that smoothing by the Yang--Mills heat flow is crucially used in both constructions.



\subsection{Organization of the paper}
The remainder of this paper is organized as follows. In Section \ref{section:3d-u(1)}, we show (for the purposes of illustration) how the 3D $\unitary(1)$ theory fits into our framework, by using Theorem~\ref{mainthm} and Lemma~\ref{fdlemma} to construct 3D $\unitary(1)$ Yang--Mills theory on $\nonlineardistspace$. In Section \ref{mainproof} (the bulk of this paper), we prove Theorem \ref{mainthm}, and along the way develop the technical details needed in the proof. In Section \ref{section:proofs-second-third}, we prove Theorems~\ref{thm:free-field-behavior-implies-tightness} and \ref{thm:gauge-invariant-free-field}; the main technical details behind the proofs of these theorems are in the companion paper \cite{CaoCh2021}.

\subsection{Acknowledgements}
We thank Nelia Charalambous, Persi Diaconis, Len Gross, and the anonymous referees for various helpful comments.

\section{Construction of 3D \texorpdfstring{$\unitary(1)$}{U(1)} theory}\label{section:3d-u(1)}
In this section, we show (for the purposes of illustration) how the 3D $\unitary(1)$ theory fits into our framework, by using Theorem~\ref{mainthm} and Lemma~\ref{fdlemma} to construct 3D $\unitary(1)$ Yang--Mills theory on $\nonlineardistspace$. Our construction is consistent with the previous construction of Gross (see Remark \ref{remark:u(1)-comparison-with-Gross}). Our framework additionally allows us to define regularized Wilson loop observables for the $\unitary(1)$ theory.

Yang--Mills theory with $G=\unitary(1)$ is the theory of electromagnetic fields. It turns out that in this case, the gauge orbit of a connection $A$ is determined by the curvature form $F_A$. Moreover, since the commutator in the definition \eqref{eq:curvdef} of $F_A$  vanishes for $\unitary(1)$ theory, the definition \eqref{eq:ym-measure} of the Yang--Mills measure indicates that $F_A$ is a Gaussian field. Indeed, this is how $\unitary(1)$ Euclidean Yang--Mills theory has been traditionally understood --- as a Gaussian field representing the curvature form, known as the {\it free electromagnetic field}~\cite{G1975, Guerra}. For non-Abelian theories, the curvature form does not determine the gauge orbit of a connection, nor is the curvature form expected to be a Gaussian field. So the traditional definition of $\unitary(1)$ theory does not generalize to non-Abelian theories. 

We begin with some preliminaries. Throughout this section, we assume that $\liegroup = \unitary(1)$. Note then that the Lie algebra is $\lalg = \icomplex \R$. We will need to use the Fourier transform for functions on $\threetorus$. Recall that $\{e_n\}_{n \in \Z^3}$ is the Fourier basis --- explicitly, if we identify functions on $\threetorus$ with 1-periodic functions on $\R^3$, then $e_n(x) = e^{\icomplex 2\pi n \cdot x}$, where $\icomplex = \sqrt{-1}$. Given a smooth {\oneform} $A$, define the Fourier coefficient
\[ \hat{A}(n) := \int_{\threetorus} A(x) \ovl{e_n(x)} dx, ~~ n \in \Z^3.\]
Note since $\lalg = \icomplex \R$, the Fourier coefficient $\hat{A}(n)$ is an element of $\C^3$. Thus let $|\hat{A}(n)|$ be the usual Euclidean norm. Since $A$ is smooth, we have the Fourier inversion formula
\beq\label{eq:fourier-inversion} A(x) = \sum_{n \in \Z^3} \hat{A}(n) e_n(x), ~~ x \in \threetorus, \eeq
where the sum on the right hand side above converges absolutely (see, e.g., \cite[Chapter 3, Section 1]{T2011a}). Note that since $\bar{z} = -z$ for $z \in \icomplex \R$, we have that $\ovl{A(x)} = - A(x)$ for all $x \in \torus^3$, and so upon equating Fourier coefficients, we have that $\hat{A}(-n) = -\ovl{\hat{A}(n)}$ for all $n \in \Z^3$.

Next, let $t \geq 0$. Given a smooth {\oneform} $A$, define the smooth {\oneform} $e^{t \Delta} A$ by
\beq\label{eq:heat-kernel-def} e^{t \Delta} A := \sum_{n \in \Z^3} e^{-4\pi^2 |n|^2 t} \hat{A}(n) e_n. \eeq
This definition ensures that $t \mapsto e^{t \Delta} A$ satisfies the heat equation with initial data $A$, i.e., $\ptl_t e^{t \Delta} A = \Delta e^{t \Delta} A = \sum_{i=1}^3 \ptl_{ii} e^{t \Delta} A$.

The proofs of the following two lemmas are in Appendix \ref{section:3d-miscellaneous-proofs}.

\begin{lemma}\label{lemma:da-l2-norm-formula}
Let $A$ be a smooth {\oneform}. Then
\[ \sym(A) = 8 \pi^2 \sum_{n \in \Z^3} (|n|^2 |\hat{A}(n)|^2 - |n \cdot \hat{A}(n)|^2). \]
Note that each term in the above sum is nonnegative by the Cauchy--Schwarz inequality.
\end{lemma}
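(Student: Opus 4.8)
The plan is to compute $\sym(A)$ directly from the Fourier expansion \eqref{eq:fourier-inversion}, using the fact that for $\unitary(1)$ the commutator term in the curvature \eqref{eq:curvdef} vanishes, so that $(\curv{A})_{ij} = \ptl_i A_j - \ptl_j A_i$ is simply the abelian (exterior derivative) curvature. First I would substitute the Fourier series $A_j(x) = \sum_n \hat{A}_j(n) e_n(x)$ into $(\curv{A})_{ij}$; since $\ptl_i e_n = \icomplex 2\pi n_i e_n$, this gives $(\curv{A})_{ij}(x) = \sum_n \icomplex 2\pi (n_i \hat{A}_j(n) - n_j \hat{A}_i(n)) e_n(x)$. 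Next I would use the orthonormality relation $\int_{\threetorus} e_n(x) \ovl{e_m(x)} dx = \ind\{n = m\}$ together with the reality constraint $\hat{A}(-n) = -\ovl{\hat{A}(n)}$ (equivalently $\ovl{A(x)} = -A(x)$, noted in the excerpt) to evaluate $\int_{\threetorus} \Tr((\curv{A})_{ij}(x)^2)\,dx$. For $\lalg = \icomplex\R$ we have $\Tr(XY) = XY$ (scalars), and $\Tr((\curv{A})_{ij}(x)^2) = -|(\curv{A})_{ij}(x)|^2$ pointwise since $(\curv{A})_{ij}(x) \in \icomplex\R$; integrating and applying Parseval yields $\int_{\threetorus} \Tr((\curv{A})_{ij}(x)^2)\,dx = -\sum_n 4\pi^2 |n_i \hat{A}_j(n) - n_j \hat{A}_i(n)|^2$.

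Then I would sum over $1 \leq i, j \leq 3$ and insert the minus sign from the definition \eqref{eq:sym-def} of $\sym$, giving $\sym(A) = 4\pi^2 \sum_n \sum_{i,j} |n_i \hat{A}_j(n) - n_j \hat{A}_i(n)|^2$. The remaining step is the linear-algebra identity: for a fixed $n \in \R^3$ and $v = \hat{A}(n) \in \C^3$,
\[
\sum_{i,j=1}^3 |n_i v_j - n_j v_i|^2 = 2\big(|n|^2 |v|^2 - |n \cdot v|^2\big),
\]
where $n \cdot v = \sum_i n_i v_i$ (note: no conjugation, matching the statement's $|n \cdot \hat A(n)|$). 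This is verified by expanding the left side: $\sum_{i,j}(n_i v_j - n_j v_i)\ovl{(n_i v_j - n_j v_i)} = \sum_{i,j}(|n_i|^2|v_j|^2 + |n_j|^2|v_i|^2 - n_i v_j \ovl{n_j v_i} - n_j v_i \ovl{n_i v_j})$; using that $n$ is real and collecting terms gives $2|n|^2|v|^2 - 2|\sum_i n_i v_i|^2 = 2(|n|^2|v|^2 - |n\cdot v|^2)$. Combining with the factor $4\pi^2$ produces the claimed constant $8\pi^2$, and the Cauchy--Schwarz remark is immediate since $|n \cdot v| \leq |n||v|$.

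I do not expect any serious obstacle here; the only points requiring care are bookkeeping of signs (the minus in \eqref{eq:sym-def}, the identity $\Tr(X^2) = -|X|^2$ for $X \in \icomplex\R$, and the reality relation for Fourier coefficients) and the justification that all the sums converge absolutely and may be interchanged — which follows from the rapid decay \eqref{eq:fourier-coefficients-rapid-decay-general-dim} of Fourier coefficients of smooth functions, guaranteeing $|\hat A(n)|$ decays faster than any polynomial. The mild subtlety worth flagging is whether the cross terms in $|n_i v_j - n_j v_i|^2$ really assemble into $|n\cdot v|^2$ rather than $n\cdot v \, \ovl{n \cdot \bar v}$ or similar; writing out the double sum explicitly as above resolves this and confirms the statement exactly as written.
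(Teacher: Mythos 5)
Your proposal is correct and follows essentially the same route as the paper's proof: drop the commutator terms for $\unitary(1)$ so that $\sym(A)=\|dA\|_2^2$, substitute the Fourier series, apply Plancherel componentwise, and then prove the pointwise identity $\sum_{i,j}|n_i v_j - n_j v_i|^2 = 2(|n|^2|v|^2 - |n\cdot v|^2)$. The sign bookkeeping you flag (the minus in the definition of $\sym$ cancelling $\Tr(X^2) = -|X|^2$ for $X\in\icomplex\R$) and the cross-term concern both resolve exactly as you argue; the reality relation $\hat A(-n) = -\ovl{\hat A(n)}$ that you mention is in fact not needed in the computation, and the paper does not use it either.
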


\begin{lemma}\label{lemma:wilson-loop-heat-kernel-regularized}
Let $A$ be a smooth {\oneform}. Let $\wloop : [0, 1] \ra \threetorus$ be a piecewise $C^1$ loop, $\character$ be a character of $\unitary(1)$. For any $t \geq 0$, we have
\beq\label{eq:regwl-in-terms-of-fourier-coef} W_{\wloop, \character}(e^{t \Delta} A) = \character\Bigg( \exp\bigg(\sum_{n \in \Z^3} e^{-4\pi^2|n|^2 t} \hat{A}(n) \cdot \int_0^1 e_n(\wloop(s)) \wloop'(s) ds\bigg)\Bigg).\eeq
\end{lemma}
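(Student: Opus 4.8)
The plan is to reduce to the abelian case of the holonomy ODE \eqref{eq:wilson-loop-ode}, in which the path-ordered exponential collapses to the ordinary exponential of a line integral, and then to substitute the Fourier expansion \eqref{eq:heat-kernel-def}.

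First I would record the following fact: for \emph{any} smooth \oneform{} $B$ on $\threetorus$ (we will apply it with $B = e^{t\Delta}A$), the holonomy of $B$ around $\wloop$ is
\[ h(1) = \exp\Bigl( \int_0^1 B(\wloop(s)) \cdot \wloop'(s)\, ds \Bigr) \in \unitary(1), \]
so that $\regwl{\wloop}{\character}(B) = \character\bigl( \exp( \int_0^1 B(\wloop(s)) \cdot \wloop'(s)\, ds) \bigr)$. To see this, note that since $\liegroup = \unitary(1)$ is abelian with $\lalg = \icomplex\R$, the function $r \mapsto \exp(\int_0^r B(\wloop(s)) \cdot \wloop'(s)\, ds)$ is continuous on $[0,1]$, equals $\groupid$ at $r = 0$, is $C^1$ on each of the finitely many intervals on which $\wloop$ is $C^1$, and on each such interval its derivative equals $h(r) B(\wloop(r)) \cdot \wloop'(r)$; by the uniqueness part of the standard ODE theory cited after \eqref{eq:wilson-loop-ode}, applied on each of these intervals in turn, this function coincides with the holonomy $h$. (Here $B(\wloop(s)) \cdot \wloop'(s) \in \icomplex\R$, so the exponential indeed lands in $\unitary(1)$.)

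Next I would take $B = e^{t\Delta}A$, which is a smooth \oneform{} by construction. Since $A$ is smooth, its Fourier coefficients $\hat{A}(n)$ decay faster than any power of $|n|$, so the series $\sum_{n \in \Z^3} e^{-4\pi^2 |n|^2 t}\hat{A}(n) e_n$ in \eqref{eq:heat-kernel-def} converges to $e^{t\Delta}A$ absolutely and uniformly on $\threetorus$. Because $\wloop$ is piecewise $C^1$, the derivative $\wloop'$ is bounded on $[0,1]$, so the partial sums of $\sum_n e^{-4\pi^2 |n|^2 t}\hat{A}(n) e_n(\wloop(s)) \cdot \wloop'(s)$ converge uniformly in $s$ to $(e^{t\Delta}A)(\wloop(s)) \cdot \wloop'(s)$; integrating term by term over $[0,1]$ is therefore justified, and gives
\[ \int_0^1 (e^{t\Delta}A)(\wloop(s)) \cdot \wloop'(s)\, ds = \sum_{n \in \Z^3} e^{-4\pi^2 |n|^2 t}\hat{A}(n) \cdot \int_0^1 e_n(\wloop(s)) \wloop'(s)\, ds. \]
Plugging this into the formula for $\regwl{\wloop}{\character}(e^{t\Delta}A)$ from the previous paragraph yields \eqref{eq:regwl-in-terms-of-fourier-coef}.

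There is no real obstacle here; the lemma is essentially bookkeeping once one observes that the $\unitary(1)$ holonomy is an exponentiated line integral. The two points that need a little care are the verification that the explicit exponential solves \eqref{eq:wilson-loop-ode} across the breakpoints of the piecewise $C^1$ loop (handled by invoking ODE uniqueness piece by piece together with the continuity of $h$), and the interchange of the sum and the integral, which is immediate from the uniform convergence of the Fourier series of the smooth \oneform{} $e^{t\Delta}A$ and the boundedness of $\wloop'$.
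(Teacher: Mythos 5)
Your proof is correct and follows essentially the same route as the paper: the paper first establishes the $\unitary(1)$ Wilson loop formula $W_{\wloop, \character}(A) = \character(\exp(\int_0^1 A(\wloop(s))\cdot\wloop'(s)\,ds))$ as a separate lemma (Lemma~\ref{lemma:wilson-loop-formula-u(1)}, proved by exhibiting $\exp(\int_0^r \cdots)$ as the solution of the holonomy ODE), and then substitutes the Fourier expansion \eqref{eq:heat-kernel-def}. You reprove that lemma inline, with a bit more explicit attention to the piecewise-$C^1$ breakpoints and to the sum--integral interchange, but the underlying argument is the same.
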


The last preliminary result we need is the following. Let $\connspace_0 := \{A \in \connspace : \hat{A}(0) = 0 \text{ and } n \cdot \hat{A}(n) = 0 \text{ for all $n \in \Z^3$}\}$. The following lemma shows that $\connspace_0$ is identified with the orbit space $\orbitspace$. (This is what's known as gauge-fixing; this particular gauge is known as the Coulomb gauge -- see e.g. \cite{Che2019}.) As we will see, this will allow us to sample the Fourier coefficients as independent complex Gaussians. (There is an issue whether formally, the ``Lebesgue measure" on $\connspace$ is mapped to ``Lebesgue measure" on $\connspace_0$. This is true here, since $\connspace_0$ is a linear subspace of $\connspace$. For non-Abelian Lie groups, this will not be the case, which necessitates the introduction of Faddeev--Popov ghosts --- see, e.g., \cite{Fadd2009} for more on this. At any rate, our construction for the $\unitary(1)$ theory will be consistent with the existing one in the literature --- see Remark \ref{remark:u(1)-comparison-with-Gross}.) 

\begin{lemma}\label{lemma:gauge-fixing-u(1)}
Every gauge orbit $[A]$ has a unique element $A_1 \in [A]$ such that $\hat{A}_1(0) = 0$ and such that $n \cdot \hat{A}_1(n) = 0$ for all $n \in \Z^3$. Consequently, the orbit space $\orbitspace$ is identified with~$\connspace_0$.
\end{lemma}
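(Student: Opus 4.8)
The plan is to write down an explicit gauge transformation that moves any connection into $\connspace_0$, and then to verify uniqueness by analyzing the residual gauge freedom. Since $\liegroup = \unitary(1)$ is abelian, the gauge action \eqref{eq:gaugetransform} simplifies dramatically: writing $\gt = e^{\icomplex \theta}$ for a smooth real-valued $\theta : \threetorus \to \R$ (this requires $\gt$ to be homotopically trivial; I will need to address general $\gt$ separately, see below), we get $A_i^\gt = A_i + \icomplex \ptl_i \theta$, i.e.\ $A^\gt = A + \icomplex\, d\theta$. So the orbit of $A$ is exactly $\{A + \icomplex\, d\theta : \theta \in C^\infty(\threetorus,\R)\}$ together with possible contributions from homotopically nontrivial $\gt$ (which are of the form $x \mapsto e^{\icomplex 2\pi m \cdot x}$, $m \in \Z^3$, and contribute $A^\gt = A + \icomplex 2\pi m$, a constant shift that does not change $\hat A(n)$ for $n \neq 0$ and shifts $\hat A(0)$ by $\icomplex 2\pi m$).

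\emph{Existence.} Given $A \in \connspace$, I want $\theta$ so that $A_1 := A + \icomplex\, d\theta$ satisfies $\hat A_1(0) = 0$ and $n \cdot \hat A_1(n) = 0$ for all $n$. Taking Fourier coefficients, $\widehat{d\theta}(n) = \icomplex 2\pi n\, \hat\theta(n)$, so $\hat A_1(n) = \hat A(n) - 2\pi n\, \hat\theta(n)$. For $n = 0$ the condition $\hat A_1(0) = 0$ cannot be achieved by $d\theta$ (since $\widehat{d\theta}(0) = 0$), but it can be achieved by a homotopically nontrivial gauge transformation only if $\hat A(0) \in \icomplex 2\pi \Z^3$; in general it is \emph{not} achievable, so I suspect the intended reading is that one first quotients by constants, or that the statement implicitly restricts $\hat A(0)$ --- I would need to reconcile this with the precise conventions, most likely by noting $\widehat{\icomplex d\theta}(0)=0$ forces us to also allow the constant (large) gauge transformations, under which $\hat A(0)$ lives in $\C^3/(2\pi\icomplex\Z^3)$, and arguing the representative is unique there. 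Setting that aside, for $n \neq 0$ I choose $\hat\theta(n) := (n \cdot \hat A(n))/(2\pi |n|^2)$; then $n \cdot \hat A_1(n) = n\cdot\hat A(n) - 2\pi |n|^2 \hat\theta(n) = 0$. I must check $\theta$ defined by these coefficients is smooth: since $A$ is smooth, $|\hat A(n)|$ decays faster than any polynomial by \eqref{eq:fourier-coefficients-rapid-decay-general-dim}, hence so does $|\hat\theta(n)| \le |\hat A(n)|/(2\pi|n|)$, so $\theta \in C^\infty(\threetorus,\R)$ (reality of $\theta$ follows from $\hat A(-n) = -\ovl{\hat A(n)}$, which gives $\hat\theta(-n) = \ovl{\hat\theta(n)}$). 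This produces the desired representative $A_1 \in \connspace_0$.

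\emph{Uniqueness.} Suppose $A_1, A_1' \in \connspace_0$ lie in the same gauge orbit, so $A_1' = A_1 + \icomplex\, d\theta + \icomplex 2\pi m$ for some smooth real $\theta$ and some $m \in \Z^3$ (absorbing large gauge transformations). Comparing Fourier coefficients at $n = 0$: both $\hat A_1(0)$ and $\hat A_1'(0)$ vanish, so $2\pi m = 0$, i.e.\ $m = 0$. For $n \neq 0$: $\hat A_1'(n) - \hat A_1(n) = 2\pi n\, \hat\theta(n)$, which is parallel to $n$; but also dotting with $n$, $n \cdot (\hat A_1'(n) - \hat A_1(n)) = 0 - 0 = 0$ by the Coulomb condition, forcing $2\pi|n|^2\hat\theta(n) = 0$, hence $\hat\theta(n) = 0$ for all $n \neq 0$, so $d\theta = 0$ and $A_1' = A_1$. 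This gives uniqueness, and the bijection $\orbitspace \leftrightarrow \connspace_0$ follows: the map $\connspace_0 \hookrightarrow \connspace \to \orbitspace$ is surjective by existence and injective by uniqueness.

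\emph{Main obstacle.} The one genuinely delicate point is the bookkeeping of ``large'' (homotopically nontrivial) gauge transformations and the zero mode $\hat A(0)$ --- one must make sure that the classification $\gt \in \gaugetransf \iff \gt = e^{\icomplex\theta}\cdot e_m$ is correct on $\threetorus$ (this uses $\pi_0$ of the loop space / the fact that $[\threetorus, \unitary(1)] = H^1(\threetorus;\Z) = \Z^3$), and to decide whether the problem statement intends $\hat A_1(0) = 0$ literally (which needs $\hat A(0)$ to already be ``trivial'') or modulo the lattice $2\pi\icomplex\Z^3$. Everything else is a routine Fourier computation exploiting the abelian structure and the rapid decay \eqref{eq:fourier-coefficients-rapid-decay-general-dim}.
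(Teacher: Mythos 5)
Your approach is genuinely different from the paper's: the paper never writes $\gt = e^{\icomplex\theta}$ or exhibits any gauge transformation at all. Instead, both existence and uniqueness in the paper's proof run entirely through Lemma~\ref{lemma:gauge-equiv-characterization}, which asserts that $[A_1]=[A_2]$ if and only if $d(A_1-A_2)=0$. With that characterization in hand, existence is immediate --- one defines $B := \sum_{n\neq 0}(\hat A(n) - (\hat A(n)\cdot n)\,n/|n|^2)e_n$, notes $d(A-B)=0$ via Lemma~\ref{lemma:da-zero-characterization}, and concludes $[A]=[B]$ --- and uniqueness follows by the same lemma plus the $n\cdot\hat A(n)=0$ condition and $\hat A(0)=0$. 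So the zero-mode bookkeeping you are worried about is invisible in the paper because no gauge transformation is ever constructed.

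That said, your hesitation about $\hat A(0)$ is legitimate and you should not simply ``set it aside.'' Your direct description shows the gauge orbit of $A$ is $\{A + \icomplex\,d\theta + 2\pi\icomplex m : \theta \in C^\infty(\threetorus,\R),\ m \in \Z^3\}$, so $\hat A(0)$ can be adjusted only within $2\pi\icomplex\Z^3$, not set to zero. This is a real obstruction that Lemma~\ref{lemma:gauge-equiv-characterization} quietly brushes past: its converse direction checks that the $n\neq 0$ contributions to the Wilson loop formula of Lemma~\ref{lemma:wilson-loop-heat-kernel-regularized} agree, but the $n=0$ contribution is $\hat A(0)\cdot\int_0^1\wloop'(s)\,ds = \hat A(0)\cdot m$ with $m\in\Z^3$ the winding vector of $\wloop$, and for a non-contractible loop this detects $\hat A(0)$ modulo $2\pi\icomplex\Z^3$. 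Concretely, $A_1\equiv 0$ and $A_2\equiv\icomplex\pi e_1$ satisfy $d(A_1-A_2)=0$ yet have different Wilson loops (take a loop winding once in direction $1$ and the defining character of $\unitary(1)$), so they are not gauge equivalent. Thus your computation, rather than being a route you failed to complete, identifies a genuine subtlety in the stated bijection $\orbitspace\leftrightarrow\connspace_0$; the clean repair is to replace $\hat A_1(0)=0$ by requiring $\hat A_1(0)$ to lie in a fixed fundamental domain for $2\pi\icomplex\Z^3$, or to restrict $\gaugetransf$ to its identity component throughout Section~\ref{section:3d-u(1)}.
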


We now begin to construct the random variables $\rvnldistspace_n$ which appear in Theorem~\ref{mainthm}. The laws of these random variables should approximate (at least in a formal sense) the Yang-Mills measure \eqref{eq:ym-measure}. We next see how this is done.

For each $n \in \Z^3 \setminus \{0\}$, fix orthogonal vectors $u_n^1, u_n^2 \in \R^3$ such that $n \cdot u_n^k = 0$ and $|u_n^k| = 1$ for $k = 1, 2$. That is, we fix an orthonormal basis of the subspace of $\R^3$ which is orthogonal to $n$. We impose that $u_{-n}^k = u_n^k$ for all $n \in \Z^3 \setminus \{0\}$, $k = 1, 2$. 

Let $A \in \connspace_0$. Then $n \cdot \hat{A}(n) = 0$ for all $n \in \Z^3$. Thus for each $n \in \Z^3 \setminus \{0\}$, there exists unique $z_n^1, z_n^2 \in \C$ such that
\[ \hat{A}(n) = z_n^1 u_n^1 + z_n^2 u_n^2.\]
Also, recall that we have $\hat{A}(-n) = - \ovl{\hat{A}(n)}$ for all $n \in \Z^3$, which implies that $z_{-n}^k = - \ovl{z_n^k}$ for all $n \in \Z^3 - \{0\}$, $k = 1, 2$. We have by Lemma \ref{lemma:da-l2-norm-formula} (combined with the fact that $n \cdot \hat{A}(n) = 0$ for all $n \in \Z^3$) that
\[ \sym(A) = 8\pi^2 \sum_{n \in \Z^3}  |n|^2( |z_n^1|^2 + |z_n^2|^2).\]
This allows us to place a Gaussian density on the $z_n^k$, which we proceed to do next.

First, note that there is some redundancy, namely, $z_{-n}^k = - \ovl{z_n^k}$. To capture this redundancy, let $I_1 \sse I_2 \sse \cdots$ be a sequence of nested subsets of $\Z^3$ with the following property. For each $M \geq 1$, the set $I_M \sse \{n \in \Z^3 : |n|_\infty \leq M\}$ is such that $0 \in I_M$, and additionally for each $n \in \Z^3 \setminus \{0\}$ with $|n|_\infty \leq M$, exactly one of $n, -n$ is in $I_M$. Let $I_\infty := \bigcup_{M \geq 1} I_M$. We can then write
\[ \sym(A) = 16 \pi^2 \sum_{n \in I_\infty} |n|^2( |z_n^1|^2 + |z_n^2|^2), \]
and so
\[ \exp(- \frac{1}{g^2}S_{\textup{YM}}(A)) = \prod_{n \in I_\infty} \exp(- \frac{1}{g^2}16\pi^2 |n|^2 (|z_n^1|^2 + |z_n^2|^2)).\]
This suggests that we should sample the coefficients $z_n^k$, $n \in I_\infty$, $k = 1, 2$ independently from a normal distribution, with mean zero and the right variance. We do this as follows. Take a collection of independent random variables $(Z_n^k, n \in I_\infty, n \neq 0, k = 1, 2)$ such that $Z_n^k = X_n^k + i Y_n^k$, where for each $n \in I_\infty$, we have $X_n^1, Y_n^1, X_n^{2}, Y_n^{2} \stackrel{i.i.d.}{\sim} N(0, g^2 / (32 \pi^2 |n|^2))$. For $n \in \Z^3$, $n \notin I_\infty$, $k = 1, 2$, let $Z_{n}^k := - \ovl{Z_{-n}^k}$. For notational simplicity, let $Z_n := Z_n^1 u_n^1 + Z_n^2 u_n^2$ (note that this is consistent with the condition $Z_{-n} = - \ovl{Z_n}$). Note that by construction, $n \cdot Z_n = 0$ and $\E [|Z_n|^2] = g^2 O(|n|^{-2})$.

We next define the random variables $\rvnldistspace_n$ which appear in Theorem \ref{mainthm}. First, we state the following lemma which will be needed. The proof is in Appendix \ref{section:3d-miscellaneous-proofs}.

\begin{lemma}\label{lemma:in-nldist-space}
Let $A_0 \in \connspace_0$. Define the function $X : (0, \infty) \ra \orbitspace$ by $X(t) := [e^{t \Delta} A_0]$ for $t > 0$. Then $X \in \nonlineardistspace$.
\end{lemma}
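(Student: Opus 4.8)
The plan is to exploit two special features of the $\liegroup = \unitary(1)$ case: the Yang--Mills heat equation \eqref{eq:ymdef} becomes the \emph{linear} heat equation restricted to divergence-free {\oneforms}, and the Coulomb gauge condition defining $\connspace_0$ is preserved by the ordinary heat flow. First I would note that since $\lalg = \icomplex\R$ is abelian, every commutator in \eqref{eq:curvdef} and \eqref{eq:ymdef} vanishes, so for a smooth {\oneform} $A$ one has $(\curv{A})_{ij} = \ptl_i A_j - \ptl_j A_i$ and the Yang--Mills heat equation reads
\[
\ptl_t A_i \;=\; \Delta A_i \;-\; \ptl_i\Big(\sum_{j=1}^3 \ptl_j A_j\Big), \qquad 1 \le i \le 3.
\]
By \eqref{eq:heat-kernel-def} the $n$-th Fourier coefficient of $e^{t\Delta}A$ equals $e^{-4\pi^2|n|^2 t}\hat{A}(n)$; hence if $A \in \connspace_0$, so that $\hat{A}(0) = 0$ and $n\cdot\hat{A}(n) = 0$ for all $n$, then the same two identities hold for $e^{t\Delta}A$, i.e.\ $e^{t\Delta}A \in \connspace_0$ for every $t \ge 0$ and $\sum_{j}\ptl_j(e^{t\Delta}A)_j \equiv 0$. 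Consequently $\{e^{t\Delta}A_0\}_{t\ge 0}$ is a smooth solution of the displayed equation, hence of the full equation \eqref{eq:ymdef}, with initial data $A_0$; smoothness up to and including $t = 0$ is immediate from the rapid decay of the Fourier coefficients of the smooth {\oneform} $A_0$ together with \eqref{eq:heat-kernel-def}.

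Next I would invoke \rade's uniqueness theorem for the smooth Yang--Mills heat flow (cited in Section~\ref{ymheatsec}) to conclude that $\{e^{t\Delta}A_0\}_{t\ge 0}$ \emph{is} the Yang--Mills heat flow started from $A_0$, and therefore $\Phi_t([A_0]) = [e^{t\Delta}A_0]$ for all $t \ge 0$. Running the same argument with $A_0$ replaced by $e^{s\Delta}A_0 \in \connspace_0$ (for fixed $s > 0$) gives $\Phi_{t-s}([e^{s\Delta}A_0]) = [e^{(t-s)\Delta}e^{s\Delta}A_0]$ for every $t > s$. Combining this with the elementary semigroup identity $e^{(t-s)\Delta}e^{s\Delta}A_0 = e^{t\Delta}A_0$, which is immediate from \eqref{eq:heat-kernel-def}, yields
\[
X(t) \;=\; [e^{t\Delta}A_0] \;=\; \Phi_{t-s}\big([e^{s\Delta}A_0]\big) \;=\; \Phi_{t-s}(X(s)) \qquad \text{for all } t > s > 0.
\]
Since also $X(t) = [e^{t\Delta}A_0] \in \orbitspace$ for each $t > 0$, this is exactly the defining property of an element of $\nonlineardistspace$, so $X \in \nonlineardistspace$.

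The argument is largely bookkeeping; the only genuine input is the reduction to the linear heat equation together with the invariance of the Coulomb gauge under $e^{t\Delta}$ in the abelian case, which makes the identification $\Phi_t([A_0]) = [e^{t\Delta}A_0]$ essentially automatic via \rade's uniqueness. If there is any subtlety at all, it lies in confirming that $e^{t\Delta}A_0$ genuinely extends to a smooth flow on $[0,\infty)$ with legitimate initial condition $A_0$, so that the uniqueness statement applies; this follows directly from \eqref{eq:heat-kernel-def} and the standard rapid decay of the Fourier coefficients of a smooth function.
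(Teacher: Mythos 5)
Your proof is correct and takes essentially the same route as the paper: both reduce to observing that $e^{t\Delta}A_0$ preserves the Coulomb gauge condition (checked on Fourier coefficients), so that the divergence term in the abelian Yang--Mills heat equation vanishes and $e^{t\Delta}A_0$ is the Yang--Mills heat flow from $A_0$; the conclusion $X\in\nonlineardistspace$ then follows from the semigroup structure. Your re-application of the argument to $e^{s\Delta}A_0$ together with the heat-kernel semigroup identity is just a slightly more explicit repackaging of the paper's appeal to the defining property of $\nonlineardistspace$.
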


\begin{definition}\label{def:u(1)-fourier-cutoff}
Let $\fouriercutoff \geq 1$. First, define the following $\connspace$-valued random variable:
\[ \rconn^0_M := \sum_{\substack{|n|_\infty \leq M \\ n \neq 0}} Z_n e_n. \]
Note that $\rconn^0_M \in \connspace_0$, since $n \cdot Z_n = 0$ for all $n \in \Z^3 \setminus \{0\}$.
Then, define the $\nonlineardistspace$-valued random variable $\rvnldistspace_M$ by setting $\rvnldistspace_M(t) := [e^{t \Delta} \rconn^0_M]$ for $t > 0$. By Lemma~\ref{lemma:in-nldist-space}, we have that indeed $\rvnldistspace_M \in \nonlineardistspace$.
\end{definition}

We proceed to show that the sequence $\{\rvnldistspace_M\}_{M \geq 1}$ converges in distribution. We will then define the limiting measure to be the 3D $\unitary(1)$ theory. We will follow the standard path of proving tightness and the convergence of the finite dimensional distributions. First, we show the tightness condition of Theorem \ref{mainthm}.

\begin{lemma}\label{lemma:u(1)-tightness}
For all $t > 0$, $\{S_{\textup{YM}}(\rvnldistspace_\fouriercutoff(t))\}_{\fouriercutoff \geq 1}$ is a tight family of $\R$-valued random variables.
\end{lemma}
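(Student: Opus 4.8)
The plan is to reduce everything to a second-moment (in fact first-moment) computation using gauge invariance of $\sym$ together with the Fourier formula of Lemma~\ref{lemma:da-l2-norm-formula}. First I would observe that since $\sym$ descends to $\orbitspace$ and $\rvnldistspace_\fouriercutoff(t) = [e^{t\Delta}\rconn^0_\fouriercutoff]$, we have $\sym(\rvnldistspace_\fouriercutoff(t)) = \sym(e^{t\Delta}\rconn^0_\fouriercutoff)$. The Fourier coefficients of $e^{t\Delta}\rconn^0_\fouriercutoff$ are $e^{-4\pi^2|n|^2 t} Z_n$ for $n \neq 0$, $|n|_\infty \leq \fouriercutoff$, and zero otherwise; crucially $n \cdot Z_n = 0$ by construction, so the term $|n \cdot \hat{A}(n)|^2$ in Lemma~\ref{lemma:da-l2-norm-formula} vanishes identically. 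Hence
\[
\sym(\rvnldistspace_\fouriercutoff(t)) = 8\pi^2 \sum_{\substack{n \in \Z^3,\ n \neq 0 \\ |n|_\infty \leq \fouriercutoff}} |n|^2 e^{-8\pi^2 |n|^2 t} |Z_n|^2,
\]
which is a \emph{finite} sum of nonnegative terms, so in particular $\sym(\rvnldistspace_\fouriercutoff(t)) \geq 0$ almost surely.

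Next I would compute the expectation. Recall $Z_n = Z_n^1 u_n^1 + Z_n^2 u_n^2$ with $u_n^1, u_n^2$ orthonormal, so $|Z_n|^2 = |Z_n^1|^2 + |Z_n^2|^2$, and each $Z_n^k = X_n^k + \icomplex Y_n^k$ with $X_n^k, Y_n^k \sim N(0, g^2/(32\pi^2|n|^2))$ independent. Thus $\E[|Z_n|^2] = 4 \cdot g^2/(32\pi^2 |n|^2) = g^2/(8\pi^2 |n|^2)$. Plugging in and using linearity of expectation over the finite sum,
\[
\E[\sym(\rvnldistspace_\fouriercutoff(t))] = 8\pi^2 \sum_{\substack{n \neq 0 \\ |n|_\infty \leq \fouriercutoff}} |n|^2 e^{-8\pi^2|n|^2 t} \cdot \frac{g^2}{8\pi^2 |n|^2} = g^2 \sum_{\substack{n \neq 0 \\ |n|_\infty \leq \fouriercutoff}} e^{-8\pi^2 |n|^2 t} \leq g^2 \sum_{n \in \Z^3} e^{-8\pi^2|n|^2 t} =: C_t,
\]
and $C_t < \infty$ for every fixed $t > 0$ since the Gaussian-type sum converges (it factors as a cube of a one-dimensional theta-type series).

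Finally, since $\sym(\rvnldistspace_\fouriercutoff(t)) \geq 0$ and $\sup_{\fouriercutoff \geq 1} \E[\sym(\rvnldistspace_\fouriercutoff(t))] \leq C_t$, Markov's inequality gives $\p(\sym(\rvnldistspace_\fouriercutoff(t)) > \lambda) \leq C_t/\lambda$ uniformly in $\fouriercutoff$, which is exactly the tightness claim; given $\varep > 0$, choose $\lambda = C_t/\varep$. There is no real obstacle here: the only points requiring a line of care are invoking gauge invariance of $\sym$ to pass to the Coulomb-gauge representative (so that the $n \cdot \hat A(n)$ term drops), and noting that for each $\fouriercutoff$ the sum is finite so termwise expectation is immediate. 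If one wanted a cleaner bound one could even note the whole family is bounded in $L^1$, which is stronger than needed. I would present the argument essentially in the three displayed steps above.
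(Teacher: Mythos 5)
Your proof is correct and follows essentially the same route as the paper: reduce to $\sym(e^{t\Delta}\rconn^0_M)$, apply Lemma~\ref{lemma:da-l2-norm-formula}, and bound the expectation uniformly in $M$ using $\E[|Z_n|^2] = g^2 O(|n|^{-2})$. The only (immaterial) differences are that you note the $|n\cdot\hat A(n)|^2$ term vanishes exactly since $n\cdot Z_n = 0$ (the paper simply drops it as an inequality) and you spell out the final Markov step that the paper leaves implicit.
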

\begin{proof}
Let $t > 0$. Observe that 
\[ S_{\textup{YM}}(\rvnldistspace_\fouriercutoff(t)) = S_{\textup{YM}}(e^{t\Delta} \rconn^0_M) .\]
Now by \eqref{eq:heat-kernel-def} and Lemma \ref{lemma:da-l2-norm-formula}, we have that 
\begin{align*}
\E \big[\sym(e^{t\Delta} \rconn^0_M)\big] &\leq 8\pi^2 \sum_{n \in \Z^3} e^{-8 \pi^2 |n|^2 t} |n|^2 \E\big[|Z_n|^2\big].
\end{align*}
To finish, note by construction that $\E [|Z_n|^2] = g^2 O(|n|^{-2})$, and thus the right hand side above is finite (and also independent of $M$). The desired result follows.
\end{proof}

Next, we show the analogue of convergence of the finite dimensional distributions.~First, for any piecewise $C^1$ loop $\wloop : [0, 1] \ra \threetorus$ and $t > 0$, define the ($\icomplex\R$)-valued random variable
\[ H_{\wloop, t} := \sum_{n \in \Z^3 \setminus \{0\}} e^{-4\pi^2 |n|^2 t} Z_n \cdot \int_0^1 e_n(\wloop(s)) \wloop'(s) ds. \]
Note that a.s., the sum above converges absolutely, since $\E[|Z_n|^2] = g^2O(|n|^{-2})$ and $\int_0^1 |\wloop'(s)| ds < \infty$, so that $\sum_{n \in \Z^3 \setminus \{0\}} e^{-4\pi^2 |n|^2 t} \E[|Z_n|] \int_0^1 |\wloop'(s)| ds < \infty$ (here we also use that if a sum of non-negative random variables has finite expectation, then the sum converges a.s.).

\begin{lemma}\label{lemma:u(1)-fdds-converge}
For any finite collection $\wloop_i, \character_i, t_i$, $1 \leq i \leq k$, where $\wloop_i : [0, 1] \ra \threetorus$ is a piecewise $C^1$ loop, $\character_i$ is a character of $\liegroup$, and $t_i > 0$, we have that as $\fouriercutoff \toinf$, $(W_{\wloop_i, \character_i, t_i}(\rvnldistspace_\fouriercutoff), 1 \leq i \leq k)$ converges in distribution to $(\character_i(\exp(H_{\wloop_i, t_i})), 1 \leq i \leq k)$.
\end{lemma}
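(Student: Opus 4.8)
The plan is to show convergence in distribution by reducing to convergence of the underlying $(\icomplex \R)$-valued random variables $H_{\wloop_i, t_i}^{(\fouriercutoff)}$ driving the Wilson loop functionals, and then applying the continuous mapping theorem. First I would write out explicitly, using Lemma \ref{lemma:wilson-loop-heat-kernel-regularized} (applied with the $\oneform$ $\rconn^0_\fouriercutoff$ and the fact that $\rvnldistspace_\fouriercutoff(t) = [e^{t\Delta}\rconn^0_\fouriercutoff]$), that
\[ W_{\wloop_i, \character_i, t_i}(\rvnldistspace_\fouriercutoff) = \character_i\Bigg(\exp\bigg(\sum_{\substack{n \in \Z^3 \setminus \{0\} \\ |n|_\infty \leq \fouriercutoff}} e^{-4\pi^2 |n|^2 t_i} Z_n \cdot \int_0^1 e_n(\wloop_i(s)) \wloop_i'(s)\, ds\bigg)\Bigg). \]
Denote the argument of $\exp$ by $H_{\wloop_i, t_i}^{(\fouriercutoff)}$, so that this is exactly the truncation of the series defining $H_{\wloop_i, t_i}$. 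Since each $\character_i$ is a character of $\unitary(1)$, it is of the form $z \mapsto z^{m_i}$ for some integer $m_i$ (on $\unitary(1)$ itself, extended via the exponential: $\character_i(\exp(w)) = e^{m_i w}$), so the map $w \mapsto \character_i(\exp(w))$ from $\icomplex \R$ to $\C$ is continuous. Thus by the continuous mapping theorem it suffices to prove that $(H_{\wloop_i, t_i}^{(\fouriercutoff)}, 1 \leq i \leq k)$ converges in distribution (indeed, almost surely, or at least in probability) to $(H_{\wloop_i, t_i}, 1 \leq i \leq k)$ as $\fouriercutoff \to \infty$.

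For this last convergence, I would show it actually holds almost surely, for each fixed $i$ simultaneously. The tail $H_{\wloop_i, t_i} - H_{\wloop_i, t_i}^{(\fouriercutoff)} = \sum_{|n|_\infty > \fouriercutoff} e^{-4\pi^2|n|^2 t_i} Z_n \cdot \int_0^1 e_n(\wloop_i(s))\wloop_i'(s)\,ds$ is bounded in absolute value by $\big(\int_0^1 |\wloop_i'(s)|\,ds\big) \sum_{|n|_\infty > \fouriercutoff} e^{-4\pi^2|n|^2 t_i} |Z_n|$, and since $|Z_n|$ has finite mean with $\E|Z_n| = O(|n|^{-1})$, the full series $\sum_{n} e^{-4\pi^2|n|^2 t_i}|Z_n|$ is a.s. finite (by monotone convergence / Borel--Cantelli on the summability, exactly as in the paragraph preceding Lemma \ref{lemma:u(1)-fdds-converge}), so its tails go to $0$ a.s. Hence $H_{\wloop_i, t_i}^{(\fouriercutoff)} \to H_{\wloop_i, t_i}$ a.s., and therefore the vector converges a.s., hence in distribution. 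Applying the continuous mapping theorem with the continuous function $(w_1, \dots, w_k) \mapsto (\character_1(\exp(w_1)), \dots, \character_k(\exp(w_k)))$ finishes the proof.

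I do not expect a serious obstacle here; the main points requiring care are (i) correctly invoking Lemma \ref{lemma:wilson-loop-heat-kernel-regularized} with the random $\oneform$ $\rconn^0_\fouriercutoff$, noting that its Fourier coefficients are $\hat{\rconn^0_\fouriercutoff}(n) = Z_n$ for $|n|_\infty \le \fouriercutoff$ and $0$ otherwise, so the finite sum in \eqref{eq:regwl-in-terms-of-fourier-coef} becomes the truncated sum above; and (ii) the almost-sure absolute convergence of the defining series for $H_{\wloop, t}$, which was already observed in the text and only needs to be applied uniformly over the finitely many pairs $(\wloop_i, t_i)$. If one prefers to avoid the a.s. statement, convergence in $L^1$ of the tail (immediate from $\E|Z_n| = O(|n|^{-1})$ and the Gaussian-type decay $e^{-4\pi^2|n|^2 t_i}$) gives convergence in probability, which is equally sufficient for convergence in distribution of the vector.
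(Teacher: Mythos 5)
Your proposal is correct and takes essentially the same approach as the paper: both invoke Lemma~\ref{lemma:wilson-loop-heat-kernel-regularized} to express $W_{\wloop_i,\character_i,t_i}(\rvnldistspace_\fouriercutoff)$ as $\character_i$ applied to the exponential of the truncated series, then observe the truncation converges almost surely to $H_{\wloop_i,t_i}$ (the paper leaves the continuity/continuous-mapping step implicit, while you spell it out). One small slip: a general character of $\unitary(1)$ is a finite nonnegative-integer combination of the irreducible characters $z\mapsto z^m$, not a single power; this does not affect your argument, since all you actually use is continuity of $w\mapsto\character_i(\exp(w))$, which holds for any character.
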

\begin{proof}
We in fact have a.s. convergence. To see this, note that for any $\wloop, \character, t$, we have
\begin{align*}
W_{\wloop, \character, t}(\rvnldistspace_\fouriercutoff) & = W_{\wloop, \character}(e^{t\Delta} \rconn^0_M)\\
&= \character\Bigg(\exp \bigg(\sum_{\substack{|n|_\infty \leq M \\ n \neq 0}} e^{-4\pi^2 |n|^2 t} Z_n \cdot \int_0^1 e_n(\wloop(s)) \wloop'(s) ds \bigg)\Bigg),
\end{align*}
where in the second equality, we have used Lemma \ref{lemma:wilson-loop-heat-kernel-regularized}. By the remark right before the statement of this lemma, we have that
\[ \lim_{M \toinf} \sum_{\substack{|n|_\infty \leq M \\ n \neq 0}} e^{-4\pi^2 |n|^2 t} Z_n \cdot \int_0^1 e_n(\wloop(s)) \wloop'(s) ds  \stackrel{a.s.}{=} H_{\ell, t}.\]
From this, the a.s.~convergence of $W_{\wloop_i, \character_i, t_i}(\rvnldistspace_\fouriercutoff)$ to $\chi_i(\exp(H_{\wloop_i, t_i}))$ for each $1 \leq i \leq k$ follows.
\end{proof}

By combining Lemmas \ref{lemma:u(1)-tightness} and \ref{lemma:u(1)-fdds-converge} and applying Theorem \ref{mainthm}, we can now show that the $\rvnldistspace_\fouriercutoff$ converge in distribution.

\begin{prop}\label{prop:u(1)-finite-dim-approximations-convergence}
The random variables $\rvnldistspace_\fouriercutoff$ converge in distribution (as $\fouriercutoff \toinf$) to a $\nonlineardistspace$-valued random variable $\rvnldistspace_\infty$, whose law is characterized as follows. For any finite collection $\wloop_i, \character_i, t_i$, $1 \leq i \leq k$, where $\wloop_i : [0, 1] \ra \threetorus$ is a piecewise $C^1$ loop, $\character_i$ is a character of $\liegroup$, and $t_i > 0$, we have that
\[ (W_{\wloop_i, \character_i, t_i}(\rvnldistspace_\infty), 1 \leq i \leq k) \text{ is equal in law to } (\character_i(\exp(H_{\wloop_i, t_i})), 1 \leq i \leq k).\]
(It follows by Lemma \ref{fdlemma} that the law of $\rvnldistspace_\infty$ is uniquely determined by the above.)
\end{prop}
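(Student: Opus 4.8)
The plan is to combine the tightness from Lemma~\ref{lemma:u(1)-tightness}, the finite-dimensional convergence from Lemma~\ref{lemma:u(1)-fdds-converge}, the compactness statement of Theorem~\ref{mainthm}, and the separation statement of Lemma~\ref{fdlemma}, in the standard pattern ``tight sequence with a unique subsequential limit converges''.

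First I would invoke Lemma~\ref{lemma:u(1)-tightness}: for every $t>0$ the family $\{\sym(\rvnldistspace_\fouriercutoff(t))\}_{\fouriercutoff\geq1}$ is tight, which is exactly the hypothesis of Theorem~\ref{mainthm}. Hence Theorem~\ref{mainthm} produces a subsequence $\{\rvnldistspace_{\fouriercutoff_k}\}_{k\geq1}$ and a probability measure $\mu$ on $(\nonlineardistspace,\newborel(\nonlineardistspace))$ with $\E f(\rvnldistspace_{\fouriercutoff_k})\to\int_{\nonlineardistspace} f\,d\mu$ for all bounded continuous $f:(\nonlineardistspace,\newtop)\to\R$. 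Let $\rvnldistspace_\infty$ be a random variable with law $\mu$; by construction it is an $\nonlineardistspace$-valued random variable.

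Next I would identify the finite-dimensional distributions. Fix $\wloop_i,\character_i,t_i$ for $1\leq i\leq k$, and let $g:\nonlineardistspace\to\C^k$ be $g(X)=(W_{\wloop_i,\character_i,t_i}(X))_{1\leq i\leq k}$. By the definition of $\newtop$ as the topology generated by the functionals $\regwlorbit{\wloop}{\character}{t}$, each coordinate of $g$ is continuous, hence $g$ is continuous; composing with bounded continuous test functions on $\C^k$ and applying the weak convergence along $\{\fouriercutoff_k\}$ gives $g(\rvnldistspace_{\fouriercutoff_k})\Rightarrow g_*\mu$ in the ordinary sense on $\C^k$ (continuous mapping). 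On the other hand, Lemma~\ref{lemma:u(1)-fdds-converge} gives $g(\rvnldistspace_{\fouriercutoff})\Rightarrow(\character_i(\exp(H_{\wloop_i,t_i})))_{1\leq i\leq k}$ along the full sequence, in particular along $\{\fouriercutoff_k\}$. Uniqueness of weak limits then forces $g_*\mu$ to be the law of $(\character_i(\exp(H_{\wloop_i,t_i})))_{1\leq i\leq k}$, which is the asserted characterization.

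Finally I would upgrade to convergence of the whole sequence. If $\rvnldistspace_\fouriercutoff$ did not converge weakly to $\mu$, there would be a bounded continuous $f$ and a subsequence along which $\E f(\rvnldistspace_{\fouriercutoff_j})$ stays bounded away from $\int f\,d\mu$. The hypothesis of Theorem~\ref{mainthm} still holds along this subsequence, so it has a further subsequence converging weakly to some probability measure $\mu'$ on $(\nonlineardistspace,\newborel(\nonlineardistspace))$; by the previous paragraph $\mu'$ has the same pushforwards under every such $g$ as $\mu$, so Lemma~\ref{fdlemma} gives $\mu'=\mu$, contradicting the choice of $f$. Hence $\rvnldistspace_\fouriercutoff\Rightarrow\rvnldistspace_\infty$, and the uniqueness of the law of $\rvnldistspace_\infty$ among measures with the stated finite-dimensional distributions is again Lemma~\ref{fdlemma}. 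The only step needing any care is the continuity of $g$ and the associated continuous-mapping argument; this is immediate from the definition of $\newtop$, so no real obstacle is expected — the rest is bookkeeping around Theorems~\ref{mainthm} and Lemma~\ref{fdlemma}.
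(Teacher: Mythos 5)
Your proposal is correct and follows essentially the same route as the paper's proof: tightness via Lemma~\ref{lemma:u(1)-tightness} and Theorem~\ref{mainthm} to get subsequential weak limits, identification of finite-dimensional distributions via Lemma~\ref{lemma:u(1)-fdds-converge} together with continuity and boundedness of the regularized Wilson loop observables, and then Lemma~\ref{fdlemma} to force uniqueness of subsequential limits. The paper phrases the final step as ``every subsequence has a further subsequence converging to the same limit, hence the whole sequence converges,'' while you phrase it as a proof by contradiction — these are the same argument.
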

\begin{proof}
By Lemma \ref{lemma:u(1)-tightness}, the tightness condition of Theorem \ref{mainthm} is satisfied, and thus for any subsequence $\{\rvnldistspace_{\fouriercutoff_k}\}_{k \geq 1}$, there exists a further subsequence $\{\rvnldistspace_{\fouriercutoff_{k_j}}\}_{j \geq 1}$ which converges in distribution, say to some $\nonlineardistspace$-valued random variable $\tilde{\rvnldistspace}_\infty$. To finish, it suffices to show that subsequential limits are unique, i.e., the law of $\tilde{\rvnldistspace}_\infty$ is unique. To see this, by Lemma \ref{lemma:u(1)-fdds-converge} and the fact that regularized Wilson loop observables are bounded continuous functions on $\nonlineardistspace$ (by the definition of the topology on $\nonlineardistspace$), we have that
\[ (W_{\wloop_i, \character_i, t_i}(\tilde{\rvnldistspace}_\infty), 1 \leq i \leq k) \text{ is equal in law to }  (\character_i(\exp(H_{\wloop_i, t_i})), 1 \leq i \leq k)\]
for any finite collection $\wloop_i, \character_i$, $t_i$, $1 \leq i \leq k$. By Lemma \ref{fdlemma}, this gives the desired uniqueness of the subsequential limits. 
\end{proof}

In light of Proposition \ref{prop:u(1)-finite-dim-approximations-convergence}, we can define the 3D $\unitary(1)$ theory as follows.

\begin{definition}
The 3D $\unitary(1)$ theory is defined to be the law of the $\nonlineardistspace$-valued random variable $\rvnldistspace_\infty$ from Proposition \ref{prop:u(1)-finite-dim-approximations-convergence}. Thus it is a probability measure on $(\nonlineardistspace, \mc{B}_\nonlineardistspace)$.
\end{definition}

\begin{remark}\label{remark:explicit-formula-rvnldistpsace-infty}
Naturally, the random variable $\rvnldistspace_\infty$ can be defined using the following explicit formula:
\beq\label{eq:x-infty-explicit-formula} \rvnldistspace_\infty(t) := \bigg[\sum_{n \in \Z^3 \setminus \{0\}} e^{-4\pi^2 |n|^2 t} Z_n e_n \bigg], ~~ t > 0.  \eeq
Note that since $\E[|Z_n|^2] = g^2 O(|n|^{-2})$, we have that $\sum |n|^{-2} |Z_n|^2 < \infty$ a.s. Using this, we may obtain that a.s., for any $t > 0$, the sum on the right hand side defines a smooth {\oneform}, and thus the right hand side above is well-defined for any $t > 0$. (Off this a.s.~event, we set $\rvnldistspace_\infty(t)$ to be the gauge orbit of the zero {\oneform} (say) for all $t > 0$.) With this definition of $\rvnldistspace_\infty$, one can show that $\rvnldistspace_M \ra \rvnldistspace_\infty$~a.s.
\end{remark}

\begin{remark}\label{remark:u(1)-comparison-with-Gross}
Let us see how our construction of the 3D $\unitary(1)$ theory compares to Gross's construction \cite{G1983}, which was defined as the law of a random distributional curvature form. First, observe that the sequence of random variables $\{\rconn^0_M\}_{M \geq 1}$ from Definition~\ref{def:u(1)-fourier-cutoff} a.s.~converges in a distributional space (for instance, in the negative Sobolev space $H^{-r}(\threetorus, \lalg^3)$ for large enough $r \geq 0$) to the following distributional {\oneform}:
\[ \rconn^0_\infty := \sum_{\substack{n \in \Z^3 \\ n \neq 0}} Z_n e_n. \]
Note that the explicit formula for the random variable $\rvnldistspace_\infty$ in Remark \ref{remark:explicit-formula-rvnldistpsace-infty} is exactly given by $\rvnldistspace_\infty(t) = [e^{t\Delta} \rconn^0_\infty]$ for $t > 0$. On the other hand, we may take the curvature $\curv{\rconn^0_\infty}$, which is a random distributional curvature form.
It can then be checked that $\curv{\rconn^0_\infty}$ has law which coincides with Gross's construction of the 3D $\unitary(1)$ theory as a random distributional curvature form (see in particular \cite[Equation (1.2)]{G1983}). 
\end{remark}


\section{Proof of the tightness result}\label{mainproof}

We begin towards the proof of Theorem \ref{mainthm}. The arguments we develop along the way will also allow us to prove Lemmas \ref{fdlemma} and \ref{symlmm}. Instead of working directly with smooth {\oneforms} $\connspace$ and smooth gauge transformations $\gaugetransf$, we will find it convenient to work mostly with $H^1$ {\oneforms} $\honeconnspace$ and $H^2$ gauge transformations $\htwogaugetransf$ (these will be defined in Section \ref{section:3dalt-notation}). This is mostly due to the property of Weak Uhlenbeck Compactness (see Theorem \ref{thm:weak-uhlenbeck-compactness}), which is satisfied by $\honeconnspace$, $\htwogaugetransf$. Thus our plan will be to first prove the analogue of Theorem \ref{mainthm} for the nonlinear distribution space obtained by using $\honeconnspace, \htwogaugetransf$ in place of $\connspace, \gaugetransf$, and then to deduce Theorem \ref{mainthm} from this analogue.

To summarize Section \ref{mainproof}, the aforementioned analogue of Theorem \ref{mainthm} is proven in Section \ref{section:nonlinear-dist-space}. In Sections \ref{section:3dalt-notation}--\ref{section:orbit-space-measure-theory}, we build up the needed results which are needed in the proof of the analogue. Then in Section \ref{section:completing-the-proof}, we use the preceding material to prove Theorem \ref{mainthm}, as well as Lemmas \ref{fdlemma} and \ref{symlmm}.

\subsection{Preliminaries}\label{section:3dalt-notation}

We now introduce some more notation, beyond what has already been introduced, that will be used throughout the rest of the manuscript. Throughout this section, $G$ will remain fixed, and $\const$ will denote a generic constant that may depend only on $\liegroup$. It may change from line to line, and even within a line. To express constants which depend on some term, say $\character$, we will write $\const_\character$. Again, $\const_\character$ may change from line to line, and even within a line. 

For general matrices $M$ (not necessarily belonging to $\liegroup$ or $\lalg$), let 
\[
\|M\| := \sqrt{\Tr(M^* M)}
\]
denote the Frobenius norm of $M$. We will often use the Cauchy--Schwarz inequality  $|\Tr(M_1^* M_2)| \leq \|M_1\| \|M_2\|$. Note also that the Frobenius norm is submultiplicative, that is, $\|M_1 M_2\| \leq \|M_1\| \|M_2\|$ (this, too, follows by the Cauchy--Schwarz inequality). 

We next begin to define a new space of {\oneforms} $\honeconnspace$ and gauge transformations $\htwogaugetransf$. We will see that $\honeconnspace$ is the space of $H^1$ {\oneforms}, and $\htwogaugetransf$ is the space of $H^2$ gauge transformations. (The superscripts attached to $\honeconnspace, \htwogaugetransf$ follow the convention of \cite{W2004}, and they are inspired by the usual notation for Sobolev spaces.)

Let $\lalg$ be endowed with the inner product $\langle \elemlalg_1, \elemlalg_2 \rangle_\lalg := \Tr(\elemlalg_1^* \elemlalg_2)$. This inner product turns $\lalg$ into a finite-dimensional real Hilbert space, and moreover, the norm associated with this inner product is the Frobenius norm. More generally, given any finite nonempty index set $I$, we endow $\lalg^I$ (whose elements will be denoted by tuples $\elemlalg = (\elemlalg_i, i \in I)$) with the inner product 
\[
\langle \elemlalg^1, \elemlalg^2\rangle_{\lalg^I} := \sum_{i \in I} \langle \elemlalg^1_i, \elemlalg^2_i \rangle_\lalg.
\]
Again, this inner product turns $\lalg^I$ into a finite-dimensional real Hilbert space. In a slight abuse of notation, we will write $\|\cdot\|$ also for the norm on $\lalg^I$, that is, for $\elemlalg \in \lalg^I$, we will write 
\[
\|\elemlalg\| := \langle \elemlalg, \elemlalg \rangle_{\lalg^I}^{1/2}.
\]
Let $(V, \langle \cdot, \cdot \rangle_V)$ be a finite-dimensional real Hilbert space. We will denote the norm on $V$ by $\|\cdot\|$ instead of $\|\cdot\|_V$. Let $1 \leq p \leq \infty$. Given a (measurable) function $f : \threetorus \ra \vectorspace$, let
\[
\|f\|_{L^p(\threetorus, \vectorspace)} := \begin{cases}  \big(\int_{\threetorus} \|f(x)\|^p dx\big)^{1/p} & 1 \leq p < \infty, \\ 
\esssup_{x \in \threetorus} \|f(x)\| & p = \infty .
\end{cases}
\]
For $1 \leq p \leq \infty$, let $L^p(\threetorus, V)$ be the real Banach space of (a.e.~equivalence classes of) measurable functions $\threetorus \ra V$ with finite $L^p$ norm. We will write $\|\cdot\|_p$ instead of $\|\cdot\|_{L^p(\threetorus, V)}$ for brevity. Note that when $p = 2$, $L^2(\threetorus, V)$ is a Hilbert space with inner product given by 
\beq\label{eq:l2-inner-product} (f_1, f_2) := \int \langle f_1(x), f_2(x) \rangle_V dx .\eeq 
Next, we define the relevant Sobolev spaces. First, let us talk about weak derivatives. 
Take any $1 \leq i \leq 3$. We say that $f$ is weakly differentiable in the $i$th coordinate if $f \in L^1(\threetorus, V)$, and there exists $h \in L^1(\threetorus, \vectorspace)$ such that for all smooth $\phi \in C^\infty(\threetorus, \vectorspace)$, we have the integration by parts identity
\[ \int_{\threetorus} \langle h(x), \phi(x)\rangle_{\vectorspace} dx = -\int_{\threetorus} \langle f(x), (\ptl_i \phi)(x)\rangle_{\vectorspace} dx, \]
where $\ptl_i \phi$ is the derivative of $\phi$ in the $i$th coordinate. Note that if $h$ exists, then it is unique in $L^1(\threetorus, V)$ (that is, as a function, it is unique up to a.e.~equality). We write $h = \ptl_i f$. 

The Sobolev space $H^1(\threetorus, \vectorspace)$ (also often denoted by $W^{1,2}(\threetorus, \vectorspace)$) is the real Hilbert space of (a.e.~equivalence classes of) measurable functions $f : \threetorus \ra \vectorspace$ such that $\|f\|_2 < \infty$, and for any $1 \leq i \leq 3$, the weak derivative $\ptl_i f$ exists and satisfies $\|\ptl_i f\|_2 < \infty$. The $H^1$ norm is defined as
\[ \|f\|_{H^1(\threetorus, \vectorspace)} := \bigg(\|f\|_2^2 + \sum_{i=1}^3 \|\ptl_i f\|_2^2\bigg)^{1/2}. \]
We will usually write $\|f\|_{H^1}$ instead of $\|f\|_{H^1(\threetorus, \vectorspace)}$. Note that $H^1(\threetorus, V)$ is a Hilbert space with inner product
\[ (f_1, f_1)_{H^1} := (f_1, f_2) + \sum_{i=1}^3 (\ptl_i f_1, \ptl_i f_2) .\]
The Sobolev space $H^2(\threetorus, \vectorspace)$ (also denoted by $W^{2,2}(\threetorus, \vectorspace)$) is the subspace of $H^1(\threetorus, \vectorspace)$ for which all the second order weak derivatives (that is, weak derivatives of the weak derivatives) exist and are in $L^2(\threetorus, \vectorspace)$.

\begin{definition}
Define the space of {\oneforms} $\honeconnspace := H^1(\threetorus, \lalg^3)$.
\end{definition}

Note that since $\honeconnspace$ is a Hilbert space, there are at least two natural topologies on $\honeconnspace$ --- the norm topology and the weak topology. We will write $A_n \ra A$ to mean convergence in the norm topology and we will write $A_n \weakarrow A$ to mean convergence in the weak topology. We will frequently use the following well-known fact about weakly convergent sequences (for a proof, see~\cite[Lemma 21.11]{Jost2005}).

\begin{lemma}\label{lemma:weak-convergence-and-h1-norm}
Let $\{A_n\}_{n \leq \infty} \sse \honeconnspace$ be such that $A_n \weakarrow A_\infty$. Then 
\[ \sup_{n \leq \infty} \|A_n\|_{H^1} < \infty. \]
\end{lemma}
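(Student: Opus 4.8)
The statement is the standard fact that weakly convergent sequences in a Banach space are norm-bounded, and the plan is to deduce it from the uniform boundedness principle (Banach--Steinhaus theorem), crucially using that $\honeconnspace$ is a Hilbert space and hence complete. Since adjoining the single element $A_\infty$ to a bounded family keeps it bounded (as $\|A_\infty\|_{H^1} < \infty$), it suffices to prove $\sup_{n < \infty} \|A_n\|_{H^1} < \infty$.

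First I would use the Riesz representation theorem to identify each $A_n$ (for $n < \infty$) with the bounded linear functional $\Lambda_n : \honeconnspace \to \R$ given by $\Lambda_n(B) := (A_n, B)_{H^1}$, whose operator norm equals $\|A_n\|_{H^1}$. By the definition of the weak topology on the Hilbert space $\honeconnspace$, the hypothesis $A_n \weakarrow A_\infty$ means precisely that $\Lambda_n(B) \to (A_\infty, B)_{H^1}$ for every $B \in \honeconnspace$. In particular, for each fixed $B$ the real sequence $\{\Lambda_n(B)\}_{n < \infty}$ is convergent, hence bounded, so $\sup_{n < \infty} |\Lambda_n(B)| < \infty$.

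Next I would apply the uniform boundedness principle to the family $\{\Lambda_n\}_{n < \infty}$ of continuous linear functionals on the complete normed space $\honeconnspace$: pointwise boundedness on all of $\honeconnspace$ upgrades to uniform boundedness of the operator norms, i.e. $\sup_{n < \infty} \|A_n\|_{H^1} = \sup_{n < \infty} \|\Lambda_n\| < \infty$. Together with $\|A_\infty\|_{H^1} < \infty$ this yields the claim.

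There is essentially no serious obstacle here; the only points to keep track of are that the argument genuinely relies on completeness of $\honeconnspace$ (so that Banach--Steinhaus applies) and on the Riesz identification to pass between the dual-pairing statement encoded in $A_n \weakarrow A_\infty$ and the desired $H^1$-norm bound. Indeed, the result is classical enough that one may simply cite \cite[Lemma 21.11]{Jost2005}, as already indicated in the text.
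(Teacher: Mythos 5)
Your proof is correct and is the standard argument: identify $A_n$ with its Riesz functional, use pointwise convergence to get pointwise boundedness, then apply Banach--Steinhaus, and finally adjoin $A_\infty$. The paper does not give its own proof but simply cites \cite[Lemma 21.11]{Jost2005}, which is the same classical argument you reproduce.
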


We will also extensively use the following well-known result, sometimes called the Rellich--Kondrachov theorem (for a proof, see \cite[Theorem B.2]{W2004}).
\begin{theorem}\label{sobolevthm}
For each $1 \leq p \leq 6$, there exists a constant $\const_p$ such that for any $A \in \honeconnspace$, we have
\[ \|A\|_p \leq \const_p \|A\|_{H^1}.\]
Moreover, for $1 \leq p < 6$, the embedding $\honeconnspace \hookrightarrow L^p$ is compact (that is, the unit ball of $\honeconnspace$ is a relatively compact subset of $L^p$).
\end{theorem}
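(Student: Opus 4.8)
The plan is to reduce the whole statement to two inputs: the trivial inclusion $\honeconnspace = H^1(\threetorus,\lalg^3)\subseteq L^2(\threetorus,\lalg^3)$, and the critical Sobolev embedding $H^1(\threetorus,\lalg^3)\hookrightarrow L^6(\threetorus,\lalg^3)$. Since $C^\infty(\threetorus,\lalg^3)$ is dense in $\honeconnspace$, it suffices to prove each norm inequality for smooth $A$ and then pass to the limit. For $1\le p\le 2$, Hölder's inequality together with $|\threetorus|=1$ gives $\|A\|_p\le\|A\|_2\le\|A\|_{H^1}$, so one may take $\const_p=1$. For $2\le p\le 6$, writing $\tfrac{1}{p}=\tfrac{1-\theta}{2}+\tfrac{\theta}{6}$ with $\theta\in[0,1]$ and using the interpolation inequality for $L^q$ norms, $\|A\|_p\le\|A\|_2^{1-\theta}\|A\|_6^{\theta}\le\const_6^{\theta}\|A\|_{H^1}$, once the endpoint $p=6$ is available. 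Thus the only genuine analytic content of the first assertion is $H^1\hookrightarrow L^6$ on $\threetorus$.

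For that endpoint I would invoke the Gagliardo--Nirenberg--Sobolev inequality on $\R^3$, namely $\|u\|_{L^6(\R^3)}\le c\,\|\nabla u\|_{L^2(\R^3)}$ for $u\in C^\infty_c(\R^3)$ (the $\lalg^3$-valued case reducing to the scalar one componentwise, since $\lalg^3$ is finite dimensional), and transfer it to the torus by a fixed cutoff. Identify a smooth $A$ on $\threetorus$ with a $1$-periodic function on $\R^3$, fix $\chi\in C^\infty_c(\R^3)$ with $\chi\equiv 1$ on the fundamental domain $[0,1]^3$ and $\mathrm{supp}\,\chi\subseteq(-1,2)^3$, and apply GNS to $\chi A$. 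Then $\|A\|_{L^6(\threetorus)}\le\|\chi A\|_{L^6(\R^3)}\le c\,\|\nabla(\chi A)\|_{L^2(\R^3)}\le c\big(\|\nabla\chi\|_\infty\|A\|_{L^2(\mathrm{supp}\,\chi)}+\|\nabla A\|_{L^2(\mathrm{supp}\,\chi)}\big)$, and since $(-1,2)^3$ is a union of $27$ translates of the fundamental domain, periodicity bounds the last two $L^2$ norms by $27^{1/2}\|A\|_{L^2(\threetorus)}$ and $27^{1/2}\|\nabla A\|_{L^2(\threetorus)}$; hence $\|A\|_6\le\const_6\|A\|_{H^1}$.

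For the compactness claim, let $\{A_n\}$ lie in the unit ball of $\honeconnspace$. I would first observe that it is enough to extract an $L^2$-convergent subsequence: if $A_{n_j}\to A$ in $L^2$, then $\{A_{n_j}\}$ is bounded in $L^6$ by the embedding just proved, and $A$ itself lies in $L^6$ with $\|A\|_6\le\sup_j\|A_{n_j}\|_6$ by lower semicontinuity of the $L^6$ norm under weak convergence (using reflexivity of $L^6$); hence $\{A_{n_j}-A\}$ is bounded in $L^6$, and for $1\le p<6$ interpolation gives $\|A_{n_j}-A\|_p\le\|A_{n_j}-A\|_2^{1-\theta}\|A_{n_j}-A\|_6^{\theta}\to 0$ (directly for $p\le 2$; here $\theta<1$ precisely because $p<6$, which is where compactness must fail at the endpoint). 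To produce the $L^2$-convergent subsequence I would use Fourier series on the torus: write $A_n=\sum_{k\in\Z^3}\hat A_n(k)e_k$; the $H^1$ bound gives $\sum_k(1+|k|^2)\|\hat A_n(k)\|^2\le\const$ uniformly in $n$, so each $\|\hat A_n(k)\|$ is bounded in $n$ and a diagonal argument yields a subsequence along which $\hat A_n(k)$ converges for every $k$. Given $\varepsilon>0$, choose $N$ with $\const/(1+N^2)<\varepsilon$; then $\sum_{|k|>N}\|\hat A_n(k)\|^2<\varepsilon$ uniformly in $n$, and Plancherel gives $\|A_n-A_m\|_2^2\le\sum_{|k|\le N}\|\hat A_n(k)-\hat A_m(k)\|^2+4\varepsilon$, whose finite first term tends to $0$ as $n,m\to\infty$ along the subsequence. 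Thus the subsequence is Cauchy, hence convergent, in $L^2$, which completes the argument.

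I do not expect a serious obstacle: the result is classical and could simply be cited (e.g.\ \cite[Theorem B.2]{W2004}, or any standard reference for compact manifolds without boundary). The only step carrying real analytic weight is the endpoint $H^1(\threetorus)\hookrightarrow L^6(\threetorus)$, which is where the Gagliardo--Nirenberg--Sobolev inequality enters; the passage from $\R^3$ to $\threetorus$ via a cutoff, the interpolation, the low-exponent cases, and the Fourier proof of $L^2$-compactness are all routine.
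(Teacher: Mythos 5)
Your proof is correct, and indeed self-contained; the paper itself does not prove this result but simply cites it as classical (to \cite[Theorem~B.2]{W2004}), so there is nothing to compare it against. Your route --- reducing everything to the endpoint $H^1\hookrightarrow L^6$ via H\"older/interpolation, transferring the Gagliardo--Nirenberg--Sobolev inequality from $\R^3$ to the torus with a fixed cutoff and periodicity, establishing $L^2$-compactness by Fourier tail estimates plus a diagonal argument, and then upgrading to $L^p$-compactness for $p<6$ by interpolation against the $L^6$ bound --- is the standard textbook proof and is carried out cleanly. One minor point you elide: to conclude $A\in L^6$ with $\|A\|_6\le\liminf_j\|A_{n_j}\|_6$ from $L^2$-convergence plus a uniform $L^6$ bound, you invoke weak lower semicontinuity, which tacitly requires identifying the weak $L^6$ subsequential limit with the $L^2$ limit; this is routine (or can be bypassed entirely by passing to an a.e.\ convergent subsequence and applying Fatou), but it is worth being aware that a small identification step is being skipped.
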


\begin{remark}\label{soboloverem}
In this paper, we will apply this theorem many times, and so instead of formally citing it each time, we will just say something like ``by the Sobolev embedding $H^1 \hookrightarrow L^p$" whenever we want to apply this theorem. A consequence of the compact embedding $H^1 \hookrightarrow L^p$ for $p < 6$ is that if $A_n \weakarrow A$, then $A_n \ra A$ in $L^p$ for all $p < 6$. To see this, note that if $A_n \weakarrow A$, then $\sup_{n \geq 1} \|A_n\|_{H^1} < \infty$ by Lemma \ref{lemma:weak-convergence-and-h1-norm}. By the compact embedding, we obtain subsequences convergent in $L^p$ for $p < 6$. Using that $A_n \weakarrow A$, it follows that any subsequential limit must in fact be $A$, and thus it follows that $A_n \ra A$ in $L^p$ for $p < 6$. We will use the same phrase ``by the Sobolev embedding $H^1 \hookrightarrow L^p$" whenever we want to apply this result.
\end{remark}

Next, we define a new space of gauge transformations $\htwogaugetransf$. We want to impose smoothness requirements on gauge transformations in a way that interacts nicely with the fact that our {\oneforms} are in $H^1$. First, we need to give meaning to $d\gt$ for $\gt$ which is not necessarily smooth. Given $\gt : \threetorus \ra \liegroup$ and $1 \leq i \leq 3$, the weak derivative $\ptl_i \gt$ is defined as follows. Recall that $G \sse \unitary(N)$, and so $G$ consists of $N \times N$ complex matrices. We may thus embed $G \hookrightarrow (\R^2)^{N \times N}$, and regard $\gt$ as a function from $\threetorus$ into $(\R^2)^{N \times N}$. The weak derivative $\ptl_i \gt$ (when it exists) is then just as we defined earlier for functions in $L^1(\threetorus, V)$, where now $V = (\R^2)^{N \times N}$. If $\ptl_i \gt$ exists for all $1 \leq i \leq 3$, we define $d\gt := (\ptl_i \gt, 1 \leq i \leq 3)$. Now, it's not hard to see that if $\ptl_i \gt$ exists, then $\ptl_i \gt(x) \in T_{\gt(x)}\liegroup$ for a.e. $x \in \threetorus$, from which it follows that $\gt(x)^{-1} \ptl_i \gt(x) \in \lalg$ for a.e.~$x \in \threetorus$. Thus if $d\gt$ exists, then $\gt^{-1} d\gt$ can be identified with an element of $L^1(\threetorus, \lalg^3)$, i.e., an $L^1$ {\oneform} (recall that weak derivatives by definition are in $L^1$). The following definition of $\htwogaugetransf$ requires the {\oneform} $\gt^{-1} d\gt$ to further be in $H^1$.

\begin{definition}
Following \cite{W2004}, we define 
\begin{align}
\label{eq:gauge-transformation-set-def} \htwogaugetransf &:= \{ s \exp(\xi) : s \in C^\infty(\torus^3, G), \xi \in H^2(\torus^3, \lalg)\} \\
&= \{\gt \in C(\torus^3, G) : \text{$d\gt$ exists and } \gt^{-1} d\gt \in \honeconnspace \}. \nonumber
\end{align}
(The set equality in the above display follows by the discussion just before \cite[Corollary B.9]{W2004}; see also \cite[Lemma B.5 and Remark B.6(ii)]{W2004}.) Note that $\gaugetransf \sse \htwogaugetransf$, that is, every smooth gauge transformation is an element of $\htwogaugetransf$.
\end{definition}

\begin{remark}
At several points in this paper, we will cite results from \cite{W2004} involving $\honeconnspace$ and $\htwogaugetransf$. The space $\honeconnspace$ is denoted by $\honeconnspace(P)$ and $\honeconnspace(P |_{\threetorus})$ in \cite{W2004} (where for us, $P = \threetorus \times G$ is a trivial principal $G$-bundle) --- see, e.g., \cite[Appendix B]{W2004}. Similarly, the set $\htwogaugetransf$ is denoted by $\htwogaugetransf(P)$ and $\htwogaugetransf(P|_{\threetorus})$ in \cite{W2004} --- see, for example, the discussion just after \cite[Equation (A.11)]{W2004}, as well as the discussion just before \cite[Corollary B.9]{W2004}.
\end{remark}

By using the second characterization of $\htwogaugetransf$ in \eqref{eq:gauge-transformation-set-def}, we can show that $\htwogaugetransf$ is a group under pointwise multiplication --- since, if $\gt_1, \gt_2 \in \htwogaugetransf$, then $\gt_1 \gt_2 \in C(\threetorus, \liegroup)$, and $(\gt_1 \gt_2)^{-1} d(\gt_1 \gt_2) = \gt_2^{-1} (\gt_1^{-1} d\gt_1) \gt_2 + \gt_2^{-1} d\gt_2$. By assumption, $\gt_2^{-1} d\gt_2 \in \honeconnspace$ and $\gt_1^{-1} d\gt_1 \in \honeconnspace$, and so $\gt_2^{-1} (\gt_1^{-1} d\gt_1) \gt_2 \in \honeconnspace$ as well.

The action of $\htwogaugetransf$ on $\honeconnspace$ is defined exactly as in~\eqref{eq:gaugetransform}. That is, for $A \in \honeconnspace$, $\gt \in \htwogaugetransf$, we let $\gaction{A}{\gt}$ to be as in \eqref{eq:gaugetransform}.
As with the action of $\gaugetransf$ on $\connspace$, we have that the action of $\htwogaugetransf$ on $\honeconnspace$ is a group action. Thus, as in the smooth case, for $A \in \honeconnspace$, we define the gauge orbit
\[ \gorbithone{A} := \{\gaction{A}{\gt} : \gt \in \htwogaugetransf\},\]
and also define the orbit space
\[ \honeorbitspace := \{\gorbithone{A} : A \in \honeconnspace\}.\]
For the following definitions, we will assume that all derivatives are defined, at least weakly. Given a {\oneform} $A$, the exterior derivative $dA$ is a $2$-form --- that is, an antisymmetric array of functions $((dA)_{ij}, 1 \leq i, j \leq 3)$ --- where for each $1 \leq i, j \leq 3$, $(dA)_{ij} : \threetorus \ra \lalg$ is given by
\beq\label{eq:d-a-def} (dA)_{ij} := \ptl_i A_j - \ptl_j A_i.\eeq
Given a function $f : \threetorus \ra \lalg$ (i.e., a {\zeroform}), the exterior derivative $df$ is a {\oneform} given by 
\[ (df)_i := \ptl_i f, ~~1 \leq i \leq 3.\]
For a {\oneform} $A$, the {\zeroform} $d^*A : \threetorus \ra \lalg$ is given by
\[ d^*A := - \sum_{i=1}^3 \ptl_i A_i . \]
For a {\twoform} $F : \threetorus \ra \lalg^{3 \times 3}$, the {\oneform} $d^*F$ is given by
\beq\label{eq:d-star-2-form-def} (d^*F)_i := \sum_{j=1}^3 \ptl_j F_{ij}, ~~ 1 \leq i \leq 3.\eeq
Finally, given {\oneforms} $A, B : \threetorus \ra \lalg^3$, the {\twoform} $[A \wedge B]$ is given by 
\beq\label{eq:A-wedge-B-one-form} [A \wedge B]_{i, j} := [A_i, B_j] - [A_j, B_i], ~~ 1 \leq i, j \leq 3. \eeq
Note  that $[A \wedge B] = [B \wedge A]$ and $[A \wedge A]_{ij} = 2[A_i, A_j]$. 

Given $A \in \honeconnspace$, the curvature $\curv{A}$ is defined as in \eqref{eq:curvdef}. Note this may also be written $\curv{A} = dA + (1/2)[A \wedge A]$. The YM action (for elements of $\honeconnspace$) is defined to be the function $\sym^{1,2} : \honeconnspace \ra [0, \infty)$, $A \mapsto \|\curv{A}\|_2^2$. Note that this definition coincides with \eqref{eq:sym-def}. Also, as in the smooth case, $\sym^{1,2}$ is gauge invariant, and so we will often write $\sym^{1,2}(\gorbithone{A}) = \sym^{1,2}(A)$.

The following lemma shows that $\curv{A}$ is in $L^2(\threetorus, \lalg^{3 \times 3})$ for all $A \in \honeconnspace$, and thus $\sym^{1,2}$ is indeed finite on $\honeconnspace$. Moreover, it bounds $\sym^{1,2}(A)$ in terms of~$\|A\|_{H^1}$.

\begin{lemma}\label{lemma:curvature-bounded-by-h1-norm}
Let $A \in \honeconnspace$. We have
\[ \sym^{1,2}(A)^{1/2} = \|\curv{A}\|_2 \leq \const (\|A\|_{H^1} + \|A\|_{H^1}^2). \]
\end{lemma}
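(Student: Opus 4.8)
The plan is to use the decomposition $\curv{A} = dA + \tfrac12 [A\wedge A]$ noted just before the lemma, and bound the linear piece $dA$ and the quadratic piece $[A\wedge A]$ separately in $L^2$ via the triangle inequality, so that $\|\curv{A}\|_2 \le \|dA\|_2 + \tfrac12\|[A\wedge A]\|_2$.

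First I would dispatch the linear term. Since $(dA)_{ij} = \ptl_i A_j - \ptl_j A_i$ and each weak derivative satisfies $\|\ptl_i A_j\|_2 \le \|A\|_{H^1}$ by the very definition of the $H^1$ norm, summing over the finitely many pairs $i,j \in \{1,2,3\}$ gives $\|dA\|_2 \le \const\,\|A\|_{H^1}$.

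For the quadratic term, recall $[A\wedge A]_{ij} = 2[A_i, A_j]$, and by submultiplicativity of the Frobenius norm, $\|[A_i(x), A_j(x)]\| \le 2\|A_i(x)\|\,\|A_j(x)\| \le 2\|A(x)\|^2$ pointwise. Integrating and applying the Cauchy--Schwarz inequality in the $x$ variable,
\[ \|[A\wedge A]\|_2^2 = \sum_{i,j}\int_{\threetorus}\|[A\wedge A]_{ij}(x)\|^2\,dx \le \const\sum_{i,j}\int_{\threetorus}\|A_i(x)\|^2\|A_j(x)\|^2\,dx \le \const\,\|A\|_4^4. \]
Now I would invoke the Sobolev embedding $H^1 \hookrightarrow L^4$ (valid since $4 \le 6$ in three dimensions, Theorem \ref{sobolevthm}) to get $\|A\|_4 \le \const\,\|A\|_{H^1}$, hence $\|[A\wedge A]\|_2 \le \const\,\|A\|_{H^1}^2$. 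Combining with the bound on $\|dA\|_2$ yields $\|\curv{A}\|_2 \le \const(\|A\|_{H^1} + \|A\|_{H^1}^2)$, which is the claim; in particular this shows $\curv{A} \in L^2(\threetorus, \lalg^{3\times 3})$ so that $\sym^{1,2}(A) = \|\curv{A}\|_2^2$ is finite.

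The argument is entirely routine; there is no real obstacle. The only point that genuinely needs the hypotheses is that the product $A_i A_j$ lies in $L^2$, which is exactly what the Sobolev embedding into $L^4$ provides (this is where the dimension three enters, through the exponent $4 < 6$), and the rest is bookkeeping of constants.
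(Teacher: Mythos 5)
Your proof is correct and follows the same route as the paper: split $\curv{A} = dA + \tfrac12[A\wedge A]$, bound $\|dA\|_2 \le \const\|A\|_{H^1}$, bound $\|[A\wedge A]\|_2 \le \const\|A\|_4^2$, and close with the Sobolev embedding $H^1 \hookrightarrow L^4$. You merely spell out the pointwise submultiplicativity step that the paper leaves implicit.
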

\begin{proof}
We have
\[ \|\curv{A}\|_2 \leq \|dA\|_2 + \|[A \wedge A]\|_2. \]
Note that $\|dA\|_2 \leq \const \|A\|_{H^1}$, and $\|[A \wedge A]\|_2 \leq \const \|A\|_4^2$. By the Sobolev embedding $H^1 \hookrightarrow L^4$ (Theorem \ref{sobolevthm}), $\|A\|_4\le \const \|A\|_{H^1}$. This completes the proof.
\end{proof}

The following well-known theorem due to Uhlenbeck \cite{Uh1982}, which is often called Weak Uhlenbeck Compactness, is a key tool. We will see how it is used in Section~\ref{section:topology-and-measure-theory}. This result only holds in 3D (though there is a version which holds in general dimensions --- see \cite[Theorem A]{W2004}).

\begin{theorem}[Theorem A of \cite{W2004}]\label{thm:weak-uhlenbeck-compactness}
Let $\{A_n\}_{n \geq 1} \sse \honeconnspace$ be a sequence such that $\sup_{n \geq 1} \sym^{1,2}(A_n) < \infty$. Then there exists a subsequence $\{A_{n_k}\}_{k \geq 1}$ and a sequence $\{\gt_{n_k}\}_{k \geq 1} \sse \htwogaugetransf$ such that $\gaction{A_{n_k}}{\gt_{n_k}}$ converges weakly in $\honeconnspace$.
\end{theorem}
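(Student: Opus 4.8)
The plan is to reduce the statement to Uhlenbeck's \emph{local} Coulomb gauge-fixing theorem and then globalize by a covering-and-patching argument; once a uniformly $H^1$-bounded sequence of gauge representatives has been produced, weak compactness is immediate. Recall the local input: there are constants $\varepsilon_0 > 0$, $\const_0 < \infty$ and $r_0 > 0$, depending only on $\liegroup$, such that for every ball $B = B_r(x) \sse \threetorus$ with $r \leq r_0$ and every $A \in \honeconnspace$ with $\|\curv{A}\|_{L^{3/2}(B)} < \varepsilon_0$, there exists $\gt \in H^2(B, \liegroup)$, unique up to a constant element of $\liegroup$, with $d^{*}(\gaction{A}{\gt}) = 0$ on $B$, a natural boundary condition on $\ptl B$, and $\|\gaction{A}{\gt}\|_{H^1(B)} \leq \const_0 \|\curv{A}\|_{L^2(B)}$. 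This is the deep analytic ingredient, proved by a continuity/implicit function argument, and in the present range of exponents it may be quoted directly.

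First I would make the smallness hypothesis hold uniformly along the sequence. Set $S := \sup_{n \geq 1} \sym^{1,2}(A_n) < \infty$. By Hölder's inequality, $\|\curv{A_n}\|_{L^{3/2}(B_r(x))} \leq \const\, r^{1/2}\, \|\curv{A_n}\|_{L^2(\threetorus)} \leq \const\, r^{1/2}\, S^{1/2}$ for every ball of radius $r$, so I may fix $r \leq r_0$ small enough that $\|\curv{A_n}\|_{L^{3/2}(B_r(x))} < \varepsilon_0$ for all $n \geq 1$ and all $x \in \threetorus$. Cover $\threetorus$ by finitely many balls $B_1, \dots, B_M$ of radius $r$, enlarged slightly to concentric balls $\tilde{B}_i$ of radius still at most $r_0$ forming a good cover. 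For each $n$ and each $i$ the local theorem yields $\gt_n^i \in H^2(\tilde{B}_i, \liegroup)$ with $\|\gaction{A_n}{\gt_n^i}\|_{H^1(\tilde{B}_i)} \leq \const_0 S^{1/2} =: \const_1$, uniformly in $n$ and $i$.

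The patching step is the main obstacle. On an overlap $\tilde{B}_i \cap \tilde{B}_j$ the transition map $g_n^{ij} := (\gt_n^j)^{-1} \gt_n^i$ relates the two Coulomb representatives $\gaction{A_n}{\gt_n^i}$ and $\gaction{A_n}{\gt_n^j}$, hence satisfies a first-order algebraic relation whose coefficients are these uniformly $H^1$-bounded connections; since $\liegroup$ is compact, $g_n^{ij}$ is bounded pointwise, and bootstrapping through the Sobolev embeddings in three dimensions, together with the two Coulomb conditions, upgrades this to a uniform $H^2$ bound on $g_n^{ij}$. By the Rellich embedding $H^2 \hookrightarrow C^0$, valid in $3$D, and finiteness of the cover, after passing to a subsequence I may assume that every $g_n^{ij}$ converges in $C^0$ and weakly in $H^2$; in particular, for large $n$ the maps $g_n^{ij}$ are mutually close in $C^0$. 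I would then assemble a single global gauge transformation $\gt_n \in \htwogaugetransf$ by interpolating the local gauges $\gt_n^i$ against a fixed partition of unity $\{\chi_i\}$ subordinate to the cover, averaging nearby elements of $\liegroup$ via the embedding $\liegroup \hookrightarrow \unitary(\matrixdim)$, the exponential map, and the nearest-point retraction onto $\liegroup$, and then check that, thanks to the $C^0$-closeness of the transitions, $\gt_n$ is genuinely $\liegroup$-valued, lies in $\htwogaugetransf$, and produces a global bound $\|\gaction{A_n}{\gt_n}\|_{H^1(\threetorus)} \leq \const_2$ independent of $n$. Finally, since $\honeconnspace$ is a Hilbert space, Banach--Alaoglu gives a subsequence with $\gaction{A_{n_k}}{\gt_{n_k}} \weakarrow A_\infty$ in $\honeconnspace$, proving the theorem. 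The delicate point throughout is to perform the gluing without losing the uniform bounds: one must control the finitely many transition functions uniformly in $n$ and patch them in a way compatible with the Coulomb normalizations, which is precisely where the uniform $H^2$ control of the $g_n^{ij}$ and the freedom to shrink $r$ enter.
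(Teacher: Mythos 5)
The paper does not prove this theorem; it quotes it as Theorem~A of Wehrheim \cite{W2004} (crediting the original to Uhlenbeck \cite{Uh1982}), so there is no internal argument to compare against. Your proposal follows the standard Uhlenbeck/Wehrheim outline --- Coulomb gauge fixing on small balls where the curvature is small, bootstrap for the transition maps, patching, then Banach--Alaoglu --- and the first two steps and the last step are handled correctly. The first paragraph's observation that $r^{1/2}\|\curv{A_n}\|_{L^2}$ controls the local $L^{3/2}$ curvature norm is exactly the right way to make the smallness hypothesis uniform in $n$.

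The patching step, however, contains a genuine gap. You propose to build a global $\gt_n$ by interpolating the local gauges $\gt_n^i$ against a partition of unity and then projecting back onto $\liegroup$, on the grounds that the transitions $g_n^{ij}$ converge in $C^0$ after a subsequence. But the issue is that $C^0$-convergence of $g_n^{ij}$ to some limit $g_\infty^{ij}$ does \emph{not} make the local gauges $\gt_n^i$ and $\gt_n^j$ close to each other on the overlap $\tilde B_i \cap \tilde B_j$; they still differ by $g_n^{ij}$, which can be far from the identity. So the object $\sum_i \chi_i \gt_n^i$ lies nowhere near $\liegroup$ in the mixing regions and the nearest-point retraction is either undefined or wrecks the $H^2$ estimates. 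What is actually needed --- and what you omit --- is to first solve the cocycle $g_\infty^{ij}=(h^j)^{-1}h^i$ for fixed smooth $h^i : \tilde B_i \ra \liegroup$ (possible because the limiting cocycle defines a bundle isomorphic to the trivial one; this is an additional point that must be justified, and which the convergence $g_n^{ij}\ra g_\infty^{ij}$ handles since isomorphism classes are discrete), then replace $\gt_n^i$ by $\gt_n^i(h^i)^{-1}$ so that the \emph{new} transitions are $C^0$-close to the identity. Only after this trivialization step does a bump-function/exponential-map construction of a single global gauge make sense and yield the claimed uniform $H^1$ bound on $\gaction{A_n}{\gt_n}$. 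Your write-up acknowledges the patching is ``the main obstacle'' but the specific mechanism you describe would fail without this intermediate reduction.
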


The next two lemmas will be needed in the coming sections. The proofs are in Appendix \ref{section:3d-miscellaneous-proofs}.

\begin{lemma}\label{lemma:limiting-gauge-transformation}
Let $\{A_n\}_{n \leq \infty}, \{B_n\}_{n \leq \infty} \sse \honeconnspace$ be sequences such that $A_n \weakarrow A_\infty$, $B_n \weakarrow B_\infty$, and such that for all $n \geq 1$, there exists $\gt_n \in \htwogaugetransf$ such that $B_n = \gaction{A_n}{\gt_n}$. Then there exists $\gt \in \htwogaugetransf$ such that $B_\infty = \gaction{A_\infty}{\gt}$. 
\end{lemma}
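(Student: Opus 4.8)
The plan is to extract a weakly convergent subsequence of the gauge transformations $\{\gt_n\}$, pass to the limit in the differentiated gauge relation to produce a candidate limiting transformation $\gt$ of $H^1$ regularity, and then bootstrap to show that $\gt$ in fact lies in $\htwogaugetransf$.

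First I would rewrite $B_n = \gaction{A_n}{\gt_n}$ (using \eqref{eq:gaugetransform}) in the equivalent form $\ptl_i \gt_n = \gt_n B_{n,i} - A_{n,i}\gt_n$ for $1 \le i \le 3$, viewing each $\gt_n$ as a map $\threetorus \to (\R^2)^{N\times N}$. Since $\gt_n$ is $\liegroup$-valued and $\liegroup \sse \unitary(N)$, we have $\|\gt_n(x)\| \equiv \sqrt N$, so $\{\gt_n\}$ is uniformly bounded in $L^\infty$ (and $L^2$); combined with submultiplicativity of the Frobenius norm and the uniform $H^1$ bounds on $\{A_n\}, \{B_n\}$ furnished by Lemma \ref{lemma:weak-convergence-and-h1-norm}, this gives a uniform bound on $\|\gt_n\|_{H^1}$. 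Hence along a subsequence $\gt_n \weakarrow \gt$ in $H^1$ and, by the compact embedding $H^1 \hookrightarrow L^p$ for $p < 6$ (Remark \ref{soboloverem}), $\gt_n \to \gt$ strongly in $L^4$; passing to a further subsequence, $\gt_n \to \gt$ a.e., so $\gt$ is $\liegroup$-valued a.e.\ because $\liegroup$ is closed.

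Next I would pass to the limit. By the Sobolev embedding, $A_n \to A_\infty$ and $B_n \to B_\infty$ in $L^4$ as well, and multiplication $L^4 \times L^4 \to L^2$ is continuous on bounded sets, so $\gt_n B_{n,i} \to \gt B_{\infty,i}$ and $A_{n,i}\gt_n \to A_{\infty,i}\gt$ in $L^2$, while $\ptl_i \gt_n \weakarrow \ptl_i \gt$ in $L^2$. Uniqueness of weak limits then gives $\ptl_i \gt = \gt B_{\infty,i} - A_{\infty,i}\gt$, i.e.\ $d\gt = \gt B_\infty - A_\infty \gt$; once $\gt \in \htwogaugetransf$ is known, this is exactly $B_\infty = \gaction{A_\infty}{\gt}$.

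The hard part is the final bootstrap showing $\gt \in \htwogaugetransf$. From $d\gt = \gt B_\infty - A_\infty\gt$ with $\gt \in L^\infty$ and $A_\infty, B_\infty \in \honeconnspace \hookrightarrow L^6$ we get $d\gt \in L^6$, so $\gt \in W^{1,6}$; in particular $\gt$ has a continuous representative, which is everywhere $\liegroup$-valued since $\liegroup$ is closed. Differentiating once more via the Leibniz rule, every term of $d(d\gt)$ is a product lying either in $L^6\cdot L^6 = L^3$ or in $L^\infty\cdot L^2 = L^2$, and $L^3(\threetorus) \subseteq L^2(\threetorus)$ since $\threetorus$ has finite volume; hence $d(d\gt) \in L^2$ and $\gt \in H^2$, so also $\gt^{-1} = \gt^* \in H^2$. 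One more Leibniz estimate shows $\gt^{-1}A_\infty\gt \in H^1$, whence $\gt^{-1}d\gt = B_\infty - \gt^{-1}A_\infty\gt \in \honeconnspace$, and the second characterization of $\htwogaugetransf$ in \eqref{eq:gauge-transformation-set-def} yields $\gt \in \htwogaugetransf$. The points requiring care are the uniform $H^1$ bound for $\{\gt_n\}$ (which rests on $\gt_n$ being unitary-valued) and the tracking of integrability exponents through the bootstrap $L^\infty \rightsquigarrow W^{1,6} \rightsquigarrow H^2$, where finiteness of the volume of $\threetorus$ is used to absorb $L^3$ into $L^2$.
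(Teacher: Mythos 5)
Your proof is correct, but it takes a genuinely different route from the paper. The paper's argument is very short: it notes that the uniform $H^1$ bounds on $\{A_n\}$ and $\{B_n = \gaction{A_n}{\gt_n}\}$ (from Lemma~\ref{lemma:weak-convergence-and-h1-norm}) let one invoke Lemma~\ref{lemma:sequence-and-gauge-transform-bounded} (a corollary of Wehrheim's Lemma~A.8(ii--iii)), which directly supplies a subsequence along which $\gt_{n_k}\to\gt_\infty\in\htwogaugetransf$ in $L^\infty$ and $\gt_{n_k}^{-1}d\gt_{n_k}\to\gt_\infty^{-1}d\gt_\infty$ in $L^2$; one then just passes to the $L^2$ limit in $\gaction{A_{n_k}}{\gt_{n_k}}=B_{n_k}$. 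Your argument avoids this blackbox and instead rebuilds the compactness by hand: you obtain the uniform $H^1$ bound on $\{\gt_n\}$ from unitarity plus the differentiated gauge relation $d\gt_n=\gt_n B_n-A_n\gt_n$, extract a weak $H^1$ limit $\gt$, pass to the limit in that identity using the compact embedding $H^1\hookrightarrow L^4$, and then run the bootstrap $L^\infty\rightsquigarrow W^{1,6}\rightsquigarrow H^2$ (using $H^1\hookrightarrow L^6$, $L^6\cdot L^6\subseteq L^3\subseteq L^2$, and Morrey to get continuity so that $\gt$ is everywhere $\liegroup$-valued) to land in $\htwogaugetransf$. In effect you are re-deriving the content of Wehrheim's lemma in this special case. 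The trade-off: the paper's proof is shorter and cleaner at the cost of an external reference, while yours is self-contained and makes the underlying Uhlenbeck-style compactness mechanism explicit. All the key steps in your argument check out --- in particular the uniform $H^1$ bound on $\gt_n$ (which does indeed hinge on $\gt_n$ being unitary-valued), the identification $d\gt=\gt B_\infty-A_\infty\gt$ via strong-weak limit matching, and the exponent bookkeeping in the bootstrap.
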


\begin{lemma}\label{lemma:wilson-loop-gauge-invariant-not-smooth-gauge-transform}
Let $A, \tilde{A}$ be smooth {\oneforms}. Suppose $\gt \in \htwogaugetransf$ is such that $\tilde{A}(x) = \gaction{A}{\gt}(x)$ for a.e.~$x \in \threetorus$. Then $\gt$ is smooth.
\end{lemma}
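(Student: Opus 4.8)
The plan is to exploit the fact that both $A$ and $\tilde A = A^\gt$ are smooth, so that the gauge transformation $\gt$ is forced to be smooth by an elliptic regularity (bootstrap) argument applied to the defining equation. Rewriting \eqref{eq:gaugetransform}, for a.e.\ $x$ we have $A_i^\gt = \gt^{-1} A_i \gt + \gt^{-1}\ptl_i \gt$, which rearranges to
\[
\ptl_i \gt = \gt\, \tilde A_i - A_i\, \gt, \qquad 1 \le i \le 3.
\]
This is the key identity: it expresses each first weak derivative of $\gt$ as a pointwise-algebraic (in fact matrix-bilinear) expression in $\gt$, $A$, and $\tilde A$. Since $\gt \in \htwogaugetransf$ we already know $\gt \in C(\threetorus, G)$ and $\gt^{-1}d\gt \in \honeconnspace = H^1$; moreover $\gt$ is bounded (it is $G$-valued, $G \sse \unitary(N)$), and $A, \tilde A$ are smooth hence bounded with all derivatives bounded. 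The strategy is a standard bootstrap: if $\gt \in W^{k,p}$ for some $k \ge 1$, then the right-hand side $\gt \tilde A_i - A_i \gt$ lies in $W^{k,p}$ as well (product of a $W^{k,p}$ function with smooth bounded functions, using that $W^{k,p}$ on the compact manifold $\threetorus$ is closed under multiplication by $C^\infty$ functions), and hence $\ptl_i \gt \in W^{k,p}$, i.e.\ $\gt \in W^{k+1,p}$. Iterating, $\gt \in \bigcap_k W^{k,p} = C^\infty(\threetorus, (\R^2)^{N\times N})$, and since $\gt$ is $G$-valued a.e.\ and continuous, $\gt \in C^\infty(\threetorus, G)$.

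To make the bootstrap rigorous I would first pin down the base case. We have $\gt \in C(\threetorus, G)$ and $d\gt$ exists as an $L^1$ object with $\gt^{-1}d\gt \in H^1$; multiplying by the continuous (hence bounded) function $\gt$ and using that $\gt^{-1}d\gt \in H^1 \hookrightarrow L^6$ (Theorem \ref{sobolevthm}) gives $d\gt \in L^6$, so $\gt \in W^{1,6}(\threetorus, (\R^2)^{N\times N})$. Then from the identity $\ptl_i \gt = \gt \tilde A_i - A_i \gt$ with $A,\tilde A$ smooth and $\gt \in W^{1,6}$, the right-hand side is in $W^{1,6}$ (product rule: $\ptl_j(\gt \tilde A_i) = (\ptl_j \gt)\tilde A_i + \gt (\ptl_j \tilde A_i)$, the first term in $L^6$ since $\ptl_j\gt \in L^6$ and $\tilde A_i$ bounded, the second term in $L^6$ since $\gt$ bounded and $\ptl_j \tilde A_i$ smooth), whence $\gt \in W^{2,6}$. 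By Morrey's embedding $W^{2,6}(\threetorus) \hookrightarrow C^1$, so $\gt \in C^1$; and now the induction runs cleanly: assuming $\gt \in C^k$, the right-hand side $\gt\tilde A_i - A_i\gt$ is $C^k$, so $\gt \in C^{k+1}$. Hence $\gt \in C^\infty$.

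The main obstacle—really the only point requiring care rather than routine bookkeeping—is the very first step: justifying that the formal rearrangement of \eqref{eq:gaugetransform} into $\ptl_i \gt = \gt\tilde A_i - A_i \gt$ is valid as an identity of weak (distributional) derivatives for $\gt \in \htwogaugetransf$, and that the product rule used throughout is legitimate at the low regularity $W^{1,6}$ before any smoothness of $\gt$ is known. One clean way to handle this is to use the structural description $\htwogaugetransf = \{s\exp(\xi): s \in C^\infty(\threetorus,G),\ \xi \in H^2(\threetorus,\lalg)\}$ from \eqref{eq:gauge-transformation-set-def}: write $\gt = s\exp(\xi)$, note $H^2(\threetorus,\lalg) \hookrightarrow C^0$ in $3$D (Sobolev), so $\exp(\xi)$ is continuous, and the weak derivative $d\exp(\xi)$ is computed from $d\xi \in H^1$ via the chain rule for the smooth map $\exp$ composed with an $H^2$ function (this product/chain-rule at $H^2$ regularity is standard and is exactly the sort of computation underlying \cite[Appendix B]{W2004}). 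This legitimizes all the manipulations, and once the identity $\ptl_i\gt = \gt\tilde A_i - A_i\gt$ holds distributionally, the bootstrap above upgrades $\gt$ to $C^\infty$. Finally, since $\gt$ agrees a.e.\ with a smooth function and is a.e.\ $G$-valued, continuity forces $\gt(\threetorus) \sse G$, so $\gt \in C^\infty(\threetorus, G) = \gaugetransf$, completing the proof.
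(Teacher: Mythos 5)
Your proof is correct, and it hinges on the same central identity as the paper's: rearranging $\tilde A = A^\gt$ (a.e.) to read $\ptl_i\gt = \gt\,\tilde A_i - A_i\,\gt$, equivalently $\gt^{-1}d\gt = \tilde A - \gt^{-1}A\gt$, and then bootstrapping the regularity of $\gt$. The two arguments diverge only in how they launch the bootstrap. The paper observes directly that $\tilde A - \gt^{-1}A\gt$ is continuous (since $\gt\in C(\threetorus, G)$ and $A,\tilde A$ are smooth), so $d\gt$ is a.e.\ equal to a continuous function, and then proves a self-contained mollification lemma (``if $f$ and all $\ptl_i f$ are continuous then $f\in C^1$'') to land in $C^1$. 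You instead run a Sobolev bootstrap: $\gt^{-1}d\gt\in H^1\hookrightarrow L^6$ gives $\gt\in W^{1,6}$, then a Leibniz computation gives $\ptl_i\gt=\gt\tilde A_i - A_i\gt\in W^{1,6}$, so $\gt\in W^{2,6}\hookrightarrow C^1$. Both are valid; the paper's route avoids invoking any product rule at Sobolev regularity, while yours is perhaps the more reflexive move if one is already thinking in terms of elliptic regularity. One small remark: the paragraph you spend worrying about justifying the rearrangement itself (and proposing to route it through $\gt = s\exp(\xi)$) is unnecessary. The lemma's hypothesis, combined with the definition \eqref{eq:gaugetransform}, already states $\tilde A_i = \gt^{-1}A_i\gt + \gt^{-1}\ptl_i\gt$ a.e., where $\ptl_i\gt$ is the weak derivative whose existence is built into $\gt\in\htwogaugetransf$; left-multiplying by the bounded continuous function $\gt$ and rearranging is a pointwise-a.e.\ algebraic manipulation of $L^1$ functions and needs no chain or product rule. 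The only place a genuine Leibniz rule appears in your argument is the step $\gt\in W^{1,6}\Rightarrow\gt\tilde A_i\in W^{1,6}$, and that (smooth function times $W^{1,p}$) is standard, as you note.
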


We close this section with the following lemma, which will be needed later. This result follows immediately by \cite[Example 58]{Pon1966}, which implies that the set of equivalence classes of irreducible characters of a second countable compact group is countable. (We also use the fact that every character can be written as a finite sum of irreducible characters --- see, e.g., \cite[Section 8.1]{Huang1999}.)

\begin{lemma}\label{lemma:lie-group-countable-characters}
The set of characters of $\liegroup$ is countable.
\end{lemma}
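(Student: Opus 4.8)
The plan is to follow the two-step route already hinted at just above the statement, expanding it slightly. First I would unwind the definition: a character of $\liegroup$ is by definition a function $\Tr\rho$ for some finite-dimensional representation $\rho$ of $\liegroup$. Since $\liegroup$ is a compact Lie group, every finite-dimensional representation is completely reducible, i.e.\ decomposes as a finite direct sum (with multiplicities) of irreducible representations (see, e.g., \cite[Section 8.1]{Huang1999}); taking traces, every character is a finite $\N$-linear combination of irreducible characters. Thus the set of all characters is the image of the map that sends a finite multiset of irreducible characters to their sum, and since the collection of finite multisets drawn from a countable set is countable, it suffices to prove that the set of irreducible characters of $\liegroup$ is countable.

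Second, I would observe that $\liegroup$, being a compact Lie group, is a second countable compact topological group (it is a compact manifold, hence metrizable and second countable). Therefore \cite[Example 58]{Pon1966} applies and yields that the set of equivalence classes of irreducible representations of $\liegroup$ is countable. Since equivalent representations have the same trace, the set of irreducible characters is the image of this countable set under the trace map, hence countable. Combining this with the first step gives that the full set of characters of $\liegroup$ is countable.

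I do not expect any real obstacle here; the argument is a direct assembly of the cited facts. The only points worth a sentence of care are (i) confirming that $\liegroup$ is second countable, which is immediate from its being a compact Lie group, and (ii) noting that different representations may produce the same character, which only shrinks the set of characters and therefore cannot affect countability. If desired, one could alternatively avoid \cite{Pon1966} entirely and invoke the Peter--Weyl theorem together with the fact that a compact Lie group has a countable set of isomorphism classes of irreducible representations (indexed, after fixing a maximal torus, by a lattice of dominant weights), but the route through \cite{Pon1966} is shorter and self-contained.
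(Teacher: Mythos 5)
Your proposal is essentially identical to the paper's proof: the paper also cites \cite[Example 58]{Pon1966} for countability of irreducible characters of a second countable compact group and \cite[Section 8.1]{Huang1999} for the fact that every character is a finite sum of irreducible characters. Your additional remarks on second countability and on multisets are small elaborations of the same argument.
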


\subsection{Properties of regularizing flows}\label{section:3d-alt-ym-heat-equation}

In this section, we state the results about solutions to the Yang--Mills heat flow with initial data in $\honeconnspace$ which will be needed later. All proofs are in Appendix~\ref{section:ym-proof-h1}.

Given a {\oneform} $A$ and a {\twoform} $F$, define the {\oneform} (recalling the definition of $d^*F$ in \eqref{eq:d-star-2-form-def})
\beq\label{eq:d-star-a-def} d^*_{A} F := d^*F +  [A \lrcorner F],\eeq
where the {\oneform} $[A \lrcorner F]$ is given by
\beq\label{eq:interior-product-def} [A \lrcorner F]_i := \sum_{j=1}^3 [A_j, F_{ij}], ~~ 1 \leq i \leq 3. \eeq
With this notation, the Yang--Mills heat flow defined in \eqref{eq:ymdef} can be written more compactly as the following PDE on {\oneforms}:
\beq\label{eq:YM}\tag{$\textup{YM}$} 
\ptl_t A(t) = - d^*_{A(t)} \curv{A(t)}. \eeq
We need to define what it means to be a solution to the Yang--Mills heat flow when the initial data is in $\honeconnspace$. We will use the notion of solution given by Charalambous and Gross in \cite[Definition 2.2]{CG2013}. They use the term ``strong solution", however we will just use the term ``solution". We first define the following notion, which will be used in Definition \ref{def:YM-solution-def}.

\begin{definition}[Strong derivative]
Given $T > 0$ and a function $f : (0, T) \ra L^2(\threetorus, \lalg^3)$, the strong derivative of $f$ at some point $t \in (0, T)$ is defined as
\[ f'(t) := \lim_{h \ra 0} \frac{f(t+h) - f(t)}{h}, \]
where limit is in the norm topology of $L^2(\threetorus, \lalg^3)$, assuming it exists. 
\end{definition}

\begin{definition}\label{def:YM-solution-def}
Let $0 < T \leq \infty$. We say that $A : [0, T) \ra \honeconnspace$ is a solution to \eqref{eq:YM} on $[0, T)$ if $A$ is a continuous function (when $\honeconnspace$ is equipped with its $H^1$ norm topology) such that the following conditions all hold:
\begin{enumerate}[label=(\alph*)]
    \item The curvature $\curv{A(t)} \in H^1(\threetorus, \lalg^{3 \times 3})$ for all $t \in (0, T)$,
    \item when regarding $A$ as a function from $(0, T)$ into $L^2(\threetorus, \lalg^3)$, the strong derivative $A'(t)$ exists for all $t \in (0, T)$, 
    \item $A'(t) = -d^*_{A(t)} \curv{A(t)}$ for all $t \in (0, T)$,
    \item $\|\curv{A(t)}\|_\infty$ (recall the definition of $L^p$ norms in Section \ref{section:3dalt-notation}) is bounded on any bounded interval $[a, b] \sse (0, T)$,
    \item $t^{3/4} \|\curv{A(t)}\|_\infty$ is bounded on some interval $(0, b) \sse (0, T)$.
\end{enumerate}
Let $A_0 \in \honeconnspace$. If additionally $A(0) = A_0$, then we say that $A$ is a solution to \eqref{eq:YM} on $[0, T)$ with initial data $A(0) = A_0$.
\end{definition}



Hereafter, whenever we say that $A$ is a solution to \eqref{eq:YM}, by default we mean that $A$ is a solution in the sense of Definition \ref{def:YM-solution-def}. We will need the following theorem regarding the global existence and uniqueness of solutions to the Yang--Mills heat flow. 

\begin{theorem}[See Theorem 2.5 of \cite{CG2013} and Theorem 1 of \cite{R1992}]\label{thm:YM-global-existence}
Let $A_0 \in \honeconnspace$. There exists a solution $A$ to \eqref{eq:YM} on $[0, \infty)$ with initial data $A(0) = A_0$. If $A_0$ is smooth, then $A \in C^\infty([0, \infty) \times \threetorus, \lalg^3)$. Given two solutions $A, \tilde{A}$ to \eqref{eq:YM} on some interval $[0, T)$ with the same initial data $A(0) = \tilde{A}(0) \in \honeconnspace$, we have $A = \tilde{A}$ on~$[0, T)$. \end{theorem}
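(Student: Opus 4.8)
The statement collects two results from the literature, so the plan is to cite them in the right order, reconcile conventions, and add one gauge-fixing argument for the smoothness claim.

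\textbf{Existence and uniqueness.} Since our notion of solution (Definition \ref{def:YM-solution-def}) is by design the ``strong solution'' of \cite[Definition 2.2]{CG2013}, I would apply Theorem 2.5 of \cite{CG2013} directly, with the compact base manifold taken to be $\threetorus$ (the boundaryless special case of their setting). This provides, for every $A_0 \in \honeconnspace$, a solution $A$ of \eqref{eq:YM} with $A(0) = A_0$ on a maximal time interval, together with a continuation criterion and the uniqueness of such solutions on any interval $[0,T)$ with prescribed initial data. The uniqueness assertion of the theorem is then immediate: two solutions on a common interval with the same initial data both coincide with the maximal solution there. To upgrade the maximal interval to $[0,\infty)$ I would invoke global existence of the Yang--Mills heat flow in dimension three --- this is where Theorem 1 of \rade~\cite{R1992} enters: in $d=3$ the Yang--Mills energy is subcritical, so the a priori estimates of \cite{R1992} (equivalently, those already present in \cite{CG2013}) prevent the continuation criterion from ever being triggered, forcing the maximal time to be $+\infty$. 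Concretely, Rade's global solution, restricted to $t>0$, satisfies conditions (a)--(e) of Definition \ref{def:YM-solution-def} (one reads (a), (d), (e) off the smoothing estimates, and (b)--(c) from the equation), so by the uniqueness just noted it agrees with $A$ wherever both are defined, and hence $A$ extends to all of $[0,\infty)$.

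\textbf{Smoothness for smooth initial data.} Because \eqref{eq:YM} is only weakly parabolic (degenerate along infinitesimal gauge directions), I would not apply parabolic regularity to it directly, but pass to the DeTurck-regularized flow $\ptl_t B = -d^*_{B}\curv{B} + d(d^*B)$ (or its covariant variant), which is strictly parabolic. For smooth $A_0$, standard parabolic theory yields a smooth solution $B$ of this modified flow on $[0,\infty)\times\threetorus$ with $B(0)=A_0$; solving the associated smooth, time-dependent ODE for a gauge transformation $\gt(t)$ and setting $A := \gaction{B}{\gt}$ produces a smooth solution of \eqref{eq:YM} on $[0,\infty)\times\threetorus$ with the same smooth initial data. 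By the uniqueness from the previous paragraph, this $A$ coincides with the solution obtained there, which is therefore smooth; equivalently, one may simply cite that the global solution of \cite{R1992} with smooth initial data is smooth.

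\textbf{Main obstacle.} No single estimate is hard --- everything needed lives in the cited works --- so the real work is bookkeeping: checking that the maximal-solution-plus-continuation-criterion packaging of \cite{CG2013} and the global-existence statement of \cite{R1992} refer to the same function spaces and the same notion of solution (up to the regularity demanded by Definition \ref{def:YM-solution-def}), and that the DeTurck construction in the smooth case is genuinely consistent with the \cite{CG2013} uniqueness statement. Once these conventions are matched, the theorem follows by concatenation.
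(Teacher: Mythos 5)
Your overall strategy matches the paper's: the proof is essentially a sequence of citations to \cite{CG2013} and \cite{R1992}, and you have identified the right sources. The paper's own proof is literally three sentences: existence from \cite[Theorem~2.5]{CG2013} or \cite[Theorem~1]{R1992}, uniqueness from \cite[Theorem~9.15]{CG2013}, and smoothness for smooth initial data from \cite[Theorem~1]{R1992}. Two places where you diverge are worth flagging. First, you package uniqueness into \cite[Theorem~2.5]{CG2013} as part of a ``maximal solution + continuation criterion'' theorem, but the paper cites a separate uniqueness result, \cite[Theorem~9.15]{CG2013}; the phrasing of the paper's proof suggests Theorem~2.5 is an existence statement and does not supply uniqueness, so if you wrote this up you would need to adjust that citation. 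Second, you treat Theorem~2.5 as local existence requiring \rade's a priori bounds to become global, whereas the paper's proof (``Theorem 2.5 \emph{or} Theorem~1 of \cite{R1992}'') indicates that Theorem~2.5 already yields the solution on $[0,\infty)$ directly; your extra continuation argument is therefore unnecessary, though not harmful. For smoothness, the DeTurck-gauging construction you sketch is a valid (and conceptually informative) alternative, but the paper takes the shorter route of simply quoting the smoothness assertion in \cite[Theorem~1]{R1992}, which you also mention as the fallback. Net effect: same proof, with the uniqueness citation the only thing that genuinely needs correcting.
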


We will also need the following result later on. This is part of \cite[Theorem 1]{R1992}, and thus the proof is omitted.

\begin{lemma}\label{lemma:ym-continuity-in-initial-data-smooth-case}
Let $\{A_{0, n}\}_{n \leq \infty} \sse \honeconnspace$ be a sequence of {\oneforms} such that $A_{0, n}$ is smooth for all $n \leq \infty$, and such that $A_{0, n} \ra A_{0, \infty}$ in $H^1$ norm. For each $n \leq \infty$, let $A_n$ be the solution to \eqref{eq:YM} on $[0, \infty)$ with initial data $A_n(0) = A_{0, n}$. Then for all $0 \leq T < \infty$, we have
\beq\label{eq:ym-continuity-in-initial-data-smooth-case} \lim_{n\to\infty} \sup_{0 \leq t \leq T} \|A_n(t) - A_\infty(t)\|_{H^1} = 0. \eeq
\end{lemma}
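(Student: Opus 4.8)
The plan is to follow \rade's approach \cite{R1992} and reduce the (only weakly parabolic) equation \eqref{eq:YM} to a strictly parabolic one via the DeTurck trick. The reason a direct energy estimate on $A_n - A_\infty$ fails is gauge invariance: the linearization of $A \mapsto -d^*_A \curv{A}$ contains a $dd^*(\cdot)$ term, so testing the difference equation against $A_n - A_\infty$ controls only $\|d(A_n - A_\infty)\|_2$, not the full $H^1$ norm. To kill that term, fix a smooth reference connection (say the zero {\oneform}) and, for $n \le \infty$, let $\hat A_n$ solve the associated DeTurck--Yang--Mills flow --- a gauge-fixed, strictly parabolic modification of \eqref{eq:YM} whose principal part is the componentwise Laplacian --- with $\hat A_n(0) = A_{0,n}$. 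Since $A_{0,n}$ is smooth, $\hat A_n$ exists for all time and is smooth, and, as is standard for the DeTurck reduction, there is a smooth path of gauge transformations $\sigma_n : [0,\infty) \ra \htwogaugetransf$ with $\sigma_n(0) = \groupid$, solving an explicit first-order linear ODE driven by $d^*\hat A_n$, such that $A_n(t) = \gaction{\hat A_n(t)}{\sigma_n(t)}$ for all $t$. (These structural facts are part of the theory cited in Theorem \ref{thm:YM-global-existence}; see \cite{R1992, CG2013}.)

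Next I would assemble the uniform a priori estimates. Since $A_{0,n} \ra A_{0,\infty}$ in $H^1$, we have $\sup_n \|A_{0,n}\|_{H^1} < \infty$, and the parabolic regularity theory for the DeTurck flow supplies, for each $T < \infty$, bounds on $\sup_{0 \le t \le T}\|\hat A_n(t)\|_{H^1}$ and on integrated higher-norm quantities (with the $t^{3/4}$-type weight near $t = 0$ of Definition \ref{def:YM-solution-def}) depending only on $\sup_n\|A_{0,n}\|_{H^1}$, hence uniform in $n$; the same for $A_\infty$ and, through the ODE, for $\sigma_n$. With these in hand, continuous dependence for the \emph{parabolic} flow is a routine energy estimate. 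Writing $\hat B_n := \hat A_n - \hat A_\infty$ and subtracting the two DeTurck equations gives a linear, variable-coefficient parabolic equation for $\hat B_n$ whose coefficients are controlled by the norms of $\hat A_n$ and $\hat A_\infty$; testing it against $\hat B_n$ and against $-\Delta \hat B_n$ (and using $H^1 \hookrightarrow L^6$ to absorb the quadratic terms) yields
\[ \frac{d}{dt}\|\hat B_n(t)\|_{H^1}^2 \le C(t)\,\|\hat B_n(t)\|_{H^1}^2, \]
with $C(t)$ controlled by the uniform bounds above and integrable in $t$ near $0$. Grönwall then gives $\sup_{0 \le t \le T}\|\hat A_n(t) - \hat A_\infty(t)\|_{H^1} \le e^{\int_0^T C}\|A_{0,n} - A_{0,\infty}\|_{H^1} \ra 0$.

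It then remains to transfer the convergence through the gauge transformations. From the ODE for $\sigma_n$ (driven by $d^*\hat A_n$, with $\sigma_n(0) = \groupid$) and the convergence $d^*\hat A_n \ra d^*\hat A_\infty$ just established (uniformly on $[0,T]$ in $H^1$, hence in every $L^p$ with $p < 6$ by Sobolev, and in $C^\infty$ on $[\varep, T]$ by parabolic smoothing), a Grönwall argument for this linear ODE --- carried out in $H^2(\threetorus, \lalg)$, say --- gives $\sup_{0 \le t \le T}\|\sigma_n(t) - \sigma_\infty(t)\|_{H^2} \ra 0$. Since the gauge action $(A,\sigma) \mapsto \gaction{A}{\sigma} = \sigma^{-1} A \sigma + \sigma^{-1} d\sigma$ is continuous from $\honeconnspace \times \htwogaugetransf$ to $\honeconnspace$ (by the Sobolev multiplication estimates, e.g.\ $H^1 \cdot H^2 \hookrightarrow H^1$ and $H^2 \hookrightarrow C^0$ in three dimensions), we conclude $A_n(t) = \gaction{\hat A_n(t)}{\sigma_n(t)} \ra \gaction{\hat A_\infty(t)}{\sigma_\infty(t)} = A_\infty(t)$ in $H^1$, uniformly in $t \in [0,T]$, which is \eqref{eq:ym-continuity-in-initial-data-smooth-case}.

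The hard part is the behavior near $t = 0$. For $t \ge \varep > 0$ the DeTurck flow and the gauge ODE are smoothing and stable, so everything converges in $C^\infty$ and the argument is soft; but on $(0,\varep)$ one only has $H^1$ control, and the a priori estimates carry negative powers of $t$ (cf.\ condition (e) of Definition \ref{def:YM-solution-def}). The crux is therefore to verify that the Grönwall exponents $\int_0^T C(t)\,dt$ are finite and, crucially, bounded uniformly in $n$ --- equivalently, that the DeTurck-flow a priori bounds depend on the initial data only through $\|A_{0,n}\|_{H^1}$, with an integrable-in-$t$ singularity at $0$. Establishing this uniform, time-integrable control is exactly the analytic content of \rade's estimates \cite{R1992} (also revisited in \cite{CG2013}), which is why the lemma is quoted from there rather than reproved.
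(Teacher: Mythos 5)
Your proposal is correct and, at bottom, takes the same route as the paper: the paper's entire treatment of this lemma is the citation ``This is part of \cite[Theorem 1]{R1992}, and thus the proof is omitted,'' and you likewise conclude by deferring the crucial uniform-in-$n$, integrable-in-$t$ a priori bounds to \cite{R1992}. The sketch you give (DeTurck/ZDDS reduction to a strictly parabolic flow, energy plus Gr\"onwall for $\hat A_n - \hat A_\infty$, then transfer back through the gauge ODE, which for \emph{smooth} data may indeed be started at $t=0$) is an accurate account of what Råde's argument does, but it is supplementary exposition rather than a different proof.
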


Note that if $A$ is a solution to \eqref{eq:YM} on some interval $[0, T)$, and $\gt \in \htwogaugetransf$, then $\gaction{A}{\gt}$ (i.e.,  the function $t \mapsto \gaction{A(t)}{\gt}$) is also a solution to \eqref{eq:YM}. Combining this with the uniqueness part of Theorem \ref{thm:YM-global-existence}, we have the following lemma, which says that the Yang--Mills heat flow is gauge covariant.

\begin{lemma}\label{lemma:ym-gauge-covariant-h1-initial-data}
Let $T > 0$. Let $A_0 \in \honeconnspace$, $\gt \in \htwogaugetransf$. Let $A, B$ be the solutions to \eqref{eq:YM} on $[0, T)$ with initial data $A(0) = A_0$, $B(0) = \gaction{A_0}{\gt}$. Then $B = \gaction{A}{\gt}$, that is, $B(t) = \gaction{A(t)}{\gt}$ for all $t \in [0, T)$.
\end{lemma}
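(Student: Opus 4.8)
The plan is to show directly that the time-dependent one-form $t \mapsto \gaction{A(t)}{\gt}$ is itself a solution of \eqref{eq:YM} on $[0,T)$ with initial data $\gaction{A_0}{\gt}$, and then to invoke the uniqueness clause of Theorem \ref{thm:YM-global-existence} to conclude that $B = \gaction{A}{\gt}$.

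First I would record the algebraic backbone: for $A \in \honeconnspace$ and $\gt \in \htwogaugetransf$ one has the gauge covariance of curvature, $\curv{\gaction{A}{\gt}} = \gt^{-1} \curv{A}\, \gt$, and of the nonlinear term, $d^*_{\gaction{A}{\gt}} \curv{\gaction{A}{\gt}} = \gt^{-1}\big(d^*_{A} \curv{A}\big) \gt$. For smooth $A$ and $\gt$ these are classical; at the Sobolev regularity $A \in \honeconnspace$, $\gt \in \htwogaugetransf$ they follow from the Leibniz rule for weak derivatives together with Sobolev multiplication estimates in 3D (using $H^2 \hookrightarrow L^\infty \cap W^{1,6}$ and $H^1 \hookrightarrow L^6$). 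The same bookkeeping shows $\gaction{A}{\gt} \in \honeconnspace$ whenever $A \in \honeconnspace$; in particular $\gaction{A_0}{\gt} \in \honeconnspace$, so Theorem \ref{thm:YM-global-existence} genuinely produces the solution $B$.

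Next, I would verify the five conditions of Definition \ref{def:YM-solution-def} for $t \mapsto \gaction{A(t)}{\gt}$. Continuity in the $H^1$ topology (including at $t = 0$) holds because $A \mapsto \gaction{A}{\gt}$ is a bounded affine self-map of $\honeconnspace$, so it transports the $H^1$-continuous path $t \mapsto A(t)$ to an $H^1$-continuous path. For (a): $\curv{\gaction{A(t)}{\gt}} = \gt^{-1}\curv{A(t)}\,\gt$ lies in $H^1(\threetorus, \lalg^{3\times 3})$ since conjugation by the $H^2$-map $\gt$ is a bounded operator on $H^1$. For (d) and (e): since $\gt$ takes values in $\unitary(\matrixdim)$, conjugation by $\gt$ preserves the Frobenius norm pointwise, hence $\|\curv{\gaction{A(t)}{\gt}}\|_\infty = \|\curv{A(t)}\|_\infty$, and the bounds in (d), (e) transfer verbatim. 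For (b) and (c): writing $\gaction{A_i(t)}{\gt} = \gt^{-1} A_i(t)\, \gt + \gt^{-1}\ptl_i \gt$, the second summand is time-independent, and conjugation by $\gt$ is a bounded linear operator on $L^2(\threetorus, \lalg^3)$; hence the strong $L^2$-derivative of $\gaction{A(t)}{\gt}$ exists and equals $\gt^{-1} A'(t)\, \gt$. Using $A'(t) = -d^*_{A(t)}\curv{A(t)}$ and the covariance identity for the nonlinear term, this equals $-d^*_{\gaction{A(t)}{\gt}} \curv{\gaction{A(t)}{\gt}}$, which is (c). Thus $t \mapsto \gaction{A(t)}{\gt}$ is a solution of \eqref{eq:YM} on $[0,T)$ agreeing at $t = 0$ with $B(0) = \gaction{A_0}{\gt}$, so the uniqueness part of Theorem \ref{thm:YM-global-existence} yields $B(t) = \gaction{A(t)}{\gt}$ for all $t \in [0,T)$.

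The main obstacle is that the whole argument hinges on the two covariance identities being valid at the low regularity $A(t) \in \honeconnspace$, $\gt \in \htwogaugetransf$ — that is, on the 3D Sobolev product/multiplication bookkeeping needed to make sense of $\gt^{-1}\curv{A}\,\gt$ and of $d^*_{\gaction{A}{\gt}}$ and to verify they land in the correct function spaces. An equivalent route that sidesteps the direct low-regularity computation is approximation: choose smooth $A_{0,n} \to A_0$ in $H^1$ and smooth $\gt_n \to \gt$ in a topology for which $\gaction{A_{0,n}}{\gt_n} \to \gaction{A_0}{\gt}$ in $H^1$, apply \rade's smooth gauge covariance (noted in Subsection \ref{ymheatsec}) to the approximants, and pass to the limit using the continuous dependence of the flow on smooth initial data (Lemma \ref{lemma:ym-continuity-in-initial-data-smooth-case}).
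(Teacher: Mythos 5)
Your proposal is correct and matches the paper's own (very brief) argument: immediately before the lemma, the paper simply notes that $t \mapsto \gaction{A(t)}{\gt}$ is again a solution to \eqref{eq:YM} and then invokes the uniqueness clause of Theorem \ref{thm:YM-global-existence}. Your write-up supplies the details of why the gauge-transformed flow satisfies the five conditions of Definition \ref{def:YM-solution-def}, which the paper leaves implicit, but the logical route is the same.
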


We will need the following proposition from \cite{CG2013} concerning growth of solutions to \eqref{eq:YM}.

\begin{prop}[Corollary 10.3 of \cite{CG2013}]\label{prop:YM-h1-norm-estimate}
There is a continuous non-decreasing map $\normbdfn : [0, \infty)^2 \ra [0, \infty)$, such that for any solution $A$ to \eqref{eq:YM} on $[0, T)$ for some $0 < T \leq \infty$, we have that for all $0 \leq t < T$,
\[ \sup_{0 \leq s \leq t} \|A(s)\|_{H^1} \leq \normbdfn(t, \|A(0)\|_{H^1}).\]
Here, non-decreasing means $\normbdfn(t_1, M_1) \leq \normbdfn(t_2, M_2)$ for all $(t_1, M_1), (t_2, M_2) \in [0, \infty)^2$ such that $t_1 \leq t_2$ and $M_1 \leq M_2$.
\end{prop}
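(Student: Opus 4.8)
My plan is to establish this a priori bound through an energy estimate for \eqref{eq:YM}, in three steps. \emph{Step 1 (curvature decay).} Since \eqref{eq:YM} is the gradient flow of $\sym$, any solution $A$ on $[0,T)$ satisfies, for $t\in(0,T)$,
\[ \frac{d}{dt}\sym(A(t)) \;=\; 2\bigl(d^*_{A(t)}\curv{A(t)},\,\partial_t A(t)\bigr) \;=\; -2\,\|d^*_{A(t)}\curv{A(t)}\|_2^2 \;\le\; 0, \]
so $t\mapsto\sym(A(t))=\|\curv{A(t)}\|_2^2$ is non-increasing. Combined with Lemma~\ref{lemma:curvature-bounded-by-h1-norm} this yields the gauge-invariant bound
\[ \sup_{0\le s<T}\|\curv{A(s)}\|_2 \;\le\; \|\curv{A(0)}\|_2 \;\le\; \const\bigl(\|A(0)\|_{H^1}+\|A(0)\|_{H^1}^2\bigr) \;=:\; Q\bigl(\|A(0)\|_{H^1}\bigr), \]
which is already continuous and non-decreasing in $\|A(0)\|_{H^1}$.

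\emph{Step 2 (differential inequality for $\|A(t)\|_{H^1}$).} Write \eqref{eq:YM} as $\partial_t A=-d^*\curv{A}-[A\lrcorner\curv{A}]$ and differentiate $\tfrac12\|A(t)\|_{H^1}^2$ (a regularisation argument, using the regularity in Definition~\ref{def:YM-solution-def}, justifies the formal computation). Integration by parts on the flat torus, together with $\curv{A}=dA+\tfrac12[A\wedge A]$, extracts from the top-order term the dissipative contribution $-\|\nabla dA\|_2^2$. Two features obstruct a naive closure: first, \eqref{eq:YM} is only weakly parabolic because of gauge invariance, so this dissipative term controls only the transverse part of the second derivatives of $A$; second, the lower-order terms coming from the nonlinearity $[A\wedge A]$ involve $\nabla\curv{A}$ (equivalently second derivatives of $A$) and so are not controllable by $\|A\|_{H^1}$ alone. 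Both are handled by the parabolic bootstrap of \cite{CG2013}: one runs the energy estimate for $\|A(t)\|_{H^1}$ simultaneously with one for $\|\curv{A(t)}\|_{H^1}$ (finite for $t>0$ by Definition~\ref{def:YM-solution-def}), using the parabolic smoothing estimates built into the notion of solution, the curvature bound $Q$ from Step~1, Hölder's inequality, the Sobolev embeddings $\honeconnspace\hookrightarrow L^p$ with $p\le 6$ (Theorem~\ref{sobolevthm}), and Young's inequality to absorb the resulting $\varepsilon\|\nabla dA\|_2^2$ and $\varepsilon\|\nabla\curv{A}\|_2^2$ into the dissipative terms; the two estimates close together. (Alternatively, one could first flow in a Coulomb-type gauge via the DeTurck--Zwanziger device, where the equation is strictly parabolic, and then transfer the resulting bound back while separately controlling the auxiliary gauge transformation, but the direct route of \cite{CG2013} is cleaner.) The outcome is an inequality of the form
\[ \frac{d}{dt}\|A(t)\|_{H^1}^2 \;\le\; \Psi\bigl(Q(\|A(0)\|_{H^1})\bigr)\bigl(1+\|A(t)\|_{H^1}^2\bigr), \qquad 0\le t<T, \]
for some continuous non-decreasing $\Psi:[0,\infty)\to[0,\infty)$.

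\emph{Step 3 (conclusion).} Gronwall's inequality turns the last display into $\sup_{0\le s\le t}\|A(s)\|_{H^1}^2\le(1+\|A(0)\|_{H^1}^2)\exp\bigl(t\,\Psi(Q(\|A(0)\|_{H^1}))\bigr)-1$, so one may take
\[ \normbdfn(t,M) \;:=\; \Bigl((1+M^2)\exp\!\bigl(t\,\Psi(Q(M))\bigr)-1\Bigr)^{1/2}, \]
which is manifestly continuous and non-decreasing in $(t,M)\in[0,\infty)^2$; global existence (Theorem~\ref{thm:YM-global-existence}) ensures the estimate is not vacuous. I expect Step~2 to be the only real obstacle: because \eqref{eq:YM} is weakly parabolic, the naive $H^1$ energy estimate does not close at top order, and one must either restore parabolicity by gauge fixing and then control the gauge transformation, or carry out the delicate coupled bootstrap of \cite{CG2013} in which the smoothing estimates, the curvature bound of Step~1, and the Sobolev embeddings interlock; once the differential inequality is in hand, the passage to a continuous monotone $\normbdfn$ in Step~3 is routine.
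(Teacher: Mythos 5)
The paper offers no proof of Proposition~\ref{prop:YM-h1-norm-estimate} --- the bracketed label records that it is Corollary~10.3 of \cite{CG2013}, and the paper supplies no argument. Your submission is therefore a reconstruction of what that reference presumably does, not something that can be checked against a proof in this paper.

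Within that framing there is a genuine gap: Step~2 is not a proof. Step~1 is correct (and the monotonicity of $\sym$ along \eqref{eq:YM} is indeed used elsewhere, in Lemma~\ref{lemma:sym-decreasing}, which itself cites Theorem~7.1 of \cite{CG2013}); Step~3 is routine once Step~2 is in hand. But Step~2 is the whole content of the proposition, and after correctly diagnosing the two obstructions --- weak parabolicity of \eqref{eq:YM}, and the appearance of $\nabla\curv{A}$, which $\|A\|_{H^1}$ alone does not control --- you hand the coupled bootstrap back to \cite{CG2013} without carrying it out. The Gronwall-ready differential inequality you posit is a guess at the output of that bootstrap, not a derivation; even the preliminary claim that $t\mapsto\|A(t)\|_{H^1}^2$ is differentiable is not given by Definition~\ref{def:YM-solution-def}, which only provides the strong derivative $A'(t)$ in $L^2$ (the ``regularisation argument'' you invoke is asserted, not supplied, and the same concern applies to the energy identity in Step~1). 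In short, the sketch is a plausible and honest outline that correctly locates all the difficulty, but it defers that difficulty to the very citation the paper uses, so nothing beyond the outline can be assessed for correctness.
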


Hereafter, $\normbdfn$ will always denote the function given by Proposition \ref{prop:YM-h1-norm-estimate}.

We will also need a variant of the Yang--Mills heat flow, due to Zwanziger \cite{Zwan1981}, DeTurck \cite{DeT1983}, Donaldson \cite{Don1985}, Sadun \cite{Sad1987}. We will refer to this equation as the ZDDS equation. The ZDDS flow is equivalent to the Yang--Mills heat flow on the gauge orbit space (see Lemma \ref{lemma:ym-zdds-solution-gauge-transformation}), but has better smoothing properties on the space of connections. For a discussion of how the ZDDS  equation naturally arises from \eqref{eq:YM}, as well as some of its uses, see \cite[Section 1]{CCHS2020}. We will now state the ZDDS equation and list some of its properties. The proofs are in Appendix \ref{section:ym-proof-h1}.

Given a {\oneform} $A$ and a {\zeroform} $f$, define the {\oneform}
\[ d_{A} f := df + [A \wedge f], \]
where $[A \wedge f]$ is the {\oneform} given by
\[ [A \wedge f]_i := [A_i, f], ~~ 1 \leq i \leq 3.\]
The ZDDS equation is the following:
\beq\label{eq:ZDDS}\tag{$\textup{ZDDS}$} 
\ptl_t A(t) = - (d_{A(t)}^* \curv{A(t)} + d_{A(t)} d^* A(t)). \eeq
The difference between \eqref{eq:ZDDS} and \eqref{eq:YM} is the presence of $d_{A(t)} d^* A(t)$. We use the following notion of solution to \eqref{eq:ZDDS}, given in \cite[Theorem 2.14]{CG2013}.

\begin{definition}[Solution to \eqref{eq:ZDDS}]\label{def:ZDDS-solution-def}
Let $0 < T \leq \infty$. We say that $A : [0, T) \ra \honeconnspace$ is a solution to \eqref{eq:ZDDS} on $[0, T)$ if $A$ is a continuous function (when $\honeconnspace$ is equipped with its $H^1$ norm topology) such that the following conditions all hold:
\begin{enumerate}[label=(\alph*)]
    \item $\curv{A(t)} \in H^1(\threetorus, \lalg^{3 \times 3})$ and $d^* A(t) \in H^1(\threetorus, \lalg)$ for all $t \in (0, T)$,
    \item when regarding $A$ as a function from $(0, T)$ into $L^2(\threetorus, \lalg^3)$, the strong derivative $A'(t)$ exists for all $t \in (0, T)$,
    \item $A'(t) = - (d_{A(t)}^* \curv{A(t)} + d_{A(t)} d^*A(t))$ for all $t \in (0, T)$, \item $t^{3/4} \|\curv{A(t)}\|_\infty$ is bounded on $(0, T)$.
\end{enumerate}
Let $A_0 \in \honeconnspace$. If additionally $A(0) = A_0$, then we say that $A$ is a solution to \eqref{eq:ZDDS} on $[0, T)$ with initial data $A(0) = A_0$.
\end{definition}

Hereafter, whenever we say that $A$ is a solution to \eqref{eq:ZDDS}, by default we mean that $A$ is a solution in the sense of Definition \ref{def:ZDDS-solution-def}. We will need the following proposition regarding local existence of solutions to \eqref{eq:ZDDS} (note, in particular, that the proposition shows that solutions to \eqref{eq:ZDDS} are smooth at positive times). This result was essentially proven by Charalambous and Gross \cite[Theorem 2.14]{CG2013}, except that their result did not include continuity of solutions in the initial data. This additional property may be easily proven by Charalambous and Gross's methods. 

\begin{prop}[Cf.~Theorem 2.14 of \cite{CG2013}]\label{prop:ZDDS-local-existence}
There is a continuous non-increasing function $\timefn : [0, \infty) \ra (0, \infty)$ such that the following holds. For any $A_0 \in \honeconnspace$, there is a unique solution $A$ to \eqref{eq:ZDDS} on $[0, \timefn( \|A_0\|_{H^1}))$ with initial data $A(0) = A_0$. Moreover, $A \in C^\infty((0, \timefn( \|A_0\|_{H^1})) \times \threetorus, \lalg^3)$. Additionally, let $\{A_{0, n}\}_{n \leq \infty} \sse \honeconnspace$, $A_{0, n} \ra A_{0, \infty}$. For each $n \leq \infty$, let $A_n$ denote the solution to \eqref{eq:ZDDS} with initial data $A_n(0) = A_{0, n}$. Then for any $0 < t_0 < \timefn(\|A_{0, \infty}\|_{H^1})$, we have
\[ \begin{split}
\sup_{0 < t \leq t_0} &\big\{\|A_n(t) - A_\infty(t)\|_{H^1} + t^{1/4} \|A_n(t) - A_\infty(t)\|_\infty ~+ \\
&t^{3/4}\big(\|dA_n(t) - dA_\infty(t)\|_\infty + \|d^*A_n(t) - d^*A_\infty(t)\|_\infty\big) \big\}\ra 0.
\end{split}\]
(Note that since $\timefn$ is continuous, we have $\timefn(\|A_{0, n}\|_{H^1}) \ra \timefn( \|A_{0, \infty}\|_{H^1})$. Thus,  for any $t_0 < \timefn(\|A_{0, \infty}\|_{H^1})$, we have that $[0, t_0] \sse [0, \timefn(\|A_{0, n}\|_{H^1}))$ for all large enough~$n$.)
\end{prop}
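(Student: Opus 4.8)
The plan is to recast \eqref{eq:ZDDS} in mild (Duhamel) form and run a contraction-mapping argument in a space of functions with time-weighted norms tailored to conditions (a)--(d) of Definition~\ref{def:ZDDS-solution-def}. On the flat torus the Hodge Laplacian on {\oneforms} acts componentwise as $\sum_i \ptl_{ii}$, so expanding $d^*_{A}\curv{A} + d_A d^* A$ with $\curv{A} = dA + \tfrac12[A\wedge A]$, equation \eqref{eq:ZDDS} takes the form $\ptl_t A = \Delta A + \mc{N}(A)$, where $\mc{N}(A)$ is a universal sum of terms of the schematic types $\ptl(AA)$, $A\,\ptl A$ and $AAA$ --- quadratic and cubic in $A$, with at most one derivative. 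Using (b)--(c) and $\ptl_t e^{t\Delta}=\Delta e^{t\Delta}$, a solution in the sense of Definition~\ref{def:ZDDS-solution-def} is equivalent to a solution of the integral equation $A(t) = e^{t\Delta}A_0 + \int_0^t e^{(t-s)\Delta}\mc{N}(A(s))\,ds$, continuous into $\honeconnspace$ up to $t=0$ with $A(0)=A_0$.

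\textbf{Existence, uniqueness, smoothness.} For $0<T\le1$ let $X_T$ be the Banach space of continuous $A:(0,T]\to\honeconnspace$ with
\[ \|A\|_{X_T} := \sup_{0<t\le T}\Big(\|A(t)\|_{H^1} + t^{1/4}\|A(t)\|_\infty + t^{3/4}\|dA(t)\|_\infty + t^{3/4}\|d^*A(t)\|_\infty\Big)<\infty. \]
The weights are dictated by the short-time smoothing estimates $\|e^{t\Delta}f\|_q\lesssim t^{-\frac32(\frac1p-\frac1q)}\|f\|_p$ and $\|\nabla e^{t\Delta}f\|_q\lesssim t^{-\frac12-\frac32(\frac1p-\frac1q)}\|f\|_p$ on $\threetorus$, combined with the Sobolev embedding $H^1\hookrightarrow L^6$ of Theorem~\ref{sobolevthm}; in particular $\|e^{t\Delta}v\|_{X_T}\le\const\|v\|_{H^1}$, so the linear part has the right size. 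I would then show that $\Gamma_{A_0}(A)(t):=e^{t\Delta}A_0+\int_0^t e^{(t-s)\Delta}\mc{N}(A(s))\,ds$ maps the ball $\{\|A\|_{X_T}\le R\}$ into itself and is a contraction there, for $R=\const\|A_0\|_{H^1}$ and $T$ small. The key point is that every contribution of $\mc{N}$ comes with a positive power of $T$: e.g.\ for the worst term $\int_0^t\nabla e^{(t-s)\Delta}(AA)(s)\,ds$ one bounds $\|(AA)(s)\|_3\le\|A(s)\|_6^2\lesssim\|A\|_{X_T}^2$ and $\|\nabla(AA)(s)\|_2\le\|A(s)\|_\infty\|\nabla A(s)\|_2\lesssim s^{-1/4}\|A\|_{X_T}^2$, then combines with beta-function integrals $\int_0^t(t-s)^{-a}s^{-b}\,ds$; similarly for the other terms. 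Solving the resulting smallness conditions explicitly yields a continuous non-increasing threshold $\timefn(\|A_0\|_{H^1})$; the unique fixed point is the solution, it extends continuously to $t=0$ (strong continuity of $e^{t\Delta}$ on $H^1$ plus the positive power of $t$ in the Duhamel term), and membership in $X_T$ together with the integral identity immediately gives (a)--(d). Uniqueness within the class of Definition~\ref{def:ZDDS-solution-def} follows since any such solution is a mild solution, so a Gronwall estimate in $X_T$ on a short initial interval forces agreement, which then propagates. Smoothness on $(0,\timefn(\|A_0\|_{H^1}))\times\threetorus$ follows by the standard parabolic bootstrap --- on any compact subinterval the bounds packaged in $\|A\|_{X_T}$ make $\mc{N}(A)$ a Hölder source, and iterating Schauder estimates gives $C^\infty$; this is as in \cite{CG2013}.

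\textbf{Continuity in the initial data and the main obstacle.} For $A_{0,n}\to A_{0,\infty}$ in $\honeconnspace$, the ball radii and contraction constants can be chosen uniformly in $n$ (since $\{\|A_{0,n}\|_{H^1}\}$ is bounded), and continuity of $\timefn$ gives $[0,t_0]\sse[0,\timefn(\|A_{0,n}\|_{H^1}))$ for all large $n$. Writing the fixed-point equations for $A_n$ and $A_\infty$, subtracting, and using $\|e^{t\Delta}v\|_{X_{t_0}}\le\const\|v\|_{H^1}$ for the linear part and the multilinear difference estimates for $\mc{N}(A_n)-\mc{N}(A_\infty)$ (the same computations as in the contraction step) for the rest, one obtains
\[ \|A_n-A_\infty\|_{X_{t_0}}\le\const\,\|A_{0,n}-A_{0,\infty}\|_{H^1} + \tfrac12\|A_n-A_\infty\|_{X_{t_0}}, \]
hence $\|A_n-A_\infty\|_{X_{t_0}}\to0$, which is exactly the asserted convergence. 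The delicate point throughout --- and what I expect to be the main obstacle --- is arranging the bilinear and trilinear estimates so that (i) the existence time depends \emph{only} on $\|A_0\|_{H^1}$ and (ii) the difference estimate closes in $X_{t_0}$ with constant $<1$; in particular the $\dot H^2$ part of the Duhamel term must be controlled by exploiting the weighted $L^\infty$ bound on $dA$ rather than an algebra property of $H^1$, which fails in three dimensions.
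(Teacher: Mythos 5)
Your proposal is correct and follows essentially the same route as the paper: recasting \eqref{eq:ZDDS} in mild form, running a contraction in the time-weighted path space $\mc{P}_T$ with exactly the weights $\|A(t)\|_{H^1}+t^{1/4}\|A(t)\|_\infty+t^{3/4}(\|dA(t)\|_\infty+\|d^*A(t)\|_\infty)$, using the $L^p\to L^q$ heat-kernel and Sobolev estimates, and obtaining continuity in the initial data by subtracting the two fixed-point equations and reusing the contraction constant. The paper's presentation leans on Charalambous--Gross \cite{CG2013} for the key ingredients (Lemmas 8.4, 8.6, Remark 8.5, and the uniqueness/smoothness claims), but the argument is the one you outline.
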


\begin{remark}\label{remark:ym-zdds-comparison}
For the reason why we need to work with both \eqref{eq:YM} and \eqref{eq:ZDDS}, note that each equation has its pros and cons. Solutions to the equation \eqref{eq:YM} exist globally, however they may not be smooth, because \eqref{eq:YM} is only weakly parabolic (see \cite[Remark 2.9]{CG2013}). On the other hand, solutions to \eqref{eq:ZDDS} may not exist globally (at least, this hasn't been proven, as far as we can tell), but they are indeed smooth at positive times. 
\end{remark}

The following lemma shows that solutions to \eqref{eq:YM} and \eqref{eq:ZDDS} are related by a time-dependent gauge transformation. This result almost directly follows by \cite[Corollary 9.4]{CG2013}, except there is one small additional thing to show.~We give the proof in Appendix \ref{section:ym-proof-h1}. 

\begin{lemma}\label{lemma:ym-zdds-solution-gauge-transformation}
Let $A_0 \in \honeconnspace$. Let $B$ be the solution to \eqref{eq:ZDDS} on some interval $[0, T)$ with initial data $B(0) = A_0$. Let $A$ be the solution to \eqref{eq:YM} on $[0, T)$ with initial $A(0) = A_0$. For any $0 < t < T$, there exists a gauge transformation $\gt \in \htwogaugetransf$, possibly depending on $t$, such that $A(t) = \gaction{B(t)}{\gt}$.
\end{lemma}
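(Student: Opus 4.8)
The plan is to run the classical DeTurck/Zwanziger gauge-fixing argument in the $H^1$ setting and then invoke uniqueness of \eqref{eq:YM}. Concretely, I would start from the observation (essentially the content of \cite[Corollary 9.4]{CG2013}) that, given the \eqref{eq:ZDDS} solution $B$ on $[0,T)$, one can produce a time-dependent gauge transformation $\{g(t)\}_{t \in [0,T)}$ with $g(0) = \groupid$, obtained by solving the first-order-in-$t$ ordinary differential equation driven by $d^* B(t)$, such that the curve $t \mapsto \gaction{B(t)}{g(t)}$ is a solution to \eqref{eq:YM} on $[0,T)$ with initial data $\gaction{B(0)}{g(0)} = A_0$. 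This uses that $B$ is smooth on $(0,T) \times \threetorus$ by Proposition \ref{prop:ZDDS-local-existence} (extended past the short existence time by the smoothing of \eqref{eq:ZDDS} together with the $H^1$ bound of Proposition \ref{prop:YM-h1-norm-estimate}), together with the quantitative blow-up rate of $\|d^* B(t)\|_\infty$ as $t \downarrow 0$ coming from the same source, which makes $t \mapsto g(t)$ well defined and continuous down to $t = 0$.

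Granting this, the remainder is short. By the uniqueness clause of Theorem \ref{thm:YM-global-existence}, the two \eqref{eq:YM} solutions $t \mapsto \gaction{B(t)}{g(t)}$ and $t \mapsto A(t)$, which agree at $t = 0$, must coincide on all of $[0,T)$; hence $A(t) = \gaction{B(t)}{g(t)}$ for every $t$, and we may take $\gt := g(t)$ for the fixed $0 < t < T$ in the statement.

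The one point that genuinely needs checking -- and which I expect to be the main obstacle -- is that the gauge transformation so produced actually lies in $\htwogaugetransf$, rather than merely in some a priori larger class. Once $A(t) = \gaction{B(t)}{g(t)}$ is known with $A(t), B(t) \in \honeconnspace$, the relation $g(t)^{-1}\ptl_i g(t) = A(t)_i - g(t)^{-1} B(t)_i g(t)$ exhibits $g(t)^{-1} dg(t)$ as the sum of an $H^1$ {\oneform} and the conjugate of an $H^1$ {\oneform} by $g(t)$; since in three dimensions $g(t)$ is at least $H^2$, hence continuous with first derivatives in $L^6$, the product rule $H^2 \cdot \honeconnspace \sse \honeconnspace$ (via the Sobolev embeddings of Theorem \ref{sobolevthm}) shows the conjugate term is in $\honeconnspace$ as well, so $g(t) \in \htwogaugetransf$ by the characterization \eqref{eq:gauge-transformation-set-def}. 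An alternative route that avoids tracking blow-up rates is to first establish the conclusion for smooth $A_0$ (where \cite[Corollary 9.4]{CG2013} applies directly and the gauge transformations are smooth), then approximate a general $A_0 \in \honeconnspace$ in $H^1$ norm by smooth $A_{0,n}$, use Lemma \ref{lemma:ym-continuity-in-initial-data-smooth-case} and Proposition \ref{prop:ZDDS-local-existence} (iterated over subintervals, with Proposition \ref{prop:YM-h1-norm-estimate} keeping the local existence times from degenerating) to get $A_n(t) \ra A(t)$ and $B_n(t) \ra B(t)$ in $H^1$, and then pass to the limit in the identity $B_n(t) = \gaction{A_n(t)}{\gt_n^{-1}}$ using Lemma \ref{lemma:limiting-gauge-transformation} to obtain $\gt \in \htwogaugetransf$ with $A(t) = \gaction{B(t)}{\gt}$ (using that $\htwogaugetransf$ is a group to invert where needed).
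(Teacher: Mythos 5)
Your two routes both hit genuine obstacles that the paper's actual argument is designed to avoid. The paper's proof uses precisely the $\varepsilon$-regularized scheme from \cite[Theorem 9.2 and Lemma 9.6]{CG2013}: for each $\varepsilon \in (0,T)$, one solves the gauge ODE $\ptl_t \gt_\varepsilon = d^*B(t)\,\gt_\varepsilon$ starting from time $\varepsilon$ with $\gt_\varepsilon(\varepsilon)\equiv\groupid$, so that $A_\varepsilon(t) := \gaction{B(t)}{\gt_\varepsilon(t)}$ is a classical \eqref{eq:YM} solution on $[\varepsilon,T)$; the key cited input is that $A_\varepsilon \to A$ in $H^1$, uniformly, as $\varepsilon \downarrow 0$. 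Fixing $t$ and sending $\varepsilon \downarrow 0$, the family $\gaction{B(t)}{\gt_{\varepsilon}(t)}$ is $H^1$-bounded, so Lemma \ref{lemma:sequence-and-gauge-transform-bounded} (Wehrheim) produces a subsequential limit $\gt \in \htwogaugetransf$, and the identity $A(t) = \gaction{B(t)}{\gt}$ follows by passing to the $L^2$ limit. This gives both the gauge transformation and its $\htwogaugetransf$ membership in one stroke.

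Your first route, by contrast, asserts one can run the ODE from $t=0$ directly with $g(0)=\groupid$. The paper explicitly flags this as the thing one cannot do: the discussion after \eqref{eq:g-eps-ode} states that the $\varepsilon>0$ starting time is taken ``to get around various difficulties caused by $d^*B(t)$ becoming more singular as $t\downarrow 0$ (in particular, this prevents us from taking $\varepsilon=0$ right away).'' Your remark that the $t^{-3/4}$ blow-up is integrable is correct as far as it goes, but neither \cite[Corollary 9.4]{CG2013} (which also uses the $\varepsilon$-scheme) nor any other cited result asserts that the resulting $g(t)$ is a gauge transformation in the right class or that $\gaction{B(t)}{g(t)}$ solves \eqref{eq:YM} in the sense of Definition \ref{def:YM-solution-def} down to $t=0$ --- if this were routine, the $\varepsilon$-limiting content of Theorem 9.2 and Lemma 9.6 of \cite{CG2013} would be unnecessary. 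Your proposed verification that $g(t)\in\htwogaugetransf$ via the algebraic identity $g(t)^{-1}dg(t) = A(t) - g(t)^{-1}B(t)g(t)$ also presupposes regularity of $g(t)$ (continuity, and enough to make sense of $d g(t)$ and the conjugation) that the bare ODE solution does not obviously provide; the paper gets this for free from Lemma \ref{lemma:sequence-and-gauge-transform-bounded}. Your second route has a separate gap: you need $A_n(t)\ra A(t)$ in order to apply Lemma \ref{lemma:limiting-gauge-transformation}, but Lemma \ref{lemma:ym-continuity-in-initial-data-smooth-case} requires the \emph{limiting} initial datum $A_{0,\infty}$ to be smooth as well, whereas here $A_0$ is merely in $\honeconnspace$. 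Iterating over subintervals starting from a positive ZDDS-regularization time $t_0$ does not close this, because relating the \eqref{eq:YM} flow started at $B_n(t_0)$ back to the \eqref{eq:YM} flow started at $A_{0,n}$ is exactly what the lemma you are trying to prove would provide (indeed the paper uses this lemma to make precisely that relation in the proof of Lemma \ref{lemma:regularized-wilson-loop-weak-sequential-continuity}), so the argument would be circular at the non-smooth initial datum.
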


\begin{remark}\label{remark:smoothing-procedure}
By Proposition \ref{prop:ZDDS-local-existence}, we can use \eqref{eq:ZDDS} to smooth out rough initial data (even more, see \cite[Section 19.7]{Feehan2016} for a local existence result for $L^3$ initial data). Moreover, this smoothing procedure is gauge invariant, in the sense that if $B$ is a solution to \eqref{eq:ZDDS} on some interval $[0, T)$, then for all $0 < s < t < T$, we have that $[B(t)] = \Phi_{t-s}([B(s)])$ (by Lemmas \ref{lemma:ym-zdds-solution-gauge-transformation} and \ref{lemma:wilson-loop-gauge-invariant-not-smooth-gauge-transform}). Thus in the setting of Theorem \ref{thm:free-field-behavior-implies-tightness}, if we obtain $\rconn^n$ which is not smooth (by, e.g., interpolating a lattice gauge configuration), we can fix this by running \eqref{eq:ZDDS} for a small amount of time, to obtain a smoothed version of $\rconn^n$.
\end{remark}

We next state the following result which will be needed in Section \ref{section:def-regularized-wilson-loops}. The proof is in Appendix \ref{section:ym-proof-h1}. This result is new, in that it is not contained in \cite{CG2013}, as was essentially the case for the previous results. 


\begin{lemma}\label{lemma:zdds-solutions-weak-continuity}
There is a continuous non-increasing function $\timefn_0 : [0, \infty) \ra (0, \infty)$ such that $\timefn_0 < \timefn$ (where $\timefn$ is as in Proposition \ref{prop:ZDDS-local-existence}), and such that the following holds. Suppose that $\{A_{0, n}\}_{n \leq \infty} \sse \honeconnspace$ and $A_{0, n} \weakarrow A_{0, \infty}$, so that $M := \sup_{n \leq \infty} \|A_{0, n}\|_{H^1} < \infty$ by Lemma \ref{lemma:weak-convergence-and-h1-norm}. For each $n \leq \infty$, let $A_n$ be the solution to \eqref{eq:ZDDS} on $[0, \timefn(\|A_{0, n}\|_{H^1}))$ with initial data $A_n(0) = A_{0, n}$. Then
\[ \sup_{0 \leq t \leq \timefn_0( M)} t^{1/2} \|A_n(t) - A_\infty(t)\|_{H^1} \ra 0.\]
(Note that for all $n \leq \infty$, $A_n$ is defined on $[0, \timefn_0(M)]$, since $\timefn_0(M) < \timefn(M) \leq \timefn(\|A_{0, n}\|_{H^1})$ by the monotonicity of $\timefn$.)
\end{lemma}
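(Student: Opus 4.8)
The plan is to run a Duhamel contraction argument for the difference of the two \eqref{eq:ZDDS} flows, using that $A_{0,n} \weakarrow A_{0,\infty}$ in $\honeconnspace$ upgrades, by the compact embedding $H^1 \hookrightarrow L^2$ (Remark~\ref{soboloverem}), to $A_{0,n} \ra A_{0,\infty}$ in $L^2$, while the heat semigroup turns this $L^2$ gain into an $H^1$ bound carrying exactly the $t^{-1/2}$ singularity that the $t^{1/2}$ weight in the statement is there to absorb. First I would rewrite \eqref{eq:ZDDS} in semilinear form: since $\curv{A} = dA + \tfrac12[A \wedge A]$ and $\threetorus$ is flat (so that $d^*d + dd^* = -\Delta$ componentwise, with $\Delta = \sum_i \ptl_{ii}$),
\[ \ptl_t A = \Delta A + \mc{N}(A), \qquad \mc{N}(A) := -\tfrac12 d^*[A \wedge A] - [A \lrcorner \curv{A}] - [A \wedge d^*A], \]
so $\mc{N}(A)$ is a sum of terms of schematic type $\ptl(A \cdot A)$, $A \cdot \ptl A$, $A \cdot A \cdot A$. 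Since solutions to \eqref{eq:ZDDS} are smooth on positive times and continuous into $H^1$ up to $t = 0$ (Proposition~\ref{prop:ZDDS-local-existence}, Definition~\ref{def:ZDDS-solution-def}), classical variation of constants on $[t_1, t]$ followed by $t_1 \downarrow 0$ (the tail integral over $(0,t_1)$ vanishing since $\|\mc{N}(A_n(s))\|_{L^{3/2}}$ is bounded by a polynomial in $\|A_n(s)\|_{H^1}$, which stays bounded near $0$) shows that each $A_n$ obeys the mild formulation
\[ A_n(t) = e^{t\Delta} A_{0,n} + \int_0^t e^{(t-s)\Delta} \mc{N}(A_n(s))\,ds, \qquad 0 < t < \timefn(\|A_{0,n}\|_{H^1}). \]

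Next I would set $M := \sup_{n \leq \infty} \|A_{0,n}\|_{H^1} < \infty$ (Lemma~\ref{lemma:weak-convergence-and-h1-norm}) and $T_1(M) := \tfrac12 \timefn(M)$. Since $\timefn$ is non-increasing, every $A_n$ ($n \leq \infty$) is defined on $[0, T_1(M)]$ (as $T_1(M) < \timefn(M) \leq \timefn(\|A_{0,n}\|_{H^1})$), and the a priori estimates of \cite{CG2013} underlying Proposition~\ref{prop:ZDDS-local-existence} give a constant $\const_M$, monotone in $M$, with $\sup_{0 < t \leq T_1(M)} \|A_n(t)\|_{H^1} \leq \const_M$. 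Put $D_n := A_n - A_\infty$ and, for $T \leq T_1(M)$, $\Phi_n(T) := \sup_{0 < t \leq T} t^{1/2} \|D_n(t)\|_{H^1}$; this is finite by continuity of $A_n, A_\infty$ into $H^1$. Subtracting the mild formulas,
\[ D_n(t) = e^{t\Delta}(A_{0,n} - A_{0,\infty}) + \int_0^t e^{(t-s)\Delta}\big(\mc{N}(A_n(s)) - \mc{N}(A_\infty(s))\big)\,ds. \]
The bound $\|e^{t\Delta} f\|_{H^1} \leq \const\, t^{-1/2} \|f\|_2$ gives $t^{1/2} \|e^{t\Delta}(A_{0,n} - A_{0,\infty})\|_{H^1} \leq \const\, \varep_n$, where $\varep_n := \|A_{0,n} - A_{0,\infty}\|_2 \ra 0$. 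Telescoping shows $\mc{N}(A_n(s)) - \mc{N}(A_\infty(s))$ is a sum of terms schematically of the form $A\, \ptl D_n$, $D_n\, \ptl A$, $D_n A A$ with $A \in \{A_n(s), A_\infty(s)\}$; by Hölder and $H^1 \hookrightarrow L^6$ (Theorem~\ref{sobolevthm}), the first two lie in $L^{3/2}(\threetorus, \lalg^3)$ and the last in $L^2(\threetorus, \lalg^3)$, each with norm $\leq \const_M \|D_n(s)\|_{H^1}$.

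Using $\|e^{r\Delta}\|_{L^{3/2} \ra H^1} \leq \const\, r^{-3/4}$ and $\|e^{r\Delta}\|_{L^2 \ra H^1} \leq \const\, r^{-1/2}$ for $r \leq 1$ (Young's inequality with the heat kernel) and $\|D_n(s)\|_{H^1} \leq s^{-1/2} \Phi_n(t)$ for $s \leq t$, I would obtain, for $t \leq T \leq T_1(M) \wedge 1$,
\[ t^{1/2} \Big\| \int_0^t e^{(t-s)\Delta}\big(\mc{N}(A_n(s)) - \mc{N}(A_\infty(s))\big)\,ds \Big\|_{H^1} \leq \const_M\, t^{1/2} \Big( \int_0^t (t-s)^{-3/4} s^{-1/2}\,ds + \int_0^t (t-s)^{-1/2} s^{-1/2}\,ds \Big) \Phi_n(t), \]
and the two Beta integrals equal a constant times $t^{-1/4}$, and $\pi$, respectively, so the right-hand side is at most $\const_M\, T^{1/4} \Phi_n(T)$. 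Taking the supremum over $t \leq T$ gives $\Phi_n(T) \leq \const\, \varep_n + \const_M\, T^{1/4} \Phi_n(T)$. I would then take $\timefn_0(M) := \min\{\tfrac14 \timefn(M),\, (2\const_M)^{-4},\, 1\}$, which is continuous, non-increasing, positive, and $< \timefn$; at $T = \timefn_0(M)$ one has $\const_M T^{1/4} \leq \tfrac12$, so absorption yields $\Phi_n(\timefn_0(M)) \leq 2\const\, \varep_n \ra 0$, which is the claim. The main obstacle will be the exponent bookkeeping in the last display --- confirming that $\mc{N}$, being only first order, is genuinely subcritical for the weighted norm $t^{1/2} \|\cdot\|_{H^1}$, so that the time integrals produce the positive power $T^{1/4}$ needed to absorb --- together with the routine but fiddly task of checking that the Charalambous--Gross solutions satisfy the mild formulation and the claimed uniform-in-$n$ a priori $H^1$ bound.
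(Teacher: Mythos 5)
Your proposal is correct and takes essentially the same route as the paper. The paper factors out the key a priori estimate (the inequality $|A_n - A_\infty|_T \le \const\|A_{0,n}-A_{0,\infty}\|_2 + \const T^{1/4}(R^2+R)|A_n-A_\infty|_T$ in the weighted norm $|A|_T := \sup_{0<t\le T} t^{1/2}\|A(t)\|_{H^1}$) into a separate lemma and then does the absorption in the proof of the statement, whereas you carry out the Duhamel estimate, Beta-integral bookkeeping, and absorption in one pass and also spell out the derivation of the mild formulation from the classical solution by sending $t_1\downarrow 0$; but the decomposition of $X(A_n)-X(A_\infty)$ into $D\cdot\ptl A$, $A\cdot\ptl D$, $D\cdot A\cdot A$ pieces, the Hölder/Sobolev bounds landing in $L^{3/2}$ and $L^2$, the heat-semigroup operator norms $\|e^{r\Delta}\|_{L^{3/2}\ra H^1}\le\const r^{-3/4}$ and $\|e^{r\Delta}\|_{L^2\ra H^1}\le\const r^{-1/2}$, the resulting $T^{1/4}$ factor, the compact embedding $H^1\hookrightarrow L^2$ converting weak $H^1$ convergence of the data into strong $L^2$ convergence, and the choice of $\timefn_0$ are all as in the paper.

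One small presentational point: you list $\ptl(A\cdot A)$ as a separate schematic type for $\mc{N}(A)$, but your telescoped difference shows only $A\ptl D$, $D\ptl A$, $DAA$ terms — implicitly you have applied the product rule to $d^*[A\wedge A]$ (and used the cancellation between $d^*[A\wedge A]$ and $d_A d^*A$), which is what makes the $r^{-3/4}$ heat-kernel estimate on $L^{3/2}$ applicable rather than a too-singular estimate with the derivative falling on the semigroup. You should state this expansion explicitly, since treating $\ptl(D\cdot A)$ as a black box would give a nonintegrable $(t-s)^{-5/4}$ singularity.
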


\subsection{Properties of Wilson loop observables}

Later on, we will need some results about the Wilson loop observables that were introduced in Section \ref{section:wilson-loop-observables}. The following lemmas show that Wilson loop observables depend in a continuous manner on the {\oneform} $A$ and the loop $\wloop$. The proofs are in Appendix \ref{section:Wilson-proofs}.

\begin{lemma}\label{lemma:wilson-loop-continuous-dependence-on-conn-general-character}
Let $\character$ be a character of $\liegroup$. Let $\wloop : [0, 1] \ra \threetorus$ be a piecewise $C^1$ loop. Let $A, \tilde{A}$ be smooth {\oneforms}. Then
\[ |W_{\wloop, \character}(A) - W_{\wloop, \character}(\tilde{A})|^2 \leq \const_\character \int_0^1 \|A(\wloop(t)) - \tilde{A}(\wloop(t))\| \cdot |\wloop'(t)| dt,\]
where $\const_\character$ is a constant which depends only on $\character$.
\end{lemma}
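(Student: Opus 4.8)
The plan is to compare the two holonomies by viewing them as solutions of the same linear ODE \eqref{eq:wilson-loop-ode} with different (time-dependent) coefficients, and then apply a Gr\"onwall-type estimate together with the fact that $\chi$ is Lipschitz on the compact group $\liegroup$ (since $\liegroup \sse \unitary(m)$ for the relevant representation, so $\chi$ restricted to $\liegroup$ has a finite Lipschitz constant $\const_\character$ with respect to the Frobenius norm). First I would set up notation: let $h, \tilde{h} : [0,1] \ra \liegroup$ solve $h'(t) = h(t) A(\wloop(t)) \cdot \wloop'(t)$ and $\tilde{h}'(t) = \tilde{h}(t) \tilde{A}(\wloop(t)) \cdot \wloop'(t)$ respectively, both with initial condition the identity, and write $a(t) := A(\wloop(t)) \cdot \wloop'(t)$, $\tilde{a}(t) := \tilde{A}(\wloop(t)) \cdot \wloop'(t)$, so that $\|a(t) - \tilde a(t)\| \le \|A(\wloop(t)) - \tilde A(\wloop(t))\| \, |\wloop'(t)|$ after summing over the three coordinate directions (absorbing a dimensional constant).

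The core estimate is on $\|h(1) - \tilde{h}(1)\|$. Writing $h(t) - \tilde h(t) = \int_0^t (h(s) a(s) - \tilde h(s) \tilde a(s))\, ds$ and inserting $\pm \tilde h(s) a(s)$, one gets
\[
\|h(t) - \tilde h(t)\| \le \int_0^t \|h(s) - \tilde h(s)\| \, \|a(s)\| \, ds + \int_0^t \|\tilde h(s)\| \, \|a(s) - \tilde a(s)\| \, ds.
\]
Since $h(s), \tilde h(s) \in \liegroup \sse \unitary(m)$ they have Frobenius norm bounded by $\sqrt{m}$, and $\int_0^1 \|a(s)\|\, ds \le \|A\|_\infty \int_0^1 |\wloop'(s)|\, ds =: L_A < \infty$ because $A$ is smooth and $\wloop$ is piecewise $C^1$. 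Gr\"onwall's inequality then yields
\[
\|h(1) - \tilde h(1)\| \le e^{L_A} \sqrt{m} \int_0^1 \|A(\wloop(s)) - \tilde A(\wloop(s))\| \, |\wloop'(s)| \, ds.
\]
Finally, applying the Lipschitz bound for $\chi$ on $\liegroup$, $|\chi(g_1) - \chi(g_2)| \le \const_\character \|g_1 - g_2\|$, and squaring gives the claimed inequality, with the constant $\const_\character$ absorbing $e^{2L_A} m$ and the Lipschitz constant; here I would note that $L_A$ can itself be controlled in terms of $\character$ only if we are willing to be slightly careful—actually $L_A$ depends on $A$ and $\wloop$, not just $\character$, so the cleanest route is to observe that the statement we must prove only claims a bound of the form $\const_\character \int_0^1 \|A(\wloop) - \tilde A(\wloop)\| |\wloop'|$ \emph{with the integrand to the first power} and no squared left-hand side on the right, which means we actually need the stronger conclusion that the constant does not blow up with $\|A\|_\infty$.

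The main obstacle is precisely this last point: a naive Gr\"onwall argument produces a factor $e^{L_A}$ that depends on $A$, whereas the lemma asserts a constant depending only on $\character$. To handle it, I would exploit that $|W_{\wloop,\character}(A) - W_{\wloop,\character}(\tilde A)| \le 2\const_\character' m$ is always bounded (characters of unitary representations are bounded by $m$ on $\liegroup$), so for the regime where the right-hand integral is large the bound is trivial; and for the regime where the integral is small, one can first reduce to the case where $\|A - \tilde A\|_\infty$ along $\wloop$ is small and use a localized Gr\"onwall on sub-intervals, or—more simply—use the identity $h(1)^{-1}\tilde h(1) = $ (path-ordered exponential of the conjugated difference $h(s)^{-1}(\tilde a(s) - a(s)) h(s)$) and bound $\|h(1)^{-1}\tilde h(1) - \groupid\|$ directly by $\exp(\int \|\tilde a - a\|) - 1 \le C \int \|\tilde a - a\|$ when the integral is $\le 1$, the exponential factor now being harmless because the conjugation by elements of $\unitary(m)$ is an isometry. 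Combining the ``integral $\le 1$'' estimate with the trivial boundedness estimate when ``integral $> 1$'' then gives a single constant $\const_\character$ (with an extra harmless additive or multiplicative universal constant) depending only on $\character$, which is what is required.
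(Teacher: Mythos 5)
Your proof is correct, but it takes a genuinely different route from the paper's. The paper's argument works with $v(t) := u(t)^*\tilde u(t)$, where $u = \rho \circ h$ and $\tilde u = \rho \circ \tilde h$; it expresses $\|u(1) - \tilde u(1)\|^2 = 2m - \Tr(v(1) + v(1)^*)$, derives the ODE $v'(t) = A_{\Pi_\rho, \wloop}(t)^* v(t) + v(t)\tilde A_{\Pi_\rho, \wloop}(t)$, integrates and takes traces, and then uses cyclicity of the trace together with skew-Hermiticity of $A_{\Pi_\rho, \wloop}$ to cancel the term that depends only on $A$. The remaining integrand is controlled by Cauchy--Schwarz via $\|v(t) - v(t)^*\| \le 2m^{1/2}$, which holds because $v(t) \in \unitary(m)$. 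The polar identity makes the squared left-hand side appear directly against the integral to the first power, with no case analysis. Your approach---conjugate so the ``holonomy ratio'' is driven only by $\tilde a - a$, then Lipschitz-bound $\character$---exploits the same two structural facts (unitarity of the holonomy, and that only the difference drives the deviation) but realizes them via a multiplicative decomposition of the group elements rather than a trace identity.

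A few small corrections would make your version as clean as the paper's. First, $h^{-1}\tilde h$ does not solve $\kappa' = \kappa b$ for a conjugated difference $b$; it solves the bilinear ODE $k' = -ak + k\tilde a$. The quantity you want is $\tilde h h^{-1}$, which solves $\kappa' = \kappa\cdot h(\tilde a - a)h^{-1}$; this is harmless because $\|h^{-1}\tilde h - \groupid\| = \|\tilde h h^{-1} - \groupid\|$ by unitary invariance of the Frobenius norm. Second, Gr\"onwall and the exponential are unnecessary: since $\kappa(s) \in \liegroup \sse \unitary(N)$, one has $\|\kappa(s)\| = N^{1/2}$ identically, so $\|\kappa(1) - \groupid\| \le \int_0^1 \|\kappa(s)\|\,\|b(s)\|\,ds = N^{1/2}\int_0^1\|\tilde a(s) - a(s)\|\,ds$ directly; the exponential never appears. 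Third, the case split is then also unnecessary: combining the (now unconditional) first-power bound with $|\character| \le m$ on $\liegroup$ gives $|W_{\wloop, \character}(A) - W_{\wloop, \character}(\tilde A)|^2 \le 2m\,|W_{\wloop, \character}(A) - W_{\wloop, \character}(\tilde A)|$, which converts your estimate to the desired squared inequality in one step.
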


\begin{lemma}\label{lemma:wilson-loop-continuous-dependence-on-loop-general-character}
Let $\character$ be a character of $\liegroup$. Let $\wloop_1, \wloop_2 : [0, 1] \ra \threetorus$ be piecewise $C^1$ loops. Let $A$ be a smooth {\oneform}. Then
\[ \begin{split}
|W_{\wloop_1, \character}(A) - W_{\wloop_2, \character}(A)|^2 \leq \const_\character \int_0^1 \big(\|A(\wloop_1(t)) &- A(\wloop_2(t))\| \cdot |\wloop_1'(t)| ~+ \\
&\|A(\wloop_2(t))\| \cdot |\wloop_1'(t) - \wloop_2'(t)| \big)dt,
\end{split}\]
where $\const_\character$ is a constant which depends only on $\character$.
\end{lemma}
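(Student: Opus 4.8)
The plan is to pass to a unitary matrix realization of the character and to compare the two holonomy ODEs by a variation-of-parameters (Duhamel) formula, the whole point being that the relevant propagators are unitary, so that the Frobenius norm is invariant under multiplication by them.

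First I would write $\character = \Tr\rho$ for a fixed unitary representation $\rho : \liegroup \to \unitary(m)$ with $m = \character(\groupid)$, and let $d\rho : \lalg \to \mathfrak{u}(m)$ denote the induced Lie algebra homomorphism; since $d\rho$ is a linear map between finite-dimensional spaces, $\|d\rho(X)\| \le \const_\character\|X\|$ for all $X\in\lalg$, with $\const_\character$ depending only on $\character$. For $i=1,2$ let $h_i$ be the holonomy of $A$ around $\wloop_i$ given by \eqref{eq:wilson-loop-ode}, and set $U_i(t) := \rho(h_i(t)) \in \unitary(m)$. Differentiating $t\mapsto\rho(h_i(t))$ (using that for a homomorphism $\frac{d}{dt}\rho(h(t)) = \rho(h(t))\,d\rho(h(t)^{-1}h'(t))$, which one checks from $\rho\circ\exp = \exp\circ d\rho$) turns the group-valued ODE into the linear matrix ODE $U_i'(t) = U_i(t)a_i(t)$, $U_i(0) = I$, where $a_i(t) := d\rho\big(A(\wloop_i(t))\cdot\wloop_i'(t)\big)\in\mathfrak{u}(m)$ is piecewise continuous in $t$. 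By definition $W_{\wloop_i,\character}(A) = \Tr U_i(1)$.

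The key step is to differentiate $s\mapsto U_1(s)U_2(s)^{-1}$, which gives $\frac{d}{ds}\big(U_1(s)U_2(s)^{-1}\big) = U_1(s)\big(a_1(s) - a_2(s)\big)U_2(s)^{-1}$, and then to integrate over $[0,1]$ (using $U_1(0) = U_2(0) = I$) and multiply on the right by $U_2(1)$:
\[ U_1(1) - U_2(1) = \int_0^1 U_1(s)\big(a_1(s) - a_2(s)\big)U_2(s)^{-1}U_2(1)\,ds. \]
Here I expect the main obstacle to be a red herring: the naive approach of setting $D = U_1 - U_2$ and running Gr\"onwall produces an amplification factor $\exp\!\big(\int_0^1\|a_2\|\big)$, which depends on $A$ and on $\wloop_2$ and would destroy any $\character$-only constant. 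The Duhamel formula sidesteps this, because $U_1(s)$, $U_2(s)^{-1}$ and $U_2(1)$ are all unitary and the Frobenius norm is invariant under left and right multiplication by unitary matrices, so the integrand has norm exactly $\|a_1(s) - a_2(s)\|$, giving $\|U_1(1) - U_2(1)\| \le \int_0^1\|a_1(s) - a_2(s)\|\,ds$ with no amplification at all.

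To finish, I would bound $\|a_1(s) - a_2(s)\| \le \const_\character\|A(\wloop_1(s))\cdot\wloop_1'(s) - A(\wloop_2(s))\cdot\wloop_2'(s)\|$, write $A(\wloop_1)\cdot\wloop_1' - A(\wloop_2)\cdot\wloop_2' = \big(A(\wloop_1) - A(\wloop_2)\big)\cdot\wloop_1' + A(\wloop_2)\cdot\big(\wloop_1' - \wloop_2'\big)$, and use $\|A(x)\cdot v\| \le \|A(x)\|\,|v|$; this reproduces exactly the integrand in the statement. Then $|W_{\wloop_1,\character}(A) - W_{\wloop_2,\character}(A)| = |\Tr(U_1(1) - U_2(1))| \le \sqrt{m}\,\|U_1(1) - U_2(1)\|$ by Cauchy--Schwarz for the Frobenius inner product. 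Multiplying this linear bound by the trivial bound $|W_{\wloop_1,\character}(A) - W_{\wloop_2,\character}(A)| \le 2m$ turns the linear estimate into the asserted quadratic one, with $\const_\character := 2m^{3/2}\const_\character$ depending only on $\character$. The only mildly delicate points are justifying $U_i' = U_i a_i$ from the $\liegroup$-valued ODE and keeping track of piecewise-$C^1$ regularity so that the fundamental theorem of calculus applies on each smooth piece of $\wloop_1,\wloop_2$; I note that the same argument (with $\wloop_1 = \wloop_2$ and $A,\tilde A$ in the roles of $a_1,a_2$) also proves Lemma~\ref{lemma:wilson-loop-continuous-dependence-on-conn-general-character}.
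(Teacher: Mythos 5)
Your proof is correct, and it takes a genuinely different route from the paper's. Where the paper compares holonomies by studying $v(t) := u_1(t)^*u_2(t)$, expanding
$\|u_1(1)-u_2(1)\|^2 = 2m - \Tr\bigl(v(1)+v(1)^*\bigr)$,
and exploiting that $\Pi_\rho$ is skew-Hermitian to cancel a term in $\Tr\bigl(v'(t)\bigr)$ (so the final inequality is squared-on-the-left, linear-on-the-right by design), you instead apply variation of parameters to $U_1U_2^{-1}$ to get a clean Duhamel identity, observe that all three flanking factors in the integrand are unitary so the Frobenius norm of the integrand is exactly $\|a_1(s)-a_2(s)\|$ with no Gr\"onwall amplification, and only at the very end convert the linear bound into the stated quadratic one by multiplying with the trivial bound $|W_1-W_2|\le 2m$. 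Both routes use unitarity as the essential ingredient and land on the same constant $2m^{3/2}\|\Pi_\rho\|_{op}$; yours avoids the skew-Hermitian manipulation entirely, while the paper's produces the squared left-hand side intrinsically rather than by patching with the trivial sup-bound. Your observation that the same Duhamel computation with $\wloop_1=\wloop_2$ and $A,\tilde A$ in place of $a_1,a_2$ also proves Lemma~\ref{lemma:wilson-loop-continuous-dependence-on-conn-general-character} is likewise correct and mirrors what the paper does in the reverse direction.
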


Recall that $\loopset$ is the set of piecewise $C^1$ loops $\wloop : [0, 1] \ra \threetorus$.

\begin{definition}\label{def:loopset-metric}
Define the following metric on $\loopset$:
\beq\label{eq:loopset-metric} d_\loopset(\wloop_1, \wloop_2) := \sup_{t \in [0, 1]} \metricspace(\wloop_1(t), \wloop_2(t)) + \int_0^1 |\wloop_1'(t) - \wloop_2'(t)| dt. \eeq
\end{definition}

\begin{definition}\label{def:indexset}
Let $\chset$ be the set of characters of $\liegroup$. Define the index set $\indexset := \loopset \times \chset \times (0, \infty)$. Define the following metric on $\indexset$:
\beq\label{eq:indexset-metric} d_\indexset((\wloop_1, \character_1, t_1), (\wloop_2, \character_2, t_2)) := \dloop(\wloop_1, \wloop_2) + \ind(\character_1 = \character_2) + |t_1 - t_2|.\eeq
\end{definition}

The following lemma shows that $(\loopset, \dloop)$ and consequently $(\indexset, d_\indexset)$ are separable (recall that $\chset$ is countable by Lemma \ref{lemma:lie-group-countable-characters}). The proof is in Appendix \ref{section:Wilson-proofs}.

\begin{lemma}\label{lemma:indexset-separable}
The space $(\loopset, \dloop)$ is separable. Consequently, $(\indexset, d_\indexset)$ is also separable.
\end{lemma}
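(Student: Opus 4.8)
The plan is to exhibit an explicit countable dense subset of $(\loopset, \dloop)$. The natural candidate is the set of piecewise-linear loops (traversed on the torus) whose breakpoints in time are rational, whose vertices lie in $\Q^3/\Z^3$, and which run at constant speed along each linear segment in a fixed rational direction. More precisely, fix the lift to $\R^3$: consider concatenations of finitely many affine pieces $t \mapsto v_k + t w_k$ on rational subintervals $[a_{k-1}, a_k]$ with $a_i \in \Q$, $v_k, w_k \in \Q^3$, chosen so that the pieces match up continuously and the whole thing closes up (i.e. the endpoint projects to the starting point mod $\Z^3$). Call this set $\mc{D}$; it is countable since it is parametrized by finite tuples of rationals. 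Each element of $\mc{D}$ is piecewise $C^1$ (indeed piecewise affine) and hence lies in $\loopset$.

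First I would establish density. Given $\wloop \in \loopset$ and $\varep > 0$, choose a partition $0 = b_0 < b_1 < \cdots < b_m = 1$ refining the breakpoints of $\wloop$ so that on each $[b_{k-1}, b_k]$ the restriction is $C^1$ and so fine that $\wloop'$ varies by less than $\varep$ in Euclidean norm on each piece (using uniform continuity of $\wloop'$ on each $C^1$ piece). On each piece, replace $\wloop$ by the affine interpolant of the endpoints (lifted to $\R^3$), which makes the derivative a constant vector close to the average of $\wloop'$ over that piece; then $\int_0^1 |\wloop'(t) - (\text{approx})'(t)|\,dt$ is small, and by the fundamental theorem of calculus the sup-distance $\sup_t \metricspace(\wloop(t), \cdot)$ is also controlled by $\int_0^1 |\wloop' - (\text{approx})'|$ plus the initial-point discrepancy, which can be taken zero by pinning the starting vertex. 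Finally perturb the finitely many breakpoint times $b_k$ and the finitely many vertices to nearby rationals; since the loop depends continuously (in the $\dloop$ metric) on these finitely many parameters, this changes $\dloop$ by an arbitrarily small amount, landing us in $\mc{D}$ within $\varep$ of $\wloop$. The one bookkeeping point is to ensure the closing-up condition is preserved under rational perturbation: one can pin the first and last vertex to be equal (or differ by a fixed integer vector) exactly, and only perturb the interior data, so closure is automatic.

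Once $(\loopset, \dloop)$ is separable, separability of $(\indexset, d_\indexset)$ is immediate: $\indexset = \loopset \times \chset \times (0,\infty)$, the set $\chset$ of characters is countable by Lemma \ref{lemma:lie-group-countable-characters}, and $(0,\infty)$ is separable with, say, the positive rationals dense. A countable dense subset of the product metric $d_\indexset$ is then the product of a countable dense subset of $\loopset$, all of $\chset$, and $\Q \cap (0,\infty)$; here one uses that for the discrete metric $\ind(\character_1 = \character_2)$ the whole (countable) space is its own dense subset, and that $d_\indexset$ is a sum of the three coordinate metrics so that approximating each coordinate approximates the tuple.

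I expect the main obstacle to be the density argument for $\loopset$ rather than anything in the product step --- specifically, the need to verify that the piecewise-affine replacement genuinely controls \emph{both} terms of $\dloop$ simultaneously (the $\int |\wloop_1' - \wloop_2'|$ term is easy, but one must check the sup-distance term is dominated by it once the base point is matched), and the need to handle the torus-vs-$\R^3$ lifting carefully so that the closing-up constraint survives the rational perturbation. These are routine but slightly fiddly; no deep input beyond uniform continuity of $\wloop'$ on each $C^1$ piece and the fundamental theorem of calculus is needed.
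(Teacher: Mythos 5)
Your proof is correct and takes essentially the same approach as the paper: both lift loops to piecewise-$C^1$ paths in $\R^3$ via the projection $\Pi$, take as the countable dense set piecewise-affine paths with rational time-breakpoints and rational data, and establish density by exploiting uniform continuity of $\wloop'$ on each $C^1$ piece. The only cosmetic difference is in how the torus-closure constraint is enforced---you pin the last vertex to differ from the first by the fixed integer lift vector and interpolate actual endpoints before perturbing to rationals, while the paper uses a uniform partition into $n$ equal sub-intervals, freely chooses rational slopes $c_{n,i}$ for $i < n$, and then solves for the final slope $c_{n,n}$ to close up, needing one extra paragraph to verify that this forced final slope has vanishing $L^1$ contribution.
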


\subsection{Regularized Wilson loop observables}\label{section:def-regularized-wilson-loops}

In this section, we define regularized Wilson loop observables for $H^1$ initial data and show that they determine gauge orbits.~These observables will be needed in many of the proofs in the coming sections. First, we need the following definition and lemma.

\begin{definition}[Yang--Mills heat semigroup]\label{def:ym-semigroup}
Let $t \geq 0$. Define the function $\tilde{\ymsemigroup}_t^{1,2} : \honeconnspace \ra \honeorbitspace$ as follows. Given $A_0 \in \honeconnspace$, let $A : [0, \infty) \ra \honeconnspace$ be the solution to \eqref{eq:YM} with initial data $A(0) = A_0$. Then let $\tilde{\ymsemigroup}_t^{1,2}(A_0) := \gorbithone{A(t)}$. By the gauge covariance of solutions to \eqref{eq:YM} (Lemma \ref{lemma:ym-gauge-covariant-h1-initial-data}), we have that $\tilde{\ymsemigroup}_t^{1,2}$ is gauge invariant, that is, $\tilde{\ymsemigroup}_t^{1,2}(A_0) = \tilde{\ymsemigroup}_t^{1,2}(\gaction{A_0}{\gt})$ for all $A_0 \in \honeconnspace$, $\gt \in \htwogaugetransf$. Thus there is a unique function $\ymsemigroup_t^{1,2} : \honeorbitspace \ra \honeorbitspace$ such that $\tilde{\ymsemigroup}_t^{1,2} = \ymsemigroup_t^{1,2} \circ \pi^{1,2}$, where recall $\pi^{1,2} : \honeconnspace \ra \honeorbitspace$ is the canonical projection map $A_0 \mapsto \gorbithone{A_0}$. Note that we have $\ymsemigroup_{s+t}^{1,2} = \ymsemigroup_s^{1,2} \circ \ymsemigroup_t^{1,2}$ for $s, t \geq 0$. We will often refer to this relation as the semigroup property of $(\ymsemigroup_t^{1,2}, t \geq 0)$.
\end{definition}

\begin{lemma}\label{lemma:orbitspace-gauge-orbit-smooth-choice}
Let $A_0 \in \honeconnspace$. For all $t > 0$, there exists a smooth {\oneform} $A \in \ymsemigroup_t^{1,2}([A_0]^{1,2})$.
\end{lemma}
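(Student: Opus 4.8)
The plan is to use the ZDDS equation of Subsection \ref{section:3d-alt-ym-heat-equation} to produce, from the $H^1$ initial data $A_0$, a one-form which is smooth at positive times and which lies in the same gauge orbit as the Yang--Mills heat flow at time $t$. First I would recall that by Theorem \ref{thm:YM-global-existence} there is a solution $A$ to \eqref{eq:YM} on $[0, \infty)$ with $A(0) = A_0$, so that by Definition \ref{def:ym-semigroup} we have $\ymsemigroup_t^{1,2}([A_0]^{1,2}) = [A(t)]^{1,2}$ for every $t \geq 0$. The issue is that $A(t)$ need not be smooth, since \eqref{eq:YM} is only weakly parabolic (Remark \ref{remark:ym-zdds-comparison}). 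So instead I would bring in the ZDDS flow, which does smooth.

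The key steps, in order: (1) Fix $t > 0$. Choose an integer $m \geq 1$ large enough that $t/m < \timefn(\|A_0\|_{H^1})$, where $\timefn$ is the local existence time from Proposition \ref{prop:ZDDS-local-existence}; this is possible since $\timefn(\|A_0\|_{H^1}) > 0$. (2) Run \eqref{eq:ZDDS} for time $t/m$ starting from $A_0$; by Proposition \ref{prop:ZDDS-local-existence} this solution exists, is unique, and is smooth on $(0, \timefn(\|A_0\|_{H^1})) \times \threetorus$, in particular at time $t/m$. Call the resulting one-form $B_1$; it is smooth. (3) By Lemma \ref{lemma:ym-zdds-solution-gauge-transformation}, $B_1$ and the time-$t/m$ Yang--Mills heat flow from $A_0$ differ by a gauge transformation in $\htwogaugetransf$, so in particular $[B_1]^{1,2} = \ymsemigroup_{t/m}^{1,2}([A_0]^{1,2})$. (4) Now iterate: since $B_1$ is smooth, $\|B_1\|_{H^1} < \infty$, but to control the ZDDS existence time at each step I would instead use Proposition \ref{prop:YM-h1-norm-estimate} to bound the $H^1$ norm of the Yang--Mills heat flow on $[0, t]$ by $\normbdfn(t, \|A_0\|_{H^1}) =: M_0$, and choose $m$ large enough that $t/m < \timefn(M_0)$; then at each of the $m$ steps we run \eqref{eq:ZDDS} for time $t/m$ starting from the (smooth) output of the previous step, whose $H^1$ norm is at most $M_0$ by gauge invariance of $\sym^{1,2}$ together with Proposition \ref{prop:YM-h1-norm-estimate} — actually, since each ZDDS output is smooth and gauge-equivalent (via $\htwogaugetransf$) to the Yang--Mills heat flow, and by Lemma \ref{lemma:wilson-loop-gauge-invariant-not-smooth-gauge-transform} this gauge transformation is then smooth, we can track the $H^1$ norm through the YM flow, which never exceeds $M_0$. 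Applying Lemma \ref{lemma:ym-zdds-solution-gauge-transformation} and the semigroup property at each of the $m$ stages, the final smooth one-form $B_m$ satisfies $[B_m]^{1,2} = \ymsemigroup_t^{1,2}([A_0]^{1,2})$, which is exactly the claim with $A = B_m$.

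The main obstacle is the bookkeeping in step (4): one must ensure that the $H^1$ norm of the initial data at each stage stays below a fixed threshold so that the ZDDS local existence time $\timefn$ does not shrink to zero, forcing $m \to \infty$. The clean way around this is precisely Proposition \ref{prop:YM-h1-norm-estimate}, which gives a uniform bound $M_0 = \normbdfn(t, \|A_0\|_{H^1})$ on the $H^1$ norm of the Yang--Mills heat flow over the whole interval $[0,t]$; since each ZDDS stage produces a smooth one-form which is $\htwogaugetransf$-gauge-equivalent to a YM-flow one-form — and hence, by Lemma \ref{lemma:wilson-loop-gauge-invariant-not-smooth-gauge-transform}, \emph{smoothly} gauge-equivalent to it — one could, if desired, even arrange the stage data to have $H^1$ norm exactly that of the corresponding YM flow point by composing with that smooth gauge transformation; but this refinement is not needed, because $\timefn$ being continuous and non-increasing, $\timefn(M_0) > 0$ suffices to pick a single $m$ that works for all $m$ stages. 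A minor point to check is that after each ZDDS stage we feed in data whose $H^1$ norm we can bound; the simplest resolution is to note that the smooth output $B_j$ of stage $j$ has $[B_j]^{1,2} = \ymsemigroup_{jt/m}^{1,2}([A_0]^{1,2}) = [A(jt/m)]^{1,2}$ with $A$ the YM flow, so $\sym^{1,2}(B_j) = \sym^{1,2}(A(jt/m))$ and more directly one can just restart the construction from $B_j$ using its own (finite) $H^1$ norm — but the uniform bound $M_0$ makes the choice of $m$ uniform and is the cleanest route. With these points in hand the proof is a short induction on $j = 1, \dots, m$.
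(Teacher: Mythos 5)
Your plan diverges from the paper's in a way that introduces a genuine gap. You try to iterate the ZDDS smoothing $m$ times across $[0,t]$, and the bookkeeping hinges on controlling $\|B_j\|_{H^1}$ uniformly across stages via Proposition~\ref{prop:YM-h1-norm-estimate}. That control does not go through: Proposition~\ref{prop:YM-h1-norm-estimate} bounds $\|A(s)\|_{H^1}$ for the \eqref{eq:YM} solution $A$, and $B_j$ is only gauge-equivalent to $A(jt/m)$; but the $H^1$ norm is \emph{not} gauge invariant (only $\sym^{1,2}$ is), so $\|A(jt/m)\|_{H^1} \leq M_0$ tells you nothing about $\|B_j\|_{H^1}$. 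Worse, your appeal to Lemma~\ref{lemma:wilson-loop-gauge-invariant-not-smooth-gauge-transform} to upgrade the gauge transformation linking $B_j$ and $A(jt/m)$ to a smooth one is not valid at this point: that lemma requires \emph{both} one-forms to be smooth, and $A(jt/m)$, being the \eqref{eq:YM} flow from $H^1$ initial data, has not been shown to be smooth --- that is essentially what this lemma is being used to circumvent. Without a uniform bound on $\|B_j\|_{H^1}$, the stepwise ZDDS existence times $\timefn(\|B_j\|_{H^1})$ may shrink to zero, and the fallback of re-choosing $m$ at each stage is not guaranteed to cover $[0,t]$.

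The idea you're missing is the one the paper leverages: Theorem~\ref{thm:YM-global-existence} asserts that the \eqref{eq:YM} flow started from \emph{smooth} initial data stays smooth for all $t \geq 0$ (this is part of R\aa de's result). So no iteration is needed. The paper runs ZDDS for a single short interval from some $B_0 \in \ymsemigroup_{t_0}^{1,2}([A_0]^{1,2})$ to produce a smooth $B(t_1 - t_0)$, and then runs \eqref{eq:YM} from that smooth one-form; since \eqref{eq:YM} preserves smoothness, this yields a smooth representative of $\ymsemigroup_{t}^{1,2}([A_0]^{1,2})$ for every $t > t_1$, and letting $t_0$ (hence $t_1$) tend to $0$ covers all $t > 0$. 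Your proposal can be repaired by replacing steps (2)--(4) with exactly this: after the first ZDDS stage gives smooth data, switch to \eqref{eq:YM} and invoke its smoothness-preservation; the $H^1$ bookkeeping then disappears entirely.
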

\begin{proof}
Let $t_0 > 0$, and take $B_0 \in \ymsemigroup_{t_0}^{1,2}([A_0]^{1,2})$ (so $B_0 \in \honeconnspace$). By Proposition \ref{prop:ZDDS-local-existence}, we may obtain the solution $B$ to \eqref{eq:ZDDS} on some non-empty interval $[0, T)$ with initial data $B(0) = B_0$, and moreover $B$ is smooth on $(0, T) \times \threetorus$. Then by Lemma \ref{lemma:ym-zdds-solution-gauge-transformation}, we have that 
\[ \gorbithone{B(t)} = \ymsemigroup_t^{1,2}([B_0]^{1,2}) = \ymsemigroup_t^{1,2}(\ymsemigroup_{t_0}^{1,2}([A_0]^{1,2})) = \ymsemigroup_{t + t_0}^{1,2}([A_0]^{1,2}) \]
for all $t \in (0, T)$, and thus the claim holds for all $t \in (t_0, t_0 + T)$. Now take some $t_1 \in (t_0, t_0 + T)$. By Theorem \ref{thm:YM-global-existence}, we may obtain the solution $A$ to \eqref{eq:YM} on $[0, \infty)$ with initial data $A(0) = B(t_1 - t_0)$, and moreover since $B(t_1 - t_0)$ is smooth, we have that $A \in C^\infty([0, \infty) \times \threetorus, \lalg^3)$. For all $t \in [0, \infty)$, we have that $\gorbithone{A(t)} = \ymsemigroup_{t_1 + t}^{1,2}([A_0]^{1,2})$, and thus the claim holds for all $t \in (t_1, \infty)$, which implies the claim holds for all $t \in (t_0, \infty)$. Since $t_0$ was arbitrary, the desired result now follows.
\end{proof}

\begin{definition}[Regularized Wilson loop observables]\label{def:regwl}
Let $\wloop : [0, 1] \ra \threetorus$ be a piecewise $C^1$ loop, $\character$ be a character of $\liegroup$, and $t > 0$. We define a function $W_{\wloop, \character, t}^{1,2} : \honeconnspace \ra \C$ as follows. Given $A_0 \in \honeconnspace$, (applying Lemma \ref{lemma:orbitspace-gauge-orbit-smooth-choice}) let $A \in \ymsemigroup_t^{1,2}([A_0]^{1,2})$ be a smooth {\oneform}. Then define $W_{\wloop, \character, t}^{1,2}(A_0) := W_{\wloop, \character}(A)$. The fact that $W_{\wloop, \character, t}^{1,2}$ is well-defined will follow by Lemma \ref{lemma:f-ell-t-well-defined} below. Additionally, note that $W_{\wloop, \character, t}^{1,2}$ is gauge invariant by construction. Thus we define $[W]_{\wloop, \character, t}^{1,2} : \honeorbitspace \ra \C$ to be the unique function such that $W_{\wloop, \character, t}^{1,2} = [W]_{\wloop, \character, t}^{1,2} \circ \pi^{1,2}$, where $\pi^{1,2} : \honeconnspace \ra \honeorbitspace$ is the quotient map $A \mapsto \gorbithone{A}$. Both $W_{\wloop, \character, t}^{1,2}$ and $[W]_{\wloop, \character, t}^{1,2}$ will be called ``regularized Wilson loop observables''. 
\end{definition}

\begin{remark}
Evidently, the name ``regularized Wilson loop observable" comes from the fact that we are first using the Yang--Mills heat flow to regularize the given $H^1$ initial data, and then applying the usual Wilson loop observable to the regularized version.
\end{remark}

\begin{lemma}\label{lemma:f-ell-t-well-defined}
For any piecewise $C^1$ loop $\wloop : [0, 1] \ra \threetorus$, character $\character$ of $\liegroup$, and $t > 0$, $W_{\wloop, \character, t}^{1,2}$ is well-defined.
\end{lemma}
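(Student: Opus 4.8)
\emph{Proof idea.} The content of the lemma is that the number assigned in Definition \ref{def:regwl} does not depend on the choice of smooth representative of the orbit $\ymsemigroup_t^{1,2}([A_0]^{1,2}) \in \honeorbitspace$. So the plan is as follows. Fix $A_0 \in \honeconnspace$ and $t > 0$, and suppose $A$ and $\tilde{A}$ are two smooth \oneforms{} that both lie in $\ymsemigroup_t^{1,2}([A_0]^{1,2})$. By the definition of $\honeorbitspace$, this means there is some $\gt \in \htwogaugetransf$ with $\tilde{A} = \gaction{A}{\gt}$, where this equality holds in $\honeconnspace$, hence pointwise for a.e.\ $x \in \threetorus$. (The existence of at least one smooth representative is guaranteed by Lemma \ref{lemma:orbitspace-gauge-orbit-smooth-choice}, so there is genuinely something to check.)

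The key step is to upgrade this a priori merely $H^2$ gauge transformation to a smooth one. This is exactly the content of Lemma \ref{lemma:wilson-loop-gauge-invariant-not-smooth-gauge-transform}: since $A$ and $\tilde{A}$ are smooth and $\tilde{A} = \gaction{A}{\gt}$ a.e.\ for some $\gt \in \htwogaugetransf$, the gauge transformation $\gt$ is in fact smooth. Once $\gt$ is known to be smooth, we are back in the classical setting of Section \ref{section:wilson-loop-observables}, where it was verified (via the conjugated path-ordered exponential $h_\gt$) that Wilson loop observables are invariant under smooth gauge transformations. Hence $W_{\wloop, \character}(A) = W_{\wloop, \character}(\gaction{A}{\gt}) = W_{\wloop, \character}(\tilde{A})$, so $W_{\wloop, \character}(A)$ is independent of the choice of smooth representative $A$, which is precisely what it means for $W_{\wloop, \character, t}^{1,2}$ to be well-defined.

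The only substantive input is Lemma \ref{lemma:wilson-loop-gauge-invariant-not-smooth-gauge-transform}, whose proof is deferred to the appendix; at this point it is invoked as a black box, and everything else is immediate from the definitions together with the elementary gauge-invariance computation already carried out in Section \ref{section:wilson-loop-observables}. I therefore do not expect any genuine obstacle here: the statement is essentially the bookkeeping observation that the deferred regularity result and classical gauge invariance combine to make Definition \ref{def:regwl} consistent.
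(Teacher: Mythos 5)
Your argument is correct and matches the paper's proof essentially verbatim: take two smooth representatives, note they differ by a $\htwogaugetransf$ gauge transformation, upgrade it to smooth via Lemma~\ref{lemma:wilson-loop-gauge-invariant-not-smooth-gauge-transform}, and then invoke classical gauge invariance of Wilson loops. No discrepancies to report.
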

\begin{proof}
Let $A_0 \in \honeconnspace$, and let $A, B \in \ymsemigroup_t^{1,2}([A_0]^{1,2})$ be smooth {\oneforms}. Then there exists $\gt \in \htwogaugetransf$ such that $A = \gaction{B}{\gt}$. By Lemma \ref{lemma:wilson-loop-gauge-invariant-not-smooth-gauge-transform}, we have that in fact $\gt$ is smooth. Thus by the gauge invariance of Wilson loop observables, we have that $W_{\wloop, \character}(A) = W_{\wloop, \character}(B)$. The well-definedness follows.
\end{proof}

The next result shows that the regularized Wilson loop observables are (sequentially) weakly continuous. In the following, recall the functions $\normbdfn, \timefn$ from Propositions \ref{prop:YM-h1-norm-estimate} and \ref{prop:ZDDS-local-existence}. 

\begin{lemma}\label{lemma:regularized-wilson-loop-weak-sequential-continuity}
Let $\wloop : [0, 1] \ra \threetorus$ be a piecewise $C^1$ loop, $\character$ a character of $\liegroup$, and $t > 0$. Let $\{A_{0, n}\}_{n \leq \infty} \sse \honeconnspace$, with $A_{0, n} \weakarrow A_{0, \infty}$. Then $W_{\wloop, \character, t}^{1,2}(A_{0, n}) \ra W_{\wloop, \character, t}^{1,2}(A_{0, \infty})$.
\end{lemma}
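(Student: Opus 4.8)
The plan is to exhibit, for each $n\le\infty$, a \emph{smooth} one-form lying in the gauge orbit $\ymsemigroup^{1,2}_t([A_{0,n}]^{1,2})$, obtained by composing three regularizing flows, and arranged so that these representatives converge in $L^\infty(\threetorus,\lalg^3)$; weak continuity of $W^{1,2}_{\wloop,\character,t}$ then follows immediately from the continuity estimate of Lemma~\ref{lemma:wilson-loop-continuous-dependence-on-conn-general-character}. The point of using three flows rather than one is that \eqref{eq:ZDDS} is smoothing and (by Lemma~\ref{lemma:zdds-solutions-weak-continuity}) respects weak convergence of initial data, but is only known to exist for a short time depending on the $H^1$ norm of the initial data, whereas $t$ is arbitrary; \eqref{eq:YM} exists globally and (by Lemma~\ref{lemma:ym-continuity-in-initial-data-smooth-case}) is continuous in smooth initial data, but only in $H^1$; and $H^1$ convergence does not control pointwise values along the loop $\wloop$ in three dimensions, so a final short \eqref{eq:ZDDS} step is needed to upgrade $H^1$ convergence to $L^\infty$ convergence. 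This last point is the main obstacle; everything else is bookkeeping.

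Concretely, I would set $M:=\sup_{n\le\infty}\|A_{0,n}\|_{H^1}$, which is finite by Lemma~\ref{lemma:weak-convergence-and-h1-norm}, and fix $s_0\in(0,\min(t/2,\timefn_0(M)))$. Running \eqref{eq:ZDDS} from $A_{0,n}$, Proposition~\ref{prop:ZDDS-local-existence} gives solutions $A_n$ that are smooth at positive times (since $s_0<\timefn_0(M)<\timefn(M)\le\timefn(\|A_{0,n}\|_{H^1})$), Lemma~\ref{lemma:zdds-solutions-weak-continuity} gives $s_0^{1/2}\|A_n(s_0)-A_\infty(s_0)\|_{H^1}\to0$, hence $A_n(s_0)\to A_\infty(s_0)$ in $H^1$, and Lemma~\ref{lemma:ym-zdds-solution-gauge-transformation} together with the definition of $\ymsemigroup^{1,2}$ gives $[A_n(s_0)]^{1,2}=\ymsemigroup^{1,2}_{s_0}([A_{0,n}]^{1,2})$. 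Since the $A_n(s_0)$ are smooth and converge in $H^1$, set $M':=\sup_{n\le\infty}\|A_n(s_0)\|_{H^1}<\infty$ and $M'':=\normbdfn(t,M')$ (finite, by Proposition~\ref{prop:YM-h1-norm-estimate} and monotonicity of $\normbdfn$). Now fix $s_1\in(0,\min(t/2,\timefn(M'')))$ and put $m:=t-s_0-s_1>0$.

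Next I would run \eqref{eq:YM} from each $A_n(s_0)$ for time $m$: by Theorem~\ref{thm:YM-global-existence} the solutions $B_n$ stay smooth, Lemma~\ref{lemma:ym-continuity-in-initial-data-smooth-case} gives $B_n(m)\to B_\infty(m)$ in $H^1$, Proposition~\ref{prop:YM-h1-norm-estimate} gives $\|B_n(m)\|_{H^1}\le\normbdfn(m,M')\le M''$, and the definition of $\ymsemigroup^{1,2}$ plus its semigroup property gives $[B_n(m)]^{1,2}=\ymsemigroup^{1,2}_m([A_n(s_0)]^{1,2})=\ymsemigroup^{1,2}_{m+s_0}([A_{0,n}]^{1,2})$. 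Finally I would run \eqref{eq:ZDDS} once more from $B_n(m)$ for time $s_1$: since $s_1<\timefn(M'')\le\timefn(\|B_\infty(m)\|_{H^1})$, Proposition~\ref{prop:ZDDS-local-existence} applies (with $B_\infty(m)$, $B_n(m)$, $B_n(m)\to B_\infty(m)$ in its continuity-in-initial-data statement), yielding smooth one-forms $C_n(s_1)$ with $s_1^{1/4}\|C_n(s_1)-C_\infty(s_1)\|_\infty\to0$, hence $\|C_n(s_1)-C_\infty(s_1)\|_\infty\to0$, and with $[C_n(s_1)]^{1,2}=\ymsemigroup^{1,2}_{s_1}([B_n(m)]^{1,2})=\ymsemigroup^{1,2}_{s_1+m+s_0}([A_{0,n}]^{1,2})=\ymsemigroup^{1,2}_{t}([A_{0,n}]^{1,2})$ by Lemma~\ref{lemma:ym-zdds-solution-gauge-transformation} and the semigroup property.

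To conclude: since $C_n(s_1)$ is a smooth element of $\ymsemigroup^{1,2}_t([A_{0,n}]^{1,2})$, Definition~\ref{def:regwl} and Lemma~\ref{lemma:f-ell-t-well-defined} give $W^{1,2}_{\wloop,\character,t}(A_{0,n})=W_{\wloop,\character}(C_n(s_1))$ for every $n\le\infty$, and Lemma~\ref{lemma:wilson-loop-continuous-dependence-on-conn-general-character} yields
\[ |W_{\wloop,\character}(C_n(s_1))-W_{\wloop,\character}(C_\infty(s_1))|^2 \le \const_\character \|C_n(s_1)-C_\infty(s_1)\|_\infty \int_0^1 |\wloop'(r)|\,dr \;\lra\; 0, \]
using that $\int_0^1|\wloop'(r)|\,dr<\infty$ since $\wloop$ is piecewise $C^1$. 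The one thing to be careful about is the order in which the time-lengths and uniform bounds are chosen ($s_0$, then $M'$ and $M''$, then $s_1$, then $m=t-s_0-s_1$), so as to avoid circularity; note $M''=\normbdfn(t,M')$ uses $t$ as a harmless upper bound for the not-yet-chosen $m$.
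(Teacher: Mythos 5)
Your proposal is correct and follows essentially the same three-flow decomposition (ZDDS to smooth and exploit weak continuity, then YM globally in $H^1$, then a final short ZDDS to upgrade $H^1$ to $L^\infty$) that the paper uses, with the same supporting lemmas at each stage; the only difference is notational, namely parameterizing the three time intervals as $(s_0, m, s_1)$ with $m = t - s_0 - s_1$ rather than as $(t_0, s, t-t_0-s)$. Your care about the order in which $M'$, $M''$, $s_1$, and $m$ are fixed is also reflected in the paper's choice of constants, so the arguments match in detail as well as in outline.
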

\begin{proof}
For $n \leq \infty$, let $B_n$ be the solution to \eqref{eq:ZDDS} on $[0, \timefn(\|A_{0, n}\|_{H^1}))$ with initial data $B_n(0) = A_{0, n}$. Let $T_0 := \timefn_0(\sup_{n \leq \infty}\|A_{0, n}\|_{H^1}) > 0$, where $\timefn_0$ is as in Lemma \ref{lemma:zdds-solutions-weak-continuity} (by the weak convergence of $\{A_{0, n}\}_{n \geq 1}$ and Lemma \ref{lemma:weak-convergence-and-h1-norm}, the sup is finite). By Lemma \ref{lemma:zdds-solutions-weak-continuity}, we have that
\begin{align}\label{bnconv}
 \sup_{0 \leq s \leq T_0} s^{1/2} \|B_n(s) - B_\infty(s)\|_{H^1} \ra 0. 
\end{align}
Fix some $t>0$ and take $t_0 \in (0, \min\{T_0, t\})$. From Definition \ref{def:regwl} and Lemma \ref{lemma:ym-zdds-solution-gauge-transformation}, we have that 
\beq\label{eq:regwl-intermediate-identity} W_{\wloop, \character, t}(A_{0, n}) = W_{\wloop, \character, t - t_0}(B_n(t_0)) \text{ for all $n \leq \infty$.}\eeq
Now for $n \leq \infty$, let $A_n$ be the solution to \eqref{eq:YM} on $[0, \infty)$ with initial data $A_n(0) = B_n(t_0)$. Note that each $B_n(t_0)$ is smooth, and that $\|B_n(t_0) - B_\infty(t_0)\|_{H^1} \ra 0$ by \eqref{bnconv}. Then by Lemma \ref{lemma:ym-continuity-in-initial-data-smooth-case}, we have that for all $s \geq 0$, $\|A_n(s) - A_\infty(s)\|_{H^1} \ra 0$. Let $M := \normbdfn(t - t_0, \sup_{n \leq \infty}\|A_n(0)\|_{H^1})$ (recall $\normbdfn$ is as in Proposition \ref{prop:YM-h1-norm-estimate}), and let $T := \timefn(M)$. Take $s \in (0, \infty) \cap (t - t_0 - T, t - t_0)$. For $n \leq \infty$, let $\tilde{B}_n$ be the solution to \eqref{eq:ZDDS} on $[0, \timefn(\|A_n(s)\|_{H^1}))$ with initial data $\tilde{B}_n(0) = A_n(s)$. By Proposition \ref{prop:YM-h1-norm-estimate}, we have that $\timefn(\|A_n(s)\|_{H^1}) \geq T$ for all $n \leq \infty$. Then by Proposition \ref{prop:ZDDS-local-existence}, we have that $\|\tilde{B}_n(t - t_0 - s) - \tilde{B}_\infty(t - t_0 - s)\|_{\infty} \ra 0$. To finish, note by \eqref{eq:regwl-intermediate-identity} and Definition \ref{def:regwl}, we have that for all $n \leq \infty$,
\[ W_{\wloop, \character, t}(A_{0, n}) = W_{\wloop, \character, t - t_0 - s}(A_n(s)) = W_{\wloop, \character}(\tilde{B}_n(t - t_0 - s)).\]
Now applying Lemma  \ref{lemma:wilson-loop-continuous-dependence-on-conn-general-character}, the desired result follows.
\end{proof}

The final technical result of this section, stated below, shows that the regularized Wilson loop observables determine (i.e., separate) gauge orbits. Even more, it shows that in fact a countable collection of regularized Wilson loop observables determines gauge orbits. 

First, recall the definitions of $(\loopset, \dloop)$ (Definition \ref{def:loopset-metric}) and $(\indexset, d_\indexset)$ (Definition \ref{def:indexset}). It follows by Lemma \ref{lemma:indexset-separable} that $(\loopset, \dloop)$ is separable. Fix a countable dense subset $\loopset_0$ of $(\loopset, \dloop)$, and define $\indexset_0 := \loopset_0 \times \chset \times (\Q \cap (0, \infty))$ (recall that $\chset$ is the set of characters of $\liegroup$). From the definition of $d_\indexset$, and the fact that $\chset$ is countable (by Lemma \ref{lemma:lie-group-countable-characters}), we have that $\indexset_0$ is a countable dense subset of $(\indexset, d_\indexset)$.

\begin{lemma}[Regularized Wilson loop observables determine gauge orbits]\label{lemma:countable-index-set-suffices}
Suppose that $\gorbithone{A_1}, \gorbithone{A_2} \in \honeorbitspace$ are such that for all $(\wloop, \character, t) \in \indexset_0$, we have that
\[ [W]_{\wloop, \character, t}^{1,2}(\gorbithone{A_1}) = [W]_{\wloop, \character, t}^{1, 2}(\gorbithone{A_2}). \]
Then $\gorbithone{A_1} = \gorbithone{A_2}$. 
\end{lemma}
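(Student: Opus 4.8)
The strategy proceeds in three stages: (1) use continuity of Wilson loop observables in the loop variable to upgrade the hypothesis from the countable dense set $\indexset_0$ to \emph{all} loops and characters, at each fixed rational time; (2) apply Sengupta's theorem (Lemma~\ref{lmm:wilson-loops-determine-gauge-eq-class}) to deduce that the heat-flowed orbits agree at every positive rational time; and (3) let the time tend to $0$ and invoke Lemma~\ref{lemma:limiting-gauge-transformation} to recover equality of the original orbits. For stage (1), I would fix $t \in \Q \cap (0, \infty)$ and a character $\character \in \chset$; by Lemma~\ref{lemma:orbitspace-gauge-orbit-smooth-choice} choose smooth \oneforms{} $A_1^t \in \ymsemigroup_t^{1,2}(\gorbithone{A_1})$ and $A_2^t \in \ymsemigroup_t^{1,2}(\gorbithone{A_2})$, so that $[W]_{\wloop, \character, t}^{1,2}(\gorbithone{A_i}) = W_{\wloop, \character}(A_i^t)$ for every $\wloop \in \loopset$ by Definition~\ref{def:regwl}. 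Since $A_i^t$ is smooth on the compact manifold $\threetorus$, it is bounded and uniformly continuous, so if $\wloop_n \ra \wloop$ in $\dloop$ then $\sup_{s} \metricspace(\wloop_n(s), \wloop(s)) \ra 0$ and $\int_0^1 |\wloop_n'(s) - \wloop'(s)| ds \ra 0$ (the latter also giving $\sup_n \int_0^1 |\wloop_n'(s)| ds < \infty$); feeding these into the estimate of Lemma~\ref{lemma:wilson-loop-continuous-dependence-on-loop-general-character} yields $W_{\wloop_n, \character}(A_i^t) \ra W_{\wloop, \character}(A_i^t)$, i.e.\ $\wloop \mapsto W_{\wloop, \character}(A_i^t)$ is continuous on $(\loopset, \dloop)$. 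The maps $\wloop \mapsto W_{\wloop, \character}(A_1^t)$ and $\wloop \mapsto W_{\wloop, \character}(A_2^t)$ agree on the dense subset $\loopset_0$ by hypothesis (as $(\wloop, \character, t) \in \indexset_0$ whenever $\wloop \in \loopset_0$), hence on all of $\loopset$; letting $\character$ and $t$ range, $W_{\wloop, \character}(A_1^t) = W_{\wloop, \character}(A_2^t)$ for all $\wloop \in \loopset$, all characters $\character$, and all $t \in \Q \cap (0, \infty)$.

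For stage (2), fix $t \in \Q \cap (0, \infty)$: both $A_1^t, A_2^t$ are smooth with equal Wilson loop functionals against every loop and character, so Lemma~\ref{lmm:wilson-loops-determine-gauge-eq-class} yields a smooth gauge transformation $\gt$ with $A_2^t = \gaction{A_1^t}{\gt}$, and since $\gaugetransf \sse \htwogaugetransf$ this gives $\gorbithone{A_1^t} = \gorbithone{A_2^t}$, i.e.\ $\ymsemigroup_t^{1,2}(\gorbithone{A_1}) = \ymsemigroup_t^{1,2}(\gorbithone{A_2})$ for all $t \in \Q \cap (0, \infty)$. For stage (3), choose representatives $A_i(0) \in \gorbithone{A_i}$ and let $A_i : [0, \infty) \ra \honeconnspace$ solve \eqref{eq:YM} with initial data $A_i(0)$ (Theorem~\ref{thm:YM-global-existence}); since a solution is continuous into $\honeconnspace$ with its $H^1$ norm topology, $A_i(t) \ra A_i(0)$ in $H^1$ and hence $A_i(t) \weakarrow A_i(0)$ as $t \downarrow 0$. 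By Definition~\ref{def:ym-semigroup}, $\gorbithone{A_i(t)} = \ymsemigroup_t^{1,2}(\gorbithone{A_i})$, so stage (2) provides, for each $t \in \Q \cap (0, \infty)$, some $\gt_t \in \htwogaugetransf$ with $A_2(t) = \gaction{A_1(t)}{\gt_t}$. Taking $t_n \in \Q \cap (0, \infty)$ with $t_n \downarrow 0$ and applying Lemma~\ref{lemma:limiting-gauge-transformation} to the weakly convergent sequences $A_1(t_n) \weakarrow A_1(0)$, $A_2(t_n) \weakarrow A_2(0)$ related by $A_2(t_n) = \gaction{A_1(t_n)}{\gt_{t_n}}$, we obtain $\gt \in \htwogaugetransf$ with $A_2(0) = \gaction{A_1(0)}{\gt}$; therefore $\gorbithone{A_1} = \gorbithone{A_1(0)} = \gorbithone{A_2(0)} = \gorbithone{A_2}$.

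The loop-continuity step and the density passage are routine given Lemmas~\ref{lemma:orbitspace-gauge-orbit-smooth-choice} and \ref{lemma:wilson-loop-continuous-dependence-on-loop-general-character}; the genuinely essential inputs are Sengupta's separation theorem in stage (2) and, in stage (3), the norm-continuity of solutions to \eqref{eq:YM} up to $t = 0$ together with Lemma~\ref{lemma:limiting-gauge-transformation}. I expect stage (3) to be the conceptual crux: the Yang--Mills heat semigroup $\ymsemigroup_t^{1,2}$ is not injective, so one cannot simply invert it, and the only way to descend from ``agreement of the regularized orbits at all small positive times'' to ``agreement of the original orbits'' is to pass to the $t \downarrow 0$ limit while tracking gauge transformations along a weakly convergent sequence --- precisely what Lemma~\ref{lemma:limiting-gauge-transformation} supplies.
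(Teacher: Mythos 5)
Your proof is correct and follows essentially the same route as the paper's: extend from $\indexset_0$ to all loops via density and Lemma~\ref{lemma:wilson-loop-continuous-dependence-on-loop-general-character}, apply Sengupta's theorem to smooth representatives at each small rational time, and pass to $t \downarrow 0$ along a weakly convergent sequence using Lemma~\ref{lemma:limiting-gauge-transformation}. The only (cosmetic) difference is that the paper runs the ZDDS flow from $A_1$, $A_2$ both to produce the smooth representatives at positive times (via Lemma~\ref{lemma:ym-zdds-solution-gauge-transformation}) and for the $H^1$-continuity at $t = 0$, whereas you invoke Lemma~\ref{lemma:orbitspace-gauge-orbit-smooth-choice} to get smooth representatives and then switch to the \eqref{eq:YM} flow for the $t \downarrow 0$ limit; both flows are $H^1$-continuous at $t=0$ by their respective solution definitions, so either works.
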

\begin{proof}
We will show that there exists $\gt \in \htwogaugetransf$ such that $A_1 = \gaction{A_2}{\gt}$. First, note that for all $(\wloop, \character, t) \in \indexset_0$, we have $[W]_{\wloop, \character, t}^{1, 2}(\gorbithone{A_j}) = W_{\wloop, \character, t}^{1,2}(A_j)$ for $j = 1, 2$. By Lemma \ref{lemma:wilson-loop-continuous-dependence-on-loop-general-character} and the fact that $\loopset_0$ is dense in $(\loopset, \dloop)$, we can obtain that for all $\wloop \in \loopset$, $\character \in \chset$, and $q \in \Q \cap (0, \infty)$,
\[ W_{\wloop, \character, q}^{1, 2}(A_1) = W_{\wloop, \character, q}^{1, 2}(A_2).\]
Now let $M := \max\{\|A_1\|_{H^1}, \|A_2\|_{H^1}\}$, and let $\timefn$ be as in Proposition \ref{prop:ZDDS-local-existence}. Let $B_1, B_2$ be the solutions to \eqref{eq:ZDDS} on $[0, \timefn(M))$ with initial data $B_1(0) = A_1$, $B_2(0) = A_2$.  Take a sequence $\{q_n\}_{n \geq 1} \sse \Q \cap (0, \infty)$, $q_n \downarrow 0$. Let $N_0$ be such that $q_n < \timefn(M)$ for all $n \geq N_0$. Then for all $n \geq N_0$, we have (by Definition \ref{def:regwl} and Lemma \ref{lemma:ym-zdds-solution-gauge-transformation})
\[ W_{\wloop, \character, q_n}^{1, 2}(A_j) = W_{\wloop, \character}(B_j(q_n)), ~~ j = 1, 2, \wloop \in \loopset, \character \in \chset.\]
Combining the previous two displays and applying Lemma \ref{lmm:wilson-loops-determine-gauge-eq-class}, we obtain that for all $n \geq N_0$, there exists a smooth gauge transformation $\gt_n$ such that $B_1(q_n) = \gaction{B_2(q_n)}{\gt_n}$. Since $q_n \downarrow 0$ and $B_1, B_2$ are continuous functions into $\honeconnspace$ (recall Definition \ref{def:ZDDS-solution-def}), we have that $B_1(q_n) \ra A_1$, $B_2(q_n) \ra A_2$. Combining the previous two observations and applying Lemma \ref{lemma:limiting-gauge-transformation}, we obtain that there exists $\gt \in \htwogaugetransf$ such that $A_1 = \gaction{A_2}{\gt}$, as desired.
\end{proof}

\subsection{Topology on the orbit space}\label{section:topology-and-measure-theory}

In this section, we place a topology on the orbit space $\honeorbitspace$. This topology will be used to make $\honeorbitspace$ into a measurable space (by taking the Borel $\sigma$-algebra).

First, we review the concept of sequential convergence, which will be needed to define the topology on the orbit space. The following definitions and results on sequential convergence all come from \cite[Section 2]{D1964}. 

\begin{definition}[Sequential convergence, $L$-convergence, and $L^*$-convergence]\label{def:sequential-convergence}
Let $S$ be a set. A sequential convergence $C$ on $S$ is a relation between sequences $\{s_n\}_{n \geq 1} \sse S$ and elements $s \in S$, denoted by $s_n \ra_C s$, with the following properties:
\begin{enumerate}
    \item If $s_n = s$ for all $n$, then $s_n \ra_C s$.
    \item If $s_n \ra_C s$, then for any subsequence $\{s_{n_k}\}_{k \geq 1}$, also $s_{n_k} \ra_C s$.
\end{enumerate}
An $L$-convergence $C$ is a sequential convergence with the additional property:
\begin{enumerate}
\setcounter{enumi}{2}
\item If $s_n \ra_C s$ and $s_n \ra_C t$, then $s = t$.
\end{enumerate}
An $L^*$-convergence $C$ is an $L$-convergence with the additional property:
\begin{enumerate}
\setcounter{enumi}{3}
\item If $\{s_n\}_{n \geq 1}$ is such that for every subsequence $\{s_{n_k}\}_{k \geq 1}$, there exists a further subsequence $\{s_{n_{k_j}}\}_{j \geq 1}$ such that $s_{n_{k_j}} \ra_C s$, then $s_n \ra_C s$.
\end{enumerate}
\end{definition}
Given a topological space $(S, \ms{T}_S)$, the sequential convergence $\mc{C}(\ms{T}_S)$ on $S$ is defined by the criterion  that $s_n \ra_{\mc{C}(\ms{T}_S)} s$ if and only if $s_n \ra s$ in $(S, \ms{T}_S)$, that is, if and only if for all open sets $U \in \ms{T}_S$ with $U \ni s$, we have that $s_n \in U$ eventually (i.e., for $n$ sufficiently large). Note that properties 1, 2, and 4 in Definition~\ref{def:sequential-convergence} are automatically satisfied by $\mc{C}(\ms{T}_S)$. If $(S, \ms{T}_S)$ is Hausdorff, then property 3 also holds, in which case $\mc{C}(\ms{T}_S)$ is an $L^*$-convergence.

Conversely, given a sequential convergence $C$ on $S$, we can define a topology $T(C)$, which is defined as the collection of sets $U \sse S$ such that for any $s \in U$ and any sequence $\{s_n\}_{n \geq 1}$ such that $s_n \ra_C s$, we have that $s_n \in U$ eventually. Note that given a topology $\ms{T}_S$, we always have $\ms{T}_S \sse T(\mc{C}(\ms{T}_S))$. 

Now, a natural question is whether the notion of convergence induced by the topology $T(C)$ is the same as $C$. That is, is it true that $s_n \ra_{\mc{C}(T(C))} s$ if and only if $s_n \ra_C s$? For one, by the definition of $T(C)$, we have that $s_n \ra_C s$ implies $s_n \ra_{\mc{C}(T(C))} s$. Therefore the only remaining question is whether the converse direction holds. The following result from \cite[Section 2]{D1964} (see the paragraph just after \cite[Theorem 2.1]{D1964}) answers this question.

\begin{lemma}\label{lemma:convergence-of-topology-of-convergence}
Let $C$ be an $L$-convergence on a set $S$. Then $s_n \ra_{\mc{C}(T(C))} s$ if and only if for every subsequence $\{s_{n_k}\}_{k \geq 1}$, there exists a further subsequence $\{s_{n_{k_j}}\}_{j \geq 1}$ such that $s_{n_{k_j}} \ra_C s$. In particular, if $C$ is an $L^*$-convergence, then $s_n \ra_{\mc{C}(T(C))} s$ if and only if $s_n \ra_C s$.
\end{lemma}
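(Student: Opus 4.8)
The plan is to prove the displayed equivalence, since the ``in particular'' clause then falls out immediately. I would introduce the auxiliary relation $C^*$ defined by: $s_n \ra_{C^*} s$ iff every subsequence $\{s_{n_k}\}$ admits a further subsequence $\{s_{n_{k_j}}\}$ with $s_{n_{k_j}} \ra_C s$. What must be shown is precisely that $\mc{C}(T(C)) = C^*$. One inclusion is easy and I would dispose of it first: suppose $s_n \ra_{C^*} s$ and $U \in T(C)$ with $s \in U$; if $s_n \notin U$ along some subsequence $\{s_{n_k}\}$, extract a further subsequence with $s_{n_{k_j}} \ra_C s$, and then the defining property of $T(C)$ (a $C$-convergent sequence with limit in $U$ lies eventually in $U$) forces $s_{n_{k_j}} \in U$ eventually, a contradiction. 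Hence $s_n \ra_{C^*} s$ implies $s_n \ra_{\mc{C}(T(C))} s$.

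The substance is the reverse inclusion. Suppose $s_n \ra_{\mc{C}(T(C))} s$ but $s_n \not\ra_{C^*} s$, so some subsequence $\{s_{n_k}\}$ has no further subsequence $C$-converging to $s$; this subsequence still $\mc{C}(T(C))$-converges to $s$, i.e.\ it is eventually inside every $T(C)$-open neighbourhood of $s$. First I would discard the (finitely many) indices with $s_{n_k} = s$, using that constant sequences $C$-converge. The goal is then to exhibit a $T(C)$-open $U$ with $s \in U$ but $s_{n_k} \notin U$ for infinitely many $k$, contradicting the previous sentence; equivalently, to find a $T(C)$-closed set $F$ containing infinitely many of the $s_{n_k}$ but not $s$. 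The natural choice is the $T(C)$-closure $F$ of the value set $B := \{s_{n_k} : k \geq 1\}$ (or of the set of subsequential $C$-limits of $\{s_{n_k}\}$); it is closed and contains $B$ by construction, so the only point left to verify is $s \notin F$.

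Verifying $s \notin F$ is the crux. At the level of a single sequential-closure step it is easy: a sequence drawn from $B$ that $C$-converges to $s$ either is finitely-valued --- forcing a constant subsequence converging to both $s$ and some $v \in B$, hence $v = s$ by uniqueness of $C$-limits, contradicting $s \notin B$ --- or is infinitely-valued, in which case passing to strictly increasing indices produces a subsequence of $\{s_{n_k}\}$ converging to $s$, against the hypothesis; the same argument works with any subset of $B$ in place of $B$. The genuine difficulty is that the sequential-closure operator need not be idempotent, so $F$ is reached only after iterating it (transfinitely), and one must check that $s$ is never absorbed at any stage. I expect this transfinite bookkeeping --- tracking the closure rank and re-running the uniqueness/index-extraction argument at successor stages --- to be the main obstacle; it is the content of Dudley's discussion following \cite[Theorem~2.1]{D1964}, which I would cite if a fully self-contained argument proved unwieldy.

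Finally, granting $\mc{C}(T(C)) = C^*$, the ``in particular'' statement is immediate: if $C$ additionally satisfies property~(4) of Definition~\ref{def:sequential-convergence}, then $s_n \ra_{C^*} s$ is equivalent to $s_n \ra_C s$, whence $\mc{C}(T(C)) = C^* = C$. As organizational glue I would also record that $C^*$ is always itself an $L^*$-convergence (properties (1)--(3) are routine, and property~(4) follows by a diagonal subsequence argument) and that $T(C^*) = T(C)$ (a set is $C$-sequentially open iff it is $C^*$-sequentially open, using the easy inclusion $\ra_C \Rightarrow \ra_{C^*}$ already noted); this reduces the general $L$-convergence case to the assertion $\mc{C}(T(D)) = D$ for the $L^*$-convergence $D = C^*$, which is the classical fact at the heart of the lemma.
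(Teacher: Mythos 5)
Your easy direction, and the structural observations in the last paragraph (that $C^*$ is an $L^*$-convergence and that $T(C) = T(C^*)$, so one may reduce to the $L^*$ case), are both correct. But the core of the argument --- taking $F$ to be the $T(C)$-closure of $B := \{s_{n_k} : k \geq 1\}$ and verifying $s \notin F$ --- has a genuine gap, not merely unfinished transfinite bookkeeping. The single-step argument works exactly as you say: a sequence drawn \emph{from $B$ itself} that $C$-converges to $s$ yields, via the uniqueness axiom and an increasing-index extraction, a subsequence of $\{s_{n_k}\}$ $C$-converging to $s$, a contradiction. But this argument does not propagate: at the next stage of the iterated sequential closure one must handle a sequence $u_m \ra_C s$ whose terms lie in the first sequential closure of $B$ but not in $B$, and since a general $L$-convergence admits no diagonal principle, there is no way to convert the $u_m$'s into a subsequence of $\{s_{n_k}\}$. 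In fact $s \in F$ can genuinely occur even when no subsequence of $\{s_{n_k}\}$ $C$-converges to $s$. For a concrete example, take $S = \{s\} \cup \{a_i : i \in \N\} \cup \{b_{ij} : i, j \in \N\}$ and let $C$ be generated by $b_{i,j_m} \ra_C a_i$ whenever $j_m \toinf$, by $a_{i_m} \ra_C s$ whenever $i_m \toinf$, and by constant sequences; this is an $L$-convergence. If $\{s_{n_k}\}$ enumerates $\{b_{ij}\}$, then no subsequence $C$-converges to $s$ (every nonconstant sequence $\ra_C s$ must eventually take values in $\{a_i\}$, but the $s_{n_k}$ never do), yet $s$ is absorbed into $\overline{B}$ at stage two, via $a_1, a_2, \ldots$

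So $S \setminus \overline{B}$ cannot serve as the separating open set. The conclusion is nevertheless correct in the example: the set $\{b_{ii} : i \in \N\}$ is $T(C)$-closed, misses $s$, and contains infinitely many $s_{n_k}$, so $s_{n_k} \not\ra_{\mc{C}(T(C))} s$. But this witness is not $\overline{B}$ --- it is the closure of a carefully chosen infinite \emph{subset} of $B$ --- and producing such a subset in general is the actual content of Dudley's proof, not a bookkeeping matter that re-runs the one-step argument at higher ranks. Your fallback of citing \cite[Section 2]{D1964} is exactly what the paper itself does, and that is the appropriate resolution; but the self-contained argument you sketch would fail as written.
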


We now record some nice properties of the topological space $(S, T(C))$ induced by a sequential convergence $C$ on a set $S$. This result is not new (for instance, it is stated in \cite[Section 2]{D1964}), but we give the proof in Appendix \ref{section:3d-miscellaneous-proofs} to illustrate how to work with sequential convergences and their induced topologies. 

\begin{lemma}\label{lemma:sequential-convergence-topology-properties}
Let $C$ be a sequential convergence on a set $S$, and let $(S, T(C))$ be the resulting topological space. A set $V \sse S$ is closed in $(S, T(C))$ if and only if it is sequentially closed with respect to $\ra_C$; that is, if and only if for any $s \in S$ such that there is a sequence $\{s_n\}_{n \geq 1} \sse V$ with $s_n \ra_C s$, we have that $s \in V$. Let $C'$ be another sequential convergence on another set $S'$. If a function $f : S \ra S'$ is such that $s_n \ra_C s$ implies $f(s_n) \ra_{C'} f(s)$, then $f : (S, T(C)) \ra (S', T(C'))$ is continuous.
\end{lemma}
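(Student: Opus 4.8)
The plan is to notice that the whole lemma reduces to the explicit description of closed sets in $(S, T(C))$, and that once that description is available the continuity assertion is immediate. So I would prove the two claims in the order stated: first characterize closed sets as sequentially closed sets, then deduce continuity from this.

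\textbf{Closed sets are exactly the sequentially closed sets.} For the forward direction, suppose $V \sse S$ is closed in $(S, T(C))$, i.e. $S \setminus V \in T(C)$. Given $\{s_n\}_{n \geq 1} \sse V$ with $s_n \ra_C s$, I would show $s \in V$ by contradiction: if $s \in S \setminus V$, then since $S \setminus V \in T(C)$, the very definition of $T(C)$ forces $s_n \in S \setminus V$ for all large $n$, contradicting $\{s_n\} \sse V$. For the converse, suppose $V$ is sequentially closed with respect to $\ra_C$; I want $S \setminus V \in T(C)$. Take $s \in S \setminus V$ and any sequence with $s_n \ra_C s$. If $s_n \in V$ for infinitely many $n$, extract a subsequence $\{s_{n_k}\}_{k \geq 1} \sse V$; by property 2 of a sequential convergence (subsequences of a convergent sequence converge to the same limit), $s_{n_k} \ra_C s$, so sequential closedness of $V$ gives $s \in V$, a contradiction. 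Hence $s_n \in S \setminus V$ eventually, which is precisely the membership condition for $S \setminus V \in T(C)$.

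\textbf{Continuity of $f$.} It suffices to check that $f^{-1}(V')$ is closed in $(S, T(C))$ whenever $V'$ is closed in $(S', T(C'))$. Applying the first part in $S'$, $V'$ is sequentially closed with respect to $\ra_{C'}$; applying the first part in $S$, it is enough to verify that $f^{-1}(V')$ is sequentially closed with respect to $\ra_C$. So take $\{s_n\} \sse f^{-1}(V')$ with $s_n \ra_C s$. The hypothesis on $f$ yields $f(s_n) \ra_{C'} f(s)$, and since $\{f(s_n)\} \sse V'$ with $V'$ sequentially closed, $f(s) \in V'$, i.e. $s \in f^{-1}(V')$. This closes the argument.

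\textbf{Main obstacle.} There is no real obstacle; the only step needing a moment's attention is the subsequence extraction in the converse direction of the first claim, where one must invoke exactly property 2 of Definition \ref{def:sequential-convergence} — and it is worth remarking that neither the $L$-convergence nor the $L^*$-convergence axioms (properties 3 and 4) are used anywhere in this lemma. Everything else is bookkeeping with the definition of $T(C)$ together with the standard equivalence between continuity and preimages of closed sets being closed.
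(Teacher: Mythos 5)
Your proposal is correct and matches the paper's argument essentially step for step: both directions of the closed-set characterization use the same contradiction via the definition of $T(C)$ and property 2 of Definition \ref{def:sequential-convergence} for the subsequence extraction, and the continuity claim is then reduced to sequential closedness of preimages of closed sets exactly as in the paper. Your side remark that only property 2 (and neither property 3 nor 4) is needed is accurate and a helpful observation.
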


The following corollary looks at continuity of $f : (S, T(C)) \ra (X, d)$, where $(X, d)$ is a metric space. The proof is in Appendix \ref{section:3d-miscellaneous-proofs}.

\begin{cor}\label{cor:sequential-convergence-metric-space}
Let $C$ be a sequential convergence on a set $S$. Let $(X, d)$ be a metric space. Let $f : S \ra X$. If $f$ is sequentially continuous with respect to $C$ and $d$, (i.e., $s_n \ra_C s$ implies $d(f(s_n), f(s)) \ra 0$), then $f : (S, T(C)) \ra (X, d)$ is continuous.
\end{cor}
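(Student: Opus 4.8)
The plan is to deduce this from the continuity criterion already established in Lemma~\ref{lemma:sequential-convergence-topology-properties}, applied to the sequential convergence on $X$ coming from the metric topology. Write $\ms{T}_d$ for the topology on $X$ generated by $d$, and let $C' := \mc{C}(\ms{T}_d)$ be the associated sequential convergence; as noted in the excerpt, this is indeed a sequential convergence (properties 1, 2, 4 are automatic, and since metric spaces are Hausdorff, property 3 holds too, so $C'$ is even an $L^*$-convergence). By the defining property of the metric topology, $x_n \ra_{C'} x$ holds precisely when $d(x_n, x) \ra 0$. So Lemma~\ref{lemma:sequential-convergence-topology-properties} applies with $S' = X$ and this choice of $C'$.

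The only point that requires an actual argument is that $T(C') = \ms{T}_d$, i.e.\ that the metric topology is recovered from its own notion of sequential convergence. One inclusion, $\ms{T}_d \sse T(C')$, holds for any topology (this is the general fact $\ms{T}_S \sse T(\mc{C}(\ms{T}_S))$ recalled just before Lemma~\ref{lemma:convergence-of-topology-of-convergence}). For the reverse inclusion, suppose $U \in T(C')$ but $U$ is not open in $(X, d)$. Then there is a point $x \in U$ such that for every $n \geq 1$ the ball $B(x, 1/n)$ is not contained in $U$; picking $x_n \in B(x, 1/n) \setminus U$ yields a sequence with $d(x_n, x) \ra 0$, hence $x_n \ra_{C'} x$, while $x_n \notin U$ for all $n$. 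This contradicts the definition of $T(C')$, which requires $x_n \in U$ eventually. Hence $T(C') \sse \ms{T}_d$, and the two topologies coincide. This middle step is the only non-formal ingredient, and it is just the standard fact that metric spaces are sequential spaces.

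Finally, the hypothesis that $f$ is sequentially continuous with respect to $C$ and $d$ says exactly that $s_n \ra_C s$ implies $d(f(s_n), f(s)) \ra 0$, i.e.\ $f(s_n) \ra_{C'} f(s)$. By Lemma~\ref{lemma:sequential-convergence-topology-properties}, the map $f : (S, T(C)) \ra (X, T(C'))$ is continuous; since $T(C') = \ms{T}_d$, this is precisely the statement that $f : (S, T(C)) \ra (X, d)$ is continuous, which completes the proof.
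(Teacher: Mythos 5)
Your proof is correct, and it takes a genuinely different (though related) route from the paper. The paper does not invoke Lemma~\ref{lemma:sequential-convergence-topology-properties} formally; instead it re-runs the sequential-closedness argument from the second claim of that lemma with $(X,d)$ as the target: for closed $V \sse X$, if $s_n \ra_C s$ with $s_n \in f^{-1}(V)$ then $d(f(s_n), f(s)) \ra 0$ with $f(s_n) \in V$, so $f(s) \in V$ because closed sets are (always, in any topological space) sequentially closed; hence $f^{-1}(V)$ is $\ra_C$-sequentially closed, hence closed in $(S, T(C))$ by the first claim of the lemma. You instead package the matter as a reduction: set $C' := \mc{C}(\ms{T}_d)$, apply Lemma~\ref{lemma:sequential-convergence-topology-properties} to conclude that $f : (S, T(C)) \ra (X, T(C'))$ is continuous, and then compare $T(C')$ with $\ms{T}_d$. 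Both routes are clean and essentially equivalent in content.

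One small observation: you prove more than is needed. To conclude from $f : (S, T(C)) \ra (X, T(C'))$ continuous that $f : (S, T(C)) \ra (X, \ms{T}_d)$ is continuous, it suffices that $\ms{T}_d \sse T(C')$ (a continuous map into a finer topology is continuous into a coarser one), and that inclusion is the automatic general inclusion $\ms{T}_S \sse T(\mc{C}(\ms{T}_S))$ recalled just before Lemma~\ref{lemma:convergence-of-topology-of-convergence}. The reverse inclusion $T(C') \sse \ms{T}_d$ — that metric spaces are sequential, which is the only part of your argument requiring an actual construction — is true but not required here. Dropping it makes your reduction entirely formal, revealing that the corollary is an immediate consequence of Lemma~\ref{lemma:sequential-convergence-topology-properties} plus the general inclusion, with no metric-specific input at all.
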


This concludes our short primer on the various relevant properties of sequential convergences and the topologies that they induce. We next use these results to obtain nice topologies on $\honeconnspace$ and $\honeorbitspace$.

First, let $\ms{T}_w$ be the weak topology on $\honeconnspace$, and let $\mc{C}(\ms{T}_w)$ be the sequential convergence generated by $\ms{T}_w$. By definition, we have that $A_n \ra_{\mc{C}(\ms{T}_w)} A$ if and only if $A_n \weakarrow A$. Thus we will write the former rather than the latter. 

\begin{definition}[Sequential weak topology]\label{def:sequential-weak-toplogy}
Let $\ms{T}_{sw} := T(\mc{C}(\ms{T}_w))$. That is, $U \in \ms{T}_{sw}$ if and only if for any sequence $\{A_n\}_{n \leq \infty} \sse \honeconnspace$ such that $A_n \weakarrow A_\infty \in U$, we have that $A_n \in U$ eventually. We will refer to $\ms{T}_{sw}$ as the sequential weak topology. Note that $\ms{T}_w \sse \ms{T}_{sw}$, since $\ms{T}_0 \sse T(\mc{C}(\ms{T}_0))$ for any topology $\ms{T}_0$. Also, since $(\honeconnspace, \ms{T}_w)$ is Hausdorff, we have that $\mc{C}(\ms{T}_w)$ is an $L^*$-convergence, and thus by Lemma \ref{lemma:convergence-of-topology-of-convergence}, we have that $A_n \ra A$ in $(\honeconnspace, \ms{T}_{sw})$ if and only if $A_n \weakarrow A$. 
\end{definition}

The sequential weak topology is convenient because convergence of sequences in this topology is the same as  weak convergence, and to verify continuity of functions on $(\honeconnspace, \ms{T}_{sw})$, it suffices to work with sequences rather than nets (by Lemma~\ref{lemma:convergence-of-topology-of-convergence} and Corollary~\ref{cor:sequential-convergence-metric-space}). This would not be true for $(\honeconnspace, \ms{T}_w)$, since $(\honeconnspace, \ms{T}_w)$ is not a sequential space --- there exists a set $V \sse \honeconnspace$ which is closed in $(\honeconnspace, \ms{T}_{sw})$ (i.e., sequentially weakly closed) but not closed in $(\honeconnspace, \ms{T}_w)$ (i.e., not weakly closed). For an example, see~\cite[Example 3.31]{BC2011}. Consequently, the identity map $\iota : (\honeconnspace, \ms{T}_w) \ra (\honeconnspace, \ms{T}_{sw})$ is sequentially continuous but not continuous. We would like to work with weakly converging sequences $A_n \weakarrow A_\infty$, rather than weakly converging nets $A_\alpha \weakarrow A_\infty$, because weakly converging sequences are $H^1$ norm-bounded (by Lemma \ref{lemma:weak-convergence-and-h1-norm}), while weakly converging nets may not be.

We now define a topology on $\honeorbitspace$. Define the quotient map $\projection^{1,2} : \honeconnspace \ra \honeorbitspace$ by $A \mapsto \gorbithone{A}$. On $\honeorbitspace$, we can place multiple topologies. One option is the quotient topology $\toporbitspace[1] := \{U \sse \honeorbitspace : (\projection^{1,2})^{-1}(U) \in \ms{T}_{sw}\}$. Another option is the topology generated by the following sequential convergence (which is inspired by Weak Uhlenbeck Compactness --- recall Theorem \ref{thm:weak-uhlenbeck-compactness}): $\gorbithone{A_n} \weakarrow_q \gorbithone{A_\infty}$ if and only if there exists $\tilde{A}_n \in \gorbithone{A_n}$ for all $n \leq \infty$ such that $\tilde{A}_n \weakarrow \tilde{A}_\infty$. Note that properties 1 and 2 in Definition \ref{def:sequential-convergence} are satisfied for $\weakarrow_q$. (In a slight clash with our previous notation, although we will refer to $\weakarrow_q$ as the sequential convergence, converging sequences will be denoted $\gorbithone{A_n} \weakarrow_q \gorbithone{A_\infty}$, rather than $\gorbithone{A_n} \ra_{\weakarrow_q} \gorbithone{A_\infty}$.) The topology is then $\toporbitspace[2] := T(\weakarrow_q)$, that is, the collection of sets $U \sse \honeorbitspace$ such that for any sequence $\{\gorbithone{A_n}\}_{n \leq \infty}$ with $\gorbithone{A_n} \weakarrow_q \gorbithone{A_\infty} \in U$, we have that $\gorbithone{A_n} \in U$ eventually.

\begin{lemma}\label{lemma:topologies-on-quotient-coincide}
We have $\toporbitspace[1] = \toporbitspace[2]$.
\end{lemma}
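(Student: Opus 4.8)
The plan is to show the two topologies $\toporbitspace[1]$ and $\toporbitspace[2]$ on $\honeorbitspace$ coincide by showing each is contained in the other. Since both are defined by the closed-sets characterization (via sequential convergences), by Lemma~\ref{lemma:sequential-convergence-topology-properties} it suffices to show that a set $V \sse \honeorbitspace$ is sequentially closed with respect to the sequential convergence defining $\toporbitspace[1]$ if and only if it is sequentially closed with respect to $\weakarrow_q$. Here the sequential convergence defining $\toporbitspace[1]$, call it $\ra_q$, is the one generated by the quotient map from $(\honeconnspace, \ms{T}_{sw})$: namely $\gorbithone{A_n} \ra_q \gorbithone{A_\infty}$ iff $(\projection^{1,2})^{-1}$ of every $\ms{T}_{sw}$-open set containing $\gorbithone{A_\infty}$ eventually contains $A_n$'s orbit. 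Actually it is cleaner to argue directly at the level of open sets, so I would instead verify the two defining conditions match.

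First I would show $\toporbitspace[2] \sse \toporbitspace[1]$. Let $U \in \toporbitspace[2]$; we must show $(\projection^{1,2})^{-1}(U) \in \ms{T}_{sw}$, i.e., whenever $A_n \weakarrow A_\infty$ with $A_\infty \in (\projection^{1,2})^{-1}(U)$, then $A_n \in (\projection^{1,2})^{-1}(U)$ eventually. But $A_n \weakarrow A_\infty$ immediately gives $\gorbithone{A_n} \weakarrow_q \gorbithone{A_\infty}$ (take $\tilde A_n = A_n$), and $\gorbithone{A_\infty} \in U$, so since $U \in T(\weakarrow_q)$ we get $\gorbithone{A_n} \in U$ eventually, i.e.\ $A_n \in (\projection^{1,2})^{-1}(U)$ eventually. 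This direction is routine.

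The harder direction is $\toporbitspace[1] \sse \toporbitspace[2]$. Let $U \in \toporbitspace[1]$, i.e.\ $(\projection^{1,2})^{-1}(U)$ is sequentially weakly open in $\honeconnspace$; we must show $U \in T(\weakarrow_q)$, i.e., if $\gorbithone{A_n} \weakarrow_q \gorbithone{A_\infty} \in U$ then $\gorbithone{A_n} \in U$ eventually. Suppose not: then along some subsequence, $\gorbithone{A_{n_k}} \notin U$ for all $k$. By definition of $\weakarrow_q$, there are representatives $\tilde A_n \in \gorbithone{A_n}$ (for all $n \le \infty$) with $\tilde A_n \weakarrow \tilde A_\infty$; in particular $\tilde A_\infty \in \gorbithone{A_\infty} \sse (\projection^{1,2})^{-1}(U)$, and passing to the subsequence, $\tilde A_{n_k} \weakarrow \tilde A_\infty$ still holds. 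Since $(\projection^{1,2})^{-1}(U)$ is sequentially weakly open, $\tilde A_{n_k} \in (\projection^{1,2})^{-1}(U)$ eventually, hence $\gorbithone{A_{n_k}} = \projection^{1,2}(\tilde A_{n_k}) \in U$ eventually, contradicting the choice of the subsequence. This contradiction shows $\gorbithone{A_n} \in U$ eventually, so $U \in T(\weakarrow_q) = \toporbitspace[2]$.

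I expect the main (very minor) obstacle is just being careful about the bookkeeping of the subsequence extraction in the second direction: one must extract the representatives $\tilde A_n$ for the \emph{whole} sequence first, then restrict to the bad subsequence, rather than trying to do it the other way around (the definition of $\weakarrow_q$ only gives a single coherent choice of representatives witnessing the convergence of the full sequence, but property~2 of sequential convergence in Definition~\ref{def:sequential-convergence} guarantees the restriction still converges). Once that is set up, both inclusions are formal manipulations with the definitions of $T(\cdot)$ and the quotient topology, and no analysis (beyond what is packaged in the definitions) is needed. In fact no use of Weak Uhlenbeck Compactness is required for this particular lemma — that theorem is what makes $\weakarrow_q$ a rich enough convergence to be useful later, but the coincidence of the two topologies is purely a general-topology statement about quotients and sequential convergences.
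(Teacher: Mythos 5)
Your proposal is correct and follows essentially the same route as the paper: the inclusion $\toporbitspace[2] \sse \toporbitspace[1]$ by the sequential continuity of $\projection^{1,2}$, and $\toporbitspace[1] \sse \toporbitspace[2]$ by extracting the representatives $\tilde A_n \weakarrow \tilde A_\infty$ and using that $(\projection^{1,2})^{-1}(U)$ is sequentially weakly open. The only cosmetic difference is that the paper argues the second inclusion directly (representatives for the full sequence immediately give $\tilde A_n \in (\projection^{1,2})^{-1}(U)$ eventually, hence $\gorbithone{A_n} \in U$ eventually), whereas you wrap the same argument in a superfluous proof by contradiction with a subsequence extraction; and your observation that Weak Uhlenbeck Compactness is not needed here is accurate.
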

\begin{proof}
If $A_n \weakarrow A_\infty$, then $\gorbithone{A_n} \weakarrow_q \gorbithone{A_\infty}$. Thus, $\projection^{1,2} :(\honeconnspace, \ms{T}_{sw}) \ra (\honeorbitspace, \toporbitspace[2])$ is continuous (by Lemma \ref{lemma:sequential-convergence-topology-properties}). It follows that $\toporbitspace[2] \sse \toporbitspace[1]$. For the reverse inclusion, let $U \in \toporbitspace[1]$, and consider a sequence $\gorbithone{A_n} \weakarrow_q \gorbithone{A_\infty} \in U$. Then there exists $\tilde{A}_n \in \gorbithone{A_n}$ for all $n \leq \infty$ such that $\tilde{A}_n \weakarrow \tilde{A}_\infty$. As $\tilde{A}_\infty \in (\projection^{1,2})^{-1}(U) \in \ms{T}_{sw}$, it follows that $\tilde{A}_n \in (\projection^{1,2})^{-1}(U)$ eventually, which implies that for all sufficiently large $n$,
\[
\gorbithone{A_n} = \gorbithone{\tilde{A}_n} \in \projection^{1,2}((\projection^{1,2})^{-1}(U)) \sse U
\]
Thus $U \in \toporbitspace[2]$, and the desired result now follows.
\end{proof}

In light of Lemma \ref{lemma:topologies-on-quotient-coincide}, we make the following definition.

\begin{definition}
Define the topology $\toporbitspace := \toporbitspace[1] = \toporbitspace[2]$.
\end{definition}

We next state the following lemma regarding the uniqueness of limits with respect to the sequential convergence $\weakarrow_q$.

\begin{lemma}\label{lemma:quotient-convergence-uniqueness-of-limits}
If $\{\gorbithone{A_n}\}_{n \geq 1} \sse \honeorbitspace$, $\gorbithone{A_\infty}, \gorbithone{\tilde{A}_\infty} \in \honeorbitspace$ are such that $\gorbithone{A_n} \weakarrow_q \gorbithone{A_\infty}$ and $\gorbithone{A_n} \weakarrow_q \gorbithone{\tilde{A}_\infty}$, then $\gorbithone{A_\infty} = \gorbithone{\tilde{A}_\infty}$. In particular, $\weakarrow_q$ satisfies property 3 of Definition \ref{def:sequential-convergence}, and thus $\weakarrow_q$ is an $L$-convergence.
\end{lemma}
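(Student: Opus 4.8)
The plan is to reduce everything to Lemma \ref{lemma:limiting-gauge-transformation}, which already packages the hard analytic content (persistence of gauge-equivalence under weak limits). Suppose $\gorbithone{A_n} \weakarrow_q \gorbithone{A_\infty}$ and $\gorbithone{A_n} \weakarrow_q \gorbithone{\tilde{A}_\infty}$. Unwinding the definition of $\weakarrow_q$, first I would extract two families of representatives: there exist $B_n \in \gorbithone{A_n}$ for $n \leq \infty$ with $B_n \weakarrow B_\infty$, and there exist $C_n \in \gorbithone{A_n}$ for $n \leq \infty$ with $C_n \weakarrow C_\infty$, where $B_\infty \in \gorbithone{A_\infty}$ and $C_\infty \in \gorbithone{\tilde{A}_\infty}$.

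Next, since for each finite $n$ both $B_n$ and $C_n$ lie in the same gauge orbit $\gorbithone{A_n}$, there is a gauge transformation $\gt_n \in \htwogaugetransf$ with $C_n = \gaction{B_n}{\gt_n}$. Now I would apply Lemma \ref{lemma:limiting-gauge-transformation} with the sequences $\{B_n\}_{n \leq \infty}$ and $\{C_n\}_{n \leq \infty}$: we have $B_n \weakarrow B_\infty$, $C_n \weakarrow C_\infty$, and $C_n = \gaction{B_n}{\gt_n}$ for every $n \geq 1$, so the lemma yields $\gt \in \htwogaugetransf$ with $C_\infty = \gaction{B_\infty}{\gt}$. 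Hence $B_\infty$ and $C_\infty$ lie in the same gauge orbit, i.e., $\gorbithone{A_\infty} = \gorbithone{B_\infty} = \gorbithone{C_\infty} = \gorbithone{\tilde{A}_\infty}$, which is exactly property 3 of Definition \ref{def:sequential-convergence}. Combined with the already-noted fact that $\weakarrow_q$ satisfies properties 1 and 2, this shows $\weakarrow_q$ is an $L$-convergence.

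There is essentially no obstacle here beyond bookkeeping, since Lemma \ref{lemma:limiting-gauge-transformation} does all the work; the only point to be careful about is that $\gt_n$ need only be defined for finite $n$ (no claim about a limiting $\gt_n$), which is precisely the hypothesis format of Lemma \ref{lemma:limiting-gauge-transformation}, so the application is legitimate.
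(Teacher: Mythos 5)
Your proof is correct and follows exactly the same approach as the paper, which simply cites Lemma \ref{lemma:limiting-gauge-transformation} as yielding the result directly; your write-up just makes the bookkeeping of choosing two families of representatives $B_n, C_n$ and the gauge transformations $\gt_n$ explicit.
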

\begin{proof}
This is a direct consequence of Lemma \ref{lemma:limiting-gauge-transformation}.
\end{proof}

\begin{remark}
We note here that in general, even if $C$ is an $L$-convergence on a set $S$ (so that in particular sequential limits are unique), it may not be the case that $(S, T(C))$ is Hausdorff. For an example, see \cite[Section 2]{D1964}. In our case, we will in fact have that $(\honeorbitspace, \toporbitspace)$ is Hausdorff, because there is a continuous injection from $(\honeorbitspace, \toporbitspace)$ to a Hausdorff space (see Theorem \ref{thm:countable-index-homeomorphism}). The proof of this uses some additional properties of $(\honeorbitspace, \toporbitspace)$ beyond the fact that it arises from an $L$-convergence.
\end{remark}

As was the case for the space $(\honeconnspace, \ms{T}_{sw})$, we may ask whether convergence in the topological space $(\honeorbitspace, \toporbitspace)$ is the same as $\weakarrow_q$. Unfortunately, at the moment we do not know whether this is true. Therefore, given a sequence $\{\gorbithone{A_n}\}_{n \leq \infty} \sse \honeorbitspace$, we will write $\gorbithone{A_n} \ra \gorbithone{A_\infty}$ to denote convergence in the space $(\honeorbitspace, \toporbitspace)$ --- that is, for any open set $U \in \toporbitspace$ with $U \ni \gorbithone{A_\infty}$, we have that $\gorbithone{A_n} \in U$ eventually. We certainly have that $\gorbithone{A_n} \weakarrow_q \gorbithone{A_\infty}$ implies $\gorbithone{A_n} \ra \gorbithone{A_\infty}$, and so $\weakarrow_q$ is the stronger notion of convergence. By Lemma \ref{lemma:convergence-of-topology-of-convergence}, the question of whether $\gorbithone{A_n} \ra \gorbithone{A_\infty}$ implies $\gorbithone{A_n} \weakarrow_q \gorbithone{A_\infty}$ boils down to whether $\weakarrow_q$ is an $L^*$-convergence, that is, whether it satisfies property 4 of Definition \ref{def:sequential-convergence}. At this moment, we do not know whether this is true. On the other hand, since $\weakarrow_q$ is an $L$-convergence by Lemma \ref{lemma:quotient-convergence-uniqueness-of-limits}, we have the following immediate consequence of Lemma \ref{lemma:convergence-of-topology-of-convergence} (the proof is omitted).

\begin{lemma}\label{lemma:orbit-space-two-notions-of-convergence}
Let $\{\gorbithone{A_n}\}_{n \leq \infty} \sse \honeorbitspace$. We have that $\gorbithone{A_n} \ra \gorbithone{A_\infty}$ if and only if for any subsequence $\{\gorbithone{A_{n_k}}\}_{k \geq 1}$, there exists a further subsequence $\{\gorbithone{A_{n_{k_j}}}\}_{j \geq 1}$ such that $\gorbithone{A_{n_{k_j}}}\weakarrow_q \gorbithone{A_\infty}$.
\end{lemma}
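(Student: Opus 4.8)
The plan is to obtain this as an immediate application of Lemma \ref{lemma:convergence-of-topology-of-convergence} with the set $S = \honeorbitspace$ and the sequential convergence $C = \weakarrow_q$. Two preliminary facts must be in place before invoking that lemma. First, $\weakarrow_q$ has to be an $L$-convergence: properties 1 and 2 of Definition \ref{def:sequential-convergence} were already noted to hold for $\weakarrow_q$ at the point where it was introduced, and property 3 (uniqueness of sequential limits) is exactly Lemma \ref{lemma:quotient-convergence-uniqueness-of-limits}, which in turn rests on Lemma \ref{lemma:limiting-gauge-transformation}. Second, the topology in which the convergence $\gorbithone{A_n} \ra \gorbithone{A_\infty}$ is being taken must coincide with $T(\weakarrow_q)$; this holds because $\toporbitspace = \toporbitspace[2] = T(\weakarrow_q)$ by definition together with Lemma \ref{lemma:topologies-on-quotient-coincide}.

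Granting these, the argument is a matter of unwinding notation. By definition of convergence in a topological space, $\gorbithone{A_n} \ra \gorbithone{A_\infty}$ in $(\honeorbitspace, \toporbitspace)$ means $\gorbithone{A_n} \ra_{\mc{C}(\toporbitspace)} \gorbithone{A_\infty}$, i.e.\ $\gorbithone{A_n} \ra_{\mc{C}(T(\weakarrow_q))} \gorbithone{A_\infty}$. Applying Lemma \ref{lemma:convergence-of-topology-of-convergence} to the $L$-convergence $\weakarrow_q$ then gives directly that this is equivalent to the assertion that for every subsequence $\{\gorbithone{A_{n_k}}\}_{k \geq 1}$ there is a further subsequence $\{\gorbithone{A_{n_{k_j}}}\}_{j \geq 1}$ with $\gorbithone{A_{n_{k_j}}} \weakarrow_q \gorbithone{A_\infty}$, which is precisely the claim.

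There is essentially no obstacle here beyond this bookkeeping; the only non-formal input is the uniqueness of $\weakarrow_q$-limits, which is why Lemma \ref{lemma:quotient-convergence-uniqueness-of-limits} (and hence Lemma \ref{lemma:limiting-gauge-transformation}) is needed. Note that one cannot upgrade the conclusion to the cleaner statement ``$\gorbithone{A_n} \ra \gorbithone{A_\infty}$ iff $\gorbithone{A_n} \weakarrow_q \gorbithone{A_\infty}$'', since it is not known whether $\weakarrow_q$ satisfies property 4 of Definition \ref{def:sequential-convergence}, i.e.\ whether it is an $L^*$-convergence; the subsequence-of-subsequence formulation is therefore the best available, and this is exactly what Lemma \ref{lemma:convergence-of-topology-of-convergence} delivers for a general $L$-convergence.
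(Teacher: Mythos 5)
Your proposal is correct and matches the paper's approach: the paper itself states that this lemma is an immediate consequence of Lemma \ref{lemma:convergence-of-topology-of-convergence} once Lemma \ref{lemma:quotient-convergence-uniqueness-of-limits} establishes that $\weakarrow_q$ is an $L$-convergence, and omits the proof. Your bookkeeping that $\toporbitspace = \toporbitspace[2] = T(\weakarrow_q)$ via Lemma \ref{lemma:topologies-on-quotient-coincide} is exactly the identification needed to apply the general result.
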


\begin{remark}
Although it is a bit unwieldy to have two (possibly) different notions of convergence, we fortunately can mostly work with the stronger notion $\gorbithone{A_n} \weakarrow_q \gorbithone{A_\infty}$ throughout this section. The main reason why we can do so is because by Lemma \ref{lemma:sequential-convergence-topology-properties}, to check that a set $V$ is closed in $(\honeorbitspace, \toporbitspace)$, we can just check that it is sequentially closed with respect to $\weakarrow_q$, and to check that a function $f$ on $(\honeorbitspace, \toporbitspace)$ is continuous, we can just check that it is sequentially continuous with respect to $\weakarrow_q$. The next lemma is an example of this.
\end{remark}

\begin{lemma}\label{lemma:regwlorbit-continuous}
For any piecewise $C^1$ loop $\wloop$, character $\character$ of $\liegroup$, and $t > 0$, the regularized Wilson loop observable $[W]_{\wloop, \character, t}^{1,2} : (\honeorbitspace, \toporbitspace) \ra \C$ is continuous.
\end{lemma}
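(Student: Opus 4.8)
The plan is to reduce the statement to sequential continuity with respect to the sequential convergence $\weakarrow_q$, and then to invoke the weak sequential continuity of the regularized Wilson loop observables that was already established at the level of $H^1$ connections in Lemma \ref{lemma:regularized-wilson-loop-weak-sequential-continuity}.

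First I would recall that $\toporbitspace = T(\weakarrow_q)$ by definition and that $\C$ is a metric space; hence by Corollary \ref{cor:sequential-convergence-metric-space} (applicable since $\weakarrow_q$ is a sequential convergence, and indeed an $L$-convergence by Lemma \ref{lemma:quotient-convergence-uniqueness-of-limits}), it suffices to check that $[W]_{\wloop, \character, t}^{1,2}$ is sequentially continuous with respect to $\weakarrow_q$. So I would fix a sequence $\{\gorbithone{A_n}\}_{n \leq \infty} \sse \honeorbitspace$ with $\gorbithone{A_n} \weakarrow_q \gorbithone{A_\infty}$ and aim to show that $[W]_{\wloop, \character, t}^{1,2}(\gorbithone{A_n}) \to [W]_{\wloop, \character, t}^{1,2}(\gorbithone{A_\infty})$ in $\C$.

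By the definition of $\weakarrow_q$, the hypothesis $\gorbithone{A_n} \weakarrow_q \gorbithone{A_\infty}$ supplies representatives $\tilde{A}_n \in \gorbithone{A_n}$ for each $n \leq \infty$ with $\tilde{A}_n \weakarrow \tilde{A}_\infty$ in $\honeconnspace$. Since $[W]_{\wloop, \character, t}^{1,2} \circ \pi^{1,2} = W_{\wloop, \character, t}^{1,2}$ (Definition \ref{def:regwl}) and $\pi^{1,2}(\tilde{A}_n) = \gorbithone{A_n}$ for all $n \leq \infty$, we get $[W]_{\wloop, \character, t}^{1,2}(\gorbithone{A_n}) = W_{\wloop, \character, t}^{1,2}(\tilde{A}_n)$. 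Now Lemma \ref{lemma:regularized-wilson-loop-weak-sequential-continuity}, applied to the weakly convergent sequence $\tilde{A}_n \weakarrow \tilde{A}_\infty$, yields $W_{\wloop, \character, t}^{1,2}(\tilde{A}_n) \to W_{\wloop, \character, t}^{1,2}(\tilde{A}_\infty)$, i.e.\ $[W]_{\wloop, \character, t}^{1,2}(\gorbithone{A_n}) \to [W]_{\wloop, \character, t}^{1,2}(\gorbithone{A_\infty})$, which is exactly the required sequential continuity.

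I do not expect a serious obstacle here: the analytic content is entirely absorbed into Lemma \ref{lemma:regularized-wilson-loop-weak-sequential-continuity} (which in turn rests on the ZDDS local existence and weak-continuity estimates of Proposition \ref{prop:ZDDS-local-existence} and Lemma \ref{lemma:zdds-solutions-weak-continuity}, together with the continuous dependence of ordinary Wilson loops on the connection, Lemma \ref{lemma:wilson-loop-continuous-dependence-on-conn-general-character}). The only points needing care are that $\weakarrow_q$ is an $L$-convergence so that Corollary \ref{cor:sequential-convergence-metric-space} legitimately reduces topological continuity to sequential continuity, and that the representatives $\tilde A_n$ are precisely what the definition of $\weakarrow_q$ hands us, so no additional gauge-fixing argument is required.
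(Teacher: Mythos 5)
Your proof is correct and is essentially the paper's own argument: the paper's proof is a one-line citation of Corollary \ref{cor:sequential-convergence-metric-space}, the definition of $\weakarrow_q$, and Lemma \ref{lemma:regularized-wilson-loop-weak-sequential-continuity}, which is exactly the reduction you carry out in detail. (One small remark: Corollary \ref{cor:sequential-convergence-metric-space} only needs $\weakarrow_q$ to be a sequential convergence, so the appeal to the $L$-property via Lemma \ref{lemma:quotient-convergence-uniqueness-of-limits} is unnecessary, though harmless.)
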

\begin{proof}
This follows directly by Corollary \ref{cor:sequential-convergence-metric-space}, the definition of the sequential convergence $\weakarrow_q$, and Lemma \ref{lemma:regularized-wilson-loop-weak-sequential-continuity}.
\end{proof}

We next begin to build towards the key result about the topology $\toporbitspace$ (Theorem~\ref{thm:countable-index-homeomorphism}). This is where Weak Uhlenbeck Compactness (Theorem \ref{thm:weak-uhlenbeck-compactness}) enters into the picture.

\begin{definition}
For $\cutoff \geq 0$, define the set 
\[ \cutoffspace{\cutoff} := \{\gorbithone{A} \in \honeorbitspace: \sym^{1,2}(\gorbithone{A}) \leq \cutoff\} \]
to be the set of gauge orbits with Yang--Mills action at most $\cutoff$. Let $\toporbitspace[\cutoff]$ denote the subspace topology on $\cutoffspace{\cutoff}$ induced by $(\honeorbitspace, \toporbitspace)$. 
\end{definition}

The next lemma shows that $\cutoffspace{\cutoff}$ is closed and sequentially compact.

\begin{lemma}\label{lemma:sequentially-compact-cutoff-space}
For any $\cutoff \geq 0$, the set $\cutoffspace{\cutoff}$ is closed in $(\honeorbitspace, \toporbitspace)$. Additionally, for any sequence $\{\gorbithone{A_n}\}_{n \geq 1} \sse \cutoffspace{\cutoff}$, there exists $\gorbithone{A} \in \cutoffspace{\cutoff}$ and a subsequence $\{\gorbithone{A_{n_k}}\}_{k \geq 1}$ such that $\gorbithone{A_{n_k}} \weakarrow_q \gorbithone{A}$. Consequently, $(\cutoffspace{\cutoff}, \toporbitspace[\cutoff])$ is sequentially compact.
\end{lemma}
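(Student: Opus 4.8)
The plan is to reduce both assertions to a single analytic fact: \emph{the Yang--Mills action is lower semicontinuous along weakly convergent sequences in $\honeconnspace$}, i.e. if $A_n \weakarrow A_\infty$ in $\honeconnspace$ then $\sym^{1,2}(A_\infty) \leq \liminf_n \sym^{1,2}(A_n)$. First I would prove this. Since $d : \honeconnspace \ra L^2(\threetorus, \lalg^{3\times 3})$ is bounded and linear, $dA_n \weakarrow dA_\infty$ weakly in $L^2$. Next, by the compact Sobolev embedding $H^1 \hookrightarrow L^4$ (Theorem \ref{sobolevthm} and Remark \ref{soboloverem}), $A_n \ra A_\infty$ strongly in $L^4$; combined with $\sup_n \|A_n\|_4 < \infty$ (which follows from $\sup_n\|A_n\|_{H^1}<\infty$, Lemma \ref{lemma:weak-convergence-and-h1-norm}, and Sobolev embedding) and the bilinear bound $\|[B \wedge C]\|_2 \leq \const \|B\|_4 \|C\|_4$, a telescoping estimate gives $[A_n \wedge A_n] \ra [A_\infty \wedge A_\infty]$ strongly in $L^2$. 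Hence $\curv{A_n} = dA_n + \tfrac12 [A_n \wedge A_n] \weakarrow \curv{A_\infty}$ weakly in $L^2$, and weak lower semicontinuity of the $L^2$ norm yields
\[ \sym^{1,2}(A_\infty) = \|\curv{A_\infty}\|_2^2 \leq \liminf_n \|\curv{A_n}\|_2^2 = \liminf_n \sym^{1,2}(A_n). \]

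With this in hand, closedness is quick. By Lemma \ref{lemma:sequential-convergence-topology-properties} it suffices to check that $\cutoffspace{\cutoff}$ is sequentially closed with respect to $\weakarrow_q$. So suppose $\{\gorbithone{A_n}\}_{n \geq 1} \sse \cutoffspace{\cutoff}$ and $\gorbithone{A_n} \weakarrow_q \gorbithone{A_\infty}$. By the definition of $\weakarrow_q$ there are representatives $\tilde A_n \in \gorbithone{A_n}$ for all $n \leq \infty$ with $\tilde A_n \weakarrow \tilde A_\infty$. Using gauge invariance of $\sym^{1,2}$ together with the lower semicontinuity above,
\[ \sym^{1,2}(\gorbithone{A_\infty}) = \sym^{1,2}(\tilde A_\infty) \leq \liminf_n \sym^{1,2}(\tilde A_n) \leq \cutoff, \]
so $\gorbithone{A_\infty} \in \cutoffspace{\cutoff}$, proving closedness.

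For the subsequence extraction, let $\{\gorbithone{A_n}\}_{n \geq 1} \sse \cutoffspace{\cutoff}$; then $\sup_n \sym^{1,2}(A_n) \leq \cutoff < \infty$, so Weak Uhlenbeck Compactness (Theorem \ref{thm:weak-uhlenbeck-compactness}) provides a subsequence $\{A_{n_k}\}$ and gauge transformations $\gt_{n_k} \in \htwogaugetransf$ such that $\tilde A_{n_k} := \gaction{A_{n_k}}{\gt_{n_k}} \weakarrow A_\infty$ in $\honeconnspace$. By the definition of $\weakarrow_q$ (using $\gorbithone{\tilde A_{n_k}} = \gorbithone{A_{n_k}}$) we get $\gorbithone{A_{n_k}} \weakarrow_q \gorbithone{A_\infty}$, and by gauge invariance plus lower semicontinuity $\sym^{1,2}(A_\infty) \leq \liminf_k \sym^{1,2}(A_{n_k}) \leq \cutoff$, so $\gorbithone{A_\infty} \in \cutoffspace{\cutoff}$. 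Finally, since $\weakarrow_q$-convergence implies convergence in $(\honeorbitspace, \toporbitspace)$, and since all the $\gorbithone{A_{n_k}}$ together with the limit lie in $\cutoffspace{\cutoff}$, the convergence $\gorbithone{A_{n_k}} \ra \gorbithone{A_\infty}$ also holds in the subspace topology $\toporbitspace[\cutoff]$; hence $(\cutoffspace{\cutoff}, \toporbitspace[\cutoff])$ is sequentially compact.

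The main obstacle is the weak lower semicontinuity of $\sym^{1,2}$: because of the quadratic term $[A \wedge A]$ the action is not convex in $A$, so this does not follow from a soft convexity argument, and genuinely requires upgrading weak $H^1$ convergence to strong $L^4$ convergence (via the compact embedding) to control the nonlinearity, while retaining only weak $L^2$ convergence for the linear part $dA$. Everything else is bookkeeping with the definition of $\weakarrow_q$ and the sequential topology $\toporbitspace$.
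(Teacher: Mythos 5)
Your proof is correct and follows the same route as the paper: Weak Uhlenbeck Compactness gives the subsequence, and closedness reduces to showing $\sym^{1,2}$ is sequentially lower semicontinuous with respect to $\weakarrow_q$, which you establish exactly as the paper does — splitting $\curv{A_n} = dA_n + \tfrac12[A_n \wedge A_n]$, using weak $L^2$ convergence of the linear part and strong $L^4$ (hence strong $L^2$ of the wedge) convergence of the nonlinear part via the compact Sobolev embedding, and then invoking weak lower semicontinuity of the $L^2$ norm. The only cosmetic difference is that you factor out the lower semicontinuity of $\sym^{1,2}$ as a standalone lemma and spell out the telescoping bound for $[A_n\wedge A_n]$, which the paper folds into a single invocation of Remark \ref{soboloverem}.
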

\begin{proof}
First, note that by Weak Uhlenbeck Compactness (Theorem \ref{thm:weak-uhlenbeck-compactness}), for any sequence $\{\gorbithone{A_n}\}_{n \geq 1} \sse \cutoffspace{\cutoff}$, there exists $\gorbithone{A} \in \honeorbitspace$ and a subsequence $\{\gorbithone{A_{n_k}}\}_{k \geq 1}$ such that $\gorbithone{A_{n_k}} \weakarrow_q \gorbithone{A}$. If $\cutoffspace{\cutoff}$ is indeed closed, then $\gorbithone{A} \in \cutoffspace{\cutoff}$.

Thus it remains to show that $\cutoffspace{\cutoff}$ is closed. Let $\{\gorbithone{\tilde{A}_n}\}_{n \geq 1} \sse \cutoffspace{\cutoff}$, $\gorbithone{\tilde{A}_n} \weakarrow_q \gorbithone{\tilde{A}} \in \honeorbitspace$. Then there exists $A_n \in \gorbithone{\tilde{A}_n}$ for all $n$, and $A \in \gorbithone{\tilde{A}}$, such that $A_n \weakarrow A$. We know that $\sym^{1,2}(A_n) \leq \cutoff$ for all $n$, and if we can show that also $\sym^{1,2}(A) \leq \cutoff$, then that would mean that $\cutoffspace{\cutoff}$ is sequentially closed with respect to $\weakarrow_q$, and thus closed (by Lemma \ref{lemma:sequential-convergence-topology-properties}). We claim that
\[ \curv{A_n} = dA_n + \frac{1}{2} [A_n \wedge A_n] \ra dA + \frac{1}{2} [A \wedge A] = \curv{A} ~~\text{ weakly in $L^2(\threetorus, \lalg^3)$}. \]
Given this claim, the desired result follows, since the norm of a Hilbert space is sequentially lower semicontinuous with respect to the weak topology (see, e.g., \cite[Corollary 21.9]{Jost2005}), which gives
\[ \sym^{1,2}(A) = \|\curv{A}\|_2^2 \leq \liminf_n \|\curv{A_n}\|_2^2 = \liminf_n \sym^{1,2}(A_n) \leq \cutoff.\]
To show the claim, first note that by the Sobolev embedding $H^1 \hookrightarrow L^4$ (Theorem~\ref{sobolevthm} and Remark \ref{soboloverem}), we have that $A_n \ra A$ in $L^4$, which implies $[A_n \wedge A_n] \ra [A \wedge A]$ in $L^2$. Next, note that since $A_n \weakarrow A$, we have that $\ptl_i A_n \ra \ptl_i A$ weakly in $L^2$ for each $1 \leq i \leq 3$, which implies $dA_n \ra dA$ weakly in $L^2$. Combining the two results yields the claim.
\end{proof}

\begin{definition}\label{def:countable-index-homeomorphism}
Let $\indexset_0$ be as in Lemma \ref{lemma:countable-index-set-suffices}. Define the map $\Psi_0 : \honeorbitspace \ra \C^{\indexset_0}$ by $\gorbithone{A} \mapsto ([W]_{\wloop, \character, t}^{1, 2}(\gorbithone{A}), (\wloop, \character, t) \in \indexset_0)$.
\end{definition}

The next theorem is the key result about the topology $\toporbitspace$. 

\begin{theorem}\label{thm:countable-index-homeomorphism}
The map $\Psi_0 : (\honeorbitspace, \toporbitspace) \ra \C^{\indexset_0}$ is continuous and one-to-one. Consequently, $(\honeorbitspace, \toporbitspace)$ is Hausdorff. Moreover, for any closed subset $F$ of $(\honeorbitspace, \toporbitspace)$ and any $\cutoff \geq 0$, $\Psi_0(F \cap \cutoffspace{\cutoff})$ is a compact subset of $\C^{\indexset_0}$. Finally, $(\cutoffspace{\cutoff}, \toporbitspace[\cutoff])$ is a compact metrizable space for each $\cutoff \geq 0$.
\end{theorem}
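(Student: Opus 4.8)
The plan is to verify the four assertions in turn, leaning on three inputs already in hand: that each $[W]_{\wloop,\character,t}^{1,2}$ is continuous on $(\honeorbitspace,\toporbitspace)$ (Lemma~\ref{lemma:regwlorbit-continuous}), that the countable subfamily indexed by $\indexset_0$ separates gauge orbits (Lemma~\ref{lemma:countable-index-set-suffices}), and that each $\cutoffspace{\cutoff}$ is closed and sequentially compact, via Weak Uhlenbeck Compactness (Lemma~\ref{lemma:sequentially-compact-cutoff-space}). The one recurring subtlety is that $(\honeorbitspace,\toporbitspace)$ is not known to be a sequential space, so every sequential argument must be routed through Lemma~\ref{lemma:sequential-convergence-topology-properties} (a set is closed iff sequentially closed for $\weakarrow_q$) and Corollary~\ref{cor:sequential-convergence-metric-space} (a map into a metric space is continuous if sequentially continuous for $\weakarrow_q$). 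Note also that, since $\indexset_0$ is countable, $\C^{\indexset_0}$ is metrizable, hence Hausdorff, and sequential compactness there is equivalent to compactness; we will use this freely.

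First I would handle continuity and injectivity of $\Psi_0$ together with the Hausdorff property. Continuity is immediate, since a map into the product $\C^{\indexset_0}$ is continuous iff each coordinate is, and the coordinates of $\Psi_0$ are precisely the $[W]_{\wloop,\character,t}^{1,2}$ with $(\wloop,\character,t)\in\indexset_0$, continuous by Lemma~\ref{lemma:regwlorbit-continuous}. Injectivity is exactly Lemma~\ref{lemma:countable-index-set-suffices}: $\Psi_0(\gorbithone{A_1})=\Psi_0(\gorbithone{A_2})$ says $[W]_{\wloop,\character,t}^{1,2}(\gorbithone{A_1})=[W]_{\wloop,\character,t}^{1,2}(\gorbithone{A_2})$ for all $(\wloop,\character,t)\in\indexset_0$, whence $\gorbithone{A_1}=\gorbithone{A_2}$. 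A continuous injection into the Hausdorff space $\C^{\indexset_0}$ separates points by pulling back separating open sets, so $(\honeorbitspace,\toporbitspace)$ is Hausdorff.

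Next I would show $\Psi_0(F\cap\cutoffspace{\cutoff})$ is compact for $F$ closed in $(\honeorbitspace,\toporbitspace)$. The set $F\cap\cutoffspace{\cutoff}$ is sequentially compact for $\toporbitspace$: given a sequence in it, Lemma~\ref{lemma:sequentially-compact-cutoff-space} yields a subsequence $\weakarrow_q$-converging to a point of $\cutoffspace{\cutoff}$, and since $F$ is closed it is sequentially closed for $\weakarrow_q$ (Lemma~\ref{lemma:sequential-convergence-topology-properties}), so the limit lies in $F\cap\cutoffspace{\cutoff}$; finally $\weakarrow_q$-convergence implies convergence in $\toporbitspace$. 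The continuous image of a sequentially compact set is sequentially compact, so $\Psi_0(F\cap\cutoffspace{\cutoff})$ is sequentially compact in $\C^{\indexset_0}$, hence compact (metrizability), and in particular closed.

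Finally, for $(\cutoffspace{\cutoff},\toporbitspace[\cutoff])$: because $\cutoffspace{\cutoff}$ is closed in $(\honeorbitspace,\toporbitspace)$, any subset closed in the subspace $\cutoffspace{\cutoff}$ is already closed in $(\honeorbitspace,\toporbitspace)$, so by the previous step $\Psi_0$ sends closed subsets of $\cutoffspace{\cutoff}$ to compact, hence closed, subsets of $\C^{\indexset_0}$. Thus $\Psi_0|_{\cutoffspace{\cutoff}}$ is a continuous closed injection, hence a homeomorphism onto its image $\Psi_0(\cutoffspace{\cutoff})$, which (apply the previous step with $F=\honeorbitspace$) is a compact subset of the metrizable space $\C^{\indexset_0}$; therefore $(\cutoffspace{\cutoff},\toporbitspace[\cutoff])$ is compact and metrizable. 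I expect no serious obstacle here: all the analytic substance has been packaged into Lemmas~\ref{lemma:countable-index-set-suffices} and~\ref{lemma:sequentially-compact-cutoff-space}, and the remaining work is the careful bookkeeping of passing between the sequential notions ($\weakarrow_q$) and the topological ones, which is exactly what Lemma~\ref{lemma:sequential-convergence-topology-properties} and Corollary~\ref{cor:sequential-convergence-metric-space} are for.
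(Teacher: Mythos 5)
Your proof is correct and follows essentially the same route as the paper's: coordinate-wise continuity plus Lemma~\ref{lemma:countable-index-set-suffices} give the continuous injection and hence the Hausdorff property; sequential compactness of $F\cap\cutoffspace{\cutoff}$ (via Lemma~\ref{lemma:sequentially-compact-cutoff-space} and sequential closedness from Lemma~\ref{lemma:sequential-convergence-topology-properties}) pushed through $\Psi_0$ into the metrizable space $\C^{\indexset_0}$ gives compactness of the image; and the closed-map argument on $\cutoffspace{\cutoff}$ yields the homeomorphism and hence compact metrizability. The only cosmetic difference is that you compress the compactness step into "continuous image of sequentially compact is sequentially compact, hence compact in a metrizable target," whereas the paper spells out the subsequence extraction explicitly.
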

\begin{proof}
The fact that $\Psi_0$ is continuous follows because each individual coordinate function $\gorbithone{A} \mapsto [W]_{\wloop, \character, t}^{1, 2}(\gorbithone{A})$ is continuous (by Lemma \ref{lemma:regwlorbit-continuous}). Lemma \ref{lemma:countable-index-set-suffices} implies that $\Psi_0$ is one-to-one. Thus, since $\Psi_0$ is a continuous one-to-one map from $(\honeorbitspace, \toporbitspace)$ into a Hausdorff space, it follows that $(\honeorbitspace, \toporbitspace)$ is Hausdorff.

Now fix a closed subset $F$ of $(\honeorbitspace, \toporbitspace)$ and $\cutoff \geq 0$. Since $\cutoffspace{\cutoff}$ is sequentially compact (by Lemma \ref{lemma:sequentially-compact-cutoff-space}) and $F$ is closed, we have that $F \cap \cutoffspace{\cutoff}$ is also sequentially compact. Let $\{x_n\}_{n \geq 1} \sse \Psi_0(F \cap \cutoffspace{\cutoff})$. Since $\C^{\indexset_0}$ is a metric space (here we use that $\indexset_0$ is countable), to show that $\Psi_0(F \cap \cutoffspace{\cutoff})$ is compact, it suffices to show that there is a subsequence $\{x_{n_k}\}_{k \geq 1}$ and a point $x \in \Psi_0(F \cap \cutoffspace{\cutoff})$ such that $x_{n_k} \ra x$. Towards this end, note that for each $n \geq 1$, there is some $\gorbithone{A_n} \in F \cap \cutoffspace{\cutoff}$ such that $x_n = \Psi_0(\gorbithone{A_n})$. By the sequential compactness of $F \cap \cutoffspace{\cutoff}$, there is a subsequence $\{\gorbithone{A_{n_k}}\}_{k \geq 1}$ and $\gorbithone{A} \in F \cap \cutoffspace{\cutoff}$ such that $\gorbithone{A_{n_k}} \ra \gorbithone{A}$. Since $\Psi_0$ is continuous, we obtain
\[\Psi_0(\gorbithone{A}) = \lim_{k \toinf} \Psi_0(\gorbithone{A_{n_k}}) = \lim_{k \toinf} x_{n_k}. \]
We can thus set $x = \Psi_0(\gorbithone{A})$.

It remains to show the final claim. Let $\cutoff \geq 0$. Note that $\Psi_0$ is a continuous bijection between $(\cutoffspace{\cutoff}, \toporbitspace[\cutoff])$ and the metric space $\Psi_0(\cutoffspace{\cutoff}) \sse \C^{\indexset_0}$. Thus it suffices to show that $(\Psi_0)^{-1} : \Psi_0(\cutoffspace{\cutoff}) \ra (\cutoffspace{\cutoff}, \toporbitspace[\cutoff])$ is continuous. (The fact that $(\cutoffspace{\cutoff}, \toporbitspace[\cutoff])$ is compact will then automatically follow, because we would then have that it is homeomorphic to a metric space, and by Lemma \ref{lemma:sequentially-compact-cutoff-space} it is sequentially compact.) This follows from the fact that $\Psi_0$ maps closed subsets of $(\cutoffspace{\cutoff}, \toporbitspace[\cutoff])$ to closed subsets of $\Psi_0(\cutoffspace{\cutoff})$, which itself follows from the fact that for any closed subset $F$ of $(\honeorbitspace, \toporbitspace)$, the image $\Psi_0(F \cap \cutoffspace{\cutoff})$ is compact.
\end{proof}

Before we state the following corollary, recall that a Lusin space is a topological space that is homeomorphic to a Borel subset of a compact metric space --- see \cite[(82.1) Definition]{RW1994}).

\begin{cor}\label{cor:orbitspace-lusin}
The space $(\honeorbitspace, \toporbitspace)$ is a Lusin space.
\end{cor}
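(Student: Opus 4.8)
The plan is to realize $(\honeorbitspace, \toporbitspace)$ as a Borel subset of a concrete compact metric space, using the countable family of regularized Wilson loop observables from Definition \ref{def:countable-index-homeomorphism}. First I would make the target compact: since every character $\character$ is a trace of a unitary representation, $|[W]_{\wloop, \character, t}^{1,2}(\gorbithone{A})| \leq \character(\groupid)$ for every gauge orbit, so the map $\Psi_0$ of Definition \ref{def:countable-index-homeomorphism} in fact takes values in
\[ K := \prod_{(\wloop, \character, t) \in \indexset_0} \{z \in \C : |z| \leq \character(\groupid)\}, \]
which, $\indexset_0$ being countable, is a compact metric space. By Theorem \ref{thm:countable-index-homeomorphism}, $\Psi_0 : (\honeorbitspace, \toporbitspace) \to K$ is continuous and one-to-one, so the statement reduces to showing (i) $\Psi_0(\honeorbitspace)$ is a Borel subset of $K$, and (ii) $\Psi_0$ is a homeomorphism onto its image.

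For (i): by Lemma \ref{lemma:curvature-bounded-by-h1-norm} every element of $\honeconnspace$, hence every gauge orbit, has finite Yang--Mills action, so $\honeorbitspace = \bigcup_{m \in \N} \cutoffspace{m}$ and therefore $\Psi_0(\honeorbitspace) = \bigcup_{m \in \N} \Psi_0(\cutoffspace{m})$. Applying Theorem \ref{thm:countable-index-homeomorphism} with the closed set $F = \honeorbitspace$ (and $\cutoff = m$), each $\Psi_0(\cutoffspace{m})$ is compact, hence closed in $K$; so $\Psi_0(\honeorbitspace)$ is $F_\sigma$, in particular Borel. The same argument applied to an arbitrary closed $F$ shows $\Psi_0(F) = \bigcup_m \Psi_0(F \cap \cutoffspace{m})$ is $F_\sigma$, so $\Psi_0$ carries closed sets to Borel sets; being injective, it then carries every Borel set to a Borel set, which already identifies $(\honeorbitspace, \mc{B}(\honeorbitspace))$ with the Borel subspace $\Psi_0(\honeorbitspace)$ of $K$ at the level of measurable spaces.

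For (ii), the remaining and most delicate point, I would first record that $\toporbitspace$ is the inductive-limit (final) topology determined by the compact metrizable subspaces $(\cutoffspace{\cutoff}, \toporbitspace[\cutoff])$, i.e. a set is $\toporbitspace$-closed iff its intersection with each $\cutoffspace{\cutoff}$ is closed in $\toporbitspace[\cutoff]$. This follows from Lemma \ref{lemma:sequential-convergence-topology-properties} (closed $=$ sequentially closed for $\weakarrow_q$), the observation — obtained from Lemmas \ref{lemma:weak-convergence-and-h1-norm} and \ref{lemma:curvature-bounded-by-h1-norm} — that any $\weakarrow_q$-convergent sequence together with its limit lies in a single $\cutoffspace{\cutoff}$, and Weak Uhlenbeck Compactness (Theorem \ref{thm:weak-uhlenbeck-compactness}) together with metrizability of $\toporbitspace[\cutoff]$, which lets one compare $\weakarrow_q$-sequential closedness inside $\cutoffspace{\cutoff}$ with ordinary closedness there. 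Granting this, for closed $F$ the set $F \cap \cutoffspace{\cutoff}$ is closed in $\toporbitspace[\cutoff]$, and since $\Psi_0$ restricts (by Theorem \ref{thm:countable-index-homeomorphism}) to a homeomorphism of the compact metric space $(\cutoffspace{\cutoff}, \toporbitspace[\cutoff])$ onto $\Psi_0(\cutoffspace{\cutoff})$, the image $\Psi_0(F \cap \cutoffspace{\cutoff})$ is closed in $\Psi_0(\cutoffspace{\cutoff})$; transporting the inductive-limit description through $\Psi_0$ then shows $\Psi_0(F)$ is relatively closed in $\Psi_0(\honeorbitspace)$, so $\Psi_0^{-1}$ is continuous. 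I expect this final transport to be the main obstacle: an increasing union of compact (hence closed) sets need not be closed, so one must verify that the subspace topology $\Psi_0(\honeorbitspace)$ inherits from $K$ is not strictly coarser than the topology that makes $\Psi_0$ a homeomorphism — it is exactly the identification of $\toporbitspace$ with the inductive limit of the $\toporbitspace[\cutoff]$, matched against the fact that $\Psi_0$ already embeds each $\cutoffspace{\cutoff}$, that pins it down. Combining (i) and (ii) yields that $(\honeorbitspace, \toporbitspace)$ is homeomorphic to a Borel subset of the compact metric space $K$; even if one only extracts the conclusion of (i), this suffices for the measure-theoretic uses of the Lusin property in the sequel.
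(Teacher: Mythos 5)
The paper's own proof is a one-liner: $(\honeorbitspace, \toporbitspace)$ is Hausdorff and is the countable union of the compact metrizable subspaces $(\cutoffspace{m}, \toporbitspace[m])$, so by \cite[Chapter II, Corollary 2 of Theorem 5]{Schwartz1973} it is Lusin. You instead try to exhibit an explicit embedding onto a Borel subset of the compact metric space $K$. Your step (i) — that $\Psi_0(\honeorbitspace) = \bigcup_m \Psi_0(\cutoffspace{m})$ is an $F_\sigma$, hence Borel, subset of $K$, and more generally that $\Psi_0$ carries closed sets to Borel sets — is correct, and it correctly reproduces the ingredients of Lemma \ref{lemma:orbit-space-regwl-determine-law}. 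But step (ii) has a genuine gap, and I do not believe it can be closed by the route you sketch.

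The gap is exactly where you flag it, and your sentence about ``pinning it down'' does not actually resolve it. You correctly observe that $\toporbitspace$ is the final (inductive-limit) topology determined by the $(\cutoffspace{\cutoff}, \toporbitspace[\cutoff])$'s, and that $\Psi_0$ restricts to a homeomorphism on each $\cutoffspace{\cutoff}$. Transporting this through $\Psi_0$ shows that each $\Psi_0(F) \cap \Psi_0(\cutoffspace{\cutoff})$ is closed in $\Psi_0(\cutoffspace{\cutoff})$, i.e.\ that $\Psi_0(F)$ is closed in the \emph{inductive-limit} topology on $\Psi_0(\honeorbitspace)$ generated by the compacta $\Psi_0(\cutoffspace{\cutoff})$. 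But what you need is that $\Psi_0(F)$ is relatively closed for the \emph{subspace} topology that $\Psi_0(\honeorbitspace)$ inherits from $K$, and the subspace topology is in general strictly coarser than the inductive-limit one on a countable increasing union of compacta. Nothing in Theorem \ref{thm:countable-index-homeomorphism} asserts that $\Psi_0$ is a global embedding; it only gives continuity, injectivity, and that $\Psi_0$ restricts to homeomorphisms on each $\cutoffspace{\cutoff}$. Equivalently, $\Psi_0$ being a homeomorphism onto its image would force $\toporbitspace$ to coincide with the initial topology generated by the countable family $([W]_{\wloop,\character,t}^{1,2})_{(\wloop,\character,t)\in\indexset_0}$; the paper carefully avoids claiming this, and Lemma \ref{lemma:orbit-space-borel-sigma-algebras-coincide} only establishes that the generated $\sigma$-algebras agree, not the topologies. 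Concretely, nothing rules out a sequence $\gorbithone{A_n}$ with $\sym^{1,2}(\gorbithone{A_n}) \to \infty$ (hence not $\toporbitspace$-convergent, since $\toporbitspace$-convergent sequences have bounded action by Uhlenbeck compactness and Lemma \ref{lemma:orbit-space-two-notions-of-convergence}) but with $\Psi_0(\gorbithone{A_n})$ converging in $K$; this is precisely the kind of ``smearing'' the Yang--Mills heat flow is designed to produce. Since ``Lusin'' is a topological property (``homeomorphic to a Borel subset of a compact metric space''), step (i) alone does not yield it, and your fallback remark that (i) suffices ``for the measure-theoretic uses'' concedes the proof of the stated corollary rather than giving it. The Schwartz union theorem is the tool that circumvents all of this: it manufactures a Lusin structure on the countable union without requiring any single globally defined embedding such as $\Psi_0$ to do the job.
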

\begin{proof}
By Theorem \ref{thm:countable-index-homeomorphism}, $(\honeorbitspace, \toporbitspace)$ is a Hausdorff space, and also, it is a countable union of compact metrizable subspaces. The desired result now follows directly by \cite[Chapter II, Corollary 2 of Theorem 5]{Schwartz1973}, which says that the countable union of Lusin subspaces of a Hausdorff space is a Lusin space.
\end{proof}

The following definition and corollary will be convenient for proofs later on.

\begin{definition}\label{def:regwl-topology-honeorbitspace}
Let $\toporbitspace^w$ be the smallest topology on $\honeorbitspace$ which makes each of the regularized Wilson loop observables $[W]_{\wloop, \character, t}^{1,2}$ continuous. For $\cutoff \geq 0$, let $\toporbitspace[\cutoff]^w$ be the subspace topology on $\cutoffspace{\cutoff}$ coming from $(\honeorbitspace, \toporbitspace^w)$. 
\end{definition}

\begin{cor}\label{cor:regwl-honeorbitspace-topology}
We have that $\toporbitspace^w \sse \toporbitspace$. Also, for all $\cutoff \geq 0$, we have that $\toporbitspace[\cutoff] = \toporbitspace[\cutoff]^w$. Finally, for all $\cutoff \geq 0$, we have that $\cutoffspace{\cutoff}$ is a closed subset of $(\honeorbitspace, \toporbitspace^w)$.
\end{cor}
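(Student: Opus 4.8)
The plan is to deduce all three assertions from soft point-set topology, the only substantive input being the compactness established in Theorem~\ref{thm:countable-index-homeomorphism}. First, the inclusion $\toporbitspace^w \sse \toporbitspace$ is immediate: by Lemma~\ref{lemma:regwlorbit-continuous} every $[W]_{\wloop, \character, t}^{1,2}$ is continuous on $(\honeorbitspace, \toporbitspace)$, so $\toporbitspace$ lies in the family of topologies making all these maps continuous, and minimality of $\toporbitspace^w$ gives the inclusion. Restricting to subspace topologies on $\cutoffspace{\cutoff}$ yields $\toporbitspace[\cutoff]^w \sse \toporbitspace[\cutoff]$, which is one half of the second assertion; the content is the reverse inclusion, together with the third assertion.

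For both of these, I would establish the single observation that \emph{every $V \sse \cutoffspace{\cutoff}$ that is closed in $(\cutoffspace{\cutoff}, \toporbitspace[\cutoff])$ is closed in $(\honeorbitspace, \toporbitspace^w)$.} The argument runs as follows. By Theorem~\ref{thm:countable-index-homeomorphism}, $(\cutoffspace{\cutoff}, \toporbitspace[\cutoff])$ is compact, so such a $V$ is compact. The map $\Psi_0 : \honeorbitspace \to \C^{\indexset_0}$ of Definition~\ref{def:countable-index-homeomorphism} has regularized Wilson loop observables for its coordinates, hence is continuous on $(\honeorbitspace, \toporbitspace)$ by Lemma~\ref{lemma:regwlorbit-continuous}, hence also on $(\cutoffspace{\cutoff}, \toporbitspace[\cutoff])$; therefore $\Psi_0(V)$ is compact, and hence closed, in the metrizable space $\C^{\indexset_0}$ (using that $\indexset_0$ is countable). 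Since $\Psi_0$ is injective on all of $\honeorbitspace$ by Lemma~\ref{lemma:countable-index-set-suffices}, we have $\Psi_0^{-1}(\Psi_0(V)) = V$; and since the coordinates of $\Psi_0$ are among the maps defining $\toporbitspace^w$, the map $\Psi_0 : (\honeorbitspace, \toporbitspace^w) \to \C^{\indexset_0}$ is continuous, so $V = \Psi_0^{-1}(\Psi_0(V))$ is closed in $(\honeorbitspace, \toporbitspace^w)$.

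Granting the observation, I would conclude as follows. Taking $V = \cutoffspace{\cutoff}$ (trivially closed in $(\cutoffspace{\cutoff}, \toporbitspace[\cutoff])$) gives the third assertion. For the second, any set closed in $(\cutoffspace{\cutoff}, \toporbitspace[\cutoff])$ is, by the observation, closed in $(\honeorbitspace, \toporbitspace^w)$, hence closed in $(\cutoffspace{\cutoff}, \toporbitspace[\cutoff]^w)$; thus $\toporbitspace[\cutoff] \sse \toporbitspace[\cutoff]^w$, and together with the first paragraph this gives $\toporbitspace[\cutoff] = \toporbitspace[\cutoff]^w$.

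The proof involves no estimates. Its one genuine ingredient is the compactness of $(\cutoffspace{\cutoff}, \toporbitspace[\cutoff])$ coming from Theorem~\ref{thm:countable-index-homeomorphism}, ultimately a consequence of Weak Uhlenbeck Compactness; without it, $\toporbitspace[\cutoff]$ could a priori be strictly finer than the topology generated by the regularized Wilson loop observables and the corollary would fail. The only points needing care are that $\Psi_0$ is used as a map continuous for \emph{both} topologies---$\toporbitspace$ to make $\Psi_0(V)$ closed in $\C^{\indexset_0}$, and $\toporbitspace^w$ to pull closed sets back---and that injectivity is invoked on all of $\honeorbitspace$, so that $\Psi_0^{-1}(\Psi_0(V))$ equals $V$ exactly rather than merely containing it.
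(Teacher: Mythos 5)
Your proof is correct, and it takes a genuinely different---and somewhat more economical---route than the paper's. The paper's proof essentially re-runs the sequential-compactness argument from Theorem~\ref{thm:countable-index-homeomorphism} a second time for the topology $\toporbitspace^w$: it shows afresh that for any closed $F$ in $(\honeorbitspace, \toporbitspace^w)$ the image $\Psi_0(F \cap \cutoffspace{\cutoff})$ is compact (using sequential compactness of $(\cutoffspace{\cutoff}, \toporbitspace^w)$, which follows from Lemma~\ref{lemma:sequentially-compact-cutoff-space} and $\toporbitspace^w \sse \toporbitspace$), concludes that $\Psi_0 : (\cutoffspace{\cutoff}, \toporbitspace[\cutoff]^w) \to \Psi_0(\cutoffspace{\cutoff})$ is a homeomorphism, combines this with the corresponding homeomorphism for $\toporbitspace[\cutoff]$ from Theorem~\ref{thm:countable-index-homeomorphism} to get the equality of subspace topologies, and finally gets closedness of $\cutoffspace{\cutoff}$ by noting it is compact in the Hausdorff space $(\honeorbitspace, \toporbitspace^w)$. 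You instead package the work into the single observation that closed sets of $(\cutoffspace{\cutoff}, \toporbitspace[\cutoff])$ are closed in $(\honeorbitspace, \toporbitspace^w)$, proved by pushing forward under $\Psi_0$ (compactness of the image in the metrizable $\C^{\indexset_0}$), pulling back under the $\toporbitspace^w$-continuous $\Psi_0$, and using global injectivity to recover the set; both the reverse topology inclusion and the closedness of $\cutoffspace{\cutoff}$ then fall out of that one observation. Your route makes fuller use of the compactness already secured in Theorem~\ref{thm:countable-index-homeomorphism} and avoids repeating the sequential argument; the paper's route is more self-contained in that it spells out the second homeomorphism rather than deducing closedness directly. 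Both are correct; yours is the shorter derivation given what is already in hand.
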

\begin{proof}
The fact that $\toporbitspace^w \sse \toporbitspace$ follows by Lemma \ref{lemma:regwlorbit-continuous} and the definition of $\toporbitspace^w$. Next, let $\Psi_0$ be as in Definition \ref{def:countable-index-homeomorphism}. By the definition of $\toporbitspace^w$, the map $\Psi_0 : (\honeorbitspace, \toporbitspace) \ra \C^{\indexset_0}$ is continuous. Also, $\Psi_0$ is one-to-one by Lemma \ref{lemma:countable-index-set-suffices}. Thus $\Psi_0$ is a continuous injection from $(\honeorbitspace, \toporbitspace)$ into a Hausdorff space, and thus $(\honeorbitspace, \toporbitspace)$ is Hausdorff.

Next, we claim that for all $\cutoff \geq 0$, $\Psi_0 : (\cutoffspace{\cutoff}, \toporbitspace[\cutoff]^w) \ra \Psi_0(\cutoffspace{\cutoff})$ is a homeomorphism. Let us assume this claim for the moment. From the proof of Theorem \ref{thm:countable-index-homeomorphism}, we also have that $\Psi_0 : (\cutoffspace{\cutoff}, \toporbitspace[\cutoff]) \ra \Psi_0(\cutoffspace{\cutoff})$ is a homeomorphism. We thus obtain that the identity map from $(\cutoffspace{\cutoff}, \toporbitspace[\cutoff]^w)$ to $(\cutoffspace{\cutoff}, \toporbitspace[\cutoff])$ is a homeomorphism, and thus $\toporbitspace[\cutoff] = \toporbitspace[\cutoff]^w$. Finally, the fact that $\cutoffspace{\cutoff}$ is a closed subset of $(\honeorbitspace, \toporbitspace^w)$ follows because $\cutoffspace{\cutoff}$ is a compact subset of the Hausdorff space $(\honeorbitspace, \toporbitspace^w)$.

It remains to show the claim. We argue as in the proof of Theorem \ref{thm:countable-index-homeomorphism}. We have already noted that $\Psi_0 : (\honeorbitspace, \toporbitspace^w) \ra \Psi_0(\honeorbitspace)$ is a continuous injection. It remains to show that the inverse $\Psi_0^{-1} : \Psi_0(\honeorbitspace) \ra (\honeorbitspace, \toporbitspace^w)$ is continuous. To show this, it suffices to show that $\Psi_0$ maps closed subsets of $(\cutoffspace{\cutoff}, \toporbitspace^w)$ to closed subsets of $\Psi_0(\cutoffspace{\cutoff})$. To show this, it suffices to show that for any closed subset $F$ of $(\honeorbitspace, \toporbitspace^w)$, the image $\Psi_0(F \cap \cutoffspace{\cutoff})$ is compact. Fix a closed subset $F$ of $(\honeorbitspace, \toporbitspace)$ and $\cutoff \geq 0$. Since $(\cutoffspace{\cutoff}, \toporbitspace^w)$ is sequentially compact (by Lemma \ref{lemma:sequentially-compact-cutoff-space} and the fact that $\toporbitspace^w \sse \toporbitspace$) and $F$ is closed, we have that $F \cap \cutoffspace{\cutoff}$ is also sequentially compact. Let $\{x_n\}_{n \geq 1} \sse \Psi_0(F \cap \cutoffspace{\cutoff})$. Since $\C^{\indexset_0}$ is a metric space (here we use that $\indexset_0$ is countable), to show that $\Psi_0(F \cap \cutoffspace{\cutoff})$ is compact, it suffices to show that there is a subsequence $\{x_{n_k}\}_{k \geq 1}$ and a point $x \in \Psi_0(F \cap \cutoffspace{\cutoff})$ such that $x_{n_k} \ra x$. Towards this end, note that for each $n \geq 1$, there is some $\gorbithone{A_n} \in F \cap \cutoffspace{\cutoff}$ such that $x_n = \Psi_0(\gorbithone{A_n})$. By the sequential compactness of $F \cap \cutoffspace{\cutoff}$, there is a subsequence $\{\gorbithone{A_{n_k}}\}_{k \geq 1}$ and $\gorbithone{A} \in F \cap \cutoffspace{\cutoff}$ such that $\gorbithone{A_{n_k}} \ra \gorbithone{A}$ in $(\cutoffspace{\cutoff}, \toporbitspace^w)$. Since $\Psi_0 : (\cutoffspace{\cutoff}, \toporbitspace^w) \ra \Psi_0(\cutoffspace{\cutoff})$ is continuous, we obtain
\[\Psi_0(\gorbithone{A}) = \lim_{k \toinf} \Psi_0(\gorbithone{A_{n_k}}) = \lim_{k \toinf} x_{n_k}. \]
We can thus set $x = \Psi_0(\gorbithone{A})$. The desired result now follows.
\end{proof}

\subsection{Measure theory on the orbit space}\label{section:orbit-space-measure-theory}

Since $(\honeorbitspace, \toporbitspace)$ is a Lusin space by Corollary \ref{cor:orbitspace-lusin}, we are in the standard setting of probability theory, in that results such as Prokhorov's theorem or the portmanteau lemma hold for probability measures on $\honeorbitspace$ (see, e.g., \cite[Chapter II.6]{RW1994}). We collect in this section some measure theory results which will be needed later in Section \ref{section:nonlinear-dist-space}. First, we review some basic measure theory concepts. 

\begin{definition}\label{def:topological-space-measure-theory}
Let $(S, \ms{T}_S)$ be a topological space. Let $\mc{B}(S) := \sigma(\ms{T}_S)$ be the Borel $\sigma$-algebra of $(S, \ms{T}_S)$. Let $\mu$ be a probability measure on $(S, \mc{B}(S))$. Let $f : (S, \mc{B}(S)) \ra \R$ be a measurable function which is integrable with respect to $\mu$. We write $\mu(f) := \int_S f d\mu$. This notation will most commonly be used for $f : (S, \ms{T}_S) \ra \R$ a bounded continuous function, in which case $f$ is automatically measurable, and moreover, integrable. Given a probability measure $\mu$ on $(S, \mc{B}(S))$ and a sequence of probability measures $\{\mu_n\}_{n \geq 1}$ on $(S, \mc{B}(S))$, we say that $\mu_n$ converges weakly to $\mu$ if for any bounded continuous function $f : (S, \ms{T}_S) \ra \R$, we have that $\mu_n(f) \ra \mu(f)$. 
\end{definition}

\begin{definition}[Pushforwards]
Let $(\Omega, \mc{F})$, $(\Omega', \mc{F}')$ be two measurable spaces. Let $\mu$ be a probability measure on $(\Omega, \mc{F}$). Let $f : (\Omega, \mc{F}) \ra (\Omega', \mc{F}')$ be a measurable function. The pushforward $f_* \mu$ is the probability measure on $(\Omega', \mc{F}')$ defined by $(f_* \mu)(F) := \mu(f^{-1}(F))$ for all $F \in \mc{F}'$. We say that $f_* \mu$ is the pushforward of $\mu$ by $f$.
\end{definition}

\begin{definition}[Random variables]\label{def:random-variables}
Let $(S, \mc{G})$ be a measurable space. An $(S, \mc{G})$-valued random variable $X$ is a measurable function from some underlying measurable space $(\Omega, \mc{F})$ to $(S, \mc{G})$. We will often also write ``$S$-valued random variable" when the $\sigma$-algebra $\mc{G}$ is understood from context. We always assume that the underlying measurable space $(\Omega, \mc{F})$ is equipped with a probability measure $\p$. The law $\mu_X$ of $X$ is the pushforward of $\p$ by $X$, that is, $\mu_X := X_* \p$. In other words, $\mu_X$ is a probability measure on $(S, \mc{G})$ with $\mu_X(B) = \p(X \in B)$ for all $B \in \mc{G}$. 
If two $(S, \mc{G})$-valued random variables $X, Y$ have the same law, then we will write $X \stackrel{d}{=} Y$. 
If $\{X_n\}_{n \leq \infty}$ is a sequence of random variables with corresponding laws $\{\mu_n\}_{n \leq \infty}$, such that $\mu_n$ converges weakly to $\mu_\infty$, then we will say that $X_n$ converges in distribution to $X_\infty$, and we will write $X_n \stackrel{d}{\ra} X_\infty$.
\end{definition}

\begin{definition}[Tight family]
Let $\{\mu_\alpha\}_{\alpha \in \Gamma}$ be a family of probability measures on $(\R, \mc{B}(\R))$, where $\Gamma$ is some arbitrary index set. We say that $\{\mu_\alpha\}_{\alpha \in \Gamma}$ is a tight family if
\[ \lim_{K \toinf} \sup_{\alpha \in \Gamma} \mu_\alpha([-K, K]^c) = 0.\]
\end{definition}

Our measurable spaces will typically be of the form $(S, \mc{B}(S))$, where $(S, \ms{T}_S)$ is a topological space, and $\mc{B}(S) = \sigma(\ms{T}_S)$ is the Borel $\sigma$-algebra. Therefore we will often write ``$S$-valued random variable" instead of ``$(S, \mc{B}(S))$-valued random variable". 

\begin{definition}[Inner regularity]\label{def:inner-regular}
Let $(S, \ms{T}_S)$ be a topological space. Let $\mu$ be a probability measure on $(S, \mc{B}(S))$. We say that $\mu$ is inner regular if for all $B \in \mc{B}(S)$, we have
\[ \mu(B) = \sup\{\mu(K) : K \sse B, \text{ $K$ compact}\}. \]
(By the phrase ``$K$ compact", we mean that $K$ is a compact subset of $(S, \ms{T}_S)$.)
\end{definition}

\begin{definition}
Let $\borelorbitspace := \mc{B}(\honeorbitspace)$ denote the Borel $\sigma$-algebra of the topological space $(\honeorbitspace, \toporbitspace)$. 
\end{definition}

The following result follows directly from Prokhorov's theorem for Lusin spaces (see \cite[(83.10) Theorem]{RW1994}), combined with the fact that the subspaces $\cutoffspace{\cutoff}$ are compact for any $\cutoff \geq 0$ (by Theorem \ref{thm:countable-index-homeomorphism}).
The proof is omitted.

\begin{prop}[Prokhorov's theorem for the orbit space]\label{prop:prokhorov-orbit-space}
Let $\{\mu_n\}_{n \geq 1}$ be a sequence of probability measures on $(\honeorbitspace, \borelorbitspace)$. Suppose that the sequence of pushforwards $\{(\sym^{1,2})_* \mu_n\}_{n \geq 1}$ is a tight family on $(\R, \mc{B}(\R))$. Note this is equivalent to 
\beq\label{eq:tightness-condition-orbit-space} \lim_{\cutoff \uparrow \infty} \sup_{n \geq 1} \mu_n(\cutoffspace{\cutoff}^c) = 0. \eeq
Then there exists a subsequence $\{\mu_{n_k}\}_{k \geq 1}$ and a probability measure $\mu$ on the space $(\honeorbitspace, \borelorbitspace)$ such that $\mu_{n_k}$ converges weakly to $\mu$.
\end{prop}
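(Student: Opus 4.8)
The plan is to reduce the statement to Prokhorov's theorem for Lusin spaces (\cite[(83.10) Theorem]{RW1994}), which is applicable because $(\honeorbitspace, \toporbitspace)$ is a Lusin space by Corollary \ref{cor:orbitspace-lusin}. The only real content is to translate the hypothesis on the pushforwards $(\sym^{1,2})_* \mu_n$ into the form of tightness that Prokhorov's theorem requires, namely that for every $\varepsilon > 0$ there is a compact set $K \sse \honeorbitspace$ with $\sup_{n \geq 1} \mu_n(K^c) < \varepsilon$.

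First I would record the equivalence of the two tightness conditions asserted in the statement. Since $\sym^{1,2} \geq 0$, for every $\cutoff \geq 0$ we have $(\sym^{1,2})^{-1}([-\cutoff, \cutoff]^c) = \{\gorbithone{A} : \sym^{1,2}(\gorbithone{A}) > \cutoff\} = \cutoffspace{\cutoff}^c$, whence $(\sym^{1,2})_*\mu_n([-\cutoff, \cutoff]^c) = \mu_n(\cutoffspace{\cutoff}^c)$. Taking $\sup_n$ and letting $\cutoff \uparrow \infty$ shows that tightness of $\{(\sym^{1,2})_* \mu_n\}_{n \geq 1}$ on $(\R, \mc{B}(\R))$ is exactly condition \eqref{eq:tightness-condition-orbit-space}. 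I would also note at this point that $\sym^{1,2}$ is Borel measurable on $(\honeorbitspace, \toporbitspace)$, since by Lemma \ref{lemma:sequentially-compact-cutoff-space} its sublevel sets $\cutoffspace{\cutoff}$ are closed (and empty for $\cutoff < 0$), so the pushforwards $(\sym^{1,2})_* \mu_n$ are well-defined to begin with.

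Next I would invoke the key topological input. By Theorem \ref{thm:countable-index-homeomorphism}, each $\cutoffspace{\cutoff}$ is a compact subset of $(\honeorbitspace, \toporbitspace)$. Therefore, given $\varepsilon > 0$, condition \eqref{eq:tightness-condition-orbit-space} provides a $\cutoff \geq 0$ with $\sup_{n \geq 1} \mu_n(\cutoffspace{\cutoff}^c) < \varepsilon$, and $\cutoffspace{\cutoff}$ is compact; this is precisely the statement that $\{\mu_n\}_{n \geq 1}$ is a tight family of Borel probability measures on the Lusin space $(\honeorbitspace, \toporbitspace)$. Applying Prokhorov's theorem for Lusin spaces then yields a subsequence $\{\mu_{n_k}\}_{k \geq 1}$ and a Borel probability measure $\mu$ on $(\honeorbitspace, \borelorbitspace)$ with $\mu_{n_k}$ converging weakly to $\mu$, which is the desired conclusion.

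I do not expect a genuine obstacle here: all of the difficulty has been front-loaded into Theorem \ref{thm:countable-index-homeomorphism} (compactness and metrizability of the level sets $\cutoffspace{\cutoff}$, ultimately via Weak Uhlenbeck Compactness) and Corollary \ref{cor:orbitspace-lusin} (the Lusin property of the orbit space). The only point that requires a moment's care is the elementary bookkeeping in the first step — identifying $[-\cutoff, \cutoff]^c$ with $\cutoffspace{\cutoff}^c$ using $\sym^{1,2} \geq 0$, and checking measurability of $\sym^{1,2}$ so that the hypothesis is well-posed.
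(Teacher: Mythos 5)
Your proposal matches the paper's (omitted) argument exactly: deduce tightness in the Prokhorov sense from the compactness of the sublevel sets $\cutoffspace{\cutoff}$ (Theorem \ref{thm:countable-index-homeomorphism}) and the hypothesis \eqref{eq:tightness-condition-orbit-space}, then invoke Prokhorov's theorem for Lusin spaces via Corollary \ref{cor:orbitspace-lusin}. Your preliminary check that $\sym^{1,2}$ is Borel measurable is the content of the paper's Remark \ref{remark:sym-measurable}, so nothing is missing.
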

\begin{remark}\label{remark:sym-measurable}
Note that $\sym^{1,2} : (\honeorbitspace, \borelorbitspace) \ra \R$ is measurable, since for all $\cutoff \geq 0$, we have that $(\sym^{1,2})^{-1}((-\infty, \cutoff]) = \cutoffspace{\cutoff}$, which is a closed subset of $(\honeorbitspace, \toporbitspace)$ (by Lemma \ref{lemma:sequentially-compact-cutoff-space}), and thus, is an element of $\borelorbitspace$.
\end{remark}





Later on, we will need some results about probability measures on spaces which are products of $\honeorbitspace$. We introduce those results now.

\begin{definition}\label{def:orbit-space-products}
Let $I$ be a countable index set. Let $\toporbitspace^I$ be the product topology on $(\honeorbitspace)^I$ coming from the topology $\toporbitspace$ on $\honeorbitspace$. Let $\mc{B}((\honeorbitspace)^I)$ be the Borel $\sigma$-algebra of $((\honeorbitspace)^I, \toporbitspace^I)$. Let $\borelorbitspace^I$ be the product $\sigma$-algebra on $(\honeorbitspace)^I$ coming from the $\sigma$-algebra $\borelorbitspace$ on $\honeorbitspace$.
\end{definition}

The following result comes from the fact that for countable products of Lusin spaces, the product and Borel $\sigma$-algebras coincide. This follows directly from \cite[Remark, Page 105]{Schwartz1973}, and thus the proof is omitted.

\begin{lemma}\label{lemma:borel-equals-product}
Let $I$ be a countable index set. Then $\mc{B}((\honeorbitspace)^I) = \borelorbitspace^I$.
\end{lemma}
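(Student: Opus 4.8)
\textbf{Proof proposal for Lemma \ref{lemma:borel-equals-product}.}

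The plan is to deduce this directly from the corresponding fact for countable products of Lusin spaces, exactly as the preceding sentence in the excerpt suggests. By Corollary \ref{cor:orbitspace-lusin}, the space $(\honeorbitspace, \toporbitspace)$ is a Lusin space, so $(\honeorbitspace)^I$ with $I$ countable is a countable product of Lusin spaces. The inclusion $\borelorbitspace^I \sse \mc{B}((\honeorbitspace)^I)$ is automatic and holds for any countable product of topological spaces: each coordinate projection $\projection_i : ((\honeorbitspace)^I, \toporbitspace^I) \ra (\honeorbitspace, \toporbitspace)$ is continuous, hence Borel measurable, so $\projection_i^{-1}(B) \in \mc{B}((\honeorbitspace)^I)$ for every $B \in \borelorbitspace$; since $\borelorbitspace^I$ is by definition the $\sigma$-algebra generated by all such sets, it is contained in $\mc{B}((\honeorbitspace)^I)$.

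The substantive direction $\mc{B}((\honeorbitspace)^I) \sse \borelorbitspace^I$ is the one that requires the Lusin hypothesis, and for this I would simply invoke \cite[Remark, Page 105]{Schwartz1973}, which states precisely that for a countable family of Lusin spaces, the Borel $\sigma$-algebra of the product topology coincides with the product $\sigma$-algebra. (The key point in that reference is that a Lusin space is separable and metrizable up to a homeomorphism onto a Borel subset of a compact metric space, and for such spaces the product topology has a countable base generated by coordinate-wise open sets, so every open set in the product is a countable union of ``rectangles'' with open sides, hence lies in the product $\sigma$-algebra.) Combining the two inclusions yields $\mc{B}((\honeorbitspace)^I) = \borelorbitspace^I$.

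The only thing one must be careful about — and the closest thing to an obstacle — is making sure the hypotheses of \cite[Remark, Page 105]{Schwartz1973} are genuinely met, namely that $(\honeorbitspace, \toporbitspace)$ is Lusin (which is exactly Corollary \ref{cor:orbitspace-lusin}) and that the index set is countable (which is part of the statement). Since both hold, the proof is a direct citation and no further work is needed.
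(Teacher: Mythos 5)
Your proposal is correct and takes essentially the same approach as the paper, which also simply cites \cite[Remark, Page 105]{Schwartz1973} together with Corollary \ref{cor:orbitspace-lusin} and omits the proof. You fill in a bit more detail (the trivial inclusion $\borelorbitspace^I \sse \mc{B}((\honeorbitspace)^I)$ via continuity of projections), but the substance is identical.
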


Recall Definition \ref{def:inner-regular}, the definition of inner regular probability measures.

\begin{lemma}\label{lemma:finite-product-orbit-space-inner-regular}
Let $I$ be a countable index set. Every probability measure $\mu$ on the space $((\honeorbitspace)^I, \borelorbitspace^I)$ is inner regular.
\end{lemma}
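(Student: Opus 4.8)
The plan is to reduce the claim to the known fact that every probability measure on a compact metric space is inner regular (indeed, a Radon measure), transported along the homeomorphisms established earlier. First I would recall the structure we have available: by Theorem \ref{thm:countable-index-homeomorphism}, for each $\cutoff \geq 0$ the space $(\cutoffspace{\cutoff}, \toporbitspace[\cutoff])$ is compact metrizable, hence $\honeorbitspace = \bigcup_{m \geq 1} \cutoffspace{m}$ is a countable increasing union of compact metrizable subspaces, each of which is a Borel subset of $(\honeorbitspace, \toporbitspace)$ (Lemma \ref{lemma:sequentially-compact-cutoff-space}). Since $I$ is countable, $((\honeorbitspace)^I, \toporbitspace^I) = \bigcup_{m \geq 1} (\cutoffspace{m})^I$ is likewise a countable increasing union of compact metrizable subspaces $(\cutoffspace{m})^I$ (a countable product of compact metric spaces is compact metrizable); and by Lemma \ref{lemma:borel-equals-product}, $\borelorbitspace^I = \mc{B}((\honeorbitspace)^I)$, so the product $\sigma$-algebra really is the Borel $\sigma$-algebra of this topological space. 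Each $(\cutoffspace{m})^I$ is a Borel subset of $((\honeorbitspace)^I, \toporbitspace^I)$.

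Next I would fix a probability measure $\mu$ on $((\honeorbitspace)^I, \borelorbitspace^I)$ and a Borel set $B$, and let $\varep > 0$. Since $\mu((\cutoffspace{m})^I) \uparrow 1$ as $m \to \infty$ by continuity from below, choose $m$ with $\mu\big(((\honeorbitspace)^I) \setminus (\cutoffspace{m})^I\big) < \varep/2$. Now restrict $\mu$ to the compact metrizable space $K_m := (\cutoffspace{m})^I$: the set function $\mu_m(\cdot) := \mu(\cdot \cap K_m)$ is a finite Borel measure on $K_m$, and since $K_m$ is a compact metric space, $\mu_m$ is inner regular (every finite Borel measure on a metric space is inner regular with respect to closed sets, and on a compact metric space closed sets are compact — see any standard reference, e.g. \cite[Chapter II.6]{RW1994} or the discussion of Radon measures on Polish spaces). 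Hence there is a compact set $\tilde K \sse B \cap K_m$ with $\mu_m(B) - \mu_m(\tilde K) < \varep/2$, i.e. $\mu(B \cap K_m) - \mu(\tilde K) < \varep/2$. A compact subset of $K_m$ is also a compact subset of $((\honeorbitspace)^I, \toporbitspace^I)$, since $K_m$ carries the subspace topology. Combining, $\mu(B) - \mu(\tilde K) \leq \big(\mu(B) - \mu(B \cap K_m)\big) + \big(\mu(B \cap K_m) - \mu(\tilde K)\big) < \varep/2 + \varep/2 = \varep$, and $\tilde K \sse B$ is compact. Taking the supremum over such $\tilde K$ and letting $\varep \downarrow 0$ gives $\mu(B) = \sup\{\mu(K) : K \sse B, K \text{ compact}\}$, which is inner regularity.

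The only genuinely delicate point — and thus the main obstacle — is making sure the two topologies match up correctly at the seams: we need that a set compact in the subspace $(\cutoffspace{m})^I$ is compact in $((\honeorbitspace)^I, \toporbitspace^I)$, and that the Borel $\sigma$-algebra of the subspace is the trace of $\borelorbitspace^I$. The first is automatic because the continuous image of a compact set is compact and the inclusion map of a subspace is continuous; the second is a standard fact about subspace topologies together with Lemma \ref{lemma:borel-equals-product}. One should also confirm that $\mu$ is well-defined on all of $\mc{B}((\honeorbitspace)^I)$ — but this is exactly the content of Lemma \ref{lemma:borel-equals-product}. With these bookkeeping matters in place, the argument is routine: the substance is entirely inherited from inner regularity of finite Borel measures on compact metric spaces, and the countable-union structure of $\honeorbitspace$ (and hence of its countable powers) lets us localize to those compact pieces.
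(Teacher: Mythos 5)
The key step in your argument is false when $I$ is countably infinite. You claim that $((\honeorbitspace)^I, \toporbitspace^I) = \bigcup_{m \geq 1} (\cutoffspace{m})^I$, but the right-hand side is the set of points $(x_i)_{i \in I}$ with $\sup_{i \in I} \sym^{1,2}(x_i) < \infty$, which for infinite $I$ is a proper subset of $(\honeorbitspace)^I$. For instance, take $I = \N$ and choose $x_n \in \honeorbitspace \setminus \cutoffspace{n}$ (such $x_n$ exist since $\sym^{1,2}$ is unbounded on $\honeorbitspace$); then $(x_n)_n$ lies in no $(\cutoffspace{m})^{\N}$. More structurally: an infinite power of a $\sigma$-compact, non-compact space is never $\sigma$-compact. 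Consequently the continuity-from-below step $\mu((\cutoffspace{m})^I) \uparrow 1$ has no justification — a probability measure on $(\honeorbitspace)^{\N}$ could even assign mass zero to $\bigcup_m (\cutoffspace{m})^{\N}$ — and the rest of the localization argument collapses.

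Your argument is perfectly fine for \emph{finite} $I$, where the exhaustion $\bigcup_m (\cutoffspace{m})^I = (\honeorbitspace)^I$ does hold (take $m \geq \max_i \sym^{1,2}(x_i)$), and indeed in that case it gives a more hands-on alternative to the paper's proof: you reduce to inner regularity of finite Borel measures on compact metric spaces, using the $\sigma$-compact structure of $(\honeorbitspace)^I$ directly. But the lemma is stated for arbitrary countable $I$, and is applied in the surrounding arguments in contexts involving the countable index set $I_0 = \Q \cap (0,\infty)$. The paper's proof avoids the issue entirely by staying at the level of Lusin spaces: it observes via Lemma \ref{lemma:borel-equals-product} that the product $\sigma$-algebra is the Borel $\sigma$-algebra, invokes that countable products of Lusin spaces are Lusin (\cite[Chapter II, Lemma 4]{Schwartz1973}), and then uses that every Borel probability measure on a Lusin space is inner regular (\cite[(82.4) Lemma]{RW1994}). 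The Lusin machinery is exactly what replaces $\sigma$-compactness: it guarantees tightness and Radon-ness for infinite products, where compact exhaustion genuinely fails. To repair your proof you would need either to restrict to finite $I$ (which does not match the stated hypothesis), or to argue tightness of $\mu$ by a Tychonoff product of marginal-compacts and then separately establish outer/closed-regularity of $\mu$ on the (non-metrizable, non-$\sigma$-compact) product space — and the clean way to do the latter is, again, the Lusin-space theorem the paper cites.
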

\begin{proof}
By Lemma \ref{lemma:borel-equals-product}, we have that $\borelorbitspace^I = \mc{B}((\honeorbitspace)^I)$. Since the space $(\honeorbitspace, \toporbitspace)$ is a Lusin space (by Corollary \ref{cor:orbitspace-lusin}), and since countable products of Lusin spaces are Lusin spaces (by \cite[Chapter II, Lemma 4]{Schwartz1973}), it follows that $((\honeorbitspace)^I, \toporbitspace^I)$ is a Lusin space. The desired result now follows because probability measures on Lusin spaces (equipped with the Borel $\sigma$-algebra) are always inner regular (see \cite[(82.4) Lemma]{RW1994}).
\end{proof}

Our next result states that the finite dimensional distributions of the regularized Wilson loop observables determine the law of an $\honeorbitspace$-valued random variable.

\begin{lemma}\label{lemma:orbit-space-regwl-determine-law}
Let $\rgclass_1, \rgclass_2$ be $\honeorbitspace$-valued random variables. Suppose that for any finite collection $\wloop_i, \character_i, t_i$, $1 \leq i \leq k$, where $\wloop_i : [0, 1] \ra \threetorus$ is a piecewise $C^1$ loop, $\character_i$ is a character of $\liegroup$, and $t_i > 0$, we have that
\[ ([W]_{\wloop_i, \character_i, t_i}^{1,2}(\rgclass_1), 1 \leq i \leq k) \stackrel{d}{=} ([W]_{\wloop_i, \character_i, t_i}^{1,2}(\rgclass_2), 1 \leq i \leq k).\]
(The above identity is interpreted as equality in distribution of two $\C^k$-valued random variables.) Then $\rgclass_1 \stackrel{d}{=} \rgclass_2$.
\end{lemma}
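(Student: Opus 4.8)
The plan is to reduce Lemma~\ref{lemma:orbit-space-regwl-determine-law} to a standard $\pi$-$\lambda$ (Dynkin) argument by exhibiting a countable collection of regularized Wilson loop observables whose joint law determines everything. First I would invoke Lemma~\ref{lemma:countable-index-set-suffices} together with Definition~\ref{def:countable-index-homeomorphism} and Theorem~\ref{thm:countable-index-homeomorphism}: the map $\Psi_0 : (\honeorbitspace, \toporbitspace) \ra \C^{\indexset_0}$ is a continuous injection with $\indexset_0$ countable. Hence $\rgclass_1$ and $\rgclass_2$ have the same law if and only if $(\Psi_0)_* \mc{L}(\rgclass_1) = (\Psi_0)_* \mc{L}(\rgclass_2)$ as probability measures on $\C^{\indexset_0}$ — one direction is trivial, and the converse uses that $\Psi_0$ is injective and Borel measurable (so it has a measurable left inverse on its range, or one can argue directly that an injective Borel map pushes forward faithfully on a Lusin space). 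So it suffices to show that the two pushforward measures on $\C^{\indexset_0}$ agree.

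Next I would observe that a probability measure on the countable product $\C^{\indexset_0}$ is determined by its finite dimensional marginals: the cylinder sets form a $\pi$-system generating the product $\sigma$-algebra, and $\mc{B}(\C^{\indexset_0})$ coincides with the product $\sigma$-algebra since $\indexset_0$ is countable and $\C$ is a standard metric space (cf.\ the analogous Lemma~\ref{lemma:borel-equals-product} for $\honeorbitspace$). By the standard uniqueness-of-measures theorem, two probability measures on $\C^{\indexset_0}$ agreeing on all such cylinder sets are equal. But the finite dimensional marginals of $(\Psi_0)_* \mc{L}(\rgclass_j)$ are exactly the laws of the vectors $([W]^{1,2}_{\wloop_i, \character_i, t_i}(\rgclass_j), 1 \leq i \leq k)$ for $(\wloop_i, \character_i, t_i) \in \indexset_0$, which are equal for $j = 1, 2$ by the hypothesis of the lemma (applied to the special case where the indices lie in $\indexset_0$). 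Therefore $(\Psi_0)_* \mc{L}(\rgclass_1) = (\Psi_0)_* \mc{L}(\rgclass_2)$, and by the previous paragraph $\rgclass_1 \stackrel{d}{=} \rgclass_2$.

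The one point requiring a little care — and the place I expect to be the main obstacle — is justifying that equality of the two pushforward measures under the injective Borel map $\Psi_0$ implies equality of the original laws on $(\honeorbitspace, \borelorbitspace)$. The cleanest route is to use that $(\honeorbitspace, \toporbitspace)$ is a Lusin space (Corollary~\ref{cor:orbitspace-lusin}), so it is Borel-isomorphic to a Borel subset of a compact metric space; an injective Borel map between such spaces is a Borel isomorphism onto its image (Lusin--Souslin theorem), and in particular $\Psi_0(\honeorbitspace)$ is Borel in $\C^{\indexset_0}$ and $\Psi_0^{-1} : \Psi_0(\honeorbitspace) \ra \honeorbitspace$ is Borel measurable. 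Then for any $B \in \borelorbitspace$, $\p(\rgclass_j \in B) = \p(\Psi_0(\rgclass_j) \in \Psi_0(B))$ with $\Psi_0(B)$ Borel, and the right side is the same for $j=1,2$ by the established equality of pushforwards. This completes the proof. (Alternatively, one could avoid invoking Lusin--Souslin by noting that $\honeorbitspace$ is a countable union of the compact metrizable subspaces $\cutoffspace{\cutoff}$ from Theorem~\ref{thm:countable-index-homeomorphism}, on each of which $\Psi_0$ restricts to a homeomorphism onto its image, and then piecing together; I would mention this as a fallback but expect the Lusin-space argument to be the shortest to write.)
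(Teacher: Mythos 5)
Your proposal is correct and follows essentially the same path as the paper's proof: reduce via the injection $\Psi_0:\honeorbitspace\to\C^{\indexset_0}$ (Definition~\ref{def:countable-index-homeomorphism}, Theorem~\ref{thm:countable-index-homeomorphism}), deduce $\Psi_0(\rgclass_1)\stackrel{d}{=}\Psi_0(\rgclass_2)$ from the finite-dimensional hypothesis since the Borel and product $\sigma$-algebras on $\C^{\indexset_0}$ coincide, and then conclude equality of laws by showing $\Psi_0$ maps Borel sets to Borel sets. The one place you diverge is in justifying this last measurability claim: you propose Lusin--Souslin as the primary route, whereas the paper uses exactly what you describe as your ``fallback'' --- it writes $\Psi_0(F)=\bigcup_m \Psi_0(F\cap\cutoffspace{m})$ for closed $F$ and uses the compactness of $\Psi_0(F\cap\cutoffspace{m})$ from Theorem~\ref{thm:countable-index-homeomorphism}, which is more self-contained and avoids appealing to descriptive set theory. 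Either version is fine; the paper's choice keeps the argument elementary given that the compactness of the $\cutoffspace{\cutoff}$'s and the homeomorphism property of $\Psi_0$ on them have already been established.
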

\begin{proof}
Let $\indexset_0$ be as in Lemma \ref{lemma:countable-index-set-suffices}, and let $\Psi_0$ be as in Definition \ref{def:countable-index-homeomorphism}. We have by the assumption in the lemma statement that
\[ \Psi_0(\rgclass_1) \stackrel{d}{=} \Psi_0(\rgclass_2). \]
Here both sides of the identity are $\C^{\indexset_0}$-valued random variables; the identity follows because the Borel and product $\sigma$-algebras on $\C^{\indexset_0}$ agree (see, e.g., \cite[Lemma 1.2]{K2002}), so that equality of all finite dimensional distributions implies equality in law. We claim that for all $B \in \borelorbitspace$, we have that $\Psi_0(B)$ is a Borel measurable subset of $\C^{\indexset_0}$. Given this claim, we have for all $B \in \borelorbitspace$ (using also that $\Psi_0$ is injective by Theorem \ref{thm:countable-index-homeomorphism})
\[ \p(\rgclass_1 \in B) = \p(\Psi_0(\rgclass_1) \in \Psi_0(B)) = \p(\Psi_0(\rgclass_2) \in \Psi_0(B)) = \p(\rgclass_2 \in B), \]
and thus $\rgclass_1 \stackrel{d}{=} \rgclass_2$. It remains to show the claim. Since $\Psi_0$ is injective, it suffices to show the claim for $F$ a closed subset of $(\honeorbitspace, \toporbitspace)$. Note that $\Psi_0(F) = \bigcup_{m=1}^\infty \Psi_0(F \cap \cutoffspace{m})$. Thus, it suffices to show that for all $m \geq 1$, $\Psi_0(F \cap \cutoffspace{m})$ is a Borel-measurable subset of $\C^{\indexset_0}$. But this follows by Theorem~\ref{thm:countable-index-homeomorphism}, which gives that $\Psi_0(F \cap \cutoffspace{m})$ is in fact a compact, and thus closed, subset of $\C^{\indexset_0}$.
\end{proof}

Recall the topologies $\toporbitspace^w$ and $\toporbitspace[\cutoff]^w$ from Definition \ref{def:regwl-topology-honeorbitspace}.

\begin{lemma}\label{lemma:orbit-space-borel-sigma-algebras-coincide}
We have that $\borelorbitspace = \sigma(\toporbitspace^w)$. That is, the Borel $\sigma$-algebras of the topological spaces $(\honeorbitspace, \toporbitspace)$ and $(\honeorbitspace, \toporbitspace^w)$ are the same.
\end{lemma}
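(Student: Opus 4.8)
The plan is to show each of the two Borel $\sigma$-algebras contains the other.

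The inclusion $\sigma(\toporbitspace^w) \sse \borelorbitspace$ is the easy direction: by Corollary \ref{cor:regwl-honeorbitspace-topology} we have $\toporbitspace^w \sse \toporbitspace$, so every $\toporbitspace^w$-open set is $\toporbitspace$-open, hence Borel, and taking the generated $\sigma$-algebra gives $\sigma(\toporbitspace^w) \sse \sigma(\toporbitspace) = \borelorbitspace$.

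For the reverse inclusion $\borelorbitspace \sse \sigma(\toporbitspace^w)$, the first key step is to reduce to showing that every $\toporbitspace$-closed set lies in $\sigma(\toporbitspace^w)$. Given a $\toporbitspace$-closed set $F$, write $F = \bigcup_{m \geq 1} (F \cap \cutoffspace{m})$, using that $\honeorbitspace = \bigcup_{m \geq 1} \cutoffspace{m}$ (every gauge orbit has some finite Yang--Mills action). So it suffices to show each $F \cap \cutoffspace{m} \in \sigma(\toporbitspace^w)$. Now I would invoke Corollary \ref{cor:regwl-honeorbitspace-topology}, which tells us two things: first, that $\cutoffspace{m}$ is $\toporbitspace^w$-closed (hence in $\sigma(\toporbitspace^w)$), and second, that the subspace topologies $\toporbitspace[m]$ and $\toporbitspace[m]^w$ on $\cutoffspace{m}$ coincide. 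Since $F$ is $\toporbitspace$-closed, $F \cap \cutoffspace{m}$ is closed in the subspace topology $\toporbitspace[m]$, and by the coincidence of topologies it is also closed in $\toporbitspace[m]^w$, i.e., it is the intersection of a $\toporbitspace^w$-closed set of $\honeorbitspace$ with $\cutoffspace{m}$ --- more carefully, from Theorem \ref{thm:countable-index-homeomorphism} (or the proof of Corollary \ref{cor:regwl-honeorbitspace-topology}) we have that $\Psi_0(F \cap \cutoffspace{m})$ is a compact, hence closed, subset of $\C^{\indexset_0}$, so $F \cap \cutoffspace{m} = \Psi_0^{-1}(\Psi_0(F \cap \cutoffspace{m})) \cap \cutoffspace{m}$, and $\Psi_0$ is $\sigma(\toporbitspace^w)$-measurable into $\C^{\indexset_0}$ since each coordinate $[W]^{1,2}_{\wloop,\character,t}$ is $\toporbitspace^w$-continuous. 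Hence $F \cap \cutoffspace{m} \in \sigma(\toporbitspace^w)$, and taking the countable union gives $F \in \sigma(\toporbitspace^w)$. Since the $\toporbitspace$-closed sets generate $\borelorbitspace$, this yields $\borelorbitspace \sse \sigma(\toporbitspace^w)$.

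The main subtlety --- though not really an obstacle, since the heavy lifting is already done in Theorem \ref{thm:countable-index-homeomorphism} and Corollary \ref{cor:regwl-honeorbitspace-topology} --- is the passage from ``$F \cap \cutoffspace{m}$ is closed in the weak-Wilson-loop subspace topology'' to ``$F \cap \cutoffspace{m}$ is a $\sigma(\toporbitspace^w)$-measurable subset of all of $\honeorbitspace$''. This requires knowing both that $\cutoffspace{m}$ itself is $\toporbitspace^w$-measurable (which is the closedness assertion in Corollary \ref{cor:regwl-honeorbitspace-topology}) and that the preimage description via $\Psi_0$ is valid, which uses the injectivity of $\Psi_0$ (Lemma \ref{lemma:countable-index-set-suffices}) together with the measurability of $\Psi_0$ with respect to $\sigma(\toporbitspace^w)$. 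Once these pieces are assembled the argument is routine.
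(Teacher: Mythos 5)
Your proof is correct and follows essentially the same route as the paper: decompose into the pieces $\cutoffspace{m}$, and use Corollary \ref{cor:regwl-honeorbitspace-topology} (the two subspace topologies on $\cutoffspace{m}$ coincide and $\cutoffspace{m}$ is $\toporbitspace^w$-closed). The paper works with arbitrary Borel sets $B$ and the relative $\sigma$-algebras $\sigma(\toporbitspace[m]) = \sigma(\toporbitspace[m]^w)$ rather than reducing to closed sets, but that is a cosmetic difference; your alternate passage via the $\sigma(\toporbitspace^w)$-measurability of $\Psi_0$ and the compactness of $\Psi_0(F \cap \cutoffspace{m})$ is also sound.
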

\begin{proof}
Since $\toporbitspace^w \sse \toporbitspace$ (by Corollary \ref{cor:regwl-honeorbitspace-topology}), it suffices to show that $\borelorbitspace \sse \sigma(\toporbitspace^w)$. Thus let $B \in \borelorbitspace$. Since $B = \bigcup_{m = 1}^\infty (B \cap \cutoffspace{m})$, it suffices to show that for all $m \geq 1$, we have that $B \cap \cutoffspace{m} \in \sigma(\toporbitspace^w)$. Fix $m \geq 1$. By a standard measure theory exercise, we have that $\sigma(\toporbitspace[m]) = \{F \cap \cutoffspace{m} : F \in \borelorbitspace\}$, and similarly $\sigma(\toporbitspace[m]^w) = \{F \cap \cutoffspace{m} : F \in \sigma(\toporbitspace^w)\}$. By Corollary \ref{cor:regwl-honeorbitspace-topology}, we have that $\sigma(\toporbitspace[m]) = \sigma(\toporbitspace[m]^w)$. We thus have that $B \cap \cutoffspace{m} \in \sigma(\toporbitspace[m]^w)$. Now observe that every element of $\sigma(\toporbitspace[m]^w)$ is of the form $F \cap \cutoffspace{m}$ for some $F \in \sigma(\toporbitspace^w)$. Thus, since $\cutoffspace{m} \in \sigma(\toporbitspace^w)$ (recall $\cutoffspace{m}$ is closed by Corollary \ref{cor:regwl-honeorbitspace-topology}), we have that every element of $\sigma(\toporbitspace[m]^w)$ is an element of $\sigma(\toporbitspace^w)$. Thus, $B \cap \cutoffspace{m} \in \sigma(\toporbitspace^w)$, as desired.
\end{proof}

\subsection{The nonlinear distribution space}\label{section:nonlinear-dist-space}

In this section, we will define the nonlinear distribution space $\nonlineardistspace^{1,2}$, which is the analogue of $\nonlineardistspace$, where we use $\honeorbitspace$ in place of $\orbitspace$. We will then prove the analogue of Theorem \ref{mainthm} for $\nonlineardistspace^{1,2}$ (see Theorem \ref{thm:nonlinear-dist-space-tightness}). Using this analogue, we will then be able to prove Theorem \ref{mainthm} in Section \ref{section:completing-the-proof}. We begin with some preliminary results that are needed before defining $\nonlineardistspace^{1,2}$.~Recall the Yang--Mills heat semigroup $(\ymsemigroup_t^{1,2}, t \geq 0)$ from Definition \ref{def:ym-semigroup}.


\begin{lemma}\label{lemma:ym-semigroup-continuous}
For any $t \geq 0$, the map $\ymsemigroup_t^{1,2} : (\honeorbitspace, \toporbitspace) \ra (\honeorbitspace, \toporbitspace)$ is continuous.
\end{lemma}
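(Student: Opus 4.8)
The plan is to reduce continuity of $\ymsemigroup_t^{1,2}$ to a statement about the sequential convergence $\weakarrow_q$, and then to re-run the ZDDS-regularization argument from the proof of Lemma~\ref{lemma:regularized-wilson-loop-weak-sequential-continuity}, tracking the whole gauge orbit rather than a single Wilson loop functional. Since $\toporbitspace$ is by definition the topology $T(\weakarrow_q)$ induced by $\weakarrow_q$, Lemma~\ref{lemma:sequential-convergence-topology-properties} shows it suffices to prove: whenever $\gorbithone{A_n} \weakarrow_q \gorbithone{A_\infty}$ in $\honeorbitspace$, we have $\ymsemigroup_t^{1,2}(\gorbithone{A_n}) \weakarrow_q \ymsemigroup_t^{1,2}(\gorbithone{A_\infty})$. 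The case $t = 0$ is trivial, so fix $t > 0$.

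First I would unpack the hypothesis: by definition of $\weakarrow_q$ there are representatives $\tilde{A}_{0,n} \in \gorbithone{A_n}$ (for $n \leq \infty$) with $\tilde{A}_{0,n} \weakarrow \tilde{A}_{0,\infty}$ in $\honeconnspace$, and by Lemma~\ref{lemma:weak-convergence-and-h1-norm} the quantity $M := \sup_{n \leq \infty} \|\tilde{A}_{0,n}\|_{H^1}$ is finite. The reason for detouring through \eqref{eq:ZDDS} is that continuity of the \eqref{eq:YM} flow in the initial data is only available in the $H^1$-norm topology and only for smooth data (Lemma~\ref{lemma:ym-continuity-in-initial-data-smooth-case}), whereas here we only have weak convergence of possibly non-smooth $H^1$ data; the ZDDS flow repairs both defects at once. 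Concretely, for each $n \leq \infty$ let $B_n$ be the solution to \eqref{eq:ZDDS} with $B_n(0) = \tilde{A}_{0,n}$, which by Lemma~\ref{lemma:zdds-solutions-weak-continuity} is defined at least on $[0, T_0]$ with $T_0 := \timefn_0(M) > 0$, and moreover
\[ \sup_{0 \leq s \leq T_0} s^{1/2}\, \|B_n(s) - B_\infty(s)\|_{H^1} \longrightarrow 0. \]
Hence for any fixed $s_0 \in (0, T_0]$ we get $B_n(s_0) \to B_\infty(s_0)$ in $H^1$ norm, and each $B_n(s_0)$ is smooth by Proposition~\ref{prop:ZDDS-local-existence}.

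Next I would pick $s_0 \in (0, \min\{T_0, t\})$ and use Lemma~\ref{lemma:ym-zdds-solution-gauge-transformation} together with the semigroup property of $(\ymsemigroup_t^{1,2}, t \geq 0)$ to write, for every $n \leq \infty$,
\[ \ymsemigroup_t^{1,2}(\gorbithone{A_n}) = \ymsemigroup_{t-s_0}^{1,2}\big(\ymsemigroup_{s_0}^{1,2}(\gorbithone{\tilde{A}_{0,n}})\big) = \ymsemigroup_{t-s_0}^{1,2}\big(\gorbithone{B_n(s_0)}\big). \]
Now let $A_n$ be the \eqref{eq:YM} solution on $[0,\infty)$ with $A_n(0) = B_n(s_0)$; since the $B_n(s_0)$ are smooth and converge in $H^1$, Lemma~\ref{lemma:ym-continuity-in-initial-data-smooth-case} gives $\|A_n(t - s_0) - A_\infty(t - s_0)\|_{H^1} \to 0$, and in particular $A_n(t - s_0) \weakarrow A_\infty(t - s_0)$ in $\honeconnspace$. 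By Definition~\ref{def:ym-semigroup}, $\ymsemigroup_{t-s_0}^{1,2}(\gorbithone{B_n(s_0)}) = \gorbithone{A_n(t - s_0)}$, and the weak convergence of the representatives $A_n(t-s_0)$ is precisely the assertion $\gorbithone{A_n(t-s_0)} \weakarrow_q \gorbithone{A_\infty(t-s_0)}$. Combining the two displays yields $\ymsemigroup_t^{1,2}(\gorbithone{A_n}) \weakarrow_q \ymsemigroup_t^{1,2}(\gorbithone{A_\infty})$, which by the reduction above completes the proof.

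The only genuinely delicate ingredient — the ``hard part'' — is Lemma~\ref{lemma:zdds-solutions-weak-continuity} itself: one cannot expect weak-to-weak continuity of the raw \eqref{eq:YM} flow, so the entire argument hinges on having a quantitative ZDDS estimate that converts weakly convergent $H^1$ initial data into $H^1$-norm-convergent (and smooth) data after an arbitrarily short ZDDS time, uniformly over the sequence. Everything else is routine manipulation of the semigroup identity and the gauge-equivalence of \eqref{eq:YM} and \eqref{eq:ZDDS}.
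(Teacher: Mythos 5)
Your proposal is correct and takes essentially the same route as the paper's proof: reduce to sequential continuity with respect to $\weakarrow_q$ via Lemma~\ref{lemma:sequential-convergence-topology-properties}, smooth the weakly convergent initial data by running \eqref{eq:ZDDS} for a short time (using Lemma~\ref{lemma:zdds-solutions-weak-continuity} and Lemma~\ref{lemma:ym-zdds-solution-gauge-transformation}), then transport forward with \eqref{eq:YM} using Lemma~\ref{lemma:ym-continuity-in-initial-data-smooth-case}. The only cosmetic difference is that you choose $s_0 \in (0, \min\{T_0, t\})$ and treat all $t>0$ uniformly, whereas the paper handles $t \in (0,T]$ directly via ZDDS and then covers $t \geq t_0$ by the YM flow from a fixed $t_0$; these are equivalent.
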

\begin{proof}
By Lemma \ref{lemma:sequential-convergence-topology-properties} and the definition of $\toporbitspace$, to show that $\ymsemigroup_t^{1,2}$ is continuous, it suffices to show that if $\{A_{0, n}\}_{n \leq \infty} \sse \honeconnspace$ is such that $A_{0, n} \weakarrow A_{0, \infty}$, then $\ymsemigroup_t^{1,2}(\gorbithone{A_{0, n}}) \weakarrow_q \ymsemigroup_t^{1,2}(\gorbithone{A_{0, \infty}})$. Towards this end, for $n \leq \infty$, let $B_n$ be the solution to \eqref{eq:ZDDS} on $[0, \timefn(\|A_{0, n}\|_{H^1}))$ with initial data $B_n(0) = A_{0, n}$ (as given by Proposition \ref{prop:ZDDS-local-existence}, and where $\timefn$ is as in that proposition). By Lemma \ref{lemma:zdds-solutions-weak-continuity}, there is $T > 0$ such that for all $t \in (0, T]$, we have that $B_n(t) \ra B_\infty(t)$ in $H^1$. By Lemma \ref{lemma:ym-zdds-solution-gauge-transformation}, we have that for all $t \in (0, T]$, $\gorbithone{B_n(t)} = \ymsemigroup_t^{1,2}(\gorbithone{A_{0, n}})$. This shows that $\ymsemigroup_t^{1,2}(\gorbithone{A_{0, n}}) \weakarrow_q \ymsemigroup_t^{1,2}(\gorbithone{A_{0, \infty}})$ for all $t \in (0, T]$. To obtain the convergence for $t \geq T$, fix $t_0 \in (0, T]$. For $n \leq \infty$, let $A_n$ be the solution to \eqref{eq:YM} with initial data $A_n(0) = B_n(t_0)$. Note that $B_n(t_0)$ is smooth (by Proposition \ref{prop:ZDDS-local-existence}). Since $A_n(0) \ra A_\infty(0)$ in $H^1$, we have by Lemma \ref{lemma:ym-continuity-in-initial-data-smooth-case} that $A_n(t) \ra A_\infty(t)$ in $H^1$ for all $t \geq 0$. By definition, we have that $\gorbithone{A_n(t)} = \ymsemigroup_t^{1,2}(\gorbithone{B_n(t_0)}) = \ymsemigroup_t^{1,2}(\ymsemigroup_{t_0}^{1,2}(\gorbithone{A_{0, n}}) = \ymsemigroup_{t + t_0}^{1,2}(\gorbithone{A_{0, n}})$. Combining the previous few observations, we obtain that $\ymsemigroup_t^{1,2}(\gorbithone{A_{0, n}}) \weakarrow_q \ymsemigroup_t^{1,2}(\gorbithone{A_{0, \infty}})$ for all $t \geq t_0$, and thus the desired result follows.
\end{proof}

\begin{lemma}\label{lemma:ym-semigroup-continuous-in-time}
For any $\gorbithone{A_0} \in \honeorbitspace$, the map  $t \mapsto \ymsemigroup_t^{1,2}(\gorbithone{A_0})$ from $[0, \infty)$ into $(\honeorbitspace, \toporbitspace)$ is continuous.
\end{lemma}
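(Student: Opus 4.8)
The plan is to reduce the assertion to the continuity of the Yang--Mills heat flow in the $H^1$ norm topology, which is already built into the definition of a solution (Definition \ref{def:YM-solution-def}), together with the fact that $\weakarrow_q$-convergence implies convergence in $\toporbitspace$. Fix $\gorbithone{A_0} \in \honeorbitspace$ and let $A : [0, \infty) \ra \honeconnspace$ be the unique solution to \eqref{eq:YM} with initial data $A(0) = A_0$, as provided by Theorem \ref{thm:YM-global-existence}. By Definition \ref{def:ym-semigroup}, $\ymsemigroup_t^{1,2}(\gorbithone{A_0}) = \gorbithone{A(t)}$ for every $t \geq 0$, so it suffices to show that $t \mapsto \gorbithone{A(t)}$ is continuous from $[0, \infty)$ into $(\honeorbitspace, \toporbitspace)$.

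Since $[0, \infty)$ is metrizable, hence first countable, a map out of it is continuous if and only if it is sequentially continuous. So let $t_n \ra t_\infty$ in $[0, \infty)$. By Definition \ref{def:YM-solution-def}, $A$ is a continuous map from $[0,\infty)$ into $\honeconnspace$ equipped with its $H^1$ norm topology; in particular $A(t_n) \ra A(t_\infty)$ in $H^1$ norm, so $A(t_n) \weakarrow A(t_\infty)$ in $\honeconnspace$. Taking the representatives $A(t_n) \in \gorbithone{A(t_n)}$ and $A(t_\infty) \in \gorbithone{A(t_\infty)}$, the definition of the sequential convergence $\weakarrow_q$ yields $\gorbithone{A(t_n)} \weakarrow_q \gorbithone{A(t_\infty)}$. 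Since $\weakarrow_q$-convergence implies convergence in the topology $\toporbitspace = T(\weakarrow_q)$, we conclude $\gorbithone{A(t_n)} \ra \gorbithone{A(t_\infty)}$ in $(\honeorbitspace, \toporbitspace)$, which is exactly the required sequential continuity.

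There is no substantial obstacle here; the only points requiring care are that continuity of $A$ at the endpoint $t = 0$ is part of Definition \ref{def:YM-solution-def} (so the argument also covers the case $t_\infty = 0$), and that the reduction from continuity to sequential continuity is valid precisely because the domain $[0, \infty)$ is metrizable. (Note that one does \emph{not} need to know whether $\weakarrow_q$ is an $L^*$-convergence, since we only use the easy implication from $\weakarrow_q$-convergence to $\toporbitspace$-convergence.)
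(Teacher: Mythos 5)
Your proof is correct and takes essentially the same route as the paper: both fix the representative solution $A$ to \eqref{eq:YM}, invoke the $H^1$-continuity of $t\mapsto A(t)$ built into Definition \ref{def:YM-solution-def}, and pass to the gauge orbit. The only difference is that you make explicit the justification the paper leaves implicit (reduction to sequential continuity via first-countability of $[0,\infty)$, then the chain $H^1$-convergence $\Rightarrow$ weak convergence $\Rightarrow$ $\weakarrow_q$-convergence $\Rightarrow$ $\toporbitspace$-convergence), which is a welcome elaboration rather than a departure.
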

\begin{proof}
Let $A_0 \in \honeconnspace$, and let $A$ be the solution to \eqref{eq:YM} on $[0, \infty)$ with initial data $A(0) = A_0$ (as given by Theorem \ref{thm:YM-global-existence}). By Definition \ref{def:YM-solution-def}, we have that $t \mapsto A(t)$ is a continuous function from $[0, \infty)$ into $\connspace^{1,2}$ (when $\honeconnspace$ is equipped with the $H^1$ norm topology). This implies that $t \mapsto \gorbithone{A(t)}$ is continuous from $[0, \infty)$ into $(\honeorbitspace, \toporbitspace)$. To finish, note by definition, we have that $\ymsemigroup_t^{1,2}(\gorbithone{A_0}) = \gorbithone{A(t)}$ for all $t \geq 0$.
\end{proof}

We now define the nonlinear distribution space $\nonlineardistspace^{1,2}$.

\begin{definition}[Nonlinear distribution space]\label{def:nonlinear-dist-space}
Let $\nonlineardistspace^{1,2}$ be the set of functions $X : (0, \infty) \ra \honeorbitspace$ such that for all $0 < s \leq t$, we have that $X(t) = \ymsemigroup_{t-s}^{1,2}(X(s))$. Typical elements of $\nonlineardistspace^{1,2}$ will be denoted $X$. By Lemma \ref{lemma:ym-semigroup-continuous-in-time}, elements of $\nonlineardistspace^{1,2}$ are continuous functions $(0, \infty) \ra (\honeorbitspace, \toporbitspace)$. For $t > 0$, define the evaluation map $\evalmap_t : \nonlineardistspace^{1,2} \ra \honeorbitspace$ by $X \mapsto X(t)$. Let $\topnonlineardistspace$ be the weakest topology on $\nonlineardistspace^{1,2}$ which makes each of the $\evalmap_t$, $t > 0$ continuous (where $\honeorbitspace$ is equipped with the topology $\toporbitspace$). Let $\borelnonlineardistspace := \mc{B}(\nonlineardistspace^{1,2})$ be the Borel $\sigma$-algebra of $(\nonlineardistspace^{1,2}, \topnonlineardistspace)$.
\end{definition}

The space $\honeorbitspace$ is naturally embedded in $\nonlineardistspace^{1,2}$, as follows.

\begin{definition}\label{def:embedding-orbit-space-nonlinear-dist-space}
Define the map $\iota_{\ymsemigroup^{1,2}} : (\honeorbitspace, \toporbitspace )\ra (\nonlineardistspace^{1,2}, \topnonlineardistspace)$ by letting $\iota_{\ymsemigroup^{1,2}}(\gorbithone{A})(t) := \ymsemigroup_t^{1,2}(\gorbithone{A})$ for each $t > 0$. Note that $\iota_{\ymsemigroup^{1,2}}$ is injective (by Lemma \ref{lemma:ym-semigroup-continuous-in-time}) and continuous (by Lemma \ref{lemma:ym-semigroup-continuous}). In a slight abuse of notation, given $\gorbithone{A} \in \honeorbitspace$, we will often write $\gorbithone{A}(\cdot)$ instead of $\iota_{\ymsemigroup^{1,2}}(\gorbithone{A})$ and $\gorbithone{A}(t)$ instead of $\iota_{\ymsemigroup^{1,2}}(\gorbithone{A})(t)$.
\end{definition}

It turns out that $(\nonlineardistspace^{1,2}, \topnonlineardistspace)$ is a Lusin space. The proof of this is deferred until later in this section.

\begin{lemma}\label{lemma:nldistspace-lusin}
The space $(\nonlineardistspace^{1,2}, \topnonlineardistspace)$ is a Lusin space.
\end{lemma}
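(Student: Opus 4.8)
The plan is to realize $(\nonlineardistspace^{1,2}, \topnonlineardistspace)$ as homeomorphic to a Borel subset of a countable power of $\honeorbitspace$. Write $Q := \Q \cap (0,\infty)$ and consider the map $\Theta : \nonlineardistspace^{1,2} \ra (\honeorbitspace)^Q$ defined by $\Theta(X) := (X(q))_{q \in Q}$. Since $\honeorbitspace$ is a Lusin space (Corollary \ref{cor:orbitspace-lusin}) and countable products of Lusin spaces are Lusin (\cite[Chapter II, Lemma 4]{Schwartz1973}), the product $(\honeorbitspace)^Q$ (with the product topology) is a Lusin space; and since a Borel subset of a Lusin space is again Lusin (immediate from the definition of a Lusin space as a Borel subset of a compact metric space, together with transitivity of ``being a Borel subset''), it suffices to show that $\Theta$ is a homeomorphism onto its image and that this image is a Borel subset of $(\honeorbitspace)^Q$.

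First I would check that $\topnonlineardistspace$ coincides with the initial topology $\ms{T}'$ generated by the countable subfamily $\{\evalmap_q : q \in Q\}$. One inclusion is automatic, as a subfamily generates a coarser initial topology. For the reverse, fix $t > 0$ and choose a rational $q \in (0, t)$; the defining relation of $\nonlineardistspace^{1,2}$ gives the factorization $\evalmap_t = \ymsemigroup_{t-q}^{1,2} \circ \evalmap_q$, which is $\ms{T}'$-continuous because $\ymsemigroup_{t-q}^{1,2} : (\honeorbitspace, \toporbitspace) \ra (\honeorbitspace, \toporbitspace)$ is continuous (Lemma \ref{lemma:ym-semigroup-continuous}). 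Hence every $\evalmap_t$ is $\ms{T}'$-continuous, so $\topnonlineardistspace \sse \ms{T}'$, giving equality. The same factorization shows that $\Theta$ is injective (each value $X(t)$, and hence $X$ itself, is recovered from a single coordinate $X(q)$ with $q < t$ via $\ymsemigroup_{t-q}^{1,2}$), and since $\ms{T}' = \topnonlineardistspace$ is exactly the topology $\Theta$ pulls back from the product, $\Theta$ is a homeomorphism onto its image.

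Next I would identify the image. I claim
\[ \Theta(\nonlineardistspace^{1,2}) = \bigl\{ (x_q)_{q \in Q} : x_{q'} = \ymsemigroup_{q'-q}^{1,2}(x_q) \text{ for all } q, q' \in Q \text{ with } q < q' \bigr\}. \]
The inclusion ``$\sse$'' is clear from the definition of $\nonlineardistspace^{1,2}$. Conversely, given a compatible tuple $(x_q)$, I would define $X(t) := \ymsemigroup_{t-q}^{1,2}(x_q)$ for an arbitrary rational $q < t$; using the semigroup property $\ymsemigroup_{s+t}^{1,2} = \ymsemigroup_s^{1,2} \circ \ymsemigroup_t^{1,2}$ (with $\ymsemigroup_0^{1,2} = \groupid$) together with the compatibility relations, one checks that this is independent of the choice of $q$, that $X \in \nonlineardistspace^{1,2}$, and that $X(q) = x_q$ for all $q \in Q$, so that $\Theta(X) = (x_q)$.

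Finally I would show that this image is Borel. For each pair $q < q'$ in $Q$, the graph $\{(a, b) \in \honeorbitspace \times \honeorbitspace : b = \ymsemigroup_{q'-q}^{1,2}(a)\}$ of the continuous map $\ymsemigroup_{q'-q}^{1,2}$ is closed in the product topology because $\honeorbitspace$ is Hausdorff (Theorem \ref{thm:countable-index-homeomorphism}), hence Borel; by Lemma \ref{lemma:borel-equals-product} (applied to a two-point index set) this set lies in the product $\sigma$-algebra on $\honeorbitspace \times \honeorbitspace$. Pulling it back under the product-measurable coordinate projection $(\honeorbitspace)^Q \ra \honeorbitspace \times \honeorbitspace$ onto coordinates $q, q'$, and intersecting over the countably many pairs $q < q'$, exhibits $\Theta(\nonlineardistspace^{1,2})$ as an element of the product $\sigma$-algebra $\borelorbitspace^Q$, which by Lemma \ref{lemma:borel-equals-product} equals the Borel $\sigma$-algebra of $(\honeorbitspace)^Q$. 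Thus $\Theta(\nonlineardistspace^{1,2})$ is a Borel subset of the Lusin space $(\honeorbitspace)^Q$, and $(\nonlineardistspace^{1,2}, \topnonlineardistspace)$, being homeomorphic to it, is Lusin. The only genuinely delicate points are the $\sigma$-algebra identifications via Lemma \ref{lemma:borel-equals-product} and the use of the Hausdorff property of $\honeorbitspace$ to make the graphs Borel; everything else is routine bookkeeping with the semigroup relation.
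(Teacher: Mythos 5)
Your proof is correct and takes essentially the same route as the paper: realize $(\nonlineardistspace^{1,2}, \topnonlineardistspace)$ as homeomorphic to a Borel subset of the countable product $(\honeorbitspace)^{I_0}$, then invoke that countable products of Lusin spaces are Lusin and Borel subsets of Lusin spaces are Lusin. The difference is purely one of packaging: the paper already has Lemma \ref{lemma:homeomorphism-of-nonlinear-dist-space} (the homeomorphism $\inclnldistpace$ between $\nonlineardistspace_0^{1,2}$ and $\nonlineardistspace^{1,2}$) and Lemma \ref{lemma:countable-nonlinear-dist-space-measurable} (Borel measurability of $\nonlineardistspace_0^{1,2}$ inside $(\honeorbitspace)^{I_0}$) and cites them, whereas you reprove both inline — your map $\Theta$ is $\inclnldistpace^{-1}$ followed by the tautological inclusion, your topology-agreement argument is the content of Lemma \ref{lemma:homeomorphism-of-nonlinear-dist-space}, and your closed-graph argument is the content of Lemma \ref{lemma:countable-nonlinear-dist-space-measurable}. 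One small remark: "a Borel subset of a Lusin space is Lusin" is cited from \cite[Chapter II, Theorem 2]{Schwartz1973} in the paper; your sketch (transitivity of "Borel subset" through the homeomorphism onto a Borel piece of a compact metric space) is a valid justification, just less standard than citing the reference.
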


We now state the following result, which is the analogue of Theorem \ref{mainthm} for the space $\nonlineardistspace^{1,2}$.

\begin{theorem}\label{thm:nonlinear-dist-space-tightness}
Let $\{\rvnldistspace_n\}_{n \geq 1}$ be a sequence of $\nonlineardistspace^{1,2}$-valued random variables. Suppose that for all $t > 0$, $\{\sym^{1,2}(\rvnldistspace_n(t))\}_{n \geq 1}$ is a tight sequence of $\R$-valued random variables. Then there exists a subsequence $\{\rvnldistspace_{n_k}\}_{k \geq 1}$, and a probability measure $\mu$ on $(\nonlineardistspace^{1,2}, \borelnonlineardistspace)$, such that the laws of $\rvnldistspace_{n_k}$ converge weakly to $\mu$. That is, for all bounded continuous functions $f : (\nonlineardistspace^{1,2}, \topnonlineardistspace) \ra \R$, we have that $\E f(\rvnldistspace_{n_k}) \ra  \int_{\nonlineardistspace^{1,2}} f d\mu$.
\end{theorem}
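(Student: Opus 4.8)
The plan is to deduce tightness on $\nonlineardistspace^{1,2}$ from tightness at each fixed time by a diagonalization argument, and then upgrade subsequential weak limits of the time-marginals to a weak limit of the $\nonlineardistspace^{1,2}$-valued random variables. First I would fix a countable dense set of times, say $\Q_{>0} := \Q \cap (0, \infty)$, and for each $q \in \Q_{>0}$ consider the pushforward laws $(\evalmap_q)_* \mu_n$ on $(\honeorbitspace, \borelorbitspace)$, where $\mu_n$ is the law of $\rvnldistspace_n$. The hypothesis says $\{\sym^{1,2}(\rvnldistspace_n(q))\}_{n \geq 1}$ is tight, which by Proposition \ref{prop:prokhorov-orbit-space} (Prokhorov for the orbit space) gives, for each fixed $q$, a subsequence along which $(\evalmap_q)_* \mu_n$ converges weakly on $\honeorbitspace$. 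Enumerating $\Q_{>0} = \{q_1, q_2, \ldots\}$ and diagonalizing, I extract a single subsequence $\{n_k\}$ along which $(\evalmap_q)_* \mu_{n_k}$ converges weakly on $\honeorbitspace$ for \emph{every} $q \in \Q_{>0}$; call the limit $\nu_q$.

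Next I would build the candidate limit measure $\mu$ on $(\nonlineardistspace^{1,2}, \borelnonlineardistspace)$ as a projective limit. The semigroup relation forces, for $s \le t$ in $\Q_{>0}$, that $(\ymsemigroup_{t-s}^{1,2})_* ((\evalmap_s)_* \mu_n) = (\evalmap_t)_* \mu_n$; passing to the limit along $n_k$ and using that $\ymsemigroup_{t-s}^{1,2}$ is continuous (Lemma \ref{lemma:ym-semigroup-continuous}), hence Borel measurable, I get $(\ymsemigroup_{t-s}^{1,2})_* \nu_s = \nu_t$. So $\{\nu_q\}_{q \in \Q_{>0}}$ is a consistent family of laws on the Lusin space $\honeorbitspace$ indexed by the directed set $(\Q_{>0}, \le)$; by the Kolmogorov-type extension theorem for projective limits of Radon/Lusin spaces (all probability measures here are inner regular by Lemma \ref{lemma:finite-product-orbit-space-inner-regular} and its obvious extension, and $\honeorbitspace$ is Lusin by Corollary \ref{cor:orbitspace-lusin}), there is a probability measure $\tilde\mu$ on $\prod_{q \in \Q_{>0}} \honeorbitspace$ (with its product $\sigma$-algebra, which by Lemma \ref{lemma:borel-equals-product} is the Borel $\sigma$-algebra) whose marginals are the $\nu_q$ and which is supported on the consistent tuples, i.e.\ on the image of the map $X \mapsto (X(q))_{q \in \Q_{>0}}$. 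Because $t \mapsto \ymsemigroup_t^{1,2}$ is continuous (Lemma \ref{lemma:ym-semigroup-continuous-in-time}) and $\nonlineardistspace^{1,2}$-elements are determined by their values on a dense set of times via $X(t) = \ymsemigroup_{t-q}^{1,2}(X(q))$ for any $q < t$, this map is a Borel isomorphism onto its (Borel, by the Lusin structure) image, so $\tilde\mu$ transports to a probability measure $\mu$ on $(\nonlineardistspace^{1,2}, \borelnonlineardistspace)$ with $(\evalmap_q)_* \mu = \nu_q$ for all $q \in \Q_{>0}$, and then, by continuity of $t \mapsto X(t)$ and the semigroup relation, $(\evalmap_t)_* \mu = \lim (\evalmap_t)_*\mu_{n_k}$ for every $t > 0$ as well.

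Finally I would verify that $\mu_{n_k} \to \mu$ weakly on $(\nonlineardistspace^{1,2}, \topnonlineardistspace)$. Since $\topnonlineardistspace$ is the weakest topology making all $\evalmap_t$ continuous, tightness of $\{\mu_{n_k}\}$ on $\nonlineardistspace^{1,2}$ reduces — via the compact sets $\evalmap_{q_1}^{-1}(\cutoffspace{\cutoff_1}) \cap \cdots \cap \evalmap_{q_m}^{-1}(\cutoffspace{\cutoff_m})$ together with a tail argument over a countable dense set of times and the monotonicity $\sym^{1,2}(X(t)) \le \sym^{1,2}(X(s))$ for $t \ge s$ coming from the gradient-flow nature of \eqref{eq:YM} — to the already-established tightness of each $\{\sym^{1,2}(\rvnldistspace_{n_k}(q))\}$; here I should note the subtle point that $\nonlineardistspace^{1,2}$ is Lusin (Lemma \ref{lemma:nldistspace-lusin}), so Prokhorov applies and the family is relatively compact. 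To identify the limit, I would combine tightness with convergence of finite-dimensional distributions: for finitely many times $t_1, \ldots, t_m$ the joint law of $(\rvnldistspace_{n_k}(t_i))_i$ converges (one can reduce general $t_i$ to rational ones using continuity in time plus uniform tightness, then apply the continuous mapping theorem with the semigroup maps), and these finite-dimensional laws separate measures on $\nonlineardistspace^{1,2}$ by the analogue of Lemma \ref{fdlemma} together with Lemma \ref{lemma:orbit-space-regwl-determine-law}. Hence every subsequential weak limit equals $\mu$, giving $\mu_{n_k} \to \mu$. The main obstacle I anticipate is the measurability/Lusin bookkeeping in the projective-limit step — ensuring the consistent-tuple set is Borel and that the bijection $X \leftrightarrow (X(q))_q$ is bi-measurable so that $\tilde\mu$ genuinely descends to $\nonlineardistspace^{1,2}$; this is where Corollary \ref{cor:orbitspace-lusin}, Lemma \ref{lemma:borel-equals-product}, and the continuity lemmas for $\ymsemigroup^{1,2}$ all have to be used carefully, and it is also essentially the content of the deferred Lemma \ref{lemma:nldistspace-lusin}.
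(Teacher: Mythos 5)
The first two-thirds of your proposal match the paper's proof very closely: the diagonal extraction along rational times via Proposition~\ref{prop:prokhorov-orbit-space}, the consistency of the family $\{\nu_q\}$ under the semigroup, and the Kolmogorov--Bochner extension (the paper packages this as Lemma~\ref{lemma:kolmogorov-extension-application}, invoking Rao's Theorem~\ref{thm:kolmogorov-bochner} with inner regularity from Lemma~\ref{lemma:finite-product-orbit-space-inner-regular}), followed by transport through $\inclnldistpace$ and propagation from rational $t$ to all $t>0$ via continuity of $\ymsemigroup_{t-s}^{1,2}$.

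Where you diverge is the final step, and that is where your sketch has a genuine gap. You try to close the argument by establishing tightness of $\{\mu_{n_k}\}$ on $(\nonlineardistspace^{1,2},\topnonlineardistspace)$ and then invoking Prokhorov plus an identification via finite-dimensional distributions. But the sets you name, $\evalmap_{q_1}^{-1}(\cutoffspace{\cutoff_1}) \cap \cdots \cap \evalmap_{q_m}^{-1}(\cutoffspace{\cutoff_m})$, are \emph{not} compact in $\nonlineardistspace^{1,2}$: for $t$ strictly below $\min_i q_i$, the value $X(t)$ is completely unconstrained, so such a set is closed but not relatively compact. You would actually need a \emph{countable} intersection $K=\bigcap_{j\ge 1}\evalmap_{q_j}^{-1}(\cutoffspace{\cutoff_j})$ with $q_j\downarrow 0$ and $\cutoff_j\uparrow\infty$, and then a separate argument that $K$ is compact. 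That argument can be made to work (view $K$ inside $(\honeorbitspace)^{I_0}$; use monotonicity of $\sym^{1,2}$ along the flow to propagate the bounds at the $q_j$'s to every rational time, embedding $K$ into a Tychonoff-compact product of sets $\cutoffspace{\cdot}$; then use that $\nonlineardistspace_0^{1,2}$ is closed in $(\honeorbitspace)^{I_0}$ and that $\cutoffspace{\cutoff}$ is compact by Theorem~\ref{thm:countable-index-homeomorphism}), and the tail argument then picks the $\cutoff_j$ using tightness of each $\{\sym^{1,2}(\rvnldistspace_{n_k}(q_j))\}_k$. So your route is repairable, but as written the central compactness claim is wrong, and you have not supplied the needed separate argument that countable intersections of these cylinder sets are compact.

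The paper sidesteps all of this. It proves a dedicated Lemma~\ref{lemma:weak-convergence-nonlinear-dist-space}: if $(\evalmap_t)_*\mu_n \to (\evalmap_t)_*\mu_\infty$ weakly on $\honeorbitspace$ for every $t>0$, then $\mu_n\to\mu_\infty$ weakly on $\nonlineardistspace^{1,2}$. Its proof is a direct portmanteau argument: every open $O\in\topnonlineardistspace$ is a countable union $\bigcup_{t\in I_0}(\evalmap_t)^{-1}(U_t)$ (Lemma~\ref{lemma:open-sets-nonlinear-dist-space-representation}, which crucially uses the semigroup to push all evaluation times down to a single rational time), finite subunions are single preimages $(\evalmap_{t_m})^{-1}(U_m)$, and then $\liminf_n\mu_n(O)\ge\mu_\infty(O)$ follows from the already-known marginal convergence and the portmanteau lemma on the Lusin space $\honeorbitspace$. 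No tightness on $\nonlineardistspace^{1,2}$, no Prokhorov at the level of $\nonlineardistspace^{1,2}$, no monotonicity of $\sym^{1,2}$, and no need to argue that finite-dimensional distributions determine the law (that claim, Lemma~\ref{lemma:nl-dist-space-regwl-determine-law}, is actually \emph{deduced} from Lemma~\ref{lemma:weak-convergence-nonlinear-dist-space} in the paper, not vice versa). This is the idea your proposal is missing, and it is what makes the final step clean.
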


In the rest of this section, we work towards the proof of Theorem \ref{thm:nonlinear-dist-space-tightness}. 
We will prove Theorem \ref{thm:nonlinear-dist-space-tightness} by first proving the analogous statement on a slightly more convenient space than $\nonlineardistspace^{1,2}$, which we now define.

\begin{definition}\label{def:countable-nonlinear-dist-space}
Let $I_0 := \Q \cap (0, \infty)$. Let $\nonlineardistspace_0^{1,2}$ be the set of functions $X : I_0 \ra \honeorbitspace$ such that for each $s, t \in I_0$, $s \leq t$, we have that $X(t) = \ymsemigroup_{t-s}^{1,2}(X(s))$. Naturally, $\nonlineardistspace_0^{1,2}$ can be thought of as obtained by the restrictions of the elements of $\nonlineardistspace^{1,2}$ to the index set $I_0$. In a slight abuse of notation, for $t \in I_0$, let $\evalmap_t : \nonlineardistspace_0^{1,2} \ra \honeorbitspace$ be the evaluation map which maps $X \mapsto X(t)$. (We trust that the context should make clear whether the domain of $\evalmap_t$ is $\nonlineardistspace^{1,2}$ or $\nonlineardistspace_0^{1,2}$.) Let $\ms{T}_{\nonlineardistspace_0^{1,2}}$ be the weakest topology on $\nonlineardistspace_0^{1,2}$ which makes each of the evaluation maps $\evalmap_t, t \in I_0$ continuous (where $\honeorbitspace$ is equipped with the topology $\toporbitspace$). Let $\mc{B}_{\nonlineardistspace_0^{1,2}}$ be the Borel $\sigma$-algebra of $(\nonlineardistspace_0^{1,2}, \ms{T}_{\nonlineardistspace_0^{1,2}})$. Define the natural inclusion map $\inclnldistpace : \nonlineardistspace_0^{1,2} \hookrightarrow \nonlineardistspace^{1,2}$ as follows. For $X \in \nonlineardistspace_0^{1,2}$ and $t \in (0, \infty)$, take $t_0 \leq t$, $t_0 \in I_0$, and let $\inclnldistpace(X)(t) := \ymsemigroup_{t-t_0}^{1,2}(X(t_0))$. This is well-defined by the semigroup property of $(\ymsemigroup_t^{1,2}, t \geq 0)$.
\end{definition}

\begin{lemma}\label{lemma:homeomorphism-of-nonlinear-dist-space}
The map $\inclnldistpace : (\nonlineardistspace_0^{1,2}, \ms{T}_{\nonlineardistspace_0^{1,2}}) \ra (\nonlineardistspace^{1,2}, \topnonlineardistspace)$ is a homeomorphism.
\end{lemma}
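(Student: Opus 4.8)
The plan is to show that $\inclnldistpace$ is a continuous bijection with continuous inverse. I would first check that $\inclnldistpace$ is well-defined and a bijection. Well-definedness was already noted in Definition \ref{def:countable-nonlinear-dist-space} via the semigroup property: if $t_0, t_0' \in I_0$ with $t_0, t_0' \leq t$, say $t_0 \leq t_0'$, then $\ymsemigroup_{t - t_0}^{1,2}(X(t_0)) = \ymsemigroup_{t - t_0'}^{1,2}(\ymsemigroup_{t_0' - t_0}^{1,2}(X(t_0))) = \ymsemigroup_{t-t_0'}^{1,2}(X(t_0'))$, using that $X \in \nonlineardistspace_0^{1,2}$. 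One also checks $\inclnldistpace(X) \in \nonlineardistspace^{1,2}$, i.e. that $\inclnldistpace(X)(t) = \ymsemigroup_{t-s}^{1,2}(\inclnldistpace(X)(s))$ for $0 < s \leq t$, which again follows by picking a rational $t_0 \leq s$ and applying the semigroup property. For injectivity: if $\inclnldistpace(X) = \inclnldistpace(Y)$, then in particular they agree on $I_0$, and for $t \in I_0$ we may take $t_0 = t$, so $\inclnldistpace(X)(t) = X(t)$; hence $X = Y$ as functions on $I_0$. For surjectivity: given $Y \in \nonlineardistspace^{1,2}$, its restriction $Y|_{I_0}$ lies in $\nonlineardistspace_0^{1,2}$, and $\inclnldistpace(Y|_{I_0}) = Y$ because for any $t > 0$ and rational $t_0 \leq t$, $\ymsemigroup_{t-t_0}^{1,2}(Y(t_0)) = Y(t)$ by the defining property of $\nonlineardistspace^{1,2}$.

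Next I would establish continuity of $\inclnldistpace$. Since $\topnonlineardistspace$ is the weakest topology making all $\evalmap_t : \nonlineardistspace^{1,2} \to \honeorbitspace$ ($t > 0$) continuous, it suffices to show $\evalmap_t \circ \inclnldistpace : \nonlineardistspace_0^{1,2} \to \honeorbitspace$ is continuous for each $t > 0$. But $\evalmap_t \circ \inclnldistpace = \ymsemigroup_{t - t_0}^{1,2} \circ \evalmap_{t_0}$ where $t_0 \in I_0$ is any rational with $t_0 \leq t$; this is a composition of $\evalmap_{t_0} : \nonlineardistspace_0^{1,2} \to \honeorbitspace$ (continuous by definition of $\ms{T}_{\nonlineardistspace_0^{1,2}}$) and $\ymsemigroup_{t-t_0}^{1,2} : (\honeorbitspace, \toporbitspace) \to (\honeorbitspace, \toporbitspace)$ (continuous by Lemma \ref{lemma:ym-semigroup-continuous}), hence continuous. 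Conversely, to show $\inclnldistpace^{-1}$ is continuous, i.e. that $\inclnldistpace$ is open (equivalently that the map $Y \mapsto Y|_{I_0}$ from $\nonlineardistspace^{1,2}$ to $\nonlineardistspace_0^{1,2}$ is continuous), it suffices, since $\ms{T}_{\nonlineardistspace_0^{1,2}}$ is the weakest topology making each $\evalmap_t : \nonlineardistspace_0^{1,2} \to \honeorbitspace$ ($t \in I_0$) continuous, to check that $\evalmap_t \circ (\,\cdot\,|_{I_0}) = \evalmap_t : \nonlineardistspace^{1,2} \to \honeorbitspace$ is continuous for each $t \in I_0$; but this holds by the very definition of $\topnonlineardistspace$.

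Putting these together, $\inclnldistpace$ is a continuous bijection whose inverse is also continuous, so it is a homeomorphism. I do not anticipate a serious obstacle here: the entire argument is a routine unwinding of the universal properties of the initial (weakest) topologies on $\nonlineardistspace^{1,2}$ and $\nonlineardistspace_0^{1,2}$ together with the semigroup property of $(\ymsemigroup_t^{1,2})_{t \geq 0}$ and the continuity of each $\ymsemigroup_t^{1,2}$ from Lemma \ref{lemma:ym-semigroup-continuous}. The only point requiring the tiniest bit of care is the well-definedness of $\inclnldistpace$ and the verification that its image lands in $\nonlineardistspace^{1,2}$, both of which reduce to the semigroup identity $\ymsemigroup_{a+b}^{1,2} = \ymsemigroup_a^{1,2} \circ \ymsemigroup_b^{1,2}$ applied after inserting an intermediate rational time; this was essentially already observed when $\inclnldistpace$ was defined.
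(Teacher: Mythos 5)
Your proof is correct and takes essentially the same approach as the paper: you invoke the universal property of initial (weakest) topologies, whereas the paper works directly with net convergence, but these are just two ways of expressing the same unwinding of the definitions. The one small extra care you took — writing out the bijectivity and well-definedness verifications, which the paper merely asserts — is fine but not a substantive difference.
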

\begin{proof}
Note that $\inclnldistpace$ is bijective, and note that the inverse $\inclnldistpace^{-1} : \nonlineardistspace^{1,2} \ra \nonlineardistspace_0^{1,2}$ is the obvious restriction map, i.e. $\inclnldistpace^{-1}(X)(t) = X(t)$ for all $X \in \nonlineardistspace^{1,2}$, $t \in I_0$. To see why $\inclnldistpace^{-1}$ is continuous, it suffices to show that if $\{X_\alpha\}$ is a net in $\nonlineardistspace^{1,2}$ such that $X_\alpha \ra X \in \nonlineardistspace^{1,2}$, then $\inclnldistpace^{-1}(X_\alpha) \ra \inclnldistpace^{-1}(X)$. By the definition of $\ms{T}_{\nonlineardistspace_0^{1,2}}$ as the smallest topology generated by $\{\evalmap_t\}_{t \in I_0}$, convergence of a net $\{Y_\alpha\} \sse \nonlineardistspace_0^{1,2}$ to some $Y \in \nonlineardistspace_0^{1,2}$ is equivalent to the convergence of $Y_\alpha(t) \ra Y(t)$ in $(\honeorbitspace, \toporbitspace)$ for all $t \in I_0$. Thus to show that $\inclnldistpace^{-1}(X_\alpha) \ra \inclnldistpace^{-1}(X)$, it suffices to show that for all $t \in I_0$, we have that $\inclnldistpace^{-1}(X_\alpha)(t) \ra \inclnldistpace(X)(t)$. But this follows since $\inclnldistpace^{-1}(X_\alpha)(t) = X_\alpha(t) \ra X(t) = \inclnldistpace^{-1}(X)(t)$, where the limit follows since $\evalmap_t$ is continuous, so that $X_\alpha(t) = \evalmap_t(X_\alpha) \ra \evalmap_t(X) = X(t)$.

It remains to show that $\inclnldistpace$ is continuous. Towards this end, let $\{X_\alpha\}$ be a net in $\nonlineardistspace_0^{1,2}$ such that $X_\alpha \ra X \in \nonlineardistspace_0^{1,2}$. As previously noted, this convergence is equivalent to the convergence of $X_\alpha(s) \ra X(s)$ in $(\honeorbitspace, \toporbitspace)$ for all $s \in I_0$. Note also that (similar to convergence in $\ms{T}_{\nonlineardistspace_0^{1,2}}$) by the definition of $\ms{T}_{\nonlineardistspace^{1,2}}$, convergence of a net $\{Y_\alpha\} \sse \nonlineardistspace^{1,2}$ to some $Y \in \nonlineardistspace^{1,2}$ is equivalent to the convergence of $Y_\alpha(t) \ra Y(t)$ in $(\honeorbitspace, \toporbitspace)$ for all $t > 0$. Thus to show $\inclnldistpace(X_\alpha) \ra \inclnldistpace(X)$, we need to show that for all $t > 0$, $\inclnldistpace(X_\alpha)(t) \ra \inclnldistpace(X)(t)$ in $(\honeorbitspace, \toporbitspace)$. Let $t > 0$, and take $s \in I_0$, $s \leq t$. Then we have that $\inclnldistpace(X_\alpha)(t) = \ymsemigroup_{t-s}^{1,2}(X_\alpha(s))$ and $\inclnldistpace(X)(t) = \ymsemigroup_{t-s}^{1,2}(X(s))$. Using that $X_\alpha(s) \ra X(s)$ in $(\honeorbitspace, \toporbitspace)$ and that $\ymsemigroup_{t-s}^{1,2}$ is continuous (by Lemma \ref{lemma:ym-semigroup-continuous}), we obtain $\inclnldistpace(X_\alpha)(t) \ra \inclnldistpace(X)(t)$, as desired.
\end{proof}

\begin{lemma}\label{lemma:open-sets-nonlinear-dist-space-representation}
Any open set $O \in \topnonlineardistspace$ can be written as a countable union
\[ O = \bigcup_{t \in I_0} (\evalmap_t)^{-1}(U_t), \]
where for each $t \in I_0$, $U_t \in \toporbitspace$ is an open set of $\honeorbitspace$. The same is true for open sets $O \in \ms{T}_{\nonlineardistspace_0^{1,2}}$.
\end{lemma}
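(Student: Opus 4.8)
The plan is to exploit the fact that $\topnonlineardistspace$ is, by definition, the topology generated by the subbasis $\mathcal{S} = \{(\evalmap_t)^{-1}(U) : t > 0,\ U \in \toporbitspace\}$, so that every $O \in \topnonlineardistspace$ is a union of finite intersections $\bigcap_{i=1}^k (\evalmap_{t_i})^{-1}(U_i)$ of subbasic sets. The key structural point to establish is that any such finite intersection can be rewritten as a single preimage $(\evalmap_t)^{-1}(V)$ with $t \in I_0$ and $V \in \toporbitspace$; granting this, the conclusion follows by a routine regrouping argument.

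First I would record the semigroup identity at the level of evaluation maps: for $X \in \nonlineardistspace^{1,2}$ and $0 < s \le t$, the defining relation $X(t) = \ymsemigroup_{t-s}^{1,2}(X(s))$ says precisely that $\evalmap_t = \ymsemigroup_{t-s}^{1,2} \circ \evalmap_s$ as maps on $\nonlineardistspace^{1,2}$. Since $\ymsemigroup_{t-s}^{1,2} : (\honeorbitspace, \toporbitspace) \to (\honeorbitspace, \toporbitspace)$ is continuous by Lemma~\ref{lemma:ym-semigroup-continuous}, for any open $U$ the set $(\ymsemigroup_{t-s}^{1,2})^{-1}(U)$ is open in $\honeorbitspace$, and $(\evalmap_t)^{-1}(U) = (\evalmap_s)^{-1}\big((\ymsemigroup_{t-s}^{1,2})^{-1}(U)\big)$. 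Thus any subbasic set $(\evalmap_{t'})^{-1}(U)$ with $t'>0$ arbitrary can be pulled back to any earlier time $s \le t'$.

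Now, given a basic open set $B = \bigcap_{i=1}^k(\evalmap_{t_i})^{-1}(U_i)$ (we may assume $k \ge 1$, since for $k=0$ we have $B = \nonlineardistspace^{1,2} = (\evalmap_1)^{-1}(\honeorbitspace)$, already of the required form), choose a rational $t \in I_0$ with $0 < t \le \min_{1 \le i \le k} t_i$, put $V_i := (\ymsemigroup_{t_i - t}^{1,2})^{-1}(U_i) \in \toporbitspace$, and observe $B = \bigcap_{i=1}^k (\evalmap_t)^{-1}(V_i) = (\evalmap_t)^{-1}\big(\bigcap_{i=1}^k V_i\big)$, a single preimage under $\evalmap_t$, $t \in I_0$, of an open subset of $\honeorbitspace$. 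Writing the given open set as $O = \bigcup_{\beta} (\evalmap_{t_\beta})^{-1}(V_\beta)$ with all $t_\beta \in I_0$ (the index set for $\beta$ being possibly uncountable), and defining for each $t \in I_0$ the open set $U_t := \bigcup\{V_\beta : t_\beta = t\}$ (taken to be $\emptyset$ when no $\beta$ contributes), we get $O = \bigcup_{t \in I_0}(\evalmap_t)^{-1}(U_t)$ because preimages commute with unions. As $I_0 = \Q \cap (0,\infty)$ is countable, this is the desired countable representation. The case of $\ms{T}_{\nonlineardistspace_0^{1,2}}$ is identical, except that the times $t_i$ already lie in $I_0$, so one simply takes $t := \min_i t_i \in I_0$ and uses the relation $X(t) = \ymsemigroup_{t-t_i}^{1,2}(X(t_i))$ valid for $X \in \nonlineardistspace_0^{1,2}$ and $t_i \le t$ in $I_0$.

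The step requiring the most care is the rewriting of a finite intersection of subbasic sets as a single preimage under one evaluation map: this is where the semigroup property and the continuity of $\ymsemigroup^{1,2}$ must be invoked correctly, and where one uses that a rational time below $\min_i t_i$ always exists. Everything else — preimages commuting with unions, the grouping of the union over the countable set $I_0$ — is bookkeeping.
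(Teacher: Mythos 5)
Your proof is correct and follows essentially the same route as the paper: express $O$ as a union of basic open sets, use the identity $\evalmap_t = \ymsemigroup_{t-s}^{1,2}\circ\evalmap_s$ together with the continuity of $\ymsemigroup_{t-s}^{1,2}$ (Lemma \ref{lemma:ym-semigroup-continuous}) to rewrite each basic open set as a single preimage under one evaluation map at a rational time $\le$ all the $t_i$, and then group the resulting preimages by that common rational time. One small slip in your last sentence: for the $\ms{T}_{\nonlineardistspace_0^{1,2}}$ case with $t = \min_i t_i$ you want $X(t_i) = \ymsemigroup_{t_i - t}^{1,2}(X(t))$ for $t \le t_i$, not the reversed relation you wrote, but this is a typo and does not affect the argument.
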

\begin{proof}
We will prove this for $O \in \topnonlineardistspace$. The proof for $O \in \ms{T}_{\nonlineardistspace_0^{1,2}}$ is essentially the same. By definition of $\topnonlineardistspace$, $O$ may be written in the following form. There is an index set $\Gamma$, and for each $\alpha \in \Gamma$, there are finite sets $\{t_1^\alpha, \ldots, t_{k_\alpha}^\alpha\} \sse (0, \infty)$ and $\{O_1^\alpha, \ldots, O_{k_\alpha}^\alpha\} \sse \toporbitspace$, such that
\[ O = \bigcup_{\alpha \in \Gamma} \bigcap_{i=1}^{k_\alpha} (\evalmap_{t_i^\alpha})^{-1}(O_i^\alpha). \]
Observe that for each $0  < s \leq t$, we have that $\evalmap_t = \ymsemigroup_{t-s}^{1,2} \circ \evalmap_s$. Thus for each $\alpha \in \Gamma$, fix $t_0^\alpha \in I_0$, $t_0^\alpha \leq \min_{1 \leq i \leq k_\alpha} t_i^\alpha$. We then have for each $1 \leq i \leq k_\alpha$,
\[ (\evalmap_{t_i^\alpha})^{-1}(O_i^\alpha) = (\evalmap_{t_0^\alpha})^{-1} ((\ymsemigroup_{t_i^\alpha - t_0^\alpha}^{1,2})^{-1}(O_i^\alpha)).\]
Thus for each $\alpha \in \Gamma$, let
\[ O^\alpha := \bigcap_{i=1}^{k_\alpha}(\ymsemigroup_{t_i^\alpha - t_0^\alpha}^{1,2})^{-1}(O_i^\alpha). \]
Since $\ymsemigroup_t^{1,2}$ is continuous for all $t \geq 0$ (by Lemma \ref{lemma:ym-semigroup-continuous}), we have that $O^\alpha \in \toporbitspace$.
We may write
\[ O = \bigcup_{\alpha \in \Gamma} (\evalmap_{t_0^\alpha})^{-1}(O^\alpha).\]
Now, for each $t \in I_0$, let $\alpha(t) := \{\alpha \in \Gamma : t_0^\alpha = t\}$, and let $U_t := \bigcup_{\alpha \in \alpha(t)} O^\alpha \in \toporbitspace$. We then have
\[ O = \bigcup_{t \in I_0} \bigcup_{\alpha \in \alpha(t)} (\evalmap_t)^{-1}(O^\alpha) = \bigcup_{t \in I_0} (\evalmap_t)^{-1}(U_t), \]
as desired.
\end{proof}

In the following, recall the definitions of $(\honeorbitspace)^I$, $\toporbitspace^{I}$, $\mc{B}((\honeorbitspace)^I)$, and $\borelorbitspace^{I}$ for a countable index set $I$ (see Definition \ref{def:orbit-space-products}).
Note that we can think of $\nonlineardistspace_0^{1,2}$ as a subset of $(\honeorbitspace)^{I_0}$. This leads to the next two lemmas.

\begin{lemma}\label{lemma:countable-nonlinear-dist-space-measurable}
For every $s, t \in I_0$, $s < t$, the set
\[ \{X \in (\honeorbitspace)^{I_0} : X(t) = \ymsemigroup_{t-s}^{1,2}(X(s))\} \in \borelorbitspace^{I_0}. \]
Consequently, as $\nonlineardistspace_0^{1,2}$ is a countable intersection (over all $s, t \in I_0$, $s < t$) of the above sets, we have that $\nonlineardistspace_0^{1,2} \in \borelorbitspace^{I_0}$.
\end{lemma}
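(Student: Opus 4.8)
The plan is to realize each of the sets $\{X \in (\honeorbitspace)^{I_0} : X(t) = \ymsemigroup_{t-s}^{1,2}(X(s))\}$ as the preimage of the diagonal of $\honeorbitspace \times \honeorbitspace$ under an explicitly measurable map, and then to use that the diagonal is Borel because $\honeorbitspace$ is Hausdorff.

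First I would fix $s, t \in I_0$ with $s < t$ and introduce the map $\Theta_{s,t} : (\honeorbitspace)^{I_0} \to \honeorbitspace \times \honeorbitspace$ given by $\Theta_{s,t}(X) := \big(\ymsemigroup_{t-s}^{1,2}(X(s)),\, X(t)\big)$. The second component is the coordinate projection onto the $t$-th factor, which is measurable by the definition of the product $\sigma$-algebra $\borelorbitspace^{I_0}$; the first component is the composition of the coordinate projection onto the $s$-th factor with $\ymsemigroup_{t-s}^{1,2}$, which is continuous, hence Borel measurable, by Lemma \ref{lemma:ym-semigroup-continuous}. Since a map into a product is measurable with respect to the product $\sigma$-algebra exactly when all of its components are, $\Theta_{s,t}$ is measurable from $\big((\honeorbitspace)^{I_0}, \borelorbitspace^{I_0}\big)$ into $\big(\honeorbitspace \times \honeorbitspace, \borelorbitspace \otimes \borelorbitspace\big)$.

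Next I would observe that
\[ \{X \in (\honeorbitspace)^{I_0} : X(t) = \ymsemigroup_{t-s}^{1,2}(X(s))\} = \Theta_{s,t}^{-1}(\Delta), \qquad \Delta := \{(y,z) \in \honeorbitspace \times \honeorbitspace : y = z\}. \]
By Theorem \ref{thm:countable-index-homeomorphism}, $(\honeorbitspace, \toporbitspace)$ is Hausdorff, so $\Delta$ is closed in the product topology and therefore $\Delta \in \mc{B}(\honeorbitspace \times \honeorbitspace)$. Applying Lemma \ref{lemma:borel-equals-product} with the two-element index set $\{1,2\}$ gives $\mc{B}(\honeorbitspace \times \honeorbitspace) = \borelorbitspace \otimes \borelorbitspace$, so in fact $\Delta$ lies in the product $\sigma$-algebra; hence $\Theta_{s,t}^{-1}(\Delta) \in \borelorbitspace^{I_0}$, which is the first assertion. (Alternatively, one could avoid Lemma \ref{lemma:borel-equals-product} by writing $\Delta$ as the countable intersection of the zero sets of $(y,z) \mapsto f_k(y) - f_k(z)$, where $\{f_k\}$ is the countable separating family of real-valued functions on $\honeorbitspace$ provided by the coordinates of the injection $\Psi_0$ of Definition \ref{def:countable-index-homeomorphism}.)

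Finally, for the stated consequence, since $\ymsemigroup_0^{1,2}$ is the identity on $\honeorbitspace$ (the time-$0$ value of the flow being the initial orbit), the condition ``$X(t) = \ymsemigroup_{t-s}^{1,2}(X(s))$ for all $s \le t$ in $I_0$'' is equivalent to the same condition restricted to pairs $s < t$, so $\nonlineardistspace_0^{1,2}$ is the countable intersection over $s, t \in I_0$, $s < t$, of the sets just shown to be in $\borelorbitspace^{I_0}$; hence $\nonlineardistspace_0^{1,2} \in \borelorbitspace^{I_0}$. The only step that needs real care is the passage from ``$\Delta$ is closed'' to ``$\Delta$ is in the product $\sigma$-algebra'' --- this is false for general Hausdorff spaces and genuinely uses a countability/Lusin property of $\honeorbitspace$ (via Lemma \ref{lemma:borel-equals-product}, ultimately Corollary \ref{cor:orbitspace-lusin}); everything else is routine bookkeeping with product $\sigma$-algebras together with the already-established continuity of the Yang--Mills heat semigroup.
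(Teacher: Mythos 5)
Your proof is correct and rests on exactly the same ingredients as the paper's: continuity of $\ymsemigroup_{t-s}^{1,2}$ (Lemma \ref{lemma:ym-semigroup-continuous}), Hausdorffness of $(\honeorbitspace, \toporbitspace)$, and Lemma \ref{lemma:borel-equals-product} to identify the Borel and product $\sigma$-algebras. The only cosmetic difference is that you pull back the diagonal $\Delta$ under $\Theta_{s,t}$, whereas the paper verifies directly by a net argument that the graph set $\{Y \in (\honeorbitspace)^{\{s,t\}} : Y(t) = \ymsemigroup_{t-s}^{1,2}(Y(s))\}$ is closed --- the same Hausdorff-plus-continuity fact repackaged.
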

\begin{proof} 
First, we claim that for any $s, t \in I_0$, $s < t$, the set
\[ \{X \in (\honeorbitspace)^{\{s, t\}} : X(t) = \ymsemigroup_{t-s}^{1,2}(X(s))\}\]
is a closed subset of $((\honeorbitspace)^{\{s, t\}}, \toporbitspace^{\{s, t\}})$. To see this, let $\{X_\alpha\}$ be a net contained in the above set, such that $X_\alpha \ra X \in (\honeorbitspace)^{\{s, t\}}$. This is equivalent to $X_\alpha(s) \ra X(s)$ and $X_\alpha(t) \ra X(t)$. Note that $X_\alpha(t) = \ymsemigroup_{t-s}^{1,2}(X_\alpha(s))$. Thus since $\ymsemigroup_{t-s}^{1,2}$ is continuous (by Lemma \ref{lemma:ym-semigroup-continuous}), we have that
\[ X(t) = \lim_\alpha X_\alpha(t) = \ymsemigroup_{t-s}^{1,2}(\lim_\alpha X_\alpha(s)) = \ymsemigroup_{t-s}^{1,2}(X(s)), \]
and the desired closedness follows. Now using that $\mc{B}((\honeorbitspace)^{\{s, t\}}) = \borelorbitspace^{\{s, t\}} $ (by Lemma \ref{lemma:borel-equals-product}), and the fact that $\borelorbitspace^{I_0}$ contains all sets of the form
\[ \{X \in (\honeorbitspace)^{I_0} : (X(s), X(t)) \in B\}, ~~ B \in \borelorbitspace^{\{s, t\}}, \]
the desired result now follows.
\end{proof}

\begin{lemma}\label{lemma:borel-equals-infinite-product}
Let $\borelorbitspace^{I_0} |_{\nonlineardistspace_0^{1,2}} := \{B \cap \nonlineardistspace_0^{1,2} : B \in \borelorbitspace^{I_0}\}$ be the restriction of the product $\sigma$-algebra $\borelorbitspace^{I_0}$ to $\nonlineardistspace_0^{1,2}$. Then $\borelorbitspace^{I_0} |_{\nonlineardistspace_0^{1,2}} = \mc{B}_{\nonlineardistspace_0^{1,2}}$.
\end{lemma}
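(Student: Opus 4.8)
The plan is to prove the two inclusions $\mc{B}_{\nonlineardistspace_0^{1,2}} \subseteq \borelorbitspace^{I_0}|_{\nonlineardistspace_0^{1,2}}$ and $\borelorbitspace^{I_0}|_{\nonlineardistspace_0^{1,2}} \subseteq \mc{B}_{\nonlineardistspace_0^{1,2}}$ separately, exploiting that, by definition, $\ms{T}_{\nonlineardistspace_0^{1,2}}$ is the weakest topology making every evaluation map $\evalmap_t$, $t \in I_0$, continuous, together with the explicit description of open sets furnished by Lemma \ref{lemma:open-sets-nonlinear-dist-space-representation}.

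For the inclusion $\mc{B}_{\nonlineardistspace_0^{1,2}} \subseteq \borelorbitspace^{I_0}|_{\nonlineardistspace_0^{1,2}}$, I would first recall that $\mc{B}_{\nonlineardistspace_0^{1,2}}$ is generated by $\ms{T}_{\nonlineardistspace_0^{1,2}}$, so it suffices to show every open set of $\nonlineardistspace_0^{1,2}$ lies in $\borelorbitspace^{I_0}|_{\nonlineardistspace_0^{1,2}}$. By Lemma \ref{lemma:open-sets-nonlinear-dist-space-representation}, any such open set $O$ can be written as a countable union $O = \bigcup_{t \in I_0} (\evalmap_t)^{-1}(U_t)$ with each $U_t \in \toporbitspace$. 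Each set $(\evalmap_t)^{-1}(U_t)$ equals $\{X \in (\honeorbitspace)^{I_0} : X(t) \in U_t\} \cap \nonlineardistspace_0^{1,2}$; since $U_t$ is open it is in particular an element of $\borelorbitspace$, so the cylinder set over it belongs to the product $\sigma$-algebra $\borelorbitspace^{I_0}$, whence $(\evalmap_t)^{-1}(U_t) \in \borelorbitspace^{I_0}|_{\nonlineardistspace_0^{1,2}}$. As $I_0$ is countable, the countable union $O$ also lies in $\borelorbitspace^{I_0}|_{\nonlineardistspace_0^{1,2}}$, giving the inclusion.

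For the reverse inclusion $\borelorbitspace^{I_0}|_{\nonlineardistspace_0^{1,2}} \subseteq \mc{B}_{\nonlineardistspace_0^{1,2}}$, I would use that $\borelorbitspace^{I_0}$ is generated by the cylinder sets $\{X \in (\honeorbitspace)^{I_0} : X(t) \in B\}$ over $t \in I_0$ and $B \in \borelorbitspace$, together with the standard fact that the trace on a subset of a $\sigma$-algebra generated by a family $\mathcal{E}$ is generated by the traces of the members of $\mathcal{E}$. This gives that $\borelorbitspace^{I_0}|_{\nonlineardistspace_0^{1,2}}$ is generated, as a $\sigma$-algebra on $\nonlineardistspace_0^{1,2}$, by the sets $(\evalmap_t)^{-1}(B)$, $t \in I_0$, $B \in \borelorbitspace$. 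Since each $\evalmap_t : (\nonlineardistspace_0^{1,2}, \ms{T}_{\nonlineardistspace_0^{1,2}}) \to (\honeorbitspace, \toporbitspace)$ is continuous by construction of the topology, it is Borel measurable, so every such $(\evalmap_t)^{-1}(B)$ lies in $\mc{B}_{\nonlineardistspace_0^{1,2}}$, and the inclusion follows.

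The argument is essentially bookkeeping; the only point demanding care — what passes for the main obstacle here — is the first inclusion, where one must write open sets of $\nonlineardistspace_0^{1,2}$ as \emph{countable} rather than arbitrary unions of preimages of open sets, which is exactly the content of Lemma \ref{lemma:open-sets-nonlinear-dist-space-representation} and rests on the defining semigroup relation of $\nonlineardistspace_0^{1,2}$ and the continuity of the maps $\ymsemigroup_t^{1,2}$ (Lemma \ref{lemma:ym-semigroup-continuous}).
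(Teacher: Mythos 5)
Your proof is correct and follows essentially the same route as the paper's. The paper's argument for $\borelorbitspace^{I_0}|_{\nonlineardistspace_0^{1,2}} \subseteq \mc{B}_{\nonlineardistspace_0^{1,2}}$ first reduces the generators of $\borelorbitspace^{I_0}$ to cylinders over open sets (via $\borelorbitspace = \sigma(\toporbitspace)$) and then observes that their traces are sub-basic open sets of $\ms{T}_{\nonlineardistspace_0^{1,2}}$; you instead pass directly to arbitrary Borel generators and invoke continuity, hence measurability, of $\evalmap_t$ — logically the same content packaged slightly differently. The reverse inclusion is identical in both: Lemma~\ref{lemma:open-sets-nonlinear-dist-space-representation} supplies the crucial reduction of open sets to \emph{countable} unions of cylinder preimages, and you correctly flag this as the one non-routine step.
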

\begin{proof}
First, by definition, we have that $\borelorbitspace^{I_0}$ is generated by sets of the form $\{X \in (\honeorbitspace)^{I_0} : X(t) \in B\}$, for $t \in I_0$ and $B \in \borelorbitspace$. Since $\borelorbitspace = \sigma(\toporbitspace)$, by a standard measure theory exercise, we can then obtain that $\borelorbitspace^{I_0}$ is generated by sets of the form $\{X \in (\honeorbitspace)^{I_0} : X(t) \in U\}$ for $t \in I_0$ and $U \in \toporbitspace$. The intersection of this set with $\nonlineardistspace_0^{1,2}$ is exactly $(\evalmap_t)^{-1}(U)$, and thus we can obtain that $\borelorbitspace^{I_0} |_{\nonlineardistspace_0^{1,2}}$ is generated by $(\evalmap_t)^{-1}(U)$ for $t \in I_0, U \in \toporbitspace$. By definition, these sets are open, and thus they are elements of $\mc{B}_{\nonlineardistspace_0^{1,2}}$. It follows that $\borelorbitspace^{I_0} |_{\nonlineardistspace_0^{1,2}} \sse \mc{B}_{\nonlineardistspace_0^{1,2}}$. For the reverse inclusion, it follows by Lemma \ref{lemma:open-sets-nonlinear-dist-space-representation} that any open set $O \in \ms{T}_{\nonlineardistspace_0^{1,2}}$ may be written as a countable union of elements of $\borelorbitspace^{I_0} |_{\nonlineardistspace_0^{1,2}}$. From this, the reverse inclusion follows.
\end{proof}

Using Lemma \ref{lemma:countable-nonlinear-dist-space-measurable}, we can prove that $(\nonlineardistspace^{1,2}, \topnonlineardistspace)$ is a Lusin space (Lemma \ref{lemma:nldistspace-lusin}).

\begin{proof}[Proof of Lemma \ref{lemma:nldistspace-lusin}]
First, note that as in the proof of Lemma \ref{lemma:finite-product-orbit-space-inner-regular}, we have that $((\honeorbitspace)^{I_0}, \toporbitspace^{I_0})$ is a Lusin space. Now by Lemma \ref{lemma:countable-nonlinear-dist-space-measurable}, and the fact that $\borelorbitspace^{I_0} = \mc{B}((\honeorbitspace)^{I_0})$ (by Lemma \ref{lemma:borel-equals-product}), we have that $\nonlineardistspace_0^{1,2}$ is a Borel subset of $(\honeorbitspace)^{I_0}$. Since Borel subsets of Lusin spaces are Lusin spaces (by \cite[Chapter II, Theorem 2]{Schwartz1973}), it follows that $(\nonlineardistspace_0^{1,2}, \ms{T}_{\nonlineardistspace_0^{1,2}})$ is a Lusin space (note the subspace topology on $\nonlineardistspace_0^{1,2}$ induced by $\toporbitspace^{I_0}$ is exactly $\ms{T}_{\nonlineardistspace_0^{1,2}}$). Since this space is homeomorphic to $(\nonlineardistspace^{1,2}, \topnonlineardistspace)$ (by Lemma \ref{lemma:homeomorphism-of-nonlinear-dist-space}), the desired result now follows.
\end{proof}

Using Lemmas \ref{lemma:nldistspace-lusin} and \ref{lemma:open-sets-nonlinear-dist-space-representation}, we prove the following lemma, which gives a sufficient condition for weak convergence of probability measures on $(\nonlineardistspace^{1,2}, \borelnonlineardistspace)$.

\begin{lemma}\label{lemma:weak-convergence-nonlinear-dist-space}
Let $\{\mu_n\}_{n \leq \infty}$ be probability measures on $(\nonlineardistspace^{1,2}, \borelnonlineardistspace)$. Suppose that for all $t > 0$, $(\evalmap_t)_* \mu_n$ converges weakly to $(\evalmap_t)_* \mu_\infty$ (as probability measures on $(\honeorbitspace, \borelorbitspace)$). Then $\mu_n$ converges weakly to $\mu_\infty$.
\end{lemma}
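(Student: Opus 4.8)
The plan is to verify weak convergence through the open‑set form of the portmanteau lemma, which is available to us since $(\nonlineardistspace^{1,2}, \topnonlineardistspace)$ is a Lusin space (Lemma~\ref{lemma:nldistspace-lusin}); concretely, it suffices to show $\liminf_{n} \mu_n(O) \geq \mu_\infty(O)$ for every $O \in \topnonlineardistspace$. Fix such an $O$. By Lemma~\ref{lemma:open-sets-nonlinear-dist-space-representation} we may write $O = \bigcup_{t \in I_0} (\evalmap_t)^{-1}(U_t)$ with each $U_t \in \toporbitspace$, where $I_0 = \Q \cap (0, \infty)$. Enumerate $I_0 = \{t_1, t_2, \dots\}$ and set $O_k := \bigcup_{i=1}^{k} (\evalmap_{t_i})^{-1}(U_{t_i})$, so that $O_k \sse O$ and $O_k \uparrow O$; hence $\mu_\infty(O) = \lim_{k} \mu_\infty(O_k)$ by continuity of measure from below. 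It therefore suffices to prove $\liminf_n \mu_n(O_k) \geq \mu_\infty(O_k)$ for each fixed $k$, and then send $k \to \infty$, using $\liminf_n \mu_n(O) \geq \liminf_n \mu_n(O_k)$.

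The key step is to collapse the finite union $O_k$ into the preimage of an open set under a single map. Put $s := \min\{t_1, \dots, t_k\} \in I_0$ and let $g : (\honeorbitspace, \toporbitspace) \ra ((\honeorbitspace)^{[k]}, \toporbitspace^{[k]})$ be given by $g(x) := (\ymsemigroup_{t_i - s}^{1,2}(x))_{i=1}^{k}$. This map is continuous: each coordinate $\ymsemigroup_{t_i - s}^{1,2}$ is continuous by Lemma~\ref{lemma:ym-semigroup-continuous}, and a map into a product is continuous iff all its coordinates are. By the defining property of $\nonlineardistspace^{1,2}$, every $X \in \nonlineardistspace^{1,2}$ satisfies $X(t_i) = \ymsemigroup_{t_i - s}^{1,2}(X(s))$, so the joint evaluation $\evalmap_{(t_1,\dots,t_k)} : X \mapsto (X(t_1), \dots, X(t_k))$ factors as $\evalmap_{(t_1,\dots,t_k)} = g \circ \evalmap_s$. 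Moreover $O_k = (\evalmap_{(t_1,\dots,t_k)})^{-1}(V)$ where $V := \bigcup_{i=1}^{k} \mathrm{pr}_i^{-1}(U_{t_i})$ is open in $\toporbitspace^{[k]}$ ($\mathrm{pr}_i$ denoting the $i$‑th coordinate projection). Consequently $(\evalmap_{(t_1,\dots,t_k)})_* \mu_n = g_*\big( (\evalmap_s)_* \mu_n \big)$, and since $(\evalmap_s)_* \mu_n$ converges weakly to $(\evalmap_s)_* \mu_\infty$ on $\honeorbitspace$ by hypothesis, the continuous mapping theorem for weak convergence (valid for general topological spaces: precompose bounded continuous test functions with $g$) shows $(\evalmap_{(t_1,\dots,t_k)})_* \mu_n$ converges weakly to $(\evalmap_{(t_1,\dots,t_k)})_* \mu_\infty$ on $(\honeorbitspace)^{[k]}$.

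Finally, $((\honeorbitspace)^{[k]}, \toporbitspace^{[k]})$ is again a Lusin space (finite products of Lusin spaces are Lusin), and its Borel $\sigma$‑algebra coincides with the product $\sigma$‑algebra $\borelorbitspace^{[k]}$ by Lemma~\ref{lemma:borel-equals-product}, so we may apply the portmanteau lemma there as well: for the open set $V$ we get $\liminf_n (\evalmap_{(t_1,\dots,t_k)})_* \mu_n(V) \geq (\evalmap_{(t_1,\dots,t_k)})_* \mu_\infty(V)$, that is, $\liminf_n \mu_n(O_k) \geq \mu_\infty(O_k)$. Combining the pieces, $\liminf_n \mu_n(O) \geq \mu_\infty(O_k)$ for every $k$, and letting $k \to \infty$ gives $\liminf_n \mu_n(O) \geq \mu_\infty(O)$. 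Since $O \in \topnonlineardistspace$ was arbitrary, $\mu_n$ converges weakly to $\mu_\infty$.

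I do not expect a serious obstacle here; the one point requiring care — and the place where the structure of $\nonlineardistspace^{1,2}$ really does the work — is the factorization $\evalmap_{(t_1,\dots,t_k)} = g \circ \evalmap_s$, which is precisely what lets the hypothesis on the one‑time marginals $(\evalmap_t)_* \mu_n$ control the joint law at finitely many times. One should also make sure Lemma~\ref{lemma:open-sets-nonlinear-dist-space-representation} is invoked for $\topnonlineardistspace$ (it explicitly covers that topology), and that the portmanteau lemma is being used in its ``$\liminf$ on open sets'' form, which is available on Lusin spaces and in any case holds on arbitrary topological spaces via the layer‑cake representation of $\mu(f)$.
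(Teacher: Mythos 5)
Your proposal is correct, and the overall strategy — portmanteau on open sets, decomposition via Lemma~\ref{lemma:open-sets-nonlinear-dist-space-representation}, exhausting by finite sub-unions, reducing to the earliest time using the semigroup — is the same as the paper's. The one place you diverge is the collapse step for $O_k$: you route through the finite product $(\honeorbitspace)^{[k]}$, defining $g(x) = (\ymsemigroup_{t_i - s}^{1,2}(x))_{i=1}^k$, factoring the joint evaluation as $g \circ \evalmap_s$, invoking the continuous mapping theorem, then applying portmanteau in the product. The paper instead observes directly that, since $(\evalmap_{t_i})^{-1}(U_{t_i}) = (\evalmap_{t_m})^{-1}\bigl((\ymsemigroup_{t_i - t_m}^{1,2})^{-1}(U_{t_i})\bigr)$ (exactly as in the proof of Lemma~\ref{lemma:open-sets-nonlinear-dist-space-representation}), the finite union $O_m$ can be written as $(\evalmap_{t_m})^{-1}(U_m)$ for a single open set $U_m := \bigcup_{t_i \in I_{0,m}} (\ymsemigroup_{t_i - t_m}^{1,2})^{-1}(U_{t_i}) \in \toporbitspace$, and then applies the hypothesis and portmanteau directly on $\honeorbitspace$. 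Your $V$ and $g$ satisfy $g^{-1}(V) = U_m$, so you arrive at the same quantity, but the product-space detour brings in extra ingredients the paper does not need for this step: the continuous mapping theorem, Lemma~\ref{lemma:borel-equals-product} to reconcile the Borel and product $\sigma$-algebras on $(\honeorbitspace)^{[k]}$, and the fact that finite products of Lusin spaces are Lusin. The paper's version is a bit leaner, but both are valid; if you prefer your formulation, just be sure to explicitly cite Lemma~\ref{lemma:borel-equals-product} and the Lusin-product fact (as you already flag), since otherwise the portmanteau step in the product space is not justified.
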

\begin{proof}
We will show that for any open set $O \in \topnonlineardistspace$,
\[ \liminf_n \mu_n(O) \geq \mu_\infty(O).\]
Since $(\nonlineardistspace^{1,2}, \topnonlineardistspace)$ is a Lusin space (by Lemma \ref{lemma:nldistspace-lusin}), the weak convergence of $\mu_n$ to $\mu$ will then follow by the portmanteau lemma (see, e.g., \cite[(83.4) Theorem]{RW1994}). Thus, let $O \in \topnonlineardistspace$. By Lemma \ref{lemma:open-sets-nonlinear-dist-space-representation}, there is an indexed family $\{U_t, t \in I_0\} \sse \toporbitspace$ of open subsets of $\honeorbitspace$ such that
\[ O = \bigcup_{t \in I_0} (\evalmap_t)^{-1}(U_t).\]
Let $I_{0, 1} \sse I_{0, 2} \sse \cdots $ be a nested family of finite sets which increases to $I_0$. For $m \geq 1$, let $O_m := \bigcup_{t \in I_{0, m}} (\evalmap_t)^{-1}(U_t)$. Then $O_m \uparrow O$. Thus it suffices to show that for each $m \geq 1$,
\[ \liminf_n \mu_n(O) \geq \mu_\infty(O_m). \]
Since $\mu_n(O) \geq \mu_n(O_m)$, it suffices to show that for each $m \geq 1$,
\[ \liminf_n \mu_n(O_m) \geq \mu_\infty(O_m). \]
Thus fix $m \geq 1$. Since $I_{0, m}$ is finite, we can in fact express $O_m = (\evalmap_{t_m})^{-1}(U_m)$ for some $t_m > 0$ and $U_m \in \toporbitspace$ (take $t_m = \min_{t \in I_{0, m}} t$, as we did in the proof of Lemma \ref{lemma:open-sets-nonlinear-dist-space-representation}). Since $O_m = (\evalmap_{t_m})^{-1}(U_m)$, we have $\mu_n(O_m) = ((\evalmap_{t_m})_* \mu_n)(U_m)$ for all $n \leq \infty$. By assumption, $(\evalmap_{t_m})_* \mu_n$ converges weakly to $(\evalmap_{t_m})_* \mu_\infty$. Since $(\honeorbitspace, \toporbitspace)$ is a Lusin space (by Corollary \ref{cor:orbitspace-lusin}), the desired inequality now follows by the portmanteau lemma (see, e.g., \cite[(83.4) Theorem]{RW1994}).
\end{proof}

We will also need the following generalization of Kolmogorov's extension theorem (also called the Kolmogorov--Bochner theorem) due to Rao \cite[Theorem 2.2]{Rao1971}.

\begin{theorem}[Theorem 2.2 of \cite{Rao1971}]\label{thm:kolmogorov-bochner}
Let $T$ be any index set and $D$ the class of all its finite subsets, directed by inclusion. Let $(\Omega_t, \Sigma_t)_{t \in T}$ be a collection of measurable spaces where $\Omega_t$ is a topological space and $\Sigma_t$ is a $\sigma$-algebra containing all the compact subsets of $\Omega_t$. For $I \in D$, let $\Omega_I := \prod_{t \in I} \Omega_t$, $\Sigma_I := \times_{t \in I} \Sigma_t$. Also let $\Omega_T := \prod_{t \in T} \Omega_t$, $\Sigma_T := \times_{t \in T} \Sigma_t$. For $I \in D$, let $\pi_I : \Omega_T \ra \Omega_I$ be the coordinate projection, and similarly for $I_1, I_2 \in D$, $I_1 \sse I_2$, let $\pi_{I_1 I_2} : \Omega_{I_2} \ra \Omega_{I_1}$ be the coordinate projection. Suppose that for all $I \in D$, $\mu_I$ is an inner regular probability measure on $(\Omega_I, \Sigma_I)$ (here $\Omega_I$ is endowed with its product topology). Suppose also that $(\mu_I, I \in D)$ is a compatible collection of probability measures, in that for $I_1, I_2 \in D$, $I_1 \sse I_2$, we have $\mu_{I_1} = (\pi_{I_1 I_2})_* \mu_{I_2}$. Then there exists a unique probability measure $\mu_T$ on $(\Omega_T, \Sigma_T)$ such that $\mu_I = (\pi_I)_* \mu_T$ for all $I \in D$.
\end{theorem}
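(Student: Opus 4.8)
The plan is to run the classical Kolmogorov extension argument, with inner regularity of the finite-dimensional marginals playing the role that Polish-space structure plays in the standard proof. Let $\mathcal{A}$ denote the algebra of cylinder sets in $\Omega_T$, i.e.\ sets of the form $\pi_I^{-1}(B)$ with $I \in D$ and $B \in \Sigma_I$. First I would define a set function on $\mathcal{A}$ by $\mu_T(\pi_I^{-1}(B)) := \mu_I(B)$. The compatibility relation $\mu_{I_1} = (\pi_{I_1 I_2})_* \mu_{I_2}$ (applied after passing to a common refinement $I_1 \cup I_2 \in D$) shows this does not depend on the chosen representation of a cylinder set, and it also yields finite additivity of $\mu_T$ on $\mathcal{A}$. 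Since $\Sigma_T = \times_{t \in T}\Sigma_t$ is by definition $\sigma(\mathcal{A})$ and $\mathcal{A}$ is a $\pi$-system, once a countably additive extension to $\Sigma_T$ is produced, its uniqueness is immediate from Dynkin's $\pi$--$\lambda$ theorem, and its existence follows from Carath\'eodory's extension theorem. So the entire content of the proof is to show that $\mu_T$ is countably additive on $\mathcal{A}$.

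Countable additivity on the algebra $\mathcal{A}$ is equivalent to continuity from above at the empty set: whenever $C_n \in \mathcal{A}$ with $C_1 \supseteq C_2 \supseteq \cdots$ and $\inf_n \mu_T(C_n) = \varepsilon > 0$, one must exhibit a point of $\bigcap_n C_n$. Write $C_n = \pi_{I_n}^{-1}(B_n)$; by enlarging the $I_n$ I may assume $I_1 \subseteq I_2 \subseteq \cdots$, so only the countable set $I_\infty := \bigcup_n I_n$ is relevant. Using inner regularity of $\mu_{I_n}$ (Definition \ref{def:inner-regular}), choose a compact $K_n \subseteq B_n$ with $K_n \in \Sigma_{I_n}$ and $\mu_{I_n}(B_n \setminus K_n) < \varepsilon\, 2^{-n-1}$; this is precisely where the hypotheses that each $\Sigma_t$ contains the compacts (so that products of compacts stay measurable in $\Omega_I$, via Tychonoff) and that each $\mu_I$ is inner regular on $(\Omega_I, \Sigma_I)$ are used. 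Set $D_n := \pi_{I_n}^{-1}(K_n)$ and $E_n := D_1 \cap \cdots \cap D_n$, which is again a cylinder over $I_n$, hence lies in $\mathcal{A}$. From $C_n \setminus E_n \subseteq \bigcup_{k \le n}(C_k \setminus D_k)$ and $\mu_T(C_k \setminus D_k) = \mu_{I_k}(B_k \setminus K_k)$ we get $\mu_T(C_n \setminus E_n) < \varepsilon/2$, so $\mu_T(E_n) > \varepsilon/2 > 0$; in particular $E_n \neq \emptyset$. The sets $E_n$ are nonincreasing, and to conclude I would pick $\omega^{(n)} \in E_n$ and use a compactness argument: for each fixed $k$, the projections $\pi_{I_k}(\omega^{(n)})$, $n \ge k$, all lie in the compact set $K_k$, so viewing the situation inside $\prod_k K_k$ (compact by Tychonoff) and invoking the finite intersection property yields an element $\omega \in \bigcap_n E_n \subseteq \bigcap_n C_n$.

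The main obstacle is exactly this final compactness step carried out in the generality of arbitrary topological spaces $\Omega_t$ — no metrizability, no second countability, possibly no Hausdorff separation. In the classical Polish-space case one extracts a convergent subsequence by a diagonal argument; here one cannot, and must instead argue directly via the finite intersection property of the compact sets $K_k$, together with the observation that the "section" of $E_n$ in each coordinate is trapped in a fixed compact set independent of $n$. A secondary technical point, which the hypothesis on $\Sigma_t$ is designed to handle, is ensuring that every compact set used in the inner-regularity approximation is genuinely $\Sigma_I$-measurable, so that the measure-theoretic manipulations (in particular $\mu_T(C_k \setminus D_k) = \mu_{I_k}(B_k \setminus K_k)$) are legitimate. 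Once continuity at $\emptyset$ is established, Carath\'eodory gives the extension $\mu_T$ with $\mu_I = (\pi_I)_* \mu_T$ for all $I \in D$, and Dynkin gives uniqueness; the full argument is in \cite{Rao1971}.
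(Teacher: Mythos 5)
The paper does not prove this theorem; it is imported wholesale from Rao \cite[Theorem 2.2]{Rao1971} and used as a black box, so there is no in-paper argument to compare against. Your sketch follows the standard Marczewski compact-class route to the Kolmogorov--Bochner theorem, and the overall architecture (define on the cylinder algebra; compatibility gives well-definedness and finite additivity; inner regularity gives compact approximants; conclude countable additivity via continuity at $\emptyset$) is indeed how such results are proved.

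Two places in your sketch are too quick, and the first is a genuine gap as written. The finite-intersection-property step at the end requires the sets $\pi_{I_k}^{-1}(K_k)$, intersected with the Tychonoff-compact box, to be \emph{closed} there; in a general topological space compact sets need not be closed, and the class of compacts need not be a compact class. (Take $\Omega = \mathbb{N}$ with open sets $\emptyset$, $\mathbb{N}$, and $\{1,\dots,n\}$: every subset is compact, yet $\bigcap_n \{n, n+1, \dots\} = \emptyset$ while each finite intersection is nonempty.) Some separation hypothesis is being used implicitly; Rao works with Hausdorff topological measure spaces, and in the paper's only application the $\Omega_t$ are the Lusin, hence Hausdorff, space $\honeorbitspace$, so the gap is harmless in context, but your argument does not establish the theorem in the generality literally stated. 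Second, you assert that the hypothesis ``$\Sigma_t$ contains all compacts of $\Omega_t$'' is what makes the inner-regularity approximants $K_n$ lie in $\Sigma_{I_n}$; it does not. Even for finite $I$, an arbitrary compact subset of $\Omega_I = \prod_{t \in I} \Omega_t$ need not belong to the product $\sigma$-algebra $\Sigma_I$ (a diagonal is the usual obstruction); the coordinate hypothesis only guarantees measurability of \emph{rectangular} compacts $\prod_{t \in I} K_t$. The correct reading is that Definition \ref{def:inner-regular}, applied to $\mu_I$ on $(\Omega_I, \Sigma_I)$, already restricts the supremum to $\Sigma_I$-measurable compacts, while the hypothesis on the $\Sigma_t$ serves to certify that compact rectangular cylinders form a compact class inside the cylinder algebra. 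You acknowledge both points as ``technical'' and defer to Rao, which is honest, but the compactness step does not close as sketched.
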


With all the setup complete, we now prove the following lemma, which is the key intermediate step in proving Theorem \ref{thm:nonlinear-dist-space-tightness}.

\begin{lemma}\label{lemma:kolmogorov-extension-application}
Let $\{\nu_n\}_{n \geq 1}$ be a sequence of probability measures on $(\nonlineardistspace_0^{1,2}, \mc{B}_{\nonlineardistspace_0^{1,2}})$ such that for each $t \in I_0$, the pushforwards $(\evalmap_t)_* \nu_n$ converge weakly as probability measures on $(\honeorbitspace, \borelorbitspace)$. Then there exists a probability measure $\nu$ on $(\nonlineardistspace_0^{1,2}, \mc{B}_{\nonlineardistspace_0^{1,2}})$ such that for each $t \in I_0$, $(\evalmap_t)_* \nu_n$ converges weakly to $(\evalmap_t)_* \nu$.
\end{lemma}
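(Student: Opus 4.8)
The plan is to realize $\nonlineardistspace_0^{1,2}$ as a Borel subset of the countable product $(\honeorbitspace)^{I_0}$ (Lemma \ref{lemma:countable-nonlinear-dist-space-measurable}), build a limiting measure on that product via the Kolmogorov--Bochner theorem (Theorem \ref{thm:kolmogorov-bochner}), and then restrict. For a finite set $J\subseteq I_0$ write $t_J:=\min J$ and let $G_J\colon\honeorbitspace\to(\honeorbitspace)^J$ be the map $a\mapsto(\ymsemigroup_{t-t_J}^{1,2}(a))_{t\in J}$, which is continuous by Lemma \ref{lemma:ym-semigroup-continuous}. Since every $X\in\nonlineardistspace_0^{1,2}$ satisfies $X(t)=\ymsemigroup_{t-t_J}^{1,2}(X(t_J))$ for $t\in J$, the joint evaluation $X\mapsto(X(t))_{t\in J}$ factors as $G_J\circ\evalmap_{t_J}$, so $((\evalmap_t)_{t\in J})_*\nu_n=(G_J)_*(\evalmap_{t_J})_*\nu_n$. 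By hypothesis $(\evalmap_{t_J})_*\nu_n$ converges weakly, say to $\mu_{t_J}$, hence by the continuous mapping theorem for weak convergence $((\evalmap_t)_{t\in J})_*\nu_n$ converges weakly to $\mu_J:=(G_J)_*\mu_{t_J}$, a probability measure on $((\honeorbitspace)^J,\borelorbitspace^{J})$ (recall $\mc{B}((\honeorbitspace)^J)=\borelorbitspace^{J}$ by Lemma \ref{lemma:borel-equals-product}).

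Next I check the hypotheses of Theorem \ref{thm:kolmogorov-bochner} with $T=I_0$, $\Omega_t=\honeorbitspace$, $\Sigma_t=\borelorbitspace$. Compact subsets of the Hausdorff space $(\honeorbitspace,\toporbitspace)$ are closed, hence lie in $\borelorbitspace$; and each $\mu_J$ is inner regular because $((\honeorbitspace)^J,\toporbitspace^{J})$ is a Lusin space, on which every Borel probability measure is inner regular (Lemma \ref{lemma:finite-product-orbit-space-inner-regular}). For compatibility, fix finite $J_1\subseteq J_2$ with $t_1:=t_{J_1}\ge t_2:=t_{J_2}$. The semigroup property of $(\ymsemigroup_t^{1,2})_{t\ge 0}$ gives $\pi_{J_1 J_2}\circ G_{J_2}=G_{J_1}\circ\ymsemigroup_{t_1-t_2}^{1,2}$, whence $(\pi_{J_1 J_2})_*\mu_{J_2}=(G_{J_1})_*(\ymsemigroup_{t_1-t_2}^{1,2})_*\mu_{t_2}$. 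On the other hand $(\evalmap_{t_1})_*\nu_n=(\ymsemigroup_{t_1-t_2}^{1,2})_*(\evalmap_{t_2})_*\nu_n$ for every $n$, and passing to the weak limit (using continuity of $\ymsemigroup_{t_1-t_2}^{1,2}$) shows that $\mu_{t_1}$ and $(\ymsemigroup_{t_1-t_2}^{1,2})_*\mu_{t_2}$ are both weak limits of $(\evalmap_{t_1})_*\nu_n$; since weak limits of probability measures on $(\honeorbitspace,\toporbitspace)$ are unique they coincide, and therefore $(\pi_{J_1 J_2})_*\mu_{J_2}=(G_{J_1})_*\mu_{t_1}=\mu_{J_1}$. (The needed uniqueness holds because $\Psi_0$ of Definition \ref{def:countable-index-homeomorphism} is a continuous injection of $(\honeorbitspace,\toporbitspace)$ into the Polish space $\C^{\indexset_0}$, which, since $(\honeorbitspace,\borelorbitspace)$ is a standard Borel space by Corollary \ref{cor:orbitspace-lusin}, is a Borel isomorphism onto its image by the Lusin--Souslin theorem; this reduces uniqueness to the metric space $\C^{\indexset_0}$.) Theorem \ref{thm:kolmogorov-bochner} now produces a probability measure $\tilde\nu$ on $((\honeorbitspace)^{I_0},\borelorbitspace^{I_0})$ with $(\pi_J)_*\tilde\nu=\mu_J$ for every finite $J$.

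Finally, I argue that $\tilde\nu$ is carried by $\nonlineardistspace_0^{1,2}$ and identify its one-dimensional marginals. By Lemma \ref{lemma:countable-nonlinear-dist-space-measurable}, $\nonlineardistspace_0^{1,2}=\bigcap_{s<t\text{ in }I_0}\{X\in(\honeorbitspace)^{I_0}:X(t)=\ymsemigroup_{t-s}^{1,2}(X(s))\}$, a countable intersection. For fixed $s<t$ the corresponding event has $\tilde\nu$-measure $\mu_{\{s,t\}}(\{(a,b):b=\ymsemigroup_{t-s}^{1,2}(a)\})$, and since $\mu_{\{s,t\}}=(G_{\{s,t\}})_*\mu_s$ with $G_{\{s,t\}}(a)=(a,\ymsemigroup_{t-s}^{1,2}(a))$ (as $s=\min\{s,t\}$), this event has full $\mu_{\{s,t\}}$-measure; hence $\tilde\nu(\nonlineardistspace_0^{1,2})=1$. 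Let $\nu$ be the restriction of $\tilde\nu$ to $\nonlineardistspace_0^{1,2}$; by Lemma \ref{lemma:borel-equals-infinite-product} this is a probability measure on $(\nonlineardistspace_0^{1,2},\mc{B}_{\nonlineardistspace_0^{1,2}})$. Since $\evalmap_t$ on $\nonlineardistspace_0^{1,2}$ is the restriction of the coordinate projection $\pi_{\{t\}}$ and $\tilde\nu$ lives on $\nonlineardistspace_0^{1,2}$, we get $(\evalmap_t)_*\nu=(\pi_{\{t\}})_*\tilde\nu=\mu_{\{t\}}=\mu_t$, which is precisely the weak limit of $(\evalmap_t)_*\nu_n$; this is the assertion. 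The argument is essentially bookkeeping once Theorem \ref{thm:kolmogorov-bochner} and the structural facts about $(\honeorbitspace,\toporbitspace)$ are in hand, so I do not expect a serious obstacle; the only points demanding genuine care are (i) the uniqueness of weak limits on $(\honeorbitspace,\toporbitspace)$ invoked in the compatibility step, and (ii) verifying that the inner-regularity and measurability requirements of Theorem \ref{thm:kolmogorov-bochner} are all met.
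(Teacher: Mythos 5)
Your proof is correct and takes essentially the same route as the paper: realize $\nonlineardistspace_0^{1,2}$ as a Borel subset of $(\honeorbitspace)^{I_0}$, build a compatible family of finite-dimensional marginals by pushing the one-time limit through the semigroup, apply Rao's Kolmogorov--Bochner theorem using the inner regularity from Lemma~\ref{lemma:finite-product-orbit-space-inner-regular}, and show the resulting measure is carried by $\nonlineardistspace_0^{1,2}$. The one place you go into more detail than the paper is the uniqueness of weak limits on $(\honeorbitspace,\toporbitspace)$, which the paper uses tacitly in the well-definedness/compatibility step (``call it $\nu_{\{t_0\}}$''); your Lusin--Souslin argument via $\Psi_0$ is a valid justification, though one could also simply cite that bounded continuous functions determine probability measures on Lusin spaces. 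This is a clarification rather than a divergence, and the overall structure matches the paper's proof.
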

\begin{proof}
Essentially, this is an example of applying Theorem \ref{thm:kolmogorov-bochner}. We will first define a compatible collection of probability measures --- one probability measure on $((\honeorbitspace)^I, \borelorbitspace^I)$ for each finite subset $I \sse I_0$ --- and then apply Theorem~\ref{thm:kolmogorov-bochner} to obtain a probability measure on $((\honeorbitspace)^{I_0}, \borelorbitspace^{I_0})$. Then, we will show that this probability measure is supported on $\nonlineardistspace_0^{1,2}$, and so the desired probability measure $\nu$ is obtained upon restricting to $\nonlineardistspace_0^{1,2}$.

Let $I \sse I_0$ be a finite subset. Take any $t_0 \in I_0$, $t_0 \leq \min_{t \in I} t$. By assumption, $\{(\evalmap_{t_0})_* \nu_n\}_{n \geq 1}$ converges weakly to some probability measure on $(\honeorbitspace, \borelorbitspace)$; call it $\nu_{\{t_0\}}$. Let $\rgclass_{t_0}$ be a random variable with law $\nu_{\{t_0\}}$. Define $\nu_I$, a probability measure on $((\honeorbitspace)^I, \borelorbitspace^I)$, as the law of the random variable $(\ymsemigroup_{t - t_0}^{1,2}(\rgclass_{t_0}), t \in I)$. To see why this is well defined, note that for $s_0, t_0 \in I_0$, $s_0 \leq t_0$, we have $\evalmap_{t_0} = \evalmap_{s_0} \circ \ymsemigroup_{t_0 - s_0}^{1,2}$, and consequently $(\evalmap_{t_0})_* \nu_n$ = $(\ymsemigroup_{t_0 - s_0}^{1,2})_* ((\evalmap_{s_0})_* \nu_n)$ for all $n \geq 1$. By the continuity of $\ymsemigroup_{t_0 - s_0}^{1,2}$, we obtain that $\nu_{\{t_0\}} = (\ymsemigroup_{t_0 - s_0}^{1,2})_* \nu_{\{s_0\}}$. Combining this with the semigroup property of $(\ymsemigroup_t^{1,2}, t \geq 0)$, we obtain that $\nu_I$ is well-defined. Moreover, by construction, $(\nu_I, I \sse I_0, \text{$I$ finite})$ is a compatible collection of probability measures. By Lemma \ref{lemma:finite-product-orbit-space-inner-regular}, we have that every measure in the collection $(\nu_I, I \sse I_0, \text{$I$ finite})$ is inner regular. Finally, note that $\borelorbitspace$ contains all compact subsets of $(\honeorbitspace, \toporbitspace)$, because compact subsets are closed, since $(\honeorbitspace, \toporbitspace)$ is Hausdorff (by Theorem \ref{thm:countable-index-homeomorphism}). We have thus shown that the conditions of Theorem \ref{thm:kolmogorov-bochner} are satisfied, and thus applying this theorem, we obtain a unique probability measure $\tilde{\nu}$ on $((\honeorbitspace)^{I_0}, \borelorbitspace^{I_0})$ which extends each of the measures $\nu_I$.

In particular, taking $s, t \in I_0$, $s < t$, we have that (recall Lemma \ref{lemma:countable-nonlinear-dist-space-measurable})
\[\begin{split}
\tilde{\nu}(\{X &\in (\honeorbitspace)^{I_0} : X(t) = \ymsemigroup_{t-s}^{1,2}(X(s))\}) = \\
&\nu_{\{s, t\}}(\{X \in (\honeorbitspace)^{\{s, t\}} : X(t) = \ymsemigroup_{t-s}^{1,2}(X(s))\})= 1,
\end{split}\]
and thus taking intersection over all such $s, t$, we obtain $\tilde{\nu}(\nonlineardistspace_0^{1,2}) = 1$. We may thus restrict $\tilde{\nu}$ to obtain a probability measure $\nu$ on $(\nonlineardistspace_0^{1,2}, \borelorbitspace^{I_0} |_{\nonlineardistspace_0^{1,2}})$, which by Lemma~\ref{lemma:borel-equals-infinite-product}, is the same space as $(\nonlineardistspace_0^{1,2}, \mc{B}_{\nonlineardistspace_0^{1,2}})$. The fact that $(\evalmap_t)_* \nu_n$ converges weakly to $(\evalmap_t)_* \nu$ follows because $(\evalmap_t)_* \nu = \nu_{\{t\}}$ by construction.
\end{proof}

We now use Lemma \ref{lemma:kolmogorov-extension-application} to prove Theorem \ref{thm:nonlinear-dist-space-tightness}.

\begin{proof}[Proof of Theorem \ref{thm:nonlinear-dist-space-tightness}]
By restriction, we can regard $\{\rvnldistspace_n\}_{n \geq 1}$ as a sequence of $\nonlineardistspace_0^{1,2}$-valued random variables (so we are abusing notation here; it would be more accurate to write $\{\inclnldistpace^{-1}(\rvnldistspace_n)\}_{n \geq 1}$, where $\iota_{\nonlineardistspace_0^{1,2}} : \nonlineardistspace_0^{1,2} \ra \nonlineardistspace^{1,2}$ is as in Definition~\ref{def:countable-nonlinear-dist-space}). Now by Prokhorov's theorem for the orbit space $\honeorbitspace$ (i.e., Proposition~\ref{prop:prokhorov-orbit-space}), and a diagonal argument, we can extract a subsequence $\{\rvnldistspace_{n_k}\}_{k \geq 1}$ such that for each $t \in I_0$ (which is countable), the laws of $\rvnldistspace_{n_k}(t)$ converge weakly. Applying Lemma \ref{lemma:kolmogorov-extension-application}, we may thus obtain a probability measure $\nu$ on $(\nonlineardistspace_0^{1,2}, \mc{B}_{\nonlineardistspace_0^{1,2}})$, such that letting $\tilde{\rvnldistspace}$ be a random variable with law $\nu$, we have that $\rvnldistspace_{n_k}(t) \stackrel{d}{\ra} \tilde{\rvnldistspace}(t)$ for all $t \in I_0$. Now, recalling the map $\inclnldistpace$ from Definition~\ref{def:countable-nonlinear-dist-space} and Lemma \ref{lemma:homeomorphism-of-nonlinear-dist-space}, we set $\rvnldistspace := \inclnldistpace (\tilde{\rvnldistspace})$, and we let $\mu$ be the law of $\rvnldistspace$. By construction, for all $t \in I_0$, $\rvnldistspace_{n_k}(t) \stackrel{d}{\ra} \rvnldistspace(t)$. Then using the fact that for any $t > 0$, we may take $s \leq t$, $s \in I_0$ such that $\rvnldistspace_{n_k}(t) = \ymsemigroup_{t-s}^{1,2}(\rvnldistspace_{n_k}(s))$ and $\rvnldistspace(t) = \ymsemigroup_{t-s}^{1,2}(\rvnldistspace(s))$, along with the continuity of $\ymsemigroup_{t-s}^{1,2}$ (Lemma \ref{lemma:ym-semigroup-continuous}), we obtain that for all $t > 0$, $\rvnldistspace_{n_k}(t) \stackrel{d}{\ra} \rvnldistspace(t)$. The desired result now follows by Lemma~\ref{lemma:weak-convergence-nonlinear-dist-space}.
\end{proof}

\subsection{Completing the proof}\label{section:completing-the-proof}

In this section, we will use Theorem \ref{thm:nonlinear-dist-space-tightness} to prove Theorem \ref{mainthm}. The arguments that we use will also allow us to prove Lemmas \ref{fdlemma} and \ref{symlmm}. First, we need some preliminary lemmas.

\begin{definition}[Regularized Wilson loop observables on $\nonlineardistspace^{1,2}$]\label{def:regwl-nonlinear-dist-space}
Let $\wloop : [0, 1] \ra \threetorus$ be a piecewise $C^1$ loop, $\character$ a character of $\liegroup$, and $t > 0$. Define the function $\obsnldistspace{\wloop}{t}{\character} : \nonlineardistspace^{1,2} \ra \C$ as follows. Take $0 < s < t$. Let $\obsnldistspace{\wloop}{t}{\character}(X) := [W]_{\wloop, \character, t - s}^{1,2}(X(s))$. This is well-defined by Definition \ref{def:regwl} and the semigroup property of $(\ymsemigroup_t^{1,2}, t \geq 0)$. By Lemma \ref{lemma:regwlorbit-continuous}, $\obsnldistspace{\wloop}{t}{\character} : (\nonlineardistspace^{1,2}, \topnonlineardistspace) \ra \C$ is a continuous function. Also, note that given $\gorbithone{A} \in \honeorbitspace$, we have that $[W]_{\wloop, \character, t}^{1,2}(\gorbithone{A}) = \obsnldistspace{\wloop}{t}{\character}(\iota_{\ymsemigroup^{1,2}}(\gorbithone{A}))$ (where recall $\iota_{\ymsemigroup^{1,2}}$ is the embedding $\honeorbitspace \ra \nonlineardistspace^{1,2}$ from Definition \ref{def:embedding-orbit-space-nonlinear-dist-space}). Thus we will also refer to the observables $\obsnldistspace{\wloop}{t}{\character}$ as regularized Wilson loop observables. 
\end{definition}

\begin{definition}\label{def:i12}
Define the map $i^{1,2} : \nonlineardistspace \ra \nonlineardistspace^{1,2}$ as follows. Given $X \in \nonlineardistspace$, define $Y : (0, \infty) \ra \honeorbitspace$ as follows. For $t > 0$, let $A \in X(t)$ (note $X(t) \in \orbitspace$, so that $A \in \connspace$), and set $Y(t) := \gorbithone{A}$. Define $i^{1,2}(X) := Y$.
\end{definition}

\begin{lemma}\label{lemma:i12-measurable}
The map $i^{1,2} : (\nonlineardistspace, \mc{B}(\nonlineardistspace)) \ra (\nonlineardistspace^{1,2}, \borelnonlineardistspace)$ is measurable. 
\end{lemma}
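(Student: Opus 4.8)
The plan is to reduce measurability of $i^{1,2}$ to the measurability of countably many scalar maps, each of which turns out to equal, on the nose, a regularized Wilson loop observable on $\nonlineardistspace$, hence is continuous. I would begin by recording the routine facts behind Definition \ref{def:i12}: the orbit $[A]^{1,2}$ is independent of the choice of $A \in X(t)$ (two such choices differ by a smooth gauge transformation and $\gaugetransf \sse \htwogaugetransf$), and $Y := i^{1,2}(X)$ does lie in $\nonlineardistspace^{1,2}$, because a smooth Yang--Mills heat flow is in particular a solution to \eqref{eq:YM} in the sense of Definition \ref{def:YM-solution-def}, so that $\ymsemigroup_{t-s}^{1,2}(Y(s)) = [A(t)]^{1,2} = Y(t)$ whenever $\{A(r)\}_{r \ge 0}$ is the smooth flow with $A(s) \in X(s)$ and $0 < s \le t$.

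Next I would strip the target $\sigma$-algebra. By Lemma \ref{lemma:homeomorphism-of-nonlinear-dist-space} the restriction map $\inclnldistpace^{-1} : (\nonlineardistspace^{1,2}, \topnonlineardistspace) \ra (\nonlineardistspace_0^{1,2}, \ms{T}_{\nonlineardistspace_0^{1,2}})$ is a homeomorphism, hence a Borel isomorphism, so it is enough to show $\inclnldistpace^{-1} \circ i^{1,2}$ is measurable into $(\nonlineardistspace_0^{1,2}, \mc{B}_{\nonlineardistspace_0^{1,2}})$. By Lemma \ref{lemma:borel-equals-infinite-product} and Definition \ref{def:orbit-space-products}, $\mc{B}_{\nonlineardistspace_0^{1,2}}$ is the trace on $\nonlineardistspace_0^{1,2}$ of the product $\sigma$-algebra $\borelorbitspace^{I_0}$, which is generated by the coordinate maps $\evalmap_t$, $t \in I_0 = \Q \cap (0,\infty)$. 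Thus it suffices to prove that for each $t \in I_0$ the map $X \mapsto i^{1,2}(X)(t)$ is measurable from $(\nonlineardistspace, \mc{B}(\nonlineardistspace))$ to $(\honeorbitspace, \borelorbitspace)$. To remove $\borelorbitspace$ as well, I would invoke the map $\Psi_0$ of Definition \ref{def:countable-index-homeomorphism}: the proof of Lemma \ref{lemma:orbit-space-regwl-determine-law} shows $\Psi_0$ is injective and sends Borel sets to Borel sets, so (since $\indexset_0$ is countable, $\C^{\indexset_0}$ carries the product $\sigma$-algebra) $\borelorbitspace$ is generated by the countable family $\{[W]_{\wloop, \character, t'}^{1,2} : (\wloop, \character, t') \in \indexset_0\}$. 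Hence it remains only to show that for each $t \in I_0$ and each $(\wloop, \character, t') \in \indexset_0$, the scalar map $X \mapsto [W]_{\wloop, \character, t'}^{1,2}\big(i^{1,2}(X)(t)\big)$ is $\mc{B}(\nonlineardistspace)$-measurable.

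The crux is the identity
\[ [W]_{\wloop, \character, t'}^{1,2}\big(i^{1,2}(X)(t)\big) = \regwlorbit{\wloop}{\character}{t+t'}(X), \qquad X \in \nonlineardistspace. \]
To prove it, fix a smooth $A \in X(t)$ and let $\{A(r)\}_{r \ge 0}$ be the smooth Yang--Mills heat flow with $A(0) = A$ (Theorem \ref{thm:YM-global-existence}). Since this smooth flow is an admissible solution to \eqref{eq:YM} in the sense of Definition \ref{def:YM-solution-def}, Definition \ref{def:ym-semigroup} gives $\ymsemigroup_{t'}^{1,2}([A]^{1,2}) = [A(t')]^{1,2}$, and $A(t')$ being smooth, Definition \ref{def:regwl} yields $[W]_{\wloop, \character, t'}^{1,2}([A]^{1,2}) = \regwl{\wloop}{\character}(A(t'))$. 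On the other hand $X(t + t') = \Phi_{t'}(X(t)) = [A(t')]$ by the definition of $\nonlineardistspace$ and of the Yang--Mills heat semigroup, so $\regwlorbit{\wloop}{\character}{t+t'}(X) = \regwl{\wloop}{\character}([A(t')]) = \regwl{\wloop}{\character}(A(t'))$, which is the claimed identity. Since $\regwlorbit{\wloop}{\character}{t+t'} : (\nonlineardistspace, \newtop) \ra \C$ is continuous by the very definition of $\newtop$, it is $\mc{B}(\nonlineardistspace) = \sigma(\newtop)$-measurable, and the proof is finished.

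I do not anticipate a genuine analytic obstruction here; the work is purely bookkeeping about which countable family generates which $\sigma$-algebra (Lemmas \ref{lemma:homeomorphism-of-nonlinear-dist-space} and \ref{lemma:borel-equals-infinite-product}, and the generation statement extracted from the proof of Lemma \ref{lemma:orbit-space-regwl-determine-law}), together with the displayed compatibility identity. The one point deserving care is verifying that a smooth classical Yang--Mills heat flow is indeed a solution in the sense of Definition \ref{def:YM-solution-def}, so that $\ymsemigroup_t^{1,2}$ and $\Phi_t$ agree on smooth gauge orbits; this follows directly from Theorem \ref{thm:YM-global-existence} and an inspection of the conditions (a)--(e) of Definition \ref{def:YM-solution-def}.
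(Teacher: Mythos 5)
Your proof is correct and follows the paper's overall reduction — restrict to $I_0 = \Q \cap (0,\infty)$ via Lemmas \ref{lemma:homeomorphism-of-nonlinear-dist-space} and \ref{lemma:borel-equals-infinite-product}, then handle each coordinate map $X \mapsto i^{1,2}(X)(t)$ separately — but you diverge at the last step. The paper invokes $\borelorbitspace = \sigma(\toporbitspace^w)$ (Lemma \ref{lemma:orbit-space-borel-sigma-algebras-coincide}) and checks via a net argument that the coordinate map is continuous from $(\nonlineardistspace, \newtop)$ into $(\honeorbitspace, \toporbitspace^w)$. You instead extract from the proof of Lemma \ref{lemma:orbit-space-regwl-determine-law} that $\Psi_0$ is an injection sending Borel sets to Borel sets, so that the countable family $\{[W]_{\wloop, \character, t'}^{1,2} : (\wloop, \character, t') \in \indexset_0\}$ already generates $\borelorbitspace$, and then record the explicit compatibility identity $[W]_{\wloop, \character, t'}^{1,2}\bigl(i^{1,2}(X)(t)\bigr) = \regwlorbit{\wloop}{\character}{t+t'}(X)$, which is precisely the content hidden inside the paper's net-convergence computation. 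Your route sidesteps the auxiliary topology $\toporbitspace^w$ entirely, trading Lemma \ref{lemma:orbit-space-borel-sigma-algebras-coincide} for the Borel-image fact already embedded in Lemma \ref{lemma:orbit-space-regwl-determine-law}; the displayed identity is arguably a cleaner record of what makes the measurability go through. Otherwise the two arguments are the same.
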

\begin{proof}
Recall the index set $I_0$, the space $\nonlineardistspace_0^{1,2}$ and the natural inclusion $\iota_{\nonlineardistspace_0^{1,2}}$ defined in Definition \ref{def:countable-nonlinear-dist-space}. Define the map $i_0^{1,2} : \nonlineardistspace \ra \nonlineardistspace_0^{1,2}$ as follows. Given $X \in \nonlineardistspace$, define $Y \in \nonlineardistspace_0^{1,2}$ as follows. For $t \in I_0$, let $A \in X(t)$, and set $Y(t) := \gorbithone{A(t)}$. Define $i_0^{1,2}(X) := Y$. Observe that $i^{1,2} = \iota_{\nonlineardistspace_0^{1,2}} \circ i_0^{1,2}$. Thus, recalling that $\iota_{\nonlineardistspace_0^{1,2}}$ is continuous (by Lemma \ref{lemma:homeomorphism-of-nonlinear-dist-space}), it suffices to show that $i_0^{1,2} : (\nonlineardistspace, \mc{B}(\nonlineardistspace)) \ra (\nonlineardistspace_0^{1,2}, \mc{B}_{\nonlineardistspace_0^{1,2}})$ is measurable. Recalling that we may think of $\nonlineardistspace_0^{1,2}$ as a measurable subset of $(\honeorbitspace)^{I_0}$ (Lemma \ref{lemma:countable-nonlinear-dist-space-measurable}), along with Lemma \ref{lemma:borel-equals-infinite-product}, it suffices to show that $i_0^{1,2} : (\nonlineardistspace, \mc{B}(\nonlineardistspace)) \ra ((\honeorbitspace)^{I_0}, \borelorbitspace^{I_0})$ is measurable. To show this, it suffices (since $\borelorbitspace^{I_0}$ is the product $\sigma$-algebra) to show that for all $t \in I_0$, the map $(\nonlineardistspace, \mc{B}(\nonlineardistspace)) \ra (\honeorbitspace, \borelorbitspace)$ defined by $X \mapsto (i_0^{1,2}(X))(t)$ is measurable. Let $\toporbitspace^w$ be the topology on $\honeorbitspace$ defined in Definition \ref{def:regwl-topology-honeorbitspace}. By Lemma \ref{lemma:orbit-space-borel-sigma-algebras-coincide}, we have that $\borelorbitspace = \sigma(\toporbitspace^w)$, and thus it suffices to show that for all $t \in I_0$, the map $(\nonlineardistspace, \newtop) \ra (\honeorbitspace, \toporbitspace^w)$ defined by $X \mapsto (i_0^{1,2}(X))(t)$ is continuous. 

Towards this end, fix $t_0 \in I_0$. Let $\{X_\alpha\}$ be a net of elements in $\nonlineardistspace$ converging to some $X \in \nonlineardistspace$. Let $Y_\alpha = i_0^{1,2}(X_\alpha)$, and $Y = i_0^{1,2}(X)$. By the definition of $\newtop$, we have that for all $\wloop, \character, t$, $W_{\wloop, \character}(X_\alpha(t)) \ra W_{\wloop, \character}(X(t))$. This implies that for all $\wloop, \character, t$, we have that $[W]_{\wloop, \character, t}^{1,2}(Y_\alpha(t_0)) \ra [W]_{\wloop, \character, t}^{1,2}(Y(t_0))$. By the definition of $\toporbitspace^w$, this implies that $Y_\alpha(t_0) \ra Y_\alpha(t_0)$, and thus the desired continuity follows.
\end{proof}

\begin{lemma}\label{lemma:gauge-orbit-smooth-choice}
Let $X \in \nonlineardistspace^{1, 2}$. For all $t > 0$, there exists a smooth {\oneform} $A \in X(t)$. 
\end{lemma}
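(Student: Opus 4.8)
The plan is to reduce the statement immediately to Lemma \ref{lemma:orbitspace-gauge-orbit-smooth-choice} by exploiting the semigroup structure that is built into the definition of $\nonlineardistspace^{1,2}$ (Definition \ref{def:nonlinear-dist-space}). The key observation is that an element $X \in \nonlineardistspace^{1,2}$ is, by definition, required to satisfy $X(t) = \ymsemigroup_{t-s}^{1,2}(X(s))$ for all $0 < s \leq t$, so that every value $X(t)$ with $t > 0$ lies in the image of $\ymsemigroup_{\varepsilon}^{1,2}$ for some $\varepsilon > 0$, and Lemma \ref{lemma:orbitspace-gauge-orbit-smooth-choice} says precisely that such images contain smooth representatives.

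Concretely, I would fix $X \in \nonlineardistspace^{1,2}$ and $t > 0$, choose any $s$ with $0 < s < t$, and write $X(s) = \gorbithone{A_0}$ for some $A_0 \in \honeconnspace$. By the defining property of $\nonlineardistspace^{1,2}$, we then have
\[ X(t) = \ymsemigroup_{t-s}^{1,2}(X(s)) = \ymsemigroup_{t-s}^{1,2}(\gorbithone{A_0}). \]
Since $t - s > 0$, Lemma \ref{lemma:orbitspace-gauge-orbit-smooth-choice} (applied with the initial data $A_0$ and time $t - s$) produces a smooth {\oneform} $A \in \ymsemigroup_{t-s}^{1,2}(\gorbithone{A_0}) = X(t)$, which is exactly the assertion.

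There is no substantive obstacle here; the only points worth a remark are that the particular choice of $s \in (0, t)$ is immaterial because the conclusion concerns only the single gauge orbit $X(t)$, and that $\ymsemigroup_{t-s}^{1,2}$ is a well-defined map on $\honeorbitspace$ by Definition \ref{def:ym-semigroup}, so that the displayed identity makes sense. Thus the lemma follows directly.
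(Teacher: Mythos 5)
Your proof is correct and is essentially identical to the paper's: both fix $s \in (0,t)$, use the defining semigroup identity $X(t) = \ymsemigroup_{t-s}^{1,2}(X(s))$, and then invoke Lemma~\ref{lemma:orbitspace-gauge-orbit-smooth-choice}. Nothing to add.
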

\begin{proof}
This follows by Lemma \ref{lemma:orbitspace-gauge-orbit-smooth-choice} along with the fact that $X(t) = \ymsemigroup_{t-s}^{1,2}(X(s))$ for all $0 < s < t$.
\end{proof}

\begin{definition}\label{def:j12}
Define the map $j^{1,2} : \nonlineardistspace^{1,2} \ra \nonlineardistspace$ as follows. Given $X \in \nonlineardistspace^{1,2}$, define $Y : (0, \infty) \ra \orbitspace$ as follows. For $t > 0$, let $A \in X(t)$ be such that $A \in \connspace$ (this exists by Lemma \ref{lemma:gauge-orbit-smooth-choice}), and set $Y(t) := [A]$ (this is well-defined by Lemma \ref{lemma:wilson-loop-gauge-invariant-not-smooth-gauge-transform}). Define $j^{1,2}(X) := Y$.
\end{definition}

The following lemma follows directly from the definitions of $i^{1,2}, j^{1,2}$, and thus its proof is omitted.

\begin{lemma}\label{lemma:j12-i12-identity}
We have that $j^{1,2} \circ i^{1,2} : \nonlineardistspace \ra \nonlineardistspace$ is the identity map. 
\end{lemma}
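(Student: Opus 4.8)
The final statement is Lemma \ref{lemma:j12-i12-identity}, which asserts that $j^{1,2} \circ i^{1,2} : \nonlineardistspace \to \nonlineardistspace$ is the identity map. The plan is to unwind the two definitions and check that applying them in sequence returns the original element of $\nonlineardistspace$.

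First I would fix an arbitrary $X \in \nonlineardistspace$ and set $Z := i^{1,2}(X)$. By Definition \ref{def:i12}, $Z \in \nonlineardistspace^{1,2}$ is the function on $(0,\infty)$ given by $Z(t) = \gorbithone{A}$ for any choice of $A \in X(t) \subseteq \connspace$; this is well-defined because any two smooth representatives of $X(t)$ differ by a smooth gauge transformation, which in particular lies in $\htwogaugetransf$, so they determine the same $H^1$-gauge orbit. Then I would set $Y := j^{1,2}(Z)$. By Definition \ref{def:j12}, for each $t > 0$ one picks a smooth representative $A' \in Z(t)$ (which exists by Lemma \ref{lemma:gauge-orbit-smooth-choice}) and sets $Y(t) := [A']$, the smooth gauge orbit of $A'$; this is well-defined by Lemma \ref{lemma:wilson-loop-gauge-invariant-not-smooth-gauge-transform}, since any two smooth representatives of the $H^1$-orbit $Z(t)$ are related by a gauge transformation in $\htwogaugetransf$ which must then be smooth, hence they have the same smooth gauge orbit.

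The heart of the argument is then showing $Y(t) = X(t)$ for every $t > 0$. Fix $t$ and pick a smooth $A \in X(t)$, so that by construction $Z(t) = \gorbithone{A}$. Since $A$ is itself a smooth representative of the $H^1$-orbit $Z(t) = \gorbithone{A}$, it is an admissible choice of $A'$ in the definition of $Y(t)$, whence $Y(t) = [A]$. But $A \in X(t)$, so $[A] = X(t)$. Therefore $Y(t) = X(t)$ for all $t$, i.e. $j^{1,2}(i^{1,2}(X)) = X$. Since $X$ was arbitrary, $j^{1,2} \circ i^{1,2}$ is the identity on $\nonlineardistspace$.

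There is essentially no obstacle here — the only subtlety, already handled by the cited lemmas, is the well-definedness of $i^{1,2}$ and $j^{1,2}$, and the crucial compatibility fact (Lemma \ref{lemma:wilson-loop-gauge-invariant-not-smooth-gauge-transform}) that two smooth connections lying in the same $H^1$-gauge orbit actually lie in the same \emph{smooth} gauge orbit. Once that is in hand, the composition is the identity purely by tracing through the definitions, and the proof is a short paragraph. (Indeed, as the excerpt notes, the paper omits the proof for exactly this reason.)
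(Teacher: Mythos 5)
Your proof is correct and is exactly the argument the paper has in mind: the paper explicitly omits the proof of this lemma, saying it "follows directly from the definitions of $i^{1,2}$, $j^{1,2}$," and your write-up is precisely that unwinding, with the one genuinely necessary input (Lemma~\ref{lemma:wilson-loop-gauge-invariant-not-smooth-gauge-transform} to ensure well-definedness of $j^{1,2}$) correctly identified.
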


\begin{lemma}\label{lemma:regwl-equality}
Let $X \in \nonlineardistspace^{1,2}$ and let $Y = j^{1,2}(X) \in \nonlineardistspace$. Then for any piecewise $C^1$ loop $\wloop : [0, 1] \ra \threetorus$, character $\character$ of $\liegroup$, and $t > 0$, we have that $\obsnldistspace{\wloop}{t}{\character}(X) = W_{\wloop, \character, t}(Y)$. 
\end{lemma}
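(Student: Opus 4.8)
The plan is to unwind both sides of the claimed identity down to ordinary (unregularized) Wilson loop observables of smooth {\oneforms} lying in the common gauge orbit $X(t)$, and then to invoke exactly the argument used in the proof of Lemma~\ref{lemma:f-ell-t-well-defined}.

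First I would treat the left-hand side. Fix $t > 0$ and choose $s \in (0, t)$. By Definition~\ref{def:regwl-nonlinear-dist-space}, $\obsnldistspace{\wloop}{t}{\character}(X) = [W]_{\wloop, \character, t-s}^{1,2}(X(s))$, and the value does not depend on $s$ by the semigroup property of $(\ymsemigroup^{1,2}_t, t \geq 0)$. Since $X \in \nonlineardistspace^{1,2}$, its defining property gives $\ymsemigroup_{t-s}^{1,2}(X(s)) = X(t)$. Writing $X(s) = \gorbithone{A_0}$ for some $A_0 \in \honeconnspace$, Definition~\ref{def:regwl} (together with Lemmas~\ref{lemma:orbitspace-gauge-orbit-smooth-choice} and~\ref{lemma:f-ell-t-well-defined}, which provide existence and well-definedness) gives $[W]_{\wloop, \character, t-s}^{1,2}(X(s)) = \regwl{\wloop}{\character}(A)$ for any smooth {\oneform} $A$ with $\gorbithone{A} = \ymsemigroup_{t-s}^{1,2}(X(s)) = X(t)$; in particular such an $A$ is a smooth element of the $H^2$-gauge orbit $X(t) \in \honeorbitspace$.

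Next I would treat the right-hand side. By Definition~\ref{def:j12}, $Y = j^{1,2}(X)$ satisfies $Y(t) = [A']$, where $A' \in \connspace$ is a smooth {\oneform} with $A' \in X(t)$ (such $A'$ exists by Lemma~\ref{lemma:gauge-orbit-smooth-choice}, and $[A'] \in \orbitspace$ is well-defined by Lemma~\ref{lemma:wilson-loop-gauge-invariant-not-smooth-gauge-transform}). Then $W_{\wloop, \character, t}(Y) = \regwl{\wloop}{\character}(Y(t)) = \regwl{\wloop}{\character}([A']) = \regwl{\wloop}{\character}(A')$, the last step by gauge invariance of the Wilson loop observable on $\connspace$.

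Finally I would bridge the two: both $A$ and $A'$ are smooth {\oneforms} that belong to the $H^2$-gauge orbit $X(t) \in \honeorbitspace$, so there is $\gt \in \htwogaugetransf$ with $A = \gaction{A'}{\gt}$, and by Lemma~\ref{lemma:wilson-loop-gauge-invariant-not-smooth-gauge-transform} this $\gt$ is automatically smooth. Gauge invariance of Wilson loop observables then yields $\regwl{\wloop}{\character}(A) = \regwl{\wloop}{\character}(A')$, and chaining the three equalities gives $\obsnldistspace{\wloop}{t}{\character}(X) = W_{\wloop, \character, t}(Y)$. There is no serious obstacle here; the only point requiring care is keeping straight the distinction between $H^2$-gauge orbits (elements of $\honeorbitspace$, the codomain of the flows $\ymsemigroup_t^{1,2}$) and smooth gauge orbits (elements of $\orbitspace$, where $Y(t)$ lives), and Lemma~\ref{lemma:wilson-loop-gauge-invariant-not-smooth-gauge-transform} is precisely the tool reconciling them — indeed it is what makes $j^{1,2}$ well-defined in the first place.
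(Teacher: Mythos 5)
Your argument is correct and follows essentially the same approach as the paper: both reduce each side to an ordinary Wilson loop observable of a smooth representative of the common orbit $X(t) \in \honeorbitspace$, using Lemma~\ref{lemma:gauge-orbit-smooth-choice} for existence of smooth representatives and Lemma~\ref{lemma:wilson-loop-gauge-invariant-not-smooth-gauge-transform} to reconcile them. The only organizational difference is that the paper chooses a single smooth representative (the value $A(t-t_0)$ of a smooth Yang--Mills heat flow started from a smooth $A_0 \in X(t_0)$) that serves both sides at once, so all three equalities chain through the same object, whereas you pick two potentially different smooth representatives $A$ and $A'$ and then reconcile them at the end — a harmless extra step since it merely re-runs the well-definedness argument of Lemma~\ref{lemma:f-ell-t-well-defined}.
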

\begin{proof}
Let $t > 0$, and take $t_0 \in (0, t)$. By Lemma \ref{lemma:gauge-orbit-smooth-choice}, there exists $A_0 \in X(t_0)$ which is smooth. Let $A$ be the solution to \eqref{eq:YM} on $[0, \infty)$ with initial data $A(0) = A_0$. Note that by Theorem \ref{thm:YM-global-existence}, $A \in C^\infty([0, \infty) \times \threetorus, \lalg^3)$. We also have that $\gorbithone{A(s)} = X(t_0 + s)$ for all $s \geq 0$, which implies that $Y(t) = [A(t - t_0)]$. It follows that $\obsnldistspace{\wloop}{t}{\character}(X) = [W]_{\wloop, \character, t - t_0}^{1,2}(X(t_0)) = W_{\wloop, \character}(A(t - t_0)) = W_{\wloop, \character, t}(Y)$, as desired.
\end{proof}

\begin{lemma}\label{lemma:j12-cont}
The map $j^{1,2} : (\nonlineardistspace^{1,2}, \topnonlineardistspace) \ra (\nonlineardistspace, \newtop)$ is continuous, and thus $j^{1,2} : (\nonlineardistspace^{1,2}, \borelnonlineardistspace) \ra (\nonlineardistspace, \mc{B}(\nonlineardistspace))$ is measurable.
\end{lemma}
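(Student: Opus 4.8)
The plan is to exploit that $\newtop$ is, by construction, the topology on $\nonlineardistspace$ generated by the family of functionals $\{\regwlorbit{\wloop}{\character}{t} : \wloop \in \loopset,\ \character \text{ a character of } \liegroup,\ t > 0\}$. By the universal property of such an initial (weak) topology, a map $g$ from a topological space into $(\nonlineardistspace, \newtop)$ is continuous if and only if $\regwlorbit{\wloop}{\character}{t} \circ g$ is continuous for every $\wloop, \character, t$. So it suffices to show that for each $\wloop, \character, t$, the composition $\regwlorbit{\wloop}{\character}{t} \circ j^{1,2} : (\nonlineardistspace^{1,2}, \topnonlineardistspace) \ra \C$ is continuous.

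First I would apply Lemma \ref{lemma:regwl-equality}: for any $X \in \nonlineardistspace^{1,2}$, writing $Y = j^{1,2}(X)$, we have $\regwlorbit{\wloop}{\character}{t}(Y) = \obsnldistspace{\wloop}{t}{\character}(X)$. In other words, $\regwlorbit{\wloop}{\character}{t} \circ j^{1,2} = \obsnldistspace{\wloop}{t}{\character}$ as functions on $\nonlineardistspace^{1,2}$. Then I would invoke Definition \ref{def:regwl-nonlinear-dist-space} (which, via Lemma \ref{lemma:regwlorbit-continuous}, already records that $\obsnldistspace{\wloop}{t}{\character} : (\nonlineardistspace^{1,2}, \topnonlineardistspace) \ra \C$ is continuous). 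Hence $\regwlorbit{\wloop}{\character}{t} \circ j^{1,2}$ is continuous for every $\wloop, \character, t$, and therefore $j^{1,2} : (\nonlineardistspace^{1,2}, \topnonlineardistspace) \ra (\nonlineardistspace, \newtop)$ is continuous.

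For the second assertion, I would simply note that a continuous map between topological spaces is Borel measurable: since $\borelnonlineardistspace = \mc{B}(\nonlineardistspace^{1,2})$ is the Borel $\sigma$-algebra of $(\nonlineardistspace^{1,2}, \topnonlineardistspace)$ and $\mc{B}(\nonlineardistspace)$ is the Borel $\sigma$-algebra of $(\nonlineardistspace, \newtop)$, continuity of $j^{1,2}$ immediately gives measurability $j^{1,2} : (\nonlineardistspace^{1,2}, \borelnonlineardistspace) \ra (\nonlineardistspace, \mc{B}(\nonlineardistspace))$.

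There is essentially no hard step here; the whole argument rides on two facts already established in the excerpt — the identity $\regwlorbit{\wloop}{\character}{t}\circ j^{1,2} = \obsnldistspace{\wloop}{t}{\character}$ from Lemma \ref{lemma:regwl-equality}, and the continuity of $\obsnldistspace{\wloop}{t}{\character}$ on $(\nonlineardistspace^{1,2}, \topnonlineardistspace)$ — together with the generating description of $\newtop$. The only point requiring a moment's care is making sure the generating functionals $\regwlorbit{\wloop}{\character}{t}$ (ranging over \emph{all} $\wloop, \character, t$, not just a countable dense subset) are exactly the ones used to define $\newtop$, so that checking continuity against each of them is both necessary and sufficient; this is immediate from the definition of $\newtop$ given in the subsection on distributional gauge orbits.
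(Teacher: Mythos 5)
Your proposal is correct and is essentially the paper's argument: you verify continuity of $j^{1,2}$ by checking continuity of each generating functional $\regwlorbit{\wloop}{\character}{t}\circ j^{1,2}$, identify this composition with $\obsnldistspace{\wloop}{t}{\character}$ via Lemma \ref{lemma:regwl-equality}, and invoke the already-established continuity of $\obsnldistspace{\wloop}{t}{\character}$. The paper phrases the first step with nets rather than the universal property of the initial topology, but the two formulations are equivalent and rely on exactly the same ingredients.
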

\begin{proof}
Let $\{X_\alpha\} \sse \nonlineardistspace^{1,2}$ be a net which converges to some  $X \in \nonlineardistspace^{1, 2}$. This implies that for all piecewise $C^1$ loops $\wloop : [0, 1] \ra \threetorus$, characters $\character$ of $\liegroup$, and $t > 0$, we have that $\obsnldistspace{\wloop}{t}{\character}(X_\alpha) \ra \obsnldistspace{\wloop}{t}{\character}(X)$. Let $Y_\alpha := j^{1,2}(X_\alpha)$, $Y := j^{1,2}(X)$. Then by Lemma \ref{lemma:regwl-equality}, we have that for all $\wloop, \character, t$, $W_{\wloop, \character, t}(Y_\alpha) \ra W_{\wloop, \character, t}(Y)$. By the definition of $\newtop$, this is equivalent to $Y_\alpha \ra Y$, and thus the continuity of $j^{1,2}$ follows.
\end{proof}

We now have enough to prove Theorem \ref{mainthm}.

\begin{proof}[Proof of Theorem \ref{mainthm}]
Define the sequence $\{\tilde{\rvnldistspace}_n\}_{n \geq 1}$ of $\nonlineardistspace^{1,2}$-valued random variables by $\tilde{\rvnldistspace}_n := i^{1,2}(\rvnldistspace_n)$ for each $n \geq 1$. Note that for all $n \geq 1$ and $t > 0$, we have that $\sym^{1,2}(\tilde{\rvnldistspace}_n(t)) = \sym(\rvnldistspace_n(t))$. Thus by Theorem \ref{thm:nonlinear-dist-space-tightness}, there exists a subsequence $\{\tilde{\rvnldistspace}_{n_k}\}_{k \geq 1}$ and a probability measure $\tilde{\mu}$ on $(\nonlineardistspace^{1,2}, \borelnonlineardistspace)$ such that the laws of $\tilde{\rvnldistspace}_{n_k}$ converge weakly to $\tilde{\mu}$. Define the probability measure $\mu$ on $(\nonlineardistspace, \mc{B}(\nonlineardistspace))$ by taking the pushforward $\mu := (j^{1,2})_* \tilde{\mu}$. To finish, we claim that the laws of $\rvnldistspace_{n_k}$ converge weakly to $\mu$. To see this, let $f : (\nonlineardistspace, \newtop) \ra \R$ be a bounded continuous function. By Lemma \ref{lemma:j12-cont}, we have that $f \circ j^{1, 2} : (\nonlineardistspace^{1,2}, \topnonlineardistspace) \ra \R$ is continuous. We thus have that $\E [ f (j^{1,2}(\tilde{\rvnldistspace}_{n_k}))] \ra \int_{\nonlineardistspace^{1,2}} f \circ j^{1,2} d\tilde{\mu}$. By Lemma \ref{lemma:j12-i12-identity}, we have that $j^{1,2}(\tilde{\rvnldistspace}_{n_k}) = j^{1, 2}(i^{1,2}(\rvnldistspace_{n_k})) = \rvnldistspace_{n_k}$ for all $k \geq 1$. Also, since $\mu = (j^{1,2})_* \tilde{\mu}$, we have that $\int_{\nonlineardistspace^{1,2}} f \circ j^{1,2} d\tilde{\mu} = \int_{\nonlineardistspace} f d\mu$. We thus obtain that $\E[f(\rvnldistspace_{n_k})] \ra \int_{\nonlineardistspace} f d\mu$. Since $f$ was an arbitrary bounded continuous function, the desired weak convergence follows.
\end{proof}

\begin{proof}[Proof of Lemma \ref{symlmm}]
Let $t > 0$. Observe that the map $t \mapsto \sym(X(t))$ may be written as the composition $\sym^{1,2} \circ \evalmap_t \circ i^{1,2}$, where $i^{1,2} : \nonlineardistspace \ra \nonlineardistspace^{1,2}$ is as in Definition \ref{def:i12}, $\evalmap_t : \nonlineardistspace^{1,2} \ra \honeorbitspace$ is as in Definition \ref{def:nonlinear-dist-space}, and $\sym^{1,2} : \honeorbitspace \ra \R$ is as defined just before Lemma \ref{lemma:curvature-bounded-by-h1-norm}. The desired result now follows because each of these maps is measurable (recall Lemma \ref{lemma:i12-measurable} and Remark \ref{remark:sym-measurable}). 
\end{proof}

We next prove Lemma \ref{fdlemma}. First, we need the following result.

\begin{lemma}\label{lemma:nl-dist-space-regwl-determine-law}
Let $\rvnldistspace_1, \rvnldistspace_2$ be $\nonlineardistspace^{1,2}$-valued random variables. Suppose that for any finite collection $\wloop_i, \character_i, t_i$, $1 \leq i \leq k$, where $\wloop_i : [0, 1] \ra \threetorus$ is a piecewise $C^1$ loop, $\character_i$ is a character of $\liegroup$, and $t_i > 0$, we have that
\[ (\obsnldistspace{\wloop_i}{t_i}{\character_i}(\rvnldistspace_1), 1 \leq i \leq k) \stackrel{d}{=} (\obsnldistspace{\wloop_i}{t_i}{\character_i}(\rvnldistspace_2), 1 \leq i \leq k).\]
(The above identity is interpreted as equality in distribution of two $\C^k$-valued random variables.) Then $\rvnldistspace_1 \stackrel{d}{=} \rvnldistspace_2$.
\end{lemma}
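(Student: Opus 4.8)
The plan is to reduce equality of laws on $\nonlineardistspace^{1,2}$ to equality of the rational-time finite-dimensional distributions valued in $\honeorbitspace$, and then to invoke Lemma~\ref{lemma:orbit-space-regwl-determine-law} one time index at a time. First I would pass to the countable-time model: by Lemma~\ref{lemma:homeomorphism-of-nonlinear-dist-space} the map $\inclnldistpace : (\nonlineardistspace_0^{1,2}, \ms{T}_{\nonlineardistspace_0^{1,2}}) \to (\nonlineardistspace^{1,2}, \topnonlineardistspace)$ is a homeomorphism, so it suffices to show that $\inclnldistpace^{-1}(\rvnldistspace_1)$ and $\inclnldistpace^{-1}(\rvnldistspace_2)$ agree in law as $\nonlineardistspace_0^{1,2}$-valued random variables. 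By Lemma~\ref{lemma:borel-equals-infinite-product}, $\mc{B}_{\nonlineardistspace_0^{1,2}} = \borelorbitspace^{I_0}|_{\nonlineardistspace_0^{1,2}}$, and since $\borelorbitspace^{I_0}$ is the product $\sigma$-algebra it is generated by the $\pi$-system consisting of the cylinder sets $\{X : (X(t_1), \dots, X(t_k)) \in B\}$ with $t_1 < \dots < t_k$ in $I_0$ and $B \in \borelorbitspace^{\{t_1, \dots, t_k\}}$. A routine $\pi$--$\lambda$ argument then reduces the claim to showing, for every finite set $\{t_1 < \dots < t_k\} \subseteq I_0$, that $(\rvnldistspace_1(t_1), \dots, \rvnldistspace_1(t_k)) \stackrel{d}{=} (\rvnldistspace_2(t_1), \dots, \rvnldistspace_2(t_k))$ as $(\honeorbitspace)^k$-valued random variables.

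Next I would collapse the tuple to its first coordinate. By the semigroup property of $(\ymsemigroup_t^{1,2}, t \ge 0)$ one has $\rvnldistspace_j(t_i) = \ymsemigroup_{t_i - t_1}^{1,2}(\rvnldistspace_j(t_1))$, so the tuple above equals $\Theta(\rvnldistspace_j(t_1))$, where $\Theta : \honeorbitspace \to (\honeorbitspace)^k$ is the map $q \mapsto (q, \ymsemigroup_{t_2 - t_1}^{1,2}(q), \dots, \ymsemigroup_{t_k - t_1}^{1,2}(q))$, which is continuous --- hence measurable --- by Lemma~\ref{lemma:ym-semigroup-continuous}. Since $\Theta$ is the same map for $j = 1, 2$, it suffices to prove $\rvnldistspace_1(t_1) \stackrel{d}{=} \rvnldistspace_2(t_1)$ for each $t_1 \in I_0$. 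For this I would apply Lemma~\ref{lemma:orbit-space-regwl-determine-law}, using the identity $[W]_{\wloop, \character, s}^{1,2}(X(t_1)) = \obsnldistspace{\wloop}{t_1 + s}{\character}(X)$, valid for all $s > 0$, which is immediate from Definition~\ref{def:regwl-nonlinear-dist-space} on choosing the intermediate time equal to $t_1$. Indeed, for any finite collection $\wloop_i, \character_i, s_i$ with $s_i > 0$, the hypothesis applied with times $t_1 + s_i$ gives that the joint law of $([W]_{\wloop_i, \character_i, s_i}^{1,2}(\rvnldistspace_1(t_1)))_{i}$ equals that of $([W]_{\wloop_i, \character_i, s_i}^{1,2}(\rvnldistspace_2(t_1)))_{i}$, so Lemma~\ref{lemma:orbit-space-regwl-determine-law} yields $\rvnldistspace_1(t_1) \stackrel{d}{=} \rvnldistspace_2(t_1)$, completing the argument.

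There is no genuinely hard step: the proof is essentially bookkeeping on top of the measure-theoretic and topological infrastructure developed in Sections~\ref{section:topology-and-measure-theory}--\ref{section:nonlinear-dist-space}, with Lemma~\ref{lemma:orbit-space-regwl-determine-law} the only content-bearing input. The point requiring the most care is the first reduction --- verifying that the cylinder sets do form a $\pi$-system generating $\mc{B}_{\nonlineardistspace_0^{1,2}}$, which is precisely where Lemma~\ref{lemma:borel-equals-infinite-product} enters --- while the measurability of $\Theta$ and the identity relating $[W]^{1,2}$ on $\honeorbitspace$ to $\obsnldistspace{\wloop}{t}{\character}$ on $\nonlineardistspace^{1,2}$ are read off directly from the definitions.
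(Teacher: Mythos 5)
Your proposal is correct, and it takes a genuinely different route to the finish than the paper. Both proofs share the same content-bearing input: you use the identity $[W]_{\wloop,\character,s}^{1,2}(X(t_1)) = \obsnldistspace{\wloop}{t_1+s}{\character}(X)$ together with Lemma~\ref{lemma:orbit-space-regwl-determine-law} to conclude that the one-point marginals $\rvnldistspace_1(t)$ and $\rvnldistspace_2(t)$ agree in law, and this is where all the real work happens. The difference is in how that fact is promoted to equality of laws on $\nonlineardistspace^{1,2}$. The paper observes that $(\evalmap_t)_*\mu_1 = (\evalmap_t)_*\mu_2$ for all $t>0$ and then applies Lemma~\ref{lemma:weak-convergence-nonlinear-dist-space} to the \emph{constant} sequence $\nu_m \equiv \mu_1$, reading off $\mu_1 = \mu_2$ from the uniqueness of weak limits on a Lusin space; this is a slick recycling of portmanteau machinery that the paper already needed for Theorem~\ref{thm:nonlinear-dist-space-tightness}. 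You instead argue directly at the level of $\sigma$-algebras: pass to the countable-time model via the homeomorphism $\inclnldistpace$ of Lemma~\ref{lemma:homeomorphism-of-nonlinear-dist-space}, identify $\mc{B}_{\nonlineardistspace_0^{1,2}}$ with the restricted product $\sigma$-algebra via Lemma~\ref{lemma:borel-equals-infinite-product}, run a $\pi$--$\lambda$ argument on cylinder sets, and collapse the finite-dimensional marginal to a single coordinate via the continuous map $\Theta$ (whose measurability uses Lemma~\ref{lemma:ym-semigroup-continuous} and Lemma~\ref{lemma:borel-equals-product}). Your route is longer but more self-contained --- it bypasses the portmanteau/weak-convergence step entirely and reduces everything to elementary measure theory plus the semigroup structure --- whereas the paper's route is two lines because it leverages infrastructure already in place. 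Both are valid; there is no gap.
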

\begin{proof}
By definition, for $X \in \nonlineardistspace^{1,2}$, we have $\obsnldistspace{\wloop}{t}{\character}(X) = [W]_{\wloop, \character, t-s}^{1, 2}(X(s))$ for any $\wloop, \character, t$ and any $0 < s < t$. It follows by Lemma \ref{lemma:orbit-space-regwl-determine-law} that for any $t > 0$, we have $\rvnldistspace_1(t) \stackrel{d}{=} \rvnldistspace_2(t)$. Let $\mu_1, \mu_2$ denote the laws of $\rvnldistspace_1, \rvnldistspace_2$, respectively. Then we have that for all $t > 0$, $(\evalmap_t)_* \mu_1 = (\evalmap_t)_* \mu_2$. Define the sequence $\{\nu_m\}_{m \geq 1}$ by $\nu_m = \mu_1$ for all $m \geq 1$. Then by Lemma \ref{lemma:weak-convergence-nonlinear-dist-space}, we have that $\nu_m$ converges weakly to $\mu_2$, from which we obtain $\mu_1 = \mu_2$, as desired.
\end{proof}

\begin{proof}[Proof of Lemma \ref{fdlemma}]
Let $\tilde{\mbf{X}} = i^{1,2}(\mbf{X})$, $\tilde{\mbf{Y}} = i^{1,2}(\mbf{Y})$. By Lemma \ref{lemma:j12-i12-identity}, we have that $j^{1,2}(\tilde{\mbf{X}}) = \mbf{X}$, $j^{1,2}(\tilde{\mbf{Y}}) = \mbf{Y}$. Thus it suffices to show that $\tilde{\mbf{X}} \stackrel{d}{=} \tilde{\mbf{Y}}$. By Lemma \ref{lemma:regwl-equality}, for any $\wloop, \character, t$, we have that $\obsnldistspace{\wloop}{t}{\character}(\tilde{\mbf{X}}) = W_{\wloop, \character, t}(\mbf{X})$, and similarly for $\tilde{\mbf{Y}}, \mbf{Y}$. Combining this with Lemma \ref{lemma:nl-dist-space-regwl-determine-law} and the assumption in the lemma statement, we obtain that $\tilde{\mbf{X}} \stackrel{d}{=} \tilde{\mbf{Y}}$, as desired.
\end{proof}


\section{Proofs of the second and third main results}\label{section:proofs-second-third}

First, we give the following lemma, which will be needed in the proof of Theorem \ref{thm:free-field-behavior-implies-tightness}.

\begin{lemma}\label{lemma:sym-decreasing}
For all $X \in \nonlineardistspace$, the function $(0, \infty) \ra [0, \infty)$ defined by $t \mapsto \sym(X(t))$ is non-increasing.
\end{lemma}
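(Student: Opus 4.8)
The plan is to reduce the statement to the corresponding monotonicity fact for classical smooth solutions of the Yang--Mills heat flow, which is a standard consequence of the flow being the gradient flow of $\sym$. Recall that for $X \in \nonlineardistspace$ and any $t > 0$, Lemma \ref{lemma:gauge-orbit-smooth-choice} (via $i^{1,2}$ and Lemma \ref{lemma:orbitspace-gauge-orbit-smooth-choice}, or directly from the definition of $\nonlineardistspace$ and $\Phi$) gives a smooth representative, so it suffices to show that along a genuine smooth solution $\{A(s)\}_{s \geq 0}$ of \eqref{eq:YM}, the map $s \mapsto \sym(A(s))$ is non-increasing. Concretely, fix $0 < s_1 < s_2$ and pick a smooth $A_0 \in X(s_1)$ (using Lemma \ref{lemma:gauge-orbit-smooth-choice}); let $\{A(s)\}_{s \geq 0}$ be the smooth solution to \eqref{eq:YM} with $A(0) = A_0$, which exists and is smooth by Theorem \ref{thm:YM-global-existence}. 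Then $\sym(X(s_1)) = \sym(A(0))$ and $\sym(X(s_2)) = \sym(A(s_2 - s_1))$, since $X(s_1 + r) = \Phi_r(X(s_1)) = [A(r)]$ for all $r \geq 0$ and $\sym$ is gauge invariant.

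First I would compute $\frac{d}{dr}\sym(A(r))$ for the smooth solution. Writing $\sym(A) = \|\curv{A}\|_2^2$ and using the standard Bianchi-type identity, one has $\frac{d}{dr}\sym(A(r)) = 2(\curv{A(r)}, \partial_r \curv{A(r)})$, and since $\partial_r A(r) = -d^*_{A(r)}\curv{A(r)}$, a short integration-by-parts computation (exactly the one showing \eqref{eq:YM} is the gradient flow of $\sym$) yields
\[ \frac{d}{dr}\sym(A(r)) = -2\|d^*_{A(r)}\curv{A(r)}\|_2^2 \leq 0. \]
This is valid for $r > 0$ where everything is smooth, and for $r = 0$ one uses continuity: $A$ is continuous into $\honeconnspace$ and $\curv{A(r)} \in H^1$ for $r > 0$, so $\sym(A(r)) = \|\curv{A(r)}\|_2^2$ is continuous on $[0, \infty)$ by Lemma \ref{lemma:curvature-bounded-by-h1-norm}. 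Hence $r \mapsto \sym(A(r))$ is non-increasing on $[0,\infty)$, which gives $\sym(A(s_2 - s_1)) \leq \sym(A(0))$, i.e. $\sym(X(s_2)) \leq \sym(X(s_1))$, as desired.

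I expect the main (mild) obstacle to be purely bookkeeping: making sure the smooth-solution energy identity is stated at the level of regularity we actually have. Since by Theorem \ref{thm:YM-global-existence} a solution started from smooth data is in $C^\infty([0,\infty) \times \threetorus, \lalg^3)$, all the manipulations above (differentiating under the integral, integrating by parts with no boundary terms on $\threetorus$) are fully justified, so this should not be a real difficulty. An alternative, if one prefers not to recompute the energy identity, is to cite the known fact that $\sym$ is non-increasing along the Yang--Mills heat flow (it is, after all, the defining feature of the gradient flow), but since the computation is two lines I would include it for self-containedness. The only genuine care needed is at the endpoint $r = 0$ / $s = s_1$, handled by the continuity remark above, and the observation that the choice of smooth representative $A_0 \in X(s_1)$ does not matter because $\sym$ is gauge invariant and, by the semigroup/gauge-covariance properties already established, the value $\sym(X(s_2))$ is intrinsic to $X$.
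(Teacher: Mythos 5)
Your proof is correct and takes essentially the same approach as the paper: pick a smooth representative $A_0 \in X(t_0)$, run the flow, and invoke monotonicity of $\sym$ along the Yang--Mills heat flow; the paper simply cites \cite[Theorem 7.1]{CG2013} for that monotonicity instead of rederiving the energy-dissipation identity. (One minor note: for $X \in \nonlineardistspace$, elements of $X(t_0) \in \orbitspace$ are automatically smooth connections, so no appeal to Lemma \ref{lemma:gauge-orbit-smooth-choice} is needed.)
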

\begin{proof}
Let $X \in \nonlineardistspace$. Let $t_0 > 0$, and let $A_0 \in X(t_0)$. Let $A$ be the solution to \eqref{eq:YM} on $[0, \infty)$ with initial data $A(0) = A_0$. By \cite[Theorem 7.1]{CG2013}, we have that $t \mapsto \sym(A(t))$ is non-increasing on $[0, \infty)$. Since $\sym(A(t)) = \sym(X(t + t_0))$, we obtain that $t \mapsto \sym(X(t))$ is non-increasing on $[t_0, \infty)$. Since $t_0$ was arbitrary, the desired result follows.
\end{proof}

We next cite the following direct consequence of \cite[Proposition 3.30]{CaoCh2021} (applied with $d = 3$ and $\varep = 1/2$ (say)). Note that the assumptions that we made on $\{\rconn^n\}_{n \geq 1}$ in Section \ref{section:free-field-behavior-implies-tightness} are exactly the Assumptions A-E which are assumed in \cite[Proposition 3.30]{CaoCh2021} (the fact that Assumption C is satisfied follows from \cite[Lemma 3.7]{CaoCh2021}).

\begin{prop}[See Proposition 3.30 in \cite{CaoCh2021}]\label{prop:free-field-like-behavior-implies-tightness}
Let $\{\rconn^n\}_{n \geq 1}$ be as in Theorem \ref{thm:free-field-behavior-implies-tightness}. Then for each $n \geq 1$, there is a $(0,1]$-valued random variable $T_n$ and a $\lalg^3$-valued stochastic process $\rconn_n = (\rconn_n(t, x), t \in [0, 1), x \in \threetorus)$ such that the following hold. The function $(t, x) \mapsto \rconn_n(t, x)$ is in $C^\infty([0, T_n) \times \threetorus, \lalg^3)$, and moreover it is the solution to \eqref{eq:ZDDS} on $[0, T)$ with initial data $\rconn_n(0) = \rconn^n$. We also have that $\sup_{n \geq 1} \E[T_n^{-p}] < \infty$ for all $p \geq 1$. Finally, for all $p \geq 1$, we have that
\[ \sup_{n \geq 1} \E\bigg[\sup_{t \in (0, T_n)} t^{p(1 + (1/4)(3-\alpha))} \|\rconn_n(t)\|_{C^1}^p \bigg] < \infty.\]
\end{prop}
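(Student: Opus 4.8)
The plan is to deduce the proposition directly from \cite[Proposition 3.30]{CaoCh2021}, which is precisely the statement of local well-posedness for the \eqref{eq:ZDDS} flow started from a family of random ``GFF-like'' one-forms in arbitrary dimension $d$, together with uniform-in-$n$ control on the (random) existence time and on a time-weighted $C^1$ norm of the solution. I would apply that result with $d = 3$ and with the free-field parameter taken to be $\varep = 1/2$ (any value in the admissible range would serve); the exponent $1 + \tfrac14(3-\alpha)$ appearing in the final display is the one that \cite[Proposition 3.30]{CaoCh2021} yields under this specialization.

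The only genuine work on the side of the present paper is to check that the standing hypotheses of \cite[Proposition 3.30]{CaoCh2021}, there labelled Assumptions A--E, are satisfied by the sequence $\{\rconn^n\}_{n \geq 1}$ of Theorem \ref{thm:free-field-behavior-implies-tightness}. This is a direct correspondence with the five bulleted assumptions of Section \ref{section:free-field-behavior-implies-tightness}: translation invariance is Assumption A; the $L^2$-regularity bullet (finite second moments of the $(\rconn^{n,a}_j, \phi)$, $L^2$-convergence of the Fourier truncations $\rconn^n_N$, and the Fubini-type identity for two-point functions) is Assumption B and is what permits passing between $\rconn^n$, its truncations $\rconn^n_N$, and the quadratic forms $(\rconn^n_N, K\rconn^n_N)$; the stretched-exponential tail bounds for linear functionals and for diagonal-free quadratic forms, together with the approximate-Isserlis (four-product) bound, provide Assumptions D and E, encoding that $\rconn^n$ behaves like a Gaussian field uniformly in $n$; and the short-distance bound $|\E[\rconn^{n,a_1}_{j_1}(x) \rconn^{n,a_2}_{j_2}(y)]| \leq \constgf \metricspace(x,y)^{-(3-\alpha)}$ with $\alpha \in (5/3, 3)$ is the free-field regularity condition. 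The one remaining hypothesis, Assumption C, is not among our bullets, but \cite[Lemma 3.7]{CaoCh2021} shows it follows from the others, so no separate check is required. Granted A--E, \cite[Proposition 3.30]{CaoCh2021} yields the random time $T_n \in (0,1]$, the smoothness of $\rconn_n$ on $[0, T_n) \times \threetorus$, the identification of $\rconn_n$ with the \eqref{eq:ZDDS} solution with initial data $\rconn^n$ (consistent with Proposition \ref{prop:ZDDS-local-existence}, since each $\rconn^n$ is smooth), and the two uniform estimates $\sup_n \E[T_n^{-p}] < \infty$ and the weighted $C^1$ moment bound.

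For orientation on what lies behind the cited result: its proof re-develops the local theory of \eqref{eq:ZDDS} in a low-regularity function space adapted to GFF-like initial data (in place of the $H^1$ setting of Proposition \ref{prop:ZDDS-local-existence}), through a fixed-point argument for the mild formulation in a norm of the schematic form $\sup_{0 < t \leq T} t^{\gamma} \|A(t)\|_{C^1}$, where $\gamma$ matches the blow-up rate of the solution as $t \downarrow 0$; the genuine parabolic smoothing of \eqref{eq:ZDDS} — absent for \eqref{eq:YM} — is essential here. The random input consists of the multilinear objects built from $\rconn^n$ (its truncations and the relevant quadratic forms), and the tail bounds plus approximate-Isserlis structure give moment bounds on these objects that are uniform in $n$; the constraint $\alpha > 5/3$ is what makes the nonlinear estimates needed to close the fixed point valid. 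This produces a lower bound on $T_n$ in terms of an a.s.\ finite norm of $\rconn^n$ whose negative moments are controlled uniformly in $n$, and the fixed-point space itself delivers the weighted $C^1$ moment bound.

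The substantive difficulty is therefore entirely inside the companion paper: carrying out the low-regularity local well-posedness of \eqref{eq:ZDDS} with every stochastic estimate uniform over the approximating sequence. Within the present paper the only point requiring care is the bookkeeping that our five assumptions reproduce Assumptions A--E of \cite{CaoCh2021}, the sole non-obvious item being Assumption C, which is supplied by \cite[Lemma 3.7]{CaoCh2021}.
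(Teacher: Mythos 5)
Your proposal matches the paper's treatment exactly: the paper likewise cites \cite[Proposition 3.30]{CaoCh2021} specialized to $d=3$ and $\varep = 1/2$, observes that the five bulleted assumptions of Section \ref{section:free-field-behavior-implies-tightness} are precisely Assumptions A--E of that result, and notes that Assumption C is supplied by \cite[Lemma 3.7]{CaoCh2021}. No further argument is given in the paper, so your bookkeeping is all that is required.
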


We now use Proposition \ref{prop:free-field-like-behavior-implies-tightness} to prove Theorem \ref{thm:free-field-behavior-implies-tightness}.

\begin{proof}[Proof of Theorem \ref{thm:free-field-behavior-implies-tightness}]
By Proposition \ref{prop:free-field-like-behavior-implies-tightness}, we obtain the following. Let $T_n, \rconn_n$ be as given by the proposition. Then $\sup_{n \geq 1} \E[T_n^{-p}] < \infty$. Also, for all $n \geq 1$, $t \in (0, T_n)$, we have that $\rconn_n(t) \in \rvnldistspace_n(t)$ (this follows by Lemmas \ref{lemma:ym-zdds-solution-gauge-transformation} and \ref{lemma:wilson-loop-gauge-invariant-not-smooth-gauge-transform}). Finally, defining
\[ R_n := \sup_{t \in (0, T_n)} t^{1 + (1/4)(3-\alpha)} \|\rconn_n(t)\|_{C^1}, \]
we have that $\sup_{n \geq 1} \E[R_n^p] < \infty$ for all $p \geq 1$. 

Now, fix $t > 0$. By Lemma \ref{lemma:sym-decreasing}, we have that
\[ \sym(\rvnldistspace_n(t)) \leq \ind(t < T_n) \sym(\rconn_n(t)) + \ind(t \geq T_n) \sym(\rconn_n(T_n/2)).\]
By Lemma \ref{lemma:curvature-bounded-by-h1-norm} we have that for all $s \in (0, T)$,
\[\begin{split}
\sym(\rconn_n(s)) &\leq \const(\|\rconn_n(s)\|_{H^1} + \|\rconn_n(s)\|_{H^1}^2) \\
&\leq \const (\|\rconn_n(s)\|_{C^1} + \|\rconn_n(s)\|_{C^1}^2).
\end{split}\]
Let $\gamma := 1 + (1/4)(3-\alpha)$. Then recalling the definition of $R_n$, we obtain
\begin{align*}
\sym(\rvnldistspace_n(t)) &\leq \const \ind(t < T_n) (t^{-2\gamma} R_n^2 + t^{-\gamma} R_n) \\
&\qquad + \const \ind(t \geq T_n) (T_n^{-2\gamma} R_n^2 + T_n^{-\gamma} R_n), 
\end{align*}
which implies
\[ \sym(\rvnldistspace_n(t)) \leq \const (\min\{t, T_n\}^{-2\gamma} R_n^2 + \min\{t, T_n\}^{-\gamma} R_n).\]
Using H\"{o}lder's inequality and the fact that $\{T_n^{-1}\}_{n \geq 1}$ and $\{R_n\}_{n \geq 1}$ are both uniformly $L^p$-bounded for all $p \geq 1$, we obtain $\sup_{n \geq 1} \E[ \sym(\rvnldistspace_n(t))] < \infty$.
\end{proof}



We next move on to proving Theorem \ref{thm:gauge-invariant-free-field}. Let $\rconn^0, \{\rconn^0_N\}_{N \geq 1}$ be as constructed in Section \ref{section:gauge-invariant-free-field}. We quote \cite[Theorem 1.5]{CaoCh2021}, which is the main technical result that enables us to construct the gauge invariant free field.

\begin{theorem}[Theorem 1.5 in \cite{CaoCh2021}]\label{thm:ym-heat-flow-gff}
There exists a $\lalg^3$-valued stochastic process $\rconn = (\rconn(t, x), t \in (0, 1), x \in \threetorus)$ and a random variable $T \in (0, 1]$ such that the following hold. The function $(t, x) \mapsto \rconn(t, x)$ is in $C^\infty((0, T) \times \threetorus, \lalg^3)$, and moreover it is a solution to \eqref{eq:ZDDS} on $(0, T)$. Also, $\E[T^{-p}] <  \infty$ for all $p \geq 1$. The process $\rconn$ relates to $\rconn^0$ in the following way. There exists a sequence $\{T_N\}_{N \geq 0}$ of $(0, 1]$-valued random variables such that the following hold. First, for any $p \geq 1$, we have that $\sup_{N \geq 0} \E[T_N^{-p}] < \infty$, and that $\E[|T_N^{-1} - T^{-1}|^p] \ra 0$ (which implies that $T_n$ converges to $T$ in probability). Also, there is a sequence $\{\rconn_N\}_{N \geq 0}$ of $\lalg^3$-valued stochastic processes $\rconn_N = (\rconn_N(t, x), t \in [0, 1), x \in \torus^3)$ such that for each $N \geq 0$, the function $(t, x) \mapsto \rconn_N(t, x)$ is in $C^\infty([0, T_N) \times \threetorus, \lalg^3)$, and moreover it is the solution to \eqref{eq:ZDDS} on $[0, T_N)$ with initial data $\rconn_N(0) = \rconn^0_N$. Finally, for any $p \geq 1$, $\delta, \varep > 0$, we have that
\[\begin{split} \lim_{N \toinf} \E\bigg[ \sup_{\substack{t \in (0, (1-\delta)T)}} t^{p((3/4) + \varep)}\|\rconn_N(t) - \rconn(t)\|_{C^1}^p \bigg] &= 0.
\end{split}\] 
\end{theorem}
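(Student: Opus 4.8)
The plan is to prove Theorem~\ref{thm:ym-heat-flow-gff} by a Da Prato--Debussche-type contraction argument for the \eqref{eq:ZDDS} flow in parabolically weighted Hölder--Besov spaces, driven by a family of ``stochastic objects'' built from the Gaussian data $\rconn^0$ and $\rconn^0_N$. First I would record the regularity of the data: from the explicit Fourier series in Section~\ref{section:gauge-invariant-free-field}, Gaussian hypercontractivity, and a Besov/Kolmogorov criterion, $\rconn^0 \in \mc{C}^{-1/2-\kappa}(\threetorus,\lalg^3)$ almost surely for every $\kappa>0$ with all moments of the norm finite, and $\rconn^0_N \to \rconn^0$ in $L^p(\Omega;\mc{C}^{-1/2-\kappa})$ for every $p$. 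Writing $\curv{A} = dA + \tfrac12[A\wedge A]$ and using $-(d^*d+dd^*) = \Delta$ on $1$-forms, \eqref{eq:ZDDS} becomes a semilinear heat equation $\ptl_t A = \Delta A + \mathcal{Q}(A,\ptl A) + \mathcal{C}(A,A,A) + (\text{lower order})$, with $\mathcal{Q}$ quadratic carrying one derivative and $\mathcal{C}$ cubic with no derivative, and I would pass to the mild formulation $A(t) = e^{t\Delta}A(0) + \int_0^t e^{(t-s)\Delta}[\mathcal{Q}+\mathcal{C}+\cdots](s)\,ds$.

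The heart of the argument is the stochastic construction. With $Z_N(t) := e^{t\Delta}\rconn^0_N$ one has $\|Z_N(t)\|_{\mc{C}^{1}} \lesssim t^{-3/4-\kappa/2}\|\rconn^0_N\|_{\mc{C}^{-1/2-\kappa}}$, which is exactly what produces the weight $(3/4)+\varep$ in the statement. The cubic Duhamel term $\int_0^\cdot e^{(\cdot-s)\Delta}\mathcal{C}(Z_N(s))\,ds$ is already absolutely convergent at $s=0$ in a weighted $\mc{C}^{1}$-space at the deterministic level (the relevant exponents close for any Besov regularity in $(0,1/6)$), so no renormalization is needed there. The delicate term is $\mathbb{W}_N := \int_0^\cdot e^{(\cdot-s)\Delta}\mathcal{Q}(Z_N(s),\ptl Z_N(s))\,ds$, whose integrand deterministically has size $s^{-\beta-1}$ with $\beta>1/2$ and is not integrable; here one uses the Lie-algebra commutator, namely that $\mathcal{Q}(A,\ptl A)$ only involves products $A^a\,\ptl A^b$ with $a\neq b$ (since $[X^a,X^a]=0$), which are products of \emph{independent} Gaussian fields, so there is no diagonal and the product lives in a higher Wiener chaos with a genuine regularity gain. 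A chaos-level moment computation plus hypercontractivity then give that $\mathbb{W}_N$ converges in $L^p(\Omega)$, uniformly in $N$, to a limit $\mathbb{W}$ in a parabolically weighted $\mc{C}^{1+\delta}$-space for some $\delta>0$; the mixed objects ($\mathcal{Q}(Z_N,\ptl\mathbb{W}_N)$, etc.) are handled the same way. Setting $v_N := \rconn_N - Z_N - \mathbb{W}_N$, the remainder solves a heat equation whose right-hand side is a polynomial in $(Z_N,\mathbb{W}_N,v_N)$ with strictly better space-time integrability, and a contraction mapping in a ball $\{v:\ \sup_{t\le \tau} t^{\eta}\|v(t)\|_{\mc{C}^{1+\delta}}\le R\}$ closes for a random time $\tau = \tau(\|Z_N\|,\|\mathbb{W}_N\|,\dots)$ obeying a polynomial lower bound in those norms. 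This yields $\rconn$ (from $\rconn^0$) and $\rconn_N$ (from $\rconn^0_N$) together with the times $T$ and $T_N$; smoothness on positive times follows by parabolic bootstrapping exactly as in Proposition~\ref{prop:ZDDS-local-existence}, and agreement with the solution notion of Definition~\ref{def:ZDDS-solution-def} is routine. Since $\tau^{-1}$ is dominated by a polynomial in the stochastic norms, which have Gaussian tails by Fernique's theorem, $\E[T^{-p}]<\infty$ and $\sup_N\E[T_N^{-p}]<\infty$ for all $p$, and $\E[|T_N^{-1}-T^{-1}|^p]\to0$ because the stochastic data converge in $L^p(\Omega)$.

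The convergence $\rconn_N\to\rconn$ in the weighted $\mc{C}^1$-norm on $(0,(1-\delta)T)$ then follows by comparing the two fixed points on the common time interval (using $T_N\to T$ in probability): the difference $\rconn_N-\rconn$ solves a linear equation driven by $Z_N-Z$, $\mathbb{W}_N-\mathbb{W}$ and lower-order differences, which is controlled by the same contraction estimates, giving $\E\big[\sup_t t^{p((3/4)+\varep)}\|\rconn_N(t)-\rconn(t)\|_{C^1}^p\big]\to0$. One could alternatively obtain the uniform-in-$N$ bounds more cheaply by checking that $\{\rconn^0_N\}_N$ satisfies Assumptions~A--E of Section~\ref{section:free-field-behavior-implies-tightness} with $\alpha=2$ (translation invariance and the Gaussian tail and four-point conditions are immediate, and the local GFF bound holds because the covariance of $\rconn^0_N$ is a partial sum of the $3$D Green's function, uniformly $\lesssim\metricspace(x,y)^{-1}$) and invoking Proposition~\ref{prop:free-field-like-behavior-implies-tightness}; but that proposition produces neither the limit $\rconn$ nor the convergence $\rconn_N\to\rconn$, which still require the comparison argument above, so the fixed-point construction cannot be avoided.

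The main obstacle, as isolated above, is the interaction of the very rough ($\mc{C}^{-1/2-}$) initial data with the quadratic, derivative-carrying nonlinearity: the naive Duhamel integral diverges at $t=0$, so the enhanced object $\mathbb{W}_N$ must be built using the probabilistic cancellation coming from $[X^a,X^a]=0$ (off-diagonal products of independent Gaussians), it must be checked that no renormalization constants appear, and all of this must hold \emph{uniformly in $N$} so that the limit $\mathbb{W}$ and the convergence $\mathbb{W}_N\to\mathbb{W}$ are available. A secondary, substantial task is the bookkeeping: setting up the right scale of parabolically weighted Besov norms (indexed by the regularity exponent) so that every term in the Picard scheme, in particular those mixing $Z_N$, $\mathbb{W}_N$ and $v_N$, is estimated with matching exponents and the fixed point genuinely closes.
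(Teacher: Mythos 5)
This theorem is not proved in the present paper at all: it is imported verbatim from the companion paper, and the ``proof'' here consists of the citation to \cite[Theorem 1.5]{CaoCh2021}. So there is no in-paper argument to compare against; what you have written is a proposed reconstruction of the companion paper's proof. As a strategy it is consistent with what that paper does and with the structural hints visible in the present one: the decomposition around $Z_N(t)=e^{t\Delta}\rconn^0_N$, the observation that the dangerous quadratic term only involves off-diagonal products $A^{a}\,\ptl A^{b}$ with $a\neq b$ because $[X^a,X^a]=0$ (this is exactly why the ``four product'' and ``tail bound'' assumptions in Section \ref{section:free-field-behavior-implies-tightness} are restricted to $a_1\neq a_2$ and to kernels with $K^{aa}=0$), the second-chaos estimate replacing the divergent deterministic bound, and the origin of the weight $t^{3/4+\varep}$ in $\|e^{t\Delta}A_0\|_{C^1}\lesssim t^{-3/4-\kappa}$ for $A_0$ of regularity $-1/2-\kappa$. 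You also correctly note that Proposition \ref{prop:free-field-like-behavior-implies-tightness} cannot substitute for the construction, since it yields neither the limiting process $\rconn$ nor the convergence $\rconn_N\to\rconn$.

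That said, what you have is a program rather than a proof. Everything that makes the theorem hard is deferred: the precise weighted Besov scale in which the Picard iteration closes, the verification that the second-chaos objects $\mathbb{W}_N$ converge \emph{uniformly in $N$} with all moments, the treatment of the mixed terms $\mathcal{Q}(Z_N,\ptl v_N)$ (which are not purely Gaussian and must be handled deterministically with the gain coming only from $v_N$'s better regularity), and the passage from $L^p$ control of the stochastic norms to the stated moment bounds on $T^{-1}$, $T_N^{-1}$ and $|T_N^{-1}-T^{-1}|$. These constitute essentially the entire content of \cite{CaoCh2021}, and none of them can be checked from the sketch; in particular the claim that the exponents ``close'' for the cubic and mixed terms is asserted rather than verified. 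Within the logic of the present paper the correct move is simply to cite the companion paper, which is what the authors do.
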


Let $\rconn, T, \rconn_N, T_N$ be as given by Theorem \ref{thm:ym-heat-flow-gff}. Define a random variable $\rvnldistspace \in \nonlineardistspace$ as follows. For $t \in (0, T)$, define $\rvnldistspace(t) := [\rconn(t)]$. For $t \geq T$, let $s \in (0, T)$, and define $\rvnldistspace(t) := \Phi_{t-s}(\rvnldistspace(s))$. (Since $\rconn$ is a solution to \eqref{eq:ZDDS} on $(0, T)$, the specific choice of $s$ does not matter, by Lemmas \ref{lemma:ym-zdds-solution-gauge-transformation} and \ref{lemma:wilson-loop-gauge-invariant-not-smooth-gauge-transform}.) The following lemma shows that $\rvnldistspace$ is an $\nonlineardistspace$-valued random variable. The proof is in Appendix \ref{section:measurability}.

\begin{lemma}\label{lemma:gauge-invariant-free-field-measurable}
Let $\rvnldistspace$ be as just defined. Then $\rvnldistspace$ is an $\nonlineardistspace$-valued random variable (i.e., it is measurable).
\end{lemma}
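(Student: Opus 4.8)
The plan is to reduce the measurability of $\rvnldistspace$ to a form in which Lemma \ref{lemma:measurability-from-stochastic-process} (or its proof in Appendix \ref{section:measurability}) applies, using the relation between the (ZDDS) flow and the (YM) flow to pass from the process $\rconn_N$ to the gauge-orbit-valued flow. Recall that by Theorem \ref{thm:ym-heat-flow-gff} the process $\rconn$ is, up to the random time $T$, a solution of (ZDDS) arising as a limit of the solutions $\rconn_N$ started from $\rconn^0_N$. The key observation is that, by Lemmas \ref{lemma:ym-zdds-solution-gauge-transformation} and \ref{lemma:wilson-loop-gauge-invariant-not-smooth-gauge-transform}, for $0 < s < t < T$ one has $[\rconn(t)] = \Phi_{t-s}([\rconn(s)])$, so that $\rvnldistspace$ as defined is a genuine element of $\nonlineardistspace$ almost surely; off the a.s.\ event where all the good properties of Theorem \ref{thm:ym-heat-flow-gff} hold we simply set $\rvnldistspace$ equal to a fixed deterministic element of $\nonlineardistspace$ (say the orbit flow of the zero connection), which does not affect measurability.

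The main steps, in order, would be: (i) fix a countable dense index set --- e.g. rationals $q \in \Q \cap (0,\infty)$ together with the countable family of loops $\loopset_0$ and characters $\chset$ from Section \ref{section:def-regularized-wilson-loops} --- and recall that by Lemma \ref{fdlemma} it suffices to check that $X \mapsto \rvnldistspace(t)$ composed with each regularized Wilson loop observable $\regwlorbit{\wloop}{\character}{t}$ is a measurable real/complex random variable, for $(\wloop,\character,t)$ ranging over this countable set; (ii) for a fixed rational $t$ and fixed $(\wloop,\character)$, express $\regwlorbit{\wloop}{\character}{t}(\rvnldistspace) = W_{\wloop,\character}(A(t-s))$ where $A$ is the smooth (YM) solution started from a smooth representative of $\rvnldistspace(s)$ for some small rational $s < t$; by Theorem \ref{thm:ym-heat-flow-gff}, on the good event and for $N$ large, $\rconn_N(s) \to \rconn(s)$ in $C^1$ and hence in $H^1$, so by Lemma \ref{lemma:ym-continuity-in-initial-data-smooth-case} (continuity of the YM flow in smooth initial data) the corresponding (YM) solutions converge in $H^1$ uniformly on compacts, and then by Lemma \ref{lemma:wilson-loop-continuous-dependence-on-conn-general-character} the Wilson loop functional converges; (iii) since each $\rconn_N$ is built measurably from $\rconn^0_N$ --- which in turn is an explicit finite sum of the i.i.d.\ Gaussian coefficients $Z^a_j(n)$ --- and since the (ZDDS) flow and (YM) flow depend measurably on initial data (measurability being inherited from continuity via Lemmas \ref{lemma:ym-continuity-in-initial-data-smooth-case} and \ref{prop:ZDDS-local-existence}, exactly as in the proof of Lemma \ref{lemma:measurability-from-stochastic-process}), each approximant $\regwlorbit{\wloop}{\character}{t}(\rvnldistspace_N)$ is measurable, and the a.s.\ limit $\regwlorbit{\wloop}{\character}{t}(\rvnldistspace)$ is therefore measurable; (iv) assemble these coordinate-wise measurability statements, together with Lemma \ref{fdlemma} and the fact that $\newborel(\nonlineardistspace)$ is generated by the $\regwlorbit{\wloop}{\character}{t}$, to conclude that $\rvnldistspace$ is measurable into $(\nonlineardistspace, \newborel(\nonlineardistspace))$.

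The main obstacle I expect is step (ii)--(iii): one must carefully handle the random switching time $T$ (and the $T_N$), since for each fixed $t$ whether $t < T$ or $t \geq T$ is itself random, and the definition of $\rvnldistspace(t)$ branches accordingly. The clean way around this is to observe that the definition via $\rvnldistspace(t) = \Phi_{t-s}(\rvnldistspace(s))$ for $s \in (0,\min\{t,T\})$ rational makes $\rvnldistspace$ entirely determined by the countable family $\{\rvnldistspace(q)\}_{q \in \Q\cap(0,\infty)}$ together with the (measurable) semigroup maps $\Phi$; and for small rational $s$ one has $s < T$ on the good event, so $\rvnldistspace(s) = [\rconn(s)]$ and the $C^1$-convergence $\rconn_N(s)\to\rconn(s)$ from Theorem \ref{thm:ym-heat-flow-gff} kicks in. Thus one never needs to resolve the $t$ vs.\ $T$ dichotomy directly; one only needs $s$ small, and $\{T^{-1} < \infty\}$ has full probability. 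The remaining work --- verifying that each map in the composition $(\text{Gaussians}) \to \rconn^0_N \to \rconn_N(s) \to (\text{YM solution}) \to W_{\wloop,\character}$ is Borel measurable --- is routine given Lemma \ref{lemma:measurability-from-stochastic-process} and the continuity lemmas cited above, and can be dispatched by the same argument as in Appendix \ref{section:measurability}.
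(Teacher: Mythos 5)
Your outline has a genuine gap at step (iv), and it is the step that carries all the weight. You want to conclude that $\rvnldistspace$ is measurable into $(\nonlineardistspace, \newborel(\nonlineardistspace))$ from the fact that each scalar $\regwlorbit{\wloop}{\character}{t}(\rvnldistspace)$ is a random variable. That argument would give measurability with respect to the $\sigma$-algebra \emph{generated by the maps} $\regwlorbit{\wloop}{\character}{t}$, which is a priori only a sub-$\sigma$-algebra of $\newborel(\nonlineardistspace) = \sigma(\newtop)$. The topology $\newtop$ is defined as the one making those maps continuous, but the Borel $\sigma$-algebra of that topology contains arbitrary (not just countable) unions of basic open sets, so equality of the two $\sigma$-algebras is not automatic and is not proven anywhere for $\nonlineardistspace$. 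Lemma~\ref{fdlemma}, which you invoke here, is a uniqueness-of-laws statement and does not supply a measurability criterion. The analogous equality \emph{is} established in the paper for $\honeorbitspace$ (Lemma~\ref{lemma:orbit-space-borel-sigma-algebras-coincide}), but the proof of that lemma leans on the Lusin structure of $(\honeorbitspace, \toporbitspace)$ and compactness of the sublevel sets $\cutoffspace{m}$; it does not directly transfer to $(\nonlineardistspace, \newtop)$, which is not shown to be a Lusin space.

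The paper's own proof sidesteps this obstacle entirely: it first passes to $\tilde{\rvnldistspace} := i^{1,2}(\rvnldistspace)$ in the Lusin space $(\nonlineardistspace^{1,2}, \topnonlineardistspace)$, where limits of measurable maps are measurable (Lemma~\ref{lemma:limits-preserve-measurability}), and then uses a different approximation than yours. Instead of the Fourier-truncated ZDDS solutions $\rconn_N$ (which only converge in probability, so you would additionally need to extract a.s.\ convergent subsequences in your step (iii)), the paper defines $\tilde{\rvnldistspace}_N(t) := \Phi_t^{1,2}\bigl([\rconn(T/N)]^{1,2}\bigr)$, evaluating the \emph{limit} process $\rconn$ at the random time $T/N$. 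This has two advantages: the convergence $\tilde{\rvnldistspace}_N \to \tilde{\rvnldistspace}$ is pointwise and deterministic given the sample path (Lemma~\ref{lemma:ym-semigroup-continuous-in-time}), with no subsequence bookkeeping, and the $T/N < T$ dichotomy you worried about is resolved by construction. The residual measurability claim --- that $(\rconn(T/N,x), x \in \threetorus)$ is a stochastic process --- is disposed of by writing $\rconn(T/N,x)$ as the a.s.\ limit of the staircase approximations $\sum_{i=1}^k \ind\bigl(T\in((i-1)/k,i/k]\bigr)\rconn(i/(kN),x)$, after which Lemma~\ref{lemma:measurability-from-stochastic-process} applies. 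If you want to retain your overall strategy, you should at minimum reroute through $i^{1,2}$ and $j^{1,2}$ and either prove the missing $\sigma$-algebra identification for $\nonlineardistspace$ or argue, as the paper does, at the level of $\nonlineardistspace^{1,2}$.
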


We can now prove Theorem \ref{thm:gauge-invariant-free-field}.

\begin{proof}[Proof of Theorem \ref{thm:gauge-invariant-free-field}]
Let $f$ be a bounded continuous function.~We will show that $\E f(\rvnldistspace_N) \ra \E f(\rvnldistspace)$. It suffices to show that for all subsequences $\{N_k\}_{k \geq 1}$, there exists a further subsequence $\{N_{k_j}\}_{j \geq 1}$ such that $\E f(\rvnldistspace_{N_{k_j}}) \ra \E f(\rvnldistspace)$. By bounded convergence, it suffices to show that $f(\rvnldistspace_{N_{k_j}}) \stackrel{a.s.}{\ra} f(\rvnldistspace)$. Thus let $\{N_k\}_{k \geq 1}$ be a subsequence. By Theorem \ref{thm:ym-heat-flow-gff}, we have that $T_N \stackrel{p}{\ra} T$, and also by the last claim of the theorem (applied with $p=1$, $\delta = 1/2$, $\varep  = 1/4$, say), we have
\[ \sup_{t \in (0, T/2)} t \|\rconn_N(t) - \rconn(t)\|_{C^1} \stackrel{p}{\ra} 0. \]
Thus there exists a further subsequence $\{N_{k_j}\}_{j \geq 1}$ such that a.s.,
\[ T_{N_{k_j}} \ra T ~~~~\text{   and   }~~~~ \sup_{t \in (0, T/2)} t \|\rconn_{N_{k_j}}(t) - \rconn(t)\|_{C^1} \ra  0.\]
Let $E$ be the event that the above happens. Then $\p(E) = 1$, and thus it suffices to show that on $E$, we have that $f(\rvnldistspace_{N_{k_j}}) \ra f(\rvnldistspace)$. Since $f$ is continuous, it suffices to show that on $E$, we have that $\rvnldistspace_{N_{k_j}} \ra \rvnldistspace$ (in the topology $\newtop$). By the definition of $\newtop$, we need to show that on $E$, for all piecewise $C^1$ loops $\wloop$, characters $\character$ of $\liegroup$, and $t > 0$, we have that $W_{\wloop, \character}(\rvnldistspace_{N_{k_j}}(t)) \ra W_{\wloop, \character}(\rvnldistspace(t))$. Thus fix $\wloop, \character, t$. Take $s \in (0, T/2)$, $s < t$. Recall the regularized Wilson loop observable $W_{\wloop, \character, t}^{1, 2}$ from Definition \ref{def:regwl}. By the construction of $\rvnldistspace$, we have that $\rconn(u) \in \rvnldistspace(u)$ for all $u \in (0, T)$, and by Lemmas \ref{lemma:ym-zdds-solution-gauge-transformation} and \ref{lemma:wilson-loop-gauge-invariant-not-smooth-gauge-transform}, we have that $\rconn_N(u) \in \rvnldistspace_N(u)$ for all $N \geq 0$ and $u \in (0, T_N)$. Thus
\[ W_{\wloop, \character}(\rvnldistspace(t)) = W_{\wloop, \character, t-s}^{1, 2}(\rconn(s)), \]
and for $j$ large enough so that $T_{N_{k_j}} > T/2$,
\[ W_{\wloop, \character}(\rvnldistspace_{N_{k_j}}(t)) = W_{\wloop, \character, t-s}^{1, 2}(\rconn_{N_{k_j}}(s)). \]
Since $s \in (0, T/2)$, we have that (on $E$) $\rconn_{N_{k_j}}(s) \ra \rconn(s)$ in $C^1$, which implies that $\rconn_{N_{k_j}}(s) \ra \rconn(s)$ in $\connspace^{1,2}$ (in $H^1$ norm), and thus $\rconn_{N_{k_j}}(s) \weakarrow \rconn(s)$. By Lemma \ref{lemma:regularized-wilson-loop-weak-sequential-continuity}, we obtain that
\[ \lim_{j \toinf} W_{\wloop, \character, t-s}^{1, 2}(\rconn_{N_{k_j}}(s)) = W_{\wloop, \character, t-s}^{1, 2}(\rconn(s)).\]
Combining the previous few results, we obtain the desired convergence.
\end{proof}

\appendix

\numberwithin{theorem}{section}

\section{Proofs of miscellaneous technical lemmas}\label{section:3d-miscellaneous-proofs}

To prove Lemma \ref{lemma:limiting-gauge-transformation}, we quote the following result, which is \cite[Lemma A.8 (ii-iii)]{W2004}, in the special case where the parameters in the lemma statement are set to $k = 2$, $p = 2$, $n = 3$, and the principal $G$-bundle $P = \threetorus \times \liegroup$ is trivial.

\begin{lemma}[Corollary of Lemma A.8 (ii-iii) in \cite{W2004}]\label{lemma:sequence-and-gauge-transform-bounded}
Let $\{A_n\}_{n \geq 1} \sse \honeconnspace$, $\{\gt_n\}_{n \geq 1} \sse \htwogaugetransf$ be such that
\[ \sup_{n \geq 1} \|A_n\|_{H^1}, ~~ \sup_{n \geq 1} \|\gaction{A_n}{\gt_n}\|_{H^1} < \infty.\]
Then there exists $\gt \in \htwogaugetransf$ and a subsequence $\{\gt_{n_k}\}_{k \geq 1}$ such that 
\[
\|\gt_{n_k} - \gt\|_\infty := \sup_{x \in \threetorus} \|\gt_{n_k}(x) - \gt(x)\| \ra 0
\]
(where the matrix norm is Frobenius norm), and moreover for all $1 \leq s < 6$, we have that $\gt_{n_k}^{-1} d\gt_{n_k} \ra \gt^{-1} d\gt$ in $L^s(\threetorus, \lalg^3)$. In particular, the convergence happens in $L^2(\threetorus, \lalg^3)$.
\end{lemma}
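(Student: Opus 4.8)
The plan is to derive this statement from \cite[Lemma A.8 (ii-iii)]{W2004} together with standard facts about Sobolev embeddings in three dimensions. Write $B_n := \gaction{A_n}{\gt_n}$, so that \eqref{eq:gaugetransform} gives the componentwise identity $\gt_n^{-1} \ptl_i \gt_n = B_{n,i} - \gt_n^{-1} A_{n,i} \gt_n$, and hence $\gt_n^{-1} d\gt_n = B_n - \gt_n^{-1} A_n \gt_n$. Applying \cite[Lemma A.8 (ii)]{W2004} with $k = 2$, $p = 2$, $n = 3$ and the trivial bundle $P = \threetorus \times \liegroup$ --- which is applicable precisely under the hypothesis $\sup_n \|A_n\|_{H^1}, \sup_n \|B_n\|_{H^1} < \infty$ --- yields $\sup_n \|\gt_n\|_{H^2} < \infty$. (Here we use that $\gt_n$ is $\liegroup$-valued with $\liegroup \sse \unitary(N)$, so that $\sup_n \|\gt_n\|_\infty < \infty$ automatically, and a bound on the $H^2$ seminorm upgrades to a bound on the full $H^2$ norm.)

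First I would extract the subsequence by compactness. Since $\sup_n \|\gt_n\|_{H^2} < \infty$ and the embedding $H^2 \hookrightarrow C^0$ is compact in dimension three (Rellich--Kondrachov), there is a subsequence $\{\gt_{n_k}\}$ and a continuous $\gt$ with $\|\gt_{n_k} - \gt\|_\infty \ra 0$; passing to a further subsequence we may also assume $\gt_{n_k} \weakarrow \gt$ in $H^2$. Because $\liegroup$ is closed in $\unitary(N)$ and each $\gt_{n_k}$ takes values in $\liegroup$, so does the uniform limit $\gt$; combined with $\gt \in H^2$ and the characterization \eqref{eq:gauge-transformation-set-def} (see also \cite[Lemma B.5]{W2004}), this gives $\gt \in \htwogaugetransf$, and in particular $\gt^{-1} d\gt \in \honeconnspace$.

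Next I would prove the convergence of the derivative terms. From $\gt_{n_k} \weakarrow \gt$ in $H^2$ we get $d\gt_{n_k} \weakarrow d\gt$ in $\honeconnspace = H^1$, and then the compact Sobolev embedding $H^1 \hookrightarrow L^s$ for $1 \leq s < 6$ (Theorem \ref{sobolevthm} and Remark \ref{soboloverem}) gives $d\gt_{n_k} \ra d\gt$ in $L^s$ for every such $s$. Since matrix inversion is continuous on the compact set $\liegroup$, $\|\gt_{n_k} - \gt\|_\infty \ra 0$ implies $\|\gt_{n_k}^{-1} - \gt^{-1}\|_\infty \ra 0$. Hence
\[ \|\gt_{n_k}^{-1} d\gt_{n_k} - \gt^{-1} d\gt\|_s \leq \|\gt_{n_k}^{-1}\|_\infty \|d\gt_{n_k} - d\gt\|_s + \|\gt_{n_k}^{-1} - \gt^{-1}\|_\infty \|d\gt\|_s \ra 0 \]
for every $1 \leq s < 6$, and since $\gt_{n_k}(x)^{-1} \ptl_i \gt_{n_k}(x) \in \lalg$ and $\gt(x)^{-1}\ptl_i\gt(x) \in \lalg$ for a.e.\ $x$, this convergence takes place in $L^s(\threetorus, \lalg^3)$. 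The case $s = 2$ gives the final assertion.

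The main obstacle is bookkeeping rather than depth: one must confirm that the cited \cite[Lemma A.8 (ii-iii)]{W2004} applies under our normalization (trivial bundle, $k = 2$, $p = 2$, $n = 3$) and that its conclusion is indeed the uniform $H^2$ bound (or weak $H^2$ precompactness) of $\{\gt_n\}$ used above --- if that reference already packages the $C^0$ and $L^s$ convergences, the argument collapses to the parameter specialization --- and one must take slight care that the limiting $\gt$ lies in $\htwogaugetransf$ and not merely in $H^2 \cap C^0$. The remaining steps are routine applications of Rellich--Kondrachov and of the algebra of strong and weak limits.
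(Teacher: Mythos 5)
Your proof is correct, but it takes a more self-contained route than the paper does. The paper treats this lemma as an essentially verbatim citation: it invokes \cite[Lemma A.8 (ii-iii)]{W2004} with the parameters $k=2$, $p=2$, $n=3$ and the trivial bundle, takes the $C^0$ convergence and the $L^s$ convergence of $\gt_{n_k}^{-1}d\gt_{n_k}$ directly from Wehrheim's conclusions, and spends its only real effort on a remark reconciling Wehrheim's intrinsic metric $d_\liegroup$ with the Frobenius norm, via the inequality $\|h_1 - h_2\| \leq N^{1/2} d_\liegroup(h_1,h_2)$. You instead use Wehrheim only for the a priori uniform $H^2$ bound on $\{\gt_n\}$ and then rebuild the compactness conclusions yourself: compact embedding $H^2 \hookrightarrow C^0$ for the uniform convergence (which sidesteps the $d_\liegroup$-versus-Frobenius issue entirely, since you get Frobenius-norm uniform convergence directly), weak $H^2$ convergence plus Rellich--Kondrachov for $d\gt_{n_k} \ra d\gt$ in $L^s$, $s<6$, and the product estimate with $\|\gt_{n_k}^{-1} - \gt^{-1}\|_\infty \ra 0$ to conclude. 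Your route has the advantage of making transparent exactly why the exponent range is $s<6$ (compactness of $H^1 \hookrightarrow L^s$ fails at $s=6$) and of needing only the boundedness part of Wehrheim's lemma; the paper's route is shorter but leaves all the analysis inside the citation. The caveats you flag yourself are the only soft spots: the attribution of the $W^{2,2}$ bound to part (ii) rather than part (i) of Wehrheim's lemma is a bookkeeping matter, and your verification that the limit $\gt$ lies in $\htwogaugetransf$ (closedness of $\liegroup$ plus the second characterization in \eqref{eq:gauge-transformation-set-def}) is the right check to make. No mathematical gap.
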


\begin{remark}
Technically, the conclusion of \cite[Lemma A.8 (ii)]{W2004} is that 
\[
\sup_{x \in \threetorus} d_\liegroup(\gt_{n_k}(x), \gt(x)) \ra 0,
\]
where $d_G$ is the metric on $\liegroup$ defined by (see \cite[Remark A.3 (iv)]{W2004})
\[ d_\liegroup(h_1, h_2) := \inf\{\|X\| : X \in T_{h_1}  G, h_2 = \exp_{h_1}(X)\}, ~~ h_1, h_2 \in \liegroup,\]
where $\exp_{h_1}(X) := h_1 \exp(h_1^{-1} X)$ (see \cite[Remark A.3 (iii)]{W2004}), $\exp$ is the matrix exponential, and $\|X\|$ is the Frobenius norm of $X$. To reconcile this with the conclusion of Lemma \ref{lemma:sequence-and-gauge-transform-bounded}, we note that for all $h_1, h_2 \in \liegroup$, we have that 
\[
\|h_1 - h_2\| \leq N^{1/2} d_\liegroup(h_1, h_2).
\]
(Recall that $\liegroup \sse \unitary(N)$.) To see this,  fix $h_1, h_2 \in \liegroup$, and suppose we have $X \in T_{h_1} \liegroup$ such that $h_2 = \exp_{h_1}(X) = h_1 \exp(h_1^{-1} X)$. Define the path $\gamma : [0, 1] \ra \liegroup$ by $\gamma(t) := h_1 \exp(t h_1^{-1} X)$. Then $\gamma(0) = h_1$, $\gamma(1) = h_2$, and $\gamma'(t) = X h_1^{-1} \gamma(t)$ for $t \in (0, 1)$. We thus have
\[ h_2 - h_1 = X \int_0^1 h_1^{-1}\gamma(t) dt,  \]
and thus
\[ \|h_1 - h_2\| \leq \|X\| \int_0^1 \|h_1^{-1} \gamma(t)\| dt = N^{1/2} \|X\|, \]
where we used that $\|h\| = N^{1/2}$ for all $h \in \liegroup$ (since $h^* h = \groupid$). Taking infimum over $X$ gives the claim.
\end{remark}

\begin{proof}[Proof of Lemma \ref{lemma:limiting-gauge-transformation}]
Note that by the weak convergence (and Lemma \ref{lemma:weak-convergence-and-h1-norm}), we have
\[ \sup_{n \geq 1} \|A_n\|_{H^1}, ~~\sup_{n \geq 1} \|\gaction{A_n}{\gt_n}\|_{H^1} < \infty. \]
Now by Lemma \ref{lemma:sequence-and-gauge-transform-bounded}, there exists $\gt_\infty \in \htwogaugetransf$ and a subsequence $\{\gt_{n_k}\}_{k \geq 1}$ such that $\|\gt_{n_k} - \gt_\infty\|_\infty \ra 0$ and $\gt_{n_k}^{-1} d \gt_{n_k} \ra \gt_\infty^{-1} d\gt_\infty$ in $L^2$ as $k \toinf$. Since $A_n \ra A_\infty$ in $L^2$ (by the Sobolev embedding $H^1 \hookrightarrow L^2$), we have that $\gaction{A_{n_k}}{\gt_{n_k}} \ra A_\infty^{\gt_\infty}$ in $L^2$. But we also have $\gaction{A_{n_k}}{\gt_{n_k}} = B_{n_k} \ra B_\infty$ in $L^2$ (again by the Sobolev embedding $H^1 \hookrightarrow L^2$). We thus obtain $B_\infty = \gaction{A_\infty}{\gt_\infty}$, as desired.
\end{proof}

\begin{proof}[Proof of Lemma \ref{lemma:wilson-loop-gauge-invariant-not-smooth-gauge-transform}]
By assumption, we have that 
\beq\label{eq:wilson-loop-not-smooth-proof-identity} \gt^{-1} d\gt = \tilde{A} - \gt^{-1} A \gt \text{ (where the equality holds a.e.)}. \eeq
Since $\gt \in \htwogaugetransf$ implies that $\gt$ is continuous, we have (by the above) that $\gt^{-1} d\gt$ is equal a.e.~to a continuous function. Thus the same is true for $d\gt$. We claim that this implies that $\gt$ is in fact in $C^1(\threetorus, \liegroup)$. This follows by the general statement that if $f \in C(\threetorus, \R^n)$ for some $n$ (for us $n = 2 (N \times N)$, where $N$ is such that $\liegroup \sse \unitary(N)$), such that for each $1 \leq i \leq 3$, the weak derivative $\ptl_i f$ exists and is continuous, then in fact $f \in C^1(\threetorus, \R^n)$. To see this, let $\{\eta_\varep\}_{\varep > 0}$ be a family of mollifiers (for instance, take $\eta_\varep$ to be the density of standard Brownian motion on $\threetorus$ at time $\varep$). Since $f$, $\ptl_i f$, $1 \leq i \leq 3$ are all continuous, we have that $f * \eta_\varep \ra f$ and $(\ptl_i f) * \eta_\varep \ra \ptl_i f$, $1 \leq i \leq 3$ as $\varep \downarrow 0$, where the limits are in $C(\threetorus, \R^n)$ (i.e., in supremum norm). Also, for each $\varep > 0$, we have that $f * \eta_\varep$ is smooth, and moreover, for $1 \leq i \leq 3$, we have that $\ptl_i (f * \eta_\varep) = (\ptl_i f) * \eta_\varep$ (this latter assertion may be verified by first showing that the identity holds as weak derivatives, and then using that the classical derivative, when it exists, coincides with the weak derivative). Taking a sequence $\varep_n \downarrow 0$, and combining the previous few observations, we obtain that $\{f * \eta_{\varep_n}\}_{n \geq 1}$ is a Cauchy sequence in $C^1(\threetorus, \R^n)$. Thus there exists some $\tilde{f} \in C^1(\threetorus, 
\R^n)$ such that $f * \eta_{\varep_n} \ra \tilde{f}$ in $C^1(\threetorus, \R^n)$. But since we have that $f * \eta_{\varep_n} \ra f$ in $C(\threetorus, \R^n)$, we obtain that $f = \tilde{f}$, and thus $f \in C^1(\threetorus, \R^n)$.

Having shown that $\sigma$ is in $C^1(\threetorus, \liegroup)$, using \eqref{eq:wilson-loop-not-smooth-proof-identity} we can then obtain that $d\gt$ is equal to a $C^1$ function, and thus we obtain that $\gt$ is in $C^2(\threetorus, \liegroup)$. Proceeding similarly, we can obtain that $\gt$ is in $C^k(\threetorus, \liegroup)$ for any $k \geq 1$, and the desired result follows.
\end{proof}

\begin{proof}[Proof of Lemma \ref{lemma:sequential-convergence-topology-properties}]
Suppose that $V$ is closed in $(S, T(C))$. Then for any $s \in S$ such that there is a sequence $\{s_n\}_{n \geq 1} \sse V$ with $s_n \ra s$ in $(S, T(C))$ (i.e., $s_n \ra_{\mc{C}(T(C))} s$), we must have $s \in V$. It follows that $V$ is sequentially closed with respect to $\ra_C$, since $s_n \ra_C s$ implies that $s_n \ra s$ in $(S, T(C))$. For the other direction, suppose that $V$ is sequentially closed with respect to $\ra_C$. We will show that $V^c$ is open. Towards this end, suppose that $s_n \ra_C s \in V^c$. We need to show that $s_n \in V^c$ eventually. Suppose not; then there exists a subsequence $\{s_{n_k}\}_{k \geq 1}$ such that $s_{n_k} \in V$ for all $k \geq 1$. By property (2) in Definition \ref{def:sequential-convergence}, we have that $s_{n_k} \ra_C s$, which implies that $s \in V$ (since $V$ is assumed to be sequentially closed). But we also have $s \in V^c$, a contradiction.

For the second claim, it suffices to show that for all closed sets $V$ in $(S', T(C'))$, we have that $f^{-1}(V)$ is closed in $(S, T(C))$. By the first claim, it suffices to show that if we have $s \in S$ and a sequence $\{s_n\}_{n \geq 1} \sse f^{-1}(V)$ with $s_n \ra_C s$, then $s \in f^{-1}(V)$. Suppose we have such an $s$ and $\{s_n\}_{n \geq 1}$. Then by assumption, $f(s_n) \ra_{C'} f(s)$. Since $V$ is closed in $(S', T(C'))$ and $f(s_n) \in V$ for all $n \geq 1$, we obtain $f(s) \in V$, and thus $s \in f^{-1}(V)$, as desired.
\end{proof}

\begin{proof}[Proof of Corollary \ref{cor:sequential-convergence-metric-space}]
It suffices to show that for all closed sets $V$ in $(X, d)$, we have that $f^{-1}(V)$ is closed in $(S, T(C))$. The argument is then essentially exactly the same as the proof of the second claim in Lemma \ref{lemma:sequential-convergence-topology-properties}.
\end{proof}

We next begin to prove the lemmas which were needed in Section \ref{section:3d-u(1)}. Thus, in the rest of Appendix \ref{section:3d-miscellaneous-proofs}, we take $\liegroup = \unitary(1)$. In the following, we will use notation that was introduced in Section \ref{section:3dalt-notation}; for instance, we will use the definitions of the exterior derivative $d$ and coderivative $d^*$. We will also use some facts about Fourier transforms. For instance, if $A$ is a smooth {\oneform}, then for $1 \leq j \leq 3$, $n \in \Z^3$, we have that $\widehat{\ptl_j A}(n) = \icomplex 2\pi n_j \hat{A}(n)$. We also have that for any $k \geq 1$,
\beq\label{eq:fourier-coefficients-rapid-decay} \sup_{n \in \Z^3} (1 + |n|^2)^{k/2} |\hat{A}(n)| < \infty. \eeq
(See, e.g., \cite[Chapter 3, equation (1.7)]{T2011a}.)

\begin{proof}[Proof of Lemma \ref{lemma:da-l2-norm-formula}]
Note that when $\liegroup = \unitary(1)$, we have that $\sym(A) = \|dA\|_2^2$, since all commutator terms disappear. In the following, all series converge absolutely, because $A$ is smooth. We have that
\[ (dA)_{kj} = 2\pi \icomplex \sum_{n \in \Z^3} (n_k \hat{A}_j(n) - n_j \hat{A}_k(n)) e_n. \]
Thus
\begin{align*}
\|dA \|_2^2 &= \sum_{k, j = 1}^3 \|(dA)_{kj}\|_2^2\\
&= 4\pi^2 \sum_{n \in \Z^3} \sum_{k,j = 1}^3 |n_k \hat{A}_j(n) - n_j \hat{A}_k(n)|^2 . 
\end{align*}
(In the second equality, we used the Plancherel identity --- see, e.g., \cite[Chapter 3]{T2011a}.)
For any $n \in \Z^3$, we have
\[\begin{split}
\sum_{k,j=1}^3 |n_k \hat{A}_j(n) - n_j \hat{A}_k(n)|^2 &= \sum_{k, j = 1}^3 (|n_k \hat{A}_j(n)|^2 + |n_j \hat{A}_k(n)|^2)  \\
&\qquad - \sum_{k,j=1}^3 n_k n_j \big(\hat{A}_j(n) \ovl{\hat{A}_k(n)} + \hat{A}_k(n) \ovl{\hat{A}_j(n)}\big). 
\end{split}\]
This can be rewritten
\[ 2 |n|^2 |\hat{A}(n)|^2- 2 |n \cdot \hat{A}(n)|^2, \]
and the desired result follows.
\end{proof}

We next proceed to prove Lemma \ref{lemma:wilson-loop-heat-kernel-regularized}. First, we need a preliminary result.

\begin{lemma}\label{lemma:wilson-loop-formula-u(1)}
Let $A$ be a smooth $1$-form, $\wloop : [0, 1] \ra \threetorus$ a piecewise $C^1$ loop, and $\character$ a character of $\unitary(1)$. Then the Wilson loop observable is given by
\[ W_{\wloop, \character}(A) = \character\bigg(\exp(\int_0^1 A(\wloop(s)) \cdot \wloop'(s) ds )\bigg).\]
\end{lemma}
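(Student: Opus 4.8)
\textbf{Proof proposal for Lemma \ref{lemma:wilson-loop-formula-u(1)}.}

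The plan is to directly solve the defining ODE \eqref{eq:wilson-loop-ode} for the holonomy in the Abelian case $\liegroup = \unitary(1)$, where everything commutes. Recall that $h : [0,1] \ra \unitary(1)$ solves $h'(t) = h(t) A(\wloop(t)) \cdot \wloop'(t)$ with $h(0) = 1$, and that $W_{\wloop, \character}(A) = \character(h(1))$. Here $A(\wloop(t)) \cdot \wloop'(t) = \sum_{i=1}^3 A_i(\wloop(t)) \wloop_i'(t) \in \lalg = \icomplex \R$, so this is a scalar-valued linear ODE. First I would define $g(t) := \exp\big(\int_0^t A(\wloop(s)) \cdot \wloop'(s) ds\big)$; since $\wloop$ is piecewise $C^1$ and $A$ is smooth, the integrand $s \mapsto A(\wloop(s)) \cdot \wloop'(s)$ is piecewise continuous and bounded, so the integral is well-defined and the map $t \mapsto \int_0^t A(\wloop(s)) \cdot \wloop'(s)\, ds$ is continuous and piecewise $C^1$. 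Then $g$ is continuous, piecewise $C^1$, $g(0) = 1$, and at every point of differentiability $g'(t) = g(t) \big(A(\wloop(t)) \cdot \wloop'(t)\big)$, which is exactly the ODE \eqref{eq:wilson-loop-ode} (the scalar factor commutes, so the order of multiplication is irrelevant). By the uniqueness statement in standard ODE theory (cited in the excerpt via \cite[Theorem 3.9]{Tes2012}, applied piecewise on each subinterval where $\wloop$ is $C^1$ and then glued using continuity of both $g$ and $h$), we conclude $h = g$ on $[0,1]$, and in particular $h(1) = \exp\big(\int_0^1 A(\wloop(s)) \cdot \wloop'(s) ds\big)$. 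Applying $\character$ gives the claimed formula.

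The only mild subtlety — and the step I would be most careful about — is handling the piecewise-$C^1$ nature of $\wloop$ rigorously, i.e. checking that $h$ (which is defined as the unique solution of the ODE with a piecewise-continuous right-hand side) genuinely agrees with $g$ across the finitely many breakpoints $0 = a_0 < a_1 < \cdots < a_k = 1$ of $\wloop$. This is routine: on each $[a_{j-1}, a_j]$ the right-hand side is continuous, so $g|_{[a_{j-1},a_j]}$ and $h|_{[a_{j-1},a_j]}$ both solve the same ODE with the same initial value at $a_{j-1}$ (using that they already agree at $a_{j-1}$ by induction, with base case $g(a_0) = h(a_0) = 1$), hence coincide; continuity at the breakpoints propagates the agreement. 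No genuine obstacle is expected here — the Abelian structure trivializes the path-ordering, which is the whole content of the lemma.
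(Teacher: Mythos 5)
Your proposal is correct and takes essentially the same approach as the paper: define the exponential of the line integral, verify by differentiation that it satisfies the holonomy ODE \eqref{eq:wilson-loop-ode} with the right initial condition, and conclude by uniqueness that it equals $h$. The extra care you take about the piecewise-$C^1$ breakpoints is a reasonable point of rigor, but the paper treats it as implicit in the ODE-uniqueness citation and does not spell it out.
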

\begin{proof}
First, note that the solution $f : [0, 1] \ra \icomplex \R$ to the ODE
\[ f'(t) = A(\wloop(t)) \cdot \wloop'(t), ~~ f(0) = 0\]
is just given by integrating:
\[ f(t) = \int_0^t A(\wloop(s)) \cdot \wloop'(s) ds. \]
If we define $h : [0, 1] \ra \unitary(1)$ by $h(t) := \exp(f(t))$, then we see that
\[ h'(t) = h(t) A(\wloop(t)) \cdot \wloop'(t), ~~ h(0) = 1, \]
and thus $h(1) = \exp(f(1))$ is the holonomy of $A$ around $\wloop$. To finish, note that by definition, $W_{\wloop, \character}(A) = \character(h(1))$.
\end{proof}

\begin{proof}[Proof of Lemma \ref{lemma:wilson-loop-heat-kernel-regularized}]
This now follows directly by Lemma \ref{lemma:wilson-loop-formula-u(1)} and \eqref{eq:heat-kernel-def}.
\end{proof}

We now proceed to prove Lemma \ref{lemma:gauge-fixing-u(1)}. First, we need some preliminary results.

\begin{lemma}\label{lemma:da-zero-characterization}
Let $A$ be a smooth {\oneform}. Then $dA = 0$ if and only if for all $n \in \Z^3 \setminus \{0\}$, there exists a constant $\alpha_n \in \C$ such that $\hat{A}(n) = \alpha_n n$.
\end{lemma}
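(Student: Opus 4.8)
The plan is to compute the Fourier coefficients of $dA$ directly and read off the condition for them all to vanish. Recall from the proof of Lemma~\ref{lemma:da-l2-norm-formula} that for a smooth \oneform\ $A$ we have the absolutely convergent expansion
\[ (dA)_{kj} = 2\pi\icomplex \sum_{n \in \Z^3} \bigl(n_k \hat{A}_j(n) - n_j \hat{A}_k(n)\bigr) e_n, \]
so by uniqueness of Fourier coefficients, $dA = 0$ if and only if $n_k \hat{A}_j(n) - n_j \hat{A}_k(n) = 0$ for all $n \in \Z^3$ and all $1 \le k, j \le 3$. For $n = 0$ this is automatic, so the condition only constrains $n \neq 0$.

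The remaining step is the elementary linear-algebra fact that, for a fixed nonzero $n \in \Z^3$, a vector $v = \hat{A}(n) \in \C^3$ satisfies $n_k v_j - n_j v_k = 0$ for all $k, j$ if and only if $v$ is a scalar multiple of $n$. The ``if'' direction is immediate. For ``only if'', pick an index $k_0$ with $n_{k_0} \neq 0$ and set $\alpha_n := v_{k_0}/n_{k_0} \in \C$; then for every $j$, the relation $n_{k_0} v_j = n_j v_{k_0}$ gives $v_j = (n_j/n_{k_0}) v_{k_0} = \alpha_n n_j$, i.e. $v = \alpha_n n$. Combining this with the previous paragraph — applied coefficient-by-coefficient over $n \in \Z^3 \setminus \{0\}$ — yields the claim: $dA = 0$ iff for each $n \neq 0$ there is $\alpha_n \in \C$ with $\hat{A}(n) = \alpha_n n$.

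There is no real obstacle here; the only point requiring a word of care is the interchange of differentiation and summation used to obtain the Fourier expansion of $dA$, but this is justified exactly as in Lemma~\ref{lemma:da-l2-norm-formula} by the rapid decay \eqref{eq:fourier-coefficients-rapid-decay} of $\hat{A}(n)$ for smooth $A$, which makes the differentiated series uniformly absolutely convergent. So the proof is essentially a two-line computation plus the scalar-multiple characterization above.
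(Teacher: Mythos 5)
Your proof is correct, and it takes a slightly different route from the paper's. The paper argues via the $L^2$ norm identity of Lemma~\ref{lemma:da-l2-norm-formula}: since $\sym(A)=\|dA\|_2^2$ for $\unitary(1)$, one has $dA=0$ iff $|n|^2|\hat A(n)|^2 = |n\cdot\hat A(n)|^2$ for every $n\neq 0$, and the equality case of the Cauchy--Schwarz inequality then forces $\hat A(n)$ to be a complex scalar multiple of $n$. You instead work coefficient-by-coefficient on the explicit Fourier expansion $(dA)_{kj}=2\pi\icomplex\sum_n\bigl(n_k\hat A_j(n)-n_j\hat A_k(n)\bigr)e_n$, read off that $dA=0$ iff each antisymmetric combination $n_k\hat A_j(n)-n_j\hat A_k(n)$ vanishes, and finish with a two-line linear-algebra argument (divide out by a nonzero component $n_{k_0}$). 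Both proofs share the same starting point (the Fourier expansion of $dA$), but yours bypasses the $L^2$-norm calculation and the Cauchy--Schwarz equality case entirely, making it somewhat more elementary and self-contained; the paper's version is slightly shorter on the page only because it can directly cite the boxed formula of Lemma~\ref{lemma:da-l2-norm-formula}. Either argument is acceptable, and your treatment of the analytic issue (termwise differentiation justified by rapid decay of Fourier coefficients) is handled at the right level of care.
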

\begin{proof}
By Lemma \ref{lemma:da-l2-norm-formula} and the fact that $\sym(A) = \|dA\|_2^2$ when $\liegroup = \unitary(1)$, we have that $dA = 0$ if and only if for all $n \in \Z^3 \setminus \{0\}$, 
\[ |n|^2 |\hat{A}(n)|^2 = |n \cdot \hat{A}(n)|^2.\]
By the equality case of Cauchy--Schwarz, the above identity is true if and only if there exists $\alpha_n \in \C$ such that $\hat{A}(n) = \alpha_n n$.
\end{proof}

\begin{lemma}\label{lemma:d-star-a-equal-0}
Let $A$ be a smooth {\oneform}. Then $d^* A = 0$ if and only if $n \cdot \hat{A}(n) = 0$ for all $n \in \Z^3$.
\end{lemma}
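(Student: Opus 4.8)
The plan is a direct Fourier computation, entirely parallel to (and simpler than) the proof of Lemma~\ref{lemma:da-zero-characterization}. Recall from Section~\ref{section:3dalt-notation} that for a {\oneform} $A$ we have $d^*A = -\sum_{i=1}^3 \ptl_i A_i$, and recall the elementary Fourier identity $\widehat{\ptl_j A}(n) = \icomplex 2\pi n_j \hat{A}(n)$ for each $1 \leq j \leq 3$ and $n \in \Z^3$, valid componentwise. The first step is to apply this to each summand to obtain, for every $n \in \Z^3$,
\[ \widehat{d^*A}(n) = -\sum_{j=1}^3 \widehat{\ptl_j A_j}(n) = -\icomplex 2\pi \sum_{j=1}^3 n_j \hat{A}_j(n) = -\icomplex 2\pi \, (n \cdot \hat{A}(n)). \]

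The second step is to invoke uniqueness of Fourier coefficients: since $A$ is smooth, $d^*A$ is a smooth (in particular $L^1$) function on $\threetorus$, and a smooth function vanishes identically if and only if all of its Fourier coefficients vanish (this is immediate from the Fourier inversion formula \eqref{eq:fourier-inversion}, or from \cite[Chapter 3]{T2011a}). Combining with the displayed identity, $d^*A = 0$ if and only if $n \cdot \hat{A}(n) = 0$ for all $n \in \Z^3$. The case $n = 0$ is automatic on both sides, so this is consistent.

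There is essentially no obstacle here; the only points requiring a word of care are (i) noting that the Fourier coefficients in the identity above are $\C^3$-valued (since $\lalg = \icomplex\R$), so that $n \cdot \hat A(n) \in \C$ is obtained by the usual dot product of $n \in \Z^3$ with the vector $\hat A(n) \in \C^3$, and the identity $\widehat{\ptl_j A}(n) = \icomplex 2\pi n_j \hat A(n)$ holds coordinatewise; and (ii) ensuring the interchange of the finite sum over $j$ with taking Fourier coefficients, which is trivial since the sum is finite. All series that appear converge absolutely by the rapid decay \eqref{eq:fourier-coefficients-rapid-decay} of the Fourier coefficients of a smooth function. I would present the proof in three or four lines along exactly these steps.
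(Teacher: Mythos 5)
Your proposal is correct and follows essentially the same Fourier-expansion argument as the paper: the paper writes $d^*A = -2\pi\icomplex\sum_{n\in\Z^3}(n\cdot\hat A(n))e_n$ and concludes, which is just the Fourier-series version of your coefficientwise computation combined with uniqueness of Fourier coefficients. No substantive difference.
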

\begin{proof}
We have that $d^* A = - \sum_{k=1}^3 \ptl_k A_k$. Note for $1 \leq k \leq 3$, we have that $\ptl_k A_k = 2\pi \icomplex \sum_{n \in \Z^3} n_k \hat{A}_k(n) e_n$ (where the series converges absolutely, because $A_k$ is smooth), and thus
\[ d^* A = - 2\pi \icomplex \sum_{n \in \Z^3} \big(n \cdot \hat{A}(n)\big) e_n. \]
The desired result now follows.
\end{proof}

\begin{lemma}\label{lemma:gauge-equiv-characterization}
Let $A_1, A_2$ be smooth {\oneforms}. Then $[A_1] = [A_2]$ if and only if $d(A_1 - A_2) = 0 $.
\end{lemma}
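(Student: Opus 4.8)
The plan is to prove the two implications separately. The ``only if'' direction reduces to the gauge-invariance of the $\unitary(1)$ curvature (which, since $dA = \curv{A}$ in the abelian case, is the statement $d(A^\gt) = dA$). The ``if'' direction reduces, via the uniqueness part of the Coulomb gauge-fixing Lemma~\ref{lemma:gauge-fixing-u(1)}, to showing that a smooth closed {\oneform} whose zero mode and whose ``longitudinal'' Fourier modes all vanish must be identically zero, which is immediate from Lemma~\ref{lemma:da-zero-characterization}.

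For the ``only if'' direction, I would first simplify the gauge action for $\liegroup = \unitary(1)$: since the group is abelian, $\gt^{-1} A_i \gt = A_i$, so \eqref{eq:gaugetransform} becomes $A^\gt = A + \gt^{-1} d\gt$. Next I would verify $d(\gt^{-1} d\gt) = 0$ by a one-line scalar computation: $(d(\gt^{-1}d\gt))_{ij} = \partial_i(\gt^{-1}\partial_j\gt) - \partial_j(\gt^{-1}\partial_i\gt)$; the mixed second-derivative terms cancel by equality of mixed partials, and the remaining terms $\partial_i(\gt^{-1})\,\partial_j\gt - \partial_j(\gt^{-1})\,\partial_i\gt$ vanish because $\partial_k(\gt^{-1}) = -\gt^{-2}\,\partial_k\gt$ and all factors are scalars, hence commute. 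Consequently, if $[A_1] = [A_2]$, say $A_2 = A_1^\gt$ for some $\gt \in \gaugetransf$, then $A_1 - A_2 = -\gt^{-1} d\gt$, so $d(A_1 - A_2) = 0$.

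For the ``if'' direction, assume $d(A_1 - A_2) = 0$. By Lemma~\ref{lemma:gauge-fixing-u(1)}, choose the Coulomb-gauge representatives $A_1', A_2'$ of $[A_1]$ and $[A_2]$, so that $A_j' \in [A_j]$, $\hat{A}_j'(0) = 0$, and $n \cdot \hat{A}_j'(n) = 0$ for all $n \in \Z^3$; these are smooth {\oneforms} since the $A_j$ and the gauge transformations are smooth. Applying the computation from the first direction inside each orbit gives $dA_j' = dA_j$, so $d(A_1' - A_2') = d(A_1 - A_2) = 0$. Set $C := A_1' - A_2'$. Then $dC = 0$, $\hat{C}(0) = 0$, and $n \cdot \hat{C}(n) = 0$ for all $n$. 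By Lemma~\ref{lemma:da-zero-characterization}, for each $n \neq 0$ there is $\alpha_n \in \C$ with $\hat{C}(n) = \alpha_n n$; pairing with $n$ yields $0 = n \cdot \hat{C}(n) = \alpha_n |n|^2$, so $\alpha_n = 0$ and hence $\hat{C}(n) = 0$. Since also $\hat{C}(0) = 0$, all Fourier coefficients of $C$ vanish, so $C \equiv 0$, i.e.\ $A_1' = A_2'$, and therefore $[A_1] = [A_1'] = [A_2'] = [A_2]$.

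I do not anticipate a genuine obstacle: the only actual computation is the identity $d(\gt^{-1}d\gt) = 0$ (equivalently, gauge-invariance of the abelian curvature) used in the first direction, and the substance of the second direction is imported wholesale from Lemmas~\ref{lemma:gauge-fixing-u(1)} and~\ref{lemma:da-zero-characterization}. The conceptual point worth stating explicitly in the write-up is that removing both the zero mode and the longitudinal modes pins down a unique representative in each gauge orbit, so two connections with the same curvature have the same representative.
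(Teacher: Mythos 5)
The forward implication (gauge equivalence implies equal curvature) is correct and is essentially the paper's computation: in the abelian case $A^{\gt} = A + \gt^{-1}d\gt$, and $d(\gt^{-1}d\gt) = 0$ follows from equality of mixed partials together with the fact that scalar factors commute.

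The backward implication, however, is circular. You invoke the existence half of Lemma~\ref{lemma:gauge-fixing-u(1)} to produce Coulomb-gauge representatives $A_1', A_2' \in [A_1], [A_2]$, but the paper's proof of Lemma~\ref{lemma:gauge-fixing-u(1)} uses exactly the ``if'' direction of Lemma~\ref{lemma:gauge-equiv-characterization}: it first defines a candidate Coulomb representative $B$ by Fourier projection, observes $d(A - B) = 0$ via Lemma~\ref{lemma:da-zero-characterization}, and then concludes $[A] = [B]$ by citing Lemma~\ref{lemma:gauge-equiv-characterization}. So your argument presupposes the conclusion it is meant to establish. The paper's actual route is genuinely different and avoids gauge-fixing entirely: from $d(A_1 - A_2) = 0$ and Lemma~\ref{lemma:da-zero-characterization} one gets $\hat{A}_1(n) = \hat{A}_2(n) + \alpha_n n$ for all $n \neq 0$; plugging this into the Wilson-loop formula of Lemma~\ref{lemma:wilson-loop-heat-kernel-regularized}, the $\alpha_n$-contribution telescopes to $\frac{\alpha_n}{2\pi\icomplex}\bigl(e_n(\wloop(1)) - e_n(\wloop(0))\bigr) = 0$, so every Wilson loop observable of $A_1$ equals that of $A_2$; Sengupta's separation theorem (Lemma~\ref{lmm:wilson-loops-determine-gauge-eq-class}) then gives $[A_1] = [A_2]$. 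To rescue the gauge-fixing approach you would need an independent proof that each orbit meets the Coulomb slice, i.e.\ an explicit construction of $\gt \in \gaugetransf$ with $\gt^{-1}d\gt$ prescribed; this is more delicate than an $\icomplex\R$-valued Poincar\'e lemma would suggest, since $\gt$ must take values in the circle group and one has to account for the winding of $\gt$ around the three nontrivial cycles of $\threetorus$.
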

\begin{proof}
Suppose $[A_1] = [A_2]$. Then there exists $\gt \in \gaugetransf$ such that $A_2 = \gaction{A_1}{\gt} = A_1 + \gt^{-1} d\gt$. Thus it suffices to show that $d(\gt^{-1} d\gt) = 0$. For $1 \leq k, j \leq 3$, we have
\[ (d(\gt^{-1} d\gt))_{kj} = \ptl_k (\gt^{-1} \ptl_j \gt) - \ptl_j (\gt^{-1} \ptl_k \gt). \]
Note
\[ \ptl_k (\gt^{-1} \ptl_j \gt) = - \gt^{-1} (\ptl_k \gt) \gt^{-1} \ptl_j \gt + \gt^{-1} \ptl_{kj} \gt. \]
Swapping $k, j$, we obtain another identity, and then subtracting the two identities and observing the cancellation that occurs gives us $(d(\gt^{-1} d\gt))_{kj} = 0$, as desired. (The fact that $\liegroup = \unitary(1)$, so that $\lalg = \icomplex\R$, gives that $\gt^{-1} (\ptl_k \gt)$ commutes with $\gt^{-1} (\ptl_j \gt)$.)

Conversely, suppose $d(A_1 - A_2) = 0$. By Lemma \ref{lemma:da-l2-norm-formula}, for all $n \in \Z^3 \setminus \{0\}$, there exists $\alpha_n$ such that
\[ \hat{A}_1(n) = \hat{A}_2(n) + \alpha_n n.\]
Note then for any piecewise $C^1$ loop $\wloop$ and character $\character$ of $\liegroup$, we have that
\[\begin{split} \hat{A}_1(n) \cdot \int_0^1 e_n(\wloop(s)) \wloop'(s) ds = \hat{A}_2(n)& \cdot \int_0^1 e_n(\wloop(s)) \wloop'(s) ds ~+ \\
&\alpha_n \int_0^1 e_n(\wloop(s)) n \cdot \wloop'(s) ds. \end{split}\]
Noting that $\nabla e_n = 2\pi \icomplex e_n n$, we have that the last term on the right hand side is
\[ \frac{\alpha_n}{2\pi \icomplex} \int_0^1 \frac{d}{ds} (e_n(\wloop(s))) ds. \]
The integral is $e_n(\wloop(1)) - e_n(\wloop(0)) = 0$. Thus by Lemma \ref{lemma:wilson-loop-heat-kernel-regularized}, we obtain that $W_{\wloop, \character}(A_1) = W_{\wloop, \character}(A_2)$. Since this holds for all $\wloop, \character$, we can apply Lemma \ref{lmm:wilson-loops-determine-gauge-eq-class} to obtain $[A_1] = [A_2]$, as desired.
\end{proof}

\begin{proof}[Proof of Lemma \ref{lemma:gauge-fixing-u(1)}]
For existence, let $A$ be a smooth {\oneform}. Define the {\oneform} 
\[ B := \sum_{\substack{n \in \Z^3 \\ n \neq 0}} (\hat{A}(n) - (\hat{A}(n) \cdot n) n / |n|^2) e_n. \]
(Note that $B$ is smooth by the rapid decay of the Fourier coefficients of $A$ --- recall \eqref{eq:fourier-coefficients-rapid-decay}.) Note that $\hat{B}(0) = 0$, and for $n \neq 0$, we have that 
\[
\hat{B}(n) = \hat{A}(n) - (\hat{A}(n) \cdot n) n / |n|^2.
\]
Thus, by Lemma \ref{lemma:da-zero-characterization}, we have that $d(A - B) = 0$, and so, by Lemma \ref{lemma:gauge-equiv-characterization}, we have that $[A] = [B]$. Moreover, we have that $n \cdot \hat{B}(n) = 0$ for all $n \in \Z^3$.

For uniqueness, let $A_1, A_2 \in [A]$ be such that $n \cdot \hat{A}_1(n) = n \cdot \hat{A}_2(n) = 0$ for all $n \in \Z^3$. By Lemma \ref{lemma:da-zero-characterization} and the fact that $d(A_1 - A_2) = 0$ (by Lemma~\ref{lemma:gauge-equiv-characterization}), we have that for all $n \in \Z^3 - \{0\}$, there exists $\alpha_n \in \C$ such that
\[ \hat{A}_1(n) = \hat{A}_2(n) + \alpha_n n.\]
Taking inner product of both sides of the equation with $n$, we obtain that $\alpha_n |n|^2 = 0$, and thus $\hat{A}_1(n) = \hat{A}_2(n)$ for all $n \in \Z^3 \setminus \{0\}$. Thus if we also have $\hat{A}_1(0) = 0 = \hat{A}_2(0)$, then we must have $A_1 = A_2$.
\end{proof}

Before we prove Lemma \ref{lemma:in-nldist-space}, note that by using the definitions of $d$ and $d^*$ in Section \ref{section:3dalt-notation}, we have that for smooth {\oneforms} $A$, the Laplacian 
\[
\Delta A := \sum_{i=1}^3 \ptl_{ii} A = - (dd^* + d^* d) A.
\]
\begin{proof}[Proof of Lemma \ref{lemma:in-nldist-space}]
Note that when $\liegroup = \unitary(1)$, the Yang--Mills heat flow reduces to the PDE
\[ \ptl_t A(t) = - d^* dA(t), ~~t > 0. \]
(This follows because all commutator terms disappear.) Define the function $A : [0, \infty) \ra \connspace$ by $A(t) := e^{t \Delta} A_0$. Note that $A$ satisfies the heat equation
\[
\ptl_t A(t) = \Delta A(t) = -(d^* d + dd^*) A(t).
\] 
We claim that $d^* A(t) = 0$ for all $t \geq 0$, which implies that $\Delta A(t) = -d^* d A(t)$. To see this, note that for all $n \in \Z^3$, we have that $n \cdot \widehat{e^{t \Delta} A_0}(n) = e^{-4\pi^2 |n|^2 t} n \cdot \hat{A}_0(n) = 0$, where the last identity follows because $A_0 \in \connspace_0$. By Lemma \ref{lemma:d-star-a-equal-0}, we thus have that $d^* A(t) = 0$. We thus have that $A$ is the solution to the Yang-Mills heat flow with initial data $A(0) = A_0$. Since $X(t) = [A(t)]$ for all $t > 0$ by definition, it thus follows that $X \in \nonlineardistspace$.
\end{proof}

\section{Proofs of technical lemmas for YM and ZDDS flows}\label{section:ym-proof-h1}

In this section, we will extensively reference the results of Charalambous and Gross~\cite{CG2013}. Their results are all stated in a general setting where the base manifold is a compact 3-dimensional Riemannian manifold with smooth boundary, and thus many of the results are stated with boundary conditions. We will apply the results for the special case where the base manifold is $\threetorus$. Since $\threetorus$ has empty boundary, any stated boundary conditions will be superfluous, as they will be vacuously satisfied.

\begin{proof}[Proof of Theorem \ref{thm:YM-global-existence}]
The existence part of the theorem is \cite[Theorem 2.5]{CG2013} or \cite[Theorem 1]{R1992}. The uniqueness part of the theorem is \cite[Theorem 9.15]{CG2013}. One of the assertions of \cite[Theorem 1]{R1992} is that if $A_0$ is smooth, then the solution $A \in C([0, \infty), \honeconnspace)$ to \eqref{eq:YM} is smooth.
\end{proof}


We now begin to prove Proposition \ref{prop:ZDDS-local-existence}, Lemma \ref{lemma:ym-zdds-solution-gauge-transformation}, 
and Lemma \ref{lemma:zdds-solutions-weak-continuity}. The proofs are just small modifications and extensions of the arguments found in \cite[Sections 8 and 9]{CG2013}. Therefore we will be a bit brief and cite many of the results from \cite{CG2013} which we do not need to modify, while going into more detail for the arguments from \cite{CG2013} which we do need to modify. 

Let $\Delta := -(dd^* + d^* d)$ be the Laplacian on $\threetorus$. By using the definitions of $d$ and $d^*$ given in Section \ref{section:3dalt-notation}, we can see that as an operator on functions (i.e., {\zeroforms}), this is the usual Laplacian $\sum_{i=1}^3 \ptl_{ii}$, while as an operator on {\oneforms} $A$, this acts by
\[ \Delta A = (\Delta A_1, \Delta A_2, \Delta A_3), \]
where $\Delta A_i$ is the Laplacian of the function $A_i$. Consequently, the heat kernel $e^{t \Delta}$ on {\oneforms} acts by
\[ e^{t \Delta} A = (e^{t \Delta} A_1, e^{t\Delta} A_2, e^{t \Delta} A_3), \]
where $e^{t \Delta} A_i$ is the heat kernel acted on the function $A_i$. 

Note that \eqref{eq:ZDDS} may be written explicitly as
\beq\label{eq:zdds-explicit} \ptl_t A_i(t) = \Delta A_i +  \sum_{j=1}^3 [A_j, 2 \ptl_j A_i - \ptl_i A_j + [A_j, A_i]], ~~ 1 \leq i \leq 3.\eeq
Following \cite[Section 8.1]{CG2013}, we may rewrite the PDE \eqref{eq:ZDDS} as
\beq\label{eq:recast-ZDDS} A'(t) = \Delta A(t) + X(A(t)),\eeq
where $X(A(t))$ collects all the non-Laplacian terms. Note $X(A)$ has the symbolic form
\[  X(A) = A^3 + A \cdot \partial A,\]
where $A^3$ is a tuple of sums of terms of the form $A_i A_j A_k$ and $A \cdot \partial A$ is a tuple of sums of terms of the form $A_i (\ptl_k A_j)$ or $(\ptl_k A_j) A_i$, where $1 \leq i, j, k \leq 3$. As Charalambous and Gross do in \cite[Section 8.1]{CG2013}, we will extensively use this symbolic form in our arguments. As an example, if we have $A, \tilde{A}$, then upon adding and subtracting, we may write
\[ A \cdot \ptl A - \tilde{A} \cdot \ptl \tilde{A} = (A - \tilde{A}) \cdot \ptl A + \tilde{A} \cdot (\ptl  A - \ptl \tilde{A}).\]
The first term on the right hand side collects all terms like $(A_i - \tilde{A}_i) (\ptl_k A_j)$ or $(\ptl_k A_j)(A_i - \tilde{A}_i)$, and the second term collects all terms like $\tilde{A}_i (\ptl_k A_j - \ptl_k \tilde{A}_j)$ or $(\ptl_k A_j - \ptl_k \tilde{A}_j) \tilde{A}_i$.

Proposition \ref{prop:ZDDS-local-existence} is a small extension of \cite[Theorem 2.14, see also Theorem 8.3]{CG2013}. Charalambous and Gross's proof of \cite[Theorem 2.14]{CG2013} is by a contraction argument on the appropriate path space. In what follows, we will introduce this contraction argument, and then slightly extend/modify it to prove the results we need.

First, we may recast \eqref{eq:recast-ZDDS} into the following integral equation (here $A_0$ is the initial data)
\beq\label{eq:duhamel-ZDDS}A(t) = e^{t \Delta} A_0 + \int_0^t e^{(t-s) \Delta} X(A(s)) ds. \eeq
For $0 < T < \infty$, define the path space $\mc{P}_T$ to be the set of continuous functions $A : [0, T] \ra \honeconnspace$ such that for each $0 < t \leq T$, $\|A(t)\|_\infty$, $\|dA(t)\|_\infty$, $\|d^*A(t)\|_\infty < \infty$, and such that
\beq\label{eq:path-space-norm-def} \begin{split}
\|A\|_{\mc{P}_T} := \sup_{0 < t \leq T} \bigg\{ &\|A(t)\|_{H^1} + t^{1/4} \|A(t)\|_\infty + \\
&t^{3/4}\big(\|dA(t)\|_\infty + \|d^*A(t)\|_\infty \big)\bigg\} < \infty.
\end{split}\eeq
By slightly modifying the proof of the classic real analysis fact that the space $C([0, T], \R)$ is complete under the uniform norm, one can show that $(\mc{P}_T, \|\cdot\|_{\mc{P}_T})$ is indeed a Banach space.

The following lemma is proven in \cite[Section 8.1]{CG2013} (see in particular the proof of Theorem 2.14 and the end of the proof of Theorem 8.3 in that section).

\begin{lemma}\label{lemma:zdds-integral-equation-suffices}
Let $0 < T < \infty$. Suppose $A \in \mc{P}_T$ is such that \eqref{eq:duhamel-ZDDS} holds for all $0 \leq t \leq T$. Then $A$ is a solution to \eqref{eq:ZDDS} on $[0, T)$ with initial data $A(0) = A_0$. Moreover, $A \in C^\infty((0, T) \times \threetorus, \lalg^3)$.
\end{lemma}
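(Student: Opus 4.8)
The plan is to deduce this from the Duhamel representation \eqref{eq:duhamel-ZDDS} by a parabolic bootstrap, following the scheme of Charalambous and Gross. First I would record the standard smoothing estimates for the heat semigroup $e^{t\Delta}$ on $\threetorus$: for appropriate exponents, $\|e^{t\Delta} f\|_{W^{k,q}} \leq \const\, t^{-\sigma}\|f\|_{W^{\ell,p}}$ with $\sigma$ fixed by scaling, together with the analogous $L^p \to L^q$ and $L^p \to C^m$ bounds. On the torus these follow from the explicit Fourier representation of the kernel (cf.\ \cite[Chapter 3]{T2011a}). The point is that both $e^{t\Delta}A_0$ and the Duhamel term $\int_0^t e^{(t-s)\Delta} X(A(s))\, ds$ are smoother than their inputs for $t>0$, provided the nonlinearity is controlled in a suitable low-regularity norm with an integrable time weight.

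Next I would exploit the symbolic form $X(A) = A^3 + A\cdot\partial A$ together with Hölder's inequality, the Sobolev embeddings $\honeconnspace \hookrightarrow L^p$ for $p \leq 6$ (Theorem \ref{sobolevthm}), and the definition of the $\mc{P}_T$-norm in \eqref{eq:path-space-norm-def} to bound $\|X(A(s))\|$ in an appropriate (negative or low) Sobolev norm by $\const\, s^{-\rho}\,(\|A\|_{\mc{P}_T} + \|A\|_{\mc{P}_T}^3)$ with $\rho < 1$. Feeding this estimate and the heat smoothing bounds into \eqref{eq:duhamel-ZDDS} shows that $A(t)$ lies in a strictly better space than $\honeconnspace$ for every $t>0$; iterating this argument finitely many times on an arbitrary compact subinterval $[a,b]\subset(0,T)$ yields $A \in C^\infty((0,T)\times\threetorus,\lalg^3)$. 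This immediately gives conditions (a) of Definition \ref{def:ZDDS-solution-def} (so $\curv{A(t)} = dA(t) + \tfrac12[A(t)\wedge A(t)] \in H^1$ and $d^*A(t)\in H^1$ for $t>0$), while the boundedness of $t^{3/4}\|\curv{A(t)}\|_\infty$ on $(0,T)$, which is condition (d), is read off directly from $\|A\|_{\mc{P}_T}<\infty$ and the same formula for $\curv{A}$.

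To see that $A$ is a solution in the sense of Definition \ref{def:ZDDS-solution-def}, I would differentiate \eqref{eq:duhamel-ZDDS} in $t$ as a map into $L^2(\threetorus,\lalg^3)$, obtaining the strong derivative
\[ A'(t) = \Delta e^{t\Delta}A_0 + X(A(t)) + \int_0^t \Delta e^{(t-s)\Delta} X(A(s))\, ds = \Delta A(t) + X(A(t)), \]
which is exactly the recast equation \eqref{eq:recast-ZDDS}, equivalently \eqref{eq:zdds-explicit}; unwinding the definitions of $d^*$, $d_A$ and $d_A^*$ identifies $\Delta A + X(A)$ with $-(d_{A}^*\curv{A} + d_{A} d^*A)$, which is condition (c), and condition (b) is the existence of this strong derivative. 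Continuity of $A$ at $t=0$ with $A(0)=A_0$ follows because $e^{t\Delta}A_0 \to A_0$ in $H^1$ and, by the $\mc{P}_T$-bound on $X(A(s))$ and dominated convergence, the Duhamel integral tends to $0$ in $H^1$ as $t\downarrow 0$.

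The main obstacle is the bookkeeping in the two places where singular time factors appear: one must choose the intermediate Lebesgue/Sobolev exponents so that the heat-smoothing weight $(t-s)^{-\sigma}$ and the weight $s^{-\rho}$ coming from estimating $X(A(s))$ via $\|A\|_{\mc{P}_T}$ are jointly integrable over $s\in(0,t)$, and so that finitely many iterations genuinely reach $C^\infty$. Relatedly, the term-by-term differentiation of $\int_0^t e^{(t-s)\Delta}X(A(s))\,ds$ needs justification because $\Delta e^{(t-s)\Delta}$ is singular as $s\to t$; this is handled by splitting the integral near $s=t$, using the semigroup property to transfer half a derivative onto already-smooth data, and passing to the limit in an $\varepsilon$-regularization. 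These are precisely the estimates carried out in \cite[Section 8.1]{CG2013}, which is why the statement above is simply quoted.
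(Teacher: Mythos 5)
The paper itself does not give a proof of this lemma --- it simply cites the relevant passages of Charalambous--Gross~\cite[Section 8.1]{CG2013} (the proof of Theorem~2.14 and the tail of the proof of Theorem~8.3). Your outline faithfully reconstructs the strategy used there: heat-semigroup smoothing estimates on the torus, the symbolic decomposition $X(A)=A^3+A\cdot\partial A$ and the weighted $\mc{P}_T$-norm to control the nonlinearity with integrable time singularities, a parabolic bootstrap for spatial smoothness at positive times, and differentiation of the Duhamel formula (with the near-diagonal singularity handled by a split of the integral and the semigroup identity) to recover the strong derivative, and hence conditions (a)--(d) of Definition~\ref{def:ZDDS-solution-def}. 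Two small points of exposition rather than content: the $\mc{P}_T$ norm already encodes continuity of $A:[0,T]\to\honeconnspace$ and $A(0)=A_0$ is read off by setting $t=0$ in \eqref{eq:duhamel-ZDDS}, so the last paragraph's verification of continuity at $0$ is redundant; and ``finitely many iterations'' of the bootstrap yields $A\in C^k$ for each fixed $k$, with $C^\infty$ obtained by letting $k$ range over $\N$ (and joint $(t,x)$-smoothness then coming from the PDE itself once spatial smoothness and the strong derivative are in hand). With those cosmetic adjustments, the argument matches the cited one.
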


Given $A_0 \in \honeconnspace$, $0 < T < \infty$, and $A \in \mc{P}_T$, define $W(A) : [0, T] \ra \honeconnspace$ by
\beq\label{eq:w-def} W(A)(t) := e^{t \Delta} A_0 + \int_0^t e^{(t - s) \Delta} X(A(s)) ds, ~~ t \in [0, T]. \eeq
In light of Lemma \ref{lemma:zdds-integral-equation-suffices}, the goal is now to show that $W$ is a contraction, when restricted to the right subspace of $\mc{P}_T$, for $T$ small enough depending only on $\|A_0\|_{H^1}$. By the contraction mapping theorem, this will then give a fixed point $W(A) = A$, i.e., a solution to the integral equation \eqref{eq:duhamel-ZDDS}. Moreover, the same argument that shows that $W$ is a contraction can be used to show the continuity of solutions to \eqref{eq:ZDDS} in the initial data.

We now state the following three lemmas from \cite[Section 8.1]{CG2013}, which establish the key estimates needed in the contraction argument.

\begin{lemma}[Remark 8.5 of \cite{CG2013}]
For any $1 \leq q \leq p \leq \infty$ and any $0 < t \leq 1$, we have
\begin{align} 
\|e^{t \Delta}\|_{q \ra p} &\leq \const t^{-(3/2)(1/q - 1/p)}, \label{eq:heat-semigroup-lq-lp-op-norm} \\
\|\ptl e^{t \Delta}\|_{q \ra p} &\leq \const t^{-1/2} t^{-(3/2)(1/q - 1/p)}, \text{ with $\ptl = d$ or $\ptl = d^*$}, \label{eq:heat-semigroup-derivative-lq-lp-op-norm} \\
\|e^{t\Delta}\|_{L^2 \ra H^1} &\leq \const t^{-1/2}. \label{eq:heat-semigroup-l2-h1-op-norm}
\end{align}
Here $\|\cdot\|_{q \ra p}$ denotes the $L^q \ra L^p$ operator norm, and $\|\cdot\|_{L^2 \ra H^1}$ denotes $L^2 \ra H^1$ operator norm.
\end{lemma}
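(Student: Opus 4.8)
The plan is to reduce everything to explicit bounds on the heat kernel of $\threetorus$. Recall that on scalar functions $e^{t\Delta}$ acts by convolution with $p_t(x) = \sum_{n \in \Z^3} g_t(x-n)$, where $g_t(x) = (4\pi t)^{-3/2}e^{-|x|^2/(4t)}$ is the Euclidean heat kernel on $\R^3$, and that on \oneforms\ (and \twoforms, \zeroforms) it acts componentwise. Since $\ptl_i$ commutes with $e^{t\Delta}$, and $d$, $d^*$ are componentwise finite linear combinations of the $\ptl_i$, the operator $\ptl e^{t\Delta}$ (with $\ptl = d$ or $\ptl = d^*$) acts componentwise by convolution with the kernels $\ptl_i p_t$. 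So it suffices to prove, for $0 < t \le 1$ and $1 \le r \le \infty$, the two kernel estimates
\[ \|p_t\|_{L^r(\threetorus)} \le \const\, t^{-(3/2)(1-1/r)}, \qquad \|\ptl_i p_t\|_{L^r(\threetorus)} \le \const\, t^{-1/2}\, t^{-(3/2)(1-1/r)}, \]
and then apply Young's convolution inequality on $\threetorus$: given $1 \le q \le p \le \infty$, pick $r \in [1,\infty]$ with $1/r = 1 - (1/q - 1/p)$ (which lies in $[0,1]$ exactly because $q \le p$), so that $\|e^{t\Delta}f\|_p \le \|p_t\|_r\|f\|_q$ and $\|\ptl_i e^{t\Delta}f\|_p \le \|\ptl_i p_t\|_r\|f\|_q$. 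Summing over the finitely many components in $d$ or $d^*$ gives the first two bounds of the lemma.

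For the kernel estimates, scaling gives $g_t(x) = t^{-3/2}\phi(x/\sqrt t)$ and $\ptl_i g_t(x) = t^{-2}(\ptl_i\phi)(x/\sqrt t)$ for the fixed Schwartz function $\phi(y) = (4\pi)^{-3/2}e^{-|y|^2/4}$, whence $\|g_t\|_{L^r(\R^3)} = \const\, t^{-(3/2)(1-1/r)}$ and $\|\ptl_i g_t\|_{L^r(\R^3)} = \const\, t^{-1/2-(3/2)(1-1/r)}$. Restricting to the fundamental domain $Q = [-1/2,1/2)^3$, the $n=0$ term of $p_t$ (resp. $\ptl_i p_t$) is bounded in $L^r(Q)$ by its full $L^r(\R^3)$ norm. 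For $n \ne 0$ and $x \in Q$ one has $|x-n| \ge |n|_\infty/2$, so $g_t(x-n) \le \const\, t^{-3/2}e^{-|n|_\infty^2/(16t)}$ and $|\ptl_i g_t(x-n)| \le \const\, |n|_\infty\, t^{-5/2}e^{-|n|_\infty^2/(16t)}$; since $|Q|=1$, the triangle inequality in $L^r$ bounds the total $n\ne0$ contribution by a fixed power of $t^{-1}$ times $\sum_{k\ge1} k^{O(1)}e^{-k^2/(16t)} \le \const\, e^{-1/(32t)}$, which is bounded uniformly on $(0,1]$. As $t^{-(3/2)(1-1/r)} \ge 1$ for $t \le 1$, this bounded image tail is absorbed into the claimed bounds (and for $r=1$ one has the exact identity $\|p_t\|_{L^1(\threetorus)} = 1$). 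This establishes both kernel estimates and hence the first two inequalities.

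For the $L^2 \to H^1$ bound it is cleaner to use Plancherel: since $e^{t\Delta}e_n = e^{-4\pi^2|n|^2 t}e_n$ and $\|g\|_{H^1}^2 = \sum_n(1+4\pi^2|n|^2)|\hat g(n)|^2$,
\[ \|e^{t\Delta}f\|_{H^1}^2 = \sum_{n\in\Z^3}(1+4\pi^2|n|^2)\,e^{-8\pi^2|n|^2 t}\,|\hat f(n)|^2 \le \Big(\sup_{s\ge0}(1+4\pi^2 s)\,e^{-8\pi^2 s t}\Big)\|f\|_2^2. \]
Substituting $u = 8\pi^2 s t$ gives $\sup_{s\ge0}(1+4\pi^2 s)e^{-8\pi^2 st} = \sup_{u\ge0}(1+u/(2t))e^{-u} \le 1 + (2et)^{-1} \le \const\, t^{-1}$ for $0 < t \le 1$, so $\|e^{t\Delta}\|_{L^2\to H^1} \le \const\, t^{-1/2}$. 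The only mildly delicate point in the whole argument is the bookkeeping of the image sum when passing from the $\R^3$ kernel to the torus kernel; the rest is Young's inequality and Plancherel.
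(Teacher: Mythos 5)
Your proof is correct. Note, however, that the paper does not actually prove this lemma at all: it is quoted directly from \cite{CG2013} (Remark~8.5), and no proof appears in the text. So there is no ``paper's proof'' to compare against; what you have supplied is a self-contained verification.

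Your route is the standard one: write $e^{t\Delta}$ as convolution against the periodized Gaussian $p_t = \sum_{n\in\Z^3} g_t(\cdot - n)$, obtain $\|p_t\|_{L^r(\threetorus)}$ and $\|\ptl_i p_t\|_{L^r(\threetorus)}$ by combining the exact $\R^3$ scaling $\|g_t\|_{L^r(\R^3)} \sim t^{-(3/2)(1-1/r)}$, $\|\ptl_i g_t\|_{L^r(\R^3)} \sim t^{-1/2}t^{-(3/2)(1-1/r)}$ with the observation that the $n\ne 0$ image tail contributes only an $O(t^{-O(1)}e^{-c/t})$ term which is bounded on $(0,1]$ and hence absorbable (since $t^{-(3/2)(1-1/r)} \ge 1$ there), and then invoke Young's inequality on $\threetorus$ with $1/r = 1 - (1/q - 1/p) \in [0,1]$. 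The bound $|x - n| \ge |n|_\infty/2$ on the fundamental domain, the $O(k^2)$ count of lattice points with $|n|_\infty = k$, and the estimate $\sum_{k\ge 1}k^{O(1)}e^{-k^2/(16t)} \le \const\, e^{-1/(32t)}$ are all correct. The observation that $d$, $d^*$ act componentwise as finite linear combinations of $\ptl_i$, and that $\ptl_i$ commutes with $e^{t\Delta}$, correctly reduces \eqref{eq:heat-semigroup-derivative-lq-lp-op-norm} to the kernel estimate. Finally the Plancherel argument for \eqref{eq:heat-semigroup-l2-h1-op-norm}, with the substitution $u = 8\pi^2 s t$ and $\sup_{u\ge 0}(1+u/(2t))e^{-u} \le 1 + (2et)^{-1} \le \const\, t^{-1}$, is clean and correct. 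Charalambous--Gross state these bounds in the more general context of compact $3$-manifolds with boundary, where the argument would go through general Gaussian heat kernel upper bounds rather than the explicit periodized Gaussian; your torus-specific treatment is more elementary but equally valid.
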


\begin{lemma}[Lemma 8.6 of \cite{CG2013}] \label{lemma:linear-heat-equation-in-path-space}
There is a constant $\const_0$ such that the following holds. For any $0 < T \leq 1$ and $A_0 \in \honeconnspace$, we have that the path $A_1 : [0, T] \ra \honeconnspace$ defined by $A_1(t) := e^{t \Delta} A_0$ is such that $A_1 \in \mc{P}_T$, and moreover
\[ \|A_1\|_{\mc{P}_T} \leq \const_0 \|A_0\|_{H^1}.  \]
\end{lemma}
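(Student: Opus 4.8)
The statement to prove is Lemma \ref{lemma:linear-heat-equation-in-path-space}: for $0 < T \leq 1$ and $A_0 \in \honeconnspace$, the path $A_1(t) := e^{t\Delta} A_0$ lies in $\mc{P}_T$ and satisfies $\|A_1\|_{\mc{P}_T} \leq \const_0 \|A_0\|_{H^1}$.

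\textbf{Plan.} The plan is to bound each of the four terms appearing in the definition \eqref{eq:path-space-norm-def} of $\|\cdot\|_{\mc{P}_T}$ separately, using the smoothing estimates \eqref{eq:heat-semigroup-lq-lp-op-norm}--\eqref{eq:heat-semigroup-l2-h1-op-norm} from the preceding lemma together with the Sobolev embedding $\honeconnspace \hookrightarrow L^p$ (Theorem \ref{sobolevthm}). Concretely: (i) For the $H^1$ term, since $e^{t\Delta}$ is a contraction semigroup on $H^1$ (it commutes with derivatives and is a contraction on $L^2$ on each Fourier mode), we get $\|A_1(t)\|_{H^1} \leq \|A_0\|_{H^1}$ directly. (ii) For the $t^{1/4}\|A_1(t)\|_\infty$ term, first use Sobolev $\honeconnspace \hookrightarrow L^6$ to write $A_0 \in L^6$ with $\|A_0\|_6 \leq \const\|A_0\|_{H^1}$, then apply \eqref{eq:heat-semigroup-lq-lp-op-norm} with $q = 6$, $p = \infty$: $\|A_1(t)\|_\infty \leq \const t^{-(3/2)(1/6)}\|A_0\|_6 = \const t^{-1/4}\|A_0\|_6$, so $t^{1/4}\|A_1(t)\|_\infty \leq \const\|A_0\|_{H^1}$. (iii) For the $t^{3/4}\|dA_1(t)\|_\infty$ and $t^{3/4}\|d^*A_1(t)\|_\infty$ terms, split $e^{t\Delta} = e^{(t/2)\Delta} e^{(t/2)\Delta}$, apply \eqref{eq:heat-semigroup-lq-lp-op-norm} on the first factor with $q=6$, $p=\infty$ (costing $(t/2)^{-1/4}$), and \eqref{eq:heat-semigroup-derivative-lq-lp-op-norm} on the second with $q=p=\infty$, or rather more carefully apply the derivative estimate \eqref{eq:heat-semigroup-derivative-lq-lp-op-norm} with $q=6$, $p=\infty$ directly: $\|\ptl e^{t\Delta} A_0\|_\infty \leq \const t^{-1/2} t^{-1/4}\|A_0\|_6 = \const t^{-3/4}\|A_0\|_6$, where $\ptl = d$ or $d^*$, giving $t^{3/4}\|dA_1(t)\|_\infty + t^{3/4}\|d^*A_1(t)\|_\infty \leq \const\|A_0\|_{H^1}$. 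Taking the supremum over $0 < t \leq T \leq 1$ and summing the four bounds gives $\|A_1\|_{\mc{P}_T} \leq \const_0\|A_0\|_{H^1}$ for a constant $\const_0$ depending only on $\liegroup$ (through the Sobolev and heat-kernel constants).

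\textbf{Continuity and finiteness.} Separately one must check $A_1 \in \mc{P}_T$ in the sense that $A_1 : [0,T] \to \honeconnspace$ is continuous and each of $\|A_1(t)\|_\infty$, $\|dA_1(t)\|_\infty$, $\|d^*A_1(t)\|_\infty$ is finite for $0 < t \leq T$. Finiteness at positive times is immediate from the pointwise-in-$t$ estimates above. Continuity of $t \mapsto e^{t\Delta}A_0$ into $H^1$ follows from the strong continuity of the heat semigroup on $H^1$ (a standard Fourier-series / dominated convergence argument, or one can cite that this is part of the standard theory used in \cite{CG2013}).

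\textbf{Main obstacle.} This lemma is essentially a bookkeeping exercise in applying the smoothing estimates, so there is no serious obstacle; the only point requiring a little care is tracking the exponents so that the powers of $t$ match exactly those in \eqref{eq:path-space-norm-def} — in particular verifying that the derivative estimate with the $L^6 \to L^\infty$ gain produces exactly $t^{-3/4}$, so that multiplying by $t^{3/4}$ yields a bounded (in fact uniformly bounded for $t \leq 1$) quantity. Since this is cited verbatim as Lemma 8.6 of \cite{CG2013}, the cleanest write-up simply references that source, but the short self-contained argument above via Theorem \ref{sobolevthm} and the preceding heat-kernel lemma is also entirely routine.
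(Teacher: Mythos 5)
Your argument is correct and is essentially the standard proof; since the paper cites this as Lemma 8.6 of~\cite{CG2013} without reproducing the argument, your sketch is exactly the natural reconstruction from the smoothing estimates stated just above. Briefly: the $H^1$ contraction gives the first term; the Sobolev embedding $H^1 \hookrightarrow L^6$ combined with $\|e^{t\Delta}\|_{6\to\infty}\lesssim t^{-1/4}$ gives the second; and $\|\ptl e^{t\Delta}\|_{6\to\infty}\lesssim t^{-1/2-1/4}=t^{-3/4}$ for $\ptl\in\{d,d^*\}$ gives the third and fourth — all exponents match the weights in the definition of $\|\cdot\|_{\mc{P}_T}$, and strong continuity of the heat semigroup on $H^1$ plus the finiteness of the above supremum give $A_1\in\mc{P}_T$.
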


\begin{lemma}[Lemma 8.4 of \cite{CG2013}]\label{lemma:nonlinear-part-bound}
Let $0 < T < \infty$, $R \geq 0$, and $2 \leq q \leq \infty$. Let $A \in \mc{P}_T$. Suppose that $\|A\|_{\mc{P}_T} \leq R$. Then for all $0 < s < T$, we have
\[ \|X(A(s))\|_q \leq \const s^{-(3/2)(1/2 - 1/q)} (R^3 + s^{-1/4} R^2).\]
Moreover, for any $A_1, A_2 \in \mc{P}_T$ such that $\|A_1\|_{\mc{P}_T}, \|A_2\|_{\mc{P}_T} \leq R$, we have that for any $0 < s < T$,
\[ \|X(A_1(s)) - X(A_2(s))\|_q \leq \const s^{-(3/2)(1/2 - 1/q)} \|A_1 - A_2\|_{\mc{P}_T} (R^2 + s^{-1/4} R).\]
\end{lemma}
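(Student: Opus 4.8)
\textbf{Proof strategy for Lemma \ref{lemma:nonlinear-part-bound}.} The plan is to reduce both estimates to bounding, for fixed indices $i,j,k\in\{1,2,3\}$, the scalar-type quantities $\|A_i(s)A_j(s)A_k(s)\|_q$ and $\|A_i(s)\,\partial_k A_j(s)\|_q$ (and $\|(\partial_k A_j(s))A_i(s)\|_q$), since by the symbolic decomposition $X(A)=A^3+A\cdot\partial A$ recorded after \eqref{eq:recast-ZDDS}, $X(A(s))$ is a finite sum of such terms. Throughout one uses the pointwise-in-$s$ consequences of $\|A\|_{\mc P_T}\le R$ coming from \eqref{eq:path-space-norm-def}, namely for $0<s\le T$,
\[
\|A(s)\|_{H^1}\le R,\qquad \|A(s)\|_\infty\le R\,s^{-1/4},\qquad \|dA(s)\|_\infty+\|d^{*}A(s)\|_\infty\le R\,s^{-3/4},
\]
together with the Sobolev embeddings $H^1\hookrightarrow L^p$ for $p\le 6$ (Theorem \ref{sobolevthm}) and the elementary interpolation inequality $\|f\|_p\le\|f\|_\infty^{1-p_0/p}\|f\|_{p_0}^{p_0/p}$ for $p_0\le p\le\infty$.

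For the cubic term I would apply H\"older's inequality with exponents $(p_1,p_2,p_3)$, $\sum_l 1/p_l=1/q$, the choice depending on the range of $q\in[2,\infty]$: for $q$ near $2$ all three $p_l$ can be taken in the Sobolev range $[2,6]$, so each factor is bounded by $\const R$; for larger $q$ one keeps as many $p_l$ as possible equal to $6$ and places the remaining factor(s) in $L^p$ with $6<p\le\infty$, estimating those by interpolation between $L^6$ (bounded by $\const R$) and $L^\infty$ (bounded by $R s^{-1/4}$). A short computation shows that in each case the resulting power of $s$ is exactly $-(3/2)(1/2-1/q)$, so the cubic part of $X(A(s))$ is $\le\const R^3\,s^{-(3/2)(1/2-1/q)}$.

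For the derivative term $\|A_i(s)\,\partial_k A_j(s)\|_q$ I would put the undifferentiated factor into $L^\infty$, using $\|A_i(s)\|_\infty\le R s^{-1/4}$, and control $\|\partial_k A_j(s)\|_q$ by interpolating its $L^2$ bound $\|\partial_k A_j(s)\|_2\le\|A(s)\|_{H^1}\le R$ against the $L^\infty$ control of the first-order data; on $\threetorus$ one has $\|\nabla A\|_{L^q}\le\const(\|dA\|_{L^q}+\|d^{*}A\|_{L^q}+\|A\|_{L^q})$ for $1<q<\infty$ (a Hodge/Fourier-multiplier estimate), with the $q=\infty$ case supplied directly by the $\|dA(s)\|_\infty,\|d^{*}A(s)\|_\infty$ bounds in \eqref{eq:path-space-norm-def}. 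Interpolating $\|dA(s)\|_q\le\|dA(s)\|_\infty^{1-2/q}\|dA(s)\|_2^{2/q}$ (and likewise for $d^{*}A$ and $A$) yields $\|\partial_k A_j(s)\|_q\le\const R\,s^{-(3/2)(1/2-1/q)}$, hence $\|A_i(s)\,\partial_k A_j(s)\|_q\le\const R^2\,s^{-(3/2)(1/2-1/q)-1/4}$. Combining this with the cubic estimate gives the first claim.

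For the second estimate I would telescope $X(A_1)-X(A_2)$ by adding and subtracting exactly as in the sample manipulation given just before this lemma, so that each resulting symbolic term contains precisely one factor drawn from $A_1-A_2$ (respectively $\partial(A_1-A_2)$, $d(A_1-A_2)$, or $d^{*}(A_1-A_2)$) and one or two further factors drawn from $A_1$ or $A_2$. Applying verbatim the H\"older/interpolation bounds from the previous two paragraphs, now estimating the distinguished factor by $\|A_1-A_2\|_{\mc P_T}$ and the others by $R$, produces $\const s^{-(3/2)(1/2-1/q)}\|A_1-A_2\|_{\mc P_T}\,(R^2+s^{-1/4}R)$, as required. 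The only step that needs genuine care is the bookkeeping of H\"older exponents: one must check that for \emph{every} $q\in[2,\infty]$ the chosen split reproduces exactly the power $s^{-(3/2)(1/2-1/q)}$ (with the extra $s^{-1/4}$ for the first-order term), which forces a small case analysis in $q$ and, for the derivative terms, the interpolation of $\|dA(s)\|_q,\|d^{*}A(s)\|_q$ between $L^2$ and $L^\infty$; everything else is a routine application of H\"older, Sobolev embedding, and the definition of $\|\cdot\|_{\mc P_T}$.
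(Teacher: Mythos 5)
The paper gives no proof of this lemma; it is cited verbatim from \cite{CG2013} (Lemma 8.4), so I assess your proposal on its own terms. Your treatment of the cubic term is fine: H\"older with all three exponents equal to $3q$ (so $\sum_l 1/p_l = 1/q$ and each $p_l \ge 6$ for $q\ge 2$), followed by interpolation of $\|A(s)\|_{3q}$ between $\|A(s)\|_6 \le \const\|A(s)\|_{H^1}\le \const R$ and $\|A(s)\|_\infty\le R s^{-1/4}$, gives $\const R^3 s^{-(3/2)(1/2-1/q)}$ uniformly in $q\in[2,\infty]$, and the Lipschitz estimate follows by the telescoping you describe. For the derivative term and $2\le q<\infty$, the Hodge estimate plus interpolation of $\|dA(s)\|_q,\|d^*A(s)\|_q,\|A(s)\|_q$ between $L^2$ (controlled by $\|A(s)\|_{H^1}\le R$) and $L^\infty$ (controlled via \eqref{eq:path-space-norm-def}) does indeed give $\|\nabla A(s)\|_q \le \const R\,s^{-(3/2)(1/2-1/q)}$, and pairing with $\|A(s)\|_\infty\le R s^{-1/4}$ finishes the job.

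The gap is the $q=\infty$ endpoint of the derivative term. You correctly state that the Hodge/Fourier-multiplier estimate $\|\nabla A\|_q \le \const(\|dA\|_q + \|d^*A\|_q + \|A\|_q)$ holds only for $1<q<\infty$, but then assert that ``the $q=\infty$ case is supplied directly'' by the $\|dA(s)\|_\infty,\|d^*A(s)\|_\infty$ controls in \eqref{eq:path-space-norm-def}. That is not correct: the full gradient really does appear in $X(A)$ (the $d^*[A\wedge A]$ contribution to the ZDDS flow produces, upon expanding the divergence, terms like $[\ptl_j A_i, A_j]$ for arbitrary $j,i$), and $\|\nabla A(s)\|_\infty$ is \emph{not} controlled by the quantities appearing in $\|A\|_{\mc{P}_T}$. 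A concrete obstruction: take $A=\nabla u$ for a function $u$ on $\threetorus$ with bounded Laplacian and gradient but unbounded Hessian (e.g.\ smooth truncations of $u(x)=x_1x_2\log\log(1/|x|)$ near a point); then $dA=0$, $d^*A=-\Delta u$ is bounded, $A\in H^1\cap L^\infty$, yet $\|\ptl_1 A_2\|_\infty$ is as large as you like. So your argument only proves the lemma for $2\le q<\infty$. Since the paper only ever invokes the lemma with $q=2$ and $q=6$, this does not propagate downstream, but the lemma as stated includes $q=\infty$, and you should consult \cite{CG2013} to see how they handle the endpoint---their path-space norm may control $\|\nabla A(t)\|_\infty$ directly rather than only $\|dA(t)\|_\infty + \|d^*A(t)\|_\infty$, in which case the $q=\infty$ case becomes immediate.
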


Given $A \in \mc{P}_T$, define $\rho(A) : [0, T] \ra \honeconnspace$ by
\beq\label{eq:rho-def-h1} \rho(A)(t) := \int_0^t e^{(t - s) \Delta} X(A(s)) ds, ~~ t \in [0, T].\eeq
We now use Lemma \ref{lemma:nonlinear-part-bound} to prove the following lemma, which is the key result needed to show that $W$ is a contraction, when restricted to the right space. Even though this lemma is essentially proven in \cite[Section 8.1]{CG2013}, we still give the proof here, because some of the results that we will prove later on can be taken care of by small modifications of the argument.

\begin{lemma}\label{lemma:duhamel-part-estimates}
Let $0 < T \leq 1$, $A \in \mc{P}_T$. Suppose for some $R$, we have $\|A\|_{\mc{P}_T} \leq R$. Write $\rho$ instead of $\rho(A)$ for brevity. Then we have
\[ \|\rho\|_{\mc{P}_T} \leq \const T^{1/4} (R^3 + R^2). \]
Additionally, suppose $A_1, A_2 \in \mc{P}_T$, and $\|A_i\|_{\mc{P}_T} \leq R$ for $i = 1,2$. Write $\rho_1, \rho_2$ instead of $\rho(A_1), \rho(A_2)$ for brevity. Then we have
\[ \|\rho_1 - \rho_2\|_{\mc{P}_T} \leq \const T^{1/4} (R^2 + R) \|A_1 - A_2\|_{\mc{P}_T} .\]
\end{lemma}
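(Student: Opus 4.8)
The plan is to estimate each of the two terms defining the path-space norm $\|\rho\|_{\mc{P}_T}$ in \eqref{eq:path-space-norm-def} separately — namely $\sup_t \|\rho(t)\|_{H^1}$, $\sup_t t^{1/4}\|\rho(t)\|_\infty$, and $\sup_t t^{3/4}(\|d\rho(t)\|_\infty + \|d^*\rho(t)\|_\infty)$ — by plugging the Duhamel integral \eqref{eq:rho-def-h1} into the operator-norm bounds \eqref{eq:heat-semigroup-lq-lp-op-norm}–\eqref{eq:heat-semigroup-l2-h1-op-norm} together with the nonlinearity bound from Lemma \ref{lemma:nonlinear-part-bound}. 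The crucial point each time is that the time singularities coming from the heat-kernel estimate and the bound on $\|X(A(s))\|_q$ are integrable near $s=0$ and near $s=t$, so that the $s$-integral converges and produces a positive power of $T$ (here $T^{1/4}$) as a prefactor, with the $R$-dependence collecting into $R^3 + R^2$.

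First I would treat the $H^1$ component. Writing $\rho(t) = \int_0^t e^{(t-s)\Delta} X(A(s))\,ds$, I would split $e^{(t-s)\Delta} = e^{((t-s)/2)\Delta} e^{((t-s)/2)\Delta}$, use \eqref{eq:heat-semigroup-l2-h1-op-norm} for the $L^2 \to H^1$ factor (giving $(t-s)^{-1/2}$ up to constants, valid since $t-s \le 1$) and \eqref{eq:heat-semigroup-lq-lp-op-norm} with $q = 3/2$, $p = 2$ for the other factor (giving $(t-s)^{-1/2}$), then apply Lemma \ref{lemma:nonlinear-part-bound} with $q = 3/2$ to bound $\|X(A(s))\|_{3/2} \le \const s^{-1/4}(R^3 + s^{-1/4} R^2)$. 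The resulting integral $\int_0^t (t-s)^{-1} s^{-1/4}(R^3 + s^{-1/4}R^2)\,ds$ — wait, the exponent $-1$ is not integrable, so in fact one must be a bit more careful and not split symmetrically: instead use \eqref{eq:heat-semigroup-l2-h1-op-norm} directly with $(t-s)^{-1/2}$ and take $q=2$ in Lemma \ref{lemma:nonlinear-part-bound} (so no $s^{-(3/2)(1/2-1/q)}$ factor, just $R^3 + s^{-1/4}R^2$), giving $\int_0^t (t-s)^{-1/2}(R^3 + s^{-1/4}R^2)\,ds \le \const T^{1/2} R^3 + \const T^{1/4} R^2 \le \const T^{1/4}(R^3 + R^2)$ since $T \le 1$. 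Next, for the $\|\rho(t)\|_\infty$ term I would use \eqref{eq:heat-semigroup-lq-lp-op-norm} with $q$ slightly above $2$ and $p = \infty$ — actually taking $q = 3$, $p = \infty$ gives $(t-s)^{-1/2}$ — paired with Lemma \ref{lemma:nonlinear-part-bound} at $q = 3$, and check the time integral is $O(t^{-1/4} T^{1/4}(R^3+R^2))$; similarly for $\|d\rho(t)\|_\infty$ and $\|d^*\rho(t)\|_\infty$ I would use \eqref{eq:heat-semigroup-derivative-lq-lp-op-norm} with $q = 3$, $p = \infty$ (giving $(t-s)^{-1/2}(t-s)^{-1/2} = (t-s)^{-1}$) — again this borderline exponent forces the choice $q$ strictly between $2$ and $3$ (or splitting the kernel and distributing the derivative onto one factor) so that the power of $(t-s)$ stays strictly above $-1$ and the integral converges, yielding the $t^{-3/4}T^{1/4}(R^3+R^2)$ bound. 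I expect this bookkeeping of exponents — choosing $q$ in Lemma \ref{lemma:nonlinear-part-bound} and distributing the heat kernel so every time integral is a convergent Beta-type integral $\int_0^t (t-s)^{-a} s^{-b}\,ds$ with $a, b < 1$ — to be the only real obstacle; it is entirely routine but must be done with care at the borderline cases.

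For the difference estimate the argument is identical in structure: $\rho_1(t) - \rho_2(t) = \int_0^t e^{(t-s)\Delta}(X(A_1(s)) - X(A_2(s)))\,ds$, and one simply replaces the use of the first half of Lemma \ref{lemma:nonlinear-part-bound} by its second half, which bounds $\|X(A_1(s)) - X(A_2(s))\|_q \le \const s^{-(3/2)(1/2-1/q)}\|A_1 - A_2\|_{\mc{P}_T}(R^2 + s^{-1/4} R)$. Carrying out the same three time-integral computations then produces $\|\rho_1 - \rho_2\|_{\mc{P}_T} \le \const T^{1/4}(R^2 + R)\|A_1 - A_2\|_{\mc{P}_T}$, since now the cubic-in-$A$ terms are replaced by quadratic ones (one factor of $R$ is traded for $\|A_1-A_2\|_{\mc{P}_T}$). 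I would present the first (non-difference) estimate in full detail and then remark that the difference estimate follows verbatim with the substitutions just described, as is done in \cite[Section 8.1]{CG2013}.
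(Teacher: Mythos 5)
Your overall architecture matches the paper's proof: it uses the Duhamel representation \eqref{eq:rho-def-h1}, the heat-kernel smoothing estimates \eqref{eq:heat-semigroup-lq-lp-op-norm}--\eqref{eq:heat-semigroup-l2-h1-op-norm}, the nonlinearity bound of Lemma \ref{lemma:nonlinear-part-bound}, and the Beta-type integral \eqref{eq:t-minus-sigma-sigma-integral}. Your self-corrected treatment of $\|\rho(t)\|_{H^1}$ (applying \eqref{eq:heat-semigroup-l2-h1-op-norm} directly together with Lemma \ref{lemma:nonlinear-part-bound} at $q=2$) is exactly what the paper does, and your choice $q=3$ for the $\|\rho(t)\|_\infty$ bound also works, though the paper takes $q=6$.

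However, your handling of the $\|\ptl\rho(t)\|_\infty$ term, $\ptl = d, d^*$, contains a directional error that would make the argument fail if executed literally. With $p=\infty$ the exponent in \eqref{eq:heat-semigroup-derivative-lq-lp-op-norm} is $(t-s)^{-1/2-3/(2q)}$, which becomes \emph{more} singular as $q$ decreases: $q=2$ gives $(t-s)^{-5/4}$, $q=3$ gives the borderline $(t-s)^{-1}$, and $q=6$ gives $(t-s)^{-3/4}$. So the borderline case is escaped by taking $q$ strictly \emph{greater} than $3$, not $q\in(2,3)$ as you propose --- for every $q\in(2,3)$ the $(t-s)$-integral diverges. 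The paper uses $q=6$, which yields the integrand $(t-s)^{-3/4}\,s^{-1/2}(R^3 + s^{-1/4}R^2)$ with all singularities integrable, giving $t^{3/4}\|\ptl\rho(t)\|_\infty \leq \const(t^{1/2}R^3 + t^{1/4}R^2)$ by \eqref{eq:t-minus-sigma-sigma-integral}. Any $q>3$ works, since the $s$-singularity from Lemma \ref{lemma:nonlinear-part-bound} stays integrable for any finite $q \geq 2$. (As a smaller aside, in your initial tentative pass at the $H^1$ estimate, $\|e^{(t-s)\Delta}\|_{L^{3/2}\to L^2}\leq \const(t-s)^{-1/4}$, not $(t-s)^{-1/2}$, and Lemma \ref{lemma:nonlinear-part-bound} is stated only for $q\geq 2$ so it does not directly control $\|X(A(s))\|_{3/2}$; but since you discard that route, this is moot.)
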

\begin{proof}
Throughout, we will repeatedly use the inequality 
\beq\label{eq:t-minus-sigma-sigma-integral} \int_0^t (t-s)^{-\beta} s^{-\gamma} ds = \const_{\beta, \gamma} t^{1 - \beta - \gamma}, ~~~ 0 \leq \beta, \gamma < 1, \eeq
where $\const_{\beta, \gamma}$ is some constant depending only on $\beta, \gamma$. This inequality can be seen by splitting $\int_0^t = \int_0^{t/2} + \int_{t/2}^t$.

For $t \in (0, T]$, we have
\[ \|\rho(t)\|_{H^1} \leq \int_0^t \|e^{(t - s) \Delta} X(A(s))\|_{H^1} d s. \]
By \eqref{eq:heat-semigroup-l2-h1-op-norm} and Lemma \ref{lemma:nonlinear-part-bound} with $q = 2$, we obtain the further bound
\[ \const \int_0^t (t-s)^{-1/2} (R^3 + s^{-1/4} R^2) ds \leq \const (t^{1/2} R^3 + t^{1/4} R^2). \]
Next, we bound $\|\rho(t)\|_\infty$. By \eqref{eq:heat-semigroup-lq-lp-op-norm} with $q = 6, p = \infty$ and Lemma \ref{lemma:nonlinear-part-bound} with $q = 6$, we have
\[ \|\rho(t)\|_\infty \leq \const \int_0^t (t - s)^{-1/4} s^{-1/2} (R^3 + s^{-1/4} R^2) ds. \]
Proceeding as before, we obtain
\[ t^{1/4} \|\rho(t)\|_\infty \leq \const (t^{1/2} R^3 + t^{1/4}R^2). \]
We finally bound $\| \ptl \rho(t)\|_\infty$ for $\ptl = d, d^*$. By \eqref{eq:heat-semigroup-derivative-lq-lp-op-norm} with $q = 6, p = \infty$, we have
\[ \|\ptl \rho(t)\|_\infty \leq \const \int_0^t (t-s)^{-3/4} \|X(A(s))\|_6 ds. \]
By Lemma \ref{lemma:nonlinear-part-bound} with $q = 6$, we obtain the further upper bound
\[ \const \int_0^t (t-s)^{-3/4} s^{-1/2}(R^3 + s^{-1/4} R^2), \]
and thus
\[ t^{3/4} \|\rho(t)\|_\infty \leq \const (t^{1/2} R^3 + t^{1/4} R^2).\]
Combining all the estimates and taking sup over $0 \leq t \leq T$ (and noting $T^{1/2} \leq T^{1/4}$ since $T \leq 1$) gives us the first claim. The second claim follows similarly.
\end{proof}

For $0 < T < \infty$ and $R \geq 0$, define the space
\[ \mc{P}_{T, R} := \{A \in \mc{P}_T : \|A\|_{\mc{P}_T} \leq R\}.\]

\begin{lemma}\label{lemma:w-is-a-contraction}
There is a continuous non-increasing function $\timefn : [0, \infty) \ra (0, 1]$ such that the following holds. Given $A_0 \in \honeconnspace$, let $T_0 := \timefn(\|A_0\|_{H^1})$, and let $R_0 := 2 \const_0 \|A_0\|_{H^1}$, where $\const_0$ is from Lemma \ref{lemma:linear-heat-equation-in-path-space}. Let $W$ be defined in terms of $A_0$ by \eqref{eq:w-def}. Then $W$ maps $\mc{P}_{T_0, R_0}$ into $\mc{P}_{T_0, R_0}$, and moreover, it is a $(1/2)$-contraction:
\[ \|W(A) - W(\tilde{A})\|_{\mc{P}_{T_0}} \leq \frac{1}{2} \|A - \tilde{A}\|_{\mc{P}_{T_0}} \text{ for all $A, \tilde{A} \in \mc{P}_{T_0, R_0}$.}\]
Additionally, let $\tilde{A}_0 \in \honeconnspace$, with $\tilde{W}$ defined in terms of $\tilde{A}_0$ by \eqref{eq:w-def}. Let $\tilde{T}_0 := \timefn(\|\tilde{A}_0\|_{H^1})$, $\tilde{R}_0 := 2 \const_0 \|\tilde{A}_0\|_{H^1}$. Let $T_1 := \min(T_0, \tilde{T}_0)$, $R_1 := \max(R_0, \tilde{R}_0)$. Let $A_1, \tilde{A}_1 : [0, T_1] \ra \honeconnspace$ be defined by $A_1(t) := e^{t \Delta}A_0$, $\tilde{A}_1(t) := e^{t \Delta} \tilde{A}_0$. Then for any $A, \tilde{A} \in \mc{P}_{T_1, R_1}$, we have
\[ \|W(A) - \tilde{W}(\tilde{A})\|_{\mc{P}_{T_1}} \leq \|A_1 - \tilde{A}_1\|_{\mc{P}_{T_1}} + \frac{1}{2} \|A - \tilde{A}\|_{\mc{P}_{T_1}}.\]
\end{lemma}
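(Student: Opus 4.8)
The strategy is to split $W(A) = A_1 + \rho(A)$, where $A_1(t) := e^{t\Delta}A_0$ is the linear part (treated by Lemma \ref{lemma:linear-heat-equation-in-path-space}) and $\rho(A)$ is the Duhamel term of \eqref{eq:rho-def-h1} (treated by Lemma \ref{lemma:duhamel-part-estimates}), and then to do bookkeeping on constants. Since $\mathcal{P}_{T}$ is a Banach, hence vector, space and $T_0 \le 1$, the map $W$ automatically lands in $\mathcal{P}_{T_0}$ when applied to $\mathcal{P}_{T_0}$, so the only content is the quantitative self-map and contraction bounds. A key structural observation is that the functional $\rho$ in \eqref{eq:rho-def-h1} depends only on the path $A$ and not on the initial data, so for two initial data $A_0,\tilde A_0$ with linear parts $A_1,\tilde A_1$ one has the \emph{exact} identity $W(A) - \tilde W(\tilde A) = (A_1 - \tilde A_1) + (\rho(A) - \rho(\tilde A))$; this makes the final two–initial–data estimate fall out of the contraction estimate.

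First I would fix the generic constant $C$ implicit in Lemma \ref{lemma:duhamel-part-estimates} together with the constant $\const_0$ from Lemma \ref{lemma:linear-heat-equation-in-path-space}, and define
\[ \timefn(M) := \min\!\left(1, \; \left(\frac{1}{2C\big((2\const_0 M)^2 + 2\const_0 M\big) + 1}\right)^{\!4}\right), \qquad M \ge 0. \]
This $\timefn$ is continuous, non-increasing, and $(0,1]$-valued, and it is engineered so that, writing $T_0 = \timefn(\|A_0\|_{H^1})$ and $R_0 = 2\const_0\|A_0\|_{H^1}$, one has $C\,T_0^{1/4}(R_0^2 + R_0) \le 1/2$. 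Then for $A \in \mathcal{P}_{T_0, R_0}$: Lemma \ref{lemma:linear-heat-equation-in-path-space} gives $\|A_1\|_{\mathcal{P}_{T_0}} \le \const_0\|A_0\|_{H^1} = R_0/2$, while Lemma \ref{lemma:duhamel-part-estimates} gives $\|\rho(A)\|_{\mathcal{P}_{T_0}} \le C\,T_0^{1/4}(R_0^3 + R_0^2) = C\,T_0^{1/4}(R_0^2+R_0)\,R_0 \le R_0/2$, so $\|W(A)\|_{\mathcal{P}_{T_0}} \le R_0$ and $W$ maps $\mathcal{P}_{T_0,R_0}$ into itself. For the contraction, the linear parts cancel, $W(A)-W(\tilde A) = \rho(A)-\rho(\tilde A)$, and the second estimate of Lemma \ref{lemma:duhamel-part-estimates} yields $\|W(A)-W(\tilde A)\|_{\mathcal{P}_{T_0}} \le C\,T_0^{1/4}(R_0^2+R_0)\|A-\tilde A\|_{\mathcal{P}_{T_0}} \le \tfrac12\|A-\tilde A\|_{\mathcal{P}_{T_0}}$.

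For the joint estimate, with $T_1 = \min(T_0,\tilde T_0)$, $R_1 = \max(R_0,\tilde R_0)$, and $A,\tilde A \in \mathcal{P}_{T_1,R_1}$, I would apply the triangle inequality for $\|\cdot\|_{\mathcal{P}_{T_1}}$ to the identity $W(A) - \tilde W(\tilde A) = (A_1 - \tilde A_1) + (\rho(A) - \rho(\tilde A))$ and then bound the second summand by the contraction estimate of Lemma \ref{lemma:duhamel-part-estimates} applied on $\mathcal{P}_{T_1}$ with radius $R_1$, obtaining $\|W(A) - \tilde W(\tilde A)\|_{\mathcal{P}_{T_1}} \le \|A_1 - \tilde A_1\|_{\mathcal{P}_{T_1}} + C\,T_1^{1/4}(R_1^2 + R_1)\|A - \tilde A\|_{\mathcal{P}_{T_1}}$. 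It then suffices that the prefactor is $\le 1/2$: assuming without loss of generality $\tilde R_0 \ge R_0$ so that $R_1 = \tilde R_0$, and using $T_1 \le \tilde T_0$ together with monotonicity of $M \mapsto M^2 + M$, one gets $C\,T_1^{1/4}(R_1^2+R_1) \le C\,\tilde T_0^{1/4}(\tilde R_0^2 + \tilde R_0) \le 1/2$ by the defining property of $\timefn$.

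\textbf{Expected difficulty.} There is no deep obstacle here — all the analytic work (heat-semigroup operator norms, the path-space estimates for the nonlinearity $X$) is already imported from \cite{CG2013} via Lemmas \ref{lemma:linear-heat-equation-in-path-space} and \ref{lemma:duhamel-part-estimates}. The only thing requiring care is organizing the constants so that a \emph{single}, manifestly continuous and non-increasing function $\timefn$ valued in $(0,1]$ simultaneously delivers the self-map property, the $(1/2)$-contraction, and (through the $\min$/$\max$ and monotonicity) the two–initial–data bound; the cap at $1$ is forced because the heat-kernel estimates \eqref{eq:heat-semigroup-lq-lp-op-norm}--\eqref{eq:heat-semigroup-l2-h1-op-norm} are only available for $t \le 1$.
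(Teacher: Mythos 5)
Your proposal is correct and follows essentially the same argument as the paper: decompose $W(A) = A_1 + \rho(A)$, invoke Lemma \ref{lemma:linear-heat-equation-in-path-space} for the linear part and Lemma \ref{lemma:duhamel-part-estimates} for the Duhamel term, choose $\timefn$ so that $\const T^{1/4}(R_0^2+R_0) \le 1/2$ (which automatically controls the cubic term after multiplying by $R_0$), and exploit the cancellation $W(A)-\tilde W(\tilde A) = (A_1-\tilde A_1) + (\rho(A)-\rho(\tilde A))$ for the joint estimate. The only cosmetic difference is that you give an explicit closed formula for $\timefn$ and spell out the $\min$/$\max$ monotonicity check, whereas the paper defines $\timefn$ implicitly as the largest admissible $T \le 1$ and leaves those verifications to the reader.
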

\begin{proof}
By Lemmas \ref{lemma:linear-heat-equation-in-path-space} and \ref{lemma:duhamel-part-estimates}, for any $0 < T \leq 1$, and any $A \in \mc{P}_{T, {R_0}}$, we have
\[ \|W(A)\|_{\mc{P}_T} \leq \const_0 \|A_0\|_{H^1} + \const T^{1/4} (R_0^3 + R_0^2).\]
Moreover, by Lemma \ref{lemma:duhamel-part-estimates}, for any $A, \tilde{A}\in \mc{P}_{T, R_1}$, we have
\[ \|W(A) - \tilde{W}(\tilde{A})\|_{\mc{P}_T} \leq  \|A_1 - \tilde{A}_1\|_{\mc{P}_T} + \const T^{1/4} (R_1^2 + R_1) \|A - \tilde{A}\|_{\mc{P}_T} .\]
We may thus define $\timefn(\|A_0\|_{H^1})$ as the largest $T \leq 1$ such that
\[ \const T^{1/4} (R_0^3 + R_0^2) \leq \const_0 \|A_0\|_{H^1} \text{ and } \const T^{1/4}(R_0^2 + R_0) \leq \frac{1}{2}.\]
Since $R_0 = 2 \const_0 \|A_0\|_{H^1}$, we see that $\timefn$ is in fact a function of $\|A_0\|_{H^1}$ (and only $\|A_0\|_{H^1}$). By taking $\tilde{A}_0 = A_0$, we see that $W$ is indeed as $(1/2)$-contraction.
\end{proof}

We can now prove Proposition \ref{prop:ZDDS-local-existence}.

\begin{proof}[Proof of Proposition \ref{prop:ZDDS-local-existence}]
Let $\timefn$ be as in Lemma \ref{lemma:w-is-a-contraction}. Given $A_0 \in \honeconnspace$, by Lemma \ref{lemma:w-is-a-contraction}, we have that $W$ is a contraction on $\mc{P}_{T_0, R_0}$, where $T_0 := \timefn(\|A_0\|_{H^1})$ and $R_0 := 2 \const_0 \|A_0\|_{H^1}$. Thus by the contraction mapping theorem (see, e.g., \cite[Theorem 2.1]{Tes2012}), there exists a unique fixed point $A \in \mc{P}_{T_0, R_0}$. By Lemma \ref{lemma:zdds-integral-equation-suffices}, we have that $A$ is a solution to \eqref{eq:ZDDS} on $[0, T_0)$ with initial data $A(0) = A_0$, and moreover $A \in C^\infty((0, T_0) \times \threetorus, \lalg^3)$. Uniqueness of solutions to \eqref{eq:ZDDS} is proven at the end of \cite[Section 8.1]{CG2013} --- see in particular the subsection titled ``Uniqueness for the parabolic equation".

We now show continuity in the initial data. Let $\{A_{0, n}\}_{n \leq \infty}$, $\{A_n\}_{n \leq \infty}$ be as in the proposition statement. For $n \leq \infty$, let $W_n$ be defined in terms of $A_n$ by \eqref{eq:w-def}, $T_n := \timefn(\|A_{0, n}\|_{H^1})$, $R_n := 2\const_0 \|A_{0, n}\|_{H^1}$. Additionally, for $n \leq \infty$, let $\tilde{T}_n := \min\{T_n, T_\infty\}$, and let $A_{1, n} : [0, T_n] \ra \honeconnspace$ be defined by $A_{1, n}(t) := e^{t\Delta} A_{0, n}$. Since $A_n \in \mc{P}_{T_n, R_n}$ for all $n \leq \infty$, we have upon applying Lemma \ref{lemma:w-is-a-contraction} (with $T_1 = \tilde{T}_n$, $R_1 = \max\{R_n, R_\infty\}$) that
\[ \|W_n(A_n) - W_\infty(A_\infty)\|_{\mc{P}_{\tilde{T}_n}} \leq \|A_{1, n} - A_{1, \infty}\|_{\mc{P}_{\tilde{T}_n}} + \frac{1}{2} \|A_n - A_\infty\|_{\mc{P}_{\tilde{T}_n}}.\]
By the fact that $W_n(A_n) = A_n$ for all $n \leq \infty$, Lemma \ref{lemma:linear-heat-equation-in-path-space}, and the fact that $\timefn$ is defined to be at most 1, we obtain
\[ \|A_n - A_\infty\|_{\mc{P}_{\tilde{T}_n}} \leq 2 \|A_{1, n} - A_{1, \infty}\|_{\mc{P}_{\tilde{T}_n}} \leq 2 \const_0 \|A_{0, n} - A_{0, \infty}\|_{H^1}. \]
By assumption, we have $\|A_{0, n} - A_{0, \infty}\|_{H^1} \ra 0$. By the continuity of $\timefn$, we have $\tilde{T}_n \ra T_\infty$. Thus for any $t_0 < T_\infty$, we have $\|A_n - A_\infty\|_{\mc{P}_{t_0}} \ra 0$. The desired result now follows.
\end{proof} 



We now proceed to prove Lemma \ref{lemma:zdds-solutions-weak-continuity}. The proof will be by a very similar argument as just given. For $0 < T < \infty$ and $A : [0, T] \ra \honeconnspace$, define the norm
\[ |A|_T := \sup_{0 \leq t \leq T} t^{1/2} \|A(t)\|_{H^1}.\]
Recall that given $A_0, \tilde{A}_0 \in \honeconnspace$, we have maps $W, \tilde{W}$ defined in terms of $A_0, \tilde{A}_0$ by \eqref{eq:w-def}, respectively. With $T_1 := \min\{\timefn(\|A_0\|_{H^1}), \timefn(\|\tilde{A}_0\|_{H^1})\}$ and $R_1 := 2\const_0 \max\{\|A_0\|_{H^1}, \|\tilde{A}_0\|_{H^1}\}$, we have that $W, \tilde{W}$ are $(1/2)$-contractions on the set $\mc{P}_{T_1, R_1}$. Thus we have fixed points $A, \tilde{A}$ of $W, \tilde{W}$, which are the solutions to \eqref{eq:ZDDS} on $[0, T_1)$ with initial data $A(0) = A_0, \tilde{A}(0) = \tilde{A}_0$. Moreover, we know that $A, \tilde{A} \in \mc{P}_{T_1, R_1}$, i.e., $\|A\|_{\mc{P}_{T_1}}, \|\tilde{A}\|_{\mc{P}_{T_1}} \leq R_1$.

\begin{lemma}\label{lemma:nonlinear-part-estimate-for-weak-convergence}
Let all notation be as in the previous paragraph. For any $T \leq T_1$, we have
\[ |A - \tilde{A}|_T \leq \const \|A_0 - \tilde{A}_0\|_2 + \const T^{1/4} (R_1^2 + R_1) |A - \tilde{A}|_T.\]
\end{lemma}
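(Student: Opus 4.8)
The plan is to mimic the contraction estimates from the proof of Lemma~\ref{lemma:duhamel-part-estimates}, but working with the weaker weighted norm $|\cdot|_T$ rather than $\|\cdot\|_{\mc{P}_T}$, and crucially exploiting that the difference $A_0 - \tilde{A}_0$ of initial data appears only through its $L^2$ norm (not its $H^1$ norm). Recall that $A = W(A)$ and $\tilde{A} = \tilde{W}(\tilde{A})$ means, by \eqref{eq:duhamel-ZDDS},
\[ A(t) - \tilde{A}(t) = e^{t\Delta}(A_0 - \tilde{A}_0) + \int_0^t e^{(t-s)\Delta}\big(X(A(s)) - X(\tilde{A}(s))\big)\,ds. \]
First I would bound the linear term: by \eqref{eq:heat-semigroup-l2-h1-op-norm}, $\|e^{t\Delta}(A_0 - \tilde{A}_0)\|_{H^1} \leq \const\, t^{-1/2}\|A_0 - \tilde{A}_0\|_2$, so $t^{1/2}\|e^{t\Delta}(A_0 - \tilde{A}_0)\|_{H^1} \leq \const \|A_0 - \tilde{A}_0\|_2$, which gives the first term $\const\|A_0 - \tilde{A}_0\|_2$ after taking the sup over $t \in [0,T]$.

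Next I would bound the Duhamel term. Using \eqref{eq:heat-semigroup-l2-h1-op-norm} again,
\[ t^{1/2}\Big\|\int_0^t e^{(t-s)\Delta}\big(X(A(s)) - X(\tilde{A}(s))\big)\,ds\Big\|_{H^1} \leq \const\, t^{1/2}\int_0^t (t-s)^{-1/2}\|X(A(s)) - X(\tilde{A}(s))\|_2\,ds. \]
Now apply Lemma~\ref{lemma:nonlinear-part-bound} with $q = 2$ and $R = R_1$: since $\|A\|_{\mc{P}_{T_1}}, \|\tilde{A}\|_{\mc{P}_{T_1}} \leq R_1$, we get $\|X(A(s)) - X(\tilde{A}(s))\|_2 \leq \const\, \|A - \tilde{A}\|_{\mc{P}_{T_1}}(R_1^2 + s^{-1/4}R_1)$. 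The one wrinkle is that the right-hand side of the desired inequality involves $|A - \tilde{A}|_T$, not $\|A - \tilde{A}\|_{\mc{P}_{T_1}}$; so I would instead use a variant of Lemma~\ref{lemma:nonlinear-part-bound} phrased in terms of the $|\cdot|_T$ norm. Concretely, $X(A) - X(\tilde{A})$ has the symbolic form $(A - \tilde{A})\cdot A\cdot A + \tilde{A}\cdot(A-\tilde{A})\cdot\tilde{A}$ plus $(A - \tilde{A})\cdot\partial A + \tilde{A}\cdot\partial(A - \tilde{A})$ (adding and subtracting as in the discussion before Lemma~\ref{lemma:nonlinear-part-bound}). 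Estimating these in $L^2$ via Hölder and Sobolev embedding $H^1 \hookrightarrow L^p$: for the cubic terms one writes $\|(A-\tilde{A})\cdot A \cdot A\|_2 \leq \|A - \tilde{A}\|_6 \|A\|_6^2 \leq \const\, s^{-1/2}|A-\tilde{A}|_s \cdot \|A\|_{H^1}^2$ (using $t^{1/2}\|A(t)\|_{H^1} \le \|A\|_{\mc P_T}$ for the last two factors, so they contribute at most $R_1^2$), and for the derivative terms $\|(A-\tilde A)\cdot\partial A\|_2 \le \|A - \tilde A\|_\infty\|\partial A\|_2$ or $\|A - \tilde A\|_6\|\partial A\|_3$, controlled by $s^{-3/4}$-type weights against $|A - \tilde{A}|_s$ and $R_1$. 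In all cases one obtains a bound of the shape $\|X(A(s)) - X(\tilde{A}(s))\|_2 \leq \const\, s^{-\gamma}(R_1^2 + R_1)|A - \tilde{A}|_T$ for $\gamma < 1$ depending on which term (the worst exponent turning out to be compatible with the weights), after which the integral $\int_0^t (t-s)^{-1/2}s^{-1/2}s^{-1/2}\,ds$ is handled by \eqref{eq:t-minus-sigma-sigma-integral} — one checks the total exponent of $s$ stays below $1$ — yielding a factor $\const\, T^{1/4}$ after absorbing $t^{1/2}$ and using $T \leq T_1 \leq 1$.

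The main obstacle is purely bookkeeping: ensuring that every term in $X(A) - X(\tilde{A})$, when measured in $L^2$ with the correct allocation of the $\|\cdot\|_{\mc{P}_T}$-weights to the $A$, $\tilde A$ factors (which come from the fixed points lying in $\mc{P}_{T_1, R_1}$) and the single $|\cdot|_T$-weight to the difference factor, produces an integrable singularity $s^{-\gamma}$ with $\gamma<1$ so that \eqref{eq:t-minus-sigma-sigma-integral} applies and the leftover power of $T$ is a positive power (here $T^{1/4}$). This is exactly the same weight-counting that Charalambous and Gross do in \cite[Section 8.1]{CG2013} and that appears in the proof of Lemma~\ref{lemma:duhamel-part-estimates}; the only change is that one of the $\|\cdot\|_{\mc{P}_T}$ weights is replaced by the weaker $t^{1/2}$-weight of $|\cdot|_T$, which is harmless because $|A|_T \leq \const\|A\|_{\mc{P}_T}$ and because the $t^{1/2}$-weight is milder than the $t^{1/4}$ and $t^{3/4}$ weights built into $\|\cdot\|_{\mc{P}_T}$. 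Once the estimate of Lemma~\ref{lemma:nonlinear-part-estimate-for-weak-convergence} is in hand, the actual content of Lemma~\ref{lemma:zdds-solutions-weak-continuity} follows by choosing $\timefn_0(M)$ small enough (depending only on $M = \sup_n\|A_{0,n}\|_{H^1}$, hence on $R_1$) that $\const\, \timefn_0(M)^{1/4}(R_1^2 + R_1) \leq 1/2$, absorbing the last term, giving $|A_n - A_\infty|_{\timefn_0(M)} \leq \const\|A_{0,n} - A_{0,\infty}\|_2$, and finally invoking the Sobolev / Rellich compactness (Remark~\ref{soboloverem}): $A_{0,n} \weakarrow A_{0,\infty}$ in $H^1$ implies $A_{0,n} \ra A_{0,\infty}$ in $L^2$, so the right-hand side tends to $0$.
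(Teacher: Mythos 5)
Your treatment of the linear term is exactly the paper's argument, and your treatment of the cubic part of the nonlinearity (Hölder into $L^6\times L^6\times L^6$, Sobolev, and the $(t-s)^{-1/2}s^{-1/2}$ integral) is also correct. The gap is in the derivative part $A\cdot\partial A - \tilde A\cdot\partial\tilde A$, which is precisely the delicate piece.

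Your first option, $\|(A-\tilde A)\cdot\partial A\|_2 \leq \|A-\tilde A\|_\infty\|\partial A\|_2$, does not close: the weighted norm $|A-\tilde A|_T = \sup_t t^{1/2}\|A(t)-\tilde A(t)\|_{H^1}$ controls only the $H^1$ norm of the difference, and in three dimensions $H^1\not\hookrightarrow L^\infty$. This is the entire point of introducing $|\cdot|_T$: it is strictly weaker than $\|\cdot\|_{\mc P_T}$, which \emph{does} control $t^{1/4}\|A(t)-\tilde A(t)\|_\infty$. If you could borrow $\|A-\tilde A\|_\infty$ you would simply be re-proving the $\|\cdot\|_{\mc P_T}$-contraction of Lemma~\ref{lemma:w-is-a-contraction}, which is not what is wanted here. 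Your second option, $\|A-\tilde A\|_6\|\partial A\|_3$, is also not available from the stated controls: $\|A\|_{\mc P_T}\leq R_1$ gives you $\|\partial A\|_2\leq\|A\|_{H^1}$ and weighted $L^\infty$ bounds on $dA$ and $d^*A$ only, not on arbitrary $\partial_k A_j$ (and recall from \eqref{eq:zdds-explicit} that $X(A)$ genuinely involves the raw derivatives $\partial_j A_i$, not just $dA$ and $d^*A$). Producing $\|\partial A\|_3$ would require an auxiliary Hodge/Calder\'on--Zygmund estimate $\|\nabla A\|_p\lesssim \|dA\|_p+\|d^*A\|_p+\|A\|_p$ plus interpolation, which you neither cite nor prove.

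The paper sidesteps this entirely by replacing \eqref{eq:heat-semigroup-l2-h1-op-norm} (the $L^2\to H^1$ smoothing at cost $(t-s)^{-1/2}$) with a sharper \emph{$L^{3/2}\to H^1$ smoothing} estimate \eqref{eq:heat-kernel-l3/2-h1-op-norm} at cost $(t-s)^{-3/4}$, proved simply by factoring $e^{(t-s)\Delta}=e^{((t-s)/2)\Delta}e^{((t-s)/2)\Delta}$ and using $L^{3/2}\to L^2$ followed by $L^2\to H^1$. Measuring the derivative part of the difference in $L^{3/2}$ rather than $L^2$, H\"older gives $\|(A-\tilde A)\cdot\partial A\|_{3/2}\leq \|A-\tilde A\|_6\|\partial A\|_2$, and now both factors are controlled by what you actually have: $\|A-\tilde A\|_6\leq\const\|A-\tilde A\|_{H^1}\leq\const s^{-1/2}|A-\tilde A|_T$ and $\|\partial A\|_2\leq\|A\|_{H^1}\leq R_1$. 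The weight-count $(t-s)^{-3/4}s^{-1/2}$ is then integrable and produces the $T^{1/4}$ factor. This $L^{3/2}$ maneuver is the key idea that your proposal is missing.
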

\begin{proof}
Since $T \leq T_1$, we have $\|A\|_{\mc{P}_T}, \|\tilde{A}\|_{\mc{P}_T} \leq R_1$, a fact which we will use several times. Let $A_1, \tilde{A}_1 : [0, T] \ra \honeconnspace$ be defined by $A_1(t) := e^{t \Delta} A_0$, $\tilde{A}_1(t) := e^{t \Delta} \tilde{A}_0$. Then $A = W(A) = A_1 + \rho(A)$, $\tilde{A} = \tilde{W}(\tilde{A}) = \tilde{A}_1 + \rho(\tilde{A})$, where $\rho$ is as defined in \eqref{eq:rho-def-h1}. Thus
\[ |A - \tilde{A}|_T \leq |A_1 - \tilde{A}_1|_T + |\rho(A) - \rho(\tilde{A})|_T.\]
The first term on the right hand side may be bounded by $\const \|A_0 - \tilde{A}_0\|_2$ by applying \eqref{eq:heat-semigroup-l2-h1-op-norm} (note $T_1 \leq 1$ by the definition of $\timefn$ in Lemma \ref{lemma:w-is-a-contraction}). It remains to bound the second term. For $0 < t \leq T$, we have
\[ \|\rho(A)(t) - \rho(\tilde{A})(t)\|_{H^1} \leq \int_0^t \|e^{(t - s) \Delta} (X(A(s)) - X(\tilde{A}(s)))\|_{H^1} ds. \]
To bound the integrand, first note that
\[ \begin{split}
\|e^{(t - s) \Delta} (X(A(s)) &- X(\tilde{A}(s)))\|_{H^1} \leq \|e^{(t- s) \Delta} (A(s)^3 - \tilde{A}(s)^3)\|_{H^1} + \\
&\|e^{(t - s) \Delta} (A(s) \cdot \ptl A(s) - \tilde{A}(s) \cdot \ptl \tilde{A}(s))\|_{H^1}
\end{split}\]
By \eqref{eq:heat-semigroup-l2-h1-op-norm}, we obtain the further upper bound on the first term of the right hand side
\[ \const (t-s)^{-1/2} \|A(s)^3 - \tilde{A}(s)^3\|_2. \]
To bound this, recall that the typical term is like
\[ \|A_i(s) A_j(s) A_k(s)- \tilde{A}_i(s) \tilde{A}_j(s) \tilde{A}_k(s)\|_2, \]
and so by introducing a telescoping sum, it suffices to bound a term like
\[ \|A_i(s) A_j(s) (A_k(s) - \tilde{A}_k(s))\|_2. \]
Thus by repeated applications of H\"{o}lder's inequality, we obtain
\[ \|A(s)^3 - \tilde{A}(s)^3\|_2 \leq \const \|A(s)\|_6^2 \|A(s) - \tilde{A}(s)\|_6. \]
Using the Sobolev embedding $H^1 \hookrightarrow L^6$ and the definition \eqref{eq:path-space-norm-def} of the norm $\|\cdot\|_{\mc{P}_T}$, we have $\|A(s)\|_6 \leq \const\|A\|_{\mc{P}_T} \leq R_1$.
We also have 
\[ \|A(s) - \tilde{A}(s)\|_6 \leq \const \|A(s) - \tilde{A}(s)\|_{H^1} \leq \const s^{-1/2} |A - \tilde{A}|_T. \]
We thus obtain (recalling \eqref{eq:t-minus-sigma-sigma-integral})
\[\begin{split}
t^{1/2}\int_0^t &\|e^{(t-s) \Delta} (A(s)^3 - \tilde{A}(s)^3)\|_{H^1} ds \leq \\
&\const R_1^2|A - \tilde{A}|_T t^{1/2}\int_0^t (t-s)^{-1/2} s^{-1/2} ds \leq \const t^{1/2} R_1^2 |A - \tilde{A}|_T.
\end{split} \]
We now bound the other term. First, we claim
\beq\begin{split}
\label{eq:heat-kernel-l3/2-h1-op-norm} \|e^{(t - s) \Delta} (A(s) \cdot &\ptl A(s) - \tilde{A}(s) \cdot \ptl \tilde{A}(s))\|_{H^1} \leq \\
& \const (t-s)^{-3/4} \|A(s) \cdot \ptl A(s) - \tilde{A}(s) \cdot \ptl \tilde{A}(s)\|_{3/2}.
\end{split}\eeq
Given this claim, we further have
\[\begin{split}
\|A(s) \cdot \ptl A(s) - \tilde{A}(s) \cdot \ptl \tilde{A}(s)\|_{3/2} \leq \|(A(s) &- \tilde{A}(s)) \cdot \ptl A(s)\|_{3/2} ~+ \\
&\|\tilde{A}(s) \cdot (\ptl A(s) - \ptl \tilde{A}(s))\|_{3/2}.
\end{split}\]
By H\"{o}lder's inequality and Theorem \ref{sobolevthm} (applied with $p = 6$), we have
\begin{align*}
\|(A(s) - \tilde{A}(s)) \cdot \ptl A(s)\|_{3/2} &\leq \const \|A(s) - \tilde{A}(s)\|_{6} \|A(s)\|_{H^1} \\
&\leq \const R_1 \|A(s) - \tilde{A}(s)\|_{H^1} \\
&\leq \const R_1  s^{-1/2} |A - \tilde{A}|_T.
\end{align*}
We may similarly obtain
\[\|\tilde{A}(s) \cdot (\ptl A(s) - \ptl \tilde{A}(s))\|_{3/2} \leq \const R_1 s^{-1/2} |A - \tilde{A}|_T. \]
We thus have (recalling \eqref{eq:t-minus-sigma-sigma-integral})
\[\begin{split}
t^{1/2} \int_0^t \|&e^{(t - s) \Delta} (A(s) \cdot \ptl A(s) - \tilde{A}(s) \cdot \ptl \tilde{A}(s))\|_{H^1} ds \leq \\
& \const R_1 |A - \tilde{A}|_T t^{1/2} \int_0^t (t-s)^{-3/4} s^{-1/2} ds \leq \const t^{1/4} R_1 |A - \tilde{A}|_T. 
\end{split}\]
Combining the previous estimates, and then taking sup over $0 \leq t \leq T$, we obtain
\[ |\rho(A) - \rho(\tilde{A})|_T \leq \const (T^{1/2} R_1^2 + T^{1/4} R_1) |A - \tilde{A}|_T . \]
Since by definition, $\timefn \leq 1$, it follows that $T \leq T_1 \leq 1$, and thus $T^{1/2} \leq T^{1/4}$. The desired result now follows, modulo the claim \eqref{eq:heat-kernel-l3/2-h1-op-norm}.

We now show the claim \eqref{eq:heat-kernel-l3/2-h1-op-norm}. For brevity, let 
\[ F := A(s) \cdot \ptl A(s) - \tilde{A}(s) \cdot \ptl \tilde{A}(s).\]
By \eqref{eq:heat-semigroup-l2-h1-op-norm} combined with \eqref{eq:heat-semigroup-lq-lp-op-norm} with $q = 3/2, p = 2$, we have
\begin{align*}
\|e^{(t-s)\Delta} F\|_{H^1} &= \|e^{((t-s)/2)\Delta} e^{((t-s)/2) \Delta} F\|_{H^1} \\
&\leq \|e^{((t-s)/2) \Delta}\|_{L^2 \ra H^1} \|e^{((t-s) / 2) \Delta}\|_{3/2 \ra 2} \|F\|_{3/2}\\
&\leq \const (t-s)^{-3/4} \|F\|_{3/2},
\end{align*}
as desired.
\end{proof}

We can now prove Lemma \ref{lemma:zdds-solutions-weak-continuity}.

\begin{proof}[Proof of Lemma \ref{lemma:zdds-solutions-weak-continuity}]
Let $M_0 > 0$ be such that $\sup_{n \leq \infty} \|A_{0, n}\|_{H^1} \leq M_0$. Let $T \leq \timefn(M_0)$ and $R := 2\const_0 M_0$ (where $\const_0$ is as in Lemma \ref{lemma:linear-heat-equation-in-path-space}). By the monotonicity of $\timefn$ and Lemma \ref{lemma:nonlinear-part-estimate-for-weak-convergence}, we have for any $n$,
\[ |A_n - A_\infty|_T \leq \const \|A_{0, n} - A_{0, \infty}\|_2 + \const T^{1/4} (R^2 + R) |A_n - A_\infty|_T.\]
Thus for $M_1 > 0$, let $\timefn_0(M_1)$ to be the largest $T \leq \timefn(M_1)/2$ such that
\[ \const T^{1/4} ((2 \const_0 M_1)^2 +  (2 \const_0 M_1)) \leq \frac{1}{2}.\]
Taking $M = \sup_{n \leq \infty} \|A_n\|_{H^1}$, we then obtain
\[ |A_n - A_\infty|_{\timefn_0(M)} \leq \const \|A_{0, n} - A_{0, \infty}\|_2. \]
(Note here we used the fact that $|A_n - A_\infty|_{\timefn_0(M)} < \infty$ for all $n$, since we have $\|A_n\|_{\mc{P}_{\timefn_0(M)}} < \infty$ for all $n \leq \infty$.) Since $A_{0, n} \weakarrow A_{0, \infty}$ implies $A_{0, n} \ra A_{0, \infty}$ in $L^2$ (by the Sobolev embedding $H^1 \hookrightarrow L^2$), the desired result now follows.
\end{proof}

Finally, we close this section off with the proof of Lemma \ref{lemma:ym-zdds-solution-gauge-transformation}. To do so, we need to describe how a solution to \eqref{eq:YM} is obtained from a solution to \eqref{eq:ZDDS}. The following is a subset of the content of \cite[Theorem 9.2 and Lemma 9.6]{CG2013}. Let $A_0 \in \honeconnspace$, and let $B$ be the solution to \eqref{eq:ZDDS} on some interval $[0, T)$ with initial data $B(0) = A_0$. For $0 < \varep < T$, let $\gt_\varep : [\varep, T) \times \threetorus \ra \liegroup$ be the solution to the following ODE on $[0, T)$:
\beq\label{eq:g-eps-ode} \ptl_t \gt_\varep(t) = d^* B(t) \gt_\varep(t), ~~ \varep \leq t < T, ~~\gt_\varep(\varep) \equiv \groupid. \eeq
(Technically, the above is a collection of ODEs, one for each $x \in \threetorus$.) For $\varep \leq t < T$, define $A_\varep(t) := \gaction{B(t)}{\gt_\varep(t)}$. Then $A_\varep \in C^\infty([\varep, T) \times \torus^3, \lalg^3)$ is a classical solution to \eqref{eq:YM} on $[\varep, T)$ with initial data $A_\varep(\varep) = B(\varep)$, and moreover, if $A$ is the solution to \eqref{eq:YM} on $[0, \infty)$ with initial data $A(0) = A_0$, then we have
\[ \lim_{\varep \downarrow 0} \sup_{\varep \leq t < T} \|A(t) - A_\varep(t)\|_{H^1} = 0. \]
As mentioned in \cite[Section 2.2]{CG2013}, we take a strictly positive time $\varep > 0$ and then send $\varep \downarrow 0$, to get around various difficulties caused by $d^* B(t)$ becoming more singular as $t\downarrow 0$ (in particular, this prevents us from taking $\varep = 0$ right away). 

\begin{proof}[Proof of Lemma \ref{lemma:ym-zdds-solution-gauge-transformation}]
For $0 < \varep < T$, let $\gt_\varep : [\varep, T) \times \threetorus \ra \liegroup$ be as defined by the ODE \eqref{eq:g-eps-ode}. Let $A_\varep(t) := \gaction{B(t)}{\gt_\varep(t)}$ for $\varep \leq t < T$. Now fix $0 < t < T$. Take a sequence $\varep_n \downarrow 0$. For notational brevity, relabel $A_n := A_{\varep_n}$ and $\gt_n := \gt_{\varep_n}$. As noted in the discussion just after \eqref{eq:g-eps-ode}, we have from \cite[Theorem 9.2]{CG2013} that $A_n(t) \ra A(t)$ (in $\honeconnspace$, i.e., in $H^1$ norm). In particular, this implies that
\[ \sup_{n \geq 1} \|\gaction{B(t)}{\gt_n(t)}\|_{H^1} < \infty. \]
Thus by Lemma \ref{lemma:sequence-and-gauge-transform-bounded}, there exists $\gt \in \htwogaugetransf$ and a subsequence $\{\gt_{n_k}(t)\}_{k \geq 1}$ such that $\|\gt_{n_k}(t) - \gt\|_\infty \ra 0$ and $\gt_{n_k}^{-1}(t) d\gt_{n_k}(t) \ra \gt^{-1} d\gt$ in $L^2$. This implies that $A_{n_k}(t) = \gaction{B(t)}{\gt_{n_k}(t)} \ra \gaction{B(t)}{\gt}$ in $L^2$. Since we also have $A_{n_k}(t) \ra A(t)$ in $L^2$, we obtain $A(t) = \gaction{B(t)}{\gt}$, as desired.
\end{proof}

\section{Proofs of technical lemmas for Wilson loops}\label{section:Wilson-proofs}

\begin{proof}[Proof of Lemma \ref{lemma:wilson-loop-continuous-dependence-on-conn-general-character}]
Take $\rho : \liegroup \ra \unitary(m)$ to be a unitary representation of $\liegroup$ such that $\character = \Tr ~\rho$ (and so $m = \character(\groupid)$). Let $h : [0, 1] \ra \liegroup$ be the solution to the ODE \eqref{eq:wilson-loop-ode}, and let $\tilde{h}$ be the solution to the same ODE but with $\tilde{A}$ in place of $A$. By definition, $W_{\wloop, \character}(A) = \character(h(1)) = \Tr(\rho(h(1)))$, and similarly for $W_{\wloop, \character}(\tilde{A})$. Define $u, \tilde{u} : [0, 1] \ra G$ by $u(t) := \rho(h(t))$, $\tilde{u}(t) := \rho(\tilde{h}(t))$. Observe that
\[ u'(t) = d\rho_{h(t)} (h'(t)) = d\rho_{h(t)} (h(t) A(\wloop(t)) \cdot \wloop'(t)), \]
where for $g \in \liegroup$, $d\rho_g : T_{g} \liegroup \ra T_{\rho(g)} \unitary(m)$ is the derivative of $\rho$ at $g$.
We have for any $g \in \liegroup$, $X \in \lalg$, 
\[ d\rho_g(gX) = \frac{d}{dt} \rho(ge^{tX}) \bigg|_{t = 0} = \rho(g) \frac{d}{dt} \rho(e^{tX}) \bigg|_{t = 0} = \rho(g) d\rho_{\groupid} (X).\]
For brevity, let $\Pi_\rho := d\rho_{\groupid}$. Note by definition, $\Pi_\rho : \lalg \ra T_{\rho(\groupid)} \unitary(m)$ is linear. Moreover, we will later use the fact that $\Pi_\rho$ maps into skew-Hermitian matrices, i.e., $(\Pi_\rho(X))^* = -\Pi_\rho(X)$ for all $X \in \lalg$. Let $A_{\Pi_\rho} := (\Pi_\rho \circ A_i, 1 \leq i \leq 3)$, and let $\tilde{A}_{\Pi_\rho}$ be defined analogously. We then obtain
\[ u'(t) = u(t) \Pi_\rho(A(\wloop(t)) \cdot \wloop'(t)) = u(t) A_{\Pi_\rho}(\wloop(t)) \cdot \wloop'(t), \]
where we have used the linearity of ${\Pi_\rho}$, and similarly
\[ \tilde{u}'(t) = \tilde{u}(t) \tilde{A}_{\Pi_\rho}(\wloop(t)) \cdot \wloop'(t). \]
We proceed to bound the Frobenius norm difference
\beq\label{eq:u-tildeu-frob-norm-diff} \|u(1) - \tilde{u}(1)\|^2 = 2 \Tr(\rho(\groupid)) - \Tr(u(1)^* \tilde{u}(1) + \tilde{u}(1)^* u(1)). \eeq
Let $A_{{\Pi_\rho}, \wloop}(t) := A_{\Pi_\rho}(\wloop(t)) \cdot \wloop'(t)$ for $0 \leq t \leq 1$, and let $\tilde{A}_{{\Pi_\rho}, \wloop}(t)$ be defined analogously. Let $v : [0, 1] \ra \liegroup$ be defined by $v(t) := u(t)^* \tilde{u}(t)$. Observe then
\[ v'(t) = A_{{\Pi_\rho}, \wloop}(t)^* v(t) + v(t) \tilde{A}_{{\Pi_\rho}, \wloop}(t), ~~ v(0) = \rho(\groupid).\]
Upon integrating and taking traces, we obtain
\begin{align*}
\Tr(v(1)) - \Tr(\rho(\groupid)) &= \int_0^1 \Tr\big(A_{{\Pi_\rho}, \wloop}(t)^* v(t) + v(t) \tilde{A}_{{\Pi_\rho}, \wloop}(t)\big) dt \\
&= \int_0^1 \Tr\big(v(t)(\tilde{A}_{{\Pi_\rho}, \wloop}(t) - A_{{\Pi_\rho}, \wloop}(t)\big) dt.
\end{align*}
In the second identity, we used the fact $(A_{{\Pi_\rho}, \wloop})^* = -A_{{\Pi_\rho}, \wloop}$, which follows from our previous observation that ${\Pi_\rho}$ maps into skew-Hermitian matrices. We may similarly obtain
\[ \Tr(v(1)^*) - \Tr(\rho(\groupid)) = \int_0^1 \Tr\big(v(t)^*(A_{{\Pi_\rho}, \wloop}(t) - \tilde{A}_{{\Pi_\rho}, \wloop}(t)\big) dt. \]
Thus upon adding the previous two displays, we obtain
\begin{align*}
&\Tr(v(1) + v(1)^*) - 2 \Tr(\rho(\groupid)) \\
&= \int_0^1 \Tr\big( (v(t) - v(t)^*) (\tilde{A}_{{\Pi_\rho}, \wloop}(t) - A_{{\Pi_\rho}, \wloop}(t))\big) dt.
\end{align*}
Recalling \eqref{eq:u-tildeu-frob-norm-diff}, we may thus obtain
\[ \|u(1) - \tilde{u}(1)\|^2 \leq 2 m^{1/2} \int_0^1 \|\tilde{A}_{{\Pi_\rho}, \wloop}(t) - A_{{\Pi_\rho}, \wloop}(t)\| dt. \]
Here we have used the Cauchy--Schwarz inequality $|\Tr(M_1^* M_2)| \leq \|M_1\| \|M_2\|$ for matrices $M_1, M_2$, as well as the fact that $v(t) \in \unitary(m)$ so that $\|v(t) - v(t)^*\| \leq \|v(t)\| + \|v(t)^*\| = 2 m^{1/2}$ for all $0 \leq t \leq 1$. 
Observe again by Cauchy--Schwarz that
\[ |W_{\wloop, \character}(A) - W_{\wloop, \character}(\tilde{A})|^2 = |\Tr(u(1) - \tilde{u}(1))|^2 \leq m \|u(1) - \tilde{u}(1)\|^2, \]
and thus
\[ |W_{\wloop, \character}(A) - W_{\wloop, \character}(\tilde{A})|^2 \leq 2m^{3/2} \int_0^1 \|\tilde{A}_{{\Pi_\rho}, \wloop}(t) - A_{{\Pi_\rho}, \wloop}(t)\| dt. \]
To finish, note we have
\begin{align*}
\|\tilde{A}_{{\Pi_\rho}, \wloop}(t) - A_{{\Pi_\rho}, \wloop}(t)\| &= \|(\tilde{A}_{\Pi_\rho}(\wloop(t)) - A_{\Pi_\rho}(\wloop(t))) \cdot \wloop'(t)\| \\
&\leq \|\tilde{A}_{\Pi_\rho}(\wloop(t)) - A_{\Pi_\rho}(\wloop(t))\| \cdot |\wloop'(t)|.
\end{align*}
As ${\Pi_\rho}$ is a linear map between finite-dimensional normed vector spaces, we have that $\|{\Pi_\rho}\|_{op} := \sup_{X \in \lalg, X \neq 0} \|{\Pi_\rho}(X)\| / \|X\| < \infty$, where as usual both matrix norms are the Frobenius norm. We thus obtain
\begin{align*}
\|\tilde{A}_{\Pi_\rho}(\wloop(t)) - A_{\Pi_\rho}(\wloop(t))\|
&\leq \|{\Pi_\rho}\|_{op} \|\tilde{A}(\wloop(t)) - A(\wloop(t))\|.
\end{align*}
The desired result now follows upon combining the previous few displays.
\end{proof}

\begin{proof}[Proof of Lemma \ref{lemma:wilson-loop-continuous-dependence-on-loop-general-character}]
By essentially repeating the same steps as in the proof of Lemma \ref{lemma:wilson-loop-continuous-dependence-on-conn-general-character}, we can obtain
\[ |W_{\wloop_1, \character}(A) - W_{\wloop_2, \character}(A)|^2\leq 2 m^{3/2} \int_0^1 \|A_{{\Pi_\rho}, \wloop_1}(t) - A_{{\Pi_\rho}, \wloop_2}(t)\| dt,\]
where $m, {\Pi_\rho}$ are as in the proof, $A_{{\Pi_\rho}, \wloop_j}(t) = A_{{\Pi_\rho}}(\wloop_j(t)) \cdot \wloop_j'(t)$ for $j = 1,2$, and $A_{\Pi_\rho} = ({\Pi_\rho} \circ A_i, 1 \leq i \leq 3)$. We can then bound for $t \in [0, 1]$
\[\begin{split}
\|A_{{\Pi_\rho}, \wloop_1}(t) - A_{{\Pi_\rho}, \wloop_2}(t)\|  \leq \|(A_{\Pi_\rho}(\wloop_1(t)) &- A_{\Pi_\rho}(\wloop_2(t))) \cdot \wloop_1'(t)\| ~+  \\
&\|A_{{\Pi_\rho}}(\wloop_2(t)) \cdot (\wloop_1'(t) - \wloop_2'(t))\|.
\end{split}\]
Then by arguing as at the end of the proof of Lemma \ref{lemma:wilson-loop-continuous-dependence-on-conn-general-character}, we may obtain the further upper bound on the right hand side
\[ \|{\Pi_\rho}\|_{op} \big(\|A(\wloop_1(t)) - A(\wloop_2(t))\| \cdot |\wloop_1'(t)| + \|A(\wloop_2(t))\| \cdot |\wloop_1'(t) - \wloop_2'(t)|\big). \]
The desired result now follows by combining the previous few displays.
\end{proof}

\begin{proof}[Proof of Lemma \ref{lemma:indexset-separable}]
Let $\Pi : \R^3 \ra \threetorus$ be the canonical projection map. Given a piecewise $C^1$ loop $\wloop : [0, 1] \ra \threetorus$, and a point $x_0 \in \R^3$ such that $\Pi(x_0) = \wloop(0)$, we can define the path $\tilde{\wloop} : [0, 1] \ra \R^3$ by
\[ \tilde{\wloop}(t) := x_0 + \int_0^t \wloop'(s) ds, ~~ 0 \leq t \leq 1.\]
Note that $\tilde{\wloop}$ is a lift of $\wloop$, in the sense that $\wloop = \Pi \circ \tilde{\wloop}$. Moreover, given a piecewise $C^1$ path $\gamma : [0, 1] \ra \R^3$ (the definition of piecewise $C^1$ paths in $\R^3$ is exactly the same as for paths in $\threetorus$), the path $\Pi \circ \gamma : [0, 1] \ra \threetorus$ is a piecewise $C^1$ path such that $(\Pi \circ \gamma)'(t) = \gamma'(t)$ whenever the derivatives exist. Thus it suffices to approximate paths in $\R^3$ rather than loops in $\threetorus$. 

Let $\tilde{\loopset}_0$ be the collection of all piecewise $C^1$ paths $\gamma : [0, 1] \ra \R^3$ with the following properties: (1) $\gamma(0) \in \Q^3$, (2) $(\Pi \circ \gamma)(0) = (\Pi \circ \gamma)(1)$, and (3) there exists some $n \geq 1$ such that for all $1 \leq i \leq n$, $\gamma'$ exists, is constant, and is in $\Q^3$ on the interval $((i-1)/n, i/n)$. (So $\tilde{\loopset}_0$ is essentially made of piecewise linear paths with some additional properties.) Note $\tilde{\loopset}_0$ is countable. Now for any piecewise $C^1$ path $\gamma : [0, 1] \ra \R^3$ such that $(\Pi \circ \gamma)(0) = (\Pi \circ \gamma)(1)$, there exists a sequence $\{\gamma_n\}_{n \geq 1} \sse \tilde{\loopset}_0$ such that $\gamma_n(0) \ra \gamma(0)$ and $\int_0^1 |\gamma_n'(t) - \gamma'(t)| dt \ra 0$ (we give a construction at the end of the proof). Now letting $\wloop := \Pi \circ \gamma$ and $\wloop_n := \Pi \circ \gamma_n$ for $n \geq 1$, we have
\[ \int_0^1 |\wloop_n'(t) - \wloop'(t)| dt = \int_0^1 |\gamma_n'(t) - \gamma'(t)| dt \ra 0, \]
and also
\[ \sup_{0 \leq t \leq 1} \metricthreetorus(\wloop_n(t), \wloop(t)) \leq |\gamma_n(0) - \gamma(0)| + \int_0^1|\gamma_n'(t) - \gamma'(t)| dt \ra 0. \]
It follows that $d_\loopset(\wloop_n, \wloop) \ra 0$. We thus see that $\loopset_0 := \{\Pi \circ \gamma : \gamma \in \tilde{\loopset}_0\}$ is a countable dense subset of $(\loopset, d_\loopset)$, as desired.

As promised, we now  construct the sequence $\{\gamma_n\}_{n \geq 1}$. Start by taking $\gamma_n(0) \ra \gamma(0)$, with $\gamma_n(0) \in \Q^3$ for all $n \geq 1$. Since $\gamma$ is piecewise $C^1$, there is some finite set of times $0 = t_0 < t_1 < \cdots < t_k = 1$, such that $\gamma'$ is uniformly continuous on $[t_{i-1}, t_i]$ for all $1 \leq i \leq k$. Let $n \geq 1$. Define $\gamma_n' := \sum_{i=1}^n c_{n, i} \ind_{((i-1)/n, i/n)}$, where $\{c_{n, i}\}_{1 \leq i \leq n}$ is defined as follows. For $1 \leq i \leq n -1$, let $1 \leq j \leq k$ be such that $((i-1)/n, i/n) \cap (t_{j-1}, t_j) \neq \varnothing$. Take arbitrary $s \in (t_{j-1}, t_j)$, and let $c_{n, i} \in \Q^3$  be such that $|\gamma'(s) - c_{n, i}| \leq 1 / n$ (say). Having defined $c_{n, i}$, $1 \leq i \leq n-1$, we now define $c_{n, n}$ so that $\int_0^1 \gamma_n'(t) dt \in \Z^3$ (this ensures that $(\Pi \circ \gamma_n)(1) = (\Pi \circ \gamma_n)(0))$. Towards this end, let $m \in \Z^3$ be such that $\gamma(1) = \gamma(0) + m$. Then, define $c_{n, n}$ by (note that the following is in $\Q^3$)
\[ c_{n, n} := nm - \sum_{i=1}^{n-1} c_{n, i}, \]
so that
\[ \int_0^1 \gamma_n'(t) dt = \sum_{i=1}^n \frac{1}{n} c_{n, i} = m \in \Z^3. \]
To finish, we show that $\int_0^1 |\gamma_n'(t) - \gamma'(t)| dt \ra 0$. By the definition of $\gamma_n'$, and the fact that $\gamma'$ is uniformly continuous on $(t_{j-1}, t_j)$ for all $1 \leq j \leq k$, we have that $\int_0^{1-1/n} |\gamma_n'(t) - \gamma'(t)| dt \ra 0$. It remains to show that $\int_{1-1/n}^1 |\gamma_n'(t) - \gamma'(t)| dt \ra 0$. Since $\gamma'$ is uniformly continuous on  $(t_{k-1}, 1)$, we have that $\gamma'$ is bounded on $(t_{k-1}, 1)$, and thus $\int_{1-1/n}^1 |\gamma'(t)| dt \ra 0$. It remains to show that $\int_{1-1/n}^1 |\gamma_n'(t)| dt \ra 0$ as well. Note that
\[ \int_{1-1/n}^1  |\gamma_n'(t)| dt = \frac{1}{n} |c_{n, n}| = \bigg|m  - \frac{1}{n} \sum_{i=1}^{n-1} c_{n, i}\bigg| = \bigg|m - \int_0^{1-1/n} \gamma_n'(t) dt \bigg|. \]
Using that $\int_0^{1-1/n} |\gamma_n'(t) - \gamma'(t)|  dt \ra 0$, we see that it suffices to show that
\[\bigg| m - \int_0^{1-1/n} \gamma'(t) dt \bigg| \ra 0. \]
This follows since $\int_0^1 \gamma'(t) dt =  m$ by assumption, and $\int_{1-1/n}^1 |\gamma'(t)| dt \ra 0$ (as previously noted).
\end{proof} 

\section{Measurability results}\label{section:measurability}

\begin{lemma}\label{lemma:limits-preserve-measurability}
Let $(\Omega, \mc{F})$ be a measure space, and let $(S, \ms{T}_S)$ be a Lusin space. Let $\{X_n\}_{n \geq 1}$ be a sequence of measurable functions $X_n : (\Omega, \mc{F}) \ra (S, \mc{B}(S))$, and suppose that there is some function $X : \Omega \ra S$ such that $X_n$ converges pointwise to $X$. Then $X : (\Omega, \mc{F}) \ra (S, \mc{B}(S))$ is measurable.
\end{lemma}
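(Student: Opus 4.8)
The plan is to reduce to the case of a Borel subset of a compact metric space, where the classical result on measurability of pointwise limits applies, and then transport the conclusion back to $(S, \ms{T}_S)$ via the defining homeomorphism of a Lusin space. First I would invoke the definition: since $(S, \ms{T}_S)$ is Lusin, there is a compact metric space $(K, d)$ and a homeomorphism $\phi : S \ra B$, where $B \in \mc{B}(K)$ is a Borel subset of $K$ equipped with its subspace topology. Because $\phi$ is a homeomorphism, it is in particular a Borel isomorphism between $(S, \mc{B}(S))$ and $(B, \mc{B}(B))$, and $\mc{B}(B) = \{A \cap B : A \in \mc{B}(K)\}$; moreover, since $B$ is itself Borel in $K$, we have $\mc{B}(B) \sse \mc{B}(K)$, so $\phi$ may be regarded as a measurable map into $(K, \mc{B}(K))$.

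Next I would set $Y_n := \phi \circ X_n : \Omega \ra K$. Each $Y_n$ is measurable from $(\Omega, \mc{F})$ to $(K, \mc{B}(K))$ as a composition of measurable maps. Since $X_n \ra X$ pointwise in $(S, \ms{T}_S)$ and $\phi$ is continuous, $Y_n = \phi \circ X_n \ra \phi \circ X =: Y$ pointwise in $K$. Now $K$ is a metric space, so the standard fact that a pointwise limit of $(K, \mc{B}(K))$-valued measurable functions is measurable applies (for instance, by writing $Y^{-1}(F) = \bigcap_{m \geq 1} \bigcup_{N \geq 1} \bigcap_{n \geq N} Y_n^{-1}(F^{1/m})$ for closed $F$, where $F^{1/m}$ is the open $1/m$-neighborhood of $F$, using metrizability to express closed sets as decreasing intersections of open neighborhoods). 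Hence $Y : (\Omega, \mc{F}) \ra (K, \mc{B}(K))$ is measurable.

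Finally I would observe that $Y$ takes values in $B = \phi(S)$, since $Y = \phi \circ X$, so $Y$ is measurable as a map into $(B, \mc{B}(B))$ (the trace $\sigma$-algebra). Then $X = \phi^{-1} \circ Y$, and $\phi^{-1} : (B, \mc{B}(B)) \ra (S, \mc{B}(S))$ is measurable because $\phi$ is a homeomorphism, hence a Borel isomorphism. Therefore $X : (\Omega, \mc{F}) \ra (S, \mc{B}(S))$ is measurable, as desired. I do not anticipate a serious obstacle here; the only point requiring a little care is the bookkeeping around the three $\sigma$-algebras $\mc{B}(S)$, $\mc{B}(B)$, and $\mc{B}(K)$ and the fact that $\mc{B}(B)$ coincides with the trace of $\mc{B}(K)$ on $B$ precisely because $B$ is Borel in $K$ — this is what lets the argument pass freely between "measurable into $K$" and "measurable into $B$". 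Everything else is the standard metric-space argument for pointwise limits.
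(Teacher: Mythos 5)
Your proof is correct and takes essentially the same route as the paper: both invoke the Lusin-space homeomorphism onto a Borel subset of a compact metric space and then apply the standard metric-space fact about pointwise limits of measurable functions. The only superficial difference is that the paper verifies measurability directly on a generating family of open balls in $B$ via $\{f(X)\in U(x,r)\}=\bigcup_k\bigcup_m\bigcap_{n\geq m}\{f(X_n)\in U(x,r-1/k)\}$, whereas you phrase the same $\liminf$-type decomposition in terms of closed sets and their open neighborhoods in $K$ and then restrict to $B$; these are equivalent bookkeeping choices.
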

\begin{proof}
By the definition of Lusin space (see, e.g., \cite[(82.1) Definition]{RW1994}), there exists a compact metric space $(J, \rho)$, a Borel subset $B \sse J$, and a homeomorphism $f : (S, \ms{T}_S) \ra (B, \ms{T}_B)$, where $\ms{T}_B$ is the subset topology on $B$, or equivalently, the metric topology generated by the restriction of $\rho$ to $B$. Thus, it suffices to show that $f(X) : (\Omega, \mc{F}) \ra (B, \mc{B}(B))$ is measurable. Since $(J, \rho)$ is a compact metric space, it follows that the Borel $\sigma$-algebra $\mc{B}(B)$ of $B$ is generated by open balls $U(x, r) := \{y \in B : \rho(x, y) < r\}$ for $x \in B, r > 0$. Thus fix $x \in B, r > 0$. It suffices to show that $\{f(X) \in U(x, r)\} \in \mc{F}$. This follows since $f(X_n)$ is measurable for all $n \geq 1$, combined with the fact that $f(X_n) \ra f(X)$ pointwise, which implies
\[ \{f(X) \in U(x, r)\} = \bigcup_{k=1}^\infty \bigcup_{m=1}^\infty \bigcap_{n = m}^\infty \{f(X_n) \in U(x, r - 1/k)\}. \qedhere\]
\end{proof}

In the following, recall the measurable maps $i^{1, 2} : (\nonlineardistspace, \mc{B}_{\nonlineardistspace}) \ra (\nonlineardistspace^{1, 2}, \borelnonlineardistspace)$ and $j^{1,2} : (\nonlineardistspace^{1, 2}, \borelnonlineardistspace) \ra (\nonlineardistspace, \mc{B}_{\nonlineardistspace})$ from Definitions \ref{def:i12} and \ref{def:j12}. Recall also $\ymsemigroup_t^{1, 2}$ from Definition \ref{def:ym-semigroup} and $\iota_{\ymsemigroup^{1,2}}$ from Definition \ref{def:embedding-orbit-space-nonlinear-dist-space}.

\begin{proof}[Proof of Lemma \ref{lemma:measurability-from-stochastic-process}]
Define $\tilde{\rvnldistspace} := i^{1,2}(\rvnldistspace)$. Then since $\rvnldistspace = j^{1,2}(\tilde{\rvnldistspace})$ (recall Lemma \ref{lemma:j12-i12-identity}), it suffices to show that $\tilde{\rvnldistspace}$ is measurable. Observe that $\tilde{\rvnldistspace}(t) = \Phi_t^{1,2}([\rconn]^{1,2})$ for $t > 0$, and thus $\tilde{\rvnldistspace} = \iota_{\ymsemigroup^{1,2}}([\rconn]^{1,2})$. Thus, it suffices to show that $[\rconn]^{1,2}$ is an $(\honeorbitspace, \borelorbitspace)$-valued random variable. Towards this end, for $N \geq 0$, define $\rconn_N \in \connspace^{1,2}$ by
\[ \rconn_N(x) := \sum_{\substack{n \in \Z^3 \\ |n|_\infty \leq N}} \hat{\rconn}(n) e_n(x). \]
Since $\rconn$ is smooth, we have that $\rconn_N \ra \rconn$ in $C^1$, and thus also in $H^1$ norm. This implies that $[\rconn_N]^{1,2} \ra [\rconn]^{1,2}$ (in the topology $\toporbitspace$). Since $(\honeorbitspace, \toporbitspace)$ is a Lusin space (Corollary \ref{cor:orbitspace-lusin}) and limits preserve measurability (Lemma \ref{lemma:limits-preserve-measurability}), it suffices to show that for all $N \geq 0$, $[\rconn_N]^{1,2}$ is an $\honeorbitspace$-valued random variable. To show this, it suffices to show that $\rconn_N$ is an $(\connspace^{1,2}, \mc{B}(\ms{T}_{sw}))$-valued random variable (recall the definition of the sequential weak topology $\ms{T}_{sw}$ from Definition \ref{def:sequential-weak-toplogy}).

Towards this end, let $N \geq 0$. Observe that since $\rconn$ is smooth, the Fourier coefficients $(\hat{\rconn}(n), |n|_\infty \leq N)$ form a stochastic process. Also, note that $\hat{\rconn}(-n) = \ovl{\hat{\rconn}(n)}$, and so $\hat{\rconn}(0) \in \R$. Let $I_N \sse \{n \in \Z^3 \setminus \{0\} : |n|_\infty \leq N\}$ be such that for all $n \in \Z^3 \setminus \{0\}$ with $|n|_\infty \leq N$, exactly one of $n, -n$ is in $I_N$. Then by our preceding observations, we have that there is some map $f_N : \R \times \C^{I_N} \ra \connspace^{1,2}$, such that $\rconn_N = f_N((\hat{\rconn}(n), n \in \{0\} \cup I_N))$. Moreover, this map $f_N$ is continuous (even when $\connspace^{1,2}$ is equipped with the stronger $H^1$ norm topology), and thus measurable. The measurability of $\rconn_N$ now follows.
\end{proof}

\begin{proof}[Proof of Lemma \ref{lemma:gauge-invariant-free-field-measurable}]
Define $\tilde{\rvnldistspace} := i^{1,2}(\rvnldistspace)$. Then since $\rvnldistspace = j^{1,2}(\tilde{\rvnldistspace})$ (recall Lemma \ref{lemma:j12-i12-identity}), it suffices to show that $\tilde{\rvnldistspace}$ is measurable. For $N \geq 2$, define $\tilde{\rvnldistspace}_N \in \nonlineardistspace^{1,2}$ by $\tilde{\rvnldistspace}_N(t) := \Phi_t^{1,2}([\rconn(T/N)]^{1,2})$ for $t > 0$. Observe then that $\tilde{\rvnldistspace}_N(t) = \tilde{\rvnldistspace}(t + T/N) = \Phi_{T/N}^{1,2}(\tilde{\rvnldistspace}(t))$ for all $t > 0$, $N \geq 2$. Then by Lemma \ref{lemma:ym-semigroup-continuous-in-time}, we obtain that for any $t > 0$, $\tilde{\rvnldistspace}_N(t) \ra \tilde{\rvnldistspace}(t)$, and thus $\tilde{\rvnldistspace}_N \ra \tilde{\rvnldistspace}$. Since $\nonlineardistspace^{1,2}$ is a Lusin space (Lemma \ref{lemma:nldistspace-lusin}) and limits preserve measurability (Lemma \ref{lemma:limits-preserve-measurability}), it thus suffices to show that for all $N \geq 2$, we have that $\tilde{\rvnldistspace}_N$ is measurable.

Towards this end, fix $N \geq 2$. Define $\rvnldistspace_N \in \nonlineardistspace$ by $\rvnldistspace_N(t) := \ymsemigroup_t([\rconn(T/N)])$ for $t > 0$. Note that $\tilde{\rvnldistspace}_N = i^{1,2}(\rvnldistspace_N)$. Thus by Lemma \ref{lemma:measurability-from-stochastic-process}, it suffices to show that $(\rconn(T/N, x), x \in \threetorus)$ is a stochastic process (note it has smooth sample paths by Theorem \ref{thm:ym-heat-flow-gff}), i.e., it suffices to show that for all $x \in \threetorus$, $\rconn(T/N, x)$ is a $\lalg^3$-valued random variable. This follows since $T \in (0, 1]$ is a random variable and $\rconn \in C^\infty((0, T) \times \threetorus, \lalg^3)$, so that we may write $\rconn(T/N, x)$ as a limit of discrete approximations
\[ \rconn(T/N, x) = \lim_{k \toinf} \sum_{i=1}^k \ind(T \in ((i-1)/k, i/k]) \rconn(i/(kN), x). \qedhere\]
\end{proof}

\bibliographystyle{plainnat}

\end{document}